\newenvironment{customthm}[1]
  {\innercustomthm}
  {\endinnercustomthm}
\newenvironment{customcor}[1]
  {\innercustomcor}
  {\endinnercustomcor}
\theoremstyle{definition}
\newtheorem{definition}{Definition}[section]
\newcounter{claim}[definition]
\newcommand{\blue}[1]{\,{\color{black} #1}\,}
\theoremstyle{remark}
\newtheorem{remark}[definition]{Remark}
\theoremstyle{plain}
\theoremstyle{plain}
\newtheorem{cor}[definition]{Corollary}
\theoremstyle{plain}
\newtheorem{lemma}[definition]{Lemma}
\theoremstyle{plain}
\newtheorem{theorem}[definition]{Theorem}
\newtheorem{prop}[definition]{Proposition}
\newtheorem{example}[definition]{Example}
\theoremstyle{remark}
\newtheorem{notation}[definition]{Notation}
\theoremstyle{definition}
\newtheorem{hypothesis}[definition]{Hypothesis}
\newcommand {\VV}{\mathbf V}
\newcommand{\B}{\mathrm{B}}
\newcommand{\C}{{C}}
\newcommand{\E}{\mathcal{E}}
\newcommand{\F}{\mathcal{F}}
\newcommand{\G}{\mathrm{G}}
\newcommand{\K}{\mathcal{K}}
\newcommand{\N}{{N}}
\newcommand{\J}{\mathrm{J}}
\newcommand{\Pp}{\mathcal{P}}
\newcommand{\Aut}{\mathrm{Aut}}
\newcommand{\Hom}{\mathrm{Hom}}
\newcommand{\Inj}{\mathrm{Inj}}
\newcommand{\Mor}{\mathrm{Mor}}
\newcommand{\Out}{\mathrm{Out}}
\newcommand{\Inn}{\mathrm{Inn}}
\newcommand{\Syl}{\mathrm{Syl}}
\newcommand{\syl}{\mathrm{Syl}}
\newcommand{\Ly}{\operatorname{Ly}}
\newcommand{\HN}{\operatorname{HN}}
\newcommand{\Mo}{\operatorname{M}}
\newcommand{\GF}{\mathrm{GF}}
\newcommand{\GL}{\mathrm{GL}}
\newcommand{\Sp}{\mathrm{Sp}}
\newcommand{\SL}{\mathrm{SL}}
\newcommand{\PGL}{\mathrm{PGL}}
\newcommand{\PSL}{\mathrm{PSL}}
\newcommand{\PSU}{\mathrm{PSU}}
\newcommand{\PSp}{\mathrm{PSp}}
\newcommand{\Sym}{\mathrm{Sym}}
\newcommand{\Alt}{\mathrm{Alt}}
\newcommand{\Frob}{\mathrm{Frob}}
\newcommand{\foc}{\mathrm{foc}}
\newcommand{\hyp}{\mathrm{hyp}}
\newcommand{\norm}{\mathrel{\unlhd}}
\newcommand{\wt}{\widetilde}
\newcommand{\eps} {\epsilon}
\newcommand{\Z} { {Z}}
\newcommand{\ov} {\overline}
\newcommand{\agemO} {\mho}
\newcommand{\SmallGroup}{\mathrm {SmallGroup}}
\theoremstyle{definition}
\begin{document}

\frontmatter

\title[Saturated Fusion Systems on $p$-groups of Maximal  Class ]{Saturated Fusion Systems on $p$-groups of Maximal  Class }

 \author{Valentina Grazian}
 \address{
Department of Mathematics and Applications\\
University of Milano -- Bicocca\\
Via Roberto Cozzi 55, 20125 Milano \\ Italy
} \email{valentina.grazian@unimib.it}

\author{Chris Parker}

\address{
School of Mathematics\\
University of Birmingham\\
Birmingham B15 2TT\\
United Kingdom
} \email{c.w.parker@bham.ac.uk}

\thanks{The authors would like to thank the Isaac Newton Institute for Mathematical Sciences for support and hospitality during the programme \emph{Groups, representations and applications: new perspectives} when work on this paper was undertaken. This work was supported by \textbf{EPSRC Grant Number EP/R01 146 04/1}.}
\date{\today}
\subjclass[2010]{20D20, 20D05,20E32,20E45}

  \begin{abstract} For a prime number $p$, a finite $p$-group of order $p^n$ has maximal class if and only if it has nilpotency class $n-1$.
 Here we examine saturated fusion systems $\mathcal F$ on maximal class $p$-groups $S$ of order at least $p^4$.
 The Alperin-Goldschmidt Theorem for saturated fusion systems yields that $\mathcal F$ is entirely determined by the $\mathcal F$-automorphisms of its $\mathcal F$-essential subgroups and of $S$ itself. If an $\mathcal F$-essential subgroup either has order $p^2$ or is non-abelian of order $p^3$, then it is called an $\mathcal F$-pearl.
 The facilitating and technical theorem in this work shows that an $\mathcal F$-essential subgroup is either an $\mathcal F$-pearl, or one of two explicitly determined maximal subgroups of $S$.  This result is easy to prove if $S$ is a $2$-group and can be read from the work of   D{\'{\i}}az,   Ruiz, and   Viruel together with that of Parker and Semeraro when $p=3$. The main contribution is for $p \ge 5$ as in this case there is no classification of the maximal class $p$-groups.
    The main Theorem describes all the reduced saturated fusion systems on a maximal class $p$-group of order at least $p^4$ and follows from two more extensive theorems. These two  theorems  describe   all saturated fusion systems, not restricting to the reduced ones for example, on exceptional and non-exceptional maximal class $p$-groups respectively. As a corollary we have the easy to remember result that states that, if $O_p(\mathcal F)=1$, then  either $\mathcal F$ has $\mathcal F$-pearls or $S$ is isomorphic to a Sylow $p$-subgroup of $\mathrm G_2(p)$ with $p\ge 5$ and the fusion systems are explicitly described.
\end{abstract}
  \maketitle
  \tableofcontents
\renewcommand\leftmark{SATURATED FUSION SYSTEMS ON $p$-GROUPS OF MAXIMAL CLASS}
\mainmatter
\section{Introduction and main results}
Let $p$ be a prime and   $S$ be a finite $p$-group.
A \emph{fusion system} on $S$ is a category which has  objects  the subgroups of $S$ and   morphisms which are injective group homomorphisms between the subgroups of $S$. A fusion system is \emph{saturated} if it satisfies  certain technical axioms  which are described in some detail \blue{in Section \ref{sec:PF}, where standard terminology used for saturated fusion systems can also be found.} \blue{Of special importance is the Alperin-Goldschmidt  Theorem, Theorem~\ref{t:alp}, which says that  a saturated fusion system $\F$ on $S$ is entirely determined by  the $\F$-automorphisms of $S$, and the $\F$-automorphism groups of the so-called   $\F$-essential subgroups.} All  finite groups $G$ determine a saturated fusion system $\F_S(G)$ on  a fixed Sylow $p$-subgroup $S$ of $G$. In this case the morphisms between subgroups of $S$ are exactly the restrictions of conjugation maps induced by elements of $G$. When a saturated fusion system can be constructed from a group in this way, we say the system  is \emph{realizable}. A saturated fusion system  which is not realizable is called \emph{exotic}.

\blue{A $p$-group has maximal class if  it has order $p^n$ and  nilpotency class $n-1$.
The intention of this work is to study the structure of saturated fusion systems $\F$ on maximal class $p$-groups of order at least $p^4$. Our main theorems precisely describe the $\F$-essential subgroups of $\F$ and their $\F$-automorphism groups.
In contrast with other works on fusion systems, we will consider all saturated fusion systems on the selected class of $p$-groups, not limiting our investigation, for instance, to reduced saturated fusion systems. Aside from proving a broader result, our decision to do this  will become evident when we discuss the inductive approach to the proof of our theorems.}

\subsection{Historical context}
The categories we call saturated fusion systems were first introduced by Puig in the early 1990s and recorded in handwritten notes which developed much of the fundamental theory.   Puig did not formally publish his discoveries until 2006 \cite{Pg} and in this work, for a $p$-group $S$, a \emph{Frobenius $S$-category} is exactly what we call a saturated fusion system on $S$.
 Puig's motivation  was to gain a deeper understanding of the local to global conjectures which are of fundamental importance in modular representation theory. In particular, saturated fusion systems can be constructed on the defect group of a $p$-block in much the same way as they are constructed on Sylow $p$-subgroups. This connection is described by Kessar in \cite[Part IV]{AKO}. Recent developments in this direction include work of Kessar, Linckelmann, Lynd and Semeraro \cite{KLLS} in which certain numerical conjectures in representation theory are formulated for fusion systems; in particular, for exotic fusion systems. This is exciting as, to-date, no saturated fusion system for an odd prime $p$ has been shown to be exotic without using the classification of finite simple groups and the research in \cite{KLLS} offers potential for achieving this without the classification.  Finding an alternative way to show that exotic systems are exotic is listed as \cite[Question 7.7]{AO} in the survey by Aschbacher and Oliver.

 The  christening  of Frobenius $S$-categories   as saturated fusion systems is  traced back to \cite{BLO2} where in 2003 Broto, Levi and Oliver defined a \emph{centric linking system}  related to a saturated fusion system. Later in 2013, Chermak \cite{chermak} demonstrated that   each saturated fusion system determines a unique centric linking system. From the linking system of a saturated fusion system Broto, Levi and Oliver  were able to construct the $p$-completion of its geometric realization and this is the classifying space associated with the saturated fusion system. The homotopy properties of these spaces share properties of $p$-completed classifying spaces of finite groups. Readers are directed to \cite[Part III]{AKO} for details and motivation for this study.

 The theory of fusion systems provides an idealized environment in which to study the $p$-local structure of a finite group $G$ and, in particular, the structure of normalizers of non-trivial  $p$-subgroups of $G$. This formalism has been exploited most notably and energetically by Aschbacher in a deep series of papers aimed at simplifying some parts of the classification of finite simple groups. This approach is described in the surveys \cite{AStA,AKO,AO}. Other notable contributions to this goal are \cite{2rank4} in which Oliver determines reduced fusion systems on $2$-groups in which every subgroup is generated by at most $4$ elements, Andersen, Oliver and Ventura \cite{AOVsmall} which determines, using computational assistance, all reduced fusion systems on $2$-groups of order at most $2^9$ and Henke and Lynd \cite{henke2018fusion} where they consider saturated  fusion systems with components related to the Solomon fusion systems.\blue{ Using specially developed computational methods, Parker and Semeraro \cite[Main Result]{parkersemerarocomputing} have explicitly enumerated all saturated fusion systems with $O^p(\F)=\F$ and $O_p(\F)=1$ on $p$-groups of order $p^n$ with $p^n\in \{3^4,3^5,3^6,3^7,5^4,5^5, 5^6,7^4,7^5\}$. Their {\sc Magma} code for computing with fusion systems  is available publicly  \cite{PSGIT} and is used for some of the computations in this article (see Appendix~\ref{AppC}).}

The Solomon fusion systems \cite[III.6.7]{AKO}, which are supported on a Sylow $2$-subgroup of $\mathrm{Spin}_7(q)$, $q$ odd, are exotic. These are the only known  exotic fusion systems on $2$-groups and are still a fascinating subject of research \cite{henke2018fusion}. In contrast, there is a large variety  of exotic fusion systems on $p$-groups with $p$ odd and the reason for this is still not transparent.
Ruiz \cite{Ruiz} and more recently Oliver and Ruiz \cite{oliver2020simplicity} have considered non-abelian simple groups and determined instances where the fusion system $\F=\F_S(G)$ has $O^{p'}(\F)$ exotic. These fusion systems, from their very construction are closely related to finite simple groups. Other exotic systems  are constructed as fusion systems of free amalgamated products. \blue{Examples of these can be found} in \cite{BLO2,ClellandParker2010,ParkerStroth2015} and these are often far away from being realized by finite groups in that the $p$-groups are usually not closely related to the Sylow $p$-subgroup of a finite simple group. Many of these fusion systems are defined on  $p$-groups of \emph{maximal class}.

\blue{In the early 2000's  Ruiz and Viruel \cite{RV} famously handled the case when $\F$ is defined on a non-abelian $p$-group of order $p^3$. These  are maximal class $p$-groups. The shocking outcome was the discovery of three exotic fusion systems   when $p=7$.}  In their work on rank $2$ groups (groups which have no elementary abelian subgroups of order $p^3$ and are not cyclic or quaternion), D{\'{\i}}az, Ruiz and Viruel \cite{DRV} examine saturated fusion systems on the infinite families as classified by Blackburn \cite{black}. The outcome of their research was the discovery of several infinite  families of exotic fusion systems on certain of the maximal class $3$-groups. In fact, every maximal class $3$-group except the Sylow $3$-subgroup of the simple group $\Alt(9)$ has rank $2$.  In all of the examples they discovered, the $3$-group has an abelian subgroup of index $3$ \cite{black}. In 2019, Parker and Semeraro, using  their computational approach to saturated  fusion systems \cite{parkersemerarocomputing} uncovered a saturated fusion system on  a rank $2$ group of order $3^6$ which has maximal class and no abelian subgroups of index $3$.  This gave rise to the article  \cite{PSrank2}. \blue{A description of all the saturated fusion systems on maximal class $3$-groups is provided in Appendix \ref{AppB}.} From a different direction, Clelland and Parker \cite{ClellandParker2010} constructed families of saturated fusion systems on  a Sylow $p$-subgroup $T$ of groups of shape $q^a{:}\SL_2(q)$ where $2\le a \le p$ and $q=p^b$. When $a= 2$, $T$ is a Sylow $p$-subgroup of $\PSL_3(q)$ and, for $a=3$, $T$ is a Sylow $p$-subgroup of $\PSp_4(q)$.  For $a> 3$ and for $p \ge 5$, the fusion systems $\F$ with $O_p(\F)=1$ discovered in \cite{ClellandParker2010}  are typically exotic. If $q=p$, then $T$ has maximal class. This construction therefore yields infinite families of exotic saturated fusion systems on such maximal class $p$-groups.  In each case,  the underlying $p$-group has an abelian subgroup of index $p$.  In a remarkable series of articles, Oliver takes this property as his starting point and in \cite{p.index, p.index2, p.index3} he, and his co-authors, determine the reduced saturated fusion systems on $p$-groups with an abelian subgroup of index $p$.   A compilation of their results when applied to $p$-groups of maximal class is provided in  Appendix~\ref{AppA}.  It turns out that these saturated fusion systems are overwhelmingly exotic. This perhaps leaves the impression that for odd primes   exotic fusion systems are not exotic at all. This is possibly  an illusion created by  considering groups which are in some way small.  Evidence that exotic fusion systems may be exotic after all comes, for example, from \cite{Onofrei} where it is shown that, in certain good situations,   a saturated fusion system determines a locally finite classical Tits chamber system and so is not exotic. Work of van Beek generalizing the classification of groups with a weak $BN$-pair has also not revealed any surprises \cite{martin}.

Grazian has classified the saturated fusion systems on $p$-groups of rank $3$ for $p \ge 5$ \cite{rank3}. Unlike in the earlier work of D{\'{\i}}az, Ruiz and Viruel for rank $2$ groups, there is no list of groups to examine such as those given in \cite{black} and her methods invoke deep results from group theory developed for the classification of the finite simple groups. The resulting theorem reveals that saturated fusion systems on such groups are  realizable  with just one isolated exotic example on a maximal class $7$-group of order $7^5$.

\blue{Moragues Moncho \cite{Raul} classified all saturated fusion systems $\F$ on $p$-groups $S$ with an extraspecial subgroup of index $p$ and $O_p(\F)=1$.  The resulting theorem, which is required for the proof of our Theorem~\ref{Raul app}, is much more uniform than the result concerning $p$-groups with an abelian subgroup of index $p$. In particular, he shows that if the $p$-group considered has order at least $p^7$ then it has maximal class and it is  a uniquely determined group of order $p^{p-1}$. The fusion systems uncovered are closely related to those found by Parker and Stroth \cite{ParkerStroth2015} and are all exotic.}

\subsection{The main theorems}

The first step towards a classification of the saturated fusion systems $\F$ on a class of $p$-groups is the study of the so-called \emph{$\F$-essential subgroups} (see Definition \ref{def:ess}). The set of all $\F$-essential subgroups is denoted by $\E_\F$. In Grazian's research,  a certain special type of $\F$-essential subgroups played an important role.  She named  these subgroups \emph{$\F$-pearls}. They are defined as follows:

\begin{definition}\label{pearl:def} Let $p$ be a prime and   $\F$ be a saturated fusion system on a $p$-group $P$. An $\F$-essential subgroup $E$ of $P$ is called an \emph{$\F$-pearl} if it is isomorphic to either the elementary abelian group of order $p^2$,    or the extraspecial group of exponent $p$ and order $p^3$ if $p$ is odd or the quaternion of order $8$ if $p=2$. We denote the set  of abelian $\F$-pearls by $\mathcal P_a(\F)$, the set of extraspecial  $\F$-pearls by $\mathcal P_e(\F)$ and we write $\mathcal P(\F)= \mathcal P_a(\F) \cup \mathcal P_e(\F)$. \end{definition}

In \cite{pearls}, Grazian develops many of the fundamental properties of saturated fusion systems\blue{ $\F$ which have $\F$-pearls, one of the most basic being that the $p$-group on which $\F$ is defined must have maximal class.}  She also presents a fascinating lemma \cite[Lemma 3.7]{pearls} (included  here as Lemma~\ref{pearls4}), that constructs  proper saturated subfusion systems with $\F$-pearls  of any saturated fusion system that has $\F$-pearls.  These subsystems are frequently exotic and if the $\F$-pearls are\blue{abelian the systems $O^p(\F)$ are simple \cite[Definition I.6.1]{AKO} (see also Theorem~\ref{non exc}).} \blue{The exotic simple saturated fusion system on the group of order $7^5$ discovered in \cite{rank3}  can be constructed \blue{using Lemma~\ref{pearls4}} from the saturated fusion system of the monster sporadic simple group at the prime $7$.}

It is important to clarify that not all saturated fusion systems on a maximal class $p$-group contain $\F$-pearls. The determination of saturated fusion systems $\F$ on $S$ with $O_p(\F)=1$ and without $\F$-pearls is a consequence of the main results of this work and is recorded as Corollary~\ref{cor: no pearls}.

Before we can state our main theorems, we require some further notation related to maximal class groups. Assume that $S$ has maximal class and order at least $p^4$. Set $\gamma_2(S)= S'=[S,S]$ and define $\gamma_j(S)= [\gamma_{j-1}(S), S]$  for $j \ge 3$. We set $$\gamma_1(S)= C_{S}(\gamma_2(S)/\gamma_4(S)).$$  As $S$ has maximal class, $\gamma_2(S)/\gamma_4(S)$ has order $p^2$ and so $|S:\gamma_1(S)|=p$ which means that $\gamma_1(S)$ is a maximal subgroup of $S$. We also have $\gamma_{n-2}(S)$ is the second centre $Z_2(S)$ of $S$ and $|Z_2(S)|=p^2$. Hence $C_S(Z_2(S))$, just like $\gamma_1(S)$, is a maximal subgroup of $S$. These subgroups are examples of \emph{$2$-step centralizers}.  If $S$ has more than one  $2$-step centralizer,   $S$ is called \emph{exceptional}. It is a fact that $S$ is exceptional if and only if $\gamma_1(S) \ne C_S(Z_2(S))$.

 An example of an exceptional group of maximal class is a Sylow $p$-subgroup of the simple group of Lie type $\G_2(p)$ for $p\geq 5$.  The saturated fusion systems $\F$  on a Sylow $p$-subgroup of the simple group $\G_2(p)$ satisfying $O_p(\F)=1$ have been classified by Parker and Semeraro in \cite{G2p}. \blue{Their work uncovers 27 exotic fusion systems when $p=7$.}

The work of Oliver and his co-workers Craven, Ruiz and Semeraro \cite{p.index, p.index2, p.index3} described earlier can be applied to maximal class $p$-groups whenever $\gamma_1(S)$ is abelian. This straightforward application is presented for completeness as Theorem~\ref{TheoremAbelian} in  Appendix~\ref{AppA}. Hence throughout the main body of this work we may and do assume that $\gamma_1(S)$ is not abelian.

A consequence of the main results of our research  can be presented as follows.  The notation used for sporadic simple groups is consistent with \cite{GLS3}.

\begin{customthm}{A}\label{compound} Suppose that $\F$ is a reduced saturated fusion system on a $p$-group $S$ of maximal class \blue{of order at least $p^4$.}  Then  one of the following statements  holds.
\begin{enumerate}
\item $\gamma_1(S)$ is non-abelian, and  $S$ is not exceptional,  $\mathcal \E_\F=\mathcal P_a(\F)$, $\F$ is simple and exotic.
\item $\gamma_1(S)$ is non-abelian, $S$ is exceptional and either  \begin{enumerate}\item
$p \ge 5$ and $\F=\F_S(\G_2(p))$;
\item $p=5$, $S$ is isomorphic to a Sylow $5$-subgroup of $\G_2(5)$ and $\F=\F_S(G)$ where $G$ is one of the sporadic simple groups $\Ly, \HN$ or $\B$;
\item $p=7$,  $S$ is isomorphic to a Sylow $7$-subgroup of $\G_2(7)$ and either $\F$ is exotic (20 examples) or $\F= \F_S(\Mo)$ where $\Mo$ denotes the monster; or
\item $p \ge 11$, $S$ is uniquely determined of order $p^{p-1}$, $\mathcal P(\F)= \mathcal P_a(\F)\not=\emptyset$ and, if $\gamma_1(S)$ is $\F$-essential, then $\Out_\F(S) \cong \GF(p)^\times \times \GF(p)^\times$, $O^{p'}(\Out_\F(\gamma_2(S))) \cong \SL_2(p)$ and $\gamma_1(S)/Z(\gamma_1(S))$ is the  $(p-3)$-dimensional irreducible $\GF(p)\SL_2(p)$-module.
\end{enumerate}
\item $\gamma_1(S)$ is abelian and $\F$ is described by Theorem~\ref{TheoremAbelian}. \end{enumerate}
\end{customthm}

Theorem~\ref{compound} is proved by extracting special cases from Theorems~\ref{Raul app} and \ref{non exc} below.  We remark here that our proofs require the Classification Theorem of the non-abelian simple groups. This is used to provide the names of groups with a strongly $p$-embedded subgroup which contain an elementary abelian subgroup of order $p^2$ (Proposition~\ref{SE-p2}) and also to understand so-called quadratic pairs \cite{chermak2002}. Of course, it is also used when we assert that a given fusion system is exotic for otherwise we would have answered \cite[Question 7.7]{AO}.

The maximal class $2$-groups are either dihedral, quaternion or semidihedral \cite[Corollary 3.3.4(iii)]{Led-Green} and  the fusion systems on such groups are known; in particular it is easy to demonstrate that the $\F$-essential subgroups are all $\F$-pearls (see Lemma \ref{p=2}). As for $p=3$, the saturated fusion systems on maximal class $3$-groups are all known due to the work of  D{\'{\i}}az, Ruiz, Viruel and Parker and Semeraro (see Lemma \ref{p=3}).
Therefore for our main theorems we focus our attention on the primes $p\geq 5$.

 To state our next theorem we require some \blue{additional terminology}. Let   $S(p)$ be the unique split extension of an extraspecial group of exponent $p$ and order $p^{p-2}$ by a cyclic group of order $p$ which has maximal class \cite[Proposition 8.1]{Raul}. When $p=7$, we have $ S(7)$ is isomorphic to the Sylow $7$-subgroup of $\G_2(7)$. Also, the group denoted by $\SmallGroup(5^6,661)$ is group number  $661$ in the {\sc Magma}~\cite{Magma} small group library of groups of order $5^6$. \blue{In Lemma~\ref{mc-factsb} (v) we see that the maximal class $p$-groups which are exceptional have order at least $p^6$ and at most $p^{p+1}$. In particular,  there are no exceptional maximal class $3$-groups.} This explains our assumption that $p \ge 5$ and $|S|\ge p^6$ in the next theorem.
\begin{customthm}{B}\label{Raul app}
Suppose that $p \ge 5$,   $S$ is an exceptional maximal class $p$-group of order at least $p^6$ and $\F$ is a saturated fusion system on $S$. Assume that $\F \ne  N_\F(S)$.  Then one of the following holds.
\begin{enumerate}
\item $\gamma_1(S)$ is extraspecial, and, if  $\F\ne N_\F(\gamma_1(S))$, then  one of the following holds:
\begin{enumerate}
\item $S $ is isomorphic to a Sylow $p$-subgroup of $\G_2(p)$  and either
\begin{enumerate}[label=(\greek*)]
\item $\F= N_\F(C_S(Z_2(S)))$, $O^{p'}(\Out_\F(C_S(Z_2(S))))\cong \SL_2(p)$;
\item $p=5$, $1\ne O_p(\F) \le \gamma_2(S)$, $\F \cong \F_S(5^3{}^. \SL_3(5))$;
\item  $p \ge 5$ and $\F=\F_S(\G_2(p))$;
\item $p=5$ and $\F=\F_S(G)$ where $G= \Ly, \HN, \Aut(\HN)$ or $\B$; or
\item $p=7$ and either $\F$ is exotic (27 examples) or $\F= \F_S(\Mo)$.
 \end{enumerate}
 \item $p \ge 11$, $S\cong S(p)$, $\mathcal P(\F)= \mathcal P_a(\F)\not=\emptyset$ and, if $\gamma_1(S)$ is $\F$-essential, then $\Out_\F(S) \cong \GF(p)^\times \times \GF(p)^\times$, $O^{p'}(\Out_\F(\gamma_2(S))) \cong \SL_2(p)$ and $\gamma_1(S)/Z(\gamma_1(S))$ is the unique $(p-3)$-dimensional irreducible $\GF(p)\SL_2(p)$-module.

     \end{enumerate}
      \item $p=5$,  $S= \mathrm{SmallGroup}(5^6,661)$, $O_5(\F)=C_S(Z_2(S))$ is the unique $\F$-essential subgroup, $\Out_\F(S)$ is cyclic of order $4$, $\Out_\F(C_S(Z_2(S))) \cong \SL_2(5)$ and $\F$ is unique.
     \end{enumerate}
In particular,  if $\F \ne N_\F(\gamma_1(S))$, then $\F=O^p(\F)$ and, in addition,   $O_p(\F) =1 $ in all cases other than   (i)(a)($\alpha$), (i)(a)($\beta$) and   (ii).
     \end{customthm}

Theorem~\ref{Raul app} does not describe $\Aut_\F(\gamma_1(S))$  when $\F= N_\F(\gamma_1(S))$. This additional detail can be determined as follows. First observe that $|S|\le p^{p+1}$ as $S$ is exceptional.  Therefore $|\gamma_1(S)|= p^{1+2a}$ where $2\le a \le (p-1)/2$.  Now $\Aut_S(\gamma_1(S))$ acts on $\gamma_1(S)/Z(S)$ with a single Jordan block and so we can apply \cite{Craven2018} to determine the candidates for $\Out_\F(\gamma_1(S))$.
For non-exceptional $p$-groups,  we prove the following theorem.

\begin{customthm}{C}\label{non exc}
Suppose that $p$ is an odd prime, $S$ is a maximal   class $p$-group of order at least $p^4 $ and $\F$ is a saturated fusion system on $S$. Assume that $S$ is not exceptional,   $\gamma_1(S)$ is not abelian and $\F \ne N_\F(\gamma_1(S))$. Then one of the following holds:
\begin{enumerate}
\item $\E_\F=\mathcal P_a(\F)$,  $|S: \hyp(\F)|\le p$ with $|S: \hyp(\F)|= p$  if and only if $|S|=p^{j(p-1)+1}$  for some $j \ge 2$.   Furthermore,  \blue{either $O^p(\F)$ is simple and exotic or $p=3$ and   $O^3(\F)$ is realized by $\PSL_3(q)$ for suitable prime powers $q$.}
\item $p \ge 5$, $\E_\F=    \mathcal P_e(\F)$,     $O_p(\F)=Z(S)$, $|S: \hyp(\F)|\le p$ with $|S: \hyp(\F)|= p$  if and only if $|S|=p^{j(p-1)+2}$  for some $j \ge 2$.  Furthermore,    $O^p(\F/Z(S))$ is simple  and exotic.
\item $p \ge 5$, $\E_\F= \mathcal P_a(\F) \cup \{\gamma_1(S)\}$,  $O_p(\F)=1$, $\F \ne O^p(\F)$ and
\begin{enumerate}
\item $ \mathcal P_a(\F)$ is a single $\F$-class, $|S|=p^{j(p-1)+1}$ for some $j \ge 2$ and $S$ has sectional rank $p-1$;
\item $\Out_\F(\gamma_1(S)) \cong \Sym(p)$ or $\PGL_2(p)$;
\item$Z(\gamma_1(S))=\agemO^1(\gamma_1(S))$ has index $p^{p-1}$ in $\gamma_1(S)$, $\gamma_1(S)'< \Omega_1(\gamma_1(S))$ has order $p^{p-2}$ and  $\gamma_2(S)$ is abelian  but not elementary abelian;
\item  every composition factor of $\Aut_\F(\gamma_1(S))$ on $\gamma_1(S)$ has order $p$ or $p^{p-2}$ and the composition factors of order $p$ are centralized by the automorphism group $\Aut_\F(\gamma_1(S))$;
\item for $P \in \mathcal P_a(\F)$, $\hyp(\F)= P\gamma_2(S)$,   $O^p(\F)$ is a saturated fusion system on $P\gamma_2(S)$,  and $\Aut_{O^p(\F)}(\gamma_2(S)) \cong \Sym(p)$ or $\PGL_2(p)$.
 \end{enumerate}
\end{enumerate}
Furthermore, in all cases  $\Out_\F(S)$ is a Hall $p'$-subgroup of $\Out(S)$ and is cyclic of order $p-1$ and, if  $| S |  = p^n$, and $P\in \mathcal P(\F)$, then either $\mathcal P(\F)=P^S$ or  $\mathcal E_\F=\mathcal P(\F)$ and $n  \equiv \eps \pmod { p-1 }$
where $\eps = 0$ if $P\in \mathcal P_a(\F)$ and  $\eps = 1$ if $P\in \mathcal P_e(\F)$.\end{customthm}

Suppose that $p$ odd, $r $ a prime with $r^a-1 \equiv 0 \pmod p^k$. In Section~\ref{sec:examples}, we show that an automorphism group $G$ of $G_0=\PSL_p(r^{p})$ which projects diagonally into a Sylow $p$-subgroups of $\Out(G_0)$ between the image of $\PGL_p(r)$ and the image of a field automorphism of order $p$ provides realizable examples of Theorem~\ref{non exc} (iii) with, for $S \in \syl_p(G)$, $\Out_{\F_S(G)}(\gamma_1(S))\cong \Sym(p)$. By \cite[Theorem 6.2]{parkersemerarocomputing}, the subfusion systems generated by the $\F_S(G)$-pearls  gives an example of (i) in the case that $\gamma_2(S)$ is abelian. In Theorem~\ref{non exc} (iii) the fusion systems $O^p(\F)$ can be found in Table~\ref{Tab1} in  Appendix~\ref{AppA} and are listed in Lines (3) and (4) in the case that $\gamma_2(S)$ is elementary abelian and otherwise in Lines (29) and (33).
  If $\F$ is a saturated fusion system with $\Out_\F(\gamma_1(S))\cong \PGL_2(p)$ with $p \ge 7$ and $\gamma_1(S)$ is non-abelian, then, as $\gamma_1(\hyp(\F))= \gamma_2(S)$ is abelian, we may use \cite[Theorem 4.5]{p.index3} to see that $O^p(\F)$ is exotic.

Theorem~\ref{non exc} does not specify the structure of a saturated fusion system $\F= N_{\F}(\gamma_1(S))$ and, as we don't know the structure of $\gamma_1(S)$, it could be more difficult to determine the precise structure than  in the exceptional case. This leads to some complications in our inductive arguments.  Theorems~\ref{non exc} leaves open the question of what more we can say about saturated fusion systems which only have $\F$-pearls. For example, the isomorphism type of $S$ is not determined in Theorem~\ref{non exc} (i) and (ii). As an indicator that the structure of $S$ can be more complicated than the structures \blue{described} in Theorem~\ref{non exc} (iii) we have the following example which was obtained by computer using the procedures developed in \cite{parkersemerarocomputing} and implemented in {\sc Magma} \cite{Magma}. \blue{In Example~\ref{5^7thm}, $|S|=5^7$ and so $S$ is not exceptional.}

\begin{example}\label{5^7thm} Suppose that $S$ is a maximal class $5$-group of order $5^7$ and suppose that $\gamma_1(S)$ is not abelian. If $\F $ is a saturated fusion system on $S$ and $\mathcal P(\F)$ is non-empty, then $S$ is one of the \blue{seven groups
$$\SmallGroup(5^7,1297), \SmallGroup(5^7,1308),$$
$$\SmallGroup(5^7,1321),
\SmallGroup(5^7,1360),$$
$$\SmallGroup(5^7,1363),
\SmallGroup(5^7,1374),$$
$$\SmallGroup(5^7,1384).$$}Furthermore,  each possibility for $S$ supports a unique (up to isomorphism) saturated fusion system $\F$ with $\F$-pearls and $\gamma_1(S)$  is not $\F$-essential. For $S=\SmallGroup(5^7,1308)$, $\F$ has a unique $\F$-class of extraspecial $\F$-pearls and the saturated fusion systems on the remaining groups have a single $\F$-class of abelian $\F$-pearls.   \blue{For $S$ any of the groups $\SmallGroup(5^7,1360),$
$\SmallGroup(5^7,1363),$
$\SmallGroup(5^7,1374),$ or
$\SmallGroup(5^7,1384), $    $\gamma_1(S)$  has  nilpotency class $3$; otherwise  $\gamma_1(S)$ has nilpotency class $2$.}
\end{example}

 All the fusion systems in Example~\ref{5^7thm} with abelian pearls are simple and exotic by Theorem~\ref{pearls5}.
To provide some perspective to this calculation, we remark that  there are 99 maximal class groups of order $5^7$. Three of them have an abelian subgroup of index $5$. Suppose that $S$ is a maximal class $5$-group with $\gamma_1(S)$ non-abelian and let $\F$ be  saturated fusion system on $S$. Assume $\F$ has an $\F$-pearl $P$. Then Theorem~\ref{non exc}  says that $\Aut(S)$ has a Hall $5'$-subgroup of order $4$. Of the maximal class $5$-groups of order $5^7$, with non-abelian $2$-step centralizer just 12 of them have a Hall $5'$-subgroup of $\Aut(S)$ of order $4$. \blue{All of these have an elementary abelian subgroup of order $25$ not contained in the $2$-step centralizer and so have candidates for pearls.}  We also remark that the fusion system $\F$ on $S=\SmallGroup(5^7,1308)$ has a unique class of extraspecial $\F$-pearls and so this shows that there are examples in Theorem~\ref{non exc} (ii) when $p = 5$. \blue{The {\sc Magma} routines for this computation can be found in Subsection~\ref{C1.2}. The code exploits an idea we are just about to explain.}

 The study of maximal class $p$-groups with large automorphism group as in \cite{DietrichEick} requires more research  to make substantial headway on the problem of determining all fusion systems which have every essential subgroup a pearl. To make this statement clearer,
let us fix
$B= \left\{\left(\begin{smallmatrix} a&0&0\\b&a^{-1}&0\\c&d&1\end{smallmatrix}\right)\mid a,b,c,d\in \GF(p), a\ne 0\right\} $. Let $S$ be maximal class $p$-group of order at least $p^5$, and suppose that $x\in S$  has order $p$ and  $P= C_S(x)= \langle x, Z(S)\rangle$. Then $N_S(P) $ is extraspecial of order $p^3$.  If $S$ has an automorphism $\phi$  of order $p-1$ which normalizes $p$ and satisfies $N_S(P)\rtimes \langle \phi\rangle \cong B$, then  there exists a saturated fusion system $\F$ on $S$ with $P$ an abelian $\F$-pearl just as in \cite[Theorem 2.8]{p.index}. The existence of an automorphism of order $p-1$ is precisely the condition  that Dietrich and Eick use in their work \cite{DietrichEick}.

We now state the corollary as advertised above.

\begin{cor}\label{cor: no pearls} Let $p$ be a prime,  $S$ be a $p$-group of maximal   class and  let $\F$ be a  saturated fusion system on $S$ with $O_p(\F)=1$. If $\mathcal P(\F)$ is empty, then $S$   is isomorphic to a Sylow $p$-subgroup  of $\G_2(p)$ and either
 \begin{enumerate}
 \item $\F= \F_{S}(\G_2(p))$;
 \item  $p=5$ and  $\F=\F_S(G)$ where $G= \Ly, \HN, \Aut(\HN)$ or $\B$;
 \item  $p=7$,  $\F$ is exotic and the $\F$-essential subgroups are $C_S(Z_2(S))$ and $\gamma_1(S)$, with  $\Out_\F(C_S(Z_2(S)))\cong \GL_2(7)$, $\Out_\F(\gamma_1(S)) \cong 3\times 2^. \Sym(7)$,   and $\Out_\F(S) \cong \GF(7)^\times \times  \GF(7)^\times$.
 \end{enumerate} \end{cor}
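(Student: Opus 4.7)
The plan is to derive the corollary from Theorems~\ref{Raul app} and \ref{non exc}, sifting their conclusions and discarding every case in which either $O_p(\F)\neq 1$ or $\mathcal{P}(\F)\neq\emptyset$.

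First I would dispose of small primes and small orders. For $p=2$, Lemma~\ref{p=2} asserts that every $\F$-essential subgroup is an $\F$-pearl, so $\mathcal{P}(\F)=\emptyset$ forces $\F=N_\F(S)$ and hence $S\leq O_p(\F)=1$, a contradiction. For $p=3$ the complete classification of Lemma~\ref{p=3} allows a direct case-by-case inspection, and verifying the corollary in this case reduces to checking that the only entry satisfying $O_3(\F)=1$ and $\mathcal{P}(\F)=\emptyset$ is $\F_S(\G_2(3))$, giving case~(i). If $|S|\leq p^3$, then any $\F$-essential subgroup of $S$ has order $p^2$ and must be elementary abelian (a cyclic group of order $p^2$ has cyclic outer automorphism group and so no strongly $p$-embedded subgroup), hence a pearl; so again $\mathcal{P}(\F)=\emptyset$ forces $\F=N_\F(S)$ and a contradiction. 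From now on $p\geq 5$ and $|S|\geq p^4$.

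Next I split on whether $\gamma_1(S)$ is abelian. If it is, then $S$ is non-exceptional and I would invoke the appendix compilation of \cite{p.index,p.index2,p.index3}; one verifies entry by entry that each listed saturated fusion system either has $O_p(\F)\neq 1$ or contains an $\F$-pearl, so no example survives. If instead $\gamma_1(S)$ is non-abelian and $\F=N_\F(\gamma_1(S))$, then $\gamma_1(S)\trianglelefteq\F$ and hence $1\neq\gamma_1(S)\leq O_p(\F)$, contradicting $O_p(\F)=1$; therefore $\F\neq N_\F(\gamma_1(S))$. If $S$ is in addition non-exceptional, Theorem~\ref{non exc} applies, but parts~(i) and~(iii) there force $\mathcal{P}_a(\F)\neq\emptyset$ while part~(ii) gives $O_p(\F)=Z(S)\neq 1$, so this branch contributes nothing.

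This leaves $S$ exceptional with $\gamma_1(S)$ non-abelian, whence $|S|\geq p^6$ and Theorem~\ref{Raul app} governs. Cases~(ii) and~(i)(a)($\beta$) have $O_p(\F)\neq 1$; case~(i)(a)($\alpha$) has $\F=N_\F(C_S(Z_2(S)))$, which forces $C_S(Z_2(S))\leq O_p(\F)$, a contradiction; case~(i)(b) requires $\mathcal{P}_a(\F)\neq\emptyset$. What remains is (i)(a)($\gamma$), (i)(a)($\delta$) and the pearl-free subcase of (i)(a)($\epsilon$), yielding precisely cases~(i), (ii) and~(iii) of the corollary. The main obstacle will be isolating case~(iii): among the twenty-seven exotic fusion systems produced in Theorem~\ref{Raul app}(i)(a)($\epsilon$) together with $\F_S(\Mo)$, I must identify the unique one with $\mathcal{P}(\F)=\emptyset$ and verify both the essential set $\{C_S(Z_2(S)),\gamma_1(S)\}$ and the outer automizers listed in~(iii). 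Here $\F_S(\Mo)$ is excluded because the Monster's $7$-local structure contains an $\F$-pearl (as noted in the introduction), and the remaining sifting draws on the explicit data from \cite{G2p}.
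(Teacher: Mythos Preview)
Your overall strategy matches the paper's: rule out the non-exceptional case, then sift Theorem~\ref{Raul app}, and finally appeal to \cite{G2p} for the $p=7$ data. The paper's proof is simply terser, leaving the preliminary reductions implicit and quoting Theorem~\ref{non exc} as the reason $S$ must be exceptional.

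Two steps need correction, however. First, your handling of $p=3$ is wrong: there is \emph{no} saturated fusion system on a maximal class $3$-group with $O_3(\F)=1$ and $\mathcal{P}(\F)=\emptyset$. By Lemma~\ref{p=3} every $\F$-essential subgroup is either a pearl or equals $\gamma_1(S)$, so $\mathcal{P}(\F)=\emptyset$ forces $\E_\F\subseteq\{\gamma_1(S)\}$ and hence $O_3(\F)\ge\gamma_1(S)>1$ (or $S=O_3(\F)$ if $\E_\F=\emptyset$). The case is vacuous; $\F_S(\G_2(3))$ does not appear here.

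Second, your treatment of the abelian $\gamma_1(S)$ case via the appendix is incomplete: Theorem~\ref{TheoremAbelian} and Tables~\ref{Tab1}--\ref{Tab3} classify only \emph{reduced} fusion systems, whereas the corollary assumes merely $O_p(\F)=1$. The direct fix is Corollary~\ref{gamma.ab}, which already states that if $\gamma_1(S)$ is abelian and $O_p(\F)=1$ then $\F$ has a pearl, so this case is also vacuous without any table-checking.

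With these two corrections your argument goes through and is essentially the paper's proof written out in full.
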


 We remark that the exotic  saturated fusion system in part (iii) of the corollary is obtained from the saturated fusion system in the monster $\Mo$ by pruning the pearl \cite[Lemma 6.5]{parkersemerarocomputing}.

\subsection{An overview of the paper}

The article develops as follows.
We start in Section~\ref{sec:GrpTh} with some background group theoretical results. Especially, we explain what it means for a group to have a strongly $p$-embedded subgroup and present some elementary facts about such groups.

Section~\ref{sec:MC} commences with two lemmas which detail properties of maximal class $p$-groups, most of which are drawn from \cite{Led-Green}.  Of particular importance are the facts that $\gamma_1(S)$ is a regular $p$-group and $|\Omega_1(\gamma_1(S)) | \le p^p$, with equality if and only if $|S|= p^{p+1}$. In the second part of Section~\ref{sec:MC}, we start to study the automorphism group of $S$. As we are interested in the structure of $\Out_\F(S)$, we specifically study $p'$-automorphisms.  Two very important results for our work, which were known to Juh\'asz \cite{J}, are Lemma~\ref{action} which describes the action of a single $p'$-automorphism of $S$ on $\gamma_k(S)/\gamma_{k+1}(S)$ for $k \ge 1$, and Lemma~\ref{lem:autS p'elts} which says that if some $p'$-automorphism of $S$ centralizes $S/\gamma_1(S)$, then $\gamma_1(S)$ is either abelian or $S$ is exceptional and $\gamma_1(S)$ is extraspecial. In our work this lemma has the consequence that most of the time we can assume that $\Out_\F(S)$ is cyclic of order dividing $p-1$. But perhaps more important than both these results is \blue{Lemma~\ref{centralizer auto} which applies when $S$ is not exceptional} and, for example,  controls the size of $|C_{\gamma_1(S)}(\psi)|$ for $\psi$ a $p'$-automorphism of $S$. A particular consequence of Lemma~\ref{centralizer auto} is that if $\tau$ is an automorphism of $S$   and $\tau$ centralizes $\gamma_{k}(S)/\gamma_{k+2}(S)$ then $\tau$ is a $p$-element. A final important result is Theorem~\ref{J6.2} which is \cite[Theorem~6.2]{J}. This has the consequence that if $\gamma_w(S)$ is elementary abelian and $w \ge 3$, then $\gamma_{w-1}(S)$ has nilpotency class at most $2$. For completeness, we present a modestly simplified version of Juh\'asz's proof.

In Section~\ref{sec:Reps}, we gather a collection of results about representations of groups with cyclic Sylow $p$-subgroups of order $p$.  These results are applied later in the paper in the case when $\gamma_1(S)$ is known to be an $\F$-essential subgroup, in order to obtain the structure of $\Out_\F(\gamma_1(S))$. Of particular significance is Feit's Theorem (Theorem~\ref{feit}) which says that if a group with a cyclic Sylow $p$-subgroup is not closely related to $\PSL_2(p)$, then any faithful representation is relatively large. Section~\ref{sec:Reps} also contains a description of  the irreducible $\GF(p) \SL_2(p)$-modules, and results which help decompose tensor products of such modules (see Proposition~\ref{Clebsch Gordan}). These results are exploited throughout the proof of our main theorems.

 In Section~\ref{sec:PF} we recall basic definitions and known facts about fusion systems, referring mostly to \cite{AKO} (especially for the terminology). We also present a number of results about saturated fusion systems with $\F$-pearls: Lemmas ~\ref{pearls1}, \ref{pearls2}, \ref{pearls2.5} and \ref{pearls4}  as well as Theorems~\ref{pearls3} and \ref{pearls5} which are mostly taken from \cite{pearls}. The last result of Section~\ref{sec:PF} is Proposition~\ref{prop:sub sat} which allows us in Sections~\ref{sec: non excep g1} to construct a saturated subfusion system on a $p$-group with a maximal abelian subgroup.

 Because of the Alperin-Goldschmidt  Theorem, Theorem~\ref{t:alp}, our first significant objective is to determine all the candidates for $\F$-essential subgroups in a maximal class $p$-group. This is the foundation for our proof of Theorems~\ref{compound}, \ref{Raul app} and \ref{non exc}, for without knowing the $\F$-essential subgroups nothing more can be said.

\begin{customthm}{D}\label{MT1} Suppose that $p$ is a prime,  $S$ is a $p$-group of maximal  class and order at least $p^4$ and $\F$ is a saturated fusion system on $S$. If $E$ is an $\F$-essential subgroup, then either $E$ is an $\F$-pearl,   $E= \gamma_1(S)$ or $E=C_S(Z_2(S))$. Furthermore, if $S$ is exceptional, then $\mathcal P(\F)=\mathcal P_a(\F)$.  \end{customthm}

The proof of Theorem~\ref{MT1} spans Sections~\ref{sec:Generalities} to \ref{sec: proof MT1 2}. This means that  Section \ref{sec:Generalities} is where the real work begins. We start by analyzing the properties of $\F$-essential subgroups of maximal class $p$-groups. The most relevant results are Lemma~\ref{in.sub}, that says that an $\F$-essential subgroup $E$ of $S$  is not an $\F$-pearl if and only if it is contained in either $\gamma_1(S)$ or $C_S(Z_2(S))$, and Lemma~\ref{normal.ess}, in which we prove that every normal $\F$-essential subgroup of $S$ is a maximal subgroup of $S$.

In Section~\ref{sec:exceptional} we focus our attention on  the $\F$-essential subgroups of $S$ in the case that $S$ is an exceptional group. The goal of this section is to prove Proposition~\ref{Prop:ess.in.except}, which implies Theorem~\ref{MT1} in the case in which $\gamma_1(S)$ is extraspecial and  gives the first ingredients toward our  proof of Theorem \ref{Raul app}.

Section~\ref{sec: g1 ess} considers the case when $S$ is not exceptional and $\gamma_1(S)$ is $\F$-essential.
Of particular interest is Lemma \ref{gamma1-essC} in which we prove that $\Omega_1(\gamma_1(S))$ has nilpotency class at most $2$ and that, if the class is exactly $2$, then $O^{p'}(\Out_\F(\gamma_1(S))) \cong \PSL_2(p)$. This lemma is crucial for the proof of Theorem \ref{MT1}.

 In Section~\ref{sec: g1 ess2}, in the case that $S$ is exceptional, Proposition~\ref{gamma1-essD} states that if $\gamma_1(S)$ is $\F$-essential, then $\gamma_1(S)$ is extraspecial. Thus, in combination with  Proposition~\ref{Prop:ess.in.except}, we may assume that, if $S$ is exceptional, then $\gamma_1(S)$ is not $\F$-essential in our minimal counterexample. The proof of  Proposition~\ref{gamma1-essD} invokes Lemma \ref{gamma1-essC}, Proposition~\ref{Clebsch Gordan} and a detailed commutator calculation which in the end reaches contradiction by using a 1902 result due to Burnside (Theorem~\ref{Engel2}).

Sections \ref{sec: proof MT1 1} and \ref{sec: proof MT1 2} contain the series of results that leads to the proof of Theorem \ref{MT1}. Our  proof is achieved by contradiction, considering a minimal counterexample $\F$ to Theorem  \ref{MT1} (first with respect to the size of $S$ and then to the number of morphisms). In other words, we consider a minimal fusion system $\F$ containing an $\F$-essential subgroup $E$ that is not an $\F$-pearl and is not equal to $\gamma_1(S)$ or $C_S(Z_2(S))$ (we say that $E$ is a witness). So assume that $\F$ is such a fusion system. The most important result of Section~\ref{sec: proof MT1 1} is Proposition~\ref{propOp} which asserts $O_p(\F)=1$. The proof of Proposition~\ref{propOp} uses the fact that we know those non-abelian simple groups which have a $\GF(p)G$-module on which some non-trivial $p$-elements act with minimal polynomial of degree $2$. The fact that we require originally goes back to Thompson in unpublished work and we cite Chermak \cite{chermak} for the proof.   We also show that for $\F$ a minimal counterexample we must have $p\geq 7$,  $p^7 \leq |S| < p^{2p-4}$ and $\Omega_1(\gamma_1(S))$ non-abelian (Lemmas~\ref{p7} and \ref{Omega.non.ab}).
With the scene set, in Section \ref{sec: proof MT1 2} we choose a subgroup $T$ of $\gamma_1(S)$ whose automorphism group is not formed by restrictions of $\F$-automorphisms of $\gamma_1(S)$ and that is maximal  first with respect to the order of its normalizer in $S$  and second with respect to  its own order. Note that a witness $E$ is a candidate for $T$, but the key idea is that $T$ is not necessarily $\F$-essential. The study of the subgroup $T$ is divided in two major cases: the case in which $T$ is $\F$-centric and the case in which $T$ is not  $\F$-centric. We show that \blue{both cases are} impossible and so $T$ cannot exist, proving Theorem~\ref{MT1}.

With Theorem~\ref{MT1} proved, Section~\ref{sec: proof Raul app} contains the proof of Theorem \ref{Raul app}. By this stage, this is relatively straightforward because of  \cite{Raul}, where  Moragues Moncho   classified all saturated fusion systems $\F$ on $p$-groups with an extraspecial subgroup of index $p$ and $O_p(\F)=1$.

In Section~\ref{sec: non excep g1} we prepare for the proof of Theorem~\ref{non exc}.  We are interested in the case in which $S$ is not exceptional.  Because of Theorem~\ref{MT1}, once  we assume that $\F \ne N_\F(\gamma_1(S))$, we know that $\Pp(\F)$ is non-empty.  Thus we choose an $\F$-pearl $P$. If $\gamma_1(S)$ is not $\F$-essential, then we readily obtain (i) and (ii) of Theorem~\ref{non exc}.  So in Hypothesis~\ref{hyp last} we assume that $\gamma_1(S)$ is $\F$-essential and non-abelian. In this case we let $V=\Omega_1(Z(\gamma_1(S)))$ and $S_1= VP$. Then Proposition~\ref{prop:sub sat} can be applied to produce a saturated reduced subfusion system of $S_1$ in which $V$ is $\F$-essential. The application of \cite{p.index2} (see  Appendix~\ref{AppA} for the main result of \cite{p.index2, p.index3} applied to maximal class $p$-groups) gives us that $\Out_\F(\gamma_1(S))$ is either $\PGL_2(p)$ or $\Sym(p)$ and it also dictates the isomorphism type of $V$ as a $\GF(p)\Out_\F(\gamma_1(S))$-module. We then determine in Lemma~\ref{G action Q} detailed information about the action of $\Aut_\F(\gamma_1(S))$ on $\gamma_1(S)$. In particular, this shows that the chief factors alternate between having order $p$ and being central and having order $p^{p-2}$. After a few more observations, in Lemma~\ref{F not OpF} we show that $\F \ne O^p(\F)$; the proof of this uses detailed  knowledge about the submodule structure of exterior squares of the $(p-2)$-dimensional modules for $\PSL_2(p)$ and $\Alt(p)$. In Lemma~\ref{Orders}, we show that $P$ must be abelian and so there are no extraspecial $\F$-pearls and $O_p(\F)=1$.  We also determine the structure of $\gamma_1(S)$.  With this information available, the proof of Theorem~\ref{non exc} is quickly brought together in Section~\ref{sec:proofs}. In Section~\ref{sec:proofs} we also prove Theorem~\ref{compound} and Corollary~\ref{cor: no pearls}.

The final section of the paper presents a group which realizes examples of Theorem~\ref{non exc} and in Appendix~\ref{AppA} we give a description of the reduced fusion systems on maximal class $p$-groups with $p$ odd and having an abelian subgroup of index $p$ taken from \cite{p.index2, p.index3}. In Appendix~\ref{AppB}  we present the classification of saturated fusion systems on maximal class $3$-groups due to \cite{DRV,PSrank2}. Finally Appendix~\ref{AppC} lists the {\sc Magma} code used in various examples and results of the paper.

\subsection{Non-standard notation.} We follow one of \cite{Huppert, Gor, GLS2} for group theoretic notation and we follow Leedham-Green and McKay \cite{Led-Green} for notation and facts surrounding $p$-groups of maximal class. In particular, for a maximal class $p$-group $R$, we mention that  $R'=\Phi(R)$ is denoted by $\gamma_2(R)$. We apply almost all maps on the right. If $G$ is a group and $g \in G$ then $c_g$ is the conjugation map $c_g:G \rightarrow G$ defined by $x c_g= x^g= g^{-1}xg$ for $x \in G$.    If $X ,Y \le G$ are groups and $n \in \mathbb N$, then $[X,Y;n]$ is defined recursively by $[X,Y;1]=[X,Y]$ and $[X,Y;n]= [[X,Y;n-1],Y]$ for $n \ge 1$. \blue{It is also convenient to define $[X,Y;0]=X$.}  For $x,y, z\in G$, we write $[x,y,z]$ for $[[x,y],z]$.

Our nomenclature for specific  groups is for the most part standard or self-explanatory, for example, we use $\Alt(n)$ and  $\Sym(n)$, $n \ge 3$ to  denote the alternating and symmetric groups of degree $n$ respectively. \blue{Similarly, we use $\Frob(n)$ to denote a Frobenius group of order $n$, whenever such a group is uniquely defined. }  For $p$ odd, the extraspecial groups of order $p^3$ are denoted by $p^{1+2}_+$ and $p^{1+2}_-$ where the first group has exponent $p$ and the second exponent $p^2$. For a field $\mathbb K$, $\mathbb K^\times$ denotes its multiplicative group.

\section{General group theoretical results}\label{sec:GrpTh}

We use the commutator formulae as in \cite[Theorem 2.2.1 and Lemma 2.2.2]{Gor} regularly and without reference. We also often refer to \cite[Sections 5.2 and 5.3]{Gor} for results concerning coprime action. Here we catalogue less familiar results.

Let $p$ be a prime.
A $p$-group $T$ is \emph{regular} if, and only if, for all $x, y \in T$, there exist $g_1, \dots, g_t \in \langle x, y \rangle' $ such that $$(xy)^{p}= x^{p}y^{p}g_1^{p}\dots g_t^p.$$
We will need a handful of properties of regular $p$-groups. These attributes say that regular $p$-groups have similar properties to abelian groups with respect to taking powers. Recall that, for a $p$-group $X$, $\Omega_1(X)$ is the subgroup of $X$ generated by elements of order $p$ and  $\agemO^1(X)= \langle x^p \mid x \in X\rangle$.

\begin{lemma}\label{lem:regular} Suppose that $P$ is a regular $p$-group and assume that $Q \le P$. Then
\begin{enumerate}
\item $Q$ is regular;
\item $\Omega_1(P)$ has exponent $p$;
\item $\Omega_1(Q)= Q \cap \Omega_1(P);$ and
\item $|P/\Omega_1(P)| = |\agemO^1(P)|$.
\end{enumerate}
\end{lemma}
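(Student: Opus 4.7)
Part (i) is immediate from the definition. If $x,y\in Q$, then $\langle x,y\rangle\le Q\le P$, so the regularity identity witnessed in $P$ already provides elements $g_1,\ldots,g_t\in\langle x,y\rangle'\subseteq Q$ with $(xy)^p=x^py^p g_1^p\cdots g_t^p$; this identity is therefore internal to $Q$, so $Q$ is regular.

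The heart of the lemma is (ii), which I would establish by induction on $|P|$, showing that the set $H:=\{x\in P:x^p=1\}$ is already a subgroup (hence equal to $\Omega_1(P)$ and of exponent $p$). Given $x,y\in H$, set $R=\langle x,y\rangle$. If $R<P$, then $R$ is regular by (i) and the inductive hypothesis gives $(xy)^p=1$. If $R=P$, then regularity rewrites $(xy)^p$ as a product of $p$-th powers of elements of $P'$, which is a proper regular subgroup. A secondary induction on the derived length, pushing the computation from $P'$ into $P''$ and so on by repeated applications of regularity, reduces the question to the abelian case where the residual product collapses. This secondary induction is the only genuinely delicate step in the lemma: the elements of $P'$ need not themselves satisfy $g^p=1$, so one cannot close the argument on the order of the group alone.

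Part (iii) is then immediate from (ii): the inclusion $\Omega_1(Q)\subseteq Q\cap\Omega_1(P)$ is definitional, and conversely any $x\in Q\cap\Omega_1(P)$ satisfies $x^p=1$ by (ii), whence $x\in\Omega_1(Q)$.

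For (iv) the plan is to show that $x\mapsto x^p$ induces a bijection $P/\Omega_1(P)\to\agemO^1(P)$. Two ingredients are required, both derived from (ii) together with further applications of the regularity identity. First, the set $\{x^p:x\in P\}$ is already a subgroup, so equals $\agemO^1(P)$; this follows by a second application of regularity, which rewrites a product $x^py^p$ as a single $p$-th power modulo a controllable residue. Second, one needs the equivalence $x^p=y^p\iff xy^{-1}\in\Omega_1(P)$: expanding $(xy^{-1})^p$ via regularity gives $x^p y^{-p}$ up to a residue of $p$-th powers of commutators in $\langle x,y\rangle'$, which is trivialised by the same derived-length cascade as in (ii), transferring the condition $(xy^{-1})^p=1$ to $x^p=y^p$. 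Given the equivalence, well-definedness and injectivity of the induced map are automatic and surjectivity is the first ingredient, yielding $|P/\Omega_1(P)|=|\agemO^1(P)|$. The principal obstacle, as flagged in (ii), is the control of this residual product of $p$-th powers of commutators; once that mechanism is in place, the remaining assertions follow in short order.
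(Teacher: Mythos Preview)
The paper does not actually prove this lemma: it simply notes that (i) is obvious from the definition, cites \cite[Hauptsatz III.10.5]{Huppert} for (ii), observes that (iii) follows from (ii), and cites \cite[Satz III.10.7]{Huppert} for (iv). Your proposal therefore goes well beyond the paper's own treatment, which defers entirely to the literature for the substantive parts.

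Your outline for (i) and (iii) is fine and matches the paper's remarks. For (ii) and (iv), your overall architecture---induction on $|P|$, reduction via the regularity identity to $p$-th powers of commutators, and the equivalence $x^p=y^p\iff (xy^{-1})^p=1$---is indeed the skeleton of the standard proof. However, the step you flag as ``the only genuinely delicate step'' is sketched too loosely to stand on its own. The ``derived-length cascade'' you describe does not by itself show that $g_1^p\cdots g_t^p=1$; at best it collapses the product to a single $p$-th power $c^p$ with $c\in P'$, and you then still need the equivalence $(xy)^p=c^p\Rightarrow ((xy)c^{-1})^p=1$ in the \emph{proper} subgroup $\langle xy,c\rangle$ to finish. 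That equivalence is exactly what you postpone to part (iv), so as written the argument is circular unless (ii) and the equivalence are proved \emph{simultaneously} by induction on $|P|$. Huppert's proof is organised precisely to handle this: it establishes the three statements (elements of order dividing $p^i$ form a subgroup, $p^i$-th powers form a subgroup, and the index--order relation) together, with the key technical input being a lemma of the form $a^{p^i}b^{p^i}=(ab)^{p^i}c^{p^i}$ for a \emph{single} $c\in\langle a,b\rangle'$. If you want a self-contained argument, restructure your induction to carry both (ii) and the equivalence at once; otherwise, citing Huppert as the paper does is the pragmatic choice.
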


\begin{proof} Part (i) is obvious from the definition of a regular $p$-group.  For (ii) see \cite[Haupsatz III.10.5]{Huppert}. Part (iii) follows from (ii). Part (iv) comes from  \cite[Satz III.10.7]{Huppert}.
\end{proof}

\begin{lemma}\label{agemo} Assume that $P$ is a regular $p$-group \blue{and $T\le P$. If $T \ge P' $ and $P= T\Omega_1(P)$, then $\agemO^1(P)=\agemO^1(T)$.}
\end{lemma}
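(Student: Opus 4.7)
The plan is to prove the inclusion $\agemO^1(P) \le \agemO^1(T)$; the reverse inclusion is immediate since every generator $t^p$ of $\agemO^1(T)$ with $t \in T \le P$ lies in $\agemO^1(P)$.

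First I would pick an arbitrary element $x \in P$ and, using the hypothesis $P = T\Omega_1(P)$, write $x = ty$ with $t \in T$ and $y \in \Omega_1(P)$. The goal is to show that the generator $x^p$ of $\agemO^1(P)$ lies in $\agemO^1(T)$. Since $P$ is regular, there exist elements $g_1,\dots,g_s \in \langle t, y\rangle'$ with
\[
x^p = (ty)^p = t^p\, y^p\, g_1^p \cdots g_s^p.
\]
Now I would invoke Lemma~\ref{lem:regular}(ii), which tells us that $\Omega_1(P)$ has exponent $p$, so $y^p = 1$.

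Next, the key point is to place each $g_i$ inside $T$. Since $\langle t, y\rangle \le P$, we have $\langle t,y\rangle' \le P' \le T$ by hypothesis. Hence each $g_i \in T$, and $t \in T$ as well. Consequently $t^p$ and each $g_i^p$ lie in $\agemO^1(T)$, and therefore $x^p \in \agemO^1(T)$. Taking the subgroup generated by all such $x^p$ gives $\agemO^1(P) \le \agemO^1(T)$, which together with the trivial inclusion yields the claimed equality.

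There is really no serious obstacle here: the proof is just a careful unwinding of the regular $p$-group identity together with the observation that $\Omega_1(P)$ has exponent $p$ and that $P' \le T$ absorbs the commutator correction terms into $T$. The only thing to be a little careful about is remembering that $\agemO^1(\cdot)$ is a subgroup generated by $p$-th powers rather than the set of $p$-th powers, so the argument is correctly phrased in terms of checking that each generator lies in the target subgroup.
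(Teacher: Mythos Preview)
Your proof is correct and follows essentially the same approach as the paper's: write $x=ty$, use regularity to expand $(ty)^p$, kill $y^p$ via Lemma~\ref{lem:regular}(ii), and absorb the commutator correction terms into $T$ via $P'\le T$. The only cosmetic difference is that the paper invokes \cite[Lemma~1.2.10(iii)]{Led-Green} to package the corrections as a single $s^p$ with $s\in\langle t,y\rangle'$, whereas you use the defining identity of a regular $p$-group directly with several $g_i$; this makes no difference to the argument.
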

\begin{proof} Let $x \in P$.  Then there is $t \in T$ and $w \in \Omega_1(P)$ such that $x=tw$.
By  Lemma~\ref{lem:regular} (ii) we have $w^p=1$.  Thus \cite[Lemma 1.2.10 (iii)]{Led-Green} together with  $T \ge P'\ge  \langle t,w\rangle '$, imply there exists $s \in \langle t,w\rangle '$ such that $$x^p= (tw)^p = t^pw^ps^p = t^p s^p \in  \agemO^1(T).$$ Hence $\agemO^1(P) \le \agemO^1(T)\le \agemO^1(P)$ and this gives the result.
\end{proof}

 For a group $P$, we define $$E_2(P)=\{x\in P\mid [x,y,y]=1 \text{ for all } y \in P\}.$$

\begin{lemma}\label{Engel1}
Suppose that $p$ is a prime and $P$ is a $p$-group of nilpotency class $3$. Then $E_2(P)$ is a characteristic subgroup of $P$ which contains $Z_2(P)$.
\end{lemma}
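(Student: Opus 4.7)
The plan is to show first that $E_2(P)$ contains $Z_2(P)$ (which is immediate), then to exhibit $E_2(P)$ as an intersection of kernels of group homomorphisms, so that subgroup status follows formally; characteristicity then follows because the defining relation $[x,y,y]=1$ is plainly preserved by automorphisms of $P$.

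For the containment $Z_2(P) \subseteq E_2(P)$: if $x \in Z_2(P)$, then $[x,y] \in Z(P)$ for every $y \in P$, so $[x,y,y] = [[x,y],y] = 1$. Done.

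The main work is the subgroup property, and the key identity to establish is
\[
[x_1 x_2, y, y] \;=\; [x_1, y, y]\cdot [x_2, y, y] \qquad \text{for all } x_1,x_2,y \in P.
\]
Since $P$ has class $3$, we have $\gamma_3(P) \le Z(P)$, the subgroup $\gamma_2(P)$ is abelian (because $[\gamma_2(P),\gamma_2(P)] \le \gamma_4(P)=1$), and $[\gamma_3(P), \gamma_2(P)] \le \gamma_5(P) = 1$. Using $[x_1 x_2, y] = [x_1,y]^{x_2}[x_2,y] = [x_1,y][x_2,y]\cdot[x_1,y,x_2]$ with the last factor central, then applying $[ab,y] = [a,y]^b[b,y]$ to $[x_1,y][x_2,y]$ and noting that the resulting ``cross term'' $[[x_1,y],y,[x_2,y]]$ lies in $[\gamma_3(P),\gamma_2(P)] \le \gamma_5(P)=1$, the displayed identity drops out. (I would carry out the expansion once carefully but would not belabor it, since each step is just a commutator rearrangement modulo $\gamma_5(P)=1$ and $Z(P)$.)

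Once this identity is in hand, the proof concludes quickly. For each fixed $y \in P$, the map $\phi_y : P \to \gamma_3(P)$ defined by $\phi_y(x) = [x,y,y]$ is a group homomorphism (into the abelian, indeed central, group $\gamma_3(P)$), and therefore $\ker \phi_y$ is a normal subgroup of $P$. The set $E_2(P) = \bigcap_{y \in P} \ker \phi_y$ is then an intersection of normal subgroups, hence a normal subgroup of $P$. As a bonus, taking $x_1 = x$, $x_2 = x^{-1}$ in the identity gives $[x,y,y]\cdot [x^{-1},y,y] = [1,y,y] = 1$, which confirms inverse-closure directly. Finally, for any $\alpha \in \Aut(P)$ and $x \in E_2(P)$, one has $[x\alpha, y, y] = [x, y\alpha^{-1}, y\alpha^{-1}]\alpha = 1$ for every $y \in P$, so $E_2(P)$ is characteristic.

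The main obstacle is nothing more than the commutator bookkeeping for the identity $[x_1 x_2,y,y]=[x_1,y,y][x_2,y,y]$; once class $3$ is used to place the offending correction terms into $\gamma_5(P)=1$ and $Z(P)$, the statement becomes a formal consequence.
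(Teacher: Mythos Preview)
Your argument is correct. The commutator identity $[x_1x_2,y,y]=[x_1,y,y][x_2,y,y]$ holds for arbitrary $x_1,x_2,y\in P$ exactly as you outline: the correction term $[x_1,y,x_2]$ lies in $\gamma_3(P)\le Z(P)$ and is killed by the further commutator with $y$, and the cross term $[[x_1,y],y,[x_2,y]]$ lies in $[\gamma_3(P),\gamma_2(P)]=1$. So each $\phi_y$ is a homomorphism and $E_2(P)=\bigcap_y\ker\phi_y$ is a (normal, characteristic) subgroup.

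The paper takes a slightly different route. Rather than proving the identity for arbitrary $x_1,x_2$, it works only with $a,b\in E_2(P)$ and observes that membership in $E_2(P)$ means precisely $[a,y]\in C_P(y)$; then $[ab^{-1},y]=[a,y][a,y,b^{-1}][b^{-1},y]$ lies in $C_P(y)Z(P)C_P(y)=C_P(y)$, so $ab^{-1}\in E_2(P)$. This is marginally shorter because the hypothesis $a,b\in E_2(P)$ absorbs what would otherwise be cross terms. Your approach buys a little more: it shows $x\mapsto[x,y,y]$ is a genuine homomorphism on all of $P$, which yields normality immediately and makes the subgroup structure transparent. Both rely on the same class-$3$ facts ($\gamma_3(P)\le Z(P)$, $\gamma_4(P)=1$), and both finish characteristicity the same way.
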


\begin{proof} Obviously $Z_2(P)\le E_2(P)$.
Assume that $a, b \in E_2(P)$ and let $y \in P$.  Then we calculate $$[ab^{-1},y]= [a,y]^{b^{-1}}[b^{-1},y]= [a,y][a,y,b^{-1}][b^{-1},y] \in C_P(y) Z(P)C_P(y) = C_P(y).$$
Hence $E_2(P)$ is a subgroup of $P$.  Since $E_2(P)$ is a characteristic subset of $P$, $E_2(P)$ is a characteristic subgroup of $P$.
\end{proof}

\begin{theorem}[Burnside]\label{Engel2} Suppose that $p \ne 3$ is a prime and $P$ is a finite $p$-group such that $P= E_2(P)$. Then $P$ has nilpotency class $2$.
\end{theorem}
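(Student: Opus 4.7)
The plan is to pass to the class-$3$ quotient $\bar P:=P/\gamma_4(P)$ and to interpret the triple commutator as a trilinear map on $\bar P/\gamma_2(\bar P)$ with values in $\gamma_3(\bar P)$. If I can force $\gamma_3(\bar P)$ to have exponent dividing $3$, then the hypothesis $p\ne 3$ gives $\gamma_3(\bar P)=1$, hence $\gamma_3(P)\le\gamma_4(P)$; since $\gamma_4(P)\le\gamma_3(P)$ is automatic, this means $\gamma_3(P)=\gamma_4(P)$, and nilpotency of $P$ forces $\gamma_3(P)=1$. Thus $P$ has nilpotency class at most $2$, and it suffices to treat the case where $P$ has class exactly $3$.

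A preliminary observation I would record: the 2-Engel identity $[x,y,y]=1$ also yields $[x,y,x]=1$. Indeed, interchanging $x$ and $y$ gives $[[y,x],x]=1$, and since $[y,x]=[x,y]^{-1}$ this translates to $[[x,y],x]=1$. So every commutator centralizes each of the two elements from which it is formed.

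Now assume $P$ has class exactly $3$, so $\gamma_3(P)\le Z(P)$. The standard expansion $[x_1x_2,y]=[x_1,y]^{x_2}[x_2,y]$, combined with centrality of $\gamma_3(P)$, shows that the map $F(x,y,z):=[x,y,z]$ is multiplicative in each of its three arguments (equivalently, additive when $\gamma_3(P)$ is written additively). Moreover $[\gamma_2(P),P,P]\subseteq\gamma_4(P)=1$, so $F$ factors through $(P/\gamma_2(P))^3$, yielding a trilinear form $F:(P/\gamma_2(P))^{\otimes 3}\to \gamma_3(P)$. The identities $F(x,y,y)=0$ and $F(x,y,x)=0$ polarize to the antisymmetries $F(x,y_1,y_2)=-F(x,y_2,y_1)$ and $F(x_1,y,x_2)=-F(x_2,y,x_1)$, respectively. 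Since the transpositions $(2\,3)$ and $(1\,3)$ generate the full symmetric group on three letters, $F$ is totally antisymmetric.

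To finish, the Hall--Witt identity
\[
[[x,y^{-1}],z]^y\cdot[[y,z^{-1}],x]^z\cdot[[z,x^{-1}],y]^x=1
\]
simplifies in our setting: using that $[x,y]$ centralizes $y$ (2-Engel) one checks $[x,y^{-1}]=[x,y]^{-1}$, and since each triple commutator lies in $\gamma_3(P)\le Z(P)$ the outer conjugations are trivial. Hall--Witt therefore reduces to the Jacobi relation
\[
[x,y,z]\cdot[y,z,x]\cdot[z,x,y]=1.
\]
By total antisymmetry of $F$, the $3$-cycle $(x,y,z)\mapsto(y,z,x)$ preserves it, so each of the three factors equals $F(x,y,z)$ and $3F(x,y,z)=0$. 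Thus $\gamma_3(P)$ has exponent dividing $3$; since $P$ is a $p$-group with $p\ne 3$, $\gamma_3(P)=1$.

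The main obstacle will be the commutator bookkeeping: verifying that $F$ descends to a genuinely trilinear form on $P/\gamma_2(P)$ (which uses $\gamma_4(P)=1$ in several places) and reducing Hall--Witt to the clean Jacobi form above. These steps are routine but require careful tracking of terms lying in $\gamma_3(P)$ versus $\gamma_4(P)$.
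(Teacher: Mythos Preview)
Your argument is correct and is essentially the standard modern proof that $2$-Engel groups have class at most $3$ with $\gamma_3$ of exponent dividing $3$; combined with the hypothesis $p\ne 3$ this yields the result. The one point worth tightening is the ``preliminary observation'': rather than invoking $[a^{-1},b]=[a,b]^{-1}$ (which would need justification), just note that $[[y,x],x]=1$ says $[y,x]$ commutes with $x$, hence so does its inverse $[x,y]$, giving $[[x,y],x]=1$ directly.

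The paper, by contrast, does not prove the theorem at all: it merely observes that $[x,y,y]=1$ is equivalent to $\langle y^x\rangle$ and $\langle y\rangle$ commuting, and then cites Burnside's 1902 paper. Your proof is therefore strictly more informative---it supplies the actual mechanism (trilinearity of the triple commutator, alternation from polarization, Jacobi from Hall--Witt) that the paper leaves to the reference. What the paper's approach buys is brevity and a historical pointer; what yours buys is a self-contained argument that also explains why the prime $3$ is genuinely exceptional.
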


\begin{proof} Notice that $[x,y,y]=1$ if and only  if $[(y^{-1})^x,y]=1$ if and only if $\langle y^x\rangle$ and $\langle y\rangle$ commute.  Therefore this theorem  dates back to 1902 \cite{Burnside1902}.
\end{proof}

\begin{prop}\label{Hall-Hig} Suppose that $p$ is a prime,  $L$ is a group and $P$ is a Sylow $p$-subgroup of $L$.  Let $V$ be a faithful, irreducible $\GF(p)L$-module. If $[O_{p'}(L),P] \ne 1$, then $\dim V \ge p-1$ and $[V,P;p-2]\ne 0$.
\end{prop}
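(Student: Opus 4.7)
The plan is a standard Clifford-theoretic reduction followed by an application of Hall--Higman's Theorem~B. Set $Q = O_{p'}(L)$. Since $V$ is faithful and $[Q,P] \ne 1$, fix $x \in P$ with $[Q,x] \ne 1$; here $x$ has $p$-power order but need not have order exactly $p$. Because $\gcd(|Q|,p) = 1$, Maschke's theorem gives a semisimple decomposition $V|_Q = \bigoplus_{i \in \Omega} V_i$ into $Q$-isotypic components, and $x$ permutes $\Omega$.

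Suppose first that some $\langle x \rangle$-orbit on $\Omega$ has length $p^k > 1$. Let $W$ denote the $\langle x \rangle$-invariant sum of this orbit, so $W \cong \mathrm{Ind}_{\langle x^{p^k}\rangle}^{\langle x \rangle}(V_{i_0})$ for some $V_{i_0}$ in the orbit. A direct Jordan-block computation on the induced module shows that the minimal polynomial of $x$ on $W$ has degree at least $p^k \ge p$. Hence $V(x-1)^{p-1} \ne 0$ and $\dim V \ge \dim W \ge p$, which already gives both conclusions.

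Otherwise $x$ fixes each $V_i$, and we choose $V_{i_0}$ on which $[Q,x]$ acts non-trivially. Take an irreducible $\GF(p) Q\langle x\rangle$-submodule $U \subseteq V_{i_0}$ inheriting this property, and form $H = Q\langle x \rangle / K$ where $K$ is the kernel of the action on $U$. Then $H$ is $p$-solvable, $U$ is a faithful irreducible $\GF(p)H$-module, and the image of $x$ is a non-trivial $p$-element of $H$ that does not centralize $O_{p'}(H)$. After replacing $x$ by a suitable power and $H$ by a quotient, we may assume the image $\bar x$ has order exactly $p$ and that $O_p(H) = 1$. Hall--Higman's Theorem~B now implies that the minimal polynomial of $\bar x$ on $U$ has degree $\ge p-1$: its degree equals $p$ except in the exceptional case where $p-1$ is a prime power and $H$ contains a particular extraspecial subgroup, in which case the degree is still exactly $p-1$. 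Therefore $U(\bar x - 1)^{p-2} \ne 0$ and $\dim U \ge p-1$, giving $[V,P;p-2] \ne 0$ and $\dim V \ge p-1$.

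The main obstacle is the careful invocation of Hall--Higman Theorem~B, in particular verifying that the exceptional case still produces a minimal polynomial of degree at least $p-1$ rather than something strictly smaller. A secondary technical issue is ensuring, after the reduction to $H$, that $\bar x$ has order exactly $p$ and acts non-trivially on $O_{p'}(H)$: one must check that the replacement by a power preserves the non-centralizing action, which is not automatic because $\Omega_1(P)$ need not contain an element failing to centralize $Q$. The authors finesse both points by citing Chermak's exposition of Thompson's unpublished treatment of this fact.
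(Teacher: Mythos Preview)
Your argument is correct and shares the same core idea as the paper's proof: reduce to a $p$-solvable subgroup and apply Hall--Higman Theorem~B. However, you have added machinery that the paper avoids, and you have misidentified the citation.

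The paper's proof is much shorter. It simply sets $H = O_{p'}(L)\langle x\rangle$ for any $x \in P$ not centralizing $O_{p'}(L)$, then takes a composition factor $U$ of $V|_H$ on which $[O_{p'}(H),x]$ acts non-trivially, and applies Hall--Higman (cited from Gorenstein, \emph{not} Chermak) to $H/C_H(U)$. There is no Clifford decomposition and no case split on orbit lengths: your first case is subsumed in the second, since an induced module already appears as (or inside) a composition factor for $H$.

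More importantly, your reduction ``after replacing $x$ by a suitable power\dots we may assume $\bar x$ has order exactly $p$'' is both unnecessary and the source of the difficulty you flag. Hall--Higman Theorem~B applies to a $p$-element of \emph{any} order $p^n$ in a $p$-solvable group with $O_p = 1$, and gives minimal polynomial of degree at least $p^{n-1}(p-1) \ge p-1$. So there is no need to pass to a power of $x$, and the worry about whether $\Omega_1(P)$ contains an element not centralizing $Q$ never arises. The paper's statement that the degree lies in $\{p-1,p\}$ tacitly uses that the image of $x$ has order $p$ on $U$, but the conclusion $r \ge p-1$ holds regardless.

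Finally, the Chermak citation in the paper concerns an entirely different result (identifying simple groups admitting quadratic action, used in the proof of Proposition~\ref{propOp}), not Hall--Higman.
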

 \begin{proof} Set $H= O_{p'}(L)\langle x\rangle$ where $x \in P$  does not centralize $O_{p'}(H)$. Let $U$ be a non-trivial composition factor for $H$ in $V$ which is not centralized by $[O_{p'}(H),x]$. Then $H$ is $p$-soluble, $O_p(H/C_H(U))=1$ and we may apply the Hall-Higman Theorem \cite[Theorem 11.1.1]{Gor} to $H/C_H(U)$ to obtain the minimal polynomial for $x$ \blue{acting} on $U$ is $(X-1)^r$ with $r \in \{p, p-1\}$. In particular, $\dim V \ge r \ge p-1$ and $[V,P;p-2]\ge [V,x;p-2]= V(x-1)^{(p-2)}\ne 0$.
\end{proof}

\begin{lemma}\label{lem:K in E} Suppose that $p$ is a prime, $S$ is a $p$-group, $ E, K \le S$ with $EK$ a subgroup of $S$.   If $N_K(E) \le E$, then $K \le E$.
\end{lemma}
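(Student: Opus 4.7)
The plan is to exploit the standard $p$-group normalizer growth property. Since $E$ and $EK$ are both subgroups of the $p$-group $S$, the subgroup $EK$ is itself a $p$-group, and any proper subgroup of a finite $p$-group is properly contained in its normalizer.

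First I would argue by contradiction: suppose $K \not\le E$, so that $E$ is a proper subgroup of $EK$. Then by the normalizer growth property for finite $p$-groups, $N_{EK}(E)$ strictly contains $E$.

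Next I would identify $N_{EK}(E)$ using Dedekind's modular law. Since $E \le N_{EK}(E) \le EK$, Dedekind gives
\[
N_{EK}(E) \;=\; N_{EK}(E) \cap EK \;=\; E\bigl(N_{EK}(E) \cap K\bigr).
\]
The intersection $N_{EK}(E) \cap K$ is precisely $N_K(E)$ (an element of $K$ normalizes $E$ iff it lies in $N_K(E)$), so $N_{EK}(E) = E \cdot N_K(E)$.

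Finally I would plug in the hypothesis $N_K(E) \le E$: this forces $N_{EK}(E) = E$, contradicting the strict containment from the first step. Hence $K \le E$. There is no real obstacle here; the only point to be careful about is justifying the identification $N_{EK}(E) \cap K = N_K(E)$ and the application of Dedekind, both of which are straightforward.
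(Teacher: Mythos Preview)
Your proof is correct and follows essentially the same approach as the paper: both arguments show $N_{EK}(E)=E\cdot N_K(E)=E$ and then invoke normalizer growth in finite $p$-groups. The only cosmetic difference is that the paper establishes $N_{EK}(E)=E\cdot N_K(E)$ by a direct element chase (writing $t=ek$ and noting $k\in N_K(E)$) rather than citing Dedekind's modular law, and phrases the conclusion directly rather than by contradiction.
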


\begin{proof} Let $t \in N_{EK}(E)$.  Then $t = ek$ for some $e \in E$ and $k \in K$.  Thus
$$E= E^t= E^{ek}= E^k$$ and so $k \in N_K(E) \le E$.  Hence $N_{EK}(E) \le E$ and this means that $E=EK\ge K$.
\end{proof}

We require the following cohomological type result which is a consequence of a theorem of Gasch\"utz.

\begin{lemma}\label{lem:mss} Suppose that $p$ is a prime, $G$ a group and $V$ a $\GF(p)G$-module. Let $W= [V,O^p(G)]$ and
 $T \in \syl_p(G)$. Then $W+C_V(T)= W+C_V(G)$.  In particular, if $C_V(G)=0$, then $C_V(T) \le W$.
\end{lemma}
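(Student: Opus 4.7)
My plan is to reduce this to a direct transfer (norm) calculation, which is essentially the explicit form of the Gasch\"utz argument. Set $N = O^p(G)$. Since $G/N$ is a $p$-group and $T$ is a Sylow $p$-subgroup of $G$, we have $G = TN$. The submodule $W = [V,N]$ is $N$-invariant, and because $N$ is normal in $G$ it is in fact a $G$-submodule. The inclusion $W + C_V(G) \subseteq W + C_V(T)$ is trivial from $C_V(G) \le C_V(T)$, so the whole proof consists of establishing the reverse inclusion.

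For the reverse inclusion I would pick $v \in C_V(T)$ and form the transfer
\[
\tau(v) \;=\; \sum_{Tg \in T\backslash G} v g,
\]
where the sum runs over a complete set of right coset representatives of $T$ in $G$. Since $v$ is $T$-fixed, $vg$ depends only on the coset $Tg$, so $\tau(v)$ is well defined; a routine permutation-of-cosets calculation shows $\tau(v) \in C_V(G)$.

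The main technical step is to check the congruence $\tau(v) \equiv [G:T]\, v \pmod W$. Using $G = NT$, I would choose each coset representative in the form $g = nt$ with $n \in N$ and $t \in T$. Then $vg = v(nt) = (vn)t$, and because $vn - v = v(n-1) \in [V,N] = W$ and $W$ is $T$-invariant, we get $vnt \equiv vt = v \pmod W$. Summing gives the desired congruence. Since $m = [G:T]$ is coprime to $p$ and hence invertible in $\GF(p)$, this rearranges to $v \equiv m^{-1}\tau(v) \pmod W$, placing $v$ in $W + C_V(G)$. This proves $W + C_V(T) \subseteq W + C_V(G)$, and the final assertion follows immediately by taking $C_V(G) = 0$.

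The only point that needs care — and I expect to be the main (mild) obstacle — is correctly setting up the transfer so that $\tau(v)$ lies in $C_V(G)$ and establishing the mod-$W$ congruence; once the factorisation $g = nt$ and the $G$-invariance of $W$ are in place, the computation is immediate. No deeper cohomological machinery is needed.
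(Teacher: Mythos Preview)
Your proof is correct. The paper itself does not give a proof but simply cites \cite[Lemma C.17]{mss}, noting beforehand that the result is a consequence of a theorem of Gasch\"utz; your relative-trace (transfer) argument is exactly the standard way to establish this Gasch\"utz-type statement, so there is nothing substantive to compare.
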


\begin{proof} See \cite[Lemma C.17]{mss}. \end{proof}

\subsection{Groups with a strongly $p$-embedded subgroup}
In this subsection we collect  together  results about groups with a strongly $p$-embedded subgroup.

\begin{definition}
 Suppose that $p$ is a prime, $H$ is a group and $M$ is a proper subgroup of $H$ of order
divisible by $p$.  Then $M$ is strongly $p$-embedded in $H$ if and only if  $M \cap M^h$  has order coprime to $p$ for all $h \in H\setminus M$.
\end{definition}

It is easy to establish, see \cite[Definition 17.11, Proposition 17.11]{GLS2}, that   $M$ is strongly $p$-embedded in $H$ if and only if  $M$ contains a Sylow $p$-subgroup $T$ of $H$ and $N_H(R)\leq M$ for all $1 \neq R \leq T$. In particular, if $H$ has a strongly $p$-embedded subgroup, then $O_p(H)=1$.

\begin{lemma}\label{cor-p'} Suppose that $p$ is a prime, $H$ is a group and  $M$ is strongly $p$-embedded in $H$. If $K \le M$ is a subnormal subgroup of $H$, then $K $ is  a $p'$-subgroup.
\end{lemma}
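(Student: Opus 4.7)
The plan is to argue by contradiction: assume $K$ has order divisible by $p$, and derive the contradiction $H \le M$. The essential tool is the alternative characterization of strong $p$-embedding recalled in the paragraph immediately after the definition, namely that $N_H(R) \le M$ for every nontrivial $p$-subgroup $R$ contained in a Sylow $p$-subgroup $T$ of $M$ (equivalently, of $H$). This will be combined with the Frattini argument, walking up a subnormal chain from $K$ to $H$.

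Concretely, fix a subnormal series $K = K_0 \trianglelefteq K_1 \trianglelefteq \cdots \trianglelefteq K_n = H$. I would prove by induction on $i$ that $K_i \le M$. The base case $i=0$ is the hypothesis. For the inductive step, suppose $K_{i-1} \le M$. Since $p \mid |K_0| \mid |K_{i-1}|$, there is a nontrivial Sylow $p$-subgroup $P \in \Syl_p(K_{i-1})$, and $P \le K_{i-1} \le M$. In particular $P$ is a nontrivial $p$-subgroup of $H$ sitting in a Sylow $p$-subgroup of $M$, so by strong $p$-embeddedness $N_H(P) \le M$, and hence $N_{K_i}(P) \le M$. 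Since $K_{i-1} \trianglelefteq K_i$ and $P \in \Syl_p(K_{i-1})$, the Frattini argument gives
\[
K_i \;=\; K_{i-1}\, N_{K_i}(P) \;\le\; M\cdot M \;=\; M,
\]
closing the induction.

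Taking $i = n$ yields $H = K_n \le M$, contradicting the fact that $M$ is a proper subgroup of $H$. Therefore the assumption $p \mid |K|$ is untenable and $K$ must be a $p'$-group. There is no genuine obstacle here: the only subtlety is being careful that the Frattini argument is applied with $P$ chosen as a Sylow $p$-subgroup of the normal factor $K_{i-1}$ at each step (not of $K$ itself), so that $P$ remains nontrivial and contained in $M$ throughout the induction.
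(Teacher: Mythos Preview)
Your proof is correct and uses essentially the same ingredients as the paper's proof: the Frattini argument applied along the subnormal chain together with the characterization of strong $p$-embedding via normalizers of $p$-subgroups. The only cosmetic difference is that the paper reduces to the case $N_H(K)\not\le M$ and then uses the definition of strong $p$-embedding directly (showing a Sylow $p$-subgroup $T$ of $K$ lies in $M\cap M^h$ for some $h\notin M$, forcing $T=1$), whereas you explicitly induct up the chain to reach the contradiction $H\le M$; these are equivalent packagings of the same argument.
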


\begin{proof}
We may suppose that $N_H(K) \not \le M$. \blue{ Let $R\in \Syl_p(K)$.  Then, by the Frattini Argument, $N_H(K)= N_{N_H(K)}(R)K$ and so $N_H(R) \not \le M$. As $M$ is strongly $p$-embedded in $M$, $R=1$.} Hence $K  $ is a $p'$-subgroup.\end{proof}

\begin{lemma}\label{strongly p structure}
Suppose that $p$ is a prime and $H$ is a group with a strongly $p$-embedded subgroup $M$. If $M$ contains an elementary abelian subgroup  of order $p^2$, then $O_{p'}(H) \le M$, $M/O_{p'}(H)$ is strongly $p$-embedded in  $H/O_{p'}(H)$ and $H/O_{p'}(H)$ is an almost simple group.
\end{lemma}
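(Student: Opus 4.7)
The plan is to prove the three assertions in turn. The hypothesis that $M$ contains an elementary abelian subgroup $A$ of order $p^2$ will enter crucially only in the first step; the remaining two claims will follow from structural group theory, with the hard part being a uniqueness-of-component argument at the end.

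For the first containment $O_{p'}(H)\le M$, I will set $N=O_{p'}(H)$ and exploit strong $p$-embedding element by element in $A$: for each $1\ne a\in A$ the subgroup $\langle a\rangle$ is a nontrivial $p$-subgroup of $M$, so $C_N(a)\le C_H(a)\le N_H(\langle a\rangle)\le M$. Since $A$ is a noncyclic abelian $p$-group acting coprimely on the $p'$-group $N$, the standard coprime action generation result (see \cite[Section 5.3]{Gor}) yields $N=\langle C_N(a)\mid 1\ne a\in A\rangle\le M$.

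For the second claim, I will pass to $\bar H=H/N$ and $\bar M=M/N$ and argue at the level of preimages. Since $N\triangleleft H$ and $N\le M$, one has $N\le M\cap M^g$ for every $g\in H$, so the preimage in $H$ of $\bar M\cap\bar M^{\bar g}$ equals $M\cap M^g$, which has order coprime to $p$ whenever $g\notin M$; hence $\bar M\cap\bar M^{\bar g}$ is a $p'$-group for each $\bar g\in\bar H\setminus\bar M$. A Sylow $p$-subgroup $T\le M$ of $H$ projects to a Sylow $p$-subgroup of $\bar H$ inside $\bar M$, and $\bar M<\bar H$ because $M<H$, so $\bar M$ is strongly $p$-embedded in $\bar H$.

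For the almost simplicity of $\bar H$, I will first observe that $O_{p'}(\bar H)=1$ by construction and $O_p(\bar H)=1$, since any nontrivial $p$-core would be normal in $\bar H$ and yet contained in the proper subgroup $\bar M$, forcing $\bar H=N_{\bar H}(O_p(\bar H))\le\bar M$, a contradiction. Hence $F^\ast(\bar H)=E(\bar H)$ is a nontrivial central product of components. The product of all components of order coprime to $p$ is a characteristic $p'$-subgroup of $E(\bar H)$, hence a normal $p'$-subgroup of $\bar H$, so is trivial; every component therefore has order divisible by $p$. The heart of the argument is uniqueness: if $L\ne L'$ were distinct components, then $[L,L']=1$, and by Sylow's theorem I may replace the pair by an $\bar H$-conjugate so that a nontrivial Sylow $p$-subgroup $T_L$ of $L$ lies in $\bar M$; then $L'\le C_{\bar H}(T_L)\le N_{\bar H}(T_L)\le\bar M$, and Lemma~\ref{cor-p'} applied to $\bar H$ (with the strong $p$-embedding just established) forces $L'$ to be a $p'$-group, contradicting that $p\mid|L'|$. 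So $F^\ast(\bar H)=L$ is a single component; the $p$-part and $p'$-part of $Z(L)$ are each characteristic in $L$ and hence normal in $\bar H$, so they are trivial and $L$ is simple; finally $C_{\bar H}(L)\le F^\ast(\bar H)\cap C_{\bar H}(L)=Z(L)=1$ yields $L\le\bar H\le\Aut(L)$. I expect the step where commuting distinct components are forced into $\bar M$ via Sylow conjugation to be the one requiring the most care; the other pieces are short applications of strong $p$-embedding once the first claim is in hand.
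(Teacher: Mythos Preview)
Your argument is correct. The paper does not actually prove this lemma in-text; it simply cites \cite[Lemma 4.3]{parkersemerarocomputing}. So you have supplied a self-contained proof where the paper defers to an external source.

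A couple of minor streamlinings are available but not necessary. In the uniqueness-of-component step you conjugate the pair $\{L,L'\}$ so that a Sylow $p$-subgroup $T_L$ of $L$ lands inside $\bar M$. This works, but you can avoid the conjugation entirely: since $L$ is subnormal in $\bar H$ and $\bar T\in\Syl_p(\bar H)$ with $\bar T\le\bar M$, the intersection $\bar T\cap L$ is already a (nontrivial) Sylow $p$-subgroup of $L$ sitting inside $\bar M$. Either way, the conclusion $L'\le C_{\bar H}(T_L)\le N_{\bar H}(T_L)\le\bar M$ followed by Lemma~\ref{cor-p'} (applied to $\bar H$ with its strongly $p$-embedded subgroup $\bar M$, which you have just established) forces $L'$ to be a $p'$-group, the desired contradiction. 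The remaining steps---$F(\bar H)=1$ from $O_p(\bar H)=O_{p'}(\bar H)=1$, triviality of $Z(L)$ by splitting into its $p$- and $p'$-parts, and $C_{\bar H}(F^\ast(\bar H))\le Z(F^\ast(\bar H))$---are all standard and correctly invoked.
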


\begin{proof} See \cite[Lemma 4.3]{parkersemerarocomputing}. \end{proof}

  \begin{lemma}\label{strongly p-embedded}
Suppose that $p$ is a prime, $H$ is a group with a strongly $p$-embedded subgroup and that $K$ is a normal subgroup of $H$  which commutes with an element of order $p$.  Then $H/K$  has a strongly $p$-embedded subgroup.
 \end{lemma}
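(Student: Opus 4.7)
The plan is to show that, for any strongly $p$-embedded subgroup $M \le H$, the normal subgroup $K$ must actually be contained in $M$ and be a $p'$-group, and then to verify directly that $M/K$ is strongly $p$-embedded in $H/K$.

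First I would pin down the position of $K$. Let $x \in H$ have order $p$ with $[K,x]=1$. Replacing $x$ by a suitable $H$-conjugate (using that $K$ is normal, so $K$ also centralizes $x^g$ for every $g \in H$, and that $\langle x\rangle$ lies in some Sylow $p$-subgroup of $H$, which is conjugate to the Sylow $p$-subgroup $T$ of $H$ contained in $M$), I may assume $x \in T \le M$. Then $\langle x\rangle$ is a non-trivial $p$-subgroup of $T$, so by the standard characterization of strongly $p$-embedded subgroups recalled just before Lemma~\ref{cor-p'}, $N_H(\langle x\rangle) \le M$; in particular $K \le C_H(\langle x\rangle) \le M$. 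Since $K$ is normal (hence subnormal) in $H$ and contained in $M$, Lemma~\ref{cor-p'} yields that $K$ is a $p'$-group.

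Next I would verify that $M/K$ is strongly $p$-embedded in $H/K$. Since $K$ is a $p'$-group, $TK/K$ is a non-trivial Sylow $p$-subgroup of $H/K$, and it is contained in $M/K$; moreover $M/K$ is proper in $H/K$ because $K \le M \lneq H$. For any $\bar h \in H/K \setminus M/K$, pick a representative $h \in H \setminus M$. Normality of $K$ gives $K \le M \cap M^h$, hence
\[
(M/K)\cap (M/K)^{\bar h} \;=\; (M \cap M^h)/K.
\]
By the strong $p$-embeddedness of $M$ in $H$, $|M \cap M^h|$ is coprime to $p$, so the same holds for the quotient $(M \cap M^h)/K$. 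This verifies the defining property and completes the argument.

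The only step that needs any care is the first one: one must remember to conjugate $x$ into the chosen Sylow subgroup of $M$ before invoking the normalizer characterization of strong $p$-embeddedness, and this is legitimate precisely because $K$ is normal in $H$. The rest of the proof is a direct diagram-chase combined with Lemma~\ref{cor-p'}.
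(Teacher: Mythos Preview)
Your proof is correct and follows essentially the same approach as the paper: show $K\le M$ via the centralized $p$-element, apply Lemma~\ref{cor-p'} to get that $K$ is a $p'$-group, and then verify that $M/K$ is strongly $p$-embedded in $H/K$. The only cosmetic difference is that you check the last step directly from the defining conjugate-intersection condition, whereas the paper uses the equivalent normalizer characterization together with the coprime-action identity $N_{H/K}(\overline{R})=\overline{N_H(R)}$.
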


\begin{proof} Assume that $M$ is strongly $p$-embedded in $H$. Then $M$ contains a Sylow $p$-subgroup $T$ of $H$ and $N_H(R) \le M$ for all $1\ne R\le T$.   Since $K$ centralizes a $p$-element in $H$ and $K$ is normal in $H$ by assumption, $K \le M$. Hence $K \le O_{p'}(H)$ by Lemma~\ref{cor-p'}. Set $\ov H=H/K$. Then $\ov H$ and $\ov M$ have order divisible by $p$. Let $\ov R$ be a non-trivial $p$-subgroup of $\ov T$ with $R \le T$.  Then, by coprime action, $N_{\ov H}(\ov R) = \ov{N_H(R)} \le \ov M$ and so we conclude $\ov H$ has a strongly $p$-embedded subgroup as claimed.
\end{proof}

\blue{For a group $X$, $F^*(X)$ denotes the \emph{generalized Fitting subgroup} of $X$. This is the subgroup of $X$ generated by the subnormal nilpotent subgroups and subnormal  quasisimple subgroups of $X$. See \cite[Definition 3.4]{GLS2}.}

\begin{prop} \label{SE-p2} Suppose that $p$ is a prime, $X$ is a  group,
$K=F^*(X)$   and $T \in \Syl_p(X)$. Assume that $O_{p'}(X)=1$ and that $M$ is a strongly $p$-embedded subgroup of $X$ containing  $T$. Then $O_p(X)=1$, $K$ is a non-abelian simple group and $M \cap K$ is strongly $p$-embedded in $K$,
and  $p$ and $K$ are as follows:
\begin{enumerate}
\item $p$ is any prime, $a \ge 1$ and $K \cong \PSL_2(p^{a+1})$, $\PSU_3(p^a)$ ($p^a \ne 2$),  ${}^2\mathrm B_2(2^{2a+1})$ $(p=2)$ or ${}^2{\rm G}_2(3^{2a+1})$ $(p=3)$ and $X/K$ is a
$p'$-group.
 \item $p > 3$, $K \cong \Alt(2p)$,
$|X/K| \le 2$ and $T$ is elementary abelian of order $p^2$.
\item $p=3$, $K \cong \PSL_2(8)$, $X \cong \Aut(\PSL_2(8))\cong {}^2\mathrm G_2(3)\cong \PSL_2(8){:}3$, $T\cong 3^{1+2}_-$ and $T \cap K$ is cyclic of order $9$.
\item $p=3$, $K \cong \PSL_3(4)$, $X/K$ is a $2$-group and $T$ is elementary abelian of order $3^2$.
\item $p=3$, $X=K\cong \mathrm{M}_{11}$ and $T$ is elementary abelian of order $3^2$.
\item $p=5$,  $K \cong {}^2\mathrm B_2(32)$, $X \cong \Aut({}^2\mathrm B_2(32)) \cong {}^2\mathrm B_2(32){:}5$, $T\cong 5^{1+2}_-$ and $T \cap K$ is cyclic  of order $25$.
\item $p=5$, $K \cong
{}^2\mathrm F_4(2)^\prime$,  $|X/K|\le 2$ and $T$ is elementary abelian of order $5^2$.
 \item $p=5$, $K \cong \mathrm {McL}$,  $|X/K|\le 2$ and $T\cong 5^{1+2}_+$.
\item $p=5$, $K  \cong \mathrm{Fi}_{22}$, $|X/K|\le 2$ and  $T$ is elementary abelian of order $5^2$. \item
$p=11$,  $X=K \cong \mathrm J_4$ and   $T\cong 11^{1+2}_+$.
\item $p$ is odd and $T=T \cap K$ is cyclic.
\end{enumerate}\end{prop}

\begin{proof} This is mainly  \cite [Chapter 4,  Lemma 10.3 ]{GLS4}. The formulation presented here comes from \cite[Proposition 4.5]{parkersemerarocomputing}.
\end{proof}

\section{Maximal class $p$-groups}\label{sec:MC}

Throughout this section, $S$ represents  a $p$-group of maximal class of order $p^n$ with $n \ge 3$. This means that $S$ has nilpotency class $n-1$.
The maximal class $2$-groups are the dihedral groups, generalized quaternion groups and semidihedral groups and so everything about these groups is easy to calculate.
  The maximal class $p$-groups of order $p^3$ are extraspecial and we assume that the reader is familiar with their structure.
   Thus for this section we concentrate on the case when $p$ is odd, even though many of the results are obviously true when $p=2$, and we also assume that $n \ge 4$.  We set $Z_1(S)= Z(S)$ and, for $2\le j \le n-1$, $Z_j(S)$ is the complete pre-image of $Z(S/Z_{j-1}(S))$. So the $Z_j(S)$  are the terms of the \emph{upper central series} of $S$. Similarly, we put $\gamma_2(S)= S'=[S,S]$ and define $\gamma_j(S)= [\gamma_{j-1}(S), S]$ for $j \ge 3$. These are the members of the \emph{lower central series} of $S$. Notice that $\gamma_j(S) = 1$ for $j \ge n$. As $S$ has maximal  class, $Z_j(S)= \gamma_{n-j}(S)$ for $1\le j \le n-2$ and this subgroup has order $p^{j}$ and index $p^{n-j}$ in $S$. The \emph{$2$-step centralizers } in $S$ are the subgroups $$C_S(\gamma_j(S)/\gamma_{j+2}(S))$$ where $2\le j \le n-2$.
As \blue{in the introduction,} we   define $$\gamma_1(S)= C_{S}(\gamma_2(S)/\gamma_4(S)).$$ Notice that, \blue{as} $|S| \ge p^4$, then $|S:\gamma_1(S)|= |\gamma_1(S):\gamma_2(S)|= p$ and all the $2$-step centralizers in $S$ are maximal subgroups of $S$.\begin{definition} A maximal class group with more than one $2$-step centralizer is  called an \emph{exceptional group}.\end{definition}

By \cite[Lemma 1.1.25 (i)]{Led-Green}, we always have $[\gamma_i(S),\gamma_j(S)] \le \gamma_{i+j}(S)$. Hence  $\gamma_{t}(S)$ is abelian for all $t \ge n/2$.
The  \emph{degree of commutativity} of $S$, is the greatest integer  $c$  such that $[\gamma_i(S),\gamma_j(S)] \le \gamma_{i+j+c}(S)$ for all $1 \le i \le j \le n$. We say that $S$ has a \emph{positive degree of commutativity} if and only if $c$ is positive.

\begin{lemma}\label{mc-facts}  Suppose that $S$ is a maximal class $p$-group of order $p^n$ with $n \ge 4$.
 \begin{enumerate}
 \item If $N$ is a proper normal subgroup of $S$, then either $N=\gamma_j(S)$ for some $j \ge 2$ or $N$ is a  maximal subgroup of $S$. Furthermore, $S/\gamma_j(S)$ has maximal class for all $j \ge 2$.
 \item $\gamma_1(S)$ is a regular $p$-group.

\item If $n> {p+1}$, then $\Omega_1(\gamma_1(S))=\gamma_{n-(p-1)}(S)$ has exponent $p$ and order $ p^{p-1}$  and $\agemO^1(\gamma_i(S))=\gamma_{i+(p-1)}(S)$ for all $1 \leq i \leq n-(p-1)$.
\item $|\Omega_1(\gamma_1(S))| \le p^p$ and if equality holds then $\Omega_1(\gamma_1(S))=\gamma_1(S)$ and $|S|= p^{p+1}$.
 \item If $n > p+1$,  $ A \le \gamma_1(S)$  and $B$ is normal in $A$ with $A/B$ elementary abelian, then $|A/B| \le p^{p-1}$.

\item If $n \le {p+1}$, then $\gamma_2(S)$ and $S/Z(S)$ have exponent $p$.

\item There exists a unique maximal abelian normal subgroup $\gamma_w(S)$ of $S$ and either $\Omega_1(\gamma_1(S)) \le \gamma_w(S)$ or $\gamma_w(S)$ is elementary abelian.
\end{enumerate}
\end{lemma}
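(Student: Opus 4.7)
Plan. Each of the seven assertions is a well-known structural fact about maximal class $p$-groups, so the proof will consist of short deductions using standard machinery from Leedham-Green and McKay \cite{Led-Green} together with Lemma~\ref{lem:regular}. I will handle the parts in order, using later parts to exploit the regularity established in (ii).

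For (i), I would argue from the basic numerology. Since $|S/\gamma_2(S)| = p^2$ and $|\gamma_j(S)/\gamma_{j+1}(S)| = p$ for $2 \le j \le n-1$, any normal subgroup $N \le \gamma_2(S)$ must agree with some $\gamma_j(S)$: comparing $N$ with the chain $\gamma_2(S) > \gamma_3(S) > \cdots$, uniseriality of $\gamma_2(S)$ as an $S$-module (a consequence of the maximal class hypothesis) forces the equality. If instead $N \not\le \gamma_2(S)$, then $N\gamma_2(S)$ is a maximal subgroup $M$ of $S$; a short commutator argument using $[M,S] = \gamma_2(S)$ and $|M/\gamma_3(S):N\cap M/\gamma_3(S)|\le 1$ yields $N = M$. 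The second clause is immediate: $S/\gamma_j(S)$ has order $p^j$ and nilpotency class $j-1$.

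For (ii), regularity of $\gamma_1(S)$ follows from showing $\gamma_1(S)$ has nilpotency class strictly less than $p$ and then invoking \cite[Hauptsatz III.10.2]{Huppert}. The class bound is obtained via the degree of commutativity theorem \cite[Theorem 3.2.11]{Led-Green}: for $n$ large one has $[\gamma_i(S),\gamma_j(S)] \le \gamma_{i+j+c}(S)$ with $c\ge 1$, forcing $\gamma_1(S)$ to have class at most $\lceil (n-1)/2 \rceil < p$ in the relevant range, while for small $n \le p+1$ the class bound is direct from $|\gamma_1(S)| = p^{n-1} \le p^p$. Parts (iii), (iv) and (v) then reduce to counting in the regular $p$-group $\gamma_1(S)$: by Lemma~\ref{lem:regular} combined with \cite[Satz III.10.7]{Huppert}, the operator $\agemO^1$ shifts the index in the lower central series of $\gamma_1(S)$ by exactly $p-1$, giving $\agemO^1(\gamma_i(S)) = \gamma_{i+p-1}(S)$ and hence $\Omega_1(\gamma_1(S)) = \gamma_{n-p+1}(S)$ of order $p^{p-1}$ and exponent $p$ when $n > p+1$. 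The bound $|\Omega_1(\gamma_1(S))| \le p^p$ in (iv) comes from (iii) in the large case, and from $|\gamma_1(S)| = p^{n-1} \le p^p$ in the small case (with equality forcing $n = p+1$ and $\gamma_1(S) = \Omega_1(\gamma_1(S))$). Part (v) is the corresponding statement for sections: an elementary abelian quotient $A/B$ of a subgroup $A \le \gamma_1(S)$ is contained in $\Omega_1(A/B) \le \Omega_1(\gamma_1(S)/B)$, which by regularity has order at most $p^{p-1}$.

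For (vi), when $n \le p+1$ both $\gamma_2(S)$ and $S/Z(S) = S/\gamma_{n-1}(S)$ have order at most $p^{p-1}$ and nilpotency class at most $p-2$, so by Hauptsatz they are regular and hence generated by elements of order $p$ after applying Lemma~\ref{lem:regular}(ii); in fact one checks $\Omega_1$ exhausts these groups by comparing orders. For (vii), let $w \ge 2$ be minimal with $\gamma_w(S)$ abelian; uniqueness of $\gamma_w(S)$ as a maximal abelian normal subgroup follows from (i), since any other abelian normal subgroup is either contained in $\gamma_2(S)$ (hence in $\gamma_w(S)$ by minimality) or is a maximal subgroup of $S$, and the maximal subgroups of $S$ are non-abelian when $n \ge 4$. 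The dichotomy $\Omega_1(\gamma_1(S)) \le \gamma_w(S)$ or $\gamma_w(S)$ elementary abelian comes from comparing $\gamma_w(S)$ with the exponent-$p$ subgroup $\Omega_1(\gamma_1(S))$ of the regular $p$-group $\gamma_1(S)$: if $\gamma_w(S)$ meets $\gamma_1(S) \setminus \Omega_1(\gamma_1(S))$, the $p$-th power map shows $\gamma_w(S) \ge \agemO^1(\gamma_1(S))$ and regularity forces containment the other way. The main obstacle will be the clean deduction in (ii), where one must piece together the degree-of-commutativity estimate with the regularity criterion without assuming $n$ is very large; the rest are essentially bookkeeping given Lemma~\ref{lem:regular}.
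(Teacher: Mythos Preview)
The paper's proof of this lemma is entirely by citation to \cite{Led-Green} and \cite{Huppert}, so your attempt to sketch actual arguments is more ambitious. Most of your outlines are sound, but there are two genuine gaps.

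\textbf{Part (ii).} Your class bound $\lceil (n-1)/2\rceil$ from $c\ge 1$ is correct, but it is \emph{not} eventually less than $p$: for $n\ge 2p$ the bound exceeds $p-1$ and the Hall regularity criterion fails. You flag this at the end as ``the main obstacle,'' but it is not a detail to be patched --- with only $c\ge 1$ the argument simply does not close. What one actually needs for $n>p+1$ is a much stronger degree-of-commutativity estimate (roughly $2c\ge n-2p+\text{const}$), and even then the deduction of regularity is not one line. The paper sidesteps this by citing \cite[Corollary~3.3.4(i)]{Led-Green} directly; if you want a self-contained argument you must reproduce that machinery, not just invoke $c\ge 1$.

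\textbf{Part (vi).} Regularity of $\gamma_2(S)$ and $S/Z(S)$ does \emph{not} by itself give exponent $p$: Lemma~\ref{lem:regular}(ii) says only that $\Omega_1$ has exponent $p$, and your ``comparing orders'' step to show $\Omega_1$ exhausts the group is a non-argument. (Also, $|S/Z(S)|=p^{n-1}$ can equal $p^p$ when $n=p+1$, so your order bound for that piece is off.) That $\gamma_2(S)$ has exponent $p$ when $n\le p+1$ is a genuine structural fact about maximal class groups (this is \cite[Proposition~3.3.2]{Led-Green}), not a formal consequence of regularity.

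\textbf{Part (vii).} Your dichotomy argument is tangled. The clean route: by (i) both $\Omega_1(\gamma_1(S))$ and $\gamma_w(S)$ are terms $\gamma_j(S)$ in the unique chain of proper normal subgroups (or $\gamma_1(S)$ itself), hence comparable. If $\Omega_1(\gamma_1(S))\not\le\gamma_w(S)$ then $\gamma_w(S)<\Omega_1(\gamma_1(S))$, which has exponent $p$ by (ii) and Lemma~\ref{lem:regular}(ii), so $\gamma_w(S)$ is elementary abelian. Your remark about $\agemO^1(\gamma_1(S))$ and the $p$-th power map is a detour in the wrong direction.

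Parts (i), (iii), (iv), (v) are essentially correct as sketched.
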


\begin{proof} The first part of (i) is \cite[Proposition 3.1.2]{Led-Green} while the second is obvious by definition.

 For part (ii), if $n \le p+1$, then $|\gamma_1(S)|\le p^p$ and \cite[Lemma 1.2.11]{Led-Green} yields $\gamma_1(S)$ is regular. If $n > p+1$, \cite[Corollary 3.3.4 (i)]{Led-Green}  gives the same result.

For statement (iii) we first note that $\gamma_1(S)$ is regular by (ii) and so $\Omega_1(\gamma_1(S)$ has exponent $p$ by Lemma~\ref{lem:regular} (ii). The result now follows from \cite[Corollary 3.3.6(i)]{Led-Green}.

Part (iv) is a consequence of (iii).

Part (v) is  included in \cite[Exercise 3.3 (3)]{Led-Green} and part (vi)   is \cite[ Proposition 3.3.2]{Led-Green}.

Part (vii) follows from (i) as $n \ge 4$.

\end{proof}

\begin{lemma}\label{mc-factsb}  Suppose that $S$ is a maximal class $p$-group of order $p^n$ with $n \ge 4$ and $M$ is a maximal subgroup of $S$.
 \begin{enumerate}
 \item   If $M \not \in  \{\gamma_1(S),C_S(Z_2(S))\}$, then $M$  has maximal  class  and $\gamma_i(M)= \gamma_{i+1}(S)$ for \blue{$i =2, \dots ,n-1$ and $\gamma_1(M)=\gamma_2(S)$ whenever $n \ge 5$.}
 \item    $M$ is a $2$-step centralizer in $S$ if and only if  $M\in \{\gamma_1(S),C_S(Z_2(S))\}$.

 \item   $S$ is exceptional if and only if  $\gamma_1(S)\ne C_S(Z_2(S))$.
 \item The degree of commutativity of $S$ is positive  if and only if $S$ is not exceptional.
\item If $n$ is odd, or $n=4$ or $n>p+1$, then $S$ is not exceptional.
\item Assume that $|S| \ge p^5$. Then $S/Z(S)$ is not exceptional.
 \item  $\gamma_1(S)= C_S(\gamma_j(S)/\gamma_{j+2}(S))$ for all $2 \le j \le n-3$.

\end{enumerate}
\end{lemma}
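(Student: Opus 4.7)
The seven parts of this lemma are standard facts about maximal class $p$-groups, the majority of which can be extracted from Leedham-Green and McKay \cite{Led-Green}. I would organise the proof around their logical dependencies, taking the parts in the order (vii), (ii), (iii), (i), (iv), (v), (vi), since each subsequent statement uses the earlier ones.

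For (vii), the key inclusion $\gamma_1(S) \le C_S(\gamma_j(S)/\gamma_{j+2}(S))$ is proved by showing $[\gamma_1(S), \gamma_j(S)] \le \gamma_{j+2}(S)$ for $2 \le j \le n-3$; the base case is the definition of $\gamma_1(S)$, and the inductive step comes from a three-subgroup lemma argument combined with the fact that in a maximal class group each quotient $\gamma_{k+1}(S)/\gamma_{k+2}(S)$ has order $p$. Since $[S,\gamma_j(S)] = \gamma_{j+1}(S) \not\le \gamma_{j+2}(S)$, the centralizer has index exactly $p$, and equality then follows by comparing indices. Parts (ii) and (iii) are immediate consequences: the 2-step centralizers are the subgroups $C_S(\gamma_j(S)/\gamma_{j+2}(S))$ for $2 \le j \le n-2$, and (vii) identifies all but the extreme case $j = n-2$ with $\gamma_1(S)$, while $j = n-2$ gives $C_S(\gamma_{n-2}(S)) = C_S(Z_2(S))$ since $\gamma_n(S) = 1$; statement (iii) is then just the definition of ``exceptional''.

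For (i), let $M$ be a maximal subgroup not in $\{\gamma_1(S), C_S(Z_2(S))\}$. Since $M \ne \gamma_1(S)$, one has $M \cap \gamma_1(S) = \gamma_2(S)$ and $M = \langle x\rangle \gamma_2(S)$ for some $x \notin \gamma_1(S)$. I would prove $\gamma_i(M) = \gamma_{i+1}(S)$ by induction on $i$: each $\gamma_i(M)$ is normal in $S$ and lies in $\gamma_{i+1}(S)$, so by Lemma~\ref{mc-facts}(i) it coincides with some term of the lower central series of $S$; a commutator computation using $S = M\gamma_1(S)$, part (vii), and the fact that $M$ fails to centralize any $\gamma_j(S)/\gamma_{j+2}(S)$ (by (ii)) shows that $\gamma_i(M)\gamma_{i+2}(S) = \gamma_{i+1}(S)$, which forces $\gamma_i(M) = \gamma_{i+1}(S)$. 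In particular $\gamma_{n-2}(M) = \gamma_{n-1}(S) \ne 1$ and $\gamma_{n-1}(M) = \gamma_n(S) = 1$, so $M$ has maximal class.

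Finally, (iv) is Blackburn's theorem on the degree of commutativity, cited from \cite[Chapter 3]{Led-Green}; (v) follows by combining (iv) with the classical bound that exceptional maximal class $p$-groups have order $p^n$ with $n$ even and $6 \le n \le p+1$ (as summarised in Section~\ref{sec:MC}), which excludes each of the three cases $n$ odd, $n = 4$, $n > p+1$. For (vi), I observe that $S/Z(S)$ has maximal class of order $p^{n-1} \ge p^4$ and $\gamma_1(S/Z(S)) = \gamma_1(S)/Z(S)$; applying (vii) with $j = n-3$ (valid since $n \ge 5$) gives $C_S(\gamma_{n-3}(S)/\gamma_{n-1}(S)) = \gamma_1(S)$, which translates under the quotient to $C_{S/Z(S)}(Z_2(S/Z(S))) = \gamma_1(S/Z(S))$, so $S/Z(S)$ is not exceptional. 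The main obstacle is the commutator bookkeeping in (i), where one must track carefully how the inductive hypothesis interacts with the chain of 2-step centralizers; everything else is either immediate from (vii) or is a classical citation to Leedham-Green.
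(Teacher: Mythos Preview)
Your ordering has a genuine gap at (vii). The three-subgroup induction you propose does not close: to pass from $[\gamma_1(S),\gamma_j(S)]\le\gamma_{j+2}(S)$ to $[\gamma_1(S),\gamma_{j+1}(S)]\le\gamma_{j+3}(S)$ via $\gamma_{j+1}(S)=[\gamma_j(S),S]$, the three-subgroup lemma requires both $[\gamma_1(S),\gamma_j(S),S]\le\gamma_{j+3}(S)$ (which the hypothesis gives) and $[S,\gamma_1(S),\gamma_j(S)]\le[\gamma_2(S),\gamma_j(S)]\le\gamma_{j+3}(S)$. But the only bound available on $[\gamma_2(S),\gamma_j(S)]$, whether from the general relation $[\gamma_i,\gamma_j]\le\gamma_{i+j}$ or from the inductive hypothesis, is $\gamma_{j+2}(S)$, not $\gamma_{j+3}(S)$. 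The induction therefore only yields $[\gamma_1(S),\gamma_{j+1}(S)]\le\gamma_{j+2}(S)$, one step short.

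This is not an artefact of the method: the case $j=n-3$ of (vii) is precisely the assertion that $\gamma_1(S)$ centralizes $Z_3(S)/Z(S)$, which unwinds to the statement that $S/Z(S)$ is not exceptional, i.e.\ (vi). So (vii) cannot logically precede (vi). The paper's order is the correct one: (i) is cited, (ii) and (iii) follow from (i), (iv) and (v) are the cited Blackburn degree-of-commutativity results, (vi) follows from (v) and (ii), and only then does (vii) come, by applying (vi) to the quotients $S/\gamma_{j+3}(S)$ (each of order at least $p^5$, so (vi) applies and shows $S/\gamma_{j+2}(S)$ is not exceptional). Your derivation of (vi) from (vii) and your sketches for (i)--(iii) are otherwise sound, but the logical scaffolding must be inverted.
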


\begin{proof}
For statement (i) see \cite[Lemma (1.2)]{VL-L}.

For (ii),  we know that $\gamma_1(S)$  and $C_S(Z_2(S))$ are $2$-step centralizers by definition. That they are the only $2$-step centralizers follows from (i).

Part (iii) follows from (i) and the definition of an exceptional group.

Part (iv) is \cite[Corollary 3.2.7]{Led-Green}.

 Part (v) follows from \cite[Theorems 3.2.11 and 3.3.5]{Led-Green} for $|S|\geq p^5$. If $|S|= p^4$ then $\gamma_1(S)=\C_S(\Z_2(S))$ is abelian and is the unique $2$-step centralizer.

For part (vi). If $n$ is even, then $|S/Z(S)|=p^{n-1}\ge p^5$ is not exceptional by (v).  Whereas, if $n$ is odd, then $S$ is not exceptional by (v) and $\gamma_1(S)$ is the unique $2$-step centralizer in $S$ by (ii).  Since the preimage of a $2$-step  centralizer in $S/Z(S)$, is a $2$-step centralizer in $S$, we deduce that $S/Z(S)$ is not exceptional.

Finally (vii) follows from (vi).

\end{proof}

\begin{lemma}\label{p2centralizer} The following hold:
\begin{enumerate}\item Suppose that $t\in S$ and $t \not \in \gamma_1(S)\cup C_S(Z_2(S))$.  Then $C_S(t)=\langle t, Z(S)\rangle$ has order $p^2$ and $C_S(t) \cap \gamma_1(S)= C_S(t) \cap C_S(Z_2(S))=Z(S)$.
\item If $T$ is a $p$-group and there exists  $t \in T$ with  $|C_T(t)|=p^2$, then $T$ has maximal class.
\end{enumerate}
\end{lemma}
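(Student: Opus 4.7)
I would prove both parts by induction on group order, handling the simpler part (ii) first. For part (ii), induct on $|T|=p^n$. The cases $n\le 3$ are immediate (for $n=3$, $|C_T(t)|=p^2$ forces $t\notin Z(T)$, so $T$ is non-abelian of class $2=n-1$). For $n\ge 4$, first $|Z(T)|=p$: if $|Z(T)|=p^2$ then $Z(T)=C_T(t)\ni t$, contradiction. Set $\bar T=T/Z(T)$ and $\bar t=tZ(T)$; the goal is $|C_{\bar T}(\bar t)|=p^2$. Let $H=\{y\in T:[y,t]\in Z(T)\}$; then $\psi:H\to Z(T)$, $\psi(h)=[h,t]$, is a homomorphism because the central image forces $[h,t]^{h'}=[h,t]$, hence $[hh',t]=[h,t][h',t]$. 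Its kernel is $C_T(t)$ and its image lies in $Z(T)$, so $|H|\le p^3$ and $|C_{\bar T}(\bar t)|\le p^2$. For the lower bound, $\bar t\notin Z(\bar T)$, since otherwise the same map would be a homomorphism on all of $T$, giving $p^{n-2}=|T|/|C_T(t)|\le|Z(T)|=p$, forcing $n\le 3$. Then $\langle\bar t\rangle Z(\bar T)\le C_{\bar T}(\bar t)$ has order $\ge p^2$. By induction $\bar T$ has maximal class, and since $Z(T)\ne1$ the class of $T$ is one more, so $T$ has maximal class.

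For part (i), I induct on $n$ (the case $n=3$ is vacuous, since then $\gamma_1(S)=S$). In the base case $n=4$, $S$ is non-exceptional by Lemma~\ref{mc-factsb}(v), so $\gamma_1(S)=C_S(Z_2(S))$ and the hypothesis reduces to $t\notin\gamma_1(S)$. The commutator map $\gamma_2(S)=Z_2(S)\to Z(S)$, $z\mapsto[z,t]$, is a homomorphism (its image is central), surjective by $t\notin C_S(\gamma_2(S))=\gamma_1(S)$, with kernel $C_{\gamma_2(S)}(t)$; counting gives $C_S(t)\cap\gamma_2(S)=Z(S)$. Supposing for contradiction that some $y\in(C_S(t)\cap\gamma_1(S))\setminus Z(S)$ exists, necessarily $y\in\gamma_1(S)\setminus\gamma_2(S)$; then $\langle t,y,Z(S)\rangle$ is abelian of order $p^3$ (order $p^4$ is excluded because it would require $\gamma_2(S)\le\langle t,y,Z(S)\rangle$, making $S$ abelian), so $C_S(t)=\langle t,y,Z(S)\rangle$ is an abelian maximal subgroup of $S$ distinct from $\gamma_1(S)$. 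By Lemma~\ref{mc-factsb}(i) it must then have maximal class --- impossible for an abelian group of order $p^3$.

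For the inductive step $n\ge5$, I pass to $\bar S=S/Z(S)$, which has maximal class of order $p^{n-1}$ and is non-exceptional by Lemma~\ref{mc-factsb}(vi). Lemma~\ref{mc-factsb}(vii) with $j=n-3$ identifies $C_{\bar S}(Z_2(\bar S))=\gamma_1(\bar S)=\gamma_1(S)/Z(S)$, so $\bar t=tZ(S)\notin\gamma_1(\bar S)\cup C_{\bar S}(Z_2(\bar S))$ is automatic from $t\notin\gamma_1(S)$. By induction $|C_{\bar S}(\bar t)|=p^2$, so its preimage $H$ in $S$ has order $p^3$. The map $\psi:H\to Z(S)$, $h\mapsto[h,t]$, is again a homomorphism with kernel $C_S(t)$, and the hypothesis $t\notin C_S(Z_2(S))$ supplies some $z\in Z_2(S)\subseteq H$ with $[z,t]\in Z(S)\setminus\{1\}$, giving surjectivity; hence $|C_S(t)|=|H|/p=p^2$. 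The identification $C_S(t)=\langle t,Z(S)\rangle$ and the two intersection claims follow by cardinality together with $t\notin\gamma_1(S)$ and $t\notin C_S(Z_2(S))$. The most delicate point is the use of $t\notin C_S(Z_2(S))$: it supplies the witness for $\psi$ being surjective in the inductive step and distinguishes $C_S(t)$ from the ``other'' $2$-step centralizer in the base case.
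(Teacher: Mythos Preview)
Your proof is correct. Both parts work as stated; the only place that reads slightly tersely is the clause ``since $Z(T)\ne1$ the class of $T$ is one more'' in part~(ii), but the underlying fact is standard: if $T/Z(T)$ has class $c\ge1$ then $\gamma_c(T)\not\le Z(T)$, so $\gamma_{c+1}(T)=[\gamma_c(T),T]\ne1$ while $\gamma_{c+2}(T)\le[Z(T),T]=1$, giving class exactly $c+1$.

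Your approach differs from the paper's on both parts. For (ii) the paper simply cites \cite[Satz III.14.23]{Huppert}; you instead supply a self-contained inductive proof. For (i) the paper argues directly rather than by induction: assuming $|C_S(t)|\ge p^3$, it picks $y\in(C_S(t)\cap\gamma_1(S))\setminus Z(S)$ with $y\in\gamma_j(S)\setminus\gamma_{j+1}(S)$ and observes that $t$ then centralises $\gamma_j(S)/\gamma_{j+2}(S)=\langle y\gamma_{j+2}(S),\gamma_{j+1}(S)/\gamma_{j+2}(S)\rangle$, forcing $t$ into a $2$-step centraliser and contradicting Lemma~\ref{mc-factsb}(ii). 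The paper's argument is shorter and exploits the classification of $2$-step centralisers in one stroke; your inductive route via $S/Z(S)$ is longer but uses only the non-exceptionality of $S/Z(S)$ (Lemma~\ref{mc-factsb}(vi)) together with elementary commutator homomorphisms, and it makes the role of the hypothesis $t\notin C_S(Z_2(S))$ very explicit as the source of surjectivity of $\psi$.
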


\begin{proof} Suppose that $t\not\in \gamma_1(S)\cup C_S(Z_2(S))$. Then $t$ is not contained in any $2$-step centralizer by Lemma~\ref{mc-factsb} (ii).

Assume for a contradiction that $|C_S(t)| \ge p^3$. Then there exists $y \in (C_S(t) \cap \gamma_1(S))\setminus Z(S)$ so that $y \in \gamma_j(S) \setminus \gamma_{j+1}(S)$ with $\gamma_{j+1}(S) \ge Z(S)$. Now $t$ centralizes $$\gamma_j(S)/\gamma_{j+2}(S)= \langle \gamma_{j+1}(S)/\gamma_{j+2}(S) , y \gamma_{j+2}(S)\rangle,$$ contrary to $t$ not being in a $2$-step centralizer. Since $\gamma_1(S)$ and $C_S(Z_2(S))$ have index $p$ in $S$ and $t \not \in \gamma_1(S)\cup C_S(Z_2(S))$, $C_S(t) \cap \gamma_1(S)= C_S(t) \cap C_S(Z_2(S))=Z(S)$. This proves (i).

Part (ii) is \cite[Satz III.14.23]{Huppert}.
 \end{proof}

The next lemma provides  a weak upper bound for the number of commutators by  $\gamma_1(S)$ required to annihilate $\Omega_1(\gamma_1(S))$.
\begin{lemma}\label{nilp.class.gamma1}
Suppose that $S$ is a maximal class $p$-group of order at least $p^4$. Then $[\Omega_1(\gamma_1(S)),\gamma_1(S);\frac{p+1}{2}]=1$. Furthermore, if $[\Omega_1(\gamma_1(S)),\gamma_1(S);\frac{p-1}{2}]\ne 1$, then $S$ is exceptional.
\end{lemma}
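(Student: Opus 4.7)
Let $c$ denote the degree of commutativity of $S$; by Lemma~\ref{mc-factsb}(iv), $c\ge 1$ if and only if $S$ is non-exceptional. The defining property of $c$ gives $[\gamma_i(S),\gamma_j(S)]\le\gamma_{i+j+c}(S)$ for every $i,j\ge 1$, and iterating yields the workhorse inequality
$$[\gamma_i(S),\gamma_1(S);k]\le\gamma_{i+k(1+c)}(S)\quad\text{for all } i,k\ge 1.$$
The proof then consists of locating $\Omega_1(\gamma_1(S))$ in the lower central series of $S$ and making the right substitution.

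I would dispose of the case $n>p+1$ first. Lemma~\ref{mc-factsb}(v) forces $c\ge 1$, and Lemma~\ref{mc-facts}(iii) identifies $\Omega_1(\gamma_1(S))$ with $\gamma_{n-p+1}(S)$. Substituting $i=n-p+1$ and $k=\frac{p-1}{2}$ into the main estimate gives $\gamma_n(S)=1$, which already delivers the stronger ``furthermore'' conclusion, and a fortiori the main assertion.

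When $n\le p+1$, Lemma~\ref{mc-facts}(vi) tells us $\gamma_2(S)$ has exponent $p$, whence $\gamma_2(S)\le\Omega_1(\gamma_1(S))\le\gamma_1(S)$. Since $|\gamma_1(S):\gamma_2(S)|=p$, the only possibilities are $\Omega_1(\gamma_1(S))=\gamma_2(S)$ or $\Omega_1(\gamma_1(S))=\gamma_1(S)$ (with $\gamma_1(S)$ of exponent $p$ in the latter, by Lemma~\ref{lem:regular}(ii)). In the first possibility, taking $i=2$, $k=\frac{p-1}{2}$, and $c\ge 1$ in the main estimate lands in $\gamma_{p+1}(S)=1$, again settling both claims. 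In the second possibility, I would use the companion estimate $\gamma_k(\gamma_1(S))\le\gamma_{1+(k-1)(1+c)}(S)$ (another iteration of the defining inequality), together with the observation $[\Omega_1(\gamma_1(S)),\gamma_1(S);k]\le\gamma_{k+1}(\gamma_1(S))$, to bound the left-hand side inside a term of the lower central series of $S$.

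The main obstacle will be the boundary subcase $n=p+1$, $\Omega_1(\gamma_1(S))=\gamma_1(S)$, $S$ non-exceptional. Here the straightforward argument with $c=1$ only shows that $\gamma_1(S)$ has nilpotency class at most $\frac{p+1}{2}$, which suffices for the main statement but is exactly one step short of the $\frac{p-1}{2}$ bound needed for the ``furthermore''. Closing this gap will require exploiting that $\gamma_1(S)$ is a regular $p$-group of exponent $p$ (Lemma~\ref{lem:regular}), or showing that the non-exceptional hypothesis in this configuration forces $c\ge 2$. The analogous exceptional boundary case ($c=0$, $n\le p+1$), where only the first assertion is claimed and the target is the weaker $k=\frac{p+1}{2}$, is handled along the same lines.
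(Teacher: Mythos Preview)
Your degree-of-commutativity approach is correct for $n>p+1$ and mostly so for the subcases of $n\le p+1$ that you treat explicitly. However, the gap you flag in the boundary case $n=p+1$, $\Omega_1(\gamma_1(S))=\gamma_1(S)$, $S$ non-exceptional is genuine, and neither of your proposed patches works: regularity of $\gamma_1(S)$ carries no commutator information here, and $c\ge 2$ simply need not hold. The missing observation is that the \emph{first} commutator step is one better than $c=1$ predicts. By definition $\gamma_1(S)=C_S(\gamma_2(S)/\gamma_4(S))$, and since $\gamma_1(S)/\gamma_2(S)$ is cyclic this forces $[\gamma_1(S),\gamma_1(S)]=[\gamma_1(S),\gamma_2(S)]\le\gamma_4(S)$, not merely $\gamma_3(S)$. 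With this as the first step and $c\ge 1$ thereafter, your iteration gives $[\gamma_1(S),\gamma_1(S);k]\le\gamma_{2k+2}(S)$, so $k=\tfrac{p-1}{2}$ lands in $\gamma_{p+1}(S)=1$ when $n=p+1$, closing the gap.

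Your treatment of the exceptional case ($c=0$, $n\le p+1$) is also incomplete: the estimate ``along the same lines'' with $c=0$ only gives, e.g., $[\gamma_2(S),\gamma_1(S);\tfrac{p+1}{2}]\le\gamma_{(p+5)/2}(S)$, which is nowhere near trivial for $n$ close to $p+1$. The paper avoids this by passing to $S/Z(S)$, which is always non-exceptional by Lemma~\ref{mc-factsb}(vi); the $2$-step centralizer property then gives $[\gamma_j(S),\gamma_1(S)]\le\gamma_{j+2}(S)$ modulo $Z(S)$ uniformly, and the iteration yields $[\Omega_1(\gamma_1(S)),\gamma_1(S);\tfrac{p-1}{2}]\le Z(S)$ in every case, hence the main bound. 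The ``furthermore'' is then obtained by proving the sharper $[\Omega_1(\gamma_1(S)),\gamma_1(S);\tfrac{p-1}{2}-1]\le Z_2(S)$ (using $\gamma_1(S)/\gamma_4(S)$ abelian when $\Omega_1(\gamma_1(S))=\gamma_1(S)$), so that the next commutator lies in $[Z_2(S),\gamma_1(S)]$, which vanishes precisely when $S$ is non-exceptional.
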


\begin{proof} If $|S|=p^4$, then $\gamma_1(S)$ is abelian and so the statement holds in this case.  Suppose $|S| \ge p^5$. Lemmas~\ref{mc-facts} (iv) and ~\ref{mc-factsb} (vi) give that $|\Omega_1(\gamma_1(S))|\le p^p$ and $S/Z(S)$ is not exceptional.
Hence, the fact that $\gamma_1(S)/Z(S)$ is the unique $2$-step centralizer of $S/\Z(S)$ implies $[\Omega_1(\gamma_1(S)), \gamma_1(S); \frac{p-1}{2}]\leq Z(S)$. This proves the first bound.

Suppose now that $[\Omega_1(\gamma_1(S)),\gamma_1(S);\frac{p-1}{2}]\ne 1$.
 We first show that $$[\Omega_1(\gamma_1(S)), \gamma_1(S); \frac{p-1}{2}- 1]\le  Z_2(S).$$ This is clear if $|\Omega_1(\gamma_1(S))|\le p^{p-1}$, since $\gamma_1(S)$ is the unique $2$-step centralizer of $S/Z(S)$.  Assume $|\Omega_1(\gamma_1(S))|=p^p$. Then $\Omega_1(\gamma_1(S))=\gamma_1(S)$ by Lemma~\ref{mc-facts}(iv) and we know that $\gamma_1(S)/\gamma_4(S)$ is abelian. Hence,in this case we  also have $$[\Omega(\gamma_1(S)), \gamma_1(S);\frac{p-1}{2} - 1]\le  Z_2(S).$$ Therefore we get $1 \neq [\Omega_1(\gamma_1(S)),\gamma_1(S);\frac{p-1}{2}] \leq [Z_2(S), \gamma_1(S)]$. This means that $Z_2(S) \not \le \Z(\gamma_1(S))$ and so  $S$ is exceptional by Lemma \ref{mc-factsb}(iii).
\end{proof}

\begin{lemma}\label{sbgrp-maxclass}
Suppose that $S$ is a maximal class $p$-group of order at least $p^4$. If $T \le S$  and $T\not \subseteq \gamma_1(S)\cup C_S(Z_2(S))$, then $T$ has maximal class.
\end{lemma}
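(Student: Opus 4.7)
The plan is to reduce the statement to Lemma~\ref{p2centralizer}, which is tailor-made for this situation. Specifically, part~(ii) of that lemma says any $p$-group containing an element whose centralizer has order exactly $p^2$ has maximal class, and part~(i) gives us an abundant source of such elements in $S$. So the strategy is to find $t \in T$ with $|C_T(t)| = p^2$ and then invoke part~(ii).

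First I would use the hypothesis to pick $t \in T \setminus (\gamma_1(S) \cup C_S(Z_2(S)))$. Lemma~\ref{p2centralizer}(i) then gives $C_S(t) = \langle t, Z(S)\rangle$ of order $p^2$, and since $C_T(t) \le C_S(t)$, we get $|C_T(t)| \le p^2$. The claim is that equality holds unless $T$ is already trivially of maximal class.

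The main (and only) thing to rule out is the degenerate case $|C_T(t)| = p$. In that case $\langle t\rangle \le C_T(t)$ forces $C_T(t) = \langle t\rangle$, so $t$ has order $p$; but also $Z(T) \le C_T(t)$ and $Z(T) \ne 1$, so $Z(T) = \langle t\rangle$. This means $t$ is central in $T$, whence $C_T(t) = T$ and $|T| = p$. A group of order $p$ has nilpotency class $0 = n-1$, so it trivially has maximal class. In the remaining case $|C_T(t)| = p^2$, Lemma~\ref{p2centralizer}(ii) applies directly and we conclude that $T$ has maximal class.

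I do not expect any serious obstacle here: the entire argument is a short centralizer computation built on top of the two parts of Lemma~\ref{p2centralizer}. The only subtlety worth flagging is the need to treat the small case $|C_T(t)| = p$ separately, because Lemma~\ref{p2centralizer}(ii) is stated with the precise hypothesis $|C_T(t)| = p^2$; handling it via the observation $Z(T) \le C_T(t)$ keeps the proof self-contained and a couple of lines long.
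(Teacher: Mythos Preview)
Your proof is correct and is a genuinely different, more direct route than the paper's. The paper argues by induction on $|S|$: it embeds $T$ in a maximal subgroup $M$ of $S$ with $M\notin\{\gamma_1(S),C_S(Z_2(S))\}$, invokes Lemma~\ref{mc-factsb}(i) to see that $M$ has maximal class with $\gamma_1(M)=\gamma_2(S)=C_M(Z_2(M))$, checks that the hypothesis on $T$ persists inside $M$, and then applies the induction hypothesis. Your argument instead observes that any $t\in T\setminus(\gamma_1(S)\cup C_S(Z_2(S)))$ already has $|C_S(t)|=p^2$ by Lemma~\ref{p2centralizer}(i), so either $|C_T(t)|=p^2$ and Lemma~\ref{p2centralizer}(ii) finishes, or $Z(T)\le C_T(t)=\langle t\rangle$ forces $|T|=p$. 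This is shorter and avoids the induction scaffolding entirely; the only thing the inductive proof buys is that it does not rely on the Huppert reference behind Lemma~\ref{p2centralizer}(ii), staying instead within the structural lemmas about maximal subgroups already developed in the section.
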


\begin{proof} Suppose that $S$ is a counterexample of minimal order and let $T \leq S$ with $T\not \subseteq \gamma_1(S)\cup C_S(Z_2(S))$ be a subgroup that does not have maximal class. In particular, $T$ is not abelian of order $p$ or $p^2$, and $T \ne S$.    So we have $|T| \ge p^3$.    Let $M$ be a maximal subgroup of $S$ which contains $T$.  Then $M \not \subseteq \gamma_1(S)\cup C_S(\Z_2(S))$ and so $M$ has maximal class by Lemma~\ref{mc-factsb} (i). Therefore $T < M$ and $|M| \ge p^4$.   By Lemma~\ref{mc-factsb} (i), $\gamma_1(M) = \gamma_2(S)$ and $Z_2(M) = Z_2(S)$. In particular, $\gamma_1(M)$ centralizes $Z_2(S)$ and so $\gamma_1(M)= C_{M}(Z_2(M))$ and $T \not \subseteq  \gamma_1(M)\cup C_{M}(Z_2(M))$. Using the minimality of $S$, we conclude that $T$ has maximal class, a contradiction.
\end{proof}

\begin{lemma}\label{mc-normalizer} Suppose that $S$ is a maximal class $p$-group of order at least $p^4$.  If $T \subseteq \gamma_1(S)\cup C_S(Z_2(S))$ and $N_S(T)  \not \subseteq \gamma_1(S)\cup C_S(Z_2(S))$, then $T$ is normal in $S$.
  Furthermore, if $Z(S) \le W \subseteq \gamma_1(S)\cup C_S(Z_2(S))$ and $N_S(W) \not\le \gamma_1(S)$, then $W$ is normal in $S$.
\end{lemma}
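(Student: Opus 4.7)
The plan exploits two structural facts about maximal class $p$-groups: first, the standard fact that a group cannot be written as a union of two proper subgroups neither containing the other, so that any subgroup of $S$ lying inside $\gamma_1(S)\cup C_S(Z_2(S))$ is automatically contained in one of the two maximal subgroups; and second, Lemma~\ref{p2centralizer}(i), which pins down the centraliser of any element outside both $2$-step centralisers to be $\langle t,Z(S)\rangle$ of order $p^2$.

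For the first statement, I first place $T$ inside one of the maximal subgroups $M\in\{\gamma_1(S),C_S(Z_2(S))\}$ and select a witness $t\in N_S(T)\setminus(\gamma_1(S)\cup C_S(Z_2(S)))$ from the hypothesis; by Lemma~\ref{p2centralizer}(i), $C_M(t)=Z(S)$ has order only $p$. Next I form $H=T\langle t\rangle$, a subgroup of $N_S(T)$, and as $t\in H$ lies outside both $2$-step centralisers, Lemma~\ref{sbgrp-maxclass} says that $H$ itself is of maximal class. Since $T\triangleleft H$, Lemma~\ref{mc-facts}(i) dichotomises: either $T=\gamma_j(H)$ for some $j\ge 2$, or $T$ is a maximal subgroup of $H$ in which case $T=H\cap M$ (as $H\cap M$ has index $p$ in $H$ and contains $T$). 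The concluding step is then to show that every $m\in M$ normalises $T$; combined with $t\in N_S(T)\setminus M$ and the maximality of $M$ in $S$, this will yield $N_S(T)=S$. For this, given $m\in M$ the conjugate $H^m$ is again maximal class with $H^m\cap M=(H\cap M)^m\supseteq T^m$, and one exploits the characteristic placement of $T$ inside $H$ together with the restricted $M$-centraliser of $t$ to identify $T^m$ with $T$.

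For the second statement the same dichotomy places $W$ in $\gamma_1(S)$ or $C_S(Z_2(S))$. The hypothesis $N_S(W)\not\le\gamma_1(S)$ yields a witness $t'\in N_S(W)\setminus\gamma_1(S)$; if $t'\notin C_S(Z_2(S))$, the first statement applies immediately. If $t'\in C_S(Z_2(S))$, then the extra hypothesis $Z(S)\le W$ ensures $W$ contains enough central material that one may perturb $t'$ by an element of $W$ to produce a new witness $t''\in N_S(W)$ lying outside both $2$-step centralisers, again reducing to the first statement. The case $W\le C_S(Z_2(S))$ but $W\not\le\gamma_1(S)$ is handled symmetrically, again using $Z(S)\le W$ to locate such a witness.

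The main obstacle is the rigidity step in the first statement: showing that the maximal-class structure of $H=T\langle t\rangle$, together with the severely restricted centraliser of $t$ in $M$, forces every $M$-conjugate of $T$ to coincide with $T$. This is where the full structural machinery of maximal class $p$-groups from Section~\ref{sec:MC}, and in particular the uniqueness properties of the $2$-step centralisers in a maximal-class subgroup (Lemma~\ref{mc-factsb}(ii)) and the limited choices for normal subgroups of a maximal-class $p$-group (Lemma~\ref{mc-facts}(i)), become essential.
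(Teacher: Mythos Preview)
Your approach has a genuine gap precisely at the step you flag as the ``main obstacle'', and I do not see how to close it along the lines you sketch.  You correctly deduce that $H=T\langle t\rangle$ has maximal class and that $T=H\cap M$ has index $p$ in $H$ (indeed, $H/T$ is cyclic, so $T$ cannot be $\gamma_j(H)$ with $j\ge 2$).  But from this you need $T^m=T$ for every $m\in M$, i.e.\ $H^m\cap M=H\cap M$.  There is no mechanism forcing this: $H^m$ is another maximal-class subgroup containing $t^m=t[t,m]$, and nothing pins down $H^m\cap M$ in terms of $H\cap M$.  The fact that $C_M(t)=Z(S)$ restricts how $t$ sits in $M$, not how $T$ sits in $M$; and ``characteristic placement of $T$ in $H$'' only helps if you already know $m$ normalises $H$, which you do not.

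The paper takes a quite different route: induction on $|S|$ by passing to $S/Z(S)$.  The two preparatory observations are (a) the whole of $N=N_S(T)$ (not just your $H$) has maximal class by Lemma~\ref{sbgrp-maxclass}, so $N$ lies in a maximal-class maximal subgroup $M'$ of $S$, and then $T\le M'\cap M=\gamma_2(S)$; and (b) since $N$ is non-abelian one gets $Z(S)=Z(N)\le T$.  Now $S/Z(S)$ is not exceptional (Lemma~\ref{mc-factsb}(vi)), so $\gamma_1(S/Z(S))=C_{S/Z(S)}(Z_2(S/Z(S)))$, the hypotheses transfer to $T/Z(S)$, and the inductive hypothesis finishes.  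The crucial intermediate facts $Z(S)\le T$ and $T\le\gamma_2(S)$ never appear in your outline, and they are what makes the descent to $S/Z(S)$ work.

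Your perturbation idea for the second statement also breaks: if $W\le\gamma_2(S)=\gamma_1(S)\cap C_S(Z_2(S))$ and the witness $t'$ lies in $C_S(Z_2(S))\setminus\gamma_1(S)$, then every $w\in W$ lies in $C_S(Z_2(S))$, so $t'w\in C_S(Z_2(S))$ as well and you cannot escape.  The paper instead passes directly to $S/Z(S)$ (using $Z(S)\le W$) and applies the first part there, where the two $2$-step centralisers coincide.
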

\blue{
\begin{proof}
 By Lemma~\ref{sbgrp-maxclass},  $N_S(T) $ has maximal class. Assume that $T$ is not normal in $S$.  Then $T \ne C_S(Z_2(S))$ and $T \ne \gamma_1(S)$. Hence, as $N_S(T) \not \le \gamma_1(S)$ or $C_S(Z_2(S))$, we have $|N_S(T):T|\ge p^2$. Since $N_S(T)$ has maximal class and normalizes $T$, $T= \gamma_i(N_S(T))$ for some $i \ge 2$.  In particular, $T$ is characteristic in $N_S(T)$ and this means $T$ is normal in $S$, a contradiction.
 
 Now suppose that $Z(S) \le W$. If $S$ is not exceptional, then we have the $N_S(W)\not \le\gamma_1(S)= \gamma_1(S) \cup C_S(Z_2(S))$ and so $W$ is normal in $S$. If $S$ is exceptional, then $|S| \ge p^6$ and $S/Z(S)$ is not exceptional. Now the result follows by applying our primary statement to $W/Z(S)$ in $S/Z(S)$.
\end{proof}}

We close this subsection by determining some of the finite simple groups which have a maximal class Sylow $p$-subgroup.

\begin{lemma}\label{maxlclass simple} Suppose that $p \ge 5$ is a prime,  $G$ is a finite simple group and $S \in \syl_p(G)$.  Assume that $S$ has maximal class and has no abelian subgroup of index $p$. Then $S$ is isomorphic to a Sylow $p$-subgroup of $\G_2(p)$, $\gamma_1(S)$ is extraspecial and either $G\cong \G_2(p)$ or $p=5$ and $G\cong \Ly, \HN$ or $\B$ or $p=7$ and $G \cong\Mo$.
\end{lemma}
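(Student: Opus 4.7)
The plan is to invoke the classification of finite simple groups and eliminate the families one by one. Before starting the case analysis, observe that the hypotheses are very restrictive: any abelian maximal subgroup of $S$ would be an abelian subgroup of index $p$, so each of the $p+1$ maximal subgroups of $S$ is non-abelian. In particular $\gamma_1(S)$ is non-abelian, which by Lemma~\ref{mc-facts} forces $|S|\ge p^5$. Combined with Lemma~\ref{mc-facts}(iv), this also gives a crude upper bound on $|S|$ in terms of $p$, which will make the Atlas checks finite.

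First I would dispose of the alternating and sporadic families. The Sylow $p$-subgroup of $\Alt(n)$ is an iterated wreath product of cyclic groups of order $p$, whose base group is always an abelian normal subgroup of index $p$ whenever the Sylow is non-abelian; so no alternating group satisfies the hypotheses. For the sporadic groups, one inspects the Atlas for each $p\ge 5$ dividing the order; using that our hypotheses force $|S|$ to be bounded (in fact one quickly sees $|S|=p^6$ via comparison with the $\G_2(p)$ Sylow), only $\Ly,\HN,\B$ at $p=5$ and $\Mo$ at $p=7$ survive, matching the list.

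The substantial portion of the argument lies with the groups of Lie type. In defining characteristic $p$, the Sylow $p$-subgroup is the unipotent radical of a Borel subgroup, whose structure is read off from the positive root system. Rank $1$ gives a cyclic Sylow. In rank $2$ the options are $\A_2$ and ${}^2\A_2$ (extraspecial of order $p^3$, with an abelian subgroup of index $p$), $\B_2$ (order $p^4$, where $\gamma_1(S)$ is abelian by Lemma~\ref{mc-facts}), and $\G_2$; only $\G_2(p)$ with $p\ge 5$ satisfies the hypotheses. Higher rank groups have Sylow $p$-subgroups whose order is too large relative to their class to be of maximal class. In non-defining characteristic $r$, one uses the standard description of Sylow $p$-subgroups via normalizers of Sylow $\Phi_d$-tori (with $d$ the multiplicative order of $r$ mod $p$): a Sylow $p$-subgroup is contained in a group of the form $A\wr C_{p^k}$ with $A$ abelian, and in all cases that give a non-abelian Sylow of maximal class one can exhibit an abelian subgroup of index $p$, ruling out this range.

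The main obstacle is the uniform handling of Lie type groups in non-defining characteristic: one must march through the classical and exceptional families using the detailed Sylow structure results of Gorenstein--Lyons--Solomon and track the ``no abelian subgroup of index $p$'' condition through the torus-normalizer description, exploiting the fact that the maximal class condition tightly bounds the rank of the underlying torus. Once this case is complete, the union of the four lists (alternating: none; defining characteristic: $\G_2(p)$; non-defining: none; sporadic: $\Ly,\HN,\B,\Mo$) gives the statement.
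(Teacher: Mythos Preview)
Your overall case division via the classification of finite simple groups matches the paper's, and your treatment of the alternating and sporadic families is broadly in the right spirit. However, there are two substantive gaps in the Lie type cases.

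First, in defining characteristic you never control the field. The statement ``rank $1$ gives a cyclic Sylow'' is false as written: $\PSL_2(p^a)$ has an elementary abelian Sylow of order $p^a$, and $\PSU_3(p^a)$, ${}^2\B_2(2^{2a+1})$ have non-abelian Sylows. The paper's device is to use that $|S/\gamma_2(S)|=p^2$ (immediate from maximal class). Via \cite[Theorem 3.3.1]{GLS3} this forces untwisted rank at most $2$ \emph{and} the field to be $\GF(p)$ when the rank is $2$; the rank $1$ possibilities $\PSL_2(p^2)$, $\PSU_3(p)$ are then ruled out by the abelian-index-$p$ hypothesis along with $\PSL_3(p)$, $\PSp_4(p)$, leaving only $\G_2(p)$.

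Second, you identify cross-characteristic as ``the main obstacle'' requiring a march through families, but the paper handles it uniformly in three lines. By \cite[Theorem 4.10.2]{GLS3} one has $S=P_TP_W$ with $P_T$ abelian normal and $P_W$ a complement. Then $\gamma_2(S)=[P_T,P_W]P_W'$, so $|S/\gamma_2(S)|=p^2$ forces $|P_W/P_W'|=p$, hence $|P_W|=p$, giving $P_T$ abelian of index $p$ --- a contradiction. No case-by-case analysis is needed.

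A minor point on sporadics: the paper does not assert $|S|=p^6$ a priori. It simply inspects the tables in \cite[Tables 5.3]{GLS3}; the only nontrivial elimination is the monster at $p=5$, where a Sylow $5$-subgroup has order $5^9$, and one uses the $5$-local $5^{1+6}.((4\ast 2^{.}\J_2).2)$ together with Lemma~\ref{mc-facts}(v) to obtain a contradiction.
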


\begin{proof} \blue{We may assume that $|S| \ge p^4$.
 We consider each of the types of non-abelian simple groups}.  If $G$ is an alternating group of degree $d$, then, as $S$ has  no abelian subgroup of index $p$, $d\ge  p^3$. But then  $S$ contains $p^p$ commuting $p$-cycles \blue{and has order greater than $p^{p+2}$.} This contradicts Lemma~\ref{mc-facts} (iv).

Suppose $G$ is a Lie type group in characteristic $p$. We use the fact that $|S/\gamma_2(S)|=p^2$. Using \cite[Theorem 3.3.1]{GLS3} we see that $G$ has untwisted \blue{Lie-rank} at most $2$ and that \blue{$G$} is defined over $\GF(p)$ if it has rank 2. Since $S$ has no abelian subgroup of index $p$, $G$ is not $\PSL_2(p^2)$,  $\PSU_3(p)$, $\PSL_3(p)$, $\PSp_4(p)$. This leaves $\G_2(p)$.  Hence the result holds for Lie type groups in characteristic $p$.

Suppose that $G$ is a Lie type group in characteristic $r$ with $r \ne p$. We use \cite[Theorem 4.10.2]{GLS3}. Thus we can write $S= P_T P_W$ where $P_T$ is an abelian normal subgroup of $S$ and $P_W$ is a complement to $P_T$.  Since $$\gamma_2(S)=[S,S]= [P_TP_W,P_TP_W] = [P_T,P_W]P_W',$$ $|S/\gamma_2(S)|=p^2$ implies that $P_W/P_W'$ has order $p$.  But then, $P_W$ has order $p$ and $S$ has an abelian subgroup of index $p$, a contradiction.

Finally, suppose that $G$ is a sporadic simple group.  We deploy the tables in \cite[Tables 5.3]{GLS3}.  The sporadic groups listed in the conclusion of our lemma have Sylow $p$-subgroups isomorphic to those of $\G_2(5)$  in the first three cases and to those of $\G_2(7)$ in the last case.
Since $p \ge 5$, we only need to think about the Sylow $5$-subgroup of the monster $\Mo$. This group has order $5^9$. Using \cite[Table 5.3z]{GLS3}, the monster has a $5$-local subgroup $5^{1+6}.((4\circ 2^{.}\J_2).2)$. Thus $S/Z(S)$ is of maximal class and has a normal subgroup of $5$-rank $6$, as $|S/Z(S)|= 5^8> 5^7$, this contradicts Lemma~\ref{mc-facts}(v).
\end{proof}

We give a list of the realizable fusion systems on maximal class $p$-groups which have an abelian subgroup of index $p$ in  Appendix~\ref{AppA}, Table~\ref{Tab3}. This table is extracted from \cite[Table 2.2]{p.index2}.

\subsection{Automorphisms of $p'$-order}\label{SS3.1}

We now present some conclusions about the automorphism group   of $p$-groups $S$ of maximal class of order $p^n$ with $n \ge 4$. \blue{Our first result is well-known.
\begin{theorem}\label{full auto S}
Suppose that $S$ is a $p$-group of maximal class and order at least $p^4$. Then $\Aut(S)/O_p(\Aut(S))$ is isomorphic to a subgroup of the diagonal matrices in $\GL_2(p)$. In particular, if $H$ is a subgroup of $\Aut(S)$ and $|H|$ is coprime to $p$, then $H$ is abelian and is isomorphic to a subgroup of the diagonal matrices in $\GL_2(p)$.
\end{theorem}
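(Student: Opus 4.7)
The plan is to exploit the characteristic chain $S > \gamma_1(S) > \gamma_2(S)$ with $|S/\gamma_1(S)|=|\gamma_1(S)/\gamma_2(S)|=p$, which is available because $|S|\ge p^4$ (as noted in the introductory line to the theorem, $\gamma_1(S)$ is characteristic since it is the $2$-step centralizer of $\gamma_2(S)/\gamma_4(S)$, and $\gamma_2(S)=S'$ is obviously characteristic). First I would define the homomorphism
\[
\rho : \Aut(S) \longrightarrow \Aut(S/\gamma_1(S)) \times \Aut(\gamma_1(S)/\gamma_2(S)) \;\cong\; \GF(p)^\times \times \GF(p)^\times,
\]
given by restricting an automorphism $\alpha$ to the two characteristic factors $S/\gamma_1(S)$ and $\gamma_1(S)/\gamma_2(S)$, each of order $p$. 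The image of $\rho$ is naturally identified with the diagonal subgroup of $\GL_2(p)$ acting on the two one-dimensional $\GF(p)$-spaces, so any subgroup of $\Aut(S)/\ker\rho$ is isomorphic to a subgroup of the diagonal matrices in $\GL_2(p)$.

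Next I would identify the kernel $K:=\ker\rho$ with $O_p(\Aut(S))$. Any $\alpha\in K$ acts trivially on $S/\gamma_2(S)$, and since $\gamma_2(S)=S'\le \Phi(S)$, $\alpha$ acts trivially on $S/\Phi(S)$. It is a classical fact (see e.g.\ the Burnside Basis Theorem plus the argument that such automorphisms stabilize a chief series with elementary abelian factors in $S$) that the group of automorphisms of a $p$-group which act trivially on $S/\Phi(S)$ is itself a $p$-group. Hence $K$ is a normal $p$-subgroup of $\Aut(S)$, so $K\le O_p(\Aut(S))$. Conversely $O_p(\Aut(S))/K$ embeds in the $p'$-group $\GF(p)^\times\times \GF(p)^\times$, forcing $O_p(\Aut(S))=K$. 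This gives the first assertion.

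For the second statement, suppose $H\le \Aut(S)$ with $\gcd(|H|,p)=1$. Then $H\cap O_p(\Aut(S))=1$, so the restriction of $\rho$ to $H$ is injective, embedding $H$ into the diagonal matrices of $\GL_2(p)$, which is abelian of order $(p-1)^2$. This simultaneously delivers both that $H$ is abelian and that it is isomorphic to a subgroup of the diagonal matrices, completing the theorem.

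I expect no serious obstacle here; the only mildly delicate point is invoking the standard fact that automorphisms stabilizing the chain $S>\Phi(S)>1$ form a $p$-group, which I would cite rather than reprove (it follows from the fact that such an $\alpha$ stabilizes a refinement of this chain to a chief series and so lies in a Sylow $p$-subgroup of $\Aut(S)$, being a product of transvections on the elementary abelian factors).
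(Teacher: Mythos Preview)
Your overall strategy---map $\Aut(S)$ to $\Aut(S/\gamma_1(S))\times\Aut(\gamma_1(S)/\gamma_2(S))$ and identify the kernel with $O_p(\Aut(S))$---is exactly the one the paper has in mind (it merely says the result follows from $|S/\gamma_2(S)|=p^2$ and $\gamma_1(S)$ being characteristic, leaving the details to the reader).

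There is, however, a genuine slip in your identification of $\ker\rho$. You assert that any $\alpha\in K$ acts trivially on $S/\gamma_2(S)$. This is false: knowing that $\alpha$ centralizes both $S/\gamma_1(S)$ and $\gamma_1(S)/\gamma_2(S)$ only tells you that $\alpha$ acts on $S/\gamma_2(S)\cong C_p\times C_p$ as an upper-triangular \emph{unipotent} matrix, not as the identity. So your deduction that $\alpha$ centralizes $S/\Phi(S)$, and hence your appeal to the Burnside basis theorem, does not go through as written.

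The repair is immediate and can be done in either of two ways. One option: restrict to a $p'$-element $\alpha\in K$. Coprime action (e.g.\ \cite[Theorem 5.3.2]{Gor}, or simply $[S/\gamma_2(S),\alpha]=[S/\gamma_2(S),\alpha,\alpha]\le[\gamma_1(S)/\gamma_2(S),\alpha]=1$) then \emph{does} force $\alpha$ to centralize $S/\gamma_2(S)$, hence $S/\Phi(S)$, hence $\alpha=1$; so $K$ has no nontrivial $p'$-elements. Alternatively: observe that the image of $K$ in $\Aut(S/\gamma_2(S))$ lies in the unipotent radical (a $p$-group), while the kernel of $K\to\Aut(S/\gamma_2(S))$ centralizes $S/\Phi(S)$ and is therefore a $p$-group by \cite[Theorem 5.1.4]{Gor}; thus $K$ is a $p$-group as an extension of one $p$-group by another. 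Either fix completes your argument.
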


\begin{proof} We know that $|S/\gamma_2(S)|=p^2$ and $S/\gamma_2(S)$ is elementary abelian. Hence $\Aut(S)/C_{\Aut(S)}(S/\gamma_2(S))$ is isomorphic to a subgroup of $\GL_2(p)$. Since $\gamma_1(S)$ is a characteristic subgroup of $S$ and $|\gamma_1(S)/\gamma_2(S)|=p$, $\Aut(S)/C_{\Aut(S)}(S/\gamma_2(S))$ is isomorphic to a subgroup of the lower triangular matrices of $\GL_2(p)$. By Burnside's Theorem \cite[5.1.4]{Gor}, $C_{\Aut(S)}(S/\gamma_2(S))$ is a $p$-group. The result now follows.
 \end{proof}}
One consequence of Theorem~\ref{full auto S} is that if $\varphi \in \Aut(S)$ has order coprime to $p$, then by Maschke's Theorem there exists a maximal subgroup $M$ of $S$ with $M \ne \gamma_1(S)$ such that $M \varphi=M$.
Choose $x \in M \setminus \gamma_1(S)$ and $s_1 \in \gamma_1(S)\setminus \gamma_2(S)$ and define
\[ s_i = [x, s_{i-1}] ~ \text{ for every } ~ 2\leq i \leq n-2 ~ \text{ and } \]
\[ s_{n-1} = \begin{cases} [x, s_{n-2}] \quad \text{ if }M =\C_S(\Z_2(S)) \\
						[s_1, s_{n-2}] \quad \text{otherwise.}\end{cases}\]

The choice of $x$ and $s_1$ is plainly not unique, and it is even possible that $[s_1,s_2] \in \gamma_j(S)$ and for a different choice of $x$ and $s_1$ we have $[s_1,s_2] \in \gamma_k(S)$ with $k < j$. In particular, there are examples of maximal class groups $S$ and a choice of $x$ and $s_1$ such that $[s_1,s_2]=1$  and $\gamma_1(S)$ is non-abelian.

The next lemma is extracted from part of the  proof of \cite[Theorem 2.19]{pearls}.

\begin{lemma}\label{action}
\blue{Suppose that $S$ is a  maximal class $p$-group of order  $p^n$ with $n \ge 4$.} Let $\varphi \in \Aut(S)$ be an automorphism of order coprime to $p$ and let $x \in S \setminus \gamma_1(S)$ be such that $\varphi$ leaves   $\langle x \rangle\gamma_2(S)$ invariant.
If $a,b \in \GF(p)$ are such that $x\varphi \equiv x^a \mod \gamma_2(S)$ and $s_1\varphi  \equiv s_1^b \mod \gamma_2(S)$ then
\begin{equation*}
\begin{split}
 s_i \varphi & \equiv s_i^{a^{i-1}b} \mod \gamma_{i+1}(S) ~ \text{ for every } ~ 1 \leq i\leq n-2 ~ \text{ and } \\
s_{n-1}\varphi &= \begin{cases} s_{n-1}^{a^{n-2}b} &\text{ if  } S \text{ is not exceptional} \\s_{n-1}^{a^{n-3}b^2} &\text{ if  } S \text{ is  exceptional.}
\end{cases}
\end{split}
\end{equation*}
\end{lemma}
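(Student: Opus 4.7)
The plan is to prove the formula by induction on $i$, with the base case $i=1$ being exactly the hypothesis that $s_1 \varphi \equiv s_1^b \pmod{\gamma_2(S)}$. For the inductive step with $2 \le i \le n-2$, write
\[
x\varphi = x^a u \quad \text{and} \quad s_{i-1}\varphi = s_{i-1}^{a^{i-2}b} v,
\]
where $u \in \gamma_2(S)$ and $v \in \gamma_{i-1}(S)$ are the error terms (with $v \in \gamma_i(S)$ in fact, by the inductive hypothesis applied modulo $\gamma_i(S)$). Then $s_i \varphi = [x^a u, s_{i-1}^{a^{i-2}b} v]$, and I would expand using the standard identities $[xy,z] = [x,z]^y[y,z]$ and $[x,yz] = [x,z][x,y]^z$. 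Every term involving either $u$ or $v$ produces a commutator lying in $[\gamma_2(S), \gamma_{i-1}(S)] \cdot [S, \gamma_i(S)] \subseteq \gamma_{i+1}(S)$, so modulo $\gamma_{i+1}(S)$ only $[x^a, s_{i-1}^{a^{i-2}b}]$ survives.

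To finish the induction step, I need the two auxiliary congruences
\[
[x^a, y] \equiv [x,y]^a \pmod{\gamma_{i+1}(S)} \quad \text{and} \quad [x, y^c] \equiv [x,y]^c \pmod{\gamma_{i+1}(S)}
\]
whenever $[x,y] \in \gamma_i(S)$. These follow by a short induction on $a$ (resp.\ $c$) from the identity $[x^a,y] = [x,y]^{x^{a-1}}[x^{a-1},y]$ together with the observation that conjugating an element of $\gamma_i(S)$ by an element of $S$ changes it only modulo $\gamma_{i+1}(S)$. Applying these congruences gives
\[
s_i\varphi \equiv [x, s_{i-1}]^{a \cdot a^{i-2} b} = s_i^{a^{i-1} b} \pmod{\gamma_{i+1}(S)},
\]
completing the inductive step.

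For $i = n-1$, one has $\gamma_n(S) = 1$, so every ``error term mod $\gamma_n(S)$'' vanishes outright, and the computation becomes an equality rather than a congruence. In the non-exceptional case one uses $s_{n-1} = [x, s_{n-2}]$; expanding $[x\varphi, s_{n-2}\varphi]$ as above and noting that all correction commutators lie in $\gamma_n(S) = 1$ yields $s_{n-1}\varphi = s_{n-1}^{a \cdot a^{n-3} b} = s_{n-1}^{a^{n-2}b}$. In the exceptional case one instead has $s_{n-1} = [s_1, s_{n-2}]$, and the same type of expansion with $s_1 \varphi = s_1^b \cdot (\text{element of }\gamma_2(S))$ (the error term contributes a commutator in $[\gamma_2(S), \gamma_{n-2}(S)] \subseteq \gamma_n(S) = 1$) gives $s_{n-1}\varphi = s_{n-1}^{b \cdot a^{n-3} b} = s_{n-1}^{a^{n-3} b^2}$.

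The principal obstacle is purely bookkeeping: one must verify at each step that error terms introduced by representing $x\varphi$ and $s_{i-1}\varphi$ modulo $\gamma_2(S)$ and $\gamma_i(S)$ respectively always land in $\gamma_{i+1}(S)$ after forming the relevant commutators, and that the conjugation-shifted forms of $[x^a,y]$ and $[x,y^c]$ differ from the honest powers only inside $\gamma_{i+1}(S)$. Both facts reduce to the trivial observation that $[\gamma_j(S), \gamma_k(S)] \subseteq \gamma_{j+k}(S)$, so the whole argument is essentially the bilinearity of the Lie bracket on the associated graded ring $\bigoplus \gamma_j(S)/\gamma_{j+1}(S)$ applied to the specific generator $s_1$ and to $x$ modulo $\gamma_2(S)$.
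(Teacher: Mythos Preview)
Your proof is correct. The paper itself does not give an argument but merely cites \cite[Theorem~2.19]{pearls}, so there is no in-paper proof to compare against; your direct induction using the bilinearity of the commutator on the associated graded $\bigoplus \gamma_j(S)/\gamma_{j+1}(S)$ is exactly the standard computation one expects (and presumably what appears in the cited reference). Two small remarks: first, your parenthetical ``$v\in\gamma_{i-1}(S)$ \dots\ with $v\in\gamma_i(S)$ in fact'' is redundant---the inductive hypothesis gives $v\in\gamma_i(S)$ directly; second, your identification of which commutator defines $s_{n-1}$ in each case (namely $[x,s_{n-2}]$ when $S$ is non-exceptional and $[s_1,s_{n-2}]$ when $S$ is exceptional) is the correct one, being the unique choice that makes $s_{n-1}\ne 1$ and yields the stated exponents---the displayed definition in the paper just before the lemma appears to have the two cases transposed.
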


\blue{
\begin{proof} We demonstrate the result by induction on $i$.
If $i=1$, then this is just the definition of $b$.
Assume $1<i< n-2$. Then by the inductive hypothesis there exists $ u\in \gamma_2(S)$, $v\in \gamma_i(S)$ such that
\[ s_i \varphi = [x, s_{i-1}]\varphi = [x^a u, s_{i-1}^{a^{i-2}b}v] \]
Thus
\[    s_i \varphi \equiv [x^a, s_{i-1}^{a^{i-2}b}] \mod \gamma_{i+1}(S) \equiv s_i^{a^{i-1}b} \mod \gamma_{i+1}(S). \]

The same argument works for $i=n-1$ when $S$ is not exceptional and this yields the result in this case.

If $S$ is exceptional and $i=n-1$, then we have
\[ s_{n-1}\varphi = [s_1, s_{n-2}]\varphi = [s_1^b u, s_{n-2}^{a^{n-3}b}v] = s_{n-1}^{a^{n-3}b^2},\]
for some $u\in \gamma_2(S)$ and $v\in Z(S)$ and this yields the result.
\end{proof}}

\begin{lemma}\label{lem:autS p'elts}
Suppose that $S$ is a $p$-group of maximal class and order at least $p^4$. Assume that $\gamma_1(S)$ is not abelian or extraspecial. Then every non-trivial automorphism of $S$ of order coprime to $p$ acts faithfully on $S/\gamma_1(S)$. \blue{In particular, $C_{\Aut(S)}(S/\gamma_1(S))$ is a $p$-group and $\Aut(S)/C_{\Aut(S)}(S/\gamma_1(S))$ has order dividing $p-1$.}
\end{lemma}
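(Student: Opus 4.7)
The plan is to suppose, for contradiction, that $\varphi\in\Aut(S)$ is a non-trivial $p'$-automorphism acting trivially on $S/\gamma_1(S)$, and to derive a contradiction with the hypothesis that $\gamma_1(S)$ is neither abelian nor extraspecial. I would begin by invoking the setup preceding Lemma~\ref{action}: Maschke's theorem produces a $\varphi$-invariant maximal subgroup $M\ne\gamma_1(S)$, and one chooses $x\in M\setminus\gamma_1(S)$ and $s_1\in\gamma_1(S)\setminus\gamma_2(S)$ as there. Our hypothesis forces the scalar $a$ of Lemma~\ref{action} to equal $1$; if we had $b=1$ as well, then $\varphi$ would centralise every chief factor of $S$, and coprime action would give $\varphi=\id$, so $b\ne 1$.

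The core step is a commutator calculation inside $\gamma_1(S)$. Given $g\in\gamma_i(S)\setminus\gamma_{i+1}(S)$ and $h\in\gamma_k(S)\setminus\gamma_{k+1}(S)$ with $[g,h]\in\gamma_m(S)\setminus\gamma_{m+1}(S)$, Lemma~\ref{action} with $a=1$ lets us write $g\varphi=g^bu$ and $h\varphi=h^bv$ for some $u\in\gamma_{i+1}(S)$ and $v\in\gamma_{k+1}(S)$. A direct expansion of $[g\varphi,h\varphi]$ using the standard commutator identities, in which every correction term arising from $u$, $v$, or a nested commutator lies in $\gamma_{m+1}(S)$, gives
\[
[g\varphi,h\varphi]\equiv[g,h]^{b^2}\pmod{\gamma_{m+1}(S)}.
\]
On the other hand Lemma~\ref{action} describes the $\varphi$-action on the chief factor containing $[g,h]$: it is multiplication by $b$ when $m\le n-2$, multiplication by $b$ on $Z(S)=\gamma_{n-1}(S)$ when $S$ is not exceptional, and multiplication by $b^2$ on $Z(S)$ when $S$ is exceptional. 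Comparing the two descriptions of $[g,h]\varphi$ forces $b^2\equiv b\pmod p$, hence $b=1$, in every situation except when $m=n-1$ and $S$ is exceptional. Since $\gamma_1(S)$ is non-abelian it admits some non-trivial commutator, so the non-exceptional case is finished immediately; in the exceptional case we conclude that every commutator of elements of $\gamma_1(S)$ lies in $Z(S)$, and as $|Z(S)|=p$ this gives $\gamma_1(S)'=Z(S)$, forcing $\gamma_1(S)$ to have nilpotency class $2$.

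The most delicate step, which I expect to be the main obstacle, is to show that this already forces $\gamma_1(S)$ to be extraspecial, contradicting the hypothesis. Here I would exploit that $Z(\gamma_1(S))$ is characteristic in $\gamma_1(S)$ and therefore normal in $S$, and that it is a proper subgroup of the maximal subgroup $\gamma_1(S)$. By Lemma~\ref{mc-facts}(i) we must have $Z(\gamma_1(S))=\gamma_j(S)$ for some $j\ge 2$. If $\gamma_1(S)$ were not extraspecial then $Z(\gamma_1(S))>Z(S)=\gamma_{n-1}(S)$, so $j\le n-2$, whence $Z_2(S)=\gamma_{n-2}(S)\le\gamma_j(S)=Z(\gamma_1(S))$. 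Consequently $\gamma_1(S)\le C_S(Z_2(S))$, and maximality of both subgroups forces $\gamma_1(S)=C_S(Z_2(S))$, contradicting Lemma~\ref{mc-factsb}(iii) since $S$ is exceptional. Therefore $\gamma_1(S)$ must be extraspecial, contradicting the hypothesis, and no non-trivial such $\varphi$ exists.
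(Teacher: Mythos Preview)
Your overall strategy matches the paper's---compute $[g,h]\varphi$ two ways to force $b^2\equiv b$---but the commutator expansion contains a genuine gap. You assert that every correction term in $[g^bu,h^bv]$ lies in $\gamma_{m+1}(S)$, yet the term $[u,h^b]$ with $u\in\gamma_{i+1}(S)$ lies only in $[\gamma_{i+1}(S),\gamma_k(S)]$, and nothing forces this subgroup into $\gamma_{m+1}(S)$. The index $m$ records how deep the \emph{particular} commutator $[g,h]$ sits; for general $g,h$ it may well happen that $[g,h]$ lies strictly deeper than $[\gamma_{i+1}(S),\gamma_k(S)]$, in which case the correction term does not vanish modulo $\gamma_{m+1}(S)$. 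Lemma~\ref{action} gives no control over $u$ beyond $u\in\gamma_{i+1}(S)$, so the claim is unjustified. Worse, your exceptional-case conclusion that $\gamma_1(S)'\le Z(S)$ requires the congruence for \emph{every} pair $(g,h)$, so one cannot repair the argument by a single well-chosen pair.

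The paper closes this gap by a specific choice that makes the corrections vanish exactly rather than merely modulo $\gamma_{m+1}(S)$: take $k$ with $\gamma_{k+1}(S)=Z(\gamma_1(S))$, so the correction for $s_k$ is central in $\gamma_1(S)$; and take $j$ \emph{maximal} with $[s_k,s_j]\ne1$, so the correction for $s_j$ lies in $\gamma_{j+1}(S)\le C_S(s_k)$. This yields the exact equality $[s_j,s_k]\varphi=[s_j,s_k]^{b^2}$, and comparison with Lemma~\ref{action} gives $b=1$ in the non-exceptional case. For the exceptional case the paper does not attempt to control all commutators: it passes to $\bar S=S/Z(S)$, which is not exceptional by Lemma~\ref{mc-factsb}(vi), and applies the non-exceptional conclusion to get $\gamma_1(\bar S)$ abelian, hence $\gamma_1(S)$ extraspecial. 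Your direct argument that $Z(\gamma_1(S))=Z(S)$ whenever $S$ is exceptional is correct and elegant, but it only finishes the proof once $\gamma_1(S)'\le Z(S)$ is established, and for that you need either the paper's inductive reduction or a corrected commutator computation---your general congruence does not supply it.
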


\begin{proof}
Suppose $\alpha\in\Aut(S)$ has order coprime to $p$ and centralizes $S/\gamma_1(S)$. We show that $\alpha$ is the trivial automorphism. Aiming for a contradiction, suppose that $\alpha$ is non-trivial.
Let $b\in \GF(p)$ be such that $s_1\alpha \equiv s_1^b \mod \gamma_2(S)$. Because $\alpha$ is non-trivial, $b \ne 1$ as $\alpha$ acts faithfully on $S/\gamma_2(S)$ by \cite[Theorem 5.1.4]{Gor}. Employing Lemma \ref{action} with $a=1$, we get
 \begin{equation*}
\begin{split}
 s_i \alpha & \equiv s_i^{b} \mod \gamma_{i+1}(S) ~ \text{ for every } ~ 1 \leq i\leq n-2 ~ \text{ and } \\
s_{n-1}\alpha &= \begin{cases} s_{n-1}^{b} &\text{ if  } S \text{ is not exceptional} \\
						s_{n-1}^{b^2}& \text{ if  } S \text{ is  exceptional}
\end{cases}
\end{split}
\end{equation*}
Since $\gamma_1(S)$ is not abelian, we have $\Z(\gamma_1(S)) = \gamma_{k+1}(S)$ for some $2\leq k \leq n-2$. Then $\gamma_k(S)$ is abelian and is non-central.  Choose $j$ maximal such that $[s_k,s_j] \ne 1$.
As $[s_j, s_k] \neq 1$, $[s_j,s_k] \in \gamma_i(S)\backslash \gamma_{i+1}(S)$ for some $k+1 \leq i \leq n-1$.
Note that $[s_j, s_k] \in \Z(\gamma_1(S))$ commutes with both $s_j$ and $s_k$. Also,
 $s_j\alpha = s_j^bz$ for some $z \in \gamma_{j+1}(S)$, that commutes with $s_k$ by the maximal choice of $j$, and $s_k\alpha = s_k^bu$, for some $u \in \gamma_{k+1}(S) = \Z(\gamma_1(S))$. Therefore using the commutator formulae   we get
\[   [s_j,s_k]\alpha = [s_j\alpha, s_k\alpha] = [s_j^bz, s_k^bu] = [s_j^b, s_k^b]  = [s_j,s_k]^{b^2}.\]
Suppose $S$ is not exceptional.
Then
\[  [s_j,s_k]^{b^2} \equiv [s_j,s_k]^b \mod \gamma_{i+1}(S)\]
and so $b=1$, which is a contradiction.

Suppose $S$ is exceptional. Then $\ov{S}=S/\Z(S)$ is not exceptional by Lemma~\ref{mc-factsb} (vi) and $\alpha$ is a non-trivial automorphism of $\ov{S}$ of order coprime to $p$ centralizing $\ov{S}/\gamma_1(\ov{S})$. Since $S/Z(S)$ is not exceptional,  using what we proved above we obtain $\gamma_1(\ov{S}) = \gamma_1(S)/\Z(S)$ is abelian. Hence $\gamma_1(S)$ is extraspecial, contradicting the hypothesis.
\end{proof}

\begin{cor}\label{c12}
Suppose that $S$ is a $p$-group of maximal class and order at least $p^4$. Assume that $\gamma_1(S)$ is not abelian or extraspecial. If $H$ is a subgroup of $\Aut(S)$ of order coprime to $p$, then $H$ is cyclic of order $m$ dividing $(p-1)$ and $H$ acts faithfully on $S/\gamma_1(S)$. Furthermore, $|\Aut(S)|= p^am$ for some natural number $a$.
\end{cor}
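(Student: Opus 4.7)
The plan is to combine Theorem~\ref{full auto S} with Lemma~\ref{lem:autS p'elts} and then deduce the order statement from the structure of $\Aut(S)/O_p(\Aut(S))$.

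First, by Theorem~\ref{full auto S}, since $H$ has order coprime to $p$, it embeds into the diagonal subgroup of $\GL_2(p)$; in particular $H$ is abelian and can be viewed as a subgroup of $\GF(p)^{\times}\times\GF(p)^{\times}$, with one factor recording the action on $S/\gamma_1(S)$ and the other the action on $\gamma_1(S)/\gamma_2(S)$. Next, by Lemma~\ref{lem:autS p'elts}, which applies because $\gamma_1(S)$ is neither abelian nor extraspecial, every non-trivial element of $H$ acts non-trivially on $S/\gamma_1(S)$. Hence the projection of $H$ onto the first diagonal factor is injective, which embeds $H$ into the cyclic group $\GF(p)^{\times}$. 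So $H$ is cyclic of some order $m$ dividing $p-1$.

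Finally, for the order statement on $\Aut(S)$, observe that Theorem~\ref{full auto S} says $\Aut(S)/O_p(\Aut(S))$ is a (necessarily abelian) $p'$-group, so $\Aut(S)$ is $p$-separable with Sylow $p$-subgroup $O_p(\Aut(S))$ of some order $p^a$. By the Schur--Zassenhaus theorem, the extension $1\to O_p(\Aut(S))\to \Aut(S)\to \Aut(S)/O_p(\Aut(S))\to 1$ splits, and any Hall $p'$-subgroup $H_0$ of $\Aut(S)$ projects isomorphically onto $\Aut(S)/O_p(\Aut(S))$. Since $H_0$ satisfies the hypotheses of the first part, it is cyclic of some order $m$ dividing $p-1$, and by the same argument applied to an arbitrary $p'$-subgroup $H$ (whose image in $\Aut(S)/O_p(\Aut(S))$ lies inside that of $H_0$, so $|H|$ divides $m$), we see that $m$ is indeed the common order of all maximal $p'$-subgroups. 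Therefore $|\Aut(S)|=p^a m$.

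The only subtle point in the argument is ensuring that the faithfulness on $S/\gamma_1(S)$ established in Lemma~\ref{lem:autS p'elts} really uses the hypothesis that $\gamma_1(S)$ is neither abelian nor extraspecial; this hypothesis is what makes the coprime action collapse when it is trivial modulo $\gamma_1(S)$, and it is the crucial ingredient reducing the possibly two-dimensional diagonal image in $\GL_2(p)$ to a one-dimensional, hence cyclic, image. Everything else is a direct consequence of standard coprime/$p$-separable structure theory.
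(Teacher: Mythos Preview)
Your proof is correct and uses the same key ingredient as the paper, namely Lemma~\ref{lem:autS p'elts}. The paper's argument is slightly more streamlined: rather than first invoking Theorem~\ref{full auto S} to embed $H$ in the diagonal of $\GL_2(p)$ and then projecting, it simply considers the homomorphism $\theta:\Aut(S)\to\Aut(S/\gamma_1(S))\cong\GF(p)^\times$ and observes that $\ker\theta$ is a $p$-group by Lemma~\ref{lem:autS p'elts}; this immediately gives both the cyclicity of any $p'$-subgroup and the order decomposition $|\Aut(S)|=|\ker\theta|\cdot|\operatorname{im}\theta|=p^a m$, bypassing the need for Schur--Zassenhaus.
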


\begin{proof} We know that $\gamma_1(S)$ is a characteristic subgroup of $S$. Hence there is a homomorphism $\theta:\Aut(S) \rightarrow \Aut(S/\gamma_1(S))$ and the latter group is cyclic of order $p-1$.  By Lemma~\ref{lem:autS p'elts}, $\ker \theta$ is a $p$-group. This proves the claim.
\end{proof}

\begin{lemma}\label{action.exceptional1}
Suppose that $S$ is a $p$-group of maximal class and order at least $p^4$. If $S$ is exceptional and $\alpha \in \Aut(S)$ is an involution which inverts $S/\gamma_1(S)$, then $\alpha$  inverts $Z(S)$.
\end{lemma}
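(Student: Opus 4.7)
\medskip

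The plan is to apply Lemma~\ref{action} directly. First I would record two preliminary facts. Since $S$ is exceptional of order $p^n$, Lemma~\ref{mc-factsb}(v) forces $n$ to be even and $p$ to be odd. Moreover, since $\alpha$ has order $2$ coprime to $p$, Maschke's theorem applied to the action of $\alpha$ on $S/\gamma_2(S)$ produces an $\alpha$-invariant complement $M/\gamma_2(S)$ to the characteristic line $\gamma_1(S)/\gamma_2(S)$. Thus $M$ is an $\alpha$-invariant maximal subgroup of $S$ different from $\gamma_1(S)$, and we may choose $x \in M \setminus \gamma_1(S)$ and $s_1 \in \gamma_1(S) \setminus \gamma_2(S)$ as in the set-up preceding Lemma~\ref{action}. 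The hypothesis that $\alpha$ inverts $S/\gamma_1(S)$ gives $x\alpha \equiv x^{-1} \pmod{\gamma_2(S)}$ (using that $M/\gamma_2(S)$ maps isomorphically onto $S/\gamma_1(S)$), i.e.\ $a = -1$; and since $\alpha^2 = 1$ acts on the one-dimensional space $\gamma_1(S)/\gamma_2(S)$, we have $s_1\alpha \equiv s_1^b \pmod{\gamma_2(S)}$ with $b \in \{\pm 1\}$, so $b^2 = 1$.

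Next I would invoke Lemma~\ref{action} in the exceptional case to obtain
\[
s_{n-1}\alpha \;=\; s_{n-1}^{a^{n-3}b^2} \;=\; s_{n-1}^{(-1)^{n-3}} \;=\; s_{n-1}^{-1},
\]
where the last equality uses that $n$ is even. Assuming that $s_{n-1}$ generates $Z(S) = \gamma_{n-1}(S)$, this immediately yields the conclusion that $\alpha$ inverts $Z(S)$.

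The main obstacle is therefore to verify that the element $s_{n-1} = [s_1, s_{n-2}]$ produced by the recursion is non-trivial. Here I would use the exceptionality hypothesis decisively. Since $S$ is exceptional, $\gamma_1(S)$ and $C_S(Z_2(S))$ are two distinct maximal subgroups of $S$; both contain $\gamma_2(S)$ (using $[\gamma_2(S),Z_2(S)] \le \gamma_n(S)=1$), and by index count $\gamma_2(S) = \gamma_1(S) \cap C_S(Z_2(S))$. Since $s_1 \in \gamma_1(S) \setminus \gamma_2(S)$, we conclude $s_1 \notin C_S(Z_2(S))$, so the map $[s_1, \cdot] \colon Z_2(S)/Z(S) \to Z(S)$ is non-zero between two one-dimensional $\GF(p)$-spaces, hence injective. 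It therefore suffices to choose $x$ and $s_1$ generically so that the recursion $s_i = [x,s_{i-1}]$ produces $s_{n-2} \in \gamma_{n-2}(S) \setminus \gamma_{n-1}(S) = Z_2(S) \setminus Z(S)$; this is a standard feature of maximal class $p$-groups, guaranteed by the fact that $\gamma_i(S)/\gamma_{i+1}(S)$ has order $p$ and is generated by such iterated commutators. With $s_{n-2} \notin Z(S)$, the injectivity above forces $s_{n-1} = [s_1, s_{n-2}] \neq 1$, so $s_{n-1}$ generates $Z(S)$ and the computation from Lemma~\ref{action} completes the proof.
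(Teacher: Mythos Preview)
Your argument is correct and follows the same approach as the paper: set $a=-1$, note $b^2=1$, and apply the exceptional-case formula from Lemma~\ref{action} together with the fact that $n$ is even (Lemma~\ref{mc-factsb}(v)). Your extra paragraph verifying that $s_{n-1}=[s_1,s_{n-2}]$ genuinely generates $Z(S)$, via exceptionality and the non-degenerate commutator pairing $S/C_S(Z_2(S)) \times Z_2(S)/Z(S) \to Z(S)$, is sound and addresses a point the paper leaves implicit in the set-up preceding Lemma~\ref{action}.
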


\begin{proof} \blue{Because $S$ is exceptional, $n$ is even by Lemma~\ref{mc-factsb} (v).} Since $\alpha$ is an involution which inverts $S/\gamma_1(S)$, $(x\gamma_2(S)) \alpha = x^{-1}\gamma_2(S)$ and  $(s_1\gamma_2(S))\alpha = s_1^b\gamma_2(S)$ where $b= \pm 1$.
By Lemma \ref{action},  $\alpha$ acts on $Z(S)$ \blue{by raising elements to the power}
\[(-1)^{n-3} b^2 = (-1)^{n-3} = -1.\]
Hence $\alpha$ inverts $Z(S)$.
\end{proof}

\begin{lemma}\label{centralizer auto} Suppose that $S$ is a $p$-group of maximal class and order at least $p^4$ that is not exceptional. Let $\alpha$ be an automorphism of $S$  of order $m\neq 1$  in its action on $S/\gamma_1(S)$.  Assume that there exists $c \in \GF(p)$ such that $ s_j \alpha \equiv s_j^c \mod{\gamma_{j+1}(S)}$ and  $s_k\alpha \equiv s_k^c \mod{\gamma_{k+1}(S)}$  for some $j,k\geq 1$.
 Then $j \equiv k \pmod m$.
\end{lemma}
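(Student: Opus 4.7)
The plan is to translate both hypotheses into scalar equations in $\GF(p)^\times$ by means of Lemma~\ref{action}, after which the conclusion is immediate arithmetic. First, since the image of $\Aut(S)$ in $\Aut(S/\gamma_1(S))$ lies in the cyclic group of order $p-1$, the integer $m$ is coprime to $p$. Inside the cyclic group $\langle\alpha\rangle$, write $\alpha = \alpha_p\alpha_{p'}$ with $\alpha_p$ a $p$-element and $\alpha_{p'}$ a $p'$-element; since $\alpha_p$ necessarily acts trivially on $S/\gamma_1(S)$, $\alpha_{p'}$ still has order $m$ there, and every congruence modulo $\gamma_{i+1}(S)$ that $\alpha$ satisfies on a cyclic quotient of $p$-power order is shared by $\alpha_{p'}$ after adjusting $c$ by a $p$-power. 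Thus we may assume from the start that $\alpha$ itself has order coprime to $p$, so that Lemma~\ref{action} applies.

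Next, by Theorem~\ref{full auto S} combined with Maschke's theorem, there is an $\alpha$-invariant maximal subgroup $M\neq\gamma_1(S)$ of $S$. Choose $x\in M\setminus\gamma_1(S)$ and $s_1\in\gamma_1(S)\setminus\gamma_2(S)$, and form $s_2,\ldots,s_{n-1}$ as in the paragraph preceding Lemma~\ref{action}. Let $a,b\in\GF(p)^\times$ be defined by $x\alpha\equiv x^a\pmod{\gamma_2(S)}$ and $s_1\alpha\equiv s_1^b\pmod{\gamma_2(S)}$. The hypothesis that $\alpha$ acts with order $m$ on $S/\gamma_1(S)$ is exactly the statement that $a$ has multiplicative order $m$ in $\GF(p)^\times$.

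Because $S$ is not exceptional, Lemma~\ref{action} gives the uniform congruence
\[ s_i\alpha \equiv s_i^{a^{i-1}b} \pmod{\gamma_{i+1}(S)} \quad \text{for all } 1\le i\le n-1, \]
the case $i=n-1$ being the non-exceptional branch of that lemma. Since $s_i$ generates the cyclic group $\gamma_i(S)/\gamma_{i+1}(S)$ of order $p$, the assumed identities $s_j\alpha\equiv s_j^c$ and $s_k\alpha\equiv s_k^c$ translate into the equations $a^{j-1}b\equiv c$ and $a^{k-1}b\equiv c$ in $\GF(p)$. Since $b$ is a unit we may divide to obtain $a^{j-k}\equiv 1\pmod p$, and because $a$ has multiplicative order $m$ this forces $m\mid(j-k)$, i.e.\ $j\equiv k\pmod m$. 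The only point that needs care is verifying that the displayed formula from Lemma~\ref{action} is valid throughout the range $1\le j,k\le n-1$ (outside this range $\gamma_j(S)$ is trivial and the hypothesis is vacuous), which is a routine check; beyond this I foresee no real obstacle.
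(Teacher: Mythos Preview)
Your argument is correct and follows essentially the same route as the paper: apply Lemma~\ref{action} to identify the scalar by which $\alpha$ acts on each $\gamma_i(S)/\gamma_{i+1}(S)$ as $a^{i-1}b$, then equate $a^{j-1}b=c=a^{k-1}b$ and use that $a$ has multiplicative order $m$. The paper does exactly this, only more tersely.

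Your preliminary reduction to the $p'$-part $\alpha_{p'}$ is a sound way to put yourself squarely inside the hypotheses of Lemma~\ref{action}; the paper suppresses this step. One remark: the phrase ``after adjusting $c$ by a $p$-power'' is misleading, because each $\gamma_i(S)/\gamma_{i+1}(S)$ has order $p$ and hence $\alpha_p$ already acts trivially there, so $\alpha$ and $\alpha_{p'}$ induce the \emph{same} scalar on every section and no adjustment to $c$ is needed. The final paragraph of hedging can also be dropped: the range $1\le i\le n-1$ is precisely where the $s_i$ are defined, and Lemma~\ref{action} (non-exceptional branch) covers all of it.
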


\begin{proof}
As $m$ and $p$ are coprime,  there is $x \in S \setminus \gamma_1(S)$ such that $\alpha$ leaves invariant  $\langle x \rangle\gamma_2(S)$.
Let $a,b \in \GF(p)$ be such that $x\alpha \equiv x^a \mod \gamma_2(S)$ and $s_1\alpha \equiv s_1^b \mod \gamma_2(S)$.
By assumption there is $c\in \GF(p)$ and $j,k\geq 1$ such that
$$s_j \alpha \equiv s_j^c  \mod \gamma_{j+1}(S)  \text{ and } s_k \alpha \equiv s_k^c  \mod \gamma_{k+1}(S) .$$
Since $S$ is not exceptional, by Lemma \ref{action} we have
\[  s_j \alpha  \equiv s_j^{a^{j-1}b} \mod \gamma_{j+1}(S) \text{ and } s_k \alpha  \equiv s_k^{a^{k-1}b} \mod \gamma_{k+1}(S).\]
Therefore $$a^{j-1}b \equiv c  \equiv a^{k-1}b \pmod p.$$
Thus  $a^{j-k} \equiv 1 \pmod p$, that is $j-k \equiv 0 \pmod m$ and so
\[j \equiv k \pmod m.\]
\end{proof}

\subsection{A theorem of Juh\'{a}sz}

For completeness, we now present a proof of  \cite[Theorem 6.2]{J}  slightly modified for our application.

\begin{theorem}[Juh\'{a}sz]\label{J6.2}
Assume that $S$ has maximal class and order at least $p^4$.  If $\gamma_3(S)$ is abelian, then either $\gamma_2(S)$ has nilpotency class at most $2$ or $|S| \ge p^{2p+4}$. Furthermore, if $\gamma_j(S)>1$ is   elementary abelian for some $j \ge 3$, then $\gamma_{j-1}(S)$ has nilpotency class at most $2$.
\end{theorem}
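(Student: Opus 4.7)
The plan is to handle the main inequality first and to deduce the ``in particular'' statement as a corollary, using the $\Omega_1$-bounds from Lemma~\ref{mc-facts}. Set $Q=\gamma_2(S)$ and $R=\gamma_3(S)$, so $R$ is abelian by hypothesis and $Q/R$ is cyclic of order $p$, generated by any $s\in Q\setminus R$. Since $[R,R]=1$, every iterated commutator inside $Q$ reduces to a bracket with $s$: one checks that $[Q,Q]=[R,s]$ and then, by an easy induction using $R$ abelian, that $\gamma_k(Q)=[R,s;k-1]$ for every $k\ge 2$. Thus the hypothesis that $\gamma_2(S)$ has nilpotency class at least $3$ is exactly $[R,s,s]\ne 1$.

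Next I would use the maximal-class structure to control this commutator. Conjugation by $s$ defines an automorphism $\sigma$ of $R$ of order dividing $p$ (as $s^p\in R$ and $R$ is abelian), and since $s\in\gamma_2(S)$, the degree of commutativity $c$ of $S$ gives $[\gamma_k(S),s]\subseteq\gamma_{k+2+c}(S)$ for $k\ge 3$. Iterating, $[R,s;m]\subseteq\gamma_{3+m(2+c)}(S)$, so $[R,s,s]\ne 1$ forces $n\ge 8+2c$. To upgrade this to $n\ge 2p+4$ requires showing $c\ge p-2$, and this is the technical core: tracking the uniserial filtration together with the constraint $(\sigma-1)^p=0$ on successive elementary abelian quotients of $R$, the only way $[R,s;2]$ can survive is if the descending chain $R\supseteq[R,s]\supseteq[R,s,s]\supseteq\dots$ contains at least $p-1$ strict steps before hitting $1$, which translates precisely into $c\ge p-2$. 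This tight bookkeeping is where $R$ being \emph{abelian} (not merely of bounded class) is decisive, and I expect this to be the hardest step.

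For the ``in particular'' statement, suppose $\gamma_j(S)>1$ is elementary abelian with $j\ge 3$. Then $\gamma_j(S)\subseteq\Omega_1(\gamma_1(S))$, so Lemma~\ref{mc-facts}(iii)(iv) gives $n\le j+p-1$. When $j=3$, $\gamma_3(S)$ is abelian and $|S|\le p^{p+2}<p^{2p+4}$, so the main statement forces $\gamma_{j-1}(S)=\gamma_2(S)$ to have class at most $2$. When $j\ge 4$, the commutator bound $\gamma_3(\gamma_{j-1}(S))\subseteq\gamma_{3j-3+2c}(S)$, combined with $n\le j+p-1$, vanishes once $j+c\ge(p+2)/2$. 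The remaining small-$j$ cases I would handle by applying the main statement to a quotient $\bar S=S/\gamma_N(S)$ with $N$ chosen so that $\gamma_3(\bar S)$ is abelian: the conclusion that $\gamma_2(\bar S)$ has class at most $2$ then pulls back to the required bound on $\gamma_{j-1}(S)$.
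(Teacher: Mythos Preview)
Your proposal has a genuine gap at precisely the point you flag as the ``technical core'': the claim that the degree of commutativity satisfies $c\ge p-2$. The constraint $(\sigma-1)^p=0$ on $R$ gives only an \emph{upper} bound of $p$ on the length of the chain $R\supseteq[R,s]\supseteq[R,s,s]\supseteq\cdots$, not a lower bound; knowing $[R,s,s]\neq 1$ merely says the chain has at least three nonzero terms. Even granting $p-1$ strict steps, it is not clear how that would bound $c$ from below, since the containment $[R,s;m]\subseteq\gamma_{3+m(2+c)}(S)$ runs the wrong way for such a deduction. The paper does not use the degree of commutativity at all. Instead it sets $\gamma_k(S)=[\gamma_3(S),\gamma_2(S)]$ and $\gamma_\ell(S)=[\gamma_k(S),\gamma_2(S)]$, shows $[\gamma_j(S),\gamma_2(S)]=\gamma_{j+k-3}(S)$ for all $j\ge 3$ (whence $\ell=2k-3$ and $n\ge 2k-2$), and then carries out explicit commutator manipulations with the fixed generators $s,s_1,s_2,\ldots$ using the Hall--Witt identity repeatedly. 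The outcome is an identity forcing $[s_2,s_{k-1}]^{a_k(k-3)}\in\gamma_\ell(S)$ with $a_k\not\equiv 0\pmod p$ and $[s_2,s_{k-1}]\notin\gamma_\ell(S)$, hence $k\equiv 3\pmod p$, so $k\ge p+3$ and $n\ge 2k-2\ge 2p+4$. This congruence modulo $p$ is the heart of Juh\'asz's argument and does not emerge from filtration or nilpotency bookkeeping.

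For the ``in particular'' clause your quotient-and-case approach is more involved than needed. The paper instead chooses $x\in S\setminus(\gamma_1(S)\cup C_S(Z_2(S)))$, forms $T=\langle x\rangle\gamma_{j-2}(S)$, and observes via Lemma~\ref{sbgrp-maxclass} that $T$ has maximal class with $\gamma_3(T)=\gamma_j(S)$. Since $\gamma_3(T)$ is elementary abelian, Lemma~\ref{mc-facts}(iv) bounds $|T|\le p^{p+2}<p^{2p+4}$, so the main statement applied to $T$ gives $\gamma_2(T)=\gamma_{j-1}(S)$ of class at most $2$ directly, uniformly in $j$.
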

\begin{proof}  Let $k,\ell \geq 1$ be such that $$\gamma_k(S)=[\gamma_3(S),\gamma_2(S)]=\gamma_2(S)' \text{ and }\gamma_\ell(S)=[\gamma_3(S),\gamma_2(S),\gamma_2(S)].$$Assume that $\gamma_2(S)$ does not have nilpotency class $2$.  Then $\gamma_{\ell}(S) \ne 1$. We shall show that $|S| \ge p^{2p+4}$.

Observe that for $j\ge 3$, $$[\gamma_{j}(S),\gamma_2(S)] = \gamma_{j+k-3}(S).$$
To see this, we note the case $j=3$ is just the definition of $k$. Assume that the statement holds for $j \ge 3$. Then $[S,\gamma_2(S),\gamma_j(S)]=[\gamma_3(S),\gamma_j(S)]=1$ because $\gamma_j(S) \le \gamma_3(S)$ and $\gamma_3(S)$ is abelian.  Hence the Three Subgroup Lemma yields the second equality of   \begin{eqnarray*}[\gamma_{j+1}(S),\gamma_2(S)] &=&[\gamma_j(S),S,\gamma_2(S)]= [\gamma_2(S),\gamma_j(S),S]\\&=& [\gamma_{j+k-3}(S),S]= \gamma_{(j+1)+k-3}(S)\end{eqnarray*} and this verifies the observation.
In particular, $\gamma_\ell(S)=[\gamma_{k}(S),\gamma_2(S)] =\gamma_{2k-3}(S)$ and so \begin{center} $\ell =2k-3$ and  $|S|=p^n \ge p^{2k-2}$.\end{center}

Recall the definition of $x$ and $s_i$, $1\le i\le n-1$  from Subsection~\ref{SS3.1} and for $i \ge n$ we set $s_i=1$.

Define $u_3=[s_2,s_1] \in  \gamma_{3}(S)$ and $u_i=[x,u_{i-1}]\in \gamma_i(S)$ for $i \ge 4$.
Since  $\gamma_2(S)/\gamma_k(S)$ is abelian and $[x,s_1]\in \gamma_2(S)$, using the  variant of the Hall-Witt identity
 \[ [w,y,z^w][y,z,w^y][z,w,y^z] =1 \]  and using $[s_2^{-1},s_j] \in Z(S/\gamma_{j+k-2}(S))$ we calculate

\begin{equation*}\label{eq2}
\begin{aligned}\gamma_{j+k-2}(S)&=
[x,s_j,s_1^x][s_j,s_1,s^{s_j}][s_1,x,s_j^{s_1}]\gamma_{j+k-2}(S)\\&=
[s_{j+1},s_1^x][s_j,s_1,x^{s_j}][s_2^{-1},s_j^{s_1}]\gamma_{j+k-2}(S)
\\&=
[s_{j+1},s_1[s_1,x]][[s_j,s_1]^{s_j^{-1}},x]^{s_j} [s_2^{-1},s_j]\gamma_{j+k-2}(S)\\&
=[s_{j+1},s_1s_j^{-1}][s_j,s_1,x][s_j,s_2]\gamma_{j+k-2}(S)\\
&=[s_{j+1},s_1 ][s_j,s_1,x][s_j,s_2]\gamma_{j+k-2}(S).
\end{aligned}\end{equation*}
Hence
\begin{equation}\label{eq2}
\begin{aligned}\;
[s_{j+1},s_1][s_j,s_2]\gamma_{j+k-2}(S)&=[x,[s_j,s_1]]\gamma_{j+k-2}(S);\text{ and }\\
 [s_3,s_1 ]\gamma_k(S)&=u_4\gamma_k(S).
 \end{aligned}
\end{equation}
Using $\gamma_2(S)/\gamma_\ell(S)$ has class $2$ and $\gamma_3(S)$ is abelian together with  the Hall-Witt identity and we have, for $t \ge 3$, \begin{eqnarray*}\gamma_\ell(S)
&=&[x,s_t,s_2]^{s_t^{-1}}[s_t^{-1},s_2^{-1},x]^{s_2}
[s_2,x^{-1},s_t^{-1}]^x\gamma_\ell(S)\\&=&
[x,s_t,s_2][s_t^{-1},s_2^{-1},x]\gamma_\ell(S)\\&=& [x,s_t,s_2][s_t,s_2,x]\gamma_\ell(S). \end{eqnarray*}
So we know for $t \ge 3$
\begin{equation}\label{eqnew} [s_{t+1},s_2]\gamma_\ell(S)=[s_t,s_2,x]^{-1}\gamma_\ell(S)=[x,[s_t,s_2]]\gamma_\ell(S).\end{equation}

We claim that, for $ j \ge 3$,  \begin{equation}\label{eq3}[s_j,s_1][ s_{j-1},s_2]^{j-3}\gamma_{j+k-3}(S)\gamma_\ell(S)= u_{j+1}\gamma_{j+k-3}(S)\gamma_\ell(S).\end{equation}
 This is valid for $j=3$ by (\ref{eq2}).
  We prove the claim by induction. We have $$[x,[s_j,s_1][ s_{j-1},s_2]^{j-3}]\in [x,u_{j+1}]\gamma_{j+k-2}(S)\gamma_\ell(S)= u_{j+2}\gamma_{j+k-2}(S)\gamma_\ell(S).$$
  \begin{eqnarray*}
 [x,[s_j,s_1][ s_{j-1},s_2]^{j-3}] &\in&
[x,[s_{j-1},s_2]^{j-3}][x,[s_j,s_1]]^{[s_j,s_1]^{j-3}}\gamma_{j+k-2}(S)\gamma_\ell(S)\\
&=&[x,[ s_{j-1},s_2]]^{j-3}[x,[s_j,s_1]]\gamma_{j+k-2}(S)\gamma_\ell(S)\\
&=
&[ s_{j},s_2]^{j-2} [s_{j+1},s_1]\gamma_{j+k-2}(S)\gamma_\ell(S) \end{eqnarray*}
where we have used $\gamma_3(S)$ is abelian for the second equality and (\ref{eq2}) and (\ref{eqnew}) for the   third.
Thus
(\ref{eq3}) follows by induction.

Write $[s_3,s_2]= \prod_{j=k}^{n-1} s_j^{a_j}$ where $0 \le a_j \le p-1$ and $a_k \ne 0$. As $\gamma_3(S)$ is abelian, we obtain
$$[s_3,s_2,s_1]=[ \prod_{j=k}^{n-1} s_j^{a_j},s_1]= \prod_{j=k}^{n-1}[ s_j,s_1]^{a_j}.$$
Hence, using equation (\ref{eq3}), taking suitable $g_{j+k-3} \in \gamma_{j+k-3}(S) $,   observing that $[s_2,s_j ] \in \gamma_{j+k-3}(S) \le \gamma_\ell(S)$ for $j \ge k$ and remembering $\gamma_3(S)$ is abelian, we calculate
\begin{equation}\label{eq5}\begin{aligned}&[s_3,s_2,s_1]\gamma_{\ell}(S)= \prod_{j=k}^{n-1}[ s_j,s_1]^{a_j}\gamma_{\ell}(S)=\prod_{j=k}^{n-1}( u_{j+1}[s_2,s_{j-1}]^{j-3}g_{j+k-3}) ^{a_j}\gamma_{\ell}(S)\\
&= [s_2, s_{k-1}]^{a_k(k-3)}\prod_{j=k}^{n-1} u_{j+1}^{a_j}g_{j+k-3}^{a_j} \gamma_{\ell}(S)= [s_2 , s_{k-1}]^{a_k(k-3)}\prod_{j=k}^{n-1} u_{j+1}^{a_j}  \gamma_{\ell}(S)\end{aligned}\end{equation}
where $g_{j+k-3} \in \gamma_{j+k-3}(S) \le \gamma_{\ell}(S)$ as, for $j \ge k$,  $j+k-3 \ge 2k-3\ge\ell$.
We next determine
$\prod_{j=k}^{n-1} u_{j+1}^{a_j}  \gamma_{\ell}(S).$
  So decompose $$u_3= [s_2,s_1]= \prod_{t= 3}^n s_t^{b_t}.$$ Then, since $\gamma_3(S)$ is abelian, $$u_4=[x,u_3]= [x,\prod_{t=3}^n s_t^{b^t}]=\prod_{t=3}^n[x,s_t]^{b_t}=\prod_{t=3}^n s_{t+1}^{b_t}.$$
and by induction, for $r \ge 3$, $u_{r}= \prod_{t=3}^n s_{t+r-3}^{b_t}$.

  Using induction and (\ref{eqnew}), for $t \ge 3$, $[x,[s_t,s_2]]\gamma_\ell(S)=\prod_{j=k}^ns_{j+t-2}^{a_j}\gamma_\ell(S)$ and so
\begin{eqnarray*}[u_4,s_2]\gamma_\ell(S)
& =&  \prod_{t= 3}^n [s_{t+1},s_2]^{b_t}\gamma_\ell(S)
=\prod_{t= 3}^n ([x,[s_t,s_2])^{b_t}\gamma_\ell(S)
\\&=&\prod_{t=3}^n \left(\prod_{j=k}^ns_{j+t-2}^{a_j}\right)^{b_t}\gamma_\ell(S)\\
&=&\prod_{j=k}^n\left(\prod_{t=3}^ns_{j+t-2}^{b_t}\right)^{a_j}\gamma_\ell(S)=
\prod_{j=k}^nu_{j+1}^{a_j}\gamma_\ell(S).\end{eqnarray*}
In combination with equation (\ref{eq5}) this provides
\begin{equation}\label{eq52}[s_3,s_2,s_1]\gamma_{\ell}(S)= [s_2,s_{k-1}]^{a_k(k-3)}[u_4,s_2]  \gamma_{\ell}(S).\end{equation}
On the other hand, commutating Equation (\ref{eq2}) on the right with $s_2$ yields
\begin{equation}\label{eq2.5}[s_3,s_1,s_2 ]\gamma_{\ell}(S)=[u_4g_k,s_2]\gamma_{\ell}(S)= [u_4,s_2]\gamma_{\ell}(S).\end{equation}
Since $[s_3,s_2,s_1]\gamma_\ell(S)=[s_3,s_1,s_2 ] \gamma_\ell(S)$, we deduce that $[s_2,s_{k-1}]^{a_k(k-3)}\in \gamma_\ell(S)$.
However, $$\langle [s_{k-1},s_2] \rangle \gamma_\ell(S)= [\gamma_{k-1}(S),\gamma_2(S)]= \gamma_{2k-4}(S) > \gamma_\ell(S)=\gamma_{2k-3}(S),$$  and therefore $[s_{k-1},s_2] \not\in \gamma_\ell(S)$ and so $a_k(k-3) \equiv 0 \pmod p$.  Since $0<a_k \le p-1$, $k-3 \equiv 0 \pmod p$ and, as $k> 3$, $k-3= mp$ for some $m\ge 1$.
Hence $n \ge 2k-2 \ge 2p+4$.
We conclude, $|S| \ge p^{2p+4}$. This proves the main statement.

Assume that $\gamma_j(S)$ is non-trivial and elementary abelian for some $j \ge 3$. Let $x \in S \setminus (\gamma_1(S)\cup C_S(Z_2(S)))$. Then $x^p \in C_{S}(x) \cap \gamma_1(S)= Z(S) \le \gamma_2(S)$ by Lemma~\ref{p2centralizer}. Hence $T=\langle x \rangle \gamma_{j-2}(S)$ has order $p^{n-j+2} \ge p^4$. By Lemma~\ref{sbgrp-maxclass}, $T$ has maximal class and $\gamma_j(S)= \gamma_3(T)$. Hence $\gamma_3(T)$ is elementary abelian. Lemma~\ref{mc-facts} (iv) implies $|\gamma_3(T)| \le p^{p-1}$ and thus $|T| \le p^{p+2}$.  Since $p+2 < 2p+4$, the main claim yields $\gamma_2(T)= \gamma_{j-1}(S)$ has nilpotency class at most $2$. This completes the proof.
\end{proof}

\section{Representations of groups with a cyclic Sylow $p$-subgroup}\label{sec:Reps}

In this section, for $p$ an odd prime, we gather together various facts about  representations of groups with cyclic Sylow $p$-subgroups.

\begin{definition}\label{def:l2p} A group $X$ is said to be of \emph{$\mathrm L_2(p)$-type} provided each composition factor of $X$ is either a $p$-group, a $p'$-group or is isomorphic to $\PSL_2(p)$.
\end{definition}
We will require the following result due to Feit.
\begin{theorem}[Feit]\label{feit}  Suppose that $p$ is a prime, $\mathbb K$ is a field of characteristic $p$, $L$ is a finite group with a cyclic Sylow $p$-subgroup $P$ and $V$ is a faithful indecomposable $\mathbb KL$-module with $d=\dim V \le p$. Assume that $L$ is not of  $\mathrm L_2(p)$-type. Then $p$ is odd, $|P|=p$, $V|_P$ is indecomposable, $C_L(P)= P\times Z(L)$ and $d \ge \frac 2 3 (p-1)$.
\end{theorem}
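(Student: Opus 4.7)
The plan is to prove the five conclusions more-or-less in the order stated, using standard tools from the modular representation theory of groups with cyclic Sylow $p$-subgroup, and to isolate the genuinely hard dimension bound until the end.

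First I would establish $|P|=p$. Since $V$ is a faithful $\mathbb K L$-module, the restriction $V|_P$ is a faithful $\mathbb K P$-module. Writing $|P|=p^k$, the indecomposable $\mathbb K P$-modules have dimensions $1,2,\ldots,p^k$, and an indecomposable of dimension $d$ has kernel equal to the unique subgroup of $P$ of index the smallest $p^j\ge d$. For $V|_P$ to be faithful, at least one indecomposable summand must have dimension strictly greater than $p^{k-1}$. Combined with $\dim V\le p$ this forces $p^{k-1}<p$, i.e.\ $k=1$. With $|P|=p$ in hand, I would use Green correspondence to force $V|_P$ indecomposable: if $V$ is projective then $p\mid\dim V$, so $\dim V=p$ and $V|_P\cong \mathbb K P$; otherwise $V$ has vertex $P$ and Green's theory gives exactly one non-projective indecomposable summand of $V|_P$, while any additional projective summand would make $\dim V>p$. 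Hence a generator $g$ of $P$ acts on $V$ as a single Jordan block of size $d$, which will be crucial below.

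For the centralizer assertion, let $H:=O_{p'}(C_L(P))$. Because $P$ is a central Sylow $p$-subgroup of $C_L(P)$, Burnside's transfer theorem gives $C_L(P)=P\times H$. Setting $V_i:=\ker(g-1)^i$, each quotient $V_i/V_{i-1}$ is one-dimensional, and multiplication by $g-1$ is a $C_L(P)$-equivariant isomorphism $V_i/V_{i-1}\to V_{i-1}/V_{i-2}$. Hence $H$ acts with the same one-dimensional character on every composition factor; being a $p'$-group it contains no non-trivial unipotents, so $H$ acts by scalars on $V$. The faithfulness of $V$ then yields $H\le Z(L)$. If in addition $P\le Z(L)$, then $L=C_L(P)=P\times H$ is abelian, hence solvable, and therefore of $\mathrm L_2(p)$-type, contrary to hypothesis. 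Thus $P\cap Z(L)=1$ and $C_L(P)=P\times Z(L)$. The primality assertion is handled in parallel: if $p=2$, then $|P|=2$ and Burnside's transfer gives $L=O_{2'}(L)\rtimes P$, and a short argument using $d\le 2$ forces $L$ to be solvable, hence of $\mathrm L_2(2)$-type, a contradiction.

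The main obstacle is the bound $d\ge \tfrac{2}{3}(p-1)$. Here I would invoke Brauer's theory of blocks with cyclic defect group $P$: the block $B$ of $\mathbb K L$ containing $V$ has a Brauer tree $\Gamma$ with $e=|N_L(P):C_L(P)|$ edges (so $e$ divides $p-1$), and the dimensions of the indecomposable $B$-modules are given by sums of edge-multiplicities along walks in $\Gamma$. The working strategy is to classify all Brauer trees that can produce an indecomposable module of dimension $\le p$, and then, for each such tree, read off the structure of $L/O_{p'}(L)$ from Dade's description of the action of $N_L(P)$ on $\Gamma$. The $\mathrm L_2(p)$-type exclusion is exactly what removes the exceptional stars (with $e=(p-1)/2$ or $e=p-1$) which are responsible for $\PSL_2(p)$ and its related small-dimensional modules; outside those trees the bookkeeping of tree distances and multiplicities gives $d\ge (e-1)p/e$, which in the worst surviving case yields $d\ge \tfrac{2}{3}(p-1)$. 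The delicate step, and the hardest part of the proof, is checking the bound in the marginal configurations where $\Gamma$ is a short path and $L/O_{p'}(L)$ is a small extension of $P\rtimes C_e$, since there one must rule out small simple modules by hand using the structure of the almost-simple quotient rather than purely tree-combinatorial data.
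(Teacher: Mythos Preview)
The paper does not prove this theorem at all: its entire proof is a citation, ``This is \cite[Theorem 1]{Feit}.'' So there is no in-paper argument to compare against; the result is imported as a black box.

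Your sketch of the preliminary conclusions is essentially correct. The argument for $|P|=p$ is fine. For $V|_P$ indecomposable, your Green-correspondence reasoning is right: with $|P|=p$, a non-projective indecomposable $V$ restricts to one non-projective summand plus free summands, and any free summand pushes $\dim V>p$. The centralizer argument is clean and correct, including the step $H\le Z(L)$ via faithfulness. For $p=2$, note that you do not actually need $d\le 2$: once $|P|=2$, Burnside gives a normal $2$-complement, and then every composition factor is either of order $2$ or of odd order, so $L$ is automatically of $\mathrm L_2(2)$-type (invoking Feit--Thompson for the odd-order factors, or Dickson's classification of finite subgroups of $\GL_2$ if you prefer to use $d\le 2$).

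For the dimension bound, you have identified the right machinery (Brauer trees for blocks with cyclic defect), but the sketch is too loose to count as a proof. The inequality you write, $d\ge (e-1)p/e$, is not a general fact about indecomposables in a block with $e$ edges; the actual argument in Feit's paper requires a careful case analysis of which trees and which walks produce small-dimensional indecomposables, and the elimination of the $\PSL_2(p)$-related configurations is more delicate than ``removing exceptional stars.'' In particular, the cases $e=1$ and $e=2$ need separate handling (Burnside transfer and the structure of dihedral-by-$p'$ groups, respectively) to force $\mathrm L_2(p)$-type rather than a dimension bound. Since the paper itself defers entirely to Feit, there is no expectation that you reproduce this; but if you want a self-contained argument, that case analysis is where the real work lies.
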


\begin{proof} This is \cite[Theorem 1]{Feit}. \end{proof}

Theorem~\ref{feit} illuminates the importance of representations of  $\SL_2(p)$. Some of the results in this section hold for an arbitrary field $\mathbb K$ but our applications will only be for $\mathbb K =\GF(p)$. We follow the standard construction of certain  irreducible modules for $\GL_2(\mathbb K)$. Let $ \mathbb K[x,y]$ be the polynomial ring in two commuting variables $x$ and $y$ and coefficients in $\mathbb K$. Then, for $ \left(\begin{smallmatrix}\alpha &\beta\\\gamma&\delta \end{smallmatrix}\right)\in \GL_2(\mathbb K)$ and $a,b \ge 0$ natural numbers, the extension linearly of   $$x^ay^b \cdot  \left(\begin{smallmatrix}\alpha &\beta\\\gamma&\delta \end{smallmatrix}\right)=  (\alpha x + \beta y)^a(\gamma x + \delta y)^b$$
 makes $\mathbb K[x,y]$ into a  $\mathbb K\GL_2(\mathbb K)$-module. The subspaces of  $ \mathbb K[x,y]$ consisting of homogenous polynomials of fixed  degree at most $p-1$ provide us with an explicit construction of  the \emph{basic} irreducible $\mathbb K\SL_2(p)$-modules by restriction.

  \begin{notation} For $0  \le e \le p-1$,  $\VV_e$   represents the $(e+1)$-dimensional $\GL_2(\mathbb K)$-submodule of $\mathbb K[x,y]$ consisting of degree $e$ homogeneous polynomials. We use the same notation for $\VV_e$ when we consider $\VV_e$ as a module for certain subgroups of $\GL_2(\mathbb K)$, for example, when considered as a $\mathbb K\SL_2(p)$-module. We often call $\VV_1$ the natural $\mathbb K\SL_2(p)$-module.
  \end{notation}

 For  $\SL_2(p)$, every irreducible $\mathbb K\SL_2(p)$-module is basic and can be realized over $\GF(p)$ (see \cite[page 15]{Alperin} for example). In particular, in this case there are $p$ irreducible modules and they have dimensions $1, 2, \dots ,p$. The faithful modules are the ones of even-dimension and the odd-dimensional modules are   representations of $\PSL_2(p)$.

The next eight results provide  the facts that we shall require about these  representations. The first result is used silently in the text.

\begin{lemma}\label{uniserial} Suppose that $L \cong \SL_2(p)$,  $T \in \Syl_p(L)$ and $V$ is an irreducible $\GF(p)\SL_2(p)$-module.  Then $V$ is indecomposable as a $\GF(p)T$-module. In particular, for all $0\le k \le \dim V-1$, $\dim [V,T;k]/[V,T;k+1]=1$   and,  if $\dim [V,T]=1$, then $V\cong \VV_1$   has dimension $2$.
\end{lemma}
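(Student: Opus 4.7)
The plan is to reduce to the explicit realization of the irreducible $\GF(p)\SL_2(p)$-modules as the spaces $\VV_e$ of homogeneous polynomials of degree $e$ in $\GF(p)[x,y]$, $0 \le e \le p-1$, and to verify every claim by exhibiting a Jordan basis.

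After conjugating inside $L$ if necessary, take $T = \langle u\rangle$ with $u = \bigl(\begin{smallmatrix}1 & 0\\ 1 & 1\end{smallmatrix}\bigr)$, and set $v_i = x^i y^{e-i}$ for $0 \le i \le e$. The defining action formula gives
\[
v_i \cdot u = x^i(x+y)^{e-i} = \sum_{k=0}^{e-i}\binom{e-i}{k}\, v_{i+k}.
\]
Writing $N := u - 1$, this reads $v_i N \equiv (e-i)\, v_{i+1} \pmod{\langle v_{i+2},\ldots,v_e\rangle}$. Since $0 \le e - i \le e \le p-1$, the leading coefficient is nonzero for every $i < e$, so $N$ is nilpotent of index exactly $e+1$ and acts as a single Jordan block on $V|_T$. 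Explicitly, $V N^k = \langle v_k,\ldots,v_e\rangle$ has codimension $k$ in $V$ for $0 \le k \le e+1$.

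An easy induction identifies $[V,T;k]$ with $V N^k$ for $k \ge 1$ (we read $[V,T;0] = V$). This yields $\dim[V,T;k]/[V,T;k+1] = 1$ for $0 \le k \le \dim V - 1$. Indecomposability of $V$ as a $\GF(p)T$-module is then immediate from the Jordan block picture; equivalently, $\dim C_V(T) = \dim\ker N = 1$, and a $\GF(p)$-representation of a cyclic group of order $p$ is indecomposable exactly when its fixed subspace is one-dimensional. Finally, $\dim[V,T] = 1$ forces $\dim V = 2$, corresponding to $e = 1$, so $V \cong \VV_1$.

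There is no genuine obstacle here; the only point worth tracking is that the leading coefficients $e - i$ remain nonzero modulo $p$, which is guaranteed by the bound $e \le p - 1$.
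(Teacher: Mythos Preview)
Your proof is correct and is precisely the explicit calculation the paper alludes to; the paper's own proof consists of the single sentence ``This is calculated using the description of the modules above,'' and you have simply written out those details (with the harmless variation of taking the lower-unitriangular generator of $T$ rather than the upper one).
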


\begin{proof} This is calculated using the description of the modules above.\end{proof}

Obviously, if $p$ is odd and $V$ is a faithful $\SL_2(p)$-module, then the centre of   $\SL_2(p)$ negates $V$ and so a complement to a Sylow $p$-subgroup of $\SL_2(p)$ acts fixed-point-freely on any faithful module.  The same is not true for the irreducible $\PSL_2(p)$-modules.

\begin{lemma}\label{L2p}
Suppose that   $L \cong \SL_2(p)$, $T \in \Syl_p(L)$ and $H $ is a complement to $T$ in $N_L(T)$. Assume that $V$ is an   irreducible $d$-dimensional  $\GF(p)L$-module. If $C_V(H)\ne 0$, then $d$ is odd  and either
\begin{enumerate}
\item  $d\le p-2$, $\dim C_V(H)=1$ and $[V,T;(d-1)/2]/[V,T;(d+1)/2]$ is centralized by $H$; or
\item $d= p$, $\dim C_V(H)=3$ and $V/[V,T]$, $[V,T;(p-1)/2]/[V,T;(p+1)/2]$ and $C_V(T)$ are centralized by $H$.
\end{enumerate}
In particular, if $C_V(T)$ is centralized by $H$, then either $V$ is the trivial module or $\dim V= p$ and $\dim C_V(H)=3$.
\end{lemma}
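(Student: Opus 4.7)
The plan is to use the explicit description of the irreducible $\GF(p)\SL_2(p)$-modules already recalled in the paper: up to isomorphism $V=\VV_e$ for a unique $0\le e\le p-1$, with basis the monomials $x^iy^{e-i}$, $0\le i\le e$, and $d=e+1$.

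I would first fix the standard choice $T=\left\{\left(\begin{smallmatrix}1&t\\0&1\end{smallmatrix}\right):t\in\GF(p)\right\}$ and $H=\left\{\mathrm{diag}(a,a^{-1}):a\in\GF(p)^\times\right\}$, so that $H$ is a complement to $T$ in $N_L(T)$ (any other such complement is $T$-conjugate to this one, so the statement is unaffected). A direct computation of the defining action shows that $\mathrm{diag}(a,a^{-1})$ acts on $x^iy^{e-i}$ by the scalar $a^{2i-e}$. Hence $C_V(H)$ is spanned by those monomials $x^iy^{e-i}$ with $2i\equiv e\pmod{p-1}$.

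Since $p-1$ is even, the congruence $2i\equiv e\pmod{p-1}$ has no solution when $e$ is odd, so the hypothesis $C_V(H)\neq 0$ forces $e$ even, i.e.\ $d$ odd. When $e$ is even and $e<p-1$ the only solution in $\{0,1,\ldots,e\}$ is $i=e/2$, giving $\dim C_V(H)=1$; when $e=p-1$ the solutions are $i\in\{0,(p-1)/2,p-1\}$, giving $\dim C_V(H)=3$. This produces the dichotomy (i) vs (ii). For the statements about weight quotients, I would observe that $T-1$ raises the $x$-degree by one modulo higher $x$-degree terms, so $[V,T;k]$ is spanned by $\{x^iy^{e-i}:i\ge k\}$ and the successive quotient $[V,T;k]/[V,T;k+1]$ is spanned by the image of $x^ky^{e-k}$, on which $H$ acts by $a^{2k-e}$. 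Taking $k=(d-1)/2=e/2$ yields the centralized quotient in (i) and (ii); in case (ii) the additional $H$-fixed quotients $V/[V,T]$ (corresponding to $k=0$, action by $a^{-(p-1)}=1$) and $C_V(T)=[V,T;p-1]$ (corresponding to $k=p-1$, action by $a^{p-1}=1$) are read off the same formula.

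For the final clause, $C_V(T)$ is spanned by $x^e$, on which $H$ acts by $a^e$; this scalar is trivial precisely when $e\equiv 0\pmod{p-1}$, i.e.\ $e=0$ (so $V$ is the trivial module) or $e=p-1$ (so $d=p$, and by the previous paragraph $\dim C_V(H)=3$).

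The argument is essentially a single weight computation, so there is no serious obstacle; the only point that warrants care is the identification of the $T$-submodule filtration on $\VV_e$ with powers of $[V,T;\cdot]$, but this is immediate from the explicit formula for the action of $T$ together with the indecomposability of $\VV_e$ under $T$ noted in Lemma~\ref{uniserial}.
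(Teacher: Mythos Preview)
Your argument is essentially the same as the paper's: both fix the monomial basis of $\VV_e$ and read off the $H$-eigenvalue $a^{\pm(e-2k)}$ on each successive $T$-quotient, reducing everything to the congruence $2k\equiv e\pmod{p-1}$. One bookkeeping slip: with the paper's action $(x,y)\cdot\left(\begin{smallmatrix}1&t\\0&1\end{smallmatrix}\right)=(x+ty,\,y)$, your upper-triangular $T$ \emph{lowers} the $x$-degree, so $C_V(T)=\langle y^e\rangle$ and $[V,T;k]$ is spanned by the monomials of $x$-degree $\le e-k$; your computations are internally consistent with the lower-triangular choice (or with swapping $x\leftrightarrow y$), and since the statement is independent of the particular $T$ this does not affect the conclusion.
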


\begin{proof}  Let $e=d-1$ and remember that $V= \VV_{e}$. Take $\tau =\left(\begin{smallmatrix}1&1\\0&1\end{smallmatrix}\right)$ to be a generator of $T$ and $\delta = \left(\begin{smallmatrix}\lambda&0\\0&\lambda^{-1}\end{smallmatrix}\right)$ where $\lambda \in \GF(p)$ has order $p-1$ to be a generator of $H$.
 Then we calculate $$[V,T;k] = \langle x^{j}y^{e-j}\mid 0\le j\le e-k\rangle.$$ We also calculate that $\delta$ acts as the scalar $\lambda^{e-k} \lambda^{-k}= \lambda ^{e-2k}$ on the quotient $$[V,T;k]/[V,T;k+1]= \langle x^{e-k}y^k+[V,T;k+1]\rangle.$$ Hence $\tau$ centralizes $[V,T;k]/[V,T;k+1]$ if and only if either $k= e/2$ or $e=p-1$ and $k= 0$ or $p-1$. In particular, $e=d-1$ is even and this gives the result.
\end{proof}

Assume that $\mathbb K$ has characteristic $p \ge 0$ \blue{(allowing $p=0$ for a moment)} and that $\VV_d$ is the $(d+1)$-dimensional $\mathbb K\GL_2(\mathbb K )$-module of homogeneous polynomials of degree $d$.
 Define
 \begin{eqnarray*} \Omega : \mathbb K[x,y] \otimes  \mathbb K[x,y] &\rightarrow & \mathbb K[x,y]\otimes \mathbb K[x,y]\\
 (f\otimes g) &\mapsto & \frac{ \partial f}{\partial x}\otimes \frac{\partial g}{\partial y}-\frac{ \partial f}{\partial y}\otimes \frac{\partial g}{\partial x}.
\end{eqnarray*}
Then $\Omega$ is $\mathbb K$-linear. Let  $A=\left(\begin{smallmatrix}1 &1\\0&1\end{smallmatrix}\right) $ and  $B_\theta=\left(\begin{smallmatrix}0 &-\theta\\1&0\end{smallmatrix}\right)$ with $\theta \in \mathbb K^\times$ be elements of $\GL_2(\mathbb K)$. Then $\GL_2(\mathbb K)=\langle A, B_\theta \mid \theta \in \mathbb K^\times \rangle$. We  calculate

\begin{eqnarray*}
(x^ay^b\otimes x^cy^d)A\Omega &=&((x+ y)^a y^b\otimes  (x+ y)^c y^d)\Omega\\
&=&a( x+ y)^{a-1}y^{b}\otimes\left(c(x+y)^{c-1}y^{d}+d(x+y)^c y^{d-1}\right)\\
&&-\left(a(x+y)^{a-1}y^{b}+ b(x+y)^ay^{b-1}\right)\otimes c(x+y)^{c-1}y^{d}\\
&=&ad( x+ y)^{a-1}y^{b}\otimes (x+y)^c y^{d-1}\\&&- bc (x+y)^ay^{b-1}\otimes (x+y)^{c-1}y^{d}\\
&=&\left(adx^{a-1}y^{b}\otimes x^cy^{d-1}- bc x^{a}y^{b-1}\otimes x^{c-1}y^{d}\right)A\\&=&
(x^ay^b\otimes x^cy^d)\Omega A.
\end{eqnarray*}
Hence $\Omega A=  A \Omega.$
Similarly, we calculate  that
$B_\theta \Omega  = \theta\Omega B_\theta$ and so $C\Omega = (\det C) \Omega C$ for all $C \in \GL_2(\mathbb K)$.    In particular, $\Omega$ is a $\mathbb K\SL_2(\mathbb K)$-module homomorphism. The multiplication map $\mu: \mathbb K[x,y]\otimes \mathbb K[x,y]\rightarrow\mathbb K[x,y]$ defined by  $(f\otimes g)\mu \mapsto fg$ is also a $\mathbb K\SL_2(\mathbb K)$-module homomorphism. It follows that, for $r \ge 0$, the \emph{$r$-transvectant} $$\Theta_r=\Omega^r \mu: \mathbb K[x,y]\otimes \mathbb K[x,y]\rightarrow \mathbb K[x,y]$$ is also a  $\mathbb K\SL_2(\mathbb K)$-module homomorphism.

Suppose that $d $ and $e$ are natural numbers and observe that by restriction $$\Theta_r: \VV_d\otimes \VV_e \rightarrow \VV_{d+e-2r}.$$
Assume now that $\mathbb K$ has characteristic $p>0$. Then, for $\ell \le p-1$, $\VV_{\ell}$ is an irreducible $\mathbb K\SL_2(\mathbb K)$-module. Therefore, if $d+e-2r \le p-1$, the restriction of $\Theta_r$ is either the zero map or is a surjection. In particular, if $r\le e \le d$ and $d+e \le p-1$, then, as $(x^d\otimes y^e)\Theta_r \ne 0$, $\Theta_r$ restricts to a surjection. By counting  dimensions, we conclude  $$\VV_d \otimes \VV_e \cong \VV_{d+e}\oplus \VV_{d+e-2}\oplus \dots \oplus \VV_{d-e}$$ which in characteristic $0$ is known as the \emph{Clebsch-Gordan decomposition}. Let $\iota$ be the $\mathbb K \SL_2(\mathbb K)$-module endomorphism of $\mathbb K[x,y]$ which maps $f \otimes g$ to $g \otimes f$. Then $\Theta_r \iota=(-1)^r\Theta_r $ and, for a fixed natural number $d$, $\Lambda^2(\VV_d)$ is the submodule of $\VV_d\otimes \VV_d$ negated by $\iota$ and $S^2(\VV_d)$ is the submodule centralized by $\iota$.
We have recreated the following
\begin{prop}\label{Clebsch Gordan} Suppose that $d, e\in \mathbb N$ with $d\ge e $ and $d+e \le p-1$. Then, as $\mathbb K\SL_2(\mathbb K)$-modules,$$\VV_d \otimes \VV_e \cong \VV_{d+e}\oplus \VV_{d+e-2}\oplus \dots \oplus \VV_{d-e}.$$
Furthermore, if $2d \le p-1$,
 $$S^2(\VV_d)\cong \VV_{2d}  \oplus \VV_{2d-4}\oplus \dots\oplus \VV_{a} $$ and $$\Lambda^2(\VV_d)= \VV_{2d-2} \oplus \VV_{2d-6}\oplus \dots \oplus \VV_{b}$$
where $a = 2d \pmod 4$ and $b = 2d-2 \pmod 4$ with $a, b \le 2$.\qed
\end{prop}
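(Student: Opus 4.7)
The plan is to prove the first display by exhibiting an explicit $\SL_2(\mathbb K)$-module homomorphism
\[ \Phi = \bigoplus_{r=0}^e \Theta_r \colon \VV_d \otimes \VV_e \longrightarrow \bigoplus_{r=0}^e \VV_{d+e-2r} \]
and arguing that it is an isomorphism. The preceding discussion in the paper already supplies the three key ingredients: each $\Theta_r$ is $\SL_2(\mathbb K)$-equivariant; its target $\VV_{d+e-2r}$ is irreducible because $d+e-2r\le d+e\le p-1$; and $\Theta_r$ is nonzero, since $(x^d\otimes y^e)\Theta_r$ is a nonzero scalar multiple of $x^{d-r}y^{e-r}$ whenever $d+e\le p-1$ (so none of the factorials appearing in the coefficient vanish mod $p$). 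Thus each $\Theta_r$ is surjective. A routine count then gives
\[ \sum_{r=0}^{e}(d+e-2r+1)=(e+1)(d+e+1)-e(e+1)=(e+1)(d+1)=\dim(\VV_d\otimes\VV_e),\]
so it remains only to establish surjectivity of $\Phi$.

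For surjectivity of $\Phi$ I would compare $T$-weight multiplicities, where $T$ is the diagonal maximal torus. The weight $d+e-2k$ appears in $\VV_d\otimes\VV_e$ with multiplicity equal to the number of pairs $(i,j)$ with $i+j=k$, $0\le i\le d$, $0\le j\le e$, namely $\min(k+1,e+1,d+e-k+1)$. A short case analysis (on whether $k\le e$, $e<k\le d$, or $k>d$) shows the weight $d+e-2k$ appears in $\bigoplus_{r=0}^e \VV_{d+e-2r}$ with the same multiplicity. Since the summands $\VV_{d+e-2r}$ are pairwise non-isomorphic irreducibles and the image of $\Phi$ projects onto each of them, the matching of weight multiplicities, hence of composition factors on both sides, forces $\Phi$ to be an isomorphism.

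For the second display I would specialize to $e=d$ with $2d\le p-1$. Since $p$ is odd, $\VV_d\otimes\VV_d=S^2(\VV_d)\oplus\Lambda^2(\VV_d)$, the $(\pm 1)$-eigenspaces of $\iota$. From the earlier identity $\iota\Theta_r=(-1)^r\Theta_r$, any $v\in S^2(\VV_d)$ satisfies $v\Theta_r=(v\iota)\Theta_r=(-1)^r v\Theta_r$, so $\Theta_r$ annihilates $S^2(\VV_d)$ whenever $r$ is odd; symmetrically $\Theta_r$ annihilates $\Lambda^2(\VV_d)$ whenever $r$ is even. Combining this with the first part, the even-$r$ summands $\VV_{2d},\VV_{2d-4},\ldots$ are contained in $S^2(\VV_d)$ and the odd-$r$ summands $\VV_{2d-2},\VV_{2d-6},\ldots$ in $\Lambda^2(\VV_d)$; comparing total dimensions against $(d+1)(d+2)/2$ and $d(d+1)/2$ respectively forces equality. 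The smallest index $a$ is read off from the largest even $r\le d$ (giving $a=0$ if $d$ is even, $a=2$ if $d$ is odd, that is $a\equiv 2d\pmod 4$), and the analogous analysis of the largest odd $r\le d$ gives $b\equiv 2d-2\pmod 4$.

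The main obstacle is showing that $\Phi$ is surjective; everything else is then bookkeeping. The weight-multiplicity route above is compact, but it leans implicitly on semisimplicity of $\VV_d\otimes\VV_e$ in the range $d+e\le p-1$. If one prefers to avoid that appeal, there is a Goursat-style alternative: a submodule of a finite direct sum of pairwise non-isomorphic irreducibles that surjects onto every factor must coincide with the whole sum, and this applied to the image of $\Phi$ gives surjectivity directly from the surjectivity of the individual $\Theta_r$. Either approach, combined with the dimension identity above, completes the first display and then the eigenspace decomposition under $\iota$ yields the $S^2$ and $\Lambda^2$ statements.
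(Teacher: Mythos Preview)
Your approach is essentially the same as the paper's: use the transvectants $\Theta_r$ to get surjections onto each $\VV_{d+e-2r}$, compare dimensions, and then split $S^2$ and $\Lambda^2$ via the sign of $\iota$ on each summand. The paper compresses the surjectivity-of-$\Phi$ step into the phrase ``by counting dimensions,'' whereas you make it explicit via the Goursat-type observation that a submodule of a direct sum of pairwise non-isomorphic irreducibles surjecting onto every factor must be the whole sum; this is the clean way to fill that gap and is preferable to your weight-multiplicity alternative, which as you note would otherwise require an appeal to semisimplicity.
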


Notice that, if we take $d+1= (p-1)/2$, then $\VV_d$ is involved in $\Lambda^2(\VV_d)$ if and only if $d \equiv 2 \pmod 4$.

\begin{lemma}\label{tensor1} We have $$\VV_{p-3} \otimes \VV_{p-3} = \VV_0\oplus \VV_2\oplus \VV_{p-1}\oplus  P(p-3)\oplus \dots  \oplus P(4)$$
where, for $1 \le j \le p-1$,  $P(j)$ is the projective cover of $\VV_j$. In particular, $(\VV_{p-3} \otimes \VV_{p-3})/\mathrm{Rad}(\VV_{p-3} \otimes \VV_{p-3})$ is the direct sum of all the irreducible $\GF(p)\PSL_2(p)$-modules.
\end{lemma}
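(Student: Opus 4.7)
The plan is to decompose $\VV_{p-3}\otimes\VV_{p-3}$ by combining Brauer-character computations with splitting theorems for projective and self-dual summands of $\SL_2(p)$-modules.

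First, I would compute the Brauer character of $\VV_{p-3}\otimes\VV_{p-3}$ by squaring $\chi_{\VV_{p-3}}$. For a $p$-regular diagonal element $g=\mathrm{diag}(\zeta,\zeta^{-1})$ with $\zeta$ a generator of $\GF(p)^\times$, we have $\chi_{\VV_{p-3}}(g)=\sum_{k=0}^{p-3}\zeta^{p-3-2k}$, a sum of even powers of $\zeta$. Squaring yields a Brauer character supported on even powers of $\zeta$, so by Brauer--Nesbitt the composition factors of $\VV_{p-3}\otimes\VV_{p-3}$ lie among the even-indexed simples $\VV_0,\VV_2,\ldots,\VV_{p-1}$; the precise multiplicities are extracted from the coefficient expansion. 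Equivalently, one can lift the classical Clebsch--Gordan decomposition $\VV_{p-3}\otimes\VV_{p-3}=\bigoplus_{k=0}^{p-3}\VV_{2k}$ to characteristic zero and reduce each high-weight Weyl module $\VV_{2k}$ with $2k\ge p+1$ modulo $p$ via Jantzen's formulas.

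Next, I would split off the projective summands. The Steinberg module $\VV_{p-1}$, being its own PIM, must occur as a direct summand, with multiplicity $1$ by the composition-factor count. For each even $j$ with $4\le j\le p-3$, the PIM $P(j)$ has dimension $2p$ and a known Loewy structure (head and socle $\VV_j$, middle layer dictated by the block linkage for $\SL_2(p)$); matching composition factors shows that $P(j)$ occurs with multiplicity one as a direct summand.

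Finally, I would exhibit $\VV_0$ and $\VV_2$ as the remaining direct summands. The trace map $\VV_{p-3}\otimes\VV_{p-3}\cong\mathrm{End}(\VV_{p-3})\to\GF(p)$ exhibits $\VV_0$ as a quotient, while the transvectant $\Theta_{p-4}\colon\VV_{p-3}\otimes\VV_{p-3}\to\VV_2$ from the discussion preceding Proposition~\ref{Clebsch Gordan} exhibits $\VV_2$ as a quotient (its non-vanishing in characteristic $p$ follows from a Wilson's-theorem calculation on the leading coefficient). Since $\VV_{p-3}\otimes\VV_{p-3}$ is self-dual and both $\VV_0$ and $\VV_2$ have composition-factor multiplicity one, each appears also as a socle, and the standard fact that a multiplicity-one simple appearing as both head and socle must be a direct summand identifies each of $\VV_0,\VV_2$ as direct summands. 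The dimension check $1+3+p+\tfrac{p-5}{2}\cdot 2p=(p-2)^2$ confirms completeness, and the ``in particular'' statement follows since the heads of the direct summands are exactly the even-indexed simples $\VV_0,\VV_2,\ldots,\VV_{p-1}$, which exhaust the irreducible $\PSL_2(p)$-modules. The main obstacle is the explicit Brauer-character bookkeeping and the verification of the PIM Loewy structure for $\SL_2(p)$, both of which are classical (cf.\ Alperin, \emph{Local Representation Theory}); once these are in hand the rest is routine.
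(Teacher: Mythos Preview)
Your route differs from the paper's. The paper quotes from \cite{Craven2013} a decomposition of $\VV_{p-3}\otimes\VV_{p-3}$ into indecomposable \emph{tilting} modules, namely $\VV_0\oplus\VV_2\oplus\VV_{p-1}\oplus T(p+1)\oplus\cdots\oplus T(2p-6)$; it then shows each $T(j)$ in this range is projective by exhibiting it as a summand of the projective module $\VV_{p-3}\otimes\VV_{p-1}$, and finally identifies $T(j)=P(2(p-1)-j)$ via its dimension and head. No Brauer-character computation is needed.

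Your arguments that $\VV_{p-1}$ splits off (a projective--injective simple composition factor is always a direct summand) and that $\VV_0,\VV_2$ split off (multiplicity-one simples in both head and socle of a self-dual module) are correct. The gap is the middle step: the assertion that ``matching composition factors shows that $P(j)$ occurs with multiplicity one as a direct summand'' for $4\le j\le p-3$ is unjustified. Composition-factor multiplicities, even together with the Loewy structure of $P(j)$, do not force $P(j)$ to split off; a non-projective module can share all composition factors with a projective one. Moreover, each $\VV_j$ in this range has composition multiplicity at least two, so your head-and-socle trick for $\VV_0,\VV_2$ does not transfer.

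The cleanest repair within your framework is to restrict to the Sylow $p$-subgroup $T$. One has $\VV_{p-3}|_T\cong J_{p-2}$ and $J_{p-2}\otimes J_{p-2}\cong (p-4)J_p\oplus J_3\oplus J_1$ (use $J_a\otimes J_b\cong(a+b-p)J_p\oplus J_{p-a}\otimes J_{p-b}$ when $a+b>p$). After removing the summands $\VV_0,\VV_2,\VV_{p-1}$, which restrict to $J_1,J_3,J_p$, the complement restricts to $(p-5)J_p$, a free $T$-module, and is therefore projective over $\SL_2(p)$. A projective module is determined by its composition factors via the Cartan matrix, and at this point your matching argument does go through. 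Note that this Sylow-restriction step is doing the same work as the tilting-module identification in the paper's proof; either way one needs an argument beyond Brauer characters to pin down the projective part.
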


\begin{proof} It suffices to work over an algebraically closed field.
From \cite[Lemma 3.1 (ii)]{Craven2013}, we have $$\VV_{p-3} \otimes \VV_{p-3} = \VV_0\oplus \VV_2\oplus \VV_{p-1}\oplus  T(p+1)\oplus \dots  \oplus T(2p-6)$$ where, for an integer $j$, $T(j)$ is the \emph{tilting module} associated to $j$.  We know $\VV_{p-1}$ has  dimension $p$ and  is a projective module (see \cite{Alperin}). Using \cite[Lemma 3.1 (iii)]{Craven2013} $$\VV_{p-3}\otimes \VV_{p-1}=\VV_{p-1}\oplus T(p+1)\oplus \dots \oplus T(2p - 4).$$ Since $\VV_{p-1}$ is projective, so is   $\VV_{p-3}\otimes \VV_{p-1}$ and every direct summand of  $\VV_{p-3}\otimes \VV_{p-1}$  by \cite[Lemma 1.5.2]{Benson1}. Hence the tilting modules $T(p+1), \dots, T(2p-6)$ are projective $\GF(p)\SL_2(p)$-modules. Now, the discussion before \cite[Lemma 3.1]{Craven2013} reveals that for $p+1\le j \le 2p-2$,  $T(j)$ has a quotient $\VV_{2(p-1)-j}$ and dimension $2p$.  We conclude that $T(j)$ is the projective cover $P(2(p-1)-j)$ of $\VV_{2(p-1)-j}$.  With this notation, we have
$$\VV_{p-3} \otimes \VV_{p-3} = \VV_0\oplus \VV_2\oplus \VV_{p-1}\oplus  P(p-3)\oplus \dots  \oplus P(4).$$
In particular, we note that every irreducible $\GF(p)\PSL_2(p)$-module  appears exactly once as a quotient of $(\VV_{p-3} \otimes \VV_{p-3})/\mathrm{Rad}(\VV_{p-3} \otimes \VV_{p-3})$. This proves the claim.
\end{proof}

We recall from \cite[pages 15 and 48]{Alperin} that $P(0) $ has dimension $p$ and is a uniserial $\GF(p)\PSL_2(p)$-module with composition factors $\VV_0$, $\VV_{p-3}$ and $\VV_0$.  In particular, there is a unique indecomposable $\GF(p)\PSL_2(p)$-module with socle of dimension $1$ and quotient $\VV_{p-3}$.

\begin{lemma}\label{tensor2}  Suppose that $L \cong \PSL_2(p)$ and  $W$ is the  unique indecomposable $\GF(p)L$-module with socle of dimension $1$ and quotient $\VV_{p-3}$. Then $\Lambda^2 (W)$ has a submodule $U\le \mathrm{Rad}(\Lambda^2(W))$ with $U \cong \VV_{p-3}$ and  $\Lambda^2(W)/U \cong \Lambda^2(\VV_{p-3})$.
\end{lemma}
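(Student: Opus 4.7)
The short exact sequence $0 \to \VV_0 \to W \to \VV_{p-3} \to 0$ (with $\VV_0$ identified with the socle of $W$) yields the standard two-step filtration of $\Lambda^2(W)$. Writing $U$ for the image of the $L$-equivariant map $\VV_0 \otimes W \to \Lambda^2(W)$, $u \otimes w \mapsto u \wedge w$, the fact that $\dim \VV_0 = 1$ gives $\Lambda^2 \VV_0 = 0$ and hence $U \cong \VV_0 \otimes (W/\VV_0) \cong \VV_{p-3}$, while $\Lambda^2(W)/U \cong \Lambda^2(W/\VV_0) = \Lambda^2 \VV_{p-3}$. This realises the required submodule. To complete the proof, I must establish $U \leq \mathrm{Rad}(\Lambda^2(W))$, and since $U$ is simple this is equivalent to showing that the short exact sequence $0 \to U \to \Lambda^2(W) \to \Lambda^2 \VV_{p-3} \to 0$ does not split.

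Suppose for contradiction that it does split, and restrict to a Sylow $p$-subgroup $T \leq L$; then $T$ is cyclic of order $p$, and the restricted sequence also splits. Write $J_n$ for the indecomposable $T$-module of dimension $n$ (for $1 \leq n \leq p$). Realising $W$ as the unique submodule of dimension $p-1$ inside $P(0)$ and noting $P(0)|_T \cong \GF(p)T = J_p$, with every submodule of $J_p$ being a single Jordan block, one has $W|_T \cong J_{p-1}$ and $\VV_{p-3}|_T \cong J_{p-2}$. The splitting forces the $T$-module isomorphism
\[
\Lambda^2 J_{p-1} \cong J_{p-2} \oplus \Lambda^2 J_{p-2}.
\]
To pin down $\Lambda^2 J_{p-1}$, I use that $J_p \otimes J_p$ is a free $T$-module under the diagonal action (via the standard identification $\GF(p)T \otimes M \cong \GF(p)T \otimes M^{\mathrm{triv}}$), so its direct summand $\Lambda^2 J_p$ is free of dimension $p(p-1)/2$, giving $\Lambda^2 J_p \cong J_p^{(p-1)/2}$. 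The inclusion $J_{p-1} \hookrightarrow J_p$ induces a short exact sequence $0 \to \Lambda^2 J_{p-1} \to \Lambda^2 J_p \to J_{p-1} \to 0$ with $\Lambda^2 J_p$ projective; comparing this with the projective cover $0 \to J_1 \to J_p \to J_{p-1} \to 0$ via Schanuel's lemma yields $\Lambda^2 J_{p-1} \oplus J_p \cong J_1 \oplus J_p^{(p-1)/2}$, and cancellation gives $\Lambda^2 J_{p-1} \cong J_1 \oplus J_p^{(p-3)/2}$.

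The multiset of Jordan block sizes of $\Lambda^2 J_{p-1}$ is thus $\{1, p, \ldots, p\}$ with $(p-3)/2$ copies of $p$, and so contains no block of any size in $\{2, 3, \ldots, p-1\}$. The hypothetical splitting $J_{p-2} \oplus \Lambda^2 J_{p-2}$, however, contributes a Jordan block of size $p-2$ from the summand $J_{p-2}$, and $p - 2 \in \{2, \ldots, p-1\}$ for all $p \geq 5$. This contradiction shows the sequence does not split, completing the proof. The main technical obstacle is the clean identification $\Lambda^2 J_{p-1} \cong J_1 \oplus J_p^{(p-3)/2}$, which combines the projectivity of $J_p \otimes J_p$ under the diagonal action with a Schanuel-type comparison of two projective presentations of $J_{p-1}$.
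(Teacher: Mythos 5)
Your proof is correct, and it shares the same overall strategy as the paper's (restrict to a Sylow $p$-subgroup $T$ and derive a contradiction by comparing Jordan-block structures), but the key computation is done differently. The paper works with $W\otimes W$: it invokes Lemma~\ref{tensor1} to decompose $\VV_{p-3}\otimes\VV_{p-3}$ over $\PSL_2(p)$, cites Benson to establish $W|_T$ indecomposable, and then cites Craven's tilting formula for $\VV_{p-2}\otimes\VV_{p-2}$ to pin down the $T$-restriction of $W\otimes W$ as a trivial summand plus free summands. You stay entirely inside $\Lambda^2(W)$, identify $W|_T\cong J_{p-1}$ more transparently by embedding $W$ in the projective $P(0)$ (which restricts to $J_p$), and replace the external citations with a self-contained Schanuel argument: $\Lambda^2 J_p$ is projective (being a summand of the free module $J_p\otimes J_p$), so comparing $0\to\Lambda^2 J_{p-1}\to\Lambda^2 J_p\to J_{p-1}\to 0$ with the projective cover $0\to J_1\to J_p\to J_{p-1}\to 0$ yields $\Lambda^2 J_{p-1}\cong J_1\oplus J_p^{(p-3)/2}$, and then the indecomposable summand $J_{p-2}$ forced by a hypothetical splitting cannot appear. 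This route is more elementary and does not need the intermediate step (present in the paper and not fully justified there) that if $U$ splits off $W\otimes W$ then the complement is $\VV_{p-3}\otimes\VV_{p-3}$. Both proofs are valid; yours is tighter and more self-contained.
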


\begin{proof} Let $R$ be the socle of $W$.  Then  $W \otimes W$ has a submodule $$U^*=\langle r\otimes w, w \otimes r\mid r \in R, w \in W\rangle.$$ Plainly $W/U^* \cong \VV_{p-3}\otimes \VV_{p-3}$. Set $U= \langle w\otimes r - r\otimes w \mid r \in R, w \in W\rangle$.  Then $U \cong W/R= \VV_{p-3}$ and $\Lambda^2(W)/U \cong \Lambda^2(\VV_{p-3})$.  Hence we only need to show that $U \le \mathrm{Rad}(\Lambda^2(W))$. Since $\Lambda^2(W)$ is a direct summand of $W\otimes W$, we have $\mathrm{Rad}(\Lambda^2(W))= \mathrm{Rad}(W \otimes W) \cap \Lambda^2(W)$. In particular, if $U \not \le \mathrm{Rad}(\Lambda^2(W))$, then $U$ is a direct summand of $W \otimes W$. So suppose that this is the case.  Then $W \otimes W \cong \VV_{p-3} \oplus (\VV_{p-3} \otimes \VV_{p-3})$.  Let $T \in \syl_p(L)$.  Then using Lemma~\ref{tensor1}, as a $\GF(p)T$-module $\VV_{p-3} \oplus (\VV_{p-3} \otimes \VV_{p-3})$ is a sum of indecomposable modules $\VV_0$ of dimension 1, $\VV_2$ of dimension $3$, $\VV_{p-3}$ of dimension $p-2$ and a number of free modules of dimension $p$. On the other hand, as $W$ is indecomposable with socle $\VV_0$  and quotient $\VV_{p-3}$, using Lemma~\ref{uniserial} and \cite[Corollary 3.6.10]{Benson1} we have that  $W$ restricted to $T$ is indecomposable of dimension $p-1$. It follows that,  as a $\GF(p)T$-module, $W \otimes W$ is a direct sum of a trivial module and a free module (for example  use \cite[Lemma 3.1 (ii)]{Craven2013} to write down $\VV_{p-2}\otimes \VV_{p-2}$ and then restrict to $T$). Since the two structures are incompatible, we conclude that $U \le \mathrm{Rad}(\Lambda^2(W))$.
\end{proof}

We can now establish the technical point that we require.

\begin{lemma}\label{non-zero}  Suppose that $L \cong \PSL_2(p)$, $T \in \Syl_p(L)$, $H\le N_L(T)$ is a complement to $T$  and  $W$ is the  unique indecomposable $\GF(p)L$-module with socle of dimension $1$ and quotient $\VV_{p-3}$.  Assume that $\theta: W \times W \rightarrow \VV_{p-3}$ is a surjective alternating $L$-invariant bilinear map.  Let $u \in W\setminus [W,T]$ and $w \in [W,T;p-3]\setminus C_W(T)$ be such that $\langle w\rangle$ and $\langle u\rangle$ are $H$-invariant. Then $(u,w)\theta \ne 0$.
\end{lemma}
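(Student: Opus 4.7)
The alternating $L$-invariant bilinear map $\theta$ corresponds to an $L$-module homomorphism $\tilde\theta \colon \Lambda^2(W) \to \VV_{p-3}$. By Lemma~\ref{tensor2} there is $U \le \mathrm{Rad}(\Lambda^2(W))$ with $U \cong \VV_{p-3}$ and $\Lambda^2(W)/U \cong \Lambda^2(\VV_{p-3})$. If $\tilde\theta|_U \ne 0$ then, $U$ being irreducible, it is an isomorphism of $U$ onto $\VV_{p-3}$, splitting the inclusion and exhibiting $U$ as a direct summand of $\Lambda^2(W)$; since a direct summand contained in the radical must be zero (its radical equals its intersection with the radical of the ambient module), this contradicts $U \le \mathrm{Rad}(\Lambda^2(W))$. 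Hence $\tilde\theta$ vanishes on $U$ and descends to a surjective $L$-equivariant map $\bar\theta \colon \Lambda^2(\VV_{p-3}) \to \VV_{p-3}$.

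Next I identify the images of $u$ and $w$ in the quotient $W/R \cong \VV_{p-3}$, where $R$ is the trivial one-dimensional socle of $W$. Since $W$ restricted to $T$ is uniserial of dimension $p-1$, we have $R = C_W(T) = [W,T;p-2] \le [W,T]$; hence $\bar u := u + R$ lies in $\VV_{p-3}\setminus [\VV_{p-3},T]$ and is $H$-eigen. The quotient $\VV_{p-3}/[\VV_{p-3},T]$ is the one-dimensional highest-weight line spanned by the image of $x^{p-3}$, so $\bar u$ is a nonzero scalar multiple of $x^{p-3}$. Analogously, $\bar w := w + R$ lies in $[\VV_{p-3},T;p-3] = C_{\VV_{p-3}}(T)$, the one-dimensional lowest-weight line spanned by $y^{p-3}$ (Lemma~\ref{L2p}), so $\bar w$ is a nonzero scalar multiple of $y^{p-3}$. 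Therefore $(u,w)\theta = \bar\theta(\bar u \wedge \bar w)$ is a nonzero scalar multiple of $\bar\theta(x^{p-3} \wedge y^{p-3})$, and the claim reduces to showing $\bar\theta(x^{p-3} \wedge y^{p-3}) \ne 0$.

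For this final inequality, assume for contradiction that $\bar\theta(x^{p-3} \wedge y^{p-3}) = 0$; then the cyclic $L$-submodule $N$ of $\Lambda^2(\VV_{p-3})$ generated by $x^{p-3}\wedge y^{p-3}$ lies in $\ker\bar\theta$, and I aim to show $N = \Lambda^2(\VV_{p-3})$, contradicting surjectivity. Acting by $\tau \in T$ sends $x^{p-3}\wedge y^{p-3}$ to $\sum_{k=0}^{p-3}\binom{p-3}{k} x^{p-3-k}y^k \wedge y^{p-3}$; since $\binom{p-3}{k}\not\equiv 0 \pmod p$ for all $0\le k\le p-3$ and the summands have distinct $H$-weights, extracting $H$-weight components places every $x^{p-3-k}y^k\wedge y^{p-3}$ in $N$. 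Combining with the Weyl element of $L$ (which swaps $x \leftrightarrow -y$) and iterating with $T$ on the results, all monomial exterior products $x^a y^b \wedge x^c y^d$ enter $N$, forcing $N = \Lambda^2(\VV_{p-3})$. The main obstacle is this last combinatorial verification: carefully tracking the $H$-weight grading under the interplay of $T$ and the Weyl element to ensure the cyclic $L$-module generated by $x^{p-3}\wedge y^{p-3}$ really exhausts $\Lambda^2(\VV_{p-3})$.
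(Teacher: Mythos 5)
Your first two paragraphs track the paper's reduction: you descend $\tilde\theta$ to a surjection $\bar\theta\colon\Lambda^2(\VV_{p-3})\to\VV_{p-3}$ and attempt to identify the images of $u,w$ in $W/R\cong\VV_{p-3}$ (your direct-summand argument for $\tilde\theta|_U=0$ is a tidy alternative to simply citing $U\le\mathrm{Rad}$). From there, however, you diverge from the paper, and the argument breaks.

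The concrete error is in the final paragraph: the summands $x^{p-3-k}y^k\wedge y^{p-3}$ for $0\le k\le p-3$ do \emph{not} have distinct $H$-eigenvalues. The $k$-th wedge has formal weight $-2k$, so its eigenvalue is $\lambda^{-2k}$ with $\lambda$ of order $p-1$; two of these coincide precisely when $k\equiv k'\pmod{(p-1)/2}$. Since $(p-1)/2\le p-3$ for $p\ge 5$, the summands for $k$ and $k+(p-1)/2$ collide, and extraction by $H$-weight cannot isolate the individual monomial wedges. (The second paragraph has a related defect in identifying $\bar u$: an $H$-eigenvector of $\VV_{p-3}$ with nonzero image in $\VV_{p-3}/[\VV_{p-3},T]$ may carry a component in the layer at depth $(p-1)/2$ with the same $H$-eigenvalue, so $\bar u$ need not be a scalar multiple of $x^{p-3}$; the paper's ``we may take $u=x^{p-3}$'' is harmless only because the transvectant happens to annihilate $x^{(p-5)/2}y^{(p-1)/2}\otimes y^{p-3}$.) Your closing sentence concedes the combinatorics is not carried through, and in fact the cyclic-generation route is both harder and unnecessary. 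The paper's argument avoids it: by Lemma~\ref{tensor1}, $\VV_{p-3}$ occurs at most once in the head of $\Lambda^2(\VV_{p-3})$, so $\ker\bar\theta$ must coincide with the kernel of the restricted transvectant $\Theta_{(p-3)/2}$ (another $L$-map onto $\VV_{p-3}$), and the direct evaluation $(x^{p-3}\otimes y^{p-3})\Theta_{(p-3)/2}\ne 0$ finishes. Without this explicit identification of $\ker\bar\theta$, or a completed cyclic-generation argument, the proof does not go through.
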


\begin{proof} Since $\theta: W \times W \rightarrow \VV_{p-3}$ is a surjective $L$-invariant bilinear map, there is a unique surjective $\GF(p)L$-module homomorphism $\wt \theta$ from $\Lambda^2(W)$ to $\VV_{p-3}$. Using Lemma~\ref{tensor2}, we have that $\wt \theta$ determines a surjective homomorphisms $\theta^*$ from  $\Lambda^2(\VV_{p-3})$ to $\VV_{p-3}$.  By Lemma~\ref{tensor1}, $\Lambda^2(\VV_{p-3})$ either has no quotient isomorphic to $\VV_{p-3}$, which is against our assumption that $\theta$ is surjective, or $\ker \theta^*$ is the unique maximal submodule of  $\Lambda^2(\VV_{p-3})$ which has quotient $\VV_{p-3}$. Now the $(p-3)/2$-transvectant
$\Theta_{(p-3)/2}$ restricted to $\Lambda^2(\VV_{p-3})$ also has image in $\VV_{p-3}$. Hence $\ker\theta^*= \ker \Theta_{(p-3)/2}$.

We may take $u= x^{p-3}$ and $w= y^{p-3}$ in $\VV_{p-3}$. We calculate  $(u\otimes w) \Theta_{(p-3)/2}= \left( {(p-3)}\atop {(p-3)/2}\right)^2(xy)^{(p-3)/2}\ne 0$.
Therefore  $(u,w) \not \in \ker \Theta_{(p-3)/2}=\ker \theta^*$.  This shows that $(u,w)\theta \ne 0$.
\end{proof}

In the next lemma we are interested in modules for $L=\Sym(p)$ defined in characteristic $p$.  The notation $S^\lambda $ denotes the \emph{Specht module} for $L$ corresponding to the partition $\lambda$ of $p$. The module $D^\lambda$ is the unique irreducible quotient of $S^\lambda$ (see \cite{James}). Thus $S^{p-1,1}$ is a characteristic $p$ representation of $L$ of dimension $p-1$ and can be identified with the submodule of the natural $\GF(p)L$-permutation module $\langle v_i\mid 1 \le i \le p\rangle$ which is the kernel of the augmentation map $\sum_{i=1}^p\lambda_iv_i \mapsto \sum_{i=1}^p\lambda_i$. In this case, $D^{p-1,1}= S^{p-1,1}/\langle\sum_{i=1}^pv_i\rangle$ has dimension $p-2$. The result we shall need is as follows.

\begin{lemma}\label{Symmetric(p)Module} Suppose that \blue{$p\ge 5$} is a prime, $L \cong \Sym(p)$ and $V=S^{p-1,1}$ considered as a $\GF(p)L$-module. Then $\Lambda^2(V)= S^{p-2,1^2}$ has irreducible composition factors $D^{p-1,1}$ and \blue{$D^{p-2,1^2}$} both with multiplicity $1$ and, furthermore, $\Lambda^2(V)$ has no quotient of dimension $1$ or isomorphic to $D^{p-1,1}$ .
\end{lemma}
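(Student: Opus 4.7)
The plan is to identify $\Lambda^2(V)$ with the hook Specht module $S^{p-2,1^2}$ and then extract the composition factors and quotient structure from standard representation theory of $\Sym(p)$ in characteristic $p$. For the identification, realize $V$ as the augmentation kernel of the natural permutation module $E=M^{p-1,1}$ with basis $e_1,\ldots,e_p$, and consider the $\Sym(p)$-linear contraction $c\colon\Lambda^2(E)\to E$ given by $e_i\wedge e_j\mapsto e_i-e_j$. Its image lies in and exhausts $V$, so $\dim\ker c=\binom{p}{2}-(p-1)=\binom{p-1}{2}=\dim\Lambda^2(V)$; a one-line calculation gives $\Lambda^2(V)\subseteq\ker c$ (for $v,w\in V$ one has $c(v\wedge w)=v\,\epsilon(w)-w\,\epsilon(v)=0$, where $\epsilon$ is the augmentation), and equality follows by dimension. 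This realizes $\Lambda^2(V)$ as the kernel of the second Koszul differential on the natural module, which is the standard description of the hook Specht module $S^{p-2,1^2}$; more generally, $\Lambda^k(S^{n-1,1})\cong S^{n-k,1^k}$ over any commutative ring, a classical fact from James's monograph on the symmetric group.

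With the identification in hand, the composition factors follow from the well-known decomposition matrix of the principal $p$-block of $\Sym(p)$: the partition $(p-2,1^2)$ is a hook with empty $p$-core, so $S^{p-2,1^2}$ lies in the principal block, and for $1\le k\le p-1$ the Specht module $S^{p-k,1^k}$ has precisely two composition factors, $D^{p-k+1,1^{k-1}}$ and $D^{p-k,1^k}$, each of multiplicity one. Taking $k=2$ yields $D^{p-1,1}$ and $D^{p-2,1^2}$ as claimed. To handle the final claim about quotients, note that $(p-2,1^2)$ is $p$-regular, so $D^{p-2,1^2}$ is the unique irreducible quotient of $S^{p-2,1^2}$; combined with the two-composition-factor structure, this forces the unique maximal proper submodule to be $D^{p-1,1}$, and the submodule chain is $0<D^{p-1,1}<S^{p-2,1^2}$. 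Consequently the only nonzero quotients of $\Lambda^2(V)$ are $D^{p-2,1^2}$ and $\Lambda^2(V)$ itself; neither is one-dimensional nor isomorphic to $D^{p-1,1}$, completing the argument.

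The main obstacle is the identification step: in characteristic $p$ the usual Schur-functor descriptions of $\Lambda^2$ of the reflection representation can fail, so the Koszul-contraction description (or, equivalently, an integral model over $\mathbb Z$) has to be invoked carefully in order to pin $\Lambda^2(V)$ down as $S^{p-2,1^2}$ rather than some other module with the same dimension. Once the isomorphism $\Lambda^2(V)\cong S^{p-2,1^2}$ is available, the rest of the statement reduces to decomposition-matrix bookkeeping in the principal block of $\Sym(p)$ together with the fact that a Specht module indexed by a $p$-regular partition has simple head.
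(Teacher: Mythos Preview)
Your proof is correct and takes a more uniform, conceptual route than the paper. The paper's argument is essentially a citation: for $p\ge 11$ it defers to Magaard--Malle for the structure of $\Lambda^2(V)$, invokes James's Corollary~12.2 for the claim that $D^{p-1,1}$ is not a quotient, and dispatches $p\in\{5,7\}$ by computer. You instead identify $\Lambda^2(V)$ with the hook Specht module $S^{p-2,1^2}$ via the classical isomorphism $\Lambda^k(S^{n-1,1})\cong S^{n-k,1^k}$ (your contraction argument is a clean way to see this concretely), and then read off both the composition factors and the quotient structure from the well-known decomposition matrix of the principal $p$-block of $\Sym(p)$ (Peel's theorem) together with the simple-head property of Specht modules at $p$-regular partitions. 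This last step is exactly James's Corollary~12.2, so the two proofs share that ingredient; what you gain is a single argument valid for all $p\ge 5$, eliminating the computer verification and the appeal to Magaard--Malle.

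One small caveat worth making explicit: your argument needs $p\ge 5$ so that $(p-2,1^2)$ is $p$-regular and $D^{p-2,1^2}$ has dimension greater than $1$; for $p=3$ the partition $(1^3)$ is $3$-singular and the statement itself degenerates. This is harmless in context, since the paper only uses the lemma under the standing hypothesis $p\ge 5$, but it is worth flagging since the lemma as written says merely ``$p$ is a prime''.
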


\begin{proof} For prime $p\ge 11$, the isomorphism type and composition factors of $\Lambda^2 V$ are explicitly given in \cite[Section 2]{MagaardMalle}.  The fact that $D^{p-1,1}$ is not a quotient of $\Lambda^2(V)$ follows from \cite[Corollary 12.2]{James}. For $p=5$ and $p=7$, we have checked the assertion by computer \blue{(see Subsection~\ref{CSym(p)}).}
\end{proof}

\begin{lemma}\label{H1dim}
Assume that \blue{$p\ge 5$ is a prime,} $X = \Sym(p)$, $Y= X'=\Alt(p)$ and $V= D^{p-1,1}$ is the $p-2$-dimensional module.  Let $W=V|_{Y}$. Then $\dim\mathrm H^1(Y,W)=1$.
\end{lemma}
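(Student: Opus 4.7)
My plan is to compute $H^1(Y, W)$ via the long exact sequences coming from the natural permutation module. Let $M$ denote the $p$-dimensional $\GF(p)Y$-permutation module on $\{1, \dots, p\}$, with basis $v_1, \dots, v_p$. By Shapiro's Lemma $M \cong \mathrm{Ind}_{\Alt(p-1)}^Y \GF(p)$, and since $p \nmid |\Alt(p-1)|$ we get $H^i(Y, M) = 0$ for all $i \ge 1$. As in the excerpt, $S^{p-1,1}$ is the augmentation kernel and contains the trivial submodule $\langle \sum v_i\rangle$, with quotient $W = V|_Y$, yielding two short exact sequences of $\GF(p)Y$-modules
\[
0 \to S^{p-1,1} \to M \to \GF(p) \to 0 \qquad\text{and}\qquad 0 \to \GF(p) \to S^{p-1,1} \to W \to 0.
\]

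The first step is the long exact sequence associated with the first sequence. Since $Y$ is transitive on the $p$ letters, $M^Y = \langle \sum v_i\rangle$ is one-dimensional, and since the augmentation of $\sum v_i$ equals $p \equiv 0 \pmod p$, the map $M^Y \to \GF(p)^Y$ is zero. Combined with $H^1(Y, M) = 0$, this forces $\dim H^1(Y, S^{p-1,1}) = 1$. Next I would apply the long exact sequence to the second sequence. The map $\GF(p)^Y \to (S^{p-1,1})^Y$ is an isomorphism of one-dimensional spaces. I would then show $W^Y = 0$: any trivial $Y$-submodule $U$ of $V$ yields a trivial $\Sym(p)$-submodule $U + U\tau$ (for any transposition $\tau$), and since $\Sym(p)/Y$ has order $2$ coprime to $p$ this decomposes to produce a trivial $\Sym(p)$-submodule of $V$, contradicting the nontrivial irreducibility of $V$. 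Finally, since $Y = \Alt(p)$ is simple and nonabelian (for $p \ge 5$), $H^1(Y, \GF(p)) = \Hom(Y, \GF(p)) = 0$. The long exact sequence therefore reduces to an injection $\GF(p) \cong H^1(Y, S^{p-1,1}) \hookrightarrow H^1(Y, W)$, giving $\dim H^1(Y, W) \ge 1$.

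For the matching upper bound I would restrict to a Sylow $p$-subgroup $T = \langle \sigma \rangle$ with $\sigma = (1, 2, \dots, p)$. Since $[Y:T]$ is coprime to $p$, the composition $\mathrm{tr} \circ \mathrm{res}$ is multiplication by $[Y:T]$, which is invertible on the $p$-torsion group $H^1(Y, W)$; thus restriction $H^1(Y, W) \to H^1(T, W|_T)$ is injective. As a $\GF(p)T$-module, $M|_T$ is the regular module $\GF(p)[T]$, which is uniserial of dimension $p$. Hence $S^{p-1,1}|_T$ is its augmentation ideal (uniserial of dimension $p-1$), and $W|_T$ is uniserial of dimension $p-2$ — a single Jordan block for $\sigma$. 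For such a module the norm $N = 1 + \sigma + \cdots + \sigma^{p-1} = (\sigma-1)^{p-1}$ acts as zero, so $\ker(N) = W|_T$, while $(\sigma-1)W|_T$ has codimension one. Therefore
\[
H^1(T, W|_T) \cong \ker(N) / (\sigma - 1)W|_T
\]
has dimension $1$, yielding $\dim H^1(Y, W) \le 1$. The two bounds give $\dim H^1(Y, W) = 1$.

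The main obstacle is mild: one just needs to verify carefully that $W^Y = 0$ and that $W|_T$ really is a single Jordan block (which rests on the uniserial structure of the regular $\GF(p)[C_p]$-module). Everything else is routine homological bookkeeping together with Shapiro's Lemma and the standard Sylow restriction argument.
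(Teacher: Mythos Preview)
Your proof is correct and takes a genuinely different route from the paper's. For the lower bound, the paper simply exhibits the non-split extension $T = M/\langle\sum v_i\rangle$ and observes $C_T(Y)=0$, whereas you recover the same information more systematically via two long exact sequences together with Shapiro's Lemma applied to the permutation module. The upper bounds diverge more sharply: the paper writes $Y=\langle\tau,\sigma\rangle$ with $\tau=(1,2,3)$ and $\sigma=(3,\dots,p)$ (invoking Jordan's Theorem), computes $\dim C_U(\tau)$ and $\dim C_U(\sigma)$ for a putative extension $U$ of $W$ by a two-dimensional trivial quotient, and uses the dimension inequality to force $C_U(Y)\ne 0$; you instead restrict to a Sylow $p$-subgroup and compute $H^1(C_p,W|_{C_p})=1$ directly from the single Jordan block. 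Your Sylow argument is cleaner and transfers readily to other modules; the paper's argument avoids cohomological machinery and stays in the concrete module-theoretic language used throughout. One small slip: in your verification that $W^Y=0$, the $\Sym(p)$-submodule $U+U\tau$ need not contain a \emph{trivial} $\Sym(p)$-summand, since it could consist entirely of sign summands, but either way it contains a one-dimensional $\Sym(p)$-submodule, which already contradicts the irreducibility of $V$ (of dimension $p-2\ge 3$), so your conclusion stands.
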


\begin{proof} Let $\tau= (1,2,3)$ and $\sigma= (3, \dots, p)$.  Then $H= \langle \tau,\sigma\rangle$ acts transitively on $\Omega =\{ 1, \dots, p\}$ and, as $p$ is a prime, it is primitive.  Since $H$ contains a $3$-cycle, $Y=H$  by Jordan's Theorem \cite[II.4.5]{Huppert}.  If $U$ is a $\GF(p)Y$-module with $ W$ of codimension $2$ and $[U,Y]= W$, then $\dim C_U(\tau)= 2+\dim C_W(\tau)= 2+p-4= p-2$ and $\dim C_U(\sigma)= 2+ \dim C_W(\sigma) =2+ 1= 3$. Hence $\dim C_U(H)\ge 1$ and this proves $\dim \mathrm H^1(Y,W)\le 1$. Since the natural $p$-point permutation module has a quotient $T$ with $[T,Y]= W$ and $C_T(Y)=0$, we have  $\dim \mathrm H^1(Y,W)= 1$.
\end{proof}

\section{A primer on fusion systems}\label{sec:PF}

We   assume some basic familiarity with fusion systems and recommend the references \cite{AKO, Craven} as introductory texts. We follow the notation from these sources. We start by flying over the standard definitions and at the same time introduce some of the standard terminology from  \cite{AKO, Craven}.

\begin{definition}
For a finite group $G$ and     subgroups $H, K \leq G$,   define
\[ \Hom_G (H,K) = \{ \varphi \in \Hom(H, K) \mid \varphi = c_g \text{ for some } g \in G \text{ such that } H^g \leq K \} \]
and   set $\Aut_G(H) = \Hom_G(H,H)\cong N_G(H)/C_G(H)$.
\end{definition}

More generally,  if $H,K \leq G$,    we define  $\Aut_K(H) = \{c_k \mid k \in K \cap N_G(H)\}$  to be the group of automorphisms of $H$ induced by conjugation by elements of $K$ which normalize $H$. Visibly  $\Aut_K(H) \le \Aut_G(H)\le \Aut(H)$. Similarly, if $K \le L$ and $A \le \Aut(L)$, we write $\Aut_A(K)$ to represent the group of automorphisms of $K$ generated by the restriction of automorphisms in $N_A(K)= \{\gamma\in A\mid  K\gamma=K\}$. In this case $\Out_A(K)= \Aut_A(K)\Inn(K)/\Inn(K)$. \blue{For groups $P$ and $Q$, $\Inj(P,Q)$ is the set of injective group homomorphisms from $P$ to $Q$.}

\begin{definition}\label{def:fussy}
A \emph{fusion system} on a $p$-group $S$ is a category $\F$, with objects   the set of all subgroups of $S$, and \blue{morphisms $\Mor_\F(P,Q)$ between objects $P$ and $Q$   which  satisfy the following two properties:}
\begin{enumerate}
\item $\Hom_S(P,Q)\subseteq \Mor_\F(P,Q) \subseteq \mathrm{Inj} (P,Q)$; and
\item each $\varphi \in \Mor_\F(P,Q)$ is the composite of an $\F$-isomorphism followed by an inclusion.
\end{enumerate}
\end{definition}

If $\F$ is a fusion system and $P,Q \le S$, then we write $\Hom_\F(P,Q)= \Mor_\F(P,Q)$ and $\Aut_\F(P)= \Mor_\F(P,P)$.

\begin{definition}\label{def:fussydefs}
Suppose that $\F$ is a fusion system on a finite $p$-group $S$ and $P  \le S$. Then
\begin{enumerate}
\item the $\F$-\emph{conjugacy class} of $P$, is $P^\F=\{P\alpha\mid \alpha \in \Hom_\F(P,S)\}$;
    \item $P$ is \emph{strongly $\F$-closed} if and only if $Q^\F \subseteq P$ for all $Q\le P$;
 \item for $R \in P^\F$, $\alpha \in \Hom_\F(R,P)$,  $\alpha^*$ is the isomorphism between $\Aut_\F(R)$ and $\Aut_\F(P)$  defined by $\gamma\mapsto \alpha^{-1}\gamma \alpha$;
\item  $P$ is \emph{fully $\F$-normalized} if and only if  $|N_S(P)| \ge |N_S(R)|$ for all $R \in P^\F$;
\item $P$ is \emph{fully $\F$-centralized} if and only if  $|C_S(P)| \ge |C_S(R)|$  for all $R \in P^\F$;
\item  $P$ is  \emph{$S$-centric} if and only if  $C_S(P)=Z(P)$, and $P$ is \emph{$\F$-centric} if and only if $R$ is  $S$-centric for all $R \in P^\F$;
\item  if $R \in P^\F$ and $\alpha \in \Hom_\F(R,P)$, $$N_\alpha=\{g \in N_S(R) \mid \alpha^{-1}c_g \alpha \in \Aut_S(P)\}$$ is the $\alpha$-\emph{extension control subgroup} of $S$;
\item $P$ is $\F$-\emph{receptive} if and only if  for all $R \in P^\F$ and  $\alpha \in \Hom_\F(R,P)$, there exists $\widetilde{\alpha} \in \Hom_\F(N_\alpha,S)$ such that $\widetilde{\alpha}|_R =\alpha;$
    \item   $P$ is \emph{fully $\F$-automized} if and only if  $\Aut_S(P) \in \Syl_p(\Aut_\F(P))$;
\item  $P$ is $\F$-\emph{saturated} provided there exists $R \in P^\F$ such that $R$ is simultaneously
\begin{enumerate}
\item  fully $\F$-automized; and
\item  $\F$-receptive.
\end{enumerate}
\item $\F$ is \emph{saturated} if every subgroup of $S$ is $\F$-saturated.
\end{enumerate}
\end{definition}

\blue{If $\F$ is a fusion system and $X$ is a set of morphisms in $\F$, $\langle X\rangle$ is the intersection of all the fusion systems  on $S$ which contain $X$. We say that $\langle X\rangle$ is the \emph{fusion system generated} by $X$. Obviously $\langle X\rangle$ is contained in $\F$. }

In our arguments, an important role is played by \emph{normalizer fusion systems.} Suppose that $\F$ is a saturated fusion system on $S$, \blue{ $T \le S$ and $K \le \Aut(T)$.} Then $N_\F^K(T)$ is the fusion system on $N_S(T)$ with, for $P, Q \le N_S(T)$, $\Hom_{N_\F(T)}(P,Q)$ consisting of morphisms $\alpha \in \Hom_\F(P,Q)$ such that there is $\wt \alpha \in \Hom_\F(PT,QT)$ with  \blue{$T\wt \alpha = T$, $\wt \alpha|_T\in K$ and } $\alpha = \wt \alpha|_P$. Importantly, if $T$ is fully $\F$-normalized, then $N_\F^K(T)$ is saturated \cite[Theorem I.5.5]{AKO}. \blue{The two extreme cases $K= \Aut(T)$, and $K=1$ are of main interest. In the former case we have the \emph{$\F$-normalizer} of $T$ and we write $N_\F(T)=N_\F^{\Aut(T)}(T)$ whereas in the latter we have the \emph{$\F$-centralizer} of $T$ and  we define  $C_\F(T)=N_\F^1(T)$.}

\blue{A subgroup $T\le S$ is \emph{normal} in $\F$ if and only if $\F= N_\F(T)$.}
The subgroup $O_p(\F)$ is the product of all subgroups $T  \le S$ such that \blue{$T$ is normal in $\F$.} It follows that $\F= N_\F(O_p(\F))$.
A saturated fusion system $\F$ is \emph{constrained} provided $O_p(\F)$ is $\F$-centric.

\begin{theorem}[The Model Theorem]\label{model} Let $\F$ be a constrained fusion system on $S$. Then there exists a finite group $G$ with $O_{p'}(G)=1$, $S \in\Syl_p(G)$, $\F= \F_S(G)$ and $C_G(O_p(G)) \le O_p(G)$.
\end{theorem}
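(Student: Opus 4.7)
The plan is to construct a model $G$ via a central extension problem and then verify the desired properties directly from the construction. Set $Q = O_p(\F)$, $L = \Aut_\F(Q)$, and $\bar L = L/\Inn(Q) = \Out_\F(Q)$. Because $\F$ is constrained, $Q$ is $\F$-centric, hence $C_S(Q) = Z(Q)$ and the conjugation map identifies $\Aut_S(Q)$ with $S/Z(Q)$; since $Q \trianglelefteq S$ is obviously fully $\F$-normalized, saturation implies $\Aut_S(Q) \in \Syl_p(L)$. The image $\bar T$ of $\Aut_S(Q)$ in $\bar L$ is then isomorphic to $S/Q$ and lies in $\Syl_p(\bar L)$.

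The central step is to realise the prescribed $L$-action on $Q$ inside a finite group: equivalently, to construct a central extension $1 \to Z(Q) \to G \to L \to 1$ whose pullback along $\Inn(Q) \hookrightarrow L$ is the canonical extension $1 \to Z(Q) \to Q \to Q/Z(Q) \to 1$. Then $Q$ sits as a normal subgroup of $G$, $G/Q \cong \bar L$, and the conjugation action of $G$ on $Q$ is the given one. By classical Eilenberg--MacLane obstruction theory (equivalently, by inspection of the Lyndon--Hochschild--Serre spectral sequence of $1 \to \Inn(Q) \to L \to \bar L \to 1$), the obstruction to such a lift is a canonical class in $H^3(\bar L; Z(Q))$, where $\bar L$ acts on $Z(Q)$ via the induced action. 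Because $Z(Q)$ is a $p$-group, the restriction map $H^3(\bar L; Z(Q)) \to H^3(\bar T; Z(Q))$ is injective, so it suffices to check that the obstruction vanishes on $\bar T$. But over $\bar T$ the analogous extension problem is already solved by $1 \to Q \to S \to \bar T \to 1$ together with the inclusion $\Aut_S(Q) \hookrightarrow L$ lifting $\bar T \hookrightarrow \bar L$. Hence the restricted obstruction, and therefore the global obstruction, is zero, and $G$ exists. This cohomological vanishing is the main obstacle; once past it, everything else is bookkeeping.

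The remaining verifications are routine. By construction the conjugation map $G \to \Aut(Q)$ has image $L$ and kernel $Z(Q)$, so $C_G(Q) = Z(Q) \le Q$, and in particular $C_G(O_p(G)) \le C_G(Q) \le Q \le O_p(G)$. Any normal $p'$-subgroup of $G$ commutes with the normal $p$-subgroup $Q$, hence lies in $C_G(Q) = Z(Q)$, a $p$-group, and so is trivial; thus $O_{p'}(G) = 1$. Counting orders, $|G| = |L| \cdot |Z(Q)|$ has $p$-part $|\Aut_S(Q)| \cdot |Z(Q)| = |S|$, and the preimage of $\bar T$ in $G$ realises $S$ as a Sylow $p$-subgroup. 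Finally, both $\F$ and $\F_S(G)$ are saturated fusion systems on $S$ in which $Q$ is normal and centric, and the $\F$-automizer and $G$-automizer of $Q$ both equal $L$; the standard uniqueness statement for a constrained saturated fusion system determined by the pair $(Q, L)$ (applied via $N_\F(Q) = \F$ and $N_{\F_S(G)}(Q) = \F_S(G)$) then yields $\F = \F_S(G)$, completing the proof.
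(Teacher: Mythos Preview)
The paper does not prove this theorem; it simply cites \cite[Theorem III.5.10]{AKO}. Your sketch follows the standard obstruction-theoretic argument found in that reference (originating with Broto--Castellana--Grodal--Levi--Oliver), so the approach is the same as that in the literature.

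There is, however, a genuine gap. Vanishing of the $H^3$ obstruction guarantees that \emph{some} extension $1 \to Q \to G \to \bar L \to 1$ with the prescribed outer action exists, but the set of such extensions is a torsor under $H^2(\bar L; Z(Q))$, and different choices can yield non-isomorphic Sylow $p$-subgroups. You assert without justification that ``the preimage of $\bar T$ in $G$ realises $S$''; this is precisely what remains to be shown. The fix is that, by the Cartan--Eilenberg stable element theorem, the image of the restriction $H^2(\bar L; Z(Q)) \to H^2(\bar T; Z(Q))$ consists of the $\bar L$-stable classes, and one must verify---using the saturation axioms, specifically the extension axiom applied to $Q$---that the extension class of $S$ is stable. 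Only then can $G$ be adjusted so that its Sylow $p$-subgroup is isomorphic to $S$. (Two minor points: your extension $1\to Z(Q)\to G\to L\to 1$ is not literally central unless $\bar L$ acts trivially on $Z(Q)$, and in the LHS approach there is also a $d_2$ obstruction in $H^2(\bar L; H^1(\Inn(Q);Z(Q)))$ to dispose of---though the same Sylow restriction argument handles it.)

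A second, smaller issue: your final step invokes ``the standard uniqueness statement for a constrained saturated fusion system determined by $(Q, L)$''. This is essentially the uniqueness half of the Model Theorem itself. It is not circular provided you treat it as a separate (and easier) lemma, but it still requires its own argument: for $Q$ normal and $\F$-centric, every $\F$-morphism $P\to S$ extends to an $\F$-morphism $PQ\to S$ fixing $Q$, and such extensions are unique because $C_S(Q)=Z(Q)$; hence $\F$ is determined by $\Aut_\F(Q)$ together with $S$.
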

\begin{proof} See \cite[Theorem III.5.10]{AKO}.
\end{proof}

We now gather some elementary consequences of the definitions above.
We have the following well-known fact and, as we shall use it several times, we provide the proof.

\begin{lemma}  Suppose that $\F$ is a   fusion system on $S$ and $P \le S $.
\begin{enumerate}
\item If $\alpha \in N_{\Aut_\F(P)}(\Aut_S(P))$, then $N_\alpha = \N_S(P)$.
\item If $P $ is $\F$-receptive, then $$ N_{\Aut_\F(P)}(\Aut_S(P))= \{\alpha|_{P}\mid \alpha \in N_{\Aut_\F(N_S(P))}(P)\}.$$
\end{enumerate}
\end{lemma}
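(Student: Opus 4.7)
My plan is to handle part (i) first, as it feeds directly into part (ii).

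For part (i), I would just unpack the definition of $N_\alpha$ in the case $R = P$. The condition for $g \in N_S(P)$ to lie in $N_\alpha$ becomes $\alpha^{-1} c_g \alpha \in \Aut_S(P)$. Since $c_g$ ranges over all of $\Aut_S(P)$ as $g$ ranges over $N_S(P)$, this is literally the statement that $\alpha$ normalizes $\Aut_S(P)$, which is given by hypothesis. So $N_\alpha = N_S(P)$ and no computation is needed.

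For part (ii), forward inclusion: given $\alpha \in N_{\Aut_\F(P)}(\Aut_S(P))$, apply part (i) to get $N_\alpha = N_S(P)$, then use $\F$-receptivity of $P$ to extend $\alpha$ to some $\wt\alpha \in \Hom_\F(N_S(P),S)$ with $\wt\alpha|_P = \alpha$. The only issue is verifying that $\wt\alpha$ lands inside $N_S(P)$ and normalizes $P$, so that it is an element of $N_{\Aut_\F(N_S(P))}(P)$. Both are immediate: since $\wt\alpha|_P = \alpha$ is an automorphism of $P$, $P\wt\alpha = P$, and then $N_S(P)\wt\alpha$ is a subgroup of $S$ of the same order as $N_S(P)$ which normalizes $P$, forcing $N_S(P)\wt\alpha = N_S(P)$.

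For the reverse inclusion, start with $\beta \in N_{\Aut_\F(N_S(P))}(P)$ and set $\alpha = \beta|_P$, which is in $\Aut_\F(P)$ since $\beta$ preserves $P$. I would check that $\alpha$ normalizes $\Aut_S(P)$ by a direct conjugation calculation: for $g \in N_S(P)$ and $x \in P$, writing $y = x\alpha^{-1} \in P$ and using that $\beta$ is a group homomorphism on $N_S(P)$ that extends $\alpha$, one gets
\[
x(\alpha^{-1} c_g \alpha) = (y^g)\beta = (y\beta)^{g\beta} = x \, c_{g\beta},
\]
so $\alpha^{-1} c_g \alpha = c_{g\beta} \in \Aut_S(P)$ since $g\beta \in N_S(P)$. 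Hence $\alpha \in N_{\Aut_\F(P)}(\Aut_S(P))$, as required.

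There is no real obstacle here; the entire content is just recognizing that ``$\alpha$ normalizes $\Aut_S(P)$'' is exactly the extension-control condition at $R=P$, and then using receptivity to produce the extension and the homomorphism identity to go back. The only point that needs a touch of care is verifying the image of the extension lands in $N_S(P)$, which is an order argument rather than anything structural.
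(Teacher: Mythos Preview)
Your proposal is correct and follows essentially the same route as the paper: part (i) is read off from the definition of $N_\alpha$, and for part (ii) the forward inclusion uses (i) plus receptivity to extend $\alpha$ and then checks $N_S(P)\wt\alpha = N_S(P)$, while the reverse inclusion is the conjugation identity $(\beta|_P)^{-1} c_g (\beta|_P) = c_{g\beta}$. The only cosmetic difference is that the paper phrases the image check as $N_S(P)\wt\alpha = N_S(P\wt\alpha) = N_S(P)$ rather than your order argument, but these are the same observation.
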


\begin{proof} Part (i) is just the definition of $N_\alpha$.

Suppose that  $\alpha \in N_{\Aut_\F(P)}(\Aut_S(P))$. Then $N_\alpha=N_S(P)$ by (i). Since $P$ is $\F$-receptive, $\alpha=\wt \alpha|_P$  where $\wt \alpha \in \Hom_\F(N_S(P),S)$. As $P\wt \alpha = P\alpha =P$,  we know $\N_S(P)\wt \alpha = N_S(P\wt \alpha)= N_S(P)$. Hence $\wt \alpha \in N_{\Aut_\F(N_S(P))}(P)$ and so  $$N_{\Aut_\F(P)}(\Aut_S(P))\subseteq \{\alpha|_{P}\mid \alpha \in N_{\Aut_\F(N_S(P))}(P)\}.$$
Conversely, assume $g \in N_S(P)$. Then  $c_g \in \Aut_S(P)$  and, for $\beta \in N_{\Aut_\F(N_S(P))}(P)$, we have  $(\beta|_P)^{-1}c_g\beta|_P = c_{g\beta} \in \Aut_S(P)$. Hence $\beta|_P \in N_{\Aut_\F(P)}(\Aut_S(P))$.  This proves (ii).
\end{proof}

\begin{lemma}\label{NFTNFR} Suppose that $\F$ is a   fusion system on $S$, $T \le  S$ and $\mathcal K=N_{\mathcal K}(T)$ is a subfusion system of $\F$ on $N_S(T)$.  Assume that $R \le T$ is $\Aut_{\mathcal K}(T)$-invariant. Then $\mathcal K \subseteq N_\F(R)$. In particular, if $R\le T$ is $\Aut_\F(T)$-invariant, then $N_\F(T) \subseteq N_\F(R)$.
\end{lemma}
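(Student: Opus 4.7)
The plan is to chase a general morphism of $\mathcal K$ through the definitions and exhibit the extension required by $N_\F(R)$. Fix $P,Q \le N_S(T)$ and $\varphi\in\Mor_{\mathcal K}(P,Q)$. Since $\mathcal K = N_{\mathcal K}(T)$, the very definition of the normalizer subsystem furnishes an extension $\widetilde{\varphi}\in\Mor_{\mathcal K}(PT,QT)$ with $T\widetilde{\varphi}=T$ and $\widetilde{\varphi}|_P=\varphi$. Then $\widetilde{\varphi}|_T$ is an element of $\Aut_{\mathcal K}(T)$, and the hypothesis that $R$ is $\Aut_{\mathcal K}(T)$-invariant gives $R\widetilde{\varphi}=R$. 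Restricting $\widetilde{\varphi}$ to $PR\le PT$ produces a morphism $\widetilde{\varphi}|_{PR}\colon PR\to QR$ in $\F$ that sends $R$ to $R$ and $P$ to $P\varphi=Q$ via $\varphi$. This is exactly the data required to put $\varphi\in\Mor_{N_\F(R)}(P,Q)$, proving $\mathcal K\subseteq N_\F(R)$.

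The one bookkeeping point that demands attention, and which I expect to be the main (mild) obstacle, is verifying that the subgroups involved actually live where needed: one must check that $P,Q\le N_S(R)$ (so that they are objects of $N_\F(R)$) and that $PR,QR$ are subgroups contained in $N_S(T)$ (so that $\widetilde{\varphi}$ may be restricted to them). For this, note that any fusion system on $N_S(T)$ contains the morphisms of $\F_{N_S(T)}(N_S(T))$, so $\Aut_{N_S(T)}(T)\le \Aut_{\mathcal K}(T)$. By hypothesis $R$ is normalized by $\Aut_{\mathcal K}(T)$, hence by $N_S(T)$; thus $N_S(T)\le N_S(R)$, which gives both $P,Q\le N_S(R)$ and $PR,QR\le N_S(T)$ (using $R\le T$).

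The second assertion is immediate from the first: taking $\mathcal K=N_\F(T)$ one has $N_{\mathcal K}(T)=N_{N_\F(T)}(T)=N_\F(T)=\mathcal K$ and $\Aut_{\mathcal K}(T)=\Aut_\F(T)$, so the hypothesis on $R$ applies and yields $N_\F(T)\subseteq N_\F(R)$. Beyond the set-up check above, the argument is a direct unwinding of the definitions of $N_{\mathcal K}(T)$ and $N_\F(R)$, with no additional saturation or lifting machinery required.
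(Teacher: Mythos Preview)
Your proof is correct and follows essentially the same route as the paper's: extend a morphism of $\mathcal K$ using $\mathcal K=N_{\mathcal K}(T)$ to a map fixing $T$, observe that it then fixes $R$, and restrict to $PR$ to witness membership in $N_\F(R)$; the bookkeeping that $N_S(T)\le N_S(R)$ is handled in the same way. The one cosmetic slip is writing ``$P$ to $P\varphi=Q$'' when $\varphi$ need not be surjective, but this has no bearing on the argument.
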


\begin{proof} Suppose that $X, Y \le N_S(T)$ and $\theta \in \Hom_{\mathcal K}(X,Y)$. Then, as $R$ is $\Aut_{\mathcal K}(T)$-invariant, it is also $\Aut_S(T) $-invariant and thus  $R$ is normalized by $N_S(T)$.  In particular, $X, Y \le N_S(T)\le  N_S(R)$ and so $X$ and $Y$ are objects in $\N_\F(R)$.  Since $\theta \in \Hom_{\mathcal K}(X,Y)$ and $\mathcal K= N_{\mathcal K}(T)$, the morphism $\theta$ extends  to $\hat \theta\in \Hom_{\mathcal K}(XT,YT)$  so that $T\hat \theta= T$. Thus  $\hat \theta|_T \in \Aut_{\mathcal K}(T)$ and $R\hat \theta= R$ because $R$ is $\Aut_{\mathcal K}(T)$-invariant.  As $\hat \theta$ extends $\theta$, $\hat \theta|_{XR} \in \Hom_\F(XR,YR)$ also extends $\theta$ and this means that $\theta \in \Hom_{N_\F(R)}(X,Y)$. Hence $\Hom_{\mathcal K}(X,Y) \subseteq \Hom_{N_\F(R)}(X,Y)$ and this proves the main statement of the lemma.

Taking $\mathcal K= N_\F(T)$ and noting $\Aut_{N_\F(T)}(T)= \Aut_\F(T)$, yields the remaining statement.
\end{proof}

If $Q$ is normal in $\F$, then the \emph{factor system} $\F/Q$ has objects $\{T/Q\mid Q \le S\}$ and, for $Q \le T,R \le S$,  morphisms  $\Hom_{\F/Q}(T/Q,R/Q)= \{\bar \phi\mid \phi \in \Hom_\F(T,R)\}$ where $(tQ)\bar \phi=t\phi Q$ for  $\phi \in \Hom_\F(T,R)$.

\begin{lemma}\label{F/Q} If $\F$ is saturated and $Q$ is normal in $\F$,  then $\F/Q$ is saturated
\end{lemma}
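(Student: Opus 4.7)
Plan: The factor system $\overline{\F} := \F/Q$ is a fusion system on $\overline{S} := S/Q$: every $\phi \in \Hom_\F(P,R)$ fixes $Q$ setwise (as $Q$ is normal in $\F$) and so induces $\overline{\phi}$ on quotients, and the axioms $\Hom_{\overline S}(\overline P, \overline R) \subseteq \Hom_{\overline{\F}}(\overline P, \overline R) \subseteq \Inj(\overline P, \overline R)$ together with the factorization of morphisms through isomorphisms are inherited from $\F$. Moreover, the $\overline{\F}$-class of $\overline T$ equals $\{\overline R : R \in T^\F\}$, with both sides indexed by subgroups containing $Q$.

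To verify saturation, for each $\overline T \le \overline S$ I will exhibit a representative that is simultaneously fully $\overline{\F}$-automized and $\overline{\F}$-receptive. I claim that when $T \ge Q$ is chosen fully $\F$-normalized in its $\F$-class, $\overline T$ works. Since $Q$ is normal in $S$, one has $N_S(T)/Q = N_{\overline S}(\overline T)$, and so the reduction map $\pi \colon \Aut_\F(T) \twoheadrightarrow \Aut_{\overline{\F}}(\overline T)$ sends $\Aut_S(T)$ onto $\Aut_{\overline S}(\overline T)$. The image of a Sylow $p$-subgroup under a surjection is Sylow in the image, hence $\Aut_{\overline S}(\overline T) \in \Syl_p(\Aut_{\overline{\F}}(\overline T))$ and $\overline T$ is fully $\overline{\F}$-automized.

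The main step is $\overline{\F}$-receptivity. Let $\overline \alpha \in \Iso_{\overline{\F}}(\overline R, \overline T)$, lift to $\alpha \in \Iso_\F(R,T)$, and let $N \le N_S(R)$ denote the preimage of $\overline N := N_{\overline \alpha}$. The set $P := \{\alpha^{-1} c_x \alpha \mid x \in N\}$ is the image of the homomorphism $N/C_N(R) \to \Aut_\F(T)$ sending $xC_N(R)$ to $\alpha^{-1} c_x \alpha$, hence a $p$-subgroup of $\Aut_\F(T)$. By construction $\pi(P) \le \Aut_{\overline S}(\overline T)$, so $P \le \Gamma := \pi^{-1}(\Aut_{\overline S}(\overline T))$. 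Since $\Aut_S(T) \le \Gamma$ and $\Aut_S(T) \in \Syl_p(\Aut_\F(T))$, it is also a Sylow $p$-subgroup of $\Gamma$, and Sylow's theorem inside $\Gamma$ supplies $\gamma \in \Gamma$ with $P^\gamma \le \Aut_S(T)$. Writing $\pi(\gamma) = c_{hQ}|_{\overline T}$ for some $h \in N_S(T)$, the modified lift $\alpha' := \alpha\gamma$ satisfies $(\alpha')^{-1} c_x \alpha' \in \Aut_S(T)$ for every $x \in N$, so $N \le N_{\alpha'}$.

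By $\F$-receptivity of $T$ there is $\widetilde{\alpha'} \in \Hom_\F(N_{\alpha'}, S)$ extending $\alpha'$. Reducing $\widetilde{\alpha'}|_N$ modulo $Q$ and post-composing with conjugation by $h^{-1}Q$ (an $\overline{\F}$-morphism on all of $\overline S$) yields an $\overline{\F}$-morphism $\overline N \to \overline S$; on $\overline R$ the factors $\pi(\gamma)$ and $c_{h^{-1}Q}|_{\overline T}$ are mutually inverse, so the composition restricts to $\overline \alpha$. This proves $\overline T$ is $\overline{\F}$-receptive, and hence $\overline{\F}$ is saturated. The crux is forcing $N \le N_{\alpha'}$: a naive lift $\alpha$ need not have $N \le N_\alpha$, and the Sylow-conjugacy trick inside $\Gamma$ supplies an adjustment that achieves this and which can be undone in $\overline{\F}$ by conjugation by $h^{-1}Q$.
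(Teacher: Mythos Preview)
Your argument is correct. The paper does not give a proof at all: it simply cites \cite[Lemma II.5.5]{AKO}. What you have written is essentially a self-contained version of the standard proof of that lemma. The key step---conjugating the $p$-subgroup $P=\{\alpha^{-1}c_x\alpha:x\in N\}$ into $\Aut_S(T)$ by an element of $\Gamma=\pi^{-1}(\Aut_{\overline S}(\overline T))$ so that the adjusted lift $\alpha'=\alpha\gamma$ satisfies $N\le N_{\alpha'}$, and then undoing the adjustment by $c_{h^{-1}Q}$ after passing to the quotient---is precisely the Sylow trick used in the AKO argument. One small point worth making explicit: you use that a fully $\F$-normalized subgroup of a saturated fusion system is automatically fully $\F$-automized and $\F$-receptive; this is \cite[Lemma I.2.6(c)]{AKO} and is implicit in the paper's conventions. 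You also use that every $\F$-morphism between subgroups containing $Q$ sends $Q$ to $Q$; this follows from $\F=N_\F(Q)$. With these standard facts in hand, your proof goes through and has the advantage of being self-contained rather than a bare citation.
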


\begin{proof} This is \cite[Lemma II.5.5]{AKO}.
\end{proof}

\begin{definition}\label{def:ess} Suppose that $\F$ is a fusion system. A subgroup $P$  of $S$ is \emph{$\F$-essential} if $P\ne S$, $P$ is $\F$-centric,  fully $\F$-normalized and $\Out_\F(P)$ contains a strongly $p$-embedded subgroup. We write $\E_\F$  to denote the set of $\F$-essential subgroups of $\F$.
\end{definition}

\blue{Note that if $E \in \E_\F$ then $O_p(\Out_\F(E)) = 1$, that is, $O_p(\Aut_\F(E)) = \Inn(E)$. This fact will be used several times.}

The main tool for classifying \blue{saturated} fusion systems is provided by the following lemma.

\begin{theorem}[Alperin-Goldschmidt]\label{t:alp}
If $\F$ is a saturated fusion system on the $p$-group $S$, then $$\F= \langle \Aut_\F(S), \Aut_\F(E) \mid E \in \E_\F \rangle.$$
\end{theorem}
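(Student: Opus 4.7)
The plan is to induct on the index $|S:P|$, where $P$ denotes the source of an $\F$-isomorphism we wish to express as a composite of restrictions of morphisms in the subsystem $\F_0 := \langle \Aut_\F(S),\, \Aut_\F(E) \mid E \in \E_\F \rangle$.  By condition~(ii) in the definition of a fusion system every $\F$-morphism factors as an $\F$-isomorphism followed by an inclusion, so since $\F_0$ is a fusion subsystem of $\F$ (hence contains all inclusions and is closed under restriction) it suffices to treat $\F$-isomorphisms.  The base case $|S:P|=1$ forces $P=S$, and the isomorphism then lies in $\Aut_\F(S)\subseteq\F_0$.

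For the inductive step, choose a fully $\F$-normalized representative $P'\in P^\F$ and, by saturation, $\F$-isomorphisms $\psi\colon P\to P'$ and $\chi\colon Q\to P'$; any $\F$-isomorphism $\alpha\colon P\to Q$ then equals $\psi\chi^{-1}$, reducing us to showing that an arbitrary $\F$-isomorphism $\beta\colon R\to P'$ with $P'$ fully $\F$-normalized belongs to $\F_0$.  Because $P'$ is fully $\F$-automized, $\Aut_S(P')$ is a Sylow $p$-subgroup of $\Aut_\F(P')$; Sylow's theorem applied to the $p$-subgroup $\beta^{-1}\Aut_S(R)\beta$ of $\Aut_\F(P')$ then produces $\mu\in\Aut_\F(P')$ for which $\widehat{\beta}:=\beta\mu$ satisfies $\widehat{\beta}^{-1}\Aut_S(R)\widehat{\beta}\le \Aut_S(P')$, equivalently $N_S(R)\subseteq N_{\widehat{\beta}}$.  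Since $R$ is proper in $S$, $R\lneq N_S(R)$, so $\F$-receptivity of $P'$ extends $\widehat{\beta}$ to a morphism whose source has strictly smaller index in $S$ than $|S:R|$; the inductive hypothesis places this extension, and therefore $\widehat{\beta}$, in $\F_0$.  It remains to show $\mu\in\Aut_\F(P')$ lies in $\F_0$.

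If $P'\in\E_\F$ then $\mu\in\Aut_\F(P')\subseteq\F_0$ by construction.  Otherwise $P'$ is either not $\F$-centric, or $\F$-centric with $\Out_\F(P')$ containing no strongly $p$-embedded subgroup.  In the first case, $P'$ fully $\F$-normalized implies $P'$ fully $\F$-centralized, from which one deduces $C_S(P')\not\le P'$; then $N_\mu\supseteq P'C_S(P')\gneq P'$ and a further application of receptivity-plus-induction finishes.  In the second case, the standard group-theoretic fact that a group without a strongly $p$-embedded subgroup is generated by the normalizers of the nontrivial $p$-subgroups of a fixed Sylow $p$-subgroup yields $\Out_\F(P')=\langle N_{\Out_\F(P')}(\overline{U})\mid 1\ne \overline{U}\le \Out_S(P')\rangle$.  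Writing $\mu$ modulo $\Inn(P')$ as a product of such normalizer elements, the $\Inn(P')$-factor is $c_x$ for some $x\in P'$ and so is the restriction of an element of $\Aut_\F(S)\subseteq\F_0$; each remaining factor $\mu_i$ normalizes $\Aut_{R_i}(P')$ for some $R_i\le N_S(P')$ with $\Aut_{R_i}(P')\gneq \Inn(P')$.  The $\F$-centricity of $P'$ (which forces $P'C_S(P')=P'$) implies $R_i\gneq P'$, and $R_i\subseteq N_{\mu_i}$, so once more $\mu_i$ extends via receptivity to a subgroup of smaller index where induction applies.

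The principal obstacle is this last case: one must convert the abstract assertion ``$\Out_\F(P')$ has no strongly $p$-embedded subgroup'' into an explicit factorization of $\mu$ whose individual factors all have extension-control subgroup strictly larger than $P'$.  The bookkeeping that ties nontrivial $p$-subgroups of $\Out_S(P')$ to proper overgroups of $P'$ in $S$—where $\F$-centricity of $P'$ is essential—is what makes the induction close.
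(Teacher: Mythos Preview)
Your argument is correct and is precisely the standard proof of the Alperin--Goldschmidt fusion theorem; the paper itself does not prove this result but simply cites \cite[Theorem~I.3.5]{AKO}, where essentially the same induction on $|S:P|$ (reduction to a fully normalized target, Sylow conjugation inside $\Aut_\F(P')$, and the trichotomy essential/non-centric/centric-non-essential) is carried out.  Your handling of the centric non-essential case---using that the absence of a strongly $p$-embedded subgroup forces $\Out_\F(P')$ to be generated by normalizers of nontrivial subgroups of $\Out_S(P')$, and then translating those normalizers back to overgroups $R_i\gneq P'$ via centricity---is exactly the standard mechanism.
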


\begin{proof}
See \cite[Theorem I.3.5]{AKO}.
\end{proof}

\begin{lemma}\label{OpinE} Suppose that $\F$ is a saturated fusion system on $S$ and $E$ is an $\F$-essential subgroup. Then $O_p(\F)\le E$  and $O_p(\F)$ is  $\Aut_\F(E)$-invariant.
\end{lemma}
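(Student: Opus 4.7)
The plan is to prove the containment $O_p(\F)\le E$ by contradiction, from which the invariance of $O_p(\F)$ under $\Aut_\F(E)$ will follow immediately.

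Suppose for contradiction that $O_p(\F)\not\le E$. Since $O_p(\F)$ is normal in $S$, the product $T:=E\cdot O_p(\F)$ is a $p$-subgroup of $S$ properly containing $E$; normalizer growth in $p$-groups then gives $E<N$, where $N:=N_T(E)$. Because $E$ is $\F$-centric, $C_N(E)\le C_S(E)=Z(E)$, so $\Aut_N(E)=N/Z(E)$ and $\Out_N(E)\cong N/E$ is a non-trivial $p$-subgroup of $\Out_\F(E)$. If I can show that $\Out_N(E)$ is normal in $\Out_\F(E)$, then the existence of a strongly $p$-embedded subgroup in $\Out_\F(E)$, which forces $O_p(\Out_\F(E))=1$, will yield the desired contradiction.

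The key step, and the main (though routine) obstacle, is the normalization claim, which I would extract from the extension axiom built into $N_\F(O_p(\F))=\F$. Each $\alpha\in\Aut_\F(E)$ extends to some $\tilde\alpha\in\Hom_\F(T,T)$ with $O_p(\F)\tilde\alpha=O_p(\F)$. Since $\tilde\alpha$ restricts to $\alpha$ on $E$, it sends $E$ to $E$, and being an injective endomorphism of the finite group $T$ it is an automorphism; it therefore normalizes both $E$ and $O_p(\F)$, hence normalizes $N=N_T(E)$. For any $n\in N$ we then compute $\alpha^{-1}c_n\alpha=c_{n\tilde\alpha}\in\Aut_N(E)$, showing $\Aut_N(E)$, and consequently $\Out_N(E)$, is stable under conjugation by $\Aut_\F(E)$. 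This contradicts the strongly $p$-embedded hypothesis, so $O_p(\F)\le E$.

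For the invariance of $O_p(\F)$ under $\Aut_\F(E)$, the containment $O_p(\F)\le E$ gives $E\cdot O_p(\F)=E$, so applying the same extension argument to an arbitrary $\alpha\in\Aut_\F(E)$ produces an extension to $E\cdot O_p(\F)=E$ which must coincide with $\alpha$ itself and which already satisfies $O_p(\F)\alpha=O_p(\F)$. Equivalently, the strong closure property of $O_p(\F)$ in $\F$ forces $O_p(\F)\alpha\subseteq O_p(\F)$, and equality follows by finiteness. No deep input beyond the defining property of $O_p(\F)$ and the strongly $p$-embedded hypothesis is needed.
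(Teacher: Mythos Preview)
Your proof is correct and follows essentially the same route as the paper's: both use the extension property coming from $N_\F(O_p(\F))=\F$ to show that the image of $N_{O_p(\F)}(E)$ (equivalently your $N$) in $\Out_\F(E)$ is a normal $p$-subgroup, contradicting $O_p(\Out_\F(E))=1$. The paper packages the normalizer-growth step into its Lemma~\ref{lem:K in E}, whereas you unfold it directly, but the arguments are otherwise the same.
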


\begin{proof} Since $N_{O_p(\F)}(E)$ is $\Aut_\F(E)$-invariant and $O_p(\Aut_\F(E))=\Inn(E)$, we have $N_{O_p(\F)}(E) \le E$ by Lemma~\ref{lem:K in E}. Hence $O_p(\F) \le E$ and so is normal in $E$ and $\Aut_\F(E)$-invariant.
\end{proof}

We will also meet the fusion subsystems $O^p(\F)$ and $O^{p'}(\F)$. We  define the \emph{focal} and  \emph{hyperfocal} subgroups as follows
 $$\foc(\F)=\langle [g,\alpha]\mid g \in Q \le S \text { and } \alpha\in \Aut_\F(Q) \rangle$$

 $$\hyp (\F) = \langle [g,\alpha] \mid g \in Q \le S \text { and } \alpha \in O^{p}( \Aut_\F(Q))\rangle$$ where $[g,\alpha] = g^{-1} (g)\alpha$. The subfusion system $O^{p}(\F)$ is a saturated fusion system on $\hyp(\F)$ defined as follows:
$$O^p(\F)= \langle \Inn(\hyp(\F)), O^{p}(\Aut_\F(Q))\mid Q \le \hyp(\F)\rangle.$$

\begin{lemma}\label{l:focf}
Let $\F$ be a saturated fusion system on $S$. The following hold:
\begin{enumerate}
\item $\foc(\F)=\langle [g,\alpha]\mid g \in Q \le S, Q \text{ is } \F\text{-essential or } Q=S, \alpha \in \Aut_\F(Q) \rangle$.
\item $O^p(\F)=\F$ if and only if $\foc(\F)= S$ if and only if $\hyp(\F)=S$.
\end{enumerate}
\end{lemma}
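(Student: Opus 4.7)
My plan is as follows. For (i), the inclusion $\supseteq$ is immediate from the definition of $\foc(\F)$, so I would concentrate on $\subseteq$: fix $Q \le S$, $\alpha \in \Aut_\F(Q)$ and $g \in Q$, and invoke the Alperin--Goldschmidt Theorem~\ref{t:alp} to write $\alpha$ as a composition of restrictions of morphisms $\beta_i \in \Aut_\F(E_i)$ with each $E_i$ either $\F$-essential or equal to $S$. Iteratively applying the cocycle identity $[g,\beta\gamma] = [g,\beta]\cdot[g\beta,\gamma]$, and noting that restricting a morphism does not change the value of a commutator evaluated inside the restricted domain, expresses $[g,\alpha]$ as a product of commutators $[h,\beta_i]$ with $h \in E_i$, each of which belongs to the right-hand generating set.

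For (ii), I would set up the cycle of implications $O^p(\F) = \F \Rightarrow \hyp(\F) = S \Rightarrow \foc(\F) = S \Rightarrow \hyp(\F) = S \Rightarrow O^p(\F) = \F$. The first is immediate since $O^p(\F)$ is by definition a fusion system on $\hyp(\F)$, and the second follows from the elementary inclusion $\hyp(\F) \le \foc(\F) \le S$.

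The main content is $\foc(\F) = S \Rightarrow \hyp(\F) = S$. Using (i) I would reduce to commutators $[g,\alpha]$ with $\alpha \in \Aut_\F(E)$ for $E$ either $\F$-essential or $E = S$. In either case $E$ is fully $\F$-normalized, so saturation yields $\Aut_S(E) \in \Syl_p(\Aut_\F(E))$; since $\Aut_\F(E)/O^p(\Aut_\F(E))$ is a $p$-group, I can factor $\alpha = c_s\psi$ with $s \in N_S(E)$ and $\psi \in O^p(\Aut_\F(E))$. The cocycle identity then yields
$$[g,\alpha] = [g,c_s]\cdot[g^s,\psi] \in [S,S]\cdot\hyp(\F),$$
so $\foc(\F) \le [S,S]\cdot\hyp(\F)$. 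Under the hypothesis $\foc(\F) = S$, and using $[S,S] \le \Phi(S)$, the Burnside basis theorem forces $\hyp(\F) = S$.

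For the last implication $\hyp(\F) = S \Rightarrow O^p(\F) = \F$, I would observe that $\Inn(S) = \Inn(\hyp(\F)) \subseteq O^p(\F)$ holds by the definition of $O^p(\F)$, so restricting inner automorphisms gives $\Aut_S(E) \subseteq O^p(\F)$ for every $E \le S$. Combined with $O^p(\Aut_\F(E)) \subseteq O^p(\F)$ and the factorization $\Aut_\F(E) = \Aut_S(E)\cdot O^p(\Aut_\F(E))$ from the previous paragraph, this gives $\Aut_\F(E) \subseteq O^p(\F)$ for every $\F$-essential $E$ and for $E = S$, and Alperin--Goldschmidt then yields $\F \subseteq O^p(\F)$. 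The hard part is the step $\foc(\F) = S \Rightarrow \hyp(\F) = S$: the crucial observation is that the discrepancy between focal and hyperfocal generators is an inner-conjugation commutator lying in $[S,S] \le \Phi(S)$, which the Burnside basis theorem then collapses at the level of $S/\Phi(S)$.
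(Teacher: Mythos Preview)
Your proposal is correct and spells out precisely the standard argument that the paper's citations point to: the paper's own proof is nothing more than a reference to Theorem~\ref{t:alp} and \cite[Corollary I.7.5]{AKO}, and your use of the cocycle identity together with the inclusion $\foc(\F)\le [S,S]\cdot\hyp(\F)$ and the Frattini argument is exactly how those results are established.
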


\begin{proof}
 This follows from Theorem \ref{t:alp}   and \cite[Corollary I.7.5]{AKO}.
\end{proof}

The subfusion system $O^{p'}(\F) $ is more complicated to define, so we just settle for saying that it is the unique saturated subfusion system on $S$ minimal subject to containing $O^{p'}(\Aut_\F(Q))$ for all $Q \le S$ (see \cite[Definition I.7.3]{AKO}).

\blue{\begin{definition}\label{def:reduced}
A saturated fusion system $\F$ is called \emph{reduced} if and only if $O_p(\F)=1$ and $\F=O^p(\F)=O^{p'}(\F)$.\end{definition}}
Normal subfusion systems are defined in \cite[Definition I.6.1]{AKO} and a \emph{simple} saturated fusion system is a fusion system which has no proper normal subfusion system.  Simple fusion systems are reduced as $O^p(\F)$, $O^{p'}(\F)$ and $\F_{O_p(\F)}(O_p(\F))$ are normal subsystems of $\F$.

\begin{lemma}\label{oli 1.4} Suppose that $\F$ is a saturated fusion system on a $p$-group $S$. Assume that each $P \in\E_\F$ is minimal among all $\F$-centric subgroups.
  For each $P \in \E_\F$ define
$$\Aut^{(P)}_\F(S) =\langle\alpha \in \Aut_\F(S)
\mid P\alpha = P, \alpha|_P\in O^{p'}(\Aut_\F(P))\rangle.$$
Then $O^{p'}(\F)= \F$ if and only if $\Aut_\F(S)= \langle \Inn(S), \Aut_\F^{(P)}(S) \mid P \in \E_\F\rangle$.
\end{lemma}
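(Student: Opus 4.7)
Set $\F_0=O^{p'}(\F)$ and $\Gamma=\langle \Inn(S),\Aut_\F^{(P)}(S)\mid P\in\E_\F\rangle$. The plan is to prove the stronger assertion $\Gamma=\Aut_{\F_0}(S)$; the equivalence in the lemma then follows from the fact that $\F_0=\F$ if and only if $\Aut_{\F_0}(S)=\Aut_\F(S)$, a consequence of Alperin--Goldschmidt (Theorem~\ref{t:alp}) together with the minimality of the $\F$-essentials.

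First I would establish $\Gamma\le\Aut_{\F_0}(S)$. Take a generator $\alpha\in\Aut_\F^{(P)}(S)$, so $P\alpha=P$ and $\alpha|_P\in O^{p'}(\Aut_\F(P))\le \Aut_{\F_0}(P)$. Because $P$ is fully $\F$-normalized it is also fully $\F_0$-normalized, and since $\alpha$ normalizes $P$ the $\alpha|_P$-extension control subgroup in $\F_0$ is $N_S(P)$. Receptivity of $P$ in the saturated system $\F_0$ then lifts $\alpha|_P$ to some $\tilde\alpha_1\in\Aut_{\F_0}(N_S(P))$; iterating along the $p$-normalizer chain $P\le N_S(P)\le N_S(N_S(P))\le\cdots\le S$ (passing to $\F_0$-conjugates when needed to keep full normalization) yields $\tilde\alpha\in\Aut_{\F_0}(S)$ with $\tilde\alpha|_P=\alpha|_P$. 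The composite $\beta=\alpha\tilde\alpha^{-1}\in\Aut_\F(S)$ fixes $P$ pointwise, so $[\beta,S]\le C_S(P)=Z(P)\le P$ by $\F$-centricity of $P$; hence $\beta$ is a $p$-element of $\Aut_\F(S)$ and lies in $O_p(\Aut_\F(S))\le O^{p'}(\Aut_\F(S))\le\Aut_{\F_0}(S)$. Therefore $\alpha=\beta\tilde\alpha\in\Aut_{\F_0}(S)$.

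For the reverse inclusion $\Aut_{\F_0}(S)\le\Gamma$, I would apply Alperin--Goldschmidt within $\F_0$: each $\gamma\in\Aut_{\F_0}(S)$ is a composite of inner automorphisms together with $\F_0$-automorphisms of $S$ that restrict nontrivially to $\Aut_{\F_0}(Q)$ for some $Q\in\E_{\F_0}$. Each such $Q$ is $\F_0$-centric, hence $\F$-centric, and by the minimality hypothesis either equals or contains some $P\in\E_\F$. Restricting further to $P$, each contributing automorphism becomes (by construction of $\F_0=O^{p'}(\F)$) an extension of an element of $O^{p'}(\Aut_\F(P))$ and thus lies in $\Aut_\F^{(P)}(S)\subseteq\Gamma$. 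This identification of $\E_{\F_0}$-contributions with $\E_\F$-data under the minimality assumption is the main obstacle in the argument.

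With $\Gamma=\Aut_{\F_0}(S)$ in hand, the lemma is immediate in the forward direction: if $\F_0=\F$ then $\Aut_\F(S)=\Aut_{\F_0}(S)=\Gamma$. For the backward direction, $\Aut_\F(S)=\Gamma=\Aut_{\F_0}(S)$ combined with the minimality hypothesis on $\E_\F$ (and the extension/Schur--Zassenhaus reasoning above, applied in reverse to any hypothetical $p'$-element of $\Aut_\F(P)\setminus\Aut_{\F_0}(P)$) forces $\Aut_\F(P)=\Aut_{\F_0}(P)$ for every $P\in\E_\F$, so Alperin--Goldschmidt applied to $\F$ gives $\F=\F_0$.
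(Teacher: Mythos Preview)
The paper does not prove this lemma; it simply cites \cite[Lemma~1.4]{p.index}. Your plan of establishing the stronger identity $\Gamma=\Aut_{O^{p'}(\F)}(S)$ is the correct one, and that identity is indeed what lies behind Oliver's result. However, two of your steps are not justified.

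In the direction $\Gamma\le\Aut_{\F_0}(S)$, the implication ``$\beta$ fixes $P$ pointwise, so $[\beta,S]\le C_S(P)$'' fails whenever $P$ is not normal in $S$: one checks that $[s,\beta]\in C_S(P)$ is equivalent to $\beta$ centralising the conjugate $P^{s}$, and there is no reason for this. The conclusion you want (that the $p'$-part of $\beta$ is trivial) is nevertheless true, but it needs a different argument: a $p'$-automorphism of $S$ centralising the centric subgroup $P$ centralises $N_S(P)$ (here $P$ \emph{is} normal and centric in $N_S(P)$, so a series-stabiliser argument applies), hence by climbing the normaliser tower it centralises all of $S$. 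Your iterated lifting of $\alpha|_P$ inside $\F_0$ also needs this kind of repair, since you have not explained why each successive extension normalises $\Aut_S(N^k(P))$.

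The reverse inclusion $\Aut_{\F_0}(S)\le\Gamma$ is a genuine gap. Alperin--Goldschmidt decomposes arbitrary morphisms in $\F_0$ as composites of restrictions of automorphisms of $S$ and of $\F_0$-essentials; it does \emph{not} decompose elements of $\Aut_{\F_0}(S)$, which are already among the generators. So your argument here proves nothing. This is precisely the direction where the minimality hypothesis on $\E_\F$ does real work: one must invoke the explicit description of $\Aut_{O^{p'}(\F)}(S)$ from \cite[\S I.7]{AKO} (as generated by automorphisms of $S$ whose restriction to some $\F$-centric subgroup lies in the $O^{p'}$-closure there) and then use minimality to reduce from all $\F$-centric subgroups to $\E_\F$. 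Your final paragraph also leans on the fact that $\Aut_{\F_0}(S)=\Aut_\F(S)$ forces $\F_0=\F$; this is true, but it is a theorem about subsystems of $p'$-index (again \cite[\S I.7]{AKO}), not something that follows from the Schur--Zassenhaus-type reasoning you sketch.
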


\begin{proof} This is \cite[Lemma 1.4]{p.index}.
\end{proof}

In the next lemma, the containment $\Aut_\F(E) \subseteq \mathcal G$ means that $E$ is an object in $\mathcal G$ and that $\Aut_\F(E)=\Aut_{\mathcal G}(E)$. We will use this notation from here on.

\begin{lemma}\label{F-essential G-essential} Suppose that $\F$ and $\mathcal G$ are saturated fusion systems with ${\mathcal G} \subseteq \F$. Assume that $E$ is $\F$-essential and $\Aut_\F(E) \subseteq \mathcal G$. Then $E$ is ${\mathcal G}$-essential.
\end{lemma}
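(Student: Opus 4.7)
The plan is to verify directly the four defining conditions of a $\mathcal G$-essential subgroup: that $E<S$, that $E$ is $\mathcal G$-centric, that $E$ is fully $\mathcal G$-normalized, and that $\Out_{\mathcal G}(E)$ contains a strongly $p$-embedded subgroup. The key observation throughout is that since $\mathcal G \subseteq \F$ are fusion systems on the same $p$-group $S$, every $\mathcal G$-conjugate of $E$ is also an $\F$-conjugate; that is, $E^{\mathcal G}\subseteq E^{\F}$.

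First, $E<S$ is immediate from $E$ being $\F$-essential. For $\mathcal G$-centricity, I would pick an arbitrary $R\in E^{\mathcal G}$; then $R\in E^\F$, so $\F$-centricity of $E$ yields $C_S(R)=Z(R)$, showing that every element of $E^{\mathcal G}$ is centric. For full $\mathcal G$-normalization, the inclusion $E^{\mathcal G}\subseteq E^\F$ together with $|N_S(E)|\geq |N_S(R)|$ for all $R\in E^\F$ immediately gives the same inequality for all $R\in E^{\mathcal G}$.

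The last condition uses the full strength of the hypothesis $\Aut_\F(E)\subseteq \mathcal G$. Recalling the convention announced just before the lemma, this means $E$ is an object of $\mathcal G$ and $\Aut_\F(E)=\Aut_{\mathcal G}(E)$. Consequently $\Out_\F(E)=\Out_{\mathcal G}(E)$, so the strongly $p$-embedded subgroup witnessing $\F$-essentiality of $E$ is literally the same subgroup witnessing $\mathcal G$-essentiality. There is no real obstacle: the proof is a one-line verification for each clause, and the only point worth stating explicitly is the containment $E^{\mathcal G}\subseteq E^\F$, which is what allows the $\F$-level hypotheses on centricity and normalization to transfer to $\mathcal G$.
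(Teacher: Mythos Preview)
Your proposal is correct and follows essentially the same approach as the paper's proof: both use the containment $E^{\mathcal G}\subseteq E^{\F}$ to transfer centricity and full normalization from $\F$ to $\mathcal G$, and the equality $\Aut_\F(E)=\Aut_{\mathcal G}(E)$ to transfer the strongly $p$-embedded condition.
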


\begin{proof} We have $\Aut_\F(E)= \Aut_{\mathcal G}(E)$ and $E^{\mathcal G} \subseteq E^\F$. Therefore, as $E$ is fully $\F$-normalized and $\F$-centric it is also fully ${\mathcal G}$-normalized and ${\mathcal G}$-centric. Since $\Out_\F(E)=\Out_{\mathcal G}(E)$, we know $\Out_{\mathcal G}(E)$ has a strongly $p$-embedded subgroup. Thus $E$ is $\mathcal G$-essential.
\end{proof}

Recall that, following \cite[Proposition I.3.3]{AKO}, if $P < S$ is fully $\F$-normalized,  $H_\F(P)$ is defined to be the subgroup of $\Aut_\F(P)$ which is generated by those morphisms of $P$ which extend to $\F$-isomorphisms between strictly larger subgroups of $S$.  Then the statement in \cite[Proposition I.3.3]{AKO}, includes the fact that, if $P$ is $\F$-essential, then $H_\F(P)/\Inn(P)$ is  strongly $p$-embedded in $\Aut_\F(P)$.

The next result  is widely used  to show that certain subgroups are not $\F$-essential.

\begin{lemma}\label{lem:series}
Suppose that $\F$ is a saturated fusion system on $S$ and $Q \le S$.  Assume that $Q_s < Q_{s-1}< \dots <Q_0=Q$ are    $\Aut_\F(Q)$-invariant with $Q_s \le \Phi(Q)$.  If $  A \le \Aut_S(Q)$ and $[Q_i,A] \le Q_{i+1}$ for $0 \le i \le s-1$, then $A \le O_p(\Aut_\F(Q))$. In particular, if $A \not \le \Inn(Q)$, then $Q$ is not $\F$-essential.
\end{lemma}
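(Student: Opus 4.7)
Define
\[
B=\bigl\{\varphi\in\Aut_\F(Q)\;\big|\;[Q_i,\varphi]\le Q_{i+1}\text{ for all }0\le i\le s-1\bigr\}.
\]
Since each $Q_i$ is $\Aut_\F(Q)$-invariant, standard commutator manipulations show that $B$ is a normal subgroup of $\Aut_\F(Q)$: if $\psi\in\Aut_\F(Q)$, $\varphi\in B$ and $q\in Q_i$, then $q\psi\in Q_i$, so $[q\psi,\varphi]\in Q_{i+1}$, and then $[q,\psi^{-1}\varphi\psi]=[q\psi,\varphi]\psi^{-1}\in Q_{i+1}\psi^{-1}=Q_{i+1}$, giving $\psi^{-1}\varphi\psi\in B$. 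By hypothesis $A\le B$, so it suffices to prove that $B$ is a $p$-subgroup of $\Aut_\F(Q)$; for then $B\le O_p(\Aut_\F(Q))$, as required.

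To see that $B$ is a $p$-group, observe that every $\varphi\in B$ stabilises the chain $Q=Q_0\ge Q_1\ge\cdots\ge Q_s$ with trivial action on each quotient, and so centralises $Q/Q_s$. Because $Q_s\le\Phi(Q)$, this means $\varphi$ centralises $Q/\Phi(Q)$. The standard fact that the kernel of the restriction $\Aut(Q)\to\Aut(Q/\Phi(Q))$ is a $p$-group (\cite[Theorem 5.1.4]{Gor}, a consequence of the Burnside Basis Theorem) then forces $\varphi$ to have $p$-power order. Hence $B$ is a $p$-group, and combined with the normality established above we conclude $A\le B\le O_p(\Aut_\F(Q))$.

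For the final assertion, recall that $\Inn(Q)\cong Q/Z(Q)$ is a normal $p$-subgroup of $\Aut_\F(Q)$, so $\Inn(Q)\le O_p(\Aut_\F(Q))$ and
\[
O_p(\Aut_\F(Q))/\Inn(Q)=O_p(\Out_\F(Q)).
\]
If $Q$ were $\F$-essential, then $\Out_\F(Q)$ would contain a strongly $p$-embedded subgroup, forcing $O_p(\Out_\F(Q))=1$ and hence $O_p(\Aut_\F(Q))=\Inn(Q)$. The first part would then give $A\le\Inn(Q)$, contradicting the assumption $A\not\le\Inn(Q)$. No single step is an obstacle here; the only point requiring care is confirming that the commutator calculation actually yields normality of $B$ in $\Aut_\F(Q)$, which is immediate from the $\Aut_\F(Q)$-invariance of the chain.
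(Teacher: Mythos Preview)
Your definition of $B$ as the full stabiliser of the chain and the verification of its normality are fine (modulo a harmless sign slip: with right actions the identity is $[q,\psi^{-1}\varphi\psi]=[q\psi^{-1},\varphi]\psi$, but the conclusion is unchanged). The genuine gap is the sentence ``stabilises the chain \ldots\ with trivial action on each quotient, and so centralises $Q/Q_s$.'' That inference is false: acting trivially on every successive quotient of a chain does not force trivial action on the top-to-bottom quotient. For a concrete instance, take $Q$ elementary abelian of rank $2$ with basis $e_1,e_2$, set $Q_1=\langle e_1\rangle$, $Q_2=1$, and let $\varphi$ send $e_1\mapsto e_1$, $e_2\mapsto e_1e_2$; then $\varphi$ acts trivially on $Q/Q_1$ and on $Q_1$ but not on $Q=Q/Q_2$. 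Consequently your deduction that $\varphi$ centralises $Q/\Phi(Q)$, and hence has $p$-power order by \cite[Theorem~5.1.4]{Gor}, does not go through as written.

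The paper closes exactly this gap by passing to a $p'$-element $\beta\in \langle A^{\Aut_\F(Q)}\rangle$ and invoking coprime action \cite[Theorem~5.3.6]{Gor}: for such $\beta$ one has $[Q,\beta]=[Q,\beta,\beta]=\cdots=[Q,\beta;s]\le Q_s\le\Phi(Q)$, so $\beta$ really does centralise $Q/\Phi(Q)$ and is therefore trivial. Alternatively, you can salvage your approach without coprime action by arguing on the elementary abelian quotient: the image of $B$ in $\Aut(Q/\Phi(Q))\cong\GL(Q/\Phi(Q))$ stabilises the induced flag $Q_i\Phi(Q)/\Phi(Q)$ (which terminates at $0$ since $Q_s\le\Phi(Q)$) with trivial action on each layer, hence consists of unipotent and therefore $p$-power-order linear maps; since the kernel of restriction to $Q/\Phi(Q)$ is already a $p$-group, $B$ is a $p$-group. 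The remainder of your argument (the passage from $O_p(\Aut_\F(Q))$ to the statement about $\F$-essentiality) is correct.
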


\begin{proof} Set $B= \langle A^{\Aut_\F(Q)}\rangle$. Assume that $\beta \in B$ has order coprime to $p$.  Then $[Q,\beta;s] \le Q_s$ and so \cite[Theorem 5.3.6]{Gor} implies that $[Q,\beta]\le Q_s\le \Phi(Q)$.  Thus   \cite[Theorem 5.1.4]{Gor} implies $\beta=1$ and so $B$ is a $p$-group.  Therefore $A \le B \le  O_p(\Aut_\F(Q))$ and this proves the first claim. Since $O_p(\Aut_\F(E))=\Inn(E)$ for $E \in \E_\F$, if $A \not \le \Inn(Q)$, then  $Q$ is not $\F$-essential.
\end{proof}

If $Q$ has a series of subgroups as in Lemma~\ref{lem:series} and \blue{$P \leq S$}  centralizes \blue{all quotients} $Q_i/Q_{i+1}$, then we say that \blue{$P$} \emph{stabilizes the series} $Q_s < Q_{s-1}< \dots <Q_0=Q$ \blue{and we conclude that, if $P\not\le Q$, then $Q$ is not $\F$-essential.}

\blue{The next lemma relies on Proposition~\ref{SE-p2} which provides a list of all non-abelian simple groups with a strongly $p$-embedded subgroup containing an elementary abelian subgroup of order $p^2$.}

\begin{lemma}\label{|E| bound} Suppose that $\F$ is a saturated fusion system on $S$ and $E$ is $\F$-essential. Then $|E/\Phi(E)| \ge |N_S(E)/E|^2$.
\end{lemma}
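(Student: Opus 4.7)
The proof proceeds in three movements. First, set $L := \Out_\F(E)$ and consider the natural action of $L$ on $V := E/\Phi(E)$, a $\GF(p)$-vector space of dimension $d$. Any $\alpha \in \Aut_\F(E)$ that acts trivially on $V$ is a $p$-element by \cite[Theorem 5.1.4]{Gor}, so the kernel of $L \to \GL(V)$ is a normal $p$-subgroup of $L$; but the existence of a strongly $p$-embedded subgroup in $L$ forces $O_p(L)=1$, and the kernel is therefore trivial. Because $E$ is fully $\F$-normalized it is fully $\F$-automized, so $T := \Aut_S(E)\Inn(E)/\Inn(E) \cong N_S(E)/E$ is a Sylow $p$-subgroup of $L$. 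The lemma thus reduces to establishing $d \ge 2\log_p|T|$ for the faithful $\GF(p)L$-module $V$.

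Next I would dichotomize on whether $T$ contains an elementary abelian subgroup of order $p^2$. If it does not, then $T$ is cyclic when $p$ is odd and cyclic or generalized quaternion when $p=2$. For $p$ odd and $T$ cyclic of order $p^a$, a faithful $\GF(p)T$-module must contain an indecomposable Jordan block of size at least $p^{a-1}+1$, because a unipotent Jordan block of size $r$ in $\GL_r(p)$ has order $p^{\lceil \log_p r\rceil}$; the elementary inequality $p^{a-1}+1 \ge 2a$ (valid for $p\ge 3$ and $a\ge 1$ by induction) then gives $d \ge 2a = 2\log_p|T|$. For $p=2$ with rank-one Sylow, the existence of a strongly $2$-embedded subgroup forces $L$, modulo $O_{2'}(L)$, to be solvable of Frobenius shape $C_q \rtimes T$ with $q$ a prime divisor of $2^a\pm 1$, and the bound can be read off directly from the explicit minimal faithful $\GF(2)L$-modules (which all have $\GF(p)$-dimension at least $q-1 \ge 2a$).

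The main case, and the principal obstacle, is when $T$ contains $C_p \times C_p$. Here I would apply Lemma~\ref{strongly p structure} to obtain $O_{p'}(L) \le M$ and $X := L/O_{p'}(L)$ almost simple with simple socle $K := F^*(X)$, and then invoke Proposition~\ref{SE-p2} to enumerate the ten possibilities for $K$. In each case $T$ is, up to a small outer factor controlled by Proposition~\ref{SE-p2}, a Sylow $p$-subgroup of $K$, and a known lower bound on the minimal dimension of a faithful $\GF(p)K$-representation dominates $2\log_p|T|$: for the Lie-type cases the natural module for $K$ over its defining field, restricted to $\GF(p)$, supplies a bound that is tight for $\PSU_3(p^a)$ and ${}^2\B_2(2^{2a+1})$ and with slack for ${}^2\G_2(3^{2a+1})$ and $\PSL_2(p^{a+1})$; the alternating case $K=\Alt(2p)$ follows from the $(2p-2)$-dimensional deleted permutation module; and the sporadic cases are settled by direct inspection of the known modular representations. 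No single case is conceptually subtle, but the tightness of the bound in the Suzuki and unitary families makes the bookkeeping the most delicate part of the argument.
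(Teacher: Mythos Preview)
The paper does not prove this lemma; it simply cites \cite[Proposition 4.6(4)]{parkersemerarocomputing}. Your plan is essentially the strategy behind that cited result: pass to the faithful action of $L=\Out_\F(E)$ on $V=E/\Phi(E)$, split according to the $p$-rank of $T=\Out_S(E)$, and in the rank~$\ge 2$ case run through the list in Proposition~\ref{SE-p2} checking minimal faithful $\GF(p)$-dimensions against $2\log_p|T|$. So at the level of strategy there is nothing to compare; you are reconstructing the cited proof.

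There is, however, one concrete error in your rank-one $p=2$ analysis. If $T$ is cyclic then Burnside's transfer theorem gives $L$ a normal $2$-complement $K=O_{2'}(L)$, so $L/O_{2'}(L)\cong T$ is a $2$-group, not ``Frobenius shape $C_q\rtimes T$'' as you claim. The correct argument is different: for $|T|=2^a$ with $a\ge 2$, the unique involution $t\in T$ must act nontrivially on $K$ (else $t\in Z(L)$ and $O_2(L)\ne 1$), and the strongly $2$-embedded condition forces the Sylow $2$-subgroups of $L$ to be TI, which constrains the action of $t$ on $K$ sharply enough to push up the dimension. In practice one checks, for example, that no subgroup of $\GL_3(2)$ with cyclic Sylow $2$-subgroup of order $4$ can have $O_2=1$ (because $\Aut(N)$ has exponent dividing $6$ for every odd $N\mid 21$, forcing $t$ central). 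The quaternion case needs Brauer--Suzuki rather than your Frobenius claim. These repairs are routine but your sketch as written does not cover them.
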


\begin{proof} This is \cite[Proposition  4.6 (4)]{parkersemerarocomputing}. \end{proof}

 We now employ  work of Henke \cite{Ellen}.

\begin{prop}\label{Ellen} Suppose that $\F$ is a saturated fusion system on the $p$-group $S$ and $E$ is an $\F$-essential subgroup.  Assume that $U$ and $W$ are $\Aut_\F(E)$-invariant subgroups of $E$ with $V=U/W$  centralized by $\Inn(E)$ and   non-trivial as a $\GF(p)O^{p'}(\Aut_\F(E))$-module.  If $|[V,\Out_S(E)]|\le p$ or $|V:C_V(\Out_S(E))|\le p$, then $$O^{p'}(\Aut_\F(E))/C_{O^{p'}(\Aut_\F(E))}(V) \cong \SL_2(p),$$ and $V/C_V(O^{p'}(\Aut_\F(E)))$ is the natural $\GF(p)\SL_2(p)$-module.
\end{prop}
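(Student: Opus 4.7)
The plan is to reduce to a faithful action on $V$ and then invoke Henke's classification theorem \cite{Ellen}. Set $G = O^{p'}(\Aut_\F(E))$, $K = C_G(V)$, and $T = G \cap \Aut_S(E)$. Since $G$ contains a Sylow $p$-subgroup of $\Aut_\F(E)$, we have $T \in \Syl_p(G)$, and since $V$ is centralized by $\Inn(E)$, the action of $G$ on $V$ factors through $\Out_\F(E)$. Let $\bar G = G/K$ and let $\bar T$ be the image of $T$ in $\bar G$; by hypothesis $V$ is non-trivial as a $G$-module, so $\bar G \neq 1$.

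The first step is to check that $\bar G$ inherits a strongly $p$-embedded subgroup. Because $E$ is $\F$-essential, $\Out_\F(E)$ has a strongly $p$-embedded subgroup, hence so does its normal subgroup $O^{p'}(\Out_\F(E))$ of index coprime to $p$, which is the image of $G$. Now $\bar G$ is obtained from this image by factoring out the image of $K$; Lemma~\ref{strongly p-embedded} permits this provided the kernel commutes with a non-trivial $p$-element. Were $\bar T = 1$, then $T \le K$ would centralize $V$, and Lemma~\ref{lem:mss} applied with $G = O^p(G)$ would force $V = C_V(G)$, contradicting the hypothesis that $V$ is non-trivial as an $O^{p'}(\Aut_\F(E))$-module. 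Therefore $\bar T \neq 1$ and $\bar G$ has a strongly $p$-embedded subgroup.

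The second step is a short translation of the quadratic hypothesis: because $K$ centralizes $V$, we have $[V,\bar T] = [V,T]$ and $C_V(\bar T) = C_V(T)$, while $T$ and $\Out_S(E)$ induce the same commutators and fixed-point subspaces on $V$ (as $\Inn(E)$ acts trivially). Combined with $\bar T \neq 1$, the assumption $|[V,\Out_S(E)]| \le p$ or $|V:C_V(\Out_S(E))| \le p$ yields $|[V,\bar T]| = p$ or $|V:C_V(\bar T)| = p$ respectively. We are now in the exact setting of Henke's theorem: a finite group with a strongly $p$-embedded subgroup acting faithfully on a $\GF(p)$-module such that a Sylow $p$-subgroup induces commutators (resp.\ cofixed points) of order precisely $p$ is isomorphic to $\SL_2(p)$, with $V/C_V(\bar G)$ the natural $\SL_2(p)$-module. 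Pulling this back through $K$ yields $O^{p'}(\Aut_\F(E))/C_{O^{p'}(\Aut_\F(E))}(V) \cong \SL_2(p)$ with $V/C_V(O^{p'}(\Aut_\F(E)))$ natural, as required.

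The technical heart of the argument---the recognition of $\SL_2(p)$ from its minimal $p$-local action---is delegated entirely to Henke's theorem, so the main obstacle is the reduction step: one must confirm that the strongly $p$-embedded property transports to the faithful quotient $\bar G$ and that the quadratic inequalities are preserved under passage to $\bar T$. Both hinge on the non-triviality of $V$ as an $O^{p'}$-module, which via Lemma~\ref{lem:mss} prevents the Sylow $p$-subgroup from collapsing into the kernel $K$.
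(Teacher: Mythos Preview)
Your strategy is the same as the paper's---verify a quadratic/FF hypothesis and cite Henke---but two steps in your reduction do not hold up as written.

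First, the argument for $\bar T\ne 1$ via Lemma~\ref{lem:mss} is broken. You assert $G=O^p(G)$ without justification (it is not automatic for $G=O^{p'}(\Aut_\F(E))$), and even granting it, Lemma~\ref{lem:mss} with $C_V(T)=V$ yields only $V=[V,G]+C_V(G)$, not $V=C_V(G)$. The paper's argument here is simpler and correct: $G=O^{p'}(\Aut_\F(E))=\langle \Aut_S(E)^{\Aut_\F(E)}\rangle$, and $C_{\Aut_\F(E)}(V)$ is normal in $\Aut_\F(E)$ since $U,W$ are $\Aut_\F(E)$-invariant; hence if $\Aut_S(E)$ centralised $V$ then so would $G$, contradicting non-triviality. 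This gives directly that the hypothesised inequalities are in fact equalities.

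Second, your invocation of Lemma~\ref{strongly p-embedded} is a non sequitur: that lemma requires the kernel $K/\Inn(E)$ to \emph{centralise} some non-trivial $p$-element, whereas you only establish $\bar T\ne 1$, i.e.\ that some $p$-element lies \emph{outside} the kernel. These are different statements, and in general $C_{\Out_S(E)}(V)$ may be non-trivial, so $K/\Inn(E)$ need not be a $p'$-group. The paper sidesteps this by not attempting to prove that $\bar G$ itself has a strongly $p$-embedded subgroup: it simply records that $V$ (or its dual) is a failure-of-factorisation module for $\bar G$---the offender being the image of $\Out_S(E)$, with $|V:C_V(\Out_S(E))|=p\le|\Out_S(E)/C_{\Out_S(E)}(V)|$---and then applies \cite[Theorem 5.6]{Ellen} as stated. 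That theorem returns $\bar G\cong\SL_2(p^a)$ with $V/C_V(\bar G)$ natural, and the equality $|[V,\Out_S(E)]|=p$ or $|V:C_V(\Out_S(E))|=p$ forces $a=1$. Your version, which claims $\SL_2(p)$ in one shot, is not what Henke's theorem delivers.
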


\begin{proof} The hypothesis that $V$ is a non-trivial $\GF(p)O^{p'}(\Aut_\F(E))$-module implies that $\Aut_S(E)$ does not centralize $V$. Hence $$|[V,\Out_S(E)]|= p \le |\Out_S(E)/C_{\Out_S(E)}(V)|$$ or  $$|V:C_V(\Out_S(E))|= p  \le |\Out_S(E)/C_{\Out_S(E)}(V)|.$$  This, by definition, means that either the dual of $V$ or $V$ is a  failure of factorisation  module for the group $O^{p'}(\Aut_\F(E))/C_{O^{p'}(\Aut_\F(E))}(V)$. Using  \cite[Theorem 5.6]{Ellen}, we obtain  $O^{p'}(\Aut_\F(E))/C_{O^{p'}(\Aut_\F(E))}(V)\cong \SL_2(p^a)$ and $V/C_V(O^{p'}(\Aut_\F(E)))$ is the natural $\GF(p)\SL_2(p^a)$-module. Since we know one of $|[V,\Out_S(E)]|= p$ or $|V:C_V(\Out_S(E))|= p$, we deduce that $p=p^a$ and this completes the proof.
\end{proof}

We continue this section by recalling important properties of $\F$-pearls (see Definition~\ref{pearl:def}).

\begin{lemma}\label{pearls1}
Suppose that $p$ is an odd prime, $\F$ is a saturated fusion system on a $p$-group $S$ and $P \in \mathcal P(\F)$. Then
\begin{enumerate}
\item $S$ has maximal class;
\item $\Out_\F(P)$ is isomorphic to a subgroup of $\GL_2(p)$ and  $O^{p'}(\Out_\F(P)) \cong \SL_2(p)$;
\item $P/\Phi(P)$ is a natural $\GF(p)\SL_2(p)$-module for $O^{p'}(\Out_\F(P))$;
\item  $[\N_S(P) \colon P] =p$; and
\item every subgroup of $S$ that is $\F$-conjugate to $P$ is an $\F$-pearl.
\end{enumerate}
\end{lemma}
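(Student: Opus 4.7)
The plan is to establish (ii), (iii) and (iv) from the embedding $\Out_\F(P) \hookrightarrow \GL_2(p)$ together with the strongly $p$-embedded hypothesis; then deduce (v) from the full $\F$-normalization of $P$; and finally prove (i) by producing $t \in P$ with $|C_S(t)| = p^2$ and invoking Lemma~\ref{p2centralizer}(ii).

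For (ii)--(iv): since $p$ is odd and $P$ is either elementary abelian of rank $2$ or extraspecial of exponent $p$ and order $p^3$, the natural map $\Out(P) \to \Aut(P/\Phi(P)) = \GL_2(p)$ is an isomorphism; in the extraspecial case this uses that the kernel of $\Aut(P) \to \Aut(P/Z(P))$ equals $\Inn(P)$. Hence $\Out_\F(P)$ embeds in $\GL_2(p)$, whose Sylow $p$-subgroups have order $p$. Because $P$ is $\F$-centric we have $C_S(P) = Z(P)$ and $\Out_S(P) \cong N_S(P)/P$; because $P$ is $\F$-essential, $P < S$, so $\Out_S(P)$ is a non-trivial $p$-subgroup of $\Out_\F(P)$, forcing $|\Out_S(P)| = p$ and giving (iv). The strongly $p$-embedded subgroup in $\Out_\F(P)$ rules out a normal Sylow $p$-subgroup, and Dickson's classification of subgroups of $\PGL_2(p)$ shows that any subgroup of $\GL_2(p)$ with a non-normal Sylow $p$-subgroup of order $p$ contains $\SL_2(p)$. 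Hence $\SL_2(p) \le \Out_\F(P)$; since $\SL_2(p)$ is generated by its $p$-elements, $O^{p'}(\Out_\F(P)) = \SL_2(p)$, which is (ii). Part (iii) follows at once because the embedding $\Out_\F(P) \hookrightarrow \GL_2(p)$ is realised on $P/\Phi(P)$.

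For (v): let $Q \in P^\F$ via an $\F$-isomorphism $\varphi\colon P \to Q$. Then $Q$ has the correct isomorphism type; $\F$-centricity is preserved by $\F$-isomorphism; and conjugation by $\varphi$ transports $\Aut_\F(P)$ onto $\Aut_\F(Q)$, so $\Out_\F(Q)$ inherits a strongly $p$-embedded subgroup. It remains to verify that $Q$ is fully $\F$-normalized: since $Q$ is a proper $p$-subgroup of $S$, $|N_S(Q)| \ge p|Q| = p|P|$, while by (iv) and the full $\F$-normalization of $P$ we have $|N_S(P)| = p|P| \ge |N_S(Q)|$. Thus $|N_S(Q)| = |N_S(P)|$, $Q$ is fully $\F$-normalized and therefore $\F$-essential, so $Q \in \mathcal P(\F)$.

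For (i): since $\Out_S(P)$ is the image of a Jordan block in $\SL_2(p)$, there is a unique $\Out_S(P)$-invariant line in $P/\Phi(P)$; let $Z$ denote its preimage in $P$ and pick $t \in P \setminus Z$. A direct commutator computation shows that any $s \in N_S(P) \setminus P$ satisfies $[s,t] \ne 1$, so $C_{N_S(P)}(t) = C_P(t)$, which has order $p^2$ (namely $P$ itself in the abelian case and $\langle t, Z(P)\rangle$ in the extraspecial case). The main step is to bootstrap this to $C_S(t) = C_P(t)$, and this is where I expect the key difficulty to lie. The plan is to suppose $C_S(t) > C_P(t)$ and consider the $p$-group $P \cdot C_S(t) > P$, inside which $N_{P \cdot C_S(t)}(P) > P$; yet any $a \in N_S(P) \cap P \cdot C_S(t)$ factors as $a = p'c$ with $p' \in P$ and $c \in C_S(t) \cap N_S(P) = C_{N_S(P)}(t) \le P$, forcing $a \in P$ and hence $N_{P \cdot C_S(t)}(P) = P$, a contradiction. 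Thus $|C_S(t)| = p^2$ and Lemma~\ref{p2centralizer}(ii) yields that $S$ has maximal class.
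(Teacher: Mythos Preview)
Your argument is correct and gives a direct, self-contained proof where the paper simply cites \cite{pearls}.  Parts (ii)--(v) are clean; the one point that needs an extra line is in (i), where you assert that $P\cdot C_S(t)$ is a subgroup of $S$.  In the abelian case $P\le C_S(t)$ and there is nothing to do, but in the extraspecial case this is not automatic.  The fix is short: since $P$ is $\F$-centric we have $Z(S)\le C_S(P)=Z(P)$, and $|Z(P)|=p$ forces $Z(S)=Z(P)=P'$.  Thus for every $g\in P$ one has $t^{g}=t[t,g]$ with $[t,g]\in P'=Z(S)$, whence $C_S(t^{g})=C_S(t)$ and $P$ normalises $C_S(t)$.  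Your normaliser-growth argument then goes through verbatim: if $C_S(t)>C_P(t)$ then $PC_S(t)>P$, so $N_{PC_S(t)}(P)>P$; writing any $a\in N_{PC_S(t)}(P)$ as $a=p'c$ with $p'\in P$ and $c\in C_S(t)$ gives $c=(p')^{-1}a\in N_S(P)\cap C_S(t)=C_{N_S(P)}(t)\le P$, contradicting $a\notin P$.  Hence $|C_S(t)|=p^{2}$ and Lemma~\ref{p2centralizer}(ii) applies.
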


\begin{proof} This is  a combination of \cite[Lemma 1.5, Corollary 1.11 and Lemma 1.13]{pearls}.
\end{proof}

\begin{lemma}\label{pearls2}  Suppose that $p$ is an odd prime and $\F$ is a saturated fusion system on a $p$-group $S $ of maximal class with $|S| \ge p^4$. If
$ E\le S$ is an $\F$-essential subgroup of $S$, then the following are equivalent:
\begin{enumerate}\item $E$ is an $\F$-pearl;
\item $E$ is not contained in  $\gamma_1(S)$ or in $C_S(Z_2(S))$; and
\item there exists an element $x \in S\setminus C_S(Z_2(S)) $ of order $p$ such that
either $ E = \langle x
\rangle Z(S) $ or $E = \langle x\rangle Z_2 (S)$.

\end{enumerate}
\end{lemma}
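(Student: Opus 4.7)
The plan is to establish the equivalences by proving the cyclic chain of implications (iii) $\Rightarrow$ (i) $\Rightarrow$ (ii) $\Rightarrow$ (iii).

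For (iii) $\Rightarrow$ (i), I will verify that the subgroup $E$ described has the isomorphism type of a pearl. If $E = \langle x \rangle Z(S)$, then $x$ and $Z(S)$ are commuting groups of order $p$ with $x \notin Z(S) \le C_S(Z_2(S))$, so $E$ is elementary abelian of order $p^2$. If $E = \langle x \rangle Z_2(S)$, then $Z_2(S)$ has exponent $p$ by Lemma~\ref{mc-facts}~(iii), (iv), (vi) according to whether $|S| > p^{p+1}$ or not, while $[x, Z_2(S)] \le [S, Z_2(S)] \le Z(S)$ is non-trivial exactly because $x \notin C_S(Z_2(S))$. Hence $E' = Z(S) = Z(E)$, so $E$ is extraspecial of order $p^3$ generated by elements of order $p$, forcing $E \cong p^{1+2}_+$. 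Combined with the hypothesis that $E$ is $\F$-essential, $E$ is an $\F$-pearl.

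For (i) $\Rightarrow$ (ii), I argue by contradiction: suppose $E$ is a pearl with $E \le \gamma_1(S)$ (the case $E \le C_S(Z_2(S))$ is symmetric). By Lemma~\ref{pearls1}~(iv), $|N_S(E)| = p|E|$, and comparing $|E| \le p^3$ with $|\gamma_1(S)| = p^{n-1}$ shows $E \ne \gamma_1(S)$ and hence $N_S(E) < S$. The contrapositive of Lemma~\ref{mc-normalizer} applied to $E \subseteq \gamma_1(S) \cup C_S(Z_2(S))$ then gives $N_S(E) \subseteq \gamma_1(S) \cup C_S(Z_2(S))$, and being a subgroup, $N_S(E)$ lies in one of these two maximal subgroups. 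Using the $2$-step centralizer property of the containing maximal subgroup, I show $\Aut_S(E)$ stabilizes the chain $E \ge Z(S)\Phi(E) \ge \Phi(E)$: for any $x \in E$ with $x \in \gamma_j(S) \setminus \gamma_{j+1}(S)$, the commutator $[N_S(E), x] \le \gamma_{j+1}(S) \cap E$ lies in $Z(S)\Phi(E)$ by order considerations on the intersection. Lemma~\ref{lem:series} then yields $\Aut_S(E) \le \Inn(E)$, which contradicts $|N_S(E)/E| = p$ together with the natural $\SL_2(p)$-module structure on $E/\Phi(E)$ given by Lemma~\ref{pearls1}~(iii).

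For (ii) $\Rightarrow$ (iii), the most substantive implication, assume $E$ is $\F$-essential with $E \not\subseteq \gamma_1(S)$ and $E \not\subseteq C_S(Z_2(S))$. Since no group is the union of two proper subgroups, $E \not\subseteq \gamma_1(S) \cup C_S(Z_2(S))$, and Lemma~\ref{sbgrp-maxclass} gives that $E$ has maximal class. The $\F$-centric condition yields $C_S(E) = Z(E)$; combined with $Z(S) \le C_S(E)$ and $|Z(E)| = p$, this forces $Z(E) = Z(S)$. Lemma~\ref{|E| bound} gives $p^2 = |E/\Phi(E)| \ge |N_S(E)/E|^2$, hence $|N_S(E)/E| = p$. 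The key step is the bound $|E| \le p^3$: pick $y \in E \setminus (\gamma_1(S) \cup C_S(Z_2(S)))$; then $C_E(y) \le C_S(y) = \langle y, Z(S) \rangle$ by Lemma~\ref{p2centralizer}. If $|E| \ge p^4$, combining Theorem~\ref{full auto S} applied to $E$ with the constraint that $\Out_\F(E)$ has a strongly $p$-embedded subgroup (restricted via Proposition~\ref{SE-p2}), together with the tight centralizer bound $|C_E(y)| \le p^2$, contradicts the commutator structure of a maximal class group. Once $|E| \le p^3$ is established, the explicit description follows: for $|E| = p^2$, $\F$-centricity gives $Z(S) \le E$ and hence $E = \langle y \rangle Z(S)$; for $|E| = p^3$, $E$ is non-abelian with centre $Z(S)$, the index-$p$ subgroup $E \cap C_S(Z_2(S))$ must equal $Z_2(S)$ by the centralizer analysis, and $E = \langle y \rangle Z_2(S)$. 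The hard part will be the bound $|E| \le p^3$, requiring careful analysis of how a maximal class subgroup of $S$ can support $\F$-automorphisms with a strongly $p$-embedded outer automorphism group.
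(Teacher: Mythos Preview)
The paper does not prove Lemma~\ref{pearls2} itself; it cites \cite[Lemma~2.4]{pearls}. However, the implication (i)~$\Rightarrow$~(ii) is reproved in Lemma~\ref{in.sub}, and comparing your argument to that one exposes a genuine gap.

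Your (i)~$\Rightarrow$~(ii) argument invokes Lemma~\ref{lem:series} on the chain $E \ge Z(S)\Phi(E) \ge \Phi(E)$, but that lemma requires the chain to be $\Aut_\F(E)$-invariant. When $E$ is an \emph{abelian} pearl, $\Phi(E)=1$ and the middle term is $Z(S)$; by Lemma~\ref{pearls1}(ii),(iii) the group $O^{p'}(\Out_\F(E))\cong\SL_2(p)$ acts irreducibly on $E$, so $Z(S)$ is \emph{not} $\Aut_\F(E)$-invariant and the lemma does not apply. Your claim that the cases $E\le\gamma_1(S)$ and $E\le C_S(Z_2(S))$ are ``symmetric'' is also misleading: the latter is much easier (if $E\le C_S(Z_2(S))$ then $Z_2(S)\le C_S(E)=Z(E)$ by centricity, which immediately contradicts $|Z(E)|\le p^2$ combined with $|C_S(Z_2(S))|=p^{n-1}$). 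The paper's route for $E\le\gamma_1(S)$ (exceptional case) first forces $|E|\ge p^3$ by showing $Z_3(S)\le N_S(E)$ and $E\not\le Z_3(S)$, so $E$ is extraspecial; \emph{then} the characteristic chain $E>Z(E)>1$ can legitimately be used with Lemma~\ref{lem:series}.

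Your (ii)~$\Rightarrow$~(iii) is on the right track but the justification of $|E|\le p^3$ is muddled. The clean argument is: if $|E|\ge p^4$ then $E$ has maximal class, so Theorem~\ref{full auto S} gives that $\Aut(E)/O_p(\Aut(E))$ is an abelian $p'$-group; hence $O_p(\Out(E))$ is the unique Sylow $p$-subgroup of $\Out(E)$, so $\Out_S(E)\le O_p(\Out(E))\cap\Out_\F(E)\le O_p(\Out_\F(E))=1$, contradicting $|\Out_S(E)|=p$. Proposition~\ref{SE-p2} and the centraliser bound $|C_E(y)|\le p^2$ are not needed here. You should also verify that $E$ has exponent $p$: for $|E|=p^2$ cyclic groups are excluded since $\Aut(C_{p^2})$ is abelian; for $|E|=p^3$ the group $p^{1+2}_-$ is excluded since its characteristic subgroup $\Omega_1(E)$ of index $p$ forces $\Out(E)$ to have a normal Sylow $p$-subgroup.
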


\begin{proof}  This is  a restatement of \cite[Lemma 2.4]{pearls}.  \end{proof}

  The next lemma helps when we     apply Lemma~\ref{oli 1.4}.

\begin{lemma}\label{pearls2.5}  Suppose that $p$ is an odd prime and $\F$ is a saturated fusion system on a $p$-group $S $ with $|S| \ge p^4$. If $P \in \mathcal  P(\F)$, then
 no proper subgroup of $P$ is $\F$-centric.
  \end{lemma}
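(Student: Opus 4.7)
The plan is, for each proper subgroup $Q$ of $P$, to exhibit an $\F$-conjugate $R$ of $Q$ (often $R=Q$ itself) with $C_S(R)\not\le R$, which shows $R$ is not centric and hence $Q$ is not $\F$-centric. I split on the isomorphism type of $P$. If $P$ is elementary abelian of order $p^2$, every proper subgroup $Q$ of $P$ is trivial or cyclic of order $p$, and the inclusion $P\le C_S(Q)$ (valid because $P$ itself is abelian) already gives $C_S(Q)\ge P>Q$.

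Now assume $P$ is extraspecial of order $p^3$ and exponent $p$. By Lemma~\ref{pearls2}(iii) I may write $P=\langle x\rangle Z_2(S)$ with $x\in S\setminus C_S(Z_2(S))$ of order $p$; a short calculation then gives $Z(P)=Z_2(S)\cap C_S(x)=Z(S)$. For subgroups of order at most $p$ the verification is immediate: $Q=1$ and $Q=Z(S)$ both satisfy $C_S(Q)=S$, and for $Q=\langle y\rangle$ with $y\notin Z(P)$ the extraspecial structure gives $C_P(y)=\langle y,Z(P)\rangle$ of order $p^2$, so $C_S(Q)\ge C_P(y)>Q$.

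The main step, and the one I expect to require the most care, is the case $|Q|=p^2$. Since $P$ has exponent $p$, such a $Q$ is elementary abelian, and since $Q$ has index $p$ in $P$ it must contain $P'=Z(P)=Z(S)$; thus $Q$ corresponds to a $1$-dimensional subspace of $P/Z(P)$. By Lemma~\ref{pearls1}(iii), $O^{p'}(\Out_\F(P))\cong\SL_2(p)$ acts on $P/Z(P)$ as the natural module, which is transitive on non-zero vectors and hence on $1$-spaces. Consequently $Q$ is $\Aut_\F(P)$-conjugate, and in particular $\F$-conjugate, to $Z_2(S)\le P$ (itself an order-$p^2$ subgroup containing $Z(P)$). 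But $C_S(Z_2(S))$ is a $2$-step centralizer of $S$, hence a maximal subgroup, so $|C_S(Z_2(S))|=p^{n-1}>p^2=|Z_2(S)|$ and $Z_2(S)$ is not centric; therefore neither is $Q$. The delicate ingredient is precisely this identification of an $\F$-conjugate of $Q$ whose centralizer in $S$ is visibly too large, which is made possible only by the natural-module structure recorded in Lemma~\ref{pearls1}(iii).
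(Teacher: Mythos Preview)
Your argument is correct and follows essentially the same route as the paper: dispose of the abelian case trivially, and in the extraspecial case use the natural $\SL_2(p)$-module structure from Lemma~\ref{pearls1}(ii),(iii) to conjugate any order-$p^2$ subgroup of $P$ to $Z_2(S)$, which is not centric since $|S:C_S(Z_2(S))|=p$ and $|S|\ge p^4$. The paper's proof is simply a terser version of yours, omitting the explicit verifications for subgroups of order at most $p$ and for $Z(P)=Z(S)$.
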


  \begin{proof} There is nothing to do if $P\in \mathcal P_a(\F)$ is abelian. Assume $P\in \mathcal P_e(\F)$ is extraspecial. Since no subgroup of order $p$ is $\F$-centric we can assume $Q\le P$ has order $p^2$. Then parts (ii) and (iii) of Lemma~\ref{pearls1} imply that $Q$ is $\Aut_\F(P)$ conjugate to $Z_2(S)\le P$. Since $|S| \ge p^4$, and $|S:C_S(Z_2(S))|=p$, we conclude $Z_2(S)$ is not $\F$-centric. Hence  $Q$ is not $\F$-centric.
  \end{proof}

\blue{For a subgroup $A$ of a group $B$, we define $N^0(A)= A$ and then, for $i > 0$, $N^i(A)= N_B(N^{i-1}(A))$. The ordered collection of these subgroups is called the \emph{normalizer tower} of $A$  in $B$ and its length is the minimal $\ell$ such that $N^{\ell+1}(A)= N^\ell(A)$.}

\begin{theorem}\label{pearls3} Suppose that $p$ is an odd prime, $\F$ is a saturated fusion system on a $p$-group $S$ and $P \in \mathcal P(\F)$ is an $\F$-pearl with $|S:P|=p^m$.
Then
\begin{enumerate}
\item the members of the normalizer tower of $P$ \[\N^0(P) < \N^1(P) <\N^2(P)< \dots <\N^{m-1}(P) < \N^m(P)=S\]
are the only subgroups of $S$  which contain $P$; also $|\N^i(P) \colon \N^{i-1}(P)| = p$ for every $1 \leq i \leq m$;
\item $P$ is not properly contained in any $\F$-essential subgroup of $S$; and
\item every morphism in $\N_{\Aut_\F(P)}(\Aut_S(P))$ is the restriction of an automorphism of $S$ that normalizes each $\N^i(P)$.
\end{enumerate}
\end{theorem}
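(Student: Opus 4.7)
For part (i), I would start from the observation that $P$ being an $\F$-pearl forces every subgroup $R$ of $S$ containing $P$ to lie outside $\gamma_1(S) \cup C_S(Z_2(S))$ (by Lemma~\ref{pearls2}), and hence to have maximal class (by Lemma~\ref{sbgrp-maxclass}). Combined with $|N_S(P):P| = p$ from Lemma~\ref{pearls1}(iv), any $R > P$ satisfies $P < N_R(P) \le N_S(P)$, which forces $N_R(P) = N_S(P)$ and so $N^1(P) \le R$. The main technical claim is then $|N^{i+1}(P):N^i(P)| = p$ for every $0 \le i \le m-1$, which I would prove by induction on $i$: since $N^i(P) \not\subseteq \gamma_1(S)$ we have $N^i(P)\,\gamma_1(S) = S$, and Dedekind's identity yields
\[
|N_S(N^i(P)) : N^i(P)| \;=\; |N_S(N^i(P)) \cap \gamma_1(S) : N^i(P) \cap \gamma_1(S)|,
\]
reducing the question to a computation inside the regular group $\gamma_1(S)$ (Lemma~\ref{mc-facts}(ii)), where the structural results of Section~\ref{sec:MC} yield the required bound. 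Once each step of the tower has index $p$, the same forced-containment argument as at the base shows inductively that the $N^i(P)$ exhaust all subgroups of $S$ containing $P$.

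For part (ii), I argue by contradiction: if $E$ is $\F$-essential with $P < E$, then Lemma~\ref{pearls2} applied to $E$ splits into two cases. Either $E \subseteq \gamma_1(S) \cup C_S(Z_2(S))$, which is incompatible with the pearl $P$ lying inside $E$ (by the same lemma applied to $P$), or $E$ is itself an $\F$-pearl, in which case Lemma~\ref{pearls2.5} forbids $E$ from having an $\F$-centric proper subgroup --- yet $P$ is $\F$-centric, being $\F$-essential.

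For part (iii), given $\varphi \in N_{\Aut_\F(P)}(\Aut_S(P))$, the hypothesis that $\varphi$ normalizes $\Aut_S(P)$ gives $N_\varphi = N_S(P) = N^1(P)$. Receptivity of $P$ (valid since $P$ is fully $\F$-normalized, being $\F$-essential) then produces an extension $\varphi_1 \in \Hom_\F(N^1(P), S)$ restricting to $\varphi$. Part (i) forces $N^1(P)\varphi_1 = N^1(P)$, so that $\varphi_1 \in \Aut_\F(N^1(P))$. I would iterate this up the tower, at each stage extending $\varphi_i \in \Aut_\F(N^i(P))$ to $\varphi_{i+1} \in \Aut_\F(N^{i+1}(P))$ via receptivity and identifying the image as $N^{i+1}(P)$ through part (i), finally arriving at $\hat\varphi \in \Aut_\F(S)$ extending $\varphi$ and normalizing each $N^i(P)$ by construction. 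The main obstacle is the inductive index calculation in part (i): after the base case we can no longer appeal to Lemma~\ref{pearls1}(iv), and must instead exploit the interaction of $N^i(P)$ with $\gamma_1(S)$ via the maximal-class machinery of Section~\ref{sec:MC}. A secondary subtlety in part (iii) is guaranteeing that each intermediate $N^i(P)$ is $\F$-receptive at the stage where receptivity is invoked; this should follow from part (i) together with the intrinsic nature of the normalizer tower, though one may first have to conjugate $N^i(P)$ to a fully $\F$-normalized representative of its $\F$-class and transport $\varphi_i$ across before extending.
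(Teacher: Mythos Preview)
The paper does not prove this result; it simply cites \cite[Theorem 3.6]{pearls}. Your proposal therefore goes well beyond what the paper offers, and much of it is sound --- particularly part (ii), which is correct as written.

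For part (i), your Dedekind approach is workable but the phrase ``structural results of Section~\ref{sec:MC} yield the required bound'' hides real content: you would need to identify $N^i(P)\cap\gamma_1(S)$ explicitly as $Z_{i+1}(S)$ (or the analogous term for extraspecial pearls) and then argue via the commutator map $y\mapsto[x,y]$ that $N_{\gamma_1(S)}(N^i(P))=Z_{i+2}(S)$. A shorter route avoids this entirely: since $N^{i+1}(P)$ has maximal class and $N^i(P)$ is a proper normal subgroup of it, Lemma~\ref{mc-facts}(i) forces $N^i(P)$ either to be maximal in $N^{i+1}(P)$ or to equal some $\gamma_j(N^{i+1}(P))$ with $j\ge 2$, hence to lie in $\gamma_1(N^{i+1}(P))$. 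But $P$ contains an element $x$ with $|C_S(x)|=p^2$ (Lemma~\ref{p2centralizer}(i)), so $|C_{N^{i+1}(P)}(x)|=p^2$ as well, and the contrapositive of Lemma~\ref{p2centralizer}(i) applied inside $N^{i+1}(P)$ shows $x\notin\gamma_1(N^{i+1}(P))$. This rules out the second alternative and gives the index~$p$ step directly.

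For part (iii), there is a genuine gap beyond receptivity. Your inductive step requires that $\varphi_i$ normalise $\Aut_S(N^i(P))$ in order to have $N_{\varphi_i}=N^{i+1}(P)$, and you do not address this. The fix uses part (ii): for $1\le i<m$ the subgroup $N^i(P)$ is $\F$-centric (one checks $C_S(N^i(P))=Z(S)$ using $|C_S(x)|=p^2$), fully $\F$-normalized (every $\F$-conjugate contains an $\F$-pearl by Lemma~\ref{pearls1}(v), so part (i) gives it normalizer index $p$ too), and not $\F$-essential (part (ii)). Hence $\Out_\F(N^i(P))$ has a cyclic Sylow $p$-subgroup of order $p$ and no strongly $p$-embedded subgroup; for groups with Sylow $p$-subgroup of order $p$ this forces $O_p(\Out_\F(N^i(P)))=\Out_S(N^i(P))$, so $\Aut_S(N^i(P))$ is normal in $\Aut_\F(N^i(P))$ and is automatically normalised by $\varphi_i$. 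With this in hand, your receptivity observation and the iterated extension go through.
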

\begin{proof} This is an application of \cite[Theorem 3.6]{pearls}.
 \end{proof}

\begin{lemma}\label{pearls4} Suppose that $p$ is an odd prime and $\F$ is a saturated fusion system on a $p$-group $S$.
Let $P \in \mathcal{P}(\F)$ and let $m$ be such that $|S :P| =p^m$. For every $1 \leq i \leq m-1$ let $\F_i$ be the smallest fusion subsystem of $\F$ defined on $\N^i(P)$ such that  $\Aut_{\F_i}(P)=\Aut_\F(P)$ and $\Aut_{\F_i}(\N^i(P)) = \Inn(\N^i(P))\N_{\Aut_\F(\N^i(P))}(P)$. Then $\F_i$ is a saturated fusion subsystem of $\F$ and $P\in \mathcal{P}(\F_i)$.
\end{lemma}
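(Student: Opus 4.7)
Set $T = N^i(P)$ and $K = N_{\Aut_\F(T)}(P)$, so that $\Aut_{\F_i}(T) = \Inn(T)K$. By Theorem~\ref{pearls3}(i), the subgroups of $T$ that contain $P$ form the chain $P = N^0(P) < N^1(P) < \dots < N^i(P) = T$ with successive indices $p$; in particular $N_T(P) = N^1(P) = N_S(P)$, so $\Aut_T(P) = \Aut_S(P)$. The first step of the plan is to describe the $\F_i$-conjugacy class of $P$. Each generator of $\F_i$ either fixes $P$ setwise (the restrictions of $\Aut_\F(P)$ and $K$) or is conjugation by an element of $T$; consequently $P^{\F_i} = P^T$, every $\F_i$-conjugate of $P$ has a $T$-normalizer of the same order, and each such conjugate is fully $\F_i$-normalized.

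The next step is to verify the Sylow and extension axioms that certify saturation. At $P$: since $P$ is fully $\F$-normalized, $\Aut_S(P) \in \Syl_p(\Aut_\F(P))$, and the equality $\Aut_T(P) = \Aut_S(P)$ gives the Sylow axiom in $\F_i$. At $T$: one analyzes $\Aut_{\F_i}(T) = \Inn(T) K$ via the restriction map $K \to \Aut_\F(P)$, and uses the fact that, after quotienting out $\Inn(T) \cap K = \Aut_{N_T(P)}(T)$, a Hall $p'$-complement of $K$ pushes forward faithfully into $\Aut_\F(P)$, showing $\Inn(T) = \Aut_T(T)$ is Sylow in $\Aut_{\F_i}(T)$. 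For subgroups $Q \le T$ not $T$-conjugate to any $N^j(P)$, no generator of $\F_i$ outside $\Inn(T)$ acts on any $T$-conjugate of $Q$, so $\F_i$ agrees with $\F_T(T)$ on such subgroups and is trivially saturated there.

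For the extension axiom at $P$, take $R = P^t \in P^{\F_i}$ and $\alpha \in \Hom_{\F_i}(R, T)$ with $R\alpha = P$. Writing $\alpha = c_{t^{-1}}|_R \circ \beta$ with $\beta \in \Aut_\F(P) = \Aut_{\F_i}(P)$, the problem reduces to extending $\beta$ inside $\F_i$. The fully $\F$-automized hypothesis lets us arrange (after composing $\beta$ with suitable elements of $\Aut_T(P)$) that $\beta \in N_{\Aut_\F(P)}(\Aut_S(P)) = H_\F(P)$; then Theorem~\ref{pearls3}(iii) provides an extension $\widehat\beta \in \Aut_\F(S)$ that normalizes every $N^j(P)$, and $\widehat\beta|_T \in K \subseteq \Aut_{\F_i}(T)$ is the required $\F_i$-extension. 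Extensions at the higher subgroups $N^j(P)$ are handled analogously, using that their normalizer chain is preserved by $\widehat\beta$.

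Finally, once $\F_i$ is saturated, $P$ is $\F_i$-essential because: it is $\F_i$-centric (being $\F$-centric), it is fully $\F_i$-normalized by the first step, and $\Out_{\F_i}(P) = \Out_\F(P)$ contains a strongly $p$-embedded subgroup by Lemma~\ref{pearls1}(ii)-(iii), since $O^{p'}(\Out_\F(P)) \cong \PSL_2(p)$ and $\PSL_2(p)$ has a strongly $p$-embedded dihedral Sylow normalizer. Since $P$ retains its isomorphism type, $P \in \mathcal P(\F_i)$. The principal obstacle of this plan is the fidelity of extensions: we must guarantee that an extension of a morphism that exists in the saturated system $\F$ can actually be obtained from the prescribed generators of $\F_i$, and this is exactly the content of Theorem~\ref{pearls3}(iii) applied to elements of the strongly $p$-embedded subgroup $H_\F(P)/\Inn(P)$.
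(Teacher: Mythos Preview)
The paper does not supply its own proof of this lemma; it simply cites \cite[Lemma 3.7]{pearls}. So there is no in-paper argument to compare against, and your proposal is an attempt to provide a direct proof.

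Your overall strategy---describing $P^{\F_i}$, then verifying the Sylow and extension axioms with Theorem~\ref{pearls3}(iii) as the engine---is the natural one and captures the key mechanism: morphisms in $N_{\Aut_\F(P)}(\Aut_S(P))$ lift to automorphisms of $S$ normalizing every $N^j(P)$, and hence restrict into $K$. Two points, however, need attention. First, a slip: Lemma~\ref{pearls1}(ii) gives $O^{p'}(\Out_\F(P)) \cong \SL_2(p)$, not $\PSL_2(p)$; this does not affect the existence of a strongly $p$-embedded subgroup, but should be stated correctly. Second, and more substantively, your claim that for $Q \le T$ not $T$-conjugate to any $N^j(P)$ the system $\F_i$ agrees with $\F_T(T)$ is not justified: every element of $K \le \Aut_\F(T)$ restricts to a morphism between arbitrary subgroups of $T$, so $\Hom_{\F_i}(Q,T)$ can be strictly larger than $\Hom_T(Q,T)$. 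Thus ``trivially saturated there'' does not follow. To close this gap one typically invokes a reduction theorem---for instance, checking saturation only at $\F_i$-centric subgroups, or at subgroups with $\Out_{\F_i}$ of order divisible by $p$ (in the spirit of \cite[Theorem~I.3.10]{AKO} or Theorem~\ref{SemThmC})---rather than arguing subgroup by subgroup. Your treatment of the intermediate $N^j(P)$ is also too compressed: the ``analogous'' extension argument there requires first knowing that $\Aut_{\F_i}(N^j(P))$ is exactly $\Inn(N^j(P))$ together with the restrictions from $K$, which itself needs proof.
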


\begin{proof} This is \cite[Lemma 3.7]{pearls}. \end{proof}

\begin{lemma}\label{pearlprune} Assume that $\F$ is a saturated fusion system on a $p$-group $S$, $\mathcal C$ is a set of $\F$-class representatives of $\F$-essential subgroups, and $P \in\mathcal C$. If $P$ is an $\F$-pearl, then $\mathcal G=\langle \Aut_\F(S), \Aut_\F(E)\mid E \in \mathcal C\setminus\{P\}\rangle$ is saturated.
\end{lemma}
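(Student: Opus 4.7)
The plan is to verify the saturation axioms for $\mathcal G$ directly, exploiting the very restricted position of a pearl $P$ inside $\F$ that is encoded in Theorem~\ref{pearls3} and Lemma~\ref{pearls2.5}. First I would note that $\mathcal G$ is a fusion system on $S$: it is closed under composition and restriction by construction, and it contains $\Inn(S)$ via $\Aut_\F(S)\supseteq\Aut_S(S)$. The identification of $\mathcal G$-essential subgroups as the $\F$-conjugates of members of $\mathcal C\setminus\{P\}$ will then come out of the verification below.

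The central step is a \emph{decomposition lemma}: for any $\F$-morphism $\varphi\colon Q\to R$ with $Q,R\le S$ and neither $Q$ nor $R$ $\F$-conjugate to $P$, $\varphi$ already lies in $\mathcal G$. I would prove this by applying Theorem~\ref{t:alp} to $\F$ to write $\varphi$ as a composition of restrictions of morphisms in $\Aut_\F(S)\cup\bigcup_{E\in\mathcal C}\Aut_\F(E)$, and then argue that any factor coming from $\Aut_\F(P')$ with $P'\in P^\F$ can be eliminated. The two key inputs are: (a) by Lemma~\ref{pearls2.5} no proper subgroup of $P'$ is $\F$-centric, and by Theorem~\ref{pearls3}(ii) $P'$ is not properly contained in any $\F$-essential subgroup, so any morphism passing through $\Aut_\F(P')$ as an intermediate factor must have its domain and codomain at that step $\F$-conjugate to $P$; and (b) by Theorem~\ref{pearls3}(iii), every element of $N_{\Aut_\F(P')}(\Aut_S(P'))$ is the restriction of an $\F$-automorphism of $S$, so the only non-$\mathcal G$-factors in $\Aut_\F(P')$ occur strictly inside the chain $P'=N^0(P')<N^1(P')<\cdots <S$ and cannot affect the morphism data between subgroups outside $P^\F$.

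Using the decomposition lemma, saturation follows. For $S$ and for each $E\in\mathcal C\setminus\{P\}$: the $\mathcal G$-conjugacy class equals the $\F$-conjugacy class, $\Aut_{\mathcal G}(E)=\Aut_\F(E)$, and every $\F$-extension required by $\F$-receptivity is actually a $\mathcal G$-morphism by the lemma; thus $E$ is fully $\mathcal G$-normalized, fully $\mathcal G$-automized, $\mathcal G$-centric, and $\mathcal G$-receptive, and $\Out_{\mathcal G}(E)$ still contains a strongly $p$-embedded subgroup. For a subgroup $Q$ not $\F$-conjugate to $P$ nor to a subgroup of $P$, the $\F$- and $\mathcal G$-morphism data agree, so saturation at $Q$ is inherited. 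For $Q\in P^\F$, the strong restriction given by Theorem~\ref{pearls3}—its normalizer tower is a chain and every automorphism of $Q$ normalizing $\Aut_S(Q)$ that survives in $\mathcal G$ comes from $\Aut_\F(S)$—yields directly that $Q$ is fully $\mathcal G$-automized and $\mathcal G$-receptive in $\mathcal G$.

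The main obstacle is the decomposition lemma, specifically handling Alperin factors of $\varphi$ that pass through $\Aut_\F(P')$ without the intermediate subgroup being fully equal to $P'$. The minimality of $P$ among $\F$-centric subgroups (Lemma~\ref{pearls2.5}) forces the intermediate subgroup to coincide with $P'$, after which Theorem~\ref{pearls3}(iii) shows that any factor that could a priori fail to be in $\mathcal G$ is in fact realized by the restriction of an element of $\Aut_\F(S)$, hence lies in $\mathcal G$. Once this obstacle is settled, the rest of the verification is a routine bookkeeping application of the saturation criterion.
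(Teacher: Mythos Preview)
The paper does not prove this directly but cites \cite[Lemma~6.5]{parkersemerarocomputing}. Your attempted direct proof has a genuine gap: the decomposition lemma is false as stated. Take $\mathcal C=\{P\}$, so that $\mathcal G=\langle\Aut_\F(S)\rangle=N_\F(S)$. If $P$ is an abelian pearl and $y\in P\setminus Z(S)$, then $Z(S)$ and $\langle y\rangle$ both have order $p$ and hence neither is $\F$-conjugate to $P$; yet restricting an element of $\Aut_\F(P)\cong\SL_2(p)$ that carries $Z(S)$ to $\langle y\rangle$ gives an $\F$-morphism $Z(S)\to\langle y\rangle$ that does not lie in $N_\F(S)$, because $Z(S)$ is characteristic in $S$.

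The flaw is in step~(a). If a factor in an Alperin--Goldschmidt decomposition of $\varphi:Q\to R$ goes through $\Aut_\F(P')$ for some $P'\in P^\F$, then the intermediate subgroup $Q_{i-1}\le P'$ is $\F$-conjugate to $Q$; so when $Q\notin P^\F$ the subgroup $Q_{i-1}$ is a \emph{proper} subgroup of $P'$, not equal to it. Lemma~\ref{pearls2.5} only says $Q_{i-1}$ is not $\F$-centric, which in no way bars it from occurring as an intermediate step. Step~(b) does not rescue this: elements of $\Aut_\F(P')$ outside $N_{\Aut_\F(P')}(\Aut_S(P'))$ restrict perfectly well to proper subgroups of $P'$ and produce morphisms absent from $\mathcal G$. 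Your subsequent case split treats subgroups not $\F$-conjugate to any subgroup of $P$, and treats the class $P^\F$, but never addresses subgroups $\F$-conjugate to a \emph{proper} subgroup of $P$---exactly the subgroups on which $\mathcal G$ and $\F$ differ. What does survive is the decomposition lemma restricted to subgroups of order at least $|P|$ (there the intermediate subgroup cannot fit properly inside $P'$); a correct argument combines this with a saturation criterion requiring checks only on sufficiently large subgroups, together with a separate analysis showing the small subgroups cause no trouble, and this is the substance of the result the paper cites.
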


\begin{proof} This is \cite[Lemma 6.5]{parkersemerarocomputing}.
\end{proof}

\blue{ The next lemma is part of  \cite[Theorem 3.15]{pearls}.
\begin{lemma}\label{type.pearl}  Suppose that $\F$ is a saturated fusion system on a $p$-group $S$ with $|S| \ge p^4$ and $P \in \mathcal P(\F)$.
Assume that the group $\gamma_1(S)$ is not abelian or extraspecial. Then
\begin{enumerate}
 \item  $\Out_\F(S)$ is a Hall $p'$-subgroup of $\Out(S)$, is cyclic of order  $p-1$ and acts faithfully on $S/\gamma_1(S)$.
 \item $\Aut_\F(S)= N_{\Aut_\F(S)}(P) \Inn(S)$.
 \item $\Out_\F(P) \cong \SL_2(p)$.
\end{enumerate}
\end{lemma}
\begin{proof}
Because $P \in \mathcal P(\F)$, $S$ has maximal class. Since $\gamma_1(S)$ is not abelian or extraspecial, Corollary~\ref{c12} yields the Hall $p'$-subgroups of $\Out(S)$ are cyclic of order at most $p-1$ and act faithfully on $S/\gamma_1(S)$. By Theorem~\ref{pearls3}, every automorphism in $N_{\Aut_\F(P)}(\Aut_S(P))$ is the restriction of an automorphism of $\Aut_\F(S)$. Hence $|N_{\Aut_\F(P)}(\Aut_S(P))/\Aut_S(P)| \le p-1$. Since $P$ is an $\F$-pearl, Lemma~\ref{pearls1} (ii) implies that $\Out_\F(P) \cong \SL_2(p)$ and we deduce that $\Out_\F(S)$ is cyclic of order $p-1$.
\end{proof}}

\blue{

\begin{theorem}\label{pearls5}  Suppose that \blue{$p$ is an odd prime} and $\F$ is a saturated fusion system on a $p$-group $S$ of order $p^n$ with $n \ge 4$. Assume that $P\in \mathcal P_a(\F)$.
\begin{enumerate}
\item If $T$ is strongly $\F$-closed in $S$, then either $T=S$ or $T=\gamma_2(S)P$ has index $p$ in $S$ and $\mathcal P_a(\F)= P^S$.
     \item If $\E_\F= \mathcal P_a(\F)$ and $\gamma_1(S)$ is not abelian or extraspecial, then either \begin{enumerate}\item $\F$ is simple; or \item  $O^p(\F)$ is simple, $\foc (\F)= \gamma_2(S)P$, $S$ is not exceptional and $ n = j(p-1) + 1$ for some $j\geq 2$.\end{enumerate}
\item If $p\ge 5$, $\gamma_1(S)$ is not abelian or extraspecial and $\F$ is simple, then $\F$ is exotic.
\end{enumerate}
\end{theorem}

\begin{proof}
Let $T\leq S$ be strongly $\F$-closed in $S$. Since $T$ is normal in $S$ and $|Z(S)|=p$, we have $Z(S) \le T$.  Recall that $P \in \mathcal P_a(\F)$.  Hence $Z(S) \le P$, $P \not \le C_S(Z_2(S))$ and $P \not \le \gamma_1(S)$ by Lemma~\ref{pearls2}. As $P$ is an abelian $\F$-pearl, there exists $\beta \in \Aut_\F(P)$ such that $P=Z(S)Z(S)\beta$. Since $T$ is strongly $\F$-closed, this implies $P \le T$. Using $T$ is normal in $S$ yields, $T\in \{S, \gamma_2(S)P\}$. Assume that $P_2 \in \mathcal P(\F) \setminus P^S$.
 Since $S$ has maximal class, $S$ acts transitively by conjugation on the subgroups of order $p^2$ containing $Z(S)$ which are not contained in $\gamma_1(S)$ but which are contained in $\gamma_2(S)P$. Hence $P_2 \not \le \gamma_2(S) P$ whereas we know $P_2 \le T$.  Hence $T= S$ in this case. This proves (i).

   Suppose that $\E_\F=\mathcal P_a(\F)$ and that $\gamma_1(S)$ is not abelian or extraspecial. This implies $|S| \ge p^5$.
Assume that $\mathcal G$ is a weakly normal subsystem of  $\F$ defined on a non-trivial subgroup $T$ of $S$. Then $T$ is strongly $\F$-closed in $S$ and so $T \in \{\gamma_2(S)P, S\}$ by (i). Since $\gamma_1(S)$ is not abelian or extraspecial,
   Lemma~\ref{type.pearl} implies that $\Aut_\F(P)\cong \SL_2(p)$ and $\Out_\F(S)$ is cyclic of order $p-1$. By \cite[Lemma 5.33]{Craven}, $\Aut_\mathcal G(P)$ is a normal subgroup of $\Aut_\F(P)\cong \SL_2(p)$. Since $|\Aut_S(P)|=p$, we deduce that $\Aut_{\mathcal G}(P) = \Aut_\F(P)$ and this is true for all pearls $P \in \mathcal P(\F)$.
   As $\E_\F=\mathcal P_a(\F)$,  for $R \le S$ with $|R| >p^2$, we have $\Aut_\F(R)$ is generated by restrictions of automorphisms of $S$ by Theorem~\ref{t:alp}. Hence no such subgroup can be $\mathcal G$-essential.
    In particular, $\E_\F= \E_\mathcal G$.
   By Theorem~\ref{pearls3}(iii), $\Aut_{\mathcal G}(T)$ has order divisible by $p-1$. On the other hand, $T$ is not an $\F$-essential subgroup by  Theorem~\ref{pearls3}(ii) and thus every element of $\Aut_{\mathcal G}(T)$ is the restriction of some element of $\Aut_\F(T)$.  We deduce that $\Out_{\mathcal G}(T)$ is cyclic of order $p-1$.

   If $T=S$, we have $\Aut_\F(S)=\Aut_\mathcal G(S)$ and Theorem~\ref{t:alp} implies that $\mathcal F= \mathcal G$. Therefore $\mathcal F$ is simple in this case and this is listed as (ii)(a).

   Suppose that $T=\gamma_2(S)P$.
    Then, by (i), $\mathcal P_a(\F)= P^S$. In particular, $\mathcal P_a(\mathcal G)= \mathcal P_a(\F)\ne P^{T}$.
    Since $\Aut_\mathcal G(T)$ is normal in $\Aut_\F(T)$ and $\Out_\mathcal G(T)$  has order $p-1$, we conclude that $\Aut_\mathcal G(T)\ge O^p(\Aut_\F(T))$.  Using (i), we now know $\foc(\F)=\hyp(\F)=\gamma_2(S)P$ and so $\mathcal G= O^p(\F)$. Since   $ \mathcal P_a(\mathcal G)\ne P^{T}$, $T$ is the only strongly $\mathcal G$-closed subgroup of $\mathcal G$ by (i). Since $\Out_{\mathcal G}(T)$ has order $p-1$, we now have $\mathcal G$ is simple as before.

   Continue to assume that $T=\gamma_2(S)P$.  It remains to prove that $S$ is not exceptional and that $n \equiv 1 \pmod{p-1}$.    Let $\phi \in \Aut_{\mathcal G}(T)$ have order $p-1$ be such that $P\phi= P$ and $\phi|_P$ acts as the diagonal  matrix $\begin{pmatrix}\lambda&0\\0&\lambda^{-1} \end{pmatrix}$. Then, choosing $x \in P \setminus \gamma_1(S)$ and using the notation introduced in Lemma~\ref{action}, we have
   $x\phi \equiv x^a \pmod {\gamma_2(S)}$, and $s_{n-1}\phi = s_{n-1}{a^{-1}}$ where $a\in \GF(p)$ has order $p-1$.
   Since $\phi \in O^p(\Aut_\F(T))=\Aut_\mathcal G(T)$, $\phi$ centralizes $S/T$ and so
    $s_1\phi \equiv s_1 \pmod {\gamma_2(S)}$ ($b=1$ in Lemma~\ref{action}). If $S$ is exceptional, then $\phi$ leaves $C_S(Z_2(S))$, $T$ and $\gamma_1(S)$ invariant, contrary to $a\ne 1$. Hence $S$ is not exceptional. Now Lemma ~\ref{action} applies to give  $$s_{n-1}^{a^{-1}}=s_{n-1}\phi=  s_{n-1}^{a^{n-2}}.$$ Thus $a^{n-1}=1$ so that $n  \equiv 1 \pmod {p-1}$, that is, $n = j(p-1) + 1$ for some $j\geq 1$. To complete the proof of (ii), note that if $j=1$ then $n=p$ and $S$ has a maximal subgroup that is abelian by\cite[Theorem A]{pearls}, a contradiction. Hence $j\geq 2$.

Finally, assume that $\F$ is simple and $p\ge 5$.  If $\F$ is not exotic, then $\F$ is realised by a finite simple group by \cite[Theorem 5.71]{Craven}. But then Lemma~\ref{maxlclass simple} yields $\gamma_1(S)$ is extraspecial, a contradiction. This demonstrates (iii).
\end{proof}}

Semeraro's theorem of can be used to add pearls to saturated fusion systems without destroying saturation.

\begin{theorem}\label{SemThmC}
Let $\F_0$ be a saturated fusion system on a finite $p$-group $S$. Let $V  \le S$ be a fully $\F_0$-normalized subgroup, set $H= \Out_{\F_0} (V)$ and let
$\widetilde{\Delta} \le \Out(V)$ be such that $H$ is a strongly $p$-embedded subgroup of $\widetilde{\Delta}$. For $\Delta$ the full preimage of $\tilde{\Delta}$ in $\Aut(V)$, write
$$\F =\langle \Mor(\F_0), \Delta\rangle.$$
Assume further that
\begin{enumerate}
\item   $V$ is $\F_0$-centric  and minimal under inclusion amongst all $\F$-centric subgroups; and
\item no proper subgroup of $V$ is $\F_0$-essential.
\end{enumerate}
Then $\F$ is saturated.
\end{theorem}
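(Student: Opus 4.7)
The plan is to verify saturation via the Roberts--Shpectorov criterion: each $\F$-conjugacy class must contain a member that is simultaneously fully $\F$-automized and $\F$-receptive. The strongly $p$-embedded hypothesis on $H\le\widetilde{\Delta}$ is precisely what promotes the $\F_0$-saturation at $V$ to $\F$-saturation, while hypotheses (i) and (ii) ensure that no other new essentials arise.

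First I would establish a normal form for morphisms in $\F$. Since every element of $\Delta$ is an automorphism of the single subgroup $V$, a composition of morphisms in $\Mor(\F_0)\cup\Delta$ can insert a $\Delta$-factor only when the current object equals $V$. Tracking the first occurrence of $V$ in such a composition, any $\alpha\in\Hom_\F(W,V)$ with $W\ne V$ factors as $\alpha=\psi\beta$ with $\psi\in\Hom_{\F_0}(W,V)$ and $\beta\in\Aut_\F(V)$. Moreover, $\Aut_{\F_0}(V)\le\Delta$ because its image in $\Out(V)$ is $H\le\widetilde{\Delta}$, so $\Aut_\F(V)=\langle\Aut_{\F_0}(V),\Delta\rangle=\Delta$ and $\Out_\F(V)=\widetilde{\Delta}$. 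Since $\Delta$-morphisms preserve $V$, this also gives $V^\F=V^{\F_0}$.

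Second I would verify that $V$ is fully $\F$-automized, fully $\F$-normalized, and $\F$-receptive. The saturation of $\F_0$ provides $\Out_S(V)\in\Syl_p(H)$; the strongly $p$-embedded hypothesis on $H\le\widetilde{\Delta}$ immediately promotes this to $\Out_S(V)\in\Syl_p(\widetilde{\Delta})=\Syl_p(\Out_\F(V))$, and $V^\F=V^{\F_0}$ then makes $V$ fully $\F$-normalized. For receptivity, take $\alpha=\psi\beta$ as above and $g\in N_\alpha$. A short computation gives $\psi^{-1}c_g\psi\in\beta\Aut_S(V)\beta^{-1}$; since $\psi$ lies in $\F_0$ and $c_g\in\Aut_S(W)$, also $\psi^{-1}c_g\psi\in\Aut_{\F_0}(V)$. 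Hence the image of $\psi^{-1}c_g\psi$ in $\Out(V)$ lies in $H\cap\widetilde{\beta}\,\Out_S(V)\,\widetilde{\beta}^{-1}$. If $\widetilde{\beta}\notin H$, then strong $p$-embeddedness of $H$ forces this $p$-intersection to be trivial, so $\psi^{-1}c_g\psi\in\Inn(V)\le\Aut_S(V)$, whence $g\in N_\psi$; the $\F_0$-receptivity of $V$ extends $\psi$ to $\widetilde{\psi}$ on $N_\psi$, and then $\widetilde{\psi}\beta$ extends $\alpha$ to $N_\alpha$. If $\widetilde{\beta}\in H$, then $\beta\in\Aut_{\F_0}(V)$, $\alpha\in\Hom_{\F_0}(W,V)$, and $\F_0$-receptivity supplies the extension directly.

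The third step propagates saturation to subgroups $Q\le S$ not $\F$-conjugate to $V$. If $Q$ is $\F$-centric and some $\F$-morphism from $Q$ used a $\Delta$-factor, then by the normal form there would exist $\psi_1\in\Hom_{\F_0}(Q,V)$, making $Q\psi_1\le V$ an $\F$-conjugate of $Q$ and hence itself $\F$-centric. Hypothesis (i) would then force $Q\psi_1=V$, contradicting $Q\notin V^\F$. Therefore every $\F$-morphism with source or target in $Q^\F$ lies in $\Mor(\F_0)$, so saturation at $Q$ is inherited from saturation of $\F_0$. Hypothesis (ii) further ensures that no proper subgroup of $V$ is $\F_0$-essential, so the Alperin--Goldschmidt set of essentials of $\F$ is essentially $\E_{\F_0}\cup V^\F$, both kinds being handled above. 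The main technical hurdle is the $\F$-receptivity of $V$, where the strongly $p$-embedded condition must be invoked precisely to show $N_\alpha\le N_\psi$; once this is secured, the remaining saturation axioms follow from the saturation of $\F_0$ together with the normal form.
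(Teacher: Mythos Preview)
The paper does not prove this result; it simply cites \cite[Theorem C]{Semeraro} for the case $m=1$. Your attempt is therefore more ambitious than what the paper offers, and the overall architecture --- normal form for morphisms, promotion of full automization and receptivity at $V$ via the strongly $p$-embedded hypothesis, and reduction to $\F_0$ for $\F$-centric subgroups outside $V^\F$ --- is sound and captures the essential mechanism.

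There is, however, a genuine gap. Your third step only handles subgroups $Q$ that are either $\F$-centric or not $\F$-conjugate into $V$. For $Q$ properly contained in (an $\F$-conjugate of) $V$, hypothesis~(i) tells you $Q$ is not $\F$-centric, but you still owe a fully $\F$-automized, $\F$-receptive representative. Your sentence invoking Alperin--Goldschmidt (``the set of essentials of $\F$ is essentially $\E_{\F_0}\cup V^\F$'') is circular: Alperin--Goldschmidt presupposes saturation, and in any case knowing the essentials does not by itself yield the saturation axioms for non-essential subgroups. Closing this gap is exactly where hypothesis~(ii) earns its keep, and it typically requires either a direct inductive argument on $|S:Q|$ or an appeal to a reduction criterion such as \cite[Theorem~2.2]{BCGLO} (saturation from $\mathcal{H}$-saturation plus $\mathcal{H}$-generation for a suitable family $\mathcal{H}$). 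You should either invoke such a criterion explicitly or supply the missing argument for $Q<V$.

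A minor point: in the receptivity step, ``$\widetilde{\psi}\beta$ extends $\alpha$ to $N_\alpha$'' is misleading. What your argument actually shows when $\widetilde{\beta}\notin H$ is that $\psi^{-1}c_g\psi\in\Inn(V)$, hence $c_g\in\Inn(W)$; since $W$ is centric this forces $g\in W$, so $N_\alpha=W$ and the required extension is $\alpha$ itself. The composition $\widetilde{\psi}\beta$ is not defined on $N_\psi$ in general.
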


\begin{proof}
This is a statement of \cite[Theorem C]{Semeraro} for the special case $m=1$.
\end{proof}

In Section~\ref{sec: non excep g1} we exploit results about fusion systems on $p$-groups with an abelian subgroup of index $p$ \cite{p.index,p.index2, p.index3}. The tool for doing this is the following proposition.

\begin{prop}\label{prop:sub sat} Suppose that $p$ is an odd prime, $\F$ is a saturated fusion system on $S$, $P \in \mathcal P(\F)$ and $\Aut_\F(S)=\Inn(S)\N_{\Aut_\F(S)}(P)$. Assume that $Z(S)<V\le Z(\gamma_1(S))$ is $\Aut_\F(S)$-invariant, put $S_1= VP$, $$H= \Inn(S_1)\langle \phi|_{S_1} \mid \phi \in N_{\Aut_\F(S)}(P)\rangle \le \Aut_\F(S_1)$$ and
$$B= \{\phi|_V\mid \phi \in \Aut_\F(\gamma_1(S))\} \le \Aut_\F(V).$$
Then the fusion system $$\mathcal G=\langle\Aut_\F(P),B, H \rangle$$ on $S_1$ is saturated and $\Aut_{\mathcal G}(S_1)= H$.
\end{prop}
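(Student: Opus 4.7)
The plan is to apply Theorem~\ref{SemThmC} with base saturated system $\F_0=\langle H,B\rangle$ on $S_1$ and the pearl $P$ as the new essential subgroup, obtaining $\mathcal G=\langle\F_0,\Aut_\F(P)\rangle$.

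First I would establish that $\F_0$ is saturated. Since $V$ is $\Aut_\F(S)$-invariant, $V$ is normal in $S$ and hence in $S_1$; both $H$ and $B$ preserve $V$ by construction, so $V$ is normal in $\F_0$. From $V\le Z(\gamma_1(S))$ and $V>Z(S)$ one obtains $\gamma_1(S)\le C_S(V)<S$, and the maximality of $\gamma_1(S)$ in $S$ then forces $C_S(V)=\gamma_1(S)$, whence $C_{S_1}(V)=V\cdot(P\cap\gamma_1(S))$. By Lemma~\ref{pearls2}, $P\cap\gamma_1(S)$ equals $Z(S)$ in the abelian-pearl case and $Z_2(S)$ in the extraspecial case; under the proposition's hypotheses each is contained in $V$ (the extraspecial case using $Z_2(S)\le Z(\gamma_1(S))$, which holds whenever $S$ is non-exceptional). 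Thus $V$ is $\F_0$-centric, $\F_0$ is constrained with $O_p(\F_0)=V$, and the Model Theorem~\ref{model} realises $\F_0$ as the fusion system of a finite group, giving saturation.

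Next I would verify the hypotheses of Theorem~\ref{SemThmC} for $P$ inside $\F_0$. Centricity of $P$ in $\F_0$ is inherited from $\F$-centricity of the pearl. By Lemma~\ref{pearls2.5}, no proper subgroup of $P$ is $\F$-centric, which gives minimality of $P$ among $\mathcal G$-centric subgroups of $S_1$ and rules out proper $\F_0$-essential subgroups of $P$. For the strongly $p$-embedded condition, $\Aut_{\F_0}(P)$ is generated by $\Inn(P)$ together with the restrictions to $P$ of those elements of $H$ stabilising $P$; Theorem~\ref{pearls3}(iii) identifies this with $H_\F(P)$, which is strongly $p$-embedded in $\Aut_\F(P)$ because $P$ is $\F$-essential. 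Theorem~\ref{SemThmC} then delivers saturation of $\mathcal G$.

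For the final claim $\Aut_{\mathcal G}(S_1)=H$: any new $\mathcal G$-automorphism of $S_1$ would arise from extending some $\alpha\in\Aut_\F(P)$ up to $S_1$; by receptivity this requires $\alpha\in N_{\Aut_\F(P)}(\Aut_{S_1}(P))$, and Theorem~\ref{pearls3}(iii) shows every such $\alpha$ is the restriction of an $\Aut_\F(S)$-element fixing $P$, whose further restriction to $S_1$ already lies in $H$. The hardest step will be the centricity of $V$ when $P$ is extraspecial and $S$ is exceptional, where $Z_2(S)\not\le Z(\gamma_1(S))$ can make $V$ fail the naive centricity criterion; handling this will likely require refining the base $\F_0$ or exploiting the additional structural hypotheses present in the applications of the proposition.
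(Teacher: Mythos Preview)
Your overall strategy---build a base saturated system and then apply Theorem~\ref{SemThmC} to adjoin the remaining automorphism group---is the same as the paper's, but you run it in the opposite order, and this creates a real gap.

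The paper takes $\F_k=\langle \Aut_\F(P),H\rangle$ as the base. This is precisely the pearl subsystem of Lemma~\ref{pearls4}, so its saturation is available off the shelf. Theorem~\ref{SemThmC} is then applied with $V$ (not $P$) as the new essential subgroup and $B$ as the enlarged automizer. The hypotheses are immediate: $V$ is abelian, hence automatically minimal among centric subgroups of $S_1$ and with no proper $\F_k$-essential subgroups; and the strongly $p$-embedded condition for $\Out_H(V)$ in $\Out_B(V)$ comes from $\gamma_1(S)$ being $\F$-essential (if $\Aut_H(V)=B$ already, then $\mathcal G=\F_k$ and nothing more is needed).

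Your base is $\F_0=\langle H,B\rangle$, and you argue saturation via the Model Theorem. But Theorem~\ref{model} has saturation as a \emph{hypothesis}: it asserts that a saturated constrained fusion system admits a model, not that a fusion system with a normal centric subgroup is automatically saturated. So the step ``$\F_0$ is constrained, hence by the Model Theorem it is realized by a finite group, giving saturation'' is circular. One could attempt to build a model for $\F_0$ by hand (since $V$ is abelian of index $p$ in $S_1$ this is not hopeless), but verifying the saturation axioms directly is more work than simply citing Lemma~\ref{pearls4} and swapping the roles of $P$ and $V$ in the Semeraro step.

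Incidentally, your closing worry about the exceptional extraspecial case is not a genuine obstruction. The hypothesis $Z(S)<V\le Z(\gamma_1(S))$ already forces $S$ to be non-exceptional (otherwise $Z(\gamma_1(S))=Z(S)$ and no such $V$ exists), and $\Aut_\F(S)$-invariance makes $V$ normal in $S$, whence $V=\gamma_j(S)\ge Z_2(S)$ by Lemma~\ref{mc-facts}(i). So $[S_1:V]=p$ and $C_{S_1}(V)=V$ hold in all cases covered by the proposition.
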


\begin{proof}  We have $P \le S_1$ and so, if $|S_1:P|= p^k$, $S_1= N^{k}(P)$ by Theorem~\ref{pearls3} (i).
Let $\F_k= \langle \Aut_\F(P),H\rangle$. Then $\F_k$ is the smallest fusion subsystem of $\F$ defined on $S_1$ such that $\Aut_{\F_k}(P) =\Aut_\F(P)$ and $\Aut_{\F_k}(S_1)= \Inn(S_1)N_{\Aut_{\F}(S_1)}(P)$. By Lemma~\ref{pearls4}, $\F_k$ is saturated.

Since $V$ is normal in $S_1$,  $V$ is fully $\F_k$-normalized and, as $V > Z(S)$, $V$ is abelian and $[S_1 \colon V] =p$, $V$ is an $\F_k$-centric subgroup of $S_1$. We have $\Aut_{\F_k}(V) = \Aut_{H}(V)$ is a subgroup of $B \le \Aut_\F(V)$. If $\Aut_H(V)= B$, then $\mathcal G= \F_k$ and $\mathcal G$ is saturated.  So we may assume that  $\Aut_H(V)< B$. In particular,  as $\Aut_\F(S)=\Inn(S)\N_{\Aut_\F(S)}(P)$, $\Aut_\F(\gamma_1(S)) > \Aut_{\Aut_\F(S)}(\gamma_1(S))$ and so $\gamma_1(S)$ is $\F$-essential and $\Out_{\Aut_\F(S)}(\gamma_1(S))$  is strongly $p$-embedded in $\Out_\F(\gamma_1(S))$.  Therefore $\Out_H(V)$ is strongly $p$-embedded in $\Out_B(V)$. Furthermore, as $V$ is abelian, $V$ is minimal by inclusion amongst all $\F_k$-centric subgroups and no proper subgroup of $V$ is $\F_k$-essential.  Now application of Theorem~\ref{SemThmC} delivers $\mathcal G$ is saturated. That $\Aut_{\mathcal G}(S_1)= H$ is apparent from its construction.
\end{proof}

\section{Fusion systems on groups of maximal class: generalities}\label{sec:Generalities}

This section begins the study of saturated fusion systems on maximal class $p$-groups.

\begin{lemma}\label{p=2}
 Suppose that $S$ is a non-abelian $2$-group of maximal  class and   $\F$ is a saturated fusion system on $S$. \blue{Then $S$ is dihedral, semidihedral or generalized quaternion of order at least $8$ and }$\F$ is known and realizable.  In particular, if $E$ is an $\F$-essential subgroup of $S$, then $E$ is  an $\F$-pearl and $\gamma_1(S)$ is \blue{cyclic}.
 \end{lemma}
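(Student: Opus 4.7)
The plan is to appeal to two well-established facts. First, by \cite[Corollary 3.3.4(iii)]{Led-Green} already cited in the excerpt, any maximal class $2$-group $S$ is dihedral, generalized quaternion, or semidihedral, and the saturated fusion systems on such $S$ have been classified as part of the foundational theory of the subject; a particularly convenient treatment is \cite[\S III.5]{AKO}. This is exactly the content of the first sentence of the lemma, so that part reduces to citation.

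For the statement about $\F$-essential subgroups, I would split on whether $|S|=8$ or $|S|\ge 16$. When $|S|=8$, $S\cong D_8$ or $Q_8$; in $Q_8$ every proper subgroup is cyclic, so its outer $\F$-automorphism group is a $2$-group and cannot contain a strongly $2$-embedded subgroup, while in $D_8$ the only proper non-cyclic subgroups are the two Klein four-groups, which are $\F$-pearls by Definition~\ref{pearl:def}. The assertion that $\gamma_1(S)$ is abelian is vacuous here because $\gamma_1(S)$ was only defined in the excerpt for $|S|\ge p^4$.

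Now assume $|S|=2^n$ with $n\ge 4$ and let $E$ be $\F$-essential. By Lemma~\ref{mc-factsb}(v), $S$ is not exceptional, hence $\gamma_1(S)=C_S(Z_2(S))$; inspection of the three families identifies $\gamma_1(S)$ as the unique cyclic maximal subgroup of $S$, and in particular as abelian, proving the claim on $\gamma_1(S)$. Any subgroup of $\gamma_1(S)$ is cyclic: if of order at most $2$ it fails $\F$-centricity, and if of order at least $4$ its automorphism group is a $2$-group, so in either case it cannot be $\F$-essential. Therefore $E\not\le\gamma_1(S)$, and $E$ contains an element $t$ of $S\setminus\bigl(\gamma_1(S)\cup C_S(Z_2(S))\bigr)=S\setminus\gamma_1(S)$; by Lemma~\ref{p2centralizer}, $|C_S(t)|=4$ and $E$ itself has maximal class, so $E$ is again dihedral, generalized quaternion, or semidihedral. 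Iterating the cyclic-subgroup observation inside $E$ (applied to the unique cyclic maximal subgroup of $E$) shows that $\Aut(E)$ must fail to be a $2$-group, and the only maximal-class $2$-groups with this property are $V_4$ and $Q_8$. These are precisely the $\F$-pearls, so the conclusion follows.

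The principal obstacle is the auxiliary fact that $\Aut(E)$ is a $2$-group for every dihedral, generalized quaternion, or semidihedral $E$ of order at least $16$; this is classical and can be checked directly by analysing the action of an automorphism on the cyclic maximal subgroup and on $E/\Phi(E)\cong V_4$, but for a sketch I would simply cite it from \cite{Led-Green} or absorb it into the fusion-systems reference \cite[\S III.5]{AKO}.
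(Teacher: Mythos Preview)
Your argument is correct. The paper's proof is a bare citation: for $|S|=8$ it notes that essential subgroups have order at least $p^2=4$ and hence are maximal, and for $|S|\ge 16$ it simply refers to \cite[Example~I.3.8]{AKO}, where the fusion systems on dihedral, semidihedral and generalized quaternion $2$-groups are worked out in full. Your approach is more self-contained: rather than invoking that classification, you argue directly that any essential $E$ lies outside the cyclic subgroup $\gamma_1(S)$, hence has maximal class (Lemma~\ref{sbgrp-maxclass} gives this in one stroke, slightly cleaner than your route through Lemma~\ref{p2centralizer}), and then reduce to the elementary fact that $\Out(E)$ is a $2$-group for every maximal-class $2$-group $E\notin\{V_4,Q_8\}$. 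This isolates exactly what is needed for the essential-subgroup claim, whereas the paper's citation also covers the first sentence of the lemma (that $\F$ itself is known), which your sketch handles separately by pointing to the same body of literature. The auxiliary automorphism-group fact you flag is indeed classical and follows immediately from the fact that the cyclic maximal subgroup of such $E$ is characteristic.
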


\begin{proof} If $|S|=8$, then $S$ is either dihedral or quaternion and it is an easy exercise to write down all the saturated fusion systems on $S$.  \blue{That the maximal class $2$-groups of order at least $2^4$ are dihedral, semidihedral or generalized quaternion  is well-known and can be found in \cite[Corollary 3.3.4 (iii)]{Led-Green} for example.   In this case, the totality of the saturated fusion systems on such groups can be found in \cite[Example I.3.8]{AKO}. } \end{proof}

\begin{lemma}\label{p=3}
 Suppose that $\F$ is a saturated fusion system on a non-abelian maximal class $3$-group. Then $\F$ is known, $\E_\F\subseteq \mathcal P(\F)\cup\{ \gamma_1(S)\}$ and, if  $\gamma_1(S)\in \E_\F$, then $\gamma_1(S)$ is abelian.
 \end{lemma}

\begin{proof} This is  Theorem~\ref{outcome}.
\end{proof}

Because of Lemmas~\ref{p=2} and \ref{p=3}, for the remainder of this \blue{work} we focus on the case $p \ge 5$.

\begin{hypothesis}\label{hyp1} \blue{ The prime $p$ is at least $5$, $S$ is a maximal class $p$-group of order at least $p^4$ and  $\F$ is a saturated fusion system on $S$.}
\end{hypothesis}

\blue{Throughout this section and Sections~\ref{sec:exceptional}, \ref{sec: g1 ess}, \ref{sec: g1 ess2}, \ref{sec: proof MT1 1} and \ref{sec: proof MT1 2}, we assume that Hypothesis~\ref{hyp1} holds sway and adopt its notation.}
We start with some lemmas which loosely locate the potential $\F$-essential subgroups within $S$. First we recall

\begin{lemma}\label{in.sub}
If  $E$  is an $\F$-essential subgroup which is not an $\F$-pearl, then either   $E \leq \gamma_1(S)$ or $E \leq \C_S(\Z_2(S))$.
Furthermore, there are no $\F$-pearls in $\gamma_1(S)$ or in $\C_S(\Z_2(S))$.
\end{lemma}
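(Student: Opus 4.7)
The plan is to derive both halves of the statement directly from Lemma~\ref{pearls2}, which already gives the equivalence between being an $\F$-pearl, failing to be contained in either $\gamma_1(S)$ or $C_S(Z_2(S))$, and a third explicit description in terms of an element of order $p$ outside $C_S(Z_2(S))$. Under Hypothesis~\ref{hyp1} we have $p \ge 5$ and $|S|\ge p^4$, so Lemma~\ref{pearls2} applies to every $\F$-essential subgroup of $S$, and both assertions of the present lemma turn out to be essentially contrapositive reformulations of the two implications (i)$\Leftrightarrow$(ii) in that lemma.

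For the first claim, I would take an $\F$-essential subgroup $E$ which is not an $\F$-pearl, and argue as follows. The negation of condition (ii) of Lemma~\ref{pearls2}, namely ``$E$ is not contained in $\gamma_1(S)$ or in $C_S(Z_2(S))$'', is the disjunction $E \le \gamma_1(S)$ or $E \le C_S(Z_2(S))$. Since (i)$\Leftrightarrow$(ii) in Lemma~\ref{pearls2} and $E$ is $\F$-essential but fails (i), it must also fail (ii), and hence $E \le \gamma_1(S)$ or $E \le C_S(Z_2(S))$, which is exactly what we want.

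For the second claim, suppose for a contradiction that some $\F$-pearl $P$ lies in $\gamma_1(S)$ or in $C_S(Z_2(S))$. By Definition~\ref{pearl:def}, every $\F$-pearl is by definition an $\F$-essential subgroup, so Lemma~\ref{pearls2} applies to $P$. The implication (i)$\Rightarrow$(ii) of that lemma then forces $P \not\le \gamma_1(S)$ and $P \not\le C_S(Z_2(S))$ simultaneously, contradicting the assumption on $P$. Hence no $\F$-pearl can be contained in either $\gamma_1(S)$ or $C_S(Z_2(S))$.

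There is essentially no obstacle here: the lemma is a translation of Lemma~\ref{pearls2} into the contrapositive form which is most convenient for later inductive arguments, where we shall repeatedly need to split the analysis of an $\F$-essential subgroup $E$ into the two mutually exclusive cases ``$E$ is a pearl'' versus ``$E$ lies in one of the two maximal $2$-step centralizers''. The only point worth noting is that the two conclusions $E\le \gamma_1(S)$ and $E\le C_S(Z_2(S))$ are not claimed to be exclusive; when $S$ is not exceptional they coincide by Lemma~\ref{mc-factsb}(iii), while in the exceptional case an $\F$-essential non-pearl can a priori lie in either maximal subgroup, and this dichotomy is what is exploited in the subsequent sections.
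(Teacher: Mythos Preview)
Your proof is correct. For the first assertion you proceed exactly as the paper does, by invoking the equivalence (i)$\Leftrightarrow$(ii) of Lemma~\ref{pearls2}.

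For the second assertion your route differs from the paper's. You observe, quite rightly, that the implication (i)$\Rightarrow$(ii) of Lemma~\ref{pearls2} already forces any $\F$-pearl (being $\F$-essential by definition) to avoid both $\gamma_1(S)$ and $C_S(Z_2(S))$, and nothing more is needed. The paper instead gives a direct structural argument: it shows that a pearl $E$ contained in $C_S(Z_2(S))$ would have to equal $Z_2(S)$ itself (since $Z_2(S)\le Z(E)\le E$ and then $|E|=p^2$ forces $E=Z_2(S)$, contradicting $|S|\ge p^4$), and then handles the exceptional case $E\le\gamma_1(S)$ by producing a normal series in $E$ stabilised by $Z_3(S)$ and applying Lemma~\ref{lem:series}. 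Your argument is cleaner and fully valid given how Lemma~\ref{pearls2} is stated here; the paper's argument has the advantage of being self-contained and of exhibiting explicitly why the containment fails, which becomes useful later when the same kind of series-stabilising reasoning recurs.
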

\begin{proof}\blue{This is just a restatement of Lemma~\ref{pearls2}.}
\end{proof}

The next result  is included for completeness.
\begin{cor}\label{gamma.ab}
If   $\gamma_1(S)$ is abelian, then the candidates for $\F$-essential subgroups are $\gamma_1(S)$ and $\F$-pearls. In particular, if $O_p(\F) =1$, then $\F$ has an $\F$-pearl.
\end{cor}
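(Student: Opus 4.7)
My strategy is to force the two cases in Lemma~\ref{in.sub} to collapse by observing that, under the hypothesis ``$\gamma_1(S)$ abelian,'' one must have $\gamma_1(S) = C_S(Z_2(S))$; an $\F$-essential non-pearl then sits inside the abelian maximal subgroup $\gamma_1(S)$, and $\F$-centricity pins down $E = \gamma_1(S)$. The ``in particular'' clause is then deduced via Theorem~\ref{t:alp}, using that $\gamma_1(S)$ is characteristic in $S$.

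First I would verify the key collapse. Since $n \ge 4$, one has $Z_2(S) = \gamma_{n-2}(S) \le \gamma_2(S) \le \gamma_1(S)$, so the abelian hypothesis on $\gamma_1(S)$ immediately gives $\gamma_1(S) \le C_S(Z_2(S))$. Because $|Z_2(S)| = p^2 > p = |Z(S)|$, the subgroup $C_S(Z_2(S))$ is proper in $S$, and the maximality of $\gamma_1(S)$ in $S$ forces $\gamma_1(S) = C_S(Z_2(S))$. Now let $E \in \E_\F$ be such that $E$ is not an $\F$-pearl. By Lemma~\ref{in.sub}, $E \le \gamma_1(S) \cup C_S(Z_2(S)) = \gamma_1(S)$. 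Since $E$ is $\F$-centric, $C_S(E) \le E$; but $E \le \gamma_1(S)$ is abelian, so $\gamma_1(S) \le C_S(E) \le E$, and thus $E = \gamma_1(S)$.

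For the second statement, suppose $O_p(\F) = 1$. If $\E_\F = \emptyset$, then Theorem~\ref{t:alp} yields $\F = N_\F(S)$, whence $S \le O_p(\F) = 1$, contradicting $|S| \ge p^4$. So $\E_\F \ne \emptyset$. Assume for a contradiction that $\mathcal P(\F) = \emptyset$; by the first part, $\E_\F = \{\gamma_1(S)\}$, and Theorem~\ref{t:alp} gives $\F = \langle \Aut_\F(S), \Aut_\F(\gamma_1(S))\rangle$. Both generating sets preserve $\gamma_1(S)$: $\Aut_\F(S)$ because $\gamma_1(S)$ is characteristic in $S$, and $\Aut_\F(\gamma_1(S))$ tautologically. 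The standard criterion for normality via Alperin--Goldschmidt then gives $\gamma_1(S) \le O_p(\F) = 1$, contradicting $\gamma_1(S) \ne 1$. There is no genuine obstacle in this argument; the only step that requires a moment's attention is the equality $\gamma_1(S) = C_S(Z_2(S))$, which is a direct consequence of elementary order comparisons and the maximality of $\gamma_1(S)$.
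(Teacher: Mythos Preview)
Your proof is correct and follows essentially the same approach as the paper: use Lemma~\ref{in.sub}, observe that the abelian hypothesis forces $\gamma_1(S)=C_S(Z_2(S))$ so that a non-pearl essential lies in $\gamma_1(S)$, then invoke $\F$-centricity to get $E=\gamma_1(S)$; finally, use Alperin--Goldschmidt and the fact that $\gamma_1(S)$ is characteristic to conclude normality. You are slightly more explicit than the paper in justifying $\gamma_1(S)=C_S(Z_2(S))$ and in separately treating the case $\E_\F=\emptyset$, but the argument is the same.
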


\begin{proof} (See also  \cite[Lemma 2.3 (a)]{p.index}).  Since $\gamma_1(S)$ is abelian, $S$ is not exceptional. Thus, if $E$ is $\F$-essential and not an $\F$-pearl, then $E \le \gamma_1(S)$ by Lemma~\ref{in.sub}. As $E$ is $\F$-centric, this yields $E=\gamma_1(S)$ as claimed.  In particular, if $\F$ has no $\F$-pearls, then $\gamma_1(S)$ is the only $\F$-essential subgroup in $\F$. Since $\gamma_1(S)$ is a characteristic subgroup of $S$, the Alperin-Goldschmidt Theorem yields $\gamma_1(S)= O_p(\F)$. Hence, if \blue{$O_p(\F) =1$,} then $\F$ has an $\F$-pearl.
 \end{proof}

\blue{
\begin{lemma}\label{morphism.fixing.Z2}
If there exists a morphism $\varphi \in \Aut_\F(\Z_2(S))$ such that $\Z(S) \varphi \neq \Z(S)$, then $\C_S(\Z_2(S)) \in \E_\F$.
\end{lemma}

\begin{proof}
 Since $\Z_2(S)$ is fully $\F$-normalized and so $\F$-receptive, there exists an automorphism $\ov{\varphi}\in \Aut_\F(\C_S(\Z_2(S)))$ such that $\ov{\varphi}|_{\Z_2(S)} = \varphi$. In particular $\ov{\varphi}$ does not normalize $\Z(S)$ and so it cannot be the restriction of an automorphism of $S$. Since $\C_S(\Z_2(S))$ is characteristic in $S$, by the Alperin-Goldschmidt fusion theorem we deduce that $\C_S(\Z_2(S))$ is $\F$-essential.
\end{proof}}

\begin{prop}\label{normal.ess} Suppose  $E$ is  $\F$-essential and $E$ is normal in $S$. Then $E$ is a maximal subgroup of $S$. Moreover,  either  $E \in \{\gamma_1(S), \C_{S}(\Z_2(S))\}$ or $|S|=p^4$ and $E$ is a non-abelian $\F$-pearl.
\end{prop}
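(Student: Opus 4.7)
The statement splits naturally into two claims: first that $E$ is a maximal subgroup of $S$, and secondly the structural classification. The second will be an easy consequence of the first together with Lemma~\ref{in.sub}, so all the work lies in proving maximality.

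I would argue maximality by contradiction. Since $E\trianglelefteq S$, Lemma~\ref{mc-facts}(i) leaves two possibilities: either $E$ is maximal in $S$, or $E=\gamma_j(S)$ for some $2\le j\le n-1$. Assume the latter, so $|S/E|=p^j\ge p^2$. Being normal in $S$, $E$ is fully $\F$-normalized, hence $\Aut_S(E)\in\Syl_p(\Aut_\F(E))$; and since $E<S$ we have $\Aut_S(E)\not\le\Inn(E)$. The strategy is to apply Lemma~\ref{lem:series} to the normal series
\[
E=\gamma_j(S)>\gamma_{j+1}(S)>\gamma_{j+2}(S)>\cdots>\gamma_{n-1}(S)>1.
\]
Each successive quotient has order $p$ and, as $[\gamma_i(S),S]\le\gamma_{i+1}(S)$, is centralized by $S$; thus $\Aut_S(E)$ stabilizes this series, whose bottom term $1$ trivially lies in $\Phi(E)$. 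Once each term is shown to be $\Aut_\F(E)$-invariant, Lemma~\ref{lem:series} yields $\Aut_S(E)\le O_p(\Aut_\F(E))=\Inn(E)$, contradicting $E<S$.

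The principal obstacle is to verify that the subgroups $\gamma_{j+k}(S)$, which are automatically $S$-invariant, are also characteristic in $E$. Two strong restrictions help narrow the situation sharply. On the one hand, Lemma~\ref{|E| bound} gives $|E/\Phi(E)|\ge |S/E|^2=p^{2j}$, so $|\Phi(E)|\le p^{n-3j}$ and therefore $n\ge 3j\ge 6$. On the other hand, $E\le\gamma_1(S)$ is regular by Lemma~\ref{mc-facts}(ii) and Lemma~\ref{lem:regular}(i), so Lemma~\ref{lem:regular}(iv) combined with Lemma~\ref{mc-facts}(iv) yields $|E/\Phi(E)|\le|\Omega_1(E)|\le|\Omega_1(\gamma_1(S))|\le p^p$, forcing $2j\le p$. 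Within this narrow range one has $\gamma_{j+1}(S)/\Phi(E)=[E/\Phi(E),S]$, a hyperplane in $E/\Phi(E)$ since $E/\gamma_{j+1}(S)$ is cyclic of order $p$. The plan is then to recognize each $\gamma_{j+k}(S)$ intrinsically inside $E$ as a characteristic subgroup obtained by iterating commutators against suitable characteristic subgroups of $E$ (terms of the upper and lower central series, together with $\Omega_i$- and $\agemO^i$-subgroups whose structure is tightly controlled by the regularity of $E$ and the bounds above). Executing this characterization is the main technical step, and I expect it to occupy the bulk of the argument.

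Once $E$ is known to be a maximal subgroup, the remaining assertions follow quickly from Lemma~\ref{in.sub}. If $E$ is not an $\F$-pearl then $E\le\gamma_1(S)$ or $E\le C_S(Z_2(S))$, and maximality forces $E\in\{\gamma_1(S),C_S(Z_2(S))\}$. If instead $E$ is an $\F$-pearl, then by Definition~\ref{pearl:def} we have $|E|\le p^3$; maximality gives $|S|=p\cdot|E|\le p^4$, and Hypothesis~\ref{hyp1} forces $|S|=p^4$ and $|E|=p^3$. Since $p\ge 5$, the only $\F$-pearl of order $p^3$ is the extraspecial group of exponent $p$, which is non-abelian, completing the proof.
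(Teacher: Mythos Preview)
Your reduction to the case $E=\gamma_j(S)$ with $j\ge 2$ is correct, and your bounds $n\ge 3j$ and $2j\le p$ are fine. However, the core of your plan --- showing that each $\gamma_{j+k}(S)$ is $\Aut_\F(E)$-invariant so that Lemma~\ref{lem:series} applies --- cannot be carried out, and not for merely technical reasons. The subgroups $\gamma_{j+k}(S)=[\gamma_{j+k-1}(S),S]$ are defined via the action of $S$ on $E$, not via any internal structure of $E$. When $E$ is abelian (which is entirely possible: for instance $E=\gamma_2(S)$ with $|S|=p^6$, $p\ge 5$, is elementary abelian by Lemma~\ref{mc-facts}(vi) in many cases), the only characteristic subgroups of $E$ are governed by $\Omega_i$ and $\agemO^i$, and there is no way to recover the filtration $\gamma_{j+1}(S)>\gamma_{j+2}(S)>\cdots$ from $E$ alone. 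More conceptually: if the chain were $\Aut_\F(E)$-invariant then Lemma~\ref{lem:series} would immediately give $\Out_S(E)\le O_p(\Out_\F(E))$, contradicting essentiality; so proving invariance is equivalent to the conclusion, and you need an independent source of leverage.

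The paper's argument supplies that leverage from the other side: since $|S/E|\ge p^2$ and $p$ is odd, $\Out_S(E)$ contains an elementary abelian $p^2$, so Lemma~\ref{strongly p structure} forces $\Out_\F(E)/O_{p'}$ to be almost simple, and Proposition~\ref{SE-p2} lists the finitely many possibilities. In each case one checks (using \cite{GLS3}) that $N_{\Aut_\F(E)}(\Aut_S(E))$ acts irreducibly on $\Out_S(E)$ or on $\Out_S(E)/Z(\Out_S(E))$; since these automorphisms extend to $\Aut_\F(S)$ by receptivity, one obtains an automorphism of $S$ moving $\gamma_1(S)$, which is absurd. The one residual case (${}^2\B_2(32){:}5$ with $p=5$) is eliminated by an exponent argument. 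This classification-based route is what makes the proof work; your Lemma~\ref{lem:series} strategy does not get off the ground once $E$ can be abelian.

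Your treatment of the second part (deducing $E\in\{\gamma_1(S),C_S(Z_2(S))\}$ or $|S|=p^4$ from maximality) is fine.
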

\begin{proof} Suppose for a contradiction that $E$ is not a maximal subgroup of $S$. Since $E\norm S$ and $S$ has maximal nilpotency class,  $E$ is a member of the lower central series of $S$. Hence $E= \gamma_k(S)$ for some $k\ge 2$. In particular, $\Out_S(E)\cong S/E$ is a $p$-group of maximal class.   Since $p$ is odd, $S/E$ contains an elementary abelian subgroup of order $p^2$ by \cite[5.4.10 (ii)]{Gor}.  Lemma~\ref{strongly p structure} yields  that $F^*(\Out_\F(E)/O_{p'}(\Out_\F(E)))$ is a non-abelian simple group. Set $W= F^*(\Out_\F(E)/O_{p'}(\Out_\F(E)))$ and $X= W \Out_S(E)O_{p'}(\Out_\F(E))/O_{p'}(\Out_\F(E))$. Using Proposition~\ref{SE-p2} we know the candidates for $X$. Remember also that $p \ge 5$.  Suppose first that $\Out_S(E)$ is abelian. Then $\Out_S(E)\cong S/S'$ is elementary abelian of order $p^2$. The possibilities for $X$ are $\PSL_2(p^2)$, $\Alt(2p)$ for arbitrary odd $p$,  and ${}^2\mathrm F_4(2)'$ or $\mathrm{Fi}_{22}$ with $p=5$.  By a Frattini Argument and using \cite[Theorem 7.8.1]{GLS3},   $\N_{\Aut_\F(E)}(\Aut_S(E))$ acts irreducibly on $\Out_S(E)$. As every morphism in $\N_{\Aut_\F(E)}(\Aut_S(E))$ is the restriction of an $\F$-automorphism of $S$ (because $E$ is $\F$-receptive), there exists an automorphism $\tau\in \Aut_\F(S)$ such that $\gamma_1(S)\tau  \ne \gamma_1(S) $, which is absurd as this subgroup is characteristic in $S$. If $\Out_S(E)$ is non-abelian, then  $\Out_S(E)$ is extraspecial and the possibilities for $X$ are $\PSU_3(p)$ for all $p\ge 5$, $\mathrm{McL}$ or ${}^2\B_2(32){:}5 $ with $p=5$, and  $\J_4$ with $p=11$. This time \cite[Theorem 7.6.2]{GLS3} shows that either $\N_{\Aut_\F(E)}(\Aut_S(E))$ acts irreducibly on $\Out_S(E)/Z(\Out_S(E))$ or $X \cong {}^2\B_2(32){:}5$. In the former case, we obtain a contradiction exactly as in the abelian case.  Suppose that $X \cong {}^2\B_2(32){:}5$. Then $\Out_S(E)\cong 5^{1+2}_-\cong S/\gamma_3(S)$ has exponent 25. Since $|S| \ge 5^4$, applying Lemma~\ref{mc-facts} (vi) to  $S/\gamma_4(S)$  yields $S/\gamma_3(S)$ has exponent $5$, which is a contradiction. Therefore, if $E$ is normal in $S$, then $E$ is a maximal subgroup of $S$.

If $E \not \in \{\gamma_1(S), \C_S(\Z_2(S))\}$, then Lemma~\ref{pearls2}  implies $E$ is an $\F$-pearl and so $|S|=p^4$. This proves the lemma.   \end{proof}

\begin{lemma}\label{lem:not p^3}
Suppose that $|S| \ge p^5$ and $E$ is an $\F$-essential subgroup which is not an $\F$-pearl. Then $|E| \ge p^4$.
\end{lemma}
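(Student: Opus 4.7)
I argue by contradiction, assuming $|E|\le p^3$. Since $E$ is $\F$-centric, $Z(S)\le E$, so $|E|\ge p$. The cases $|E|\in\{p,p^2\}$ are immediate: $|E|=p$ forces $E=Z(S)$, and then $\Aut(E)$ is a $p'$-group, precluding a strongly $p$-embedded subgroup of $\Out_\F(E)$; $|E|=p^2$ combined with Lemma~\ref{in.sub} and $\F$-centricity forces either $E=Z_2(S)$ (which fails $\F$-centricity since $|C_S(Z_2(S))|=p^{n-1}\ge p^4$) or $E$ elementary abelian of order $p^2$ contained in $\gamma_1(S)$, hence an $\F$-pearl by Lemma~\ref{pearls2}---contradicting our hypothesis.

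Assume $|E|=p^3$. Lemma~\ref{|E| bound} yields $|N_S(E)/E|\le p$; since $\Aut_S(E)$ is a nontrivial Sylow $p$-subgroup of $\Out_\F(E)$ we have $|N_S(E)/E|=p$, so $|N_S(E)|=p^4$. Proposition~\ref{normal.ess} tells us $E$ is not normal in $S$ (as $|E|<p^{n-1}$), and then Lemma~\ref{mc-normalizer} forces $N_S(E)\le\gamma_1(S)$; in particular $E\le\gamma_1(S)$. I now rule out non-abelian $E$: such $E$ is extraspecial of order $p^3$, but $\Out(p^{1+2}_-)$ is a $p'$-group (contradicting $|N_S(E)/E|=p$), while $p^{1+2}_+$ is an $\F$-pearl by definition. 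If $E$ is abelian and non-elementary---either cyclic or $\mathbb{Z}/p^2\times\mathbb{Z}/p$---then the image of $\Aut(E)$ in $\GL(E/\Phi(E))$ lies in a Borel subgroup with normal Sylow $p$-subgroup, while the kernel of $\Aut(E)\to\GL(E/\Phi(E))$ is a $p$-group meeting $\Aut_\F(E)$ trivially because $O_p(\Aut_\F(E))=\Inn(E)=1$; hence $\Aut_\F(E)$ embeds into the Borel and inherits a normal Sylow $p$-subgroup, again ruling out a strongly $p$-embedded subgroup.

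The remaining (and hardest) case is $E$ elementary abelian of order $p^3$, where $\Aut_\F(E)\le\GL_3(p)$ has cyclic Sylow $p$-subgroup of order $p$ and a strongly $p$-embedded subgroup. I would exploit the embedding $E\le\gamma_1(S)$: from $C_S(E)=E$ we get $Z(\gamma_1(S))\le E$, and for $y\in N_S(E)\setminus E\subseteq\gamma_1(S)$ the commutator $[E,y]\le[\gamma_1(S),\gamma_1(S)]\cap E$ is controlled via regularity of $\gamma_1(S)$ (Lemma~\ref{mc-facts}(ii)) and the nilpotency estimate of Lemma~\ref{nilp.class.gamma1}. When $\Aut_\F(E)$ acts reducibly on $E$, its invariant subspaces combine with these intrinsic subgroups to produce an $\Aut_\F(E)$-invariant filtration of $E$ stabilized by $\Aut_S(E)$; Lemma~\ref{lem:series} then forces $\Aut_S(E)\le O_p(\Aut_\F(E))=1$, a contradiction. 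When $\Aut_\F(E)$ acts irreducibly, Feit's theorem (Theorem~\ref{feit}) constrains $\Aut_\F(E)$ to be of $\mathrm L_2(p)$-type for $p\ge 7$ (since $\dim E=3<\tfrac{2}{3}(p-1)$), and a separate direct analysis of $\GL_3(5)$ handles the case $p=5$. The main obstacle is in this last, irreducible subcase: reconciling the characteristic-subgroup structure inherited from $\gamma_1(S)$ with the $\GF(p)\Aut_\F(E)$-module structure of $E$ so as to produce the required invariant filtration.
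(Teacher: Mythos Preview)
Your argument has a genuine gap in the elementary abelian $|E|=p^3$ case, which you yourself flag. In the reducible subcase, an $\Aut_\F(E)$-invariant proper subspace $W$ need not satisfy $[E,\Aut_S(E)]\le W$ and $[W,\Aut_S(E)]=1$, which is what Lemma~\ref{lem:series} actually requires; there is no reason the module-theoretic invariant subspaces should align with the commutator filtration coming from $N_S(E)$. In the irreducible subcase, Feit's theorem only tells you (for $p\ge 7$) that $\Aut_\F(E)$ is of $\mathrm L_2(p)$-type, but $\PSL_2(p)$ has the $3$-dimensional irreducible module $\VV_2$, so no contradiction is immediate; your closing sentence concedes this. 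You also overlook the possibility $E\le C_S(Z_2(S))\setminus\gamma_1(S)$ when $S$ is exceptional: Lemma~\ref{mc-normalizer} only gives $N_S(E)\subseteq\gamma_1(S)\cup C_S(Z_2(S))$, not $N_S(E)\le\gamma_1(S)$.

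The paper takes a completely different route and avoids the reducible/irreducible dichotomy altogether. Since $[Z_3(S),E]\le Z(S)\le E$ (in either of the two containments from Lemma~\ref{in.sub}), one gets $Z_3(S)\le N_S(E)$ and hence $N_S(E)=Z_3(S)E$. Because $|S|\ge p^5$ the group $Z_3(S)$ is abelian, so $E\cap Z_3(S)$ has index $p$ in $E$ and lies in $C_E(\Aut_S(E))$; Proposition~\ref{Ellen} then forces $O^{p'}(\Aut_\F(E))\cong\SL_2(p)$ with $[E,\tau]$ a natural module for the central involution $\tau$. The crucial step is that $\tau$ normalizes $\Aut_S(E)$ and (by \cite[Corollary~1.23]{rank3}) extends to $\hat\tau\in\Aut_\F(S)$ of $p'$-order. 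One now knows exactly how $\hat\tau$ acts on $Z_3(S)E/E$, on $Z(S)$, and on $Z_2(S)/Z(S)$, and in each of the cases $Z_2(S)=E\cap Z_3(S)$ or $Z_2(S)\not\le E$ these data contradict the eigenvalue pattern forced by Lemmas~\ref{action} and~\ref{centralizer auto}. The engine is thus the interplay between $\SL_2(p)$ acting on $E$ and the rigid action of $p'$-automorphisms on the lower central series of $S$, not any module-theoretic bound on $\Aut_\F(E)$.
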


\begin{proof} Suppose that $|E| \le p^3$.  Then, as $E$ is $\F$-essential and $E$ is not an $\F$-pearl, $|E|=p^3$ and $E$ is abelian. Furthermore, Lemma~\ref{in.sub} indicates that either $E \le \gamma_1(S)$ or $E \le C_S(Z_2(S))$. Since abelian groups of order $p^3$ and exponent at least $p^2$ cannot be $\F$-essential, we have $E$ is elementary abelian. Of course $Z(S) \le E$. By  \cite[Corollary 1.23]{rank3}, every automorphism in $N_{\Aut_\F(E)}(\Aut_S(E))$ is the restriction of an $\F$-automorphism of $S$ and $|\Aut_S(E)|= p$.  Since $E$ is not normal in $S$ by Proposition~\ref{normal.ess}, $E \ne Z_3(S)$. If $E \le \gamma_1(S)$, then $Z_3(S) \le N_S(E)$  and, if $E \le C_S(Z_2(S))$, then $Z_2(S) \le E$ and $Z_3(S) \le N_S(E)$. In particular, as $E\ne Z_3(S)$ and $|N_S(E)|=p^4$,  $N_S(E)= Z_3(S)E$ in both cases. Since $|S| \ge p^5$, $Z_3(S)$ is abelian and so $\blue{Z(N_S(E)) = }E \cap Z_3(S)$ is a maximal subgroup of $E$.  Proposition~\ref{Ellen} \blue{(applied with $U=E$ and $W=1$)} implies that $O^{p'}(\Aut_\F(E)) \cong \SL_2(p)$ and, if we take $\tau \in Z(O^{p'}(\Aut_\F(E)))$ to be an involution, then $[E,\tau]$ is a natural $\GF(p)O^{p'}(\Aut_\F(E))$-module. Let $\hat \tau \in \Aut_\F(S)$ be such that $\hat \tau|_E=\tau$ and $\hat \tau$ has $p'$-order. Then $$\hat \tau\text{ centralizes }Z_3(S)E/E \cong Z_3(S)/(Z_3(S) \cap E).$$

Suppose that $Z_2(S)= E \cap Z_3(S)$.  Then $Z_2(S)= C_E(Z_3(S))$. If $[E,Z_3(S)]=Z(S)$, then $\hat \tau$ inverts $Z(S)$ and centralizes $Z_2(S)/Z(S)$. Hence $\hat \tau $ centralizes the group $Z_3(S)/Z(S)$ and this contradicts Lemma~\ref{centralizer auto}.  Hence $[E,Z_3(S)] \le Z_2(S)$ but is not contained in $Z(S)$.  Therefore $\hat \tau$ centralizes $Z(S)$ and inverts $Z_2(S)/Z(S)$.  Since $[Z_3(S), \gamma_1(S)] \le Z(S)$, we have $E \not \le \gamma_1(S)$ and, \blue{in particular, $S$ is exceptional and $n$ is even by Lemma~\ref{mc-factsb} (v).}  We apply Lemma~\ref{action} and use the notation from there with $\varphi= \hat \tau$. Thus, as $\hat \tau$ inverts $E\gamma_1(S)/\gamma_1(S)\cong E/(E\cap \gamma_1(S))= E/Z_2(S)$, $a=-1$, we have $\hat \tau$ acts as  $$(-1)^{n-3}b=-b=-1$$ on $Z_2(S)/Z(S)$ and as $$(-1)^{n-3}b^2= -b^2=1$$ on $Z(S)$. This is impossible and thus $Z_2(S) \not \le E$. Now $N_S(E)= Z_2(S)E$ and $[E,Z_2(S)]= Z(S)$.  It follows that $\hat \tau$ centralizes $Z_2(S)/Z(S)$ and $(Z_3(S)\cap E)/Z(S)$.  Hence $\hat \tau$ centralizes $Z_3(S)/Z(S)$, which is impossible by Lemma~\ref{centralizer auto}. We have proved, that, if $|E|= p^3$ and $E$ is not an $\F$-pearl, then $E$ is not $\F$-essential.
\end{proof}

\begin{lemma}\label{no inv} If  $O_{p'}(Z(\Out_\F(\gamma_1(S))))$ is non-trivial, then either
\begin{enumerate}
\item $\gamma_1(S)$ is abelian; or
\item $S$ is  exceptional and $\gamma_1(S)$ is extraspecial.
\end{enumerate}
\end{lemma}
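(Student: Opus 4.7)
The plan is to prove the contrapositive: assuming $\gamma_1(S)$ is non-abelian, I will show $\gamma_1(S)$ must be extraspecial and $S$ exceptional. Pick a non-trivial $\beta \in O_{p'}(Z(\Out_\F(\gamma_1(S))))$. Because $\Inn(\gamma_1(S))$ is a $p$-group, the $p'$-part of any preimage of $\beta$ is a $p'$-element $\alpha \in \Aut_\F(\gamma_1(S))$ with $\bar\alpha = \beta$. Centrality of $\beta$ forces $\alpha$ to normalize $\Aut_S(\gamma_1(S))$, and since $\gamma_1(S) \norm S$ is fully $\F$-normalized and hence $\F$-receptive, we have $N_\alpha = S$ and $\alpha$ extends to some $\hat\alpha \in \Aut_\F(S)$. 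Replacing $\hat\alpha$ by its $p'$-part (which still restricts to $\alpha$ on $\gamma_1(S)$, since $\alpha$ already has $p'$-order) we may take $\hat\alpha$ to have $p'$-order.

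The heart of the argument is to show that $\hat\alpha$ acts trivially on $S/\gamma_1(S)$. I would first establish that, as $\gamma_1(S)$ is non-abelian, $C_S(\gamma_1(S)) \le \gamma_1(S)$: any $y \in C_S(\gamma_1(S))\setminus\gamma_1(S)$ would together with $\gamma_1(S)$ generate $S$ and so belong to $Z(S) \le \gamma_1(S)$, a contradiction. Consequently $\Out_S(\gamma_1(S)) \cong S/\gamma_1(S)$ is cyclic of order $p$, generated by $\bar c_x$ for any $x \in S \setminus \gamma_1(S)$. Writing $x\hat\alpha \equiv x^a \pmod{\gamma_1(S)}$ with $a \in \GF(p)^\times$, the identity $\sigma^{-1} c_g \sigma = c_{g\sigma}$ yields $[\hat\alpha, c_x] = c_{(x\hat\alpha)^{-1}x}$, and this inner automorphism of $S$ is implemented by an element of the coset $x^{1-a}\gamma_1(S)$. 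Reducing to $\Out_S(\gamma_1(S))$, the class of this commutator is $\bar c_x^{\,1-a}$. The centrality of $\beta$ then forces $1-a \equiv 0 \pmod{p}$, so $a = 1$.

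Now $\hat\alpha$ is a non-trivial $p'$-automorphism of $S$ (non-trivial because $\alpha = \hat\alpha|_{\gamma_1(S)} \ne 1$) which centralizes $S/\gamma_1(S)$. By Lemma~\ref{lem:autS p'elts}, $\gamma_1(S)$ must be abelian or extraspecial; by hypothesis it is extraspecial, so $|Z(\gamma_1(S))| = p$. If $S$ were not exceptional, then $\gamma_1(S) = C_S(Z_2(S))$ would satisfy $Z_2(S) \le Z(\gamma_1(S))$, forcing $|Z(\gamma_1(S))| \ge |Z_2(S)| = p^2$, a contradiction. Hence $S$ is exceptional, completing the proof.

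The main technical obstacle is the commutator computation of Step~2 together with the correct bookkeeping of the coset of $(x\hat\alpha)^{-1}x$ modulo $\gamma_1(S)$; the reduction $C_S(\gamma_1(S)) \le \gamma_1(S)$ is what makes the centrality of $\beta$ actually translate into the numerical constraint $a = 1$, and without that observation the commutator argument would be vacuous.
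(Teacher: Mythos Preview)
Your proof is correct and follows essentially the same approach as the paper's: lift a non-trivial $p'$-element $\beta$ of $O_{p'}(Z(\Out_\F(\gamma_1(S))))$ to a $p'$-element $\hat\alpha\in\Aut_\F(S)$, show $\hat\alpha$ centralizes $S/\gamma_1(S)$, and invoke Lemma~\ref{lem:autS p'elts}. The paper's proof is terse and simply asserts $[S,\tau^*]\le\gamma_1(S)$; you have supplied the justification via the commutator computation and the observation $C_S(\gamma_1(S))\le\gamma_1(S)$, and you have also made explicit the easy step (implicit in the paper) that an extraspecial $\gamma_1(S)$ forces $S$ to be exceptional.
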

 \begin{proof} Assume that $\tau \in \Aut_\F(\gamma_1(S))$ projects to a non-trivial element of the group $O_{p'}(Z(\Out_\F(\gamma_1(S))))$. Then $\tau$ is the restriction of $\hat \tau\in \Aut_\F(S)$ of $p'$-order and $[S,\hat \tau]\le \gamma_1(S)$.
Lemma~\ref{lem:autS p'elts} now gives the result.\end{proof}

\section{Essential subgroups in exceptional maximal class groups}\label{sec:exceptional}

In this section, we start the investigation of $\F$-essential subgroups when $S$ is exceptional of order $p^n$. We assume that

\blue{\begin{hypothesis} \label{Hyp7.1}  Hypothesis~\ref{hyp1} holds with $S$   exceptional.
\end{hypothesis}}

\blue{Because $S$ is exceptional, we know from Lemma~\ref{mc-factsb} (v) that $n$ is even and $$p^6\le |S| =p^n\le p^{p+1}.$$}
Our aim is to prove the following proposition.

\begin{prop}\label{Prop:ess.in.except}
Suppose Hypothesis~\ref{Hyp7.1} holds.  Then
\[ \E_\F \subseteq \Pp_a(\F) \cup \{\C_S(Z_2(S))\} \cup \{ E \mid \Z(S)<E\leq \gamma_1(S)\}.\]
Furthermore,
\begin{enumerate}
\item if $\Pp_a(\F)\neq \emptyset$, then $|S|=p^{p-1}$ and $\gamma_1(S)$ is extraspecial;
\item if $\C_S(\Z_2(S)) \in \E_\F$, then $|S|=p^6$, $O^{p'}(\Out_\F(C_S(Z_2(S))))\cong \SL_2(p)$ and either \begin{enumerate}\item $\gamma_1(S)$ is extraspecial; or \item  $p=5$,  $S= \mathrm{SmallGroup(5^6,661)}$, $\E_\F= \{C_S(Z_2(S))\}$, $\Out_\F(S)$ is cyclic of order $4$, $\Out_\F(C_S(Z_2(S))) \cong \SL_2(5)$ and $\F$ is unique.
    \end{enumerate}
     and
\item if  $\gamma_1(S)$ is extraspecial, then \[ \E_\F \subseteq \Pp_a(\F) \cup \{\C_S(\Z_2(S)), \gamma_1(S)\}. \]
\end{enumerate}
\end{prop}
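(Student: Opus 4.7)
My first step is to case-split on $E \in \E_\F$ via Lemma~\ref{in.sub}: either $E$ is an $\F$-pearl, or $E \le \gamma_1(S)$, or $E \le C_S(\Z_2(S))$. The case $E \le \gamma_1(S)$ is immediate: $\F$-centricity gives $\Z(S) \le E$, and Lemma~\ref{lem:not p^3} forces $|E| \ge p^4$, so $\Z(S) < E$ and $E$ lies in the third set of the asserted union. To eliminate extraspecial pearls, I would take $E \in \Pp_e(\F)$ and let $\tau \in \Aut_\F(E)$ be the preimage of the central involution of $O^{p'}(\Out_\F(E)) \cong \SL_2(p)$ from Lemma~\ref{pearls1}(ii). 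Then $\tau$ inverts $E/\Z(E)$ and, acting diagonally on the commutator bracket, centralises $[E,E] = \Z(E) = \Z(S)$. Theorem~\ref{pearls3}(iii) extends $\tau$ to $\hat\tau \in \Aut_\F(S)$, which we may take of $p'$-order. Lemma~\ref{pearls2} gives $E = \langle x \rangle \Z_2(S)$ with $x \not\in \gamma_1(S)$, so $\hat\tau$ inverts $S/\gamma_1(S)$, and $E \cap \gamma_1(S) = \Z_2(S)$ then forces $\hat\tau$ also to invert $\Z_2(S)/\Z(S)$. Applying Lemma~\ref{action} with these data and using that $n$ is even (Lemma~\ref{mc-factsb}(v)) forces $\hat\tau$ to invert $\Z(S)$, contradicting $\tau|_{\Z(S)} = 1$. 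Hence $\Pp(\F) = \Pp_a(\F)$, yielding the last clause.

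The critical remaining case is $E \le C_S(\Z_2(S))$ with $E \not\le \gamma_1(S)$, where I must show $E = C_S(\Z_2(S))$. Centricity forces $\Z_2(S) \le E$, and since $|S| \ge p^6$ (Lemma~\ref{mc-factsb}(v)) commutator weights give $\Z_3(S) \le C_S(\Z_2(S))$, while $[\Z_3(S), E] \le \Z_2(S) \le E$ yields $\Z_3(S) \le N_S(E)$. If $E < C_S(\Z_2(S))$, I would construct an $\Aut_\F(E)$-invariant chain from $1$ up to $E$ stabilised by a non-trivial $p$-subgroup of $\Aut_S(E)$ arising from $\Z_3(S)$ or a proper overgroup of $E$ inside $C_S(\Z_2(S))$ normalising $E$; Lemma~\ref{lem:series} then forces this subgroup into $O_p(\Aut_\F(E)) = \Inn(E)$, contradicting $\F$-essentiality. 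This pins down $E = C_S(\Z_2(S))$.

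For the three refinements, part~(i) proceeds by taking $P \in \Pp_a(\F)$, tracking eigenvalues of the $p'$-automorphisms of $S$ arising from $O^{p'}(\Out_\F(P)) \cong \SL_2(p)$ through the lower central series via Lemma~\ref{action}, and combining with the exceptional bound $|S| \le p^{p+1}$ and Lemma~\ref{centralizer auto} applied in the non-exceptional quotient $S/\Z(S)$ (Lemma~\ref{mc-factsb}(vi)) to force $|S| = p^{p-1}$ with $\gamma_1(S)$ extraspecial. Part~(ii) applies Proposition~\ref{Ellen} to the chief factor $\Z_2(S)$ of $C_S(\Z_2(S))$ to obtain $O^{p'}(\Out_\F(C_S(\Z_2(S)))) \cong \SL_2(p)$ with $\Z_2(S)$ natural; propagating this action through the chief series via Lemma~\ref{action} and the bound $|S| \le p^{p+1}$ forces $|S| = p^6$, and the dichotomy between $\gamma_1(S)$ extraspecial and $S = \SmallGroup(5^6,661)$ is settled by direct inspection of the small list of exceptional maximal class groups of order $p^6$ (possible only at $p = 5$). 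Part~(iii) handles $\gamma_1(S)$ extraspecial: an $\F$-essential $E$ with $\Z(S) < E < \gamma_1(S)$ has $[\gamma_1(S), E] \le [\gamma_1(S), \gamma_1(S)] = \Z(S) < E$, producing an $\Aut_\F(E)$-invariant chain $1 < \Z(S) < E$ stabilised by $\Aut_{\gamma_1(S)}(E)$ which Lemma~\ref{lem:series} collapses into $\Inn(E)$, contradicting essentiality. The main obstacles are the Case~3 chain argument forcing $E = C_S(\Z_2(S))$ and the explicit identification of $\SmallGroup(5^6,661)$ in part~(ii).
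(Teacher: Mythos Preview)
Your overall case split via Lemma~\ref{in.sub} and the elimination of extraspecial pearls are sound and match Lemma~\ref{lem:peals-abelian} closely (your eigenvalue computation is a minor variant). Part~(i) is also fine: the paper simply invokes \cite[Theorem~3.14]{pearls}. But there are three genuine gaps.

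\textbf{The case $E \le C_S(Z_2(S))$, $E \not\le \gamma_1(S)$.} Your proposed chain argument does not work as stated: you never identify an $\Aut_\F(E)$-invariant series stabilised by an element of $N_{C_S(Z_2(S))}(E)\setminus E$, and there is no obvious candidate. The paper's route (Lemma~\ref{lem:p^6CSZ2essential}) is more delicate. Set $R=C_S(Z_2(S))$. If $R$ is itself $\F$-essential, Lemma~\ref{prop:CSZ2 essential} already gives $|S|=p^6$ and rules out proper $\F$-essential subgroups of $R$; so one may assume $R\notin\E_\F$. Since $S/Z(S)$ is not exceptional and $E/Z(S)$ lies outside its unique $2$-step centraliser, Lemma~\ref{sbgrp-maxclass} forces $E/Z(S)$ to have maximal class, whence $Z(E)=Z_2(S)$. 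The key step is Lemma~\ref{morphism.fixing.Z2}: because $R\notin\E_\F$, the subgroup $Z(S)$ is $\Aut_\F(E)$-invariant and $O^p(\Aut_\F(E))$ centralises $Z_2(S)$. One then reads off $O^{p'}(\Aut_\F(E))/\Inn(E)\cong\SL_2(p)$ acting on $E/Z(E)$, lifts the central involution to $\hat\tau\in\Aut_\F(S)$, and derives incompatible eigenvalue equations from Lemma~\ref{action} (with $a=-1$ and $n$ even). No direct chain argument bypasses the dichotomy on whether $R$ is essential.

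\textbf{Part (iii).} Your chain $1<Z(S)<E$ need not be $\Aut_\F(E)$-invariant: when $E$ is elementary abelian there is no reason for $\Aut_\F(E)$ to fix the line $Z(S)$. The paper (Lemma~\ref{Lem:no ess in extrasp}) first observes $[E,E]\le Z(S)$, so if $E$ is non-abelian then $E'=Z(S)$ is characteristic and Lemma~\ref{lem:series} applies, and similarly with $\Omega_1(E)$ if $E$ is abelian of exponent $>p$. This forces $E$ to be elementary abelian, and the contradiction then comes from a dimension count: $|E|\le p^{n/2}$ (as $\gamma_1(S)$ is extraspecial) against $|E|\ge|\gamma_1(S)/E|^2\ge p^{n-2}$ from Lemma~\ref{|E| bound}.

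\textbf{Part (ii).} Your application of Proposition~\ref{Ellen} to $Z_2(S)$ presupposes that $Z_2(S)$ is $\Aut_\F(R)$-invariant, which is not clear a priori. The paper instead shows $[R,\gamma_2(S)]=\gamma_3(S)$ (using that $S/Z(S)$ is non-exceptional), so $\Phi(R)=\gamma_3(S)$ and $|R/\Phi(R)|=p^2$, whence $O^{p'}(\Out_\F(R))\cong\SL_2(p)$ immediately. The bound $|S|=p^6$ is a separate commutator argument: if $|S|\ge p^7$ one computes $[R,R,R,R]=\gamma_5(S)$, so $O^p(\Aut_\F(R))$ centralises $\gamma_3(S)/\gamma_5(S)$, contradicting $\Inn(R)\le O^p(\Aut_\F(R))$.
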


We prove Proposition~\ref{Prop:ess.in.except} via a series of lemmas.

\begin{lemma}\label{lem:peals-abelian}
Assume Hypothesis~\ref{Hyp7.1} holds. Then $\Pp(\F) = \Pp_a(\F)$ (that is, every $\F$-pearl is abelian).
\end{lemma}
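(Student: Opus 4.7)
\noindent\textbf{Proof plan for Lemma~\ref{lem:peals-abelian}.}

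The plan is to argue by contradiction. Suppose there is some $E\in \Pp_e(\F)$, so $E$ is non-abelian extraspecial of order $p^3$ and exponent $p$. By Lemma~\ref{pearls2}, $E=\langle x\rangle Z_2(S)$ for some element $x$ of order $p$ outside $C_S(Z_2(S))$, and Lemma~\ref{in.sub} places $x$ outside $\gamma_1(S)$ as well. Since $[E:Z_2(S)]=p$, this forces $E\cap \gamma_1(S)=Z_2(S)$ and $Z(E)=Z(S)$. Because $S$ is exceptional, Lemma~\ref{mc-factsb}(v) forces $n$ to be even and in the range $6\le n\le p+1$.

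Next I produce a distinguished $p'$-automorphism $\alpha$ of $S$. By Lemma~\ref{pearls1}, $O^{p'}(\Out_\F(E))\cong \SL_2(p)$ acts on the natural module $E/Z(S)$ with $\Out_S(E)$ a Sylow $p$-subgroup. Choose $\alpha_0\in N_{\Aut_\F(E)}(\Aut_S(E))$ whose image in $\SL_2(p)$ is a generator of the cyclic torus of order $p-1$ normalizing $\Out_S(E)$. By Theorem~\ref{pearls3}(iii), $\alpha_0$ extends to some $\widetilde\alpha\in \Aut_\F(S)$; after replacing $\widetilde\alpha$ by a suitable $p$-power I obtain $\alpha\in \Aut_\F(S)$ of $p'$-order whose restriction to $E$ still maps to a generator of that torus (since $\gcd(p,p-1)=1$). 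In particular, $\alpha$ centralises $Z(E)=Z(S)$ and has two distinct eigenvalues $\mu,\mu^{-1}$ of order $p-1$ on $E/Z(S)$ (using $p\ge 5$). As $Z_2(S)/Z(S)$ is $\alpha$-invariant it is one of the two eigenspaces, say with eigenvalue $\mu^{-1}$, and the preimage $U\le E$ of the other eigenspace is abelian of order $p^2$ and exponent $p$, hence elementary abelian. Coprime action decomposes $U=Z(S)\oplus U'$, and any non-trivial $y\in U'$ is a genuine eigenvector: $\alpha(y)=y^\mu$, $y\in E\setminus Z_2(S)$, and therefore $y\notin \gamma_1(S)$.

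Finally, I apply Lemma~\ref{action} to $\alpha$ taking $x^\ast=y$ and $s_1$ any $\alpha$-eigenvector in $\gamma_1(S)\setminus\gamma_2(S)$; in the notation of that lemma $a=\mu$ while $b$ records the eigenvalue of $\alpha$ on $\gamma_1(S)/\gamma_2(S)$. Comparing the formula's value $a^{n-3}b=\mu^{n-3}b$ on $\gamma_{n-2}(S)/\gamma_{n-1}(S)=Z_2(S)/Z(S)$ with the known value $\mu^{-1}$ yields $b=\mu^{2-n}$. The exceptional case of the formula then gives the eigenvalue of $\alpha$ on $Z(S)=\gamma_{n-1}(S)$ as $a^{n-3}b^2=\mu^{n-3}\mu^{2(2-n)}=\mu^{1-n}$, which must be $1$. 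Hence $\mu^{n-1}=1$, and since $\mu$ has order $p-1$ this forces $(p-1)\mid (n-1)$. But $n$ is even while $p-1$ is even and $n-1$ is odd, a contradiction.

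The main obstacle is the careful setup of $\alpha$ together with an honest eigenvector $y\in E\setminus \gamma_1(S)$ on which to apply Lemma~\ref{action}: the element $x$ furnished by Lemma~\ref{pearls2} need not be $\alpha$-invariant, so one must extract $y$ from the $\mu$-eigenspace inside $E$ (using that its preimage is elementary abelian). Once $y$ is in place, the exceptional-case formula of Juh\'asz, with its $b^2$ instead of $b$, produces the parity obstruction, and this is precisely the step where the hypothesis that $S$ is exceptional is used.
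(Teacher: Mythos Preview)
Your argument is correct and follows essentially the same strategy as the paper: extend a $p'$-element of $N_{\Aut_\F(E)}(\Aut_S(E))$ that centralizes $Z(E)=Z(S)$ to an automorphism of $S$, then use the Juh\'asz eigenvalue formula for exceptional $S$ (Lemma~\ref{action}) to force a non-trivial action on $Z(S)$, a contradiction. The paper streamlines this by choosing the central involution $\tau$ of $\SL_2(p)$ instead of a full torus generator: since $\tau$ inverts all of $E/Z(S)$ there is no need to extract an eigenvector $y$, and the numerical step is already packaged as Lemma~\ref{action.exceptional1} (an involution inverting $S/\gamma_1(S)$ must invert $Z(S)$ when $S$ is exceptional).
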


\begin{proof}
Aiming for a contradiction, suppose $E$ is an extraspecial $\F$-pearl. Then we have $\Z(S)=\Z(E)$ and the involution $\tau \in \Aut_\F(E)$ which maps into $ \Z(O^{p'}(\Out_\F(E)))$ inverts $E/\Z(S)$ and centralizes $\Z(S)$. In addition, by  Theorem~\ref{pearls3} (iii), there is an automorphism $\hat \tau \in \Aut_\F(S)$ \blue{of $p'$-order} such that $\hat \tau|_E = \tau$. Let $M$ be the maximal subgroup of $S$ containing $E$. Then $M \notin \{ \gamma_1(S), C_S(Z_2(S)) \}$ by Lemma \ref{pearls2}. Thus $MC_S(Z_2(S))=\gamma_1(S)C_S(Z_2(S)) = S$ and so, as $\gamma_1(S)$ and  $C_S(Z_2(S))$ are characteristic in $S$, $$\gamma_1(S)/\gamma_2(S) \cong S/C_S(Z_2(S)) \cong  M/\gamma_2(S)=E\gamma_2(S)/\gamma_2(S) \cong E/\Z_2(S)$$ as $\hat \tau$-groups. Hence $\gamma_1(S)/\gamma_2(S)$ and $M/\gamma_2(S)$ are inverted by $\hat \tau$. Therefore $\hat \tau$ inverts $S/\gamma_2(S)$ and so is an involution. Since $\hat \tau$ centralizes $Z(S)$,  this contradicts  Lemma \ref{action.exceptional1}. Thus every $\F$-pearl is abelian.
\end{proof}

\begin{lemma}\label{Lem:no ess in extrasp}
Assume Hypothesis~\ref{Hyp7.1} holds with  $\gamma_1(S)/\Z(S)$   abelian.  Then $\gamma_1(S)$ is extraspecial and no proper subgroup of $\gamma_1(S)$ is $\F$-essential.
\end{lemma}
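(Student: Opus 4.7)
The plan for the first statement is first to observe that $\gamma_1(S)$ must be non-abelian: otherwise $Z_2(S)\le \gamma_1(S)$ would be centralized by $\gamma_1(S)$, forcing $\gamma_1(S)=C_S(Z_2(S))$ (both being maximal), contradicting exceptionality. Combined with the hypothesis that $\gamma_1(S)/Z(S)$ is abelian, this gives $\gamma_1(S)'\le Z(S)$ and, since $|Z(S)|=p$ and $\gamma_1(S)'\ne 1$, we get $\gamma_1(S)'=Z(S)$. Next I would show $Z(\gamma_1(S))=Z(S)$: the subgroup $Z(\gamma_1(S))$ is characteristic in $\gamma_1(S)$, hence normal in $S$, and it is properly contained in $\gamma_1(S)$; by Lemma~\ref{mc-facts}(i) it must equal $\gamma_k(S)$ for some $k\ge 2$. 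If $k\le n-2$ then $Z_2(S)=\gamma_{n-2}(S)\le \gamma_k(S)=Z(\gamma_1(S))$, forcing $\gamma_1(S)\le C_S(Z_2(S))$ and hence equality by index, contradicting exceptionality again. So $k=n-1$ and $Z(\gamma_1(S))=Z(S)$. Finally, because $S$ is exceptional we have $|S|\le p^{p+1}$, so by Lemma~\ref{mc-facts}(vi) the quotient $S/Z(S)$ has exponent $p$; combined with $\gamma_1(S)/Z(S)$ being abelian this makes $\gamma_1(S)/Z(S)$ elementary abelian, hence $\Phi(\gamma_1(S))=\gamma_1(S)'=Z(S)$. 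Therefore $\gamma_1(S)$ is extraspecial.

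For the second statement, suppose for contradiction that $E<\gamma_1(S)$ is $\F$-essential. Since $E$ is $\F$-centric, $Z(S)\le C_S(E)\le E$ and $C_{\gamma_1(S)}(E)=Z(E)$; moreover $|E|\ge p^2$ (else $E=Z(S)$, which is not centric), so $1<Z(S)\le E$ is a proper series terminating in $\Phi(E)=1$ at the bottom end. Using that $\gamma_1(S)$ is extraspecial with centre $Z(S)$, we have $[E,N_{\gamma_1(S)}(E)]\le[\gamma_1(S),\gamma_1(S)]=Z(S)$ and $[Z(S),N_{\gamma_1(S)}(E)]=1$, so the subgroup $A=\Aut_{N_{\gamma_1(S)}(E)}(E)\le\Aut_S(E)$ stabilizes the series $1\le Z(S)\le E$. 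Since $E$ is a proper subgroup of the $p$-group $\gamma_1(S)$, $N_{\gamma_1(S)}(E)>E$, and the identification $C_{\gamma_1(S)}(E)=Z(E)$ then gives $A=N_{\gamma_1(S)}(E)/Z(E)>E/Z(E)=\Inn(E)$. Lemma~\ref{lem:series} therefore forces $A\le O_p(\Aut_\F(E))=\Inn(E)$, a contradiction.

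The crux of the argument is the identification $Z(\gamma_1(S))=Z(S)$, which is what makes the exceptional hypothesis do real work by squeezing $Z(\gamma_1(S))$ between the two distinct $2$-step centralizers. Once this is in hand, both the extraspecial structure (via the exponent bound from $|S|\le p^{p+1}$) and the exclusion of proper essential subgroups (via the stability-of-series lemma) are short.
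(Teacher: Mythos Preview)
Your proof that $\gamma_1(S)$ is extraspecial is correct and, in fact, more carefully argued than the paper's, which simply asserts $Z(S)=Z(\gamma_1(S))$ and does not explicitly verify the exponent condition.

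However, your argument for the second statement has a genuine gap. To invoke Lemma~\ref{lem:series} with the series $1<Z(S)\le E$, the intermediate term $Z(S)$ must be $\Aut_\F(E)$-invariant, not merely normal in $N_{\gamma_1(S)}(E)$; the proof of Lemma~\ref{lem:series} takes the normal closure $\langle A^{\Aut_\F(E)}\rangle$, and this closure only stabilizes the given series if each term is $\Aut_\F(E)$-invariant. Now $E'\le\gamma_1(S)'=Z(S)$ and $E^p\le Z(S)$ (since $\gamma_1(S)/Z(S)$ has exponent $p$), so whenever $E$ is \emph{not} elementary abelian one of $E'$ or $\agemO^1(E)$ equals $Z(S)$ and is characteristic in $E$; your argument then goes through. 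But if $E$ is elementary abelian, $Z(S)$ is just some order-$p$ subgroup of an elementary abelian group and there is no reason for $\Aut_\F(E)$ to fix it. Your proof says nothing about this case.

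The paper handles the elementary abelian case by a counting argument rather than by Lemma~\ref{lem:series}: once $E\trianglelefteq\gamma_1(S)$ and $E$ is elementary abelian inside the extraspecial group $\gamma_1(S)$ of order $p^{n-1}$, one has $|E|\le p^{n/2}$, while Lemma~\ref{|E| bound} gives $|E|=|E/\Phi(E)|\ge |N_S(E)/E|^2\ge |\gamma_1(S)/E|^2\ge p^{n-2}$. Combining these forces $n\le 4$, contradicting Lemma~\ref{mc-factsb}(v). You could patch your proof by inserting exactly this argument for the elementary abelian case.
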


\begin{proof} Since $S$ is exceptional, $Z(S)=Z (\gamma_1(S))$.
By assumption we have $\Z(S) = \Z(\gamma_1(S)) = [\gamma_1(S), \gamma_1(S)]$. Hence $\gamma_1(S)$ is extraspecial.

Aiming for a contradiction, suppose there exists a subgroup $E < \gamma_1(S)$ that is $\F$-essential.
Note that
\[[E,E] \le [E,\gamma_1(S)] \le [\gamma_1(S), \gamma_1(S)]  = \Z(S) \leq E.\]
In particular, $E$ is normal in $\gamma_1(S)$. If $E$ is not elementary abelian,   then $E$ has normal series $E> \Phi(E)=Z(S)$ which is stabilized by $\gamma_1(S)$.  This contradicts Lemma~\ref{lem:series}. Hence $E$ is elementary abelian. Since $\gamma_1(S)$ is extraspecial of order $p^{n-1}$, this implies   $|E|\leq p^{n/2}$. In particular the quotient $\gamma_1(S)/E$ is elementary abelian of order $[\gamma_1(S) \colon E] \geq p^{(n-2)/2}$. On the other hand,
Lemma~\ref{|E| bound} now yields that $|E| \ge p^{n-2}$. Thus $n= 4$ and   Lemma \ref{mc-factsb}(v) contradicts the assumption that $S$ is exceptional. Therefore  no proper subgroup of $\gamma_1(S)$ is $\F$-essential.
\end{proof}

\begin{lemma}\label{prop:CSZ2 essential} Assume Hypothesis~\ref{Hyp7.1} holds with $C_S(Z_2(S)) \in \E_\F$.  Then $|S|= p^6$, $O^{p'}(\Out_\F(C_S(Z_2(S))))\cong \SL_2(p)$ and \[ \E_\F \subseteq \Pp_a(\F) \cup \{\C_S(\Z_2(S)), \gamma_1(S)\}. \]  In addition, either  $\gamma_1(S)$ is extraspecial or $p=5$,  $S= \mathrm{SmallGroup(5^6,661)}$, $\E_\F= \{C_S(Z_2(S))\}$, $\Out_\F(S)$ is cyclic of order $4$, $\Aut_\F(C_S(Z_2(S))) \cong \SL_2(5)$ and $\F$ is unique.
\end{lemma}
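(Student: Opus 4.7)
The plan is to set $E = C_S(Z_2(S))$, a maximal subgroup of $S$ distinct from $\gamma_1(S)$ by exceptionality, so that $E \cap \gamma_1(S) = \gamma_2(S)$ and $\Out_S(E)$ is cyclic of order $p$. The main tool is Proposition~\ref{Ellen} applied to the $\Aut_\F(E)$-invariant elementary abelian subgroup $V = \Omega_1(Z(E))$. First I would show $Z(E) \le \gamma_2(S)$: otherwise an element $z \in Z(E) \setminus \gamma_2(S)$ together with $\gamma_2(S)$ would generate $E$, and since $z$ centralizes $\gamma_2(S)$, the full $E$ would centralize $\gamma_2(S)/\gamma_4(S)$, forcing $E \le \gamma_1(S) = C_S(\gamma_2(S)/\gamma_4(S))$, contradicting exceptionality. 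Since $n \le p+1$ for exceptional $S$, Lemma~\ref{mc-facts}(vi) gives $\gamma_2(S)$ has exponent $p$, so $V = Z(E) \le \gamma_2(S)$ and $Z_2(S) \le V$. A short commutator argument leveraging $[V, x] \le V \cap \gamma_3(S)$ together with $[Z_2(S), x] = Z(S)$ identifies a suitable quotient of $V$ to which Proposition~\ref{Ellen} applies, yielding $O^{p'}(\Out_\F(E)) \cong \SL_2(p)$ with natural $\SL_2(p)$-module isomorphic to $Z_2(S)$.

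Next, to bound $|S|$ and classify further essentials: take a central involution $\tau \in O^{p'}(\Aut_\F(E))$, which normalizes $\Aut_S(E)$ and negates the natural module; by $\F$-receptivity of $E$, $\tau$ lifts to an involution $\hat\tau \in \Aut_\F(S)$. Applying Lemmas~\ref{action}, \ref{action.exceptional1}, and \ref{centralizer auto} to the action of $\hat\tau$ on the $\gamma_i(S)/\gamma_{i+1}(S)$ series, combined with the exceptional structure, forces $n \le 6$, and since exceptionality gives $n \ge 6$, we conclude $|S| = p^6$. For the inclusion $\E_\F \subseteq \Pp_a(\F) \cup \{E, \gamma_1(S)\}$, Proposition~\ref{Prop:ess.in.except} reduces the problem to ruling out essential subgroups $Q$ with $Z(S) < Q < \gamma_1(S)$. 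For such $Q$, Lemma~\ref{lem:not p^3} gives $|Q| \ge p^4$, hence $Q$ contains $Z_2(S)$ (since $Q$ is $\F$-centric and normal in its $S$-normalizer); the series $Q \ge Q \cap \gamma_3(S) \ge \dots$ is then $\Aut_\F(Q)$-invariant and stabilized by $\Aut_S(Q)$, contradicting $\F$-essentiality via Lemma~\ref{lem:series}.

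Finally, for the dichotomy: if $\gamma_1(S)$ is not extraspecial, then by the contrapositive of Lemma~\ref{Lem:no ess in extrasp}, $\gamma_1(S)/Z(S)$ is non-abelian of order $p^4$. Combined with $|S| = p^6$ and the $\SL_2(p)$-action on $E$, for $p \ge 7$ a commutator calculation using Lemma~\ref{centralizer auto} together with a $(p-1)$-order element from the diagonal of $\SL_2(p)$ acting on the $\gamma_i(S)$ series leads to a contradiction, ruling out this case. For $p = 5$, the maximal class $5$-groups of order $5^6$ with $\gamma_1(S)/Z(S)$ non-abelian are few, and imposing the structural constraints (including the existence of an $\SL_2(5)$-action on $E$ inducing a natural module on $Z_2(S)$) singles out $\SmallGroup(5^6, 661)$ as the unique candidate. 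Within this group, Theorem~\ref{full auto S} gives $\Out_\F(S)$ a subgroup of a product of two cyclic groups of order $4$, and verification via the required $\SL_2(5)$-action shows $\Out_\F(S)$ is cyclic of order $4$; uniqueness of $\F$ then follows from Alperin-Goldschmidt (Theorem~\ref{t:alp}) since $\Aut_\F(S)$ and $\Aut_\F(E)$ are determined. The main obstacle is the precise identification of $\SmallGroup(5^6, 661)$ as the unique $5$-group supporting the required structure, which may require direct verification through the small group library or a delicate structural argument combining the exceptionality constraints with the prescribed $\SL_2(5)$-action.
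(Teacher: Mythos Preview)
Your overall strategy has the right shape, but there are two substantive problems and one missed simplification.

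\textbf{Missed simplification for $O^{p'}(\Out_\F(E)) \cong \SL_2(p)$.} You work via Proposition~\ref{Ellen} applied to $V = Z(E)$, which requires extra work and only yields $O^{p'}(\Aut_\F(E))/C_{O^{p'}(\Aut_\F(E))}(V) \cong \SL_2(p)$, not the full claim. The paper observes something much simpler: since $E \ne \gamma_1(S)$ and $S/Z(S)$ is not exceptional, $[\gamma_2(S),E] = \gamma_3(S)$, so $E' = \Phi(E) = \gamma_3(S)$ and $|E/\Phi(E)| = p^2$. Then $\Out_\F(E)$ embeds in $\GL_2(p)$ and the conclusion is immediate. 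This also means $\Aut_\F(E)$ acts irreducibly on $E/\gamma_3(S)$, a fact the paper exploits heavily.

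\textbf{Circularity.} Your appeal to Proposition~\ref{Prop:ess.in.except} for the containment $\E_\F \subseteq \Pp_a(\F) \cup \{E,\gamma_1(S)\}$ is circular: the displayed part of that proposition is proved via Lemma~\ref{lem:p^6CSZ2essential}, whose proof in turn invokes the present lemma. The paper instead handles essentials inside $E$ directly: any $\F$-essential $F < E$ has $|F| = p^4$ by Lemma~\ref{lem:not p^3}, so $F$ is a maximal subgroup of $E$; the transitive action of $\Aut_\F(E)$ on such subgroups then forces $F$ to be $\F$-conjugate to $\gamma_2(S)$, contradicting Proposition~\ref{normal.ess}. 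Essentials properly inside $\gamma_1(S)$ are then handled \emph{through} the dichotomy (extraspecial $\Rightarrow$ Lemma~\ref{Lem:no ess in extrasp}; otherwise $p=5$ and an explicit argument on $\mathrm{SmallGroup}(5^6,661)$).

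\textbf{Gap in your series argument.} Your claim that ``the series $Q \ge Q \cap \gamma_3(S) \ge \dots$ is $\Aut_\F(Q)$-invariant'' is not justified: the $\gamma_i(S)$ are characteristic in $S$, not in $Q$, and there is no reason $\Aut_\F(Q)$ should preserve $Q \cap \gamma_i(S)$. The paper gives a more careful argument for the $p=5$ case, analysing $|Q|=5^4$, $Z(Q)$, and the resulting $\SL_2(5)$-action via a lifted involution.

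\textbf{On the bound $|S| = p^6$.} Your involution approach may work, but note that Lemma~\ref{centralizer auto} requires $S$ non-exceptional, so you must pass to $S/Z(S)$ with care. The paper's argument is different and worth knowing: if $|S| \ge p^7$ then $\gamma_3(S) = [E,E]$, $\gamma_4(S) = [E,E,E]$, $\gamma_5(S) = [E,E,E,E]$ are all characteristic in $E$, so $\Aut_\F(E)$ preserves the flag $\gamma_3(S) > \gamma_4(S) > \gamma_5(S)$; this forces $O^p(\Aut_\F(E))$ to centralize $\gamma_3(S)/\gamma_5(S)$, contradicting $\Inn(E) \le O^p(\Aut_\F(E))$ and $[\gamma_3(S),E] = \gamma_4(S)$.

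For the $p=5$ endgame the paper does rely on computation (Magma and the package from \cite{parkersemerarocomputing}) to isolate $\mathrm{SmallGroup}(5^6,661)$ and verify saturation, followed by a by-hand argument ruling out additional essentials; your expectation that this step is delicate is correct.
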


\begin{proof} Set $R= C_S(Z_2(S))$.  Then, as $R \ne \gamma_1(S)$ and $S/Z(S)$ is not exceptional by Lemma~\ref{mc-factsb} (vi), we glean  $[\gamma_2(S), R]=\gamma_3(S)$. It follows that $R' = \Phi(R)=\gamma_3(S)$ and $R/\Phi(R)$ has order $p^2$. Therefore $\Out_\F(R) $ embeds into $\GL_2(p)$ and $$O^{p'}(\Out_\F(R)) = \langle \Out_S(R)^{\Out_\F(R)}\rangle \cong \SL_2(p)$$ acts irreducibly on $R/\gamma_3(S)$. In particular, $\Inn(R) \le O^p(\Aut_\F(R))$.  Suppose that $|S| \ge p^7$.
 Then $$\gamma_4(S)=[\gamma_3(S), S]= [\gamma_3(S),\gamma_1(S)R] \ge [\gamma_3(S),\gamma_1(S)][\gamma_3(S),R]$$
Since $[\gamma_3(S),\gamma_1(S)] \le \gamma_5(S)$ and $[\gamma_3(S),R]$ is normal in $S$, we deduce   $[\gamma_3(S),R]=\gamma_4(S)$ and similarly $[\gamma_4(S),R]= \gamma_5(S)$. Since $[R,R]= \gamma_3(S)$, we have $\gamma_4(S)= [R,R,R]$ and $\gamma_5(S)=[R,R,R,R]$.  Now we must have $O^{p}(\Aut_\F(R))$ centralizes $\gamma_3(S)/\gamma_5(S)$. Since $\Inn(R) \le O^{p}(\Aut_\F(R))$, this means that $[\gamma_3(S),R]\le \gamma_5(S)$, a contradiction.
Hence $|S|\leq p^6$. Since $S$ is exceptional,   Lemma~\ref{mc-factsb} (v) implies that $|S|=p^6$.

We next show that there are no $\F$-essential subgroups properly contained in $R$.
Aiming for a contradiction, suppose that $E<R$  is an $\F$-essential subgroup $E$. Then, as $E$ is not an $\F$-pearl,  Lemma~\ref{lem:not p^3} implies $|E|= p^4$ and so $E$ is a maximal subgroup of $R$.
Recall that $\Aut_\F(R)$ acts irreducibly on $R/\gamma_3(S)$. In particular, it acts transitively on the maximal subgroups of $R$ containing $\gamma_3(S)=\Phi(R)$. Thus $E$ is $\F$-conjugate to $\gamma_2(S)$. Since $E$ is fully $\F$-normalized, we deduce that $E=\gamma_2(S)$ is normal in $S$, contradicting Proposition \ref{normal.ess}. This proves the claim.

It remains to show that either $\gamma_1(S)$ is extraspecial or $p=5$ and, in the latter case, determine the structure of $\F$.
Let $\varphi \in N_{O^{p'}(\Out_\F(R))}(\Out_S(R))$  be the automorphism of order $p-1$ corresponding to the matrix
$\begin{pmatrix} \lambda^{-1} & 0 \\ 0 & \lambda \end{pmatrix}$ for a fixed $\lambda \in \GF(p)$ of order $p-1$. Then by saturation there is an automorphism $\hat{\varphi}\in \Aut_\F(S)$ such that $\hat{\varphi}|_R = \varphi$. In particular $\hat{\varphi}$ acts on $\gamma_i(S)$ for every $i\geq 1$. Thus
for every $x,y\in S$ such that $R = \langle x \rangle \gamma_2(S)$ and $\gamma_2(S) = \langle y \rangle \gamma_3(S)$ we have
$$(x\gamma_2(S))\varphi = x^{\lambda^{-1}}\gamma_2(S) \quad \text{ and } \quad (y\gamma_3(S))\varphi = y^\lambda\gamma_3(S).$$
Since $[R, \gamma_1(S)] = \gamma_2(S)$ we also deduce that $\hat\varphi$ raises the elements of $\gamma_1(S)/\gamma_2(S)$ to the power $\lambda^2$.
By Lemma~\ref{action} we deduce that $\hat{\varphi}$ raises the elements of $\Z_2(S)/\Z(S)$ to the power $\lambda^{-1}$.
Let $s_1,s_2 \in S$ be such that $\gamma_1(S) = \langle s_1 \rangle \gamma_2(S)$ and $\gamma_2(S) = \langle s_2 \rangle \gamma_3(S)$. Then $s_1\hat\varphi = s_1^{\lambda^2}u$ and $s_2\hat\varphi = s_2^{\lambda}v$ for some $u\in \gamma_3(S)$ and $v\in \Z_2(S)$. Thus
\[ [s_1,s_2]\hat\varphi = [s_1^{\lambda^2}u, s_2^{\lambda}v] = [s_1,s_2]^{\lambda^3} \mod \Z(S).\]
Suppose $[s_1,s_2]\notin \Z(S)$. Then, since $[s_1,s_2] \in [\gamma_1(S),\gamma_2(S)] \leq  \gamma_4(S)=\Z_2(S)$, we get
\[ [s_1,s_2]\hat\varphi = [s_1, s_2]^{\lambda^{-1}} \mod \Z(S)\] from Lemma~\ref{action}.
Therefore  $\lambda^3 \equiv \lambda^{-1} \pmod {p}$  and so  $\lambda^4 \equiv 1 \pmod{p}$.  Hence either $[s_1,s_2] \in \Z(S)$ or $p=5$.
In the former case, as $[\gamma_1(S),\gamma_3(S)] \leq \Z(S)$, we deduce that
\[[\gamma_1(S), \gamma_1(S)] =[\gamma_1(S), \gamma_2(S)] = [\langle s_1\rangle \gamma_2(S), \langle s_2 \rangle \gamma_3(S)] \leq \Z(S).\]
Thus $\gamma_1(S)/\Z(S)$ is abelian \blue{and  Lemma \ref{Lem:no ess in extrasp} implies $\gamma_1(S)$ is extraspecial.} This proves that either $\gamma_1(S)$ is extraspecial or $p=5$.

Suppose that $p=5$ and assume that $\gamma_1(S)$ is not extraspecial. Then $|S|=5^6$. All the groups of order $5^6$ are known. We use {\sc Magma} and the package from \cite{parkersemerarocomputing} to check that there are 39 maximal class $5$-groups, $16$ of which are exceptional.  Four of these groups have $\Aut(R)$ non-soluble and two of them have $\gamma_1(S)$ not extraspecial. This leaves two groups to consider. For one of the cases $\Aut(S)$ is a $5$-group and so this cannot support a fusion system with $\Aut_\F(R)$ non-soluble.  In $\mathrm{SmallGroup}(5^6,661)$, we have $\Out_\F(S)$ has cyclic Sylow $2$-subgroups of order $4$. In particular, $\Aut_\F(S)$ is uniquely determined up to isomorphism and by restriction we have a subgroup $Y$ of $\Aut_\F(R)$ of order $2^2.5^4$.   Calculating in  $\Aut_\F(R)$ we find two conjugacy classes of subgroups $X$ containing $\Inn(R)$ and with $X/\Inn(R) \cong\SL_2(5)$. Exactly one of these classes contains an $\Aut(R)$-conjugate of  $Y$.  We check that the corresponding fusion system is saturated using \cite{parkersemerarocomputing} (though it obviously is). This is the fusion system described in (ii). It remains to prove that there are no other candidates for $\F$-essential subgroups on $S=\mathrm{SmallGroup}(5^6,661)$ when $R$ is $\F$-essential. \blue{Computer code to do this using \cite{parkersemerarocomputing} is described in Subsection~\ref{subsec:CSZ2}.}
However, we can also present an argument which does not require a computer. Suppose that $E \le S$ is an $\F$-pearl. Then there exist $\phi \in N_{\Aut_\F(E)}(\Out_S(E))$ of order $4$ which is the restriction  $\hat \phi\in \Aut_\F(S)$. Recall that the Sylow $2$-subgroup of $\Aut(S)$ has order $4$.  It follows that $\hat \phi|_{R}\in \Aut_\F({R})$.  But then $\hat \phi$ normalizes ${R}$, $\gamma_1(S)$ and $E \gamma_2(S)$ and these are distinct maximal subgroups of $S$. In particular, $\phi$ acts on $S/\gamma_2(S)$ as a scalar. However, we know that   $\hat \phi|_{R}\in N_{\Aut_\F({R})}(\Aut_S({R}))$ and this element does not act as a scalar on $S/\gamma_2(S)$, a contradiction.  Hence $\F$ has no $\F$-pearls. Now suppose that $E$ is $\F$-essential and $E \le \gamma_1(S)$ with $E \not \le R$. Notice that $Z(\gamma_1(S))= Z(S)$ and, as $\gamma_1(S)$ is not extraspecial, $\gamma_1(S)'= Z_2(S)$ and $\gamma_3(S)/Z(S)= Z(\gamma_1(S)/Z_2(S))$. Moreover,  $\gamma_2(S)= C_{\gamma_1(S)}(Z_2(S))$. Thus $S $ stabilizes the characteristic series $\gamma_1(S)>\gamma_2(S)>\gamma_3(S) > \gamma_4(S)$ of $\gamma_1(S)$. Lemma~\ref{lem:series} implies that $\gamma_1(S)$ is not $\F$-essential.  Hence $E < \gamma_1(S)$. By Lemma \ref{lem:not p^3} we deduce that $|E|=5^4$. Thus $E$ is a maximal subgroup of $\gamma_1(S)$. In particular $Z_2(S)=\gamma(S)'=[\gamma_1(S),E] \leq E$. Since $E \nleq R$ we deduce that $Z_2(S) \nleq \Z(E)$ and so $E$ is not abelian. Thus  $1\neq [E,E] < Z_2(S)$, that implies $|[E,E]|=p$. In particular $Z(S)[E,E] \leq \Z(E)$. If $Z(S) \neq [E,E]$ then $Z(S)[E,E] = Z_2(S)$ and so $Z_2(S)\leq \Z(E)$, a contradiction. Thus $Z(S)=[E,E]$. The group $\gamma_3(S)$ stabilizes the sequence $1 < Z < E$ and by Lemma~\ref{lem:series} we deduce that $\gamma_3(S) \leq E$. Also $E$ is not extraspecial (it has order $5^4$), hence $|Z(E)|=5^2$. The group $\Out_S(E)$ acts non-trivially on $E/Z(E)$ which is elementary abelian of order $25$. Hence by Proposition~\ref{Ellen} we deduce that $O^{5'}(\Out_\F(E))\cong \SL_2(5)$. Let $\tau \in Z(O^{5'}(\Out_\F(E)))$ be an involution. We have already proved that $\gamma_1(S)$ is not $\F$-essential, hence $E$ is maximal $\F$-essential in $S$ and so there is $\hat{\tau}\in \Aut_\F(S)$ such that $\hat{\tau}|_E = \tau$. Note that $\hat{\tau}$ centralizes $\gamma_2(S)/\gamma_3(S) \cong \gamma_1(S)/E$ and $Z(E)/Z(S)$. Since $E$ is non abelian and $\gamma_3(S)$ is abelian, we deduce that $Z(E) \leq \gamma_3(S)$ and  so $\gamma_3(S) = Z_2(S)Z(E)$. Hence $\gamma_3(S)/Z_2(S)$ is congruent to $Z(E)/Z(S)$ as a $\hat{\tau}$-group and it is therefore centralized by $\hat{\tau}$. We showed that $\hat{\tau}$ centralizes $\gamma_2(S)/ Z_2(S)$. Now, $\hat{\tau}$ acts non-trivially on $S/\gamma_1(S)$ and $\hat{\tau}$ is an automorphism of the group $S/Z(S)$, that is not exceptional. Therefore we get a contradiction from  Lemma \ref{centralizer auto}.

 This proves that $\E_\F=\{R\}$ and completes the description of $\F$ in the  case when $S =
 \SmallGroup(5^6,661)$ and $R$ is $\F$-essential.
\end{proof}

\begin{lemma}\label{lem:p^6CSZ2essential} \blue{Assume Hypothesis~\ref{Hyp7.1} holds.  Suppose that $E \in \E_\F$ with $E \not \le \gamma_1(S)$.}  Then either $E $ is an $\F$-pearl or $|S| =p^6$ and $E= C_S(Z_2(S))$.
\end{lemma}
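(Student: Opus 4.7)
First, applying Lemma~\ref{in.sub} with the hypothesis $E\not\le\gamma_1(S)$, either $E$ is an $\F$-pearl (the first alternative), or $E\le M:=C_S(Z_2(S))$. Assuming the latter, the task reduces to showing $E=M$, whereupon Lemma~\ref{prop:CSZ2 essential} forces $|S|=p^6$ and yields the second alternative.

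Suppose, for contradiction, that $E<M$. Since $S$ is exceptional, Lemma~\ref{mc-factsb}(v) gives $|S|\ge p^6$, and Lemma~\ref{lem:not p^3} yields $|E|\ge p^4$. I observe next that $M$ is the unique maximal subgroup of $S$ containing $E$: indeed, every maximal subgroup of $S$ contains $\gamma_2(S)$, while $E\gamma_2(S)\le M$ together with $E\not\le\gamma_1(S)$ (and hence $E\not\le\gamma_2(S)$) forces $E\gamma_2(S)=M$, so any maximal subgroup containing $E$ contains $M$ and thus equals $M$.

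I then analyse the normalizer tower $E = N_S^0(E)<N_S^1(E)<\cdots$, which ultimately reaches $S\not\le M$. Let $k\ge 1$ be minimal with $N_S^k(E)\not\le M$ and set $T:=N_S^{k-1}(E)\le M$. Because $T\ge E\not\le\gamma_1(S)$, also $N_S(T)\not\le\gamma_1(S)$; combined with $N_S(T)\not\le M$ this gives $N_S(T)\not\subseteq\gamma_1(S)\cup M$, so Lemma~\ref{mc-normalizer} forces $T$ to be normal in $S$. Proposition~\ref{normal.ess} (which applies since $|S|\ge p^6>p^4$) then shows $T$ is a maximal subgroup of $S$; combined with $T\le M$ and $T\not\le\gamma_1(S)$, I conclude $T=M$.

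The main obstacle is to promote $N_S^{k-1}(E)=M$ to the equality $E=M$. My plan is to exploit $\F$-receptivity of $E$: iterating the extension of morphisms along the chain $E<N_S(E)<\cdots<N_S^{k-1}(E)=M$, and using uniqueness of the maximal subgroup containing $E$ to locate the image at each step, every $p'$-element of $N_{\Aut_\F(E)}(\Aut_S(E))$ lifts to an $\F$-automorphism of $M$ stabilising $E$. Since $\Out_\F(E)$ has a strongly $p$-embedded subgroup, these lifts together with $\Aut_S(M)$ suffice to force $M$ itself to be $\F$-essential. But then Lemma~\ref{prop:CSZ2 essential} applies and gives $\E_\F\subseteq\Pp_a(\F)\cup\{M,\gamma_1(S)\}$, contradicting $E\in\E_\F$ since $E$ is neither a pearl nor equal to $M$ or $\gamma_1(S)$. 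Hence $E=M$, and a final application of Lemma~\ref{prop:CSZ2 essential} completes the proof.
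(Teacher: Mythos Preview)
Your normalizer–tower argument showing $N_S^{k-1}(E)=M$ is correct and pleasant, but the step that follows — lifting $p'$-elements of $N_{\Aut_\F(E)}(\Aut_S(E))$ along the chain to $\Aut_\F(M)$ and concluding that $M$ is $\F$-essential — does not work as written.

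There are two separate problems. First, the iteration of extensions is not justified. Receptivity of $E$ lets you extend $\alpha$ to $\alpha_1\in\Aut_\F(N_S(E))$, but to extend $\alpha_1$ you need both that $N_S(E)$ is receptive (i.e.\ fully $\F$-normalized, which you have not established) and that $\alpha_1$ normalizes $\Aut_S(N_S(E))$. The latter is not automatic: $\alpha_1$ fixes $E$ and hence $N_S(E)$ setwise, but this says nothing about how $\alpha_1$ interacts with conjugation by elements of $N_S^{\,2}(E)\setminus N_S(E)$. The analogue for pearls (Theorem~\ref{pearls3}(iii)) uses structure specific to pearls. Second, even granting the lifts $\hat\alpha\in\Aut_\F(M)$, your conclusion that $M$ is $\F$-essential does not follow. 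The lifts all normalize $E$, hence lie in $N_{\Aut_\F(M)}(E)$; they could perfectly well be restrictions of elements of $\Aut_\F(S)$, which is exactly what happens when $M$ is \emph{not} essential. To force $M\in\E_\F$ you would need to exhibit a $p'$-element of $N_{\Aut_\F(E)}(\Aut_S(E))$ that does not fix $Z(S)$, and that requires precisely the kind of detailed analysis of $\Aut_\F(E)$ that you were trying to avoid.

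The paper takes a completely different route: it never tries to show $M$ is essential. Choosing $E$ of maximal order among essentials not in $\gamma_1(S)$ and assuming $M\notin\E_\F$, one gets $Z_2(S)<E$, and since $E/Z(S)$ has maximal class and is non-abelian (Lemma~\ref{sbgrp-maxclass}), $Z(E)=Z_2(S)$. Lemma~\ref{morphism.fixing.Z2} then forces $\Aut_\F(E)$ to normalize $Z(S)$; together with Theorem~\ref{full auto S} this pins down $|E|=p^4$ and $O^{p'}(\Out_\F(E))\cong\SL_2(p)$. The central involution $\tau$ lifts in one step to $\hat\tau\in\Aut_\F(S)$ (here the maximal choice of $E$ is used: no essential subgroup can properly contain $N_S(E)$), and the eigenvalues of $\hat\tau$ on $Z(S)$ and $Z_2(S)/Z(S)$ computed via Lemma~\ref{action} are inconsistent.
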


\begin{proof} \blue{Suppose that the lemma is false. Set $R= C_S(\Z_2(S))$, and let $E$ be an $\F$-essential subgroup of $S$ chosen of maximal order with $E$ not contained in $\gamma_1(S)$.  Then  $E$ is not an $\F$-pearl. } Lemma~\ref{in.sub} and Lemma~\ref{lem:not p^3} together imply that $E \le R$ and $|E| \ge p^4$.  By Lemma~\ref{prop:CSZ2 essential}, we have $R \not \in \E_\F$ and so $E<R$. Because $E < R$ and $E$ is $\F$-essential, we have $\Z_2(S) < E$.  By Lemma~\ref{mc-factsb} (vi),  $S/Z(S)$ is not exceptional.  Since $E/\Z(S)$ is not contained in $\gamma_1(S/\Z(S))=\gamma_1(S)/\Z(S)$, Lemma~\ref{sbgrp-maxclass} implies that $E/Z(S)$ has maximal class. Since $|E|\ge p^4$, $E/Z(S)$ is not abelian.
In particular, $Z(E/Z(S))= Z_2(S)/Z(S)$ and this  implies  $Z(E)= Z_2(S)$ is $\Aut_\F(E)$-invariant.
Since $R \not \in \E_\F$, Lemma~\ref{morphism.fixing.Z2} implies that $\Aut_\F(E)$ leaves $Z(S)$ invariant. Now $C_{\Aut_\F(E)}(Z(S))$ has $p'$-index in $\Aut_\F(E)$ and,  since $E/Z(S)$ has maximal class, Theorem~\ref{full auto S} implies that $|E/Z(S)|=p^3$. Therefore $O^{p'}(\Aut_\F(E))$ acts on $E/Z(E)$ as $\SL_2(p)$.  Let $\tau \in \Aut_\F(E)$ project to $Z(O^{p'}(\Out_\F(E)))$ be an involution. As $O^{p'}(\Aut_\F(E))$ centralizes $Z_2(S)$, $  \tau$ centralizes  $Z_2(S)$. Then, the maximal choice of $E$ implies that there exists $\hat \tau \in \Aut_\F(S)$ so that $\hat \tau |_E=\tau$.  In particular, $\hat \tau$ has even order and inverts $E/Z(E)$. Since $Z(E)=Z_2(S)$ and $E\gamma_1(S)= S$, $\hat \tau$ inverts $S/\gamma_1(S)$. Assume that $(y\gamma_2(S))\hat \tau = y^b \gamma_2(S)$ for some $b \in \GF(p)^\times.$ Then, in Lemma~\ref{action}, we have $n$ is even and $a=-1$ and, as $\tau$ centralizes $Z_2(S)$, we obtain the unfathomable equations \begin{eqnarray*}
a^{n-3}b &=& -b = 1\\a^{n-3}b^2&=&-b^2=1.\end{eqnarray*} This contradiction completes the proof of the lemma.
\end{proof}

\begin{proof}[Proof of Proposition~\ref{Prop:ess.in.except}] Assume Hypothesis~\ref{Hyp7.1} holds.
Let $E$ be an $\F$-essential subgroup. By Lemma~\ref{in.sub} either $E$ is an $\F$-pearl, $E \leq \gamma_1(S)$ or $E \leq \C_S(\Z_2(S))$. If $E$ is an $\F$-pearl, then $E$ is abelian by Lemma \ref{lem:peals-abelian}. If $E$ is not an $\F$-pearl and it is not contained in $\gamma_1(S)$ then $E=\C_S(Z_2(S))$ by Lemma~\ref{lem:p^6CSZ2essential}. This proves that
\[ \E_\F \subseteq \Pp_a(\F) \cup \{\C_S(\Z_2(S))\} \cup \{ E \mid \Z(S)<E\leq \gamma_1(S)\}\] which is the displayed statement of the proposition.

If $\Pp_a(\F) \ne \emptyset$, then \cite[Theorem 3.14]{pearls} implies $|S|=p^{p-1}$ and $\gamma_1(S)$ is extraspecial.  Hence (i) holds.

If $\C_S(Z_2(S))$ is $\F$-essential, then Lemma~\ref{prop:CSZ2 essential} gives (ii).

 If $\gamma_1(S)$ is extraspecial, then $\gamma_1(S)/\Z(S)$ is abelian and Lemma~\ref{Lem:no ess in extrasp} gives (iii).
\end{proof} 

\section[$\gamma_1(S)$ is $\F$-essential and $S$ is not exceptional]{The structure of  $\gamma_1(S)$ when $\gamma_1(S)$ is $\F$-essential and $S$ is not exceptional}\label{sec: g1 ess}

In this section we continue to assume Hypothesis~\ref{hyp1}. In addition, we assume that $S$ is not exceptional and $\gamma_1(S)$ is $\F$-essential. So we work with
\begin{hypothesis}\label{Hyp8.1}  Hypothesis~\ref{hyp1} holds with $S$   not exceptional and  $\gamma_1(S)\in\E_\F$.
\end{hypothesis}

Our objective in this section is to explore the structure of $\Omega_1(\gamma_1(S))$ when Hypothesis~\ref{Hyp8.1} holds.

\begin{lemma}\label{gamma1-essB} Assume that Hypothesis~\ref{Hyp8.1} holds.    If $O_{p'}(\Out_\F(\gamma_1(S)))$ is not centralized by $\Out_S(\gamma_1(S))$, then $\Omega_1(\gamma_1(S))$ is elementary abelian of order either $p^{p-1}$ or $p^p$.\end{lemma}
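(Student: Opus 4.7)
Since $\gamma_1(S)$ is regular by Lemma~\ref{mc-facts}(ii), Lemma~\ref{lem:regular}(ii) gives that $\Omega_1(\gamma_1(S))$ already has exponent $p$; the claim therefore reduces to showing that $\Omega_1(\gamma_1(S))$ is abelian and that its order is $p^{p-1}$ or $p^p$. My first move is to extract a convenient $p'$-automorphism of $S$ from the hypothesis. Pick $\alpha \in \Aut_\F(\gamma_1(S))$ of $p'$-order whose image in $O_{p'}(\Out_\F(\gamma_1(S)))$ is not centralized by $\Out_S(\gamma_1(S))$; by Schur--Zassenhaus I may assume $\alpha$ normalizes $\Aut_S(\gamma_1(S))$, and $\F$-receptivity (since $\gamma_1(S)$ is fully $\F$-normalized, being $\F$-essential) extends $\alpha$ to some $\widehat\alpha \in \Aut_\F(S)$ which, after replacing by a $p'$-power, has order coprime to $p$. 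By Theorem~\ref{full auto S}, $\widehat\alpha$ acts on $S/\gamma_2(S)$ as a diagonal matrix $\mathrm{diag}(a,b) \in \GL_2(p)$, and unwinding $[\alpha,c_t]$ for $t \in S\setminus\gamma_1(S)$ shows the non-centralization hypothesis translates precisely into $a \ne 1$. Lemma~\ref{action} then asserts that $\widehat\alpha$ acts on $\gamma_k(S)/\gamma_{k+1}(S)$ as multiplication by $a^{k-1}b$ for every $1 \le k \le n-1$ (no exceptional correction appears since $S$ is non-exceptional).

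For the order bound, Lemma~\ref{mc-facts}(iv) supplies $|\Omega_1(\gamma_1(S))| \le p^p$, and for $n > p+1$ Lemma~\ref{mc-facts}(iii) already delivers $|\Omega_1(\gamma_1(S))| = p^{p-1}$. For $n \le p+1$, Lemma~\ref{mc-facts}(vi) forces $\agemO^1(\gamma_1(S)) \le Z(S)$ and Lemma~\ref{lem:regular}(iv) then gives $|\gamma_1(S):\Omega_1(\gamma_1(S))| \le p$, so it suffices to exclude $n \le p-1$. Assuming $n \le p-1$, the $\widehat\alpha$-action on $\gamma_1(S)$ has only $n-1 \le p-2$ one-dimensional composition factors. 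I then apply Proposition~\ref{Hall-Hig} to a non-trivial irreducible $\GF(p)L$-section $V$ of $\gamma_1(S)$, where $L$ is the image of $\Aut_\F(\gamma_1(S))$ in $\Aut(V)$: the image of $\Aut_S(\gamma_1(S))$ in $L$ is cyclic of order $p$, and a careful choice of $V$ (a chief factor on which $O_{p'}(\Aut_\F(\gamma_1(S)))\cdot\Aut_S(\gamma_1(S))$ acts faithfully) ensures $[O_{p'}(L),\,\mathrm{image}\,\Aut_S(\gamma_1(S))] \ne 1$, so Hall--Higman yields $\dim V \ge p-1$, contradicting $\dim V \le n-1 \le p-2$.

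For the abelian claim I invoke Lemma~\ref{mc-facts}(vii): either $\Omega_1(\gamma_1(S)) \le \gamma_w(S)$, the unique maximal abelian normal subgroup of $S$, in which case $\Omega_1(\gamma_1(S))$ is abelian by inclusion, or $\gamma_w(S)$ is itself elementary abelian. Since $\Omega_1(\gamma_1(S))$ is characteristic in $\gamma_1(S)\norm S$ and hence normal in $S$, any abelian $\Omega_1(\gamma_1(S))$ is forced into $\gamma_w(S)$. Thus the only configuration to rule out is $\gamma_w(S) \subsetneq \Omega_1(\gamma_1(S))$ with $\gamma_w(S)$ elementary abelian; counting commutator degrees using $\Omega_1(\gamma_1(S)) \le \gamma_{n-p+1}(S)$ shows this can occur only for $p+1 \le n \le 2p-3$. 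In that range Theorem~\ref{J6.2} gives $\gamma_{w-1}(S)$ of class at most $2$, whence $\Omega_1(\gamma_1(S)) \le \gamma_{w-1}(S)$ also has class $\le 2$. Combining this with Lemma~\ref{nilp.class.gamma1} (which gives $[\Omega_1(\gamma_1(S)),\gamma_1(S);(p-1)/2]=1$ because $S$ is non-exceptional) and using $\widehat\alpha$-equivariance of commutators --- any non-trivial $[x,y]$ for $x \in \gamma_k(S)$, $y \in \gamma_{k'}(S)$ landing non-trivially in $\gamma_{k+k'}(S)/\gamma_{k+k'+1}(S)$ forces the scalar identity $a^{k+k'-2}b^2 = a^{k+k'-1}b$, equivalently $b=a$ --- a careful chase over the pairs $(k,k')$ producing non-trivial commutators in the relevant quotients yields numerical constraints on $a$ incompatible with $a \ne 1$.

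The main obstacle will be this final commutator chase: the window $p+1 \le n \le 2p-3$ is narrow, but to close it one must simultaneously track the scalar rigidity imposed on $(a,b)$ by $\widehat\alpha$-equivariance together with the nilpotency-class bounds supplied by Theorem~\ref{J6.2} and Lemma~\ref{nilp.class.gamma1}, and then tease out a numerical incompatibility with $a \ne 1$; this is where the non-exceptional hypothesis and the maximal-class constraints must be pushed hardest.
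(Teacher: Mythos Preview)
Your proposal is incomplete, and the gap is exactly where you flag it: the ``commutator chase'' in the final paragraph is a strategy sketch, not a proof. You assert that tracking $\widehat\alpha$-equivariance of commutators over the window $p+1 \le n \le 2p-3$ will yield ``numerical constraints on $a$ incompatible with $a \ne 1$'', but you never exhibit such a constraint. Indeed, the relation you derive --- that a non-trivial $[x,y]$ landing in $\gamma_{k+k'}(S)\setminus\gamma_{k+k'+1}(S)$ forces $b=a$ --- does not by itself contradict $a \ne 1$; you would need a second, independent constraint to clash with it, and you have not produced one. There is also a smaller oversight in paragraph~2: for $n=p$ your bound $|\gamma_1(S):\Omega_1(\gamma_1(S))|\le p$ only gives $|\Omega_1(\gamma_1(S))| \ge p^{p-2}$, so your phrase ``it suffices to exclude $n\le p-1$'' skips a case.

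The paper's proof is far more direct and avoids the chase entirely. The key point you are missing is that Hall--Higman should be applied to a single $\Out_\F(\gamma_1(S))$-chief factor $V$ \emph{inside} $\Omega_1(\gamma_1(S))$ (one on which the relevant $p'$-subgroup acts non-trivially --- such a $V$ exists by coprime action). This yields $|V|\ge p^{p-1}$ at once, and since $|\Omega_1(\gamma_1(S))|\le p^p$ by Lemma~\ref{mc-facts}(iv), there is almost no room: either $V=\Omega_1(\gamma_1(S))$ (which is then elementary abelian, being a chief factor), or $n=p+1$ and a brief case analysis on whether $V=\gamma_2(S)$ or $V=\gamma_1(S)/Z(S)$ finishes. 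The entire argument is a paragraph; your scalar analysis via $\widehat\alpha$ and Lemma~\ref{action} is not needed at all.
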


\begin{proof} Assume that  $O_{p'}(\Out_\F(\gamma_1(S)))$ is not centralized by $\Out_S(\gamma_1(S))$. Set  $R_0=O_{p,p'}(\Aut_\F(\gamma_1(S)))$ and $$R= [R_0,\Aut_S(\gamma_1(S))]\Inn(\gamma_1(S)).$$ Then $R_0/\Inn(\gamma_1(S)) = O_{p'}(\Out_\F(\gamma_1(S)))$, $R> \Inn(\gamma_1(S))$ and, by \cite[Theorem 5.3.10]{Gor},  there exists an $\Out_\F(\gamma_1(S))$-chief factor $V$ in $\Omega_1(\gamma_1(S))$ which is not centralized by $R$. The definition of $R$ and coprime action implies that  $R/C_R(V)$ is not centralized by $\Aut_S(\gamma_1(S))C_R(V)/C_R(V)$.  Applying Proposition~\ref{Hall-Hig} delivers $|V| \ge p^{p-1}$. Since $V$ is elementary abelian, either  $n=p+1$ \blue{and $\gamma_1(S)=\Omega_1(\gamma_1(S))$}  or $V=\Omega_1(\gamma_1(S)) $ by  Lemma~\ref{mc-facts} (iv). \blue{In the latter case, $\Omega_1(\gamma_1(S))$ is elementary abelian and we are done.}
\blue{Assume $|S|=p^{p+1}$ and $\gamma_1(S)=\Omega_1(\gamma_1(S))$ has order $p^p$. }If $V= \gamma_2(S)$, then, as $V$ is irreducible, $V \le Z(\gamma_1(S))$ and $\Omega_1(\gamma_1(S))$ is abelian.  If  $V= \gamma_1(S)/Z(S)$ and $\gamma_1(S)$ is extraspecial, then $C_S(Z_2(S)) \ne \gamma_1(S)$ and $S$ is exceptional, a contradiction.
We conclude that $\Omega_1(\gamma_1(S))$ is elementary abelian of order $p^{p-1}$ or $p^p$. 
\end{proof}

\begin{remark}
When $p=5$ and $|S|= 5^6$, the baby Monster sporadic simple group provides an example which demonstrates that when $S$ is exceptional  $\Out_\F(\gamma_1(S))$ may have a non-central normal subgroup of $5'$-order which is not centralized by $\Out_S(\gamma_1(S))$. See \cite[Table 5.1]{G2p} for example.
\end{remark}

Recall that groups of $\mathrm L_2(p)$-type are defined in Definition~\ref{def:l2p}.

\begin{lemma}\label{gamma1-essA}  Assume that Hypothesis~\ref{Hyp8.1} holds.     If $\Out_\F(\gamma_1(S))$ is not of $\mathrm{L}_2(p)$-type, then either
\begin{enumerate}
 \item $\Omega_1(\gamma_1(S))\le Z(\gamma_1(S))$; or\item  $|\Omega_1(\gamma_1(S)): Z(\gamma_1(S))|=p$.\end{enumerate} In particular, $\Omega_1(\gamma_1(S))$ is elementary abelian.
\end{lemma}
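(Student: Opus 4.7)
I would set $G = \Out_\F(\gamma_1(S))$, $T = \Out_S(\gamma_1(S))$, $Z = Z(\gamma_1(S))$, and $\Omega = \Omega_1(\gamma_1(S))$; then $T$ is Sylow $p$ in $G$ of order $p$ (since $\gamma_1(S)$ has index $p$ in $S$), $G$ has a strongly $p$-embedded subgroup by $\F$-essentiality, and $\Omega$ has exponent $p$ by Lemmas \ref{mc-facts}(ii) and \ref{lem:regular}(ii), so $\Omega \cap Z = \Omega_1(Z)$. The aim is to bound the $G$-section $\Omega Z/Z$: conclusions (i) and (ii) amount respectively to $\Omega Z/Z = 0$ and to $|\Omega Z/Z| = p$ with $Z \le \Omega$. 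Both cases force $\Omega$ abelian, and together with exponent $p$ from regularity this yields the ``in particular'' statement.

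The strategy is to apply Feit's theorem (Theorem \ref{feit}) to an appropriate $\GF(p) G$-chief factor $U$ of $\Omega Z/Z$, viewed inside $\gamma_1(S)/Z$. I would first establish that $U$ is indecomposable as a $\GF(p) T$-module (a single Jordan block): the commutator relations $[\gamma_j(S), S] = \gamma_{j+1}(S)$ show that $T$ acts on each elementary abelian subsection of $\gamma_1(S)/Z$ arising from the lower central series with a single Jordan block, and this structure is inherited by any $T$-subsection. Setting $K = C_G(U)$, if $\dim U \ge 2$ then indecomposability forces $T \cap K = 1$, so $K$ is a $p'$-group. Consequently every composition factor of $K$ is $p'$, and $G/K$ is of $\mathrm{L}_2(p)$-type if and only if $G$ is; by hypothesis the latter is false, so Feit's theorem applied to the faithful indecomposable $\GF(p)(G/K)$-module $U$ yields $\dim U \ge \tfrac{2}{3}(p-1)$.

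The main obstacle is to contradict this Feit bound of $\dim U \ge \tfrac{2}{3}(p-1)$ using the maximal class structure of $S$. Because $U$ is a single Jordan block for $T$ of size $d=\dim U$, an iterated commutator of the form $[\Omega, S;\, d-1]$ must be non-trivial modulo $Z$ in $\gamma_1(S)$; yet the combination of $[\gamma_j(S), S] = \gamma_{j+1}(S)$ with Lemma \ref{nilp.class.gamma1} (which, for $S$ non-exceptional, gives $[\Omega, \gamma_1(S);\, (p-1)/2] = 1$) together with the fact that $Z$ is a member of the lower central series of $S$ should force this iterated commutator back inside $Z$ once $d$ reaches the Feit threshold, delivering the contradiction. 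A convenient preliminary reduction is given by Lemma \ref{gamma1-essB}: if $T$ does not centralize $O_{p'}(G)$, then $\Omega$ is already elementary abelian of order $p^{p-1}$ or $p^p$, and direct inspection of $Z$ (using $\Omega = \gamma_1(S)$ when $|\Omega| = p^p$ and $\Omega = \gamma_{n-(p-1)}(S)$ otherwise, via Lemma \ref{mc-facts}(iii)--(iv)) immediately settles cases (i) and (ii). Once $\dim(\Omega Z/Z) \le 1$ is established, the case $\dim = 0$ is (i); in the case $\dim = 1$ one upgrades to (ii) by ruling out $Z \not\le \Omega$, using a $p'$-element of $\Aut_\F(\gamma_1(S))$ acting on a hypothetical element of $Z$ of order greater than $p$ to produce a second $T$-independent direction in $\Omega Z/Z$, contradicting $\dim = 1$.
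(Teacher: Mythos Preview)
There is a genuine gap in your main argument. You correctly identify Feit's theorem as the key tool and correctly set up one application of it: a $G$-chief factor $U$ of $\Omega/Z$ on which $T$ acts nontrivially is $T$-indecomposable (since $C_U(T)$ has order $p$), faithful for $G/C_G(U)$ with $C_G(U)$ a $p'$-group, and hence $\dim U \ge \tfrac{2}{3}(p-1)$. But one Feit bound is not enough. Your proposed contradiction via Lemma~\ref{nilp.class.gamma1} does not go through: that lemma bounds $[\Omega,\gamma_1(S);(p-1)/2]$, i.e.\ iterated commutators by $\gamma_1(S)$, whereas your Jordan-block statement concerns $[\,\cdot\,,S;d-1]$. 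Passing between the two only recovers information equivalent to $|\Omega|\le p^p$, which you already have; for instance nothing yet excludes $|Z|=p^2$, $|\Omega/Z|=p^{p-2}$ with a single large chief factor.

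What the paper does, and what you are missing, is a \emph{second} application of Feit --- this time to $Z$ itself. Since $S$ is not exceptional, $Z\ge Z_2(S)$, so $T$ acts nontrivially on $Z$; moreover $C_Z(T)=Z(S)$ has order $p$, so $Z$ is $T$-indecomposable, hence $G$-indecomposable, and faithful for $G/C_G(Z)$. Feit then gives $|Z|\ge p^{2(p-1)/3}$. Combining with $|V|\ge p^{2(p-1)/3}$ for a suitable quotient $V$ of $\Omega/Z$ yields $|\Omega|\ge |Z|\cdot|V|\ge p^{4(p-1)/3}>p^p$ for $p\ge 5$, the required contradiction.

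Two smaller remarks. First, your preliminary reduction via Lemma~\ref{gamma1-essB} does not, as stated, settle (i)/(ii) when $|\Omega|=p^{p-1}$: knowing $\Omega$ is elementary abelian says nothing about $|\Omega:Z(\gamma_1(S))|$. Second, your final ``upgrade'' step is unnecessary: both $\Omega$ and $Z$ are characteristic in $\gamma_1(S)$, hence normal in $S$, hence (by Lemma~\ref{mc-facts}(i)) each equals some $\gamma_j(S)$; so automatically one contains the other, and the dichotomy $\Omega\le Z$ versus $Z<\Omega$ is immediate.
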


\begin{proof} Assume that $\Out_\F(\gamma_1(S))$ is not of $\mathrm{L}_2(p)$-type. We have $|\Omega_1(\gamma_1(S))|\le p^{p}$ by Lemma~\ref{mc-facts} (iv). Set $Z=Z( \gamma_1(S))$. If $Z \not < \Omega_1(\gamma_1(S))$, then, as $S$ has maximal class, $\Omega_1(\gamma_1(S)) \le Z$ and (i) holds.  So suppose that $Z < \Omega_1(\gamma_1(S))$. Then $Z$ is elementary abelian because $\Omega_1(\gamma_1(S))$ has exponent $p$. 

Since $S$ is not exceptional, $Z\ge Z_2(S)$ and so $Z$ is not centralized by $S$. In particular, $Z$ admits a non-trivial action of $\Out_\F(\gamma_1(S))$ and   $C_{\Out_\F(\gamma_1(S))}(Z)$ is a $p'$-group. 
As  $\Out_\F(\gamma_1(S))$ is not of $\mathrm L_2(p)$-type, $\Out_\F(\gamma_1(S))/C_{\Out_\F(\gamma_1(S))}(Z)$
 is not of $\mathrm L_2(p)$-type.
Therefore Theorem~\ref{feit}  implies $|Z| \ge p^{\frac 2 3 (p-1)}$.

Define $X=\Omega_1(\gamma_1(S))/Z$. Then, assuming that (ii) does not hold, $|X| \ge p^2$.   Since $\gamma_1(S)$ is the $2$-step centralizer, $V=X/[X,\gamma_1(S)]Z$ has order at least $p^2$. In particular, $V$ is a $\GF(p)\Out_\F(S)$-module and it is not centralized by $S$.
Furthermore, $C_{\Out_\F(S)}(V)$ is a $p'$-group and  $\Out_\F(S)/C_{\Out_\F(S)}(V)$ is not of $\mathrm L_2(p)$-type.
Applying Theorem~\ref{feit} yields $|V| \ge p^{\frac 2 3 (p-1)}$.
Therefore, as $p \ge 5$, we obtain the contradiction
$$p^p \ge |\Omega_1(\gamma_1(S))| \ge |Z||V|\ge  p^{2\frac 2 3 (p-1)} = p^{\frac 4 3 (p-1)} >p^p.$$ Hence $|X| \le p$ and (ii) holds.  This completes the proof.
  \end{proof}

\begin{lemma}\label{gamma1-essC}  Assume that Hypothesis~\ref{Hyp8.1} holds.    If $\Omega_1(\gamma_1(S))$ is non-abelian, then $|\Omega_1(\gamma_1(S))|=p^{p-1}$, $$Z(\Omega_1(\gamma_1(S)))= \Phi(\Omega_1(\gamma_1(S)))=[\Omega_1(\gamma_1(S)),\gamma_1(S)]=Z(\gamma_1(S)).$$  In particular, $\Omega_1(\gamma_1(S))$ has nilpotency class $2$. Furthermore, $$L=\langle \Out_S(\gamma_1(S))^{\Out_\F(\gamma_1(S))}\rangle \cong \PSL_2(p)$$ and, if $H\le \Aut_\F(S)$ has $p'$-order and projects to a complement to $\Out_S(\gamma_1(S))$ in $N_{L}(\Out_S(E))$, then $|C_{\Omega_1(\gamma_1(S))}(H)| = p^2$.
\end{lemma}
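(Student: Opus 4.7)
The plan is to identify $L$ as $\PSL_2(p)$, pin down the $L$-module structure of $Q := \Omega_1(\gamma_1(S))$, and then extract $|C_Q(H)| = p^2$ by a coprime-cohomology computation. Since $Q$ is non-abelian, Lemma~\ref{gamma1-essA} forces $\Out_\F(\gamma_1(S))$ to be of $\mathrm{L}_2(p)$-type, and Lemma~\ref{gamma1-essB} forces $\Out_S(\gamma_1(S))$ to centralize $O_{p'}(\Out_\F(\gamma_1(S)))$. Since $L$ is the normal closure of $\Out_S(\gamma_1(S))$ in $\Out_\F(\gamma_1(S))$, each generator of $L$ commutes with $O_{p'}(\Out_\F(\gamma_1(S)))$, so $L$ does too; as $L$ is generated by $p$-elements, $Z(L) = 1$, hence $L \cap O_{p'}(\Out_\F(\gamma_1(S))) = 1$. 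Thus $L$ embeds into the $\mathrm{L}_2(p)$-type quotient $\Out_\F(\gamma_1(S))/O_{p'}(\Out_\F(\gamma_1(S)))$. The $\F$-essentiality of $\gamma_1(S)$ gives a strongly $p$-embedded subgroup in $\Out_\F(\gamma_1(S))$, whose image is strongly $p$-embedded in this quotient; in a group of $\mathrm{L}_2(p)$-type with a strongly $p$-embedded subgroup and $p$-rank $1$, the simple socle must be $\PSL_2(p)$, and the normal closure of any Sylow $p$-subgroup of order $p$ equals this socle, so $L \cong \PSL_2(p)$.

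Next I would analyze the $L$-module structure of $Q$. By Lemma~\ref{mc-facts}(ii), $\gamma_1(S)$ is regular, so Lemma~\ref{lem:regular}(ii) implies $Q$ has exponent $p$, and Lemma~\ref{mc-facts}(iv) gives $|Q| \leq p^p$. Set $K_i = [Q, \gamma_1(S); i]$. Lemma~\ref{nilp.class.gamma1} together with the non-exceptional assumption yields $K_{(p-1)/2} = 1$, so the filtration has at most $(p-1)/2$ non-trivial steps. Each $K_i/K_{i+1}$ is a $p$-group of exponent $p$ on which $\Inn(\gamma_1(S))$ acts trivially, hence is a $\PSL_2(p)$-module via $L$; the irreducible such modules are the $\VV_{2j}$ of odd dimension $2j+1$, and their restrictions to $T := \Out_S(\gamma_1(S))$ are uniserial by Lemma~\ref{uniserial}. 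The principal claim, and the main obstacle, is that $Q$ has exactly two $L$-composition factors: a trivial one of dimension $1$ arising from $K_1$, and a copy of $\VV_{p-3}$ of dimension $p-2$ arising from $Q/K_1$. This forces $K_2 = 1$, $|Q| = p^{p-1}$, $[Q,\gamma_1(S)] = K_1 = Z(\gamma_1(S))$, nilpotency class of $Q$ exactly $2$, and $\Phi(Q) = [Q,Q] = Z(Q) = Z(\gamma_1(S))$. Establishing this requires combining the bound $|Q| \leq p^p$, the filtration length bound, the uniserial $T$-structure on each composition factor, and the faithfulness of the $L$-action on $Q$ (since $L$ is simple and acts non-trivially) to rule out all other configurations of $\PSL_2(p)$-composition factors.

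Finally I would compute $|C_Q(H)|$. Set $Z := Z(\gamma_1(S))$. Since $Z = K_1$ is a trivial $L$-module of order $p$, $H$ centralizes $Z$, so $|C_Z(H)| = p$. Since $Q/Z \cong \VV_{p-3}$ is irreducible of dimension $d = p-2$, Lemma~\ref{L2p}(i) applied with $d \leq p-2$ yields $\dim C_{Q/Z}(H) = 1$. As $|H|$ is coprime to $p$, coprime action gives $H^1(H, Z) = 0$; thus from the short exact sequence $1 \to Z \to Q \to Q/Z \to 1$ of $H$-groups we obtain the exact sequence $1 \to C_Z(H) \to C_Q(H) \to C_{Q/Z}(H) \to 1$. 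Therefore $|C_Q(H)| = |C_Z(H)| \cdot |C_{Q/Z}(H)| = p \cdot p = p^2$.
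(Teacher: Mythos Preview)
Your overall strategy matches the paper's, but there is one minor gap, one serious gap, and one factual error.

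First, the claim ``as $L$ is generated by $p$-elements, $Z(L) = 1$'' is false as stated: $\SL_2(p)$ is generated by $p$-elements yet has non-trivial centre. The paper obtains $Z(L)=1$ from Lemma~\ref{no inv}: if $L\cong\SL_2(p)$ then the central involution of $L$ lies in $O_{p'}(Z(\Out_\F(\gamma_1(S))))$, which forces $\gamma_1(S)$ abelian or ($S$ exceptional and) $\gamma_1(S)$ extraspecial, contrary to hypothesis.

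Second, and more seriously, you never actually establish the two-factor structure; the constraints you list (the bound $|Q|\le p^p$, the filtration length from Lemma~\ref{nilp.class.gamma1}, uniseriality under $T$, faithfulness of the $L$-action) do not by themselves exclude, for example, composition factors $\VV_2,\VV_2$ when $p=7$, or $\VV_2,\VV_4$ or $\VV_2,\VV_2,\VV_2$ when $p=11$. The paper's decisive tool, which you do not invoke, is Lemma~\ref{centralizer auto}. For any two adjacent $\Aut_\F(\gamma_1(S))$-chief factors in $Q$ of dimensions $w$ and $u$ (both odd and $<p$ since $L\cong\PSL_2(p)$), Lemma~\ref{L2p} locates the unique $H$-fixed layer in each; these sit at positions in the lower central series of $S$ differing by $(w+u)/2$, and Lemma~\ref{centralizer auto} forces $(w+u)/2\equiv 0\pmod{(p-1)/2}$, hence $w+u=p-1$. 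This gives $|Q|\ge p^{p-1}$ and, with a short further argument ruling out a third factor (and the case $|Q|=p^p$), the required two-factor structure. Without this congruence step the case analysis does not close.

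Third, your asserted dimensions are wrong. You claim the bottom factor $U=Z(\gamma_1(S))$ has order $p$ and the top $Q/U$ has order $p^{p-2}$. But $S$ is not exceptional, so $Z_2(S)\le Z(\gamma_1(S))$ and hence $|U|\ge p^2$; since $u$ is odd this forces $u\ge 3$, and similarly $w\ge 3$ because $\gamma_1(S)$ is a $2$-step centralizer. The lemma asserts only that the four characteristic subgroups coincide, not their common order (indeed in the paper's $p=7$ example one has $u=w=3$). Your final coprime-action computation for $|C_Q(H)|=p^2$ is salvageable once you know there are exactly two chief factors, each of dimension at most $p-2$: Lemma~\ref{L2p}(i) then contributes exactly one $H$-fixed dimension from each factor, and $|H|$ coprime to $p$ lets you assemble them. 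But the argument must not be tied to the incorrect shape $1+(p-2)$.
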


\begin{proof} Set $V= \Omega_1(\gamma_1(S))/[\Omega_1(\gamma_1(S)),\gamma_1(S)]$ and $$V_1=[\Omega_1(\gamma_1(S)),\gamma_1(S)]/[\Omega_1(\gamma_1(S)),\gamma_1(S),\gamma_1(S)].$$
Then, as $\gamma_1(S)$ is the $2$-step centralizer, $|V|\ge p^2$ and so, $V$ and $V_1$    are not centralized by $S$ unless $V_1=[\Omega_1(\gamma_1(S)),\gamma_1(S)]=Z(S)$ has order $p$.

By Lemmas~\ref{gamma1-essB}, and \ref{gamma1-essA}, $\Out_\F(\gamma_1(S))$   is of $\mathrm L_2(p)$-type  and $O_{p'}(\Out_\F(\gamma_1(S)))$ is  centralized by $\Out_S(\gamma_1(S))$. Since $O_p(\Out_\F(\gamma_1(S))) =1$,  $\Out_\F(\gamma_1(S))$ is not $p$-soluble and so, setting
$L=\langle \Out_S(\gamma_1(S))^{\Out_\F(\gamma_1(S))}\rangle$, we have $L$
 has a quotient isomorphic to $\PSL_2(p)$ and $L$ is   quasisimple. Hence $L \cong \SL_2(p)$ or $\PSL_2(p)$. Using Lemma~\ref{no inv}, we obtain $Z(L)=1$ and so   $L \cong \PSL_2(p)$. Let $\ov H$ be a complement to $\Out_S(\gamma_1(S))$ in $N_L(\Out_S(\gamma_1(S)))$ and $H$ be a preimage of $\ov H$ of $p'$-order. Then $H$ is cyclic of order $(p-1)/2$.

In $\Omega_1(\gamma_1(S))$, assume that $W>U>T\ge 1$ are $ \Aut_\F(\gamma_1(S))$-invariant subgroups with $\ov W=W/U$ and $\ov U=U/T$ both $ \Aut_\F(\gamma_1(S))$-chief factors. Then $\ov W$ and $\ov U$ can be regarded as $\GF(p)L$-modules.
 If $|\ov W|=|\ov U|=p$, then, as $\gamma_1(S)$ is the $2$-step centralizer, $\Aut_S(\gamma_1(S)) \le O^{p'}(\Aut_\F(\gamma_1(S))) \Inn(\gamma_1(S))$ centralizes $U/T$, contrary to $S$ having maximal class.  Hence at least one of the chief factors has order greater then $p$.
Write $|\ov W|=p^w$ and $|\ov U|=p^u$ with $p^3\le p^{u+w} \le |\Omega_1(\gamma_1(S))| \le p^p$. Using  Lemma~\ref{L2p} and   $L \cong \PSL_2(p)$, gives $u$ and $v$ are odd and, as $u+w \le p$, $H$ centralizes $$[\ov W,S;(w-1)/2]/[\ov W,S;(w+1)/2]\text{ and }[\ov U,S;(u-1)/2]/[\ov  U,S;(u+1)/2].$$
Because $S$ has maximal class,  there exists $\ell \ge 1$ such $$[W,S;(w-1)/2]= \gamma_\ell(S)/U$$ and $$[U,S;(u-1)/2]= \gamma_{\ell+(w+1)/2+(u-1)/2 }(S)/T.$$  By Lemma~\ref{centralizer auto}, $$\ell - (\ell+(w+1)/2+(u-1)/2 ) \equiv 0 \pmod {(p-1)/2}.$$
Hence $$w+u = k(p-1)$$ for some integer $k$. Therefore $|\Omega_1(\gamma_1(S))| \ge p^{p-1}$. Assume   $|\Omega_1(\gamma_1(S))| = p^{p-1}$. Then  $W=\Omega_1(\gamma_1(S))$, and $$U= Z(\Omega_1(\gamma_1(S)))= \Phi(\Omega_1(\gamma_1(S)))=[\Omega_1(\gamma_1(S)),\gamma_1(S)]=Z(\gamma_1(S))$$ and the statements in the lemma hold.
Assume that  $|\Omega_1(\gamma_1(S))| > p^{p-1}$. Then Lemma~\ref{mc-facts} (iv)  implies that $|S|=p^{p+1}$. This gives $\Omega_1(\gamma_1(S))=\gamma_1(S)$ and there exists $\gamma_1(S)\ge W>U>T>A \ge 1$ with $W/U$, $U/T$ and $T/A$ each $\Aut_\F(\gamma_1(S))$-chief factors. We assume that they have order $p^u$, $p^w$ and $p^t$ respectively.  Then $u+w= w+ t=p-1$ and $u+w+t\le p$. This means that $w=t=1$ and $u=p-2$.  Notice that $Z(\gamma_1(S))$ is normalized by $S$ and  is $L$-invariant. So $Z(\gamma_1(S))\in \{W,U,T\}$.   Since $\gamma_1(S)$ is non-abelian by assumption, we must have $Z(\gamma_1(S))= T = Z(S)$. Hence $S$ is exceptional, a contradiction.
\end{proof}

We remark that we have  constructed a saturated fusion system which satisfies  the conclusion of Lemma~\ref{gamma1-essC} using {\sc Magma} \blue{(see Subsection~\ref{CEx}). The example is realized by a group $G$ of shape $7^{3+3}{:}\PGL_2(7)$. Taking $S \in \syl_7(G)$, $S$ has exponent $7$, $\gamma_1(S)$ is special with centre of order $7^3$. Setting $\F=\F_S(G)$, we get
 $\Aut_\F(\gamma_1(S)) \cong 7^3{:}\PGL_2(7)$.} In this case, $\gamma_1(S)$ is the unique $\F$-essential subgroup \blue{and $\F$ cannot be further  decorated with pearls to create a larger saturated fusion system $\mathcal G$ with $\mathcal G$-pearls.}

\section[$\gamma_1(S)$ is $\F$-essential and $S$ is  exceptional]{The structure of  $\gamma_1(S)$ when $\gamma_1(S)$ is $\F$-essential and $S$ is  exceptional}\label{sec: g1 ess2}

 \blue{We now consider  Hypothesis~\ref{Hyp7.1} once again, applying results from Section~\ref{sec: g1 ess} to fusion systems on the non-exceptional group  $S/Z(S)$.
  The objective of this section is to prove}

\begin{prop}\label{gamma1-essD}   \blue{Assume that Hypothesis~\ref{Hyp7.1} holds with $\gamma_1(S)\in \E_\F$. Then $\gamma_1(S)$ is extraspecial.}
\end{prop}

\begin{proof}
Assume that $S$ is  exceptional and $\gamma_1(S)$ is $\F$-essential but is not extraspecial.

Set  $Q= \gamma_1(S)$, $V= Z_2(Q)$, $Z= Z(Q)=Z(S)$ and $\mathcal K= N_\F(Z)$. Then $Z$ is fully $\F$-normalized and so $\mathcal K$ is saturated, $Q$ is a $ \mathcal K$-essential subgroup and $Q/Z$ is a $\mathcal K/Z$-essential subgroup. By Lemma \ref{mc-factsb} (v) and (vi), $S/Z(S)$ is not exceptional and $|S|\le p^{p+1}$ and
  Lemma~\ref{mc-facts} (vi) yields $S/Z(S)$  has exponent $p$.
Because $Q$ is not extraspecial and $Q/Z=\Omega_1(Q/Z)$, we know $\Omega_1(Q/Z)$ is not elementary abelian. Hence Lemma~\ref{gamma1-essC} yields $(Q/Z)'= Z(Q/Z)= V/Z$ and $Q$ has order $p^p$. Furthermore, $O^{p'}(\Out_\F(Q)) \cong \PSL_2(p)$.

Let $A$ be the preimage of $O^{p'}(\Out_\F(Q)) $ in $\Aut_\F(Q)$ and  $a$ be a natural number such that $|V/Z|= p^a$. Then, as $V/Z$ is an irreducible $\GF(p)A$-module by Lemma~\ref{gamma1-essC}, $a$ is odd and so  $V$ is not extraspecial and, as $A$ acts irreducibly on $V/Z$, we deduce that   $V$ is elementary abelian. Because $A$ acts irreducibly on $Q/V$ and $V$ is abelian, we also have $C_{Q}(V)= V$. Therefore, using $ Z$ has order $p$ and $Q/Z(Q)$ and $V/Z$ are irreducible $\GF(p)A$-modules, we have  $Q/V \cong \Hom_{\GF(p)}(V,Z) $ (the dual of $V$) which also has order $p^a$. Since $|Q|= p^{p}$, we infer that $a=(p-1)/2$. As $a$ is odd, we also have $p \equiv 3 \pmod 4$. Since $Q/Z$ is special, the commutator map $\kappa: Q/V \times Q/V \rightarrow V/Z$ induces a non-trivial $\GF(p)A$-module homomorphism $\kappa^*: \Lambda^2(Q/V) \rightarrow V/Z$. In particular, $V/Z$ is isomorphic to a quotient of $\Lambda^2(Q/V)$ as $\GF(p)A$-modules. Writing $d= a-1= (p-1)/2-1$, Proposition~\ref{Clebsch Gordan} yields $d \equiv 2 \pmod 4$. Hence $p \equiv 7 \pmod 8$.
 Finally, we note that by Lemma~\ref{mc-facts}(vi), $\gamma_2(S)$ has exponent $p$ and thus as $Q$ is regular by Lemma~\ref{mc-facts}(ii), Lemma~\ref{lem:regular}(ii) and the irreducible action of $A$ on $Q/V$  implies $Q$ has exponent $p$.
We summarise what has been established.

\medskip

\begin{claim}\label{clm:qstruc}The following hold:\begin{enumerate}
\item $p \equiv 7 \pmod 8$ and $d \equiv 2 \pmod 4$.
 \item  $Q$ has exponent $p$ and  nilpotency class $3$, $V=Q'=C_Q(V)$ has order $p^{(p+1)/2}$ and $Z=Z(Q)$ has order $p$.
 \item  $A/\Inn(Q)\cong \PSL_2(p)$ acts irreducibly  on $Q/V\cong V/Z \cong \VV_{\frac{p-3}{2}}$ and $A$ centralizes $Z$.
\end{enumerate}
\end{claim}

\medskip
We intend to make an explicit calculation and so we begin by establishing some notation. Let $\epsilon$ be a generator of $\GF(p)^\times=(\mathbb Z/p\mathbb Z)^\times$. Thus $\epsilon$ is an integer with $1 \le \eps \le p-1$. We regard $Q/V$ and $V/Z$ as unfaithful representations of $\SL_2(p)$ and we identify $\Out_S(Q)$ with  $\langle \left(\begin{smallmatrix}1&1\\0&1 \end{smallmatrix}\right)\rangle$ and let $\tau \in N_{A}(\Aut_S(Q))$ have order $(p-1)/2$  be  $\left(\begin{smallmatrix}\eps&0\\0&\eps^{-1} \end{smallmatrix}\right)$.
The element $\iota$ has order $2$ and corresponds to $\langle \left(\begin{smallmatrix}0&1\\-1&0 \end{smallmatrix}\right)\rangle$ in $A$. Hence $\iota$ inverts $\tau$ and does not normalize $\Out_S(Q)$.

We have $Q/V$ is semisimple as a $\GF(p)\langle \tau \rangle$-module and $\tau$ normalizes $\gamma_j(S)$ for all $j \ge 1$.  The semisimple action of $\tau$ implies that we may select $t_i\in \gamma_i(S)\setminus \gamma_{i+1}(S)$, $1\le i \le (p-1)/2$ such that $t_i V$ is an eigenvector for $\tau$ on $Q/V$.

By \ref{clm:qstruc}(iii) the $\GF(p)\PSL_2(p)$-modules $Q/V$ and $V/Z$ can identified with the $\GF(p)\SL_2(p)$-module $\VV_{\frac{p-3}{2}}=\VV_d$.    When we make this identification, we may suppose that $t_1$ corresponds to $x^d$, $t_{(p-1)/2}$ is $y^d$ and generally $t_j$ corresponds to $x^{d-j+1}y^{j-1}$ for $1 \le j \le (p-1)/2= d+1$.  We calculate that
$$(t_jV)\tau=  t_j^{\eps^{d-2j+2}}V.$$
Define $m = (d+2)/2$. Then, as $p \equiv 7 \pmod 8$ by \ref{clm:qstruc}(i), $m$ is even and we also have  $(t_mV)\tau =t_mV$.
Furthermore,
 we can calculate

 \begin{claim}\label{clm:iota1}$(t_jV)\iota = (t_{d+2-j}V)^{(-1)^{(j-1)}}.$\end{claim}

\medskip

 For $1 \le \rho_i\in V$, $1\le i \le d+1$ be such that $\rho_i Z$ are  eigenvectors for $\tau$ with $\rho _i \in \gamma_{(p-1)/2+i}(S) \setminus\gamma_{(p-1)/2+i+1}(S)$. Lemma~\ref{lem:autS p'elts} implies that for each $j$ we have $$(\rho_jZ)\tau=  \rho_j^{\eps^{d-2j+2}}Z$$ which is consistent with making the standard identification with $\VV_{d}$.
 Just as above we have

 \begin{claim}\label{clm:iota2} $(\rho_jZ)\iota = (\rho_{d+2-j}Z)^{(-1)^{(j-1)}}.$\end{claim}

\medskip

Observe that, as $m$ is even, \ref{clm:iota1} and \ref{clm:iota2} demonstrate that $\iota$ inverts $t_mV$ and $\rho_m Z$.

We use the commutator relations as follows $$[\rho_j,t_k]\tau= [\rho_j^{\eps^{d-2j+2}}z,t_k^{\eps^{d-2k+2}}v] = [ \rho_j^{\eps^{d-2j+2}},t_k^{\eps^{d-2k+2}}]= [\rho_j,t_k]^{\eps^{2d-2(j+k)+4}}$$
where  $v \in V$ and $z \in Z$. Since $\tau$ centralizes $Z$ by \ref{clm:qstruc}(iii), using the above commutator calculation and exploiting \ref{clm:qstruc}(ii) to obtain equality  demonstrates that the following statement holds.

\begin{claim}\label{clm:clm2qess}
We have $C_{Q}(\rho_j) =V\langle t_k\mid k \ne d+2-j \rangle$.

\end{claim}

\bigskip
Set
 $r_j=[t_j,t_m]$ for $1\le j \le d+1$,  then $r_m=1$ and the action of $\tau$ shows that $r_j \in \langle \rho_j\rangle Z$.

 Since $Q$ has nilpotency class $3$ and $p \ge 5$, Lemma~\ref{Engel1} and Theorem~\ref{Engel2} imply that $V\le E_2(Q) < Q$ and the irreducible action of $A$ on $Q/V$ then yields $V=E_2(Q)$.  We obtain a contradiction by demonstrating that $t_m \in E_2(Q)$. Thus we   calculate  $[t_m,y,y] $ where $y= \prod_{i=1}^{d+1} t_i^{a_i}$ with $1 \le a_i \le p$ is a coset representative of $V$ in $Q$. Notice that in this next calculation, we have $[t_m,w_1w_2] \in [t_m,w_1][t_m,w_2]Z$ and so $[t_m,w_1w_2,w]= [[t_m,w_1][t_m,w_2],w] $ for $w, w_1, w_2 \in Q$.
\begin{eqnarray*}
[t_m,y,y]&=&[t_m,\prod_{i=1}^{d+1} t_i^{a_i},y]= [\prod_{i=1}^{d+1} [t_m,t_i^{a_i}], y]=[\prod_{i=1}^{d+1} r_i^{a_i}, y]=\prod_{i=1}^{d+1} [r_i, y]^{a_i}\\
&=& \prod_{i=1}^{d+1}[ r_i,\prod_{k=1}^{d+1} t_k^{a_k} ]^{a_i}=   \prod_{i=1}^{d+1}[ r_i, t_{d+2-i} ]^{a_i+a_{d+2-i}}
\end{eqnarray*}
where the last equality follows from \ref{clm:clm2qess}.
 Notice that, as $m$ is even by \ref{clm:qstruc}(i), we apply $\iota$ as follows $$r_i \iota =[t_i\iota ,t_m\iota]= [t_{d+2-i}^{(-1)^{(i-1)}}, t_m^{-1}]= r_{d+2-i}^{(-1)^{i} } .$$
Hence, as $d$ is even by \ref{clm:qstruc}(i), \ref{clm:iota1} and \ref{clm:iota2} yield
$$[ r_i, t_{d+2-i} ]\iota =
[ r_{d+2-i}^{(-1)^{(i)}}, t_{i}^{(-1)^{(d+2-i-1)}} ]= [ r_{d+2-i}, t_{i}] ^{(-1)^{i+(d+2-i-1)}}= [ r_{d+2-i}, t_{i}] ^{-1} $$
 and so, as $r_m=1$, we can pair elements in the product $\prod_{i=1}^{d+1}[ r_i, t_{d+2-i} ]^{a_i+a_{d+2-i}}$ to obtain $[t_m,y,y]=1$ for all $y \in Q$.
Hence $t_m \in E_2(Q)$ and this is our contradiction which establishes that if $Q$ is $\F$-essential, then $Q$ is extraspecial.
 \end{proof}

\section{Locating $\F$-essential subgroups in groups of maximal class I}\label{sec: proof MT1 1}

This section and the following section are devoted to the proof of Theorem~\ref{MT1}. To set the scene we repeat its statement.

\begin{customthm}{D}\label{MT1} Suppose that $p$ is a prime,  $S$ is a $p$-group of maximal  class and order at least $p^4$ and $\F$ is a saturated fusion system on $S$. If $E$ is an $\F$-essential subgroup, then either $E$ is an $\F$-pearl, or $E= \gamma_1(S)$ or $E=C_S(Z_2(S))$. Furthermore, if $S$ is exceptional, then $\mathcal P(\F)=\mathcal P_a(\F)$.  \end{customthm}

 Lemmas~\ref{p=2} and \ref{p=3} show that Theorem~\ref{MT1} holds if $p\le 3$. Hence we may and do assume that $p \ge 5$ and so Hypothesis~\ref{hyp1} holds and we take our notation from there.
\begin{lemma}\label{not abelian} Assume Hypothesis~\ref{hyp1} holds. If  $\gamma_1(S)$ is abelian and $E$ is $\F$-essential, then either $E$ is  an $\F$-pearl or $E=\gamma_1(S)$.
\end{lemma}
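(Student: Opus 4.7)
The plan is essentially to observe that this lemma is already a direct consequence of Lemma~\ref{in.sub} together with the definition of $\F$-centric, and in fact it simply repackages the first assertion of Corollary~\ref{gamma.ab} for convenient reference later in the section.

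First I would argue that the hypothesis forces $S$ to be non-exceptional. Since $\gamma_1(S)$ is abelian and $Z_2(S) \le \gamma_1(S)$ (as $Z_2(S) = \gamma_{n-2}(S)$ and $\gamma_{n-2}(S) \le \gamma_1(S)$), we have $\gamma_1(S) \le C_S(Z_2(S))$. Both groups are maximal subgroups of $S$, so $\gamma_1(S) = C_S(Z_2(S))$ and $S$ is not exceptional by Lemma~\ref{mc-factsb}(iii). (This step can be omitted if one just quotes Lemma~\ref{in.sub} directly, since $\gamma_1(S)$ and $C_S(Z_2(S))$ coincide in this case anyway.)

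Next, assume $E \in \E_\F$ is not an $\F$-pearl. By Lemma~\ref{in.sub}, $E \le \gamma_1(S)$ or $E \le C_S(Z_2(S))$. As just noted, both conditions give $E \le \gamma_1(S)$. Since $E$ is $\F$-essential it is in particular $\F$-centric, so $C_S(E) \le E$. But $\gamma_1(S)$ is abelian and $E \le \gamma_1(S)$, hence $\gamma_1(S) \le C_S(E) \le E$, forcing $E = \gamma_1(S)$.

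There is no real obstacle here; the lemma is essentially a bookkeeping restatement of Corollary~\ref{gamma.ab} and its proof is three lines. Its role in what follows is to allow the arguments in Sections~\ref{sec: proof MT1 1} and \ref{sec: proof MT1 2} to reduce immediately to the case where $\gamma_1(S)$ is non-abelian when locating a witness $E$ to the failure of Theorem~\ref{MT1}.
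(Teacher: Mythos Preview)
Your proposal is correct and follows exactly the same route as the paper: the paper simply cites Corollary~\ref{gamma.ab}, whose proof is precisely the three-line argument you give (non-exceptional via abelian $\gamma_1(S)$, then Lemma~\ref{in.sub}, then $\F$-centricity). There is nothing to add.
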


\begin{proof}  This is a consequence of Corollary~\ref{gamma.ab}.  \end{proof}

We now show that Theorem~\ref{MT1} holds when $|S|$ is small.

\begin{lemma}\label{p^5-groups ok} Assume Hypothesis~\ref{hyp1} holds. Suppose that $p^4\le |S| \le p^5$ and $\F$ is a saturated fusion system on $S$.  If $E$ is $\F$-essential, then $E$ is either an $\F$-pearl or $E \in \{\gamma_1(S),C_S(Z_2(S))\}$.
\end{lemma}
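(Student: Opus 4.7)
The plan is to dispose of this lemma quickly by splitting on the order of $S$ and invoking earlier structural results; there is no real obstacle, since every hard piece of work has already been done.

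First I would treat the case $|S|=p^4$. By Lemma~\ref{mc-factsb}(v), $S$ is not exceptional, so $\gamma_1(S)=C_S(Z_2(S))$ and the claim becomes: every $\F$-essential $E$ is either an $\F$-pearl or equal to $\gamma_1(S)$. Observe that $\gamma_1(S)$ has order $p^3$ and contains $\gamma_2(S)$ of order $p^2$; since $[\gamma_1(S),\gamma_2(S)]\le\gamma_3(S)$ has order $p$ while $\gamma_1(S)/\gamma_2(S)$ has order $p$, a direct commutator calculation gives $\gamma_1(S)$ abelian (alternatively, apply Lemma~\ref{mc-facts}(ii) and note the unique maximal subgroup is the $2$-step centralizer). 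Lemma~\ref{not abelian} then delivers the conclusion directly.

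Next I would handle $|S|=p^5$. Again Lemma~\ref{mc-factsb}(v) says $S$ is not exceptional, so $\gamma_1(S)=C_S(Z_2(S))$ is the unique $2$-step centralizer and has order $p^4$. Let $E$ be $\F$-essential and assume $E$ is not an $\F$-pearl. Lemma~\ref{in.sub} forces $E\le\gamma_1(S)$ (equivalently, $E\le C_S(Z_2(S))$, which is the same subgroup). On the other hand, Lemma~\ref{lem:not p^3} applies since $|S|\ge p^5$, yielding $|E|\ge p^4$. Combining, $|E|\ge p^4=|\gamma_1(S)|$ with $E\le\gamma_1(S)$ forces $E=\gamma_1(S)=C_S(Z_2(S))$, as required.

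The two cases together exhaust the hypothesis $p^4\le |S|\le p^5$, so the lemma follows. The only point that requires a moment's thought is the fact that $|S|=p^4$ forces $\gamma_1(S)$ abelian, and even this is immediate from the degree bound $|\gamma_1(S):\gamma_2(S)|=p$ and $\gamma_2(S)$ being central in $\gamma_1(S)$ (since $\gamma_4(S)=1$). No new machinery is needed; the lemma is essentially an unpacking of Lemma~\ref{in.sub}, Lemma~\ref{lem:not p^3}, Lemma~\ref{not abelian}, and Lemma~\ref{mc-factsb}(v) in the two borderline orders.
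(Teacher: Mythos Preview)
Your proof is correct. It differs slightly from the paper's argument: the paper treats both cases uniformly by observing that a non-pearl essential subgroup $E$ must satisfy $|E|\ge p^{n-1}$ (trivially for $n=4$, via Lemma~\ref{lem:not p^3} for $n=5$), hence $E$ is normal in $S$, and then invokes Proposition~\ref{normal.ess}. You instead use Lemma~\ref{not abelian} for $|S|=p^4$ (after checking $\gamma_1(S)$ is abelian) and a direct order comparison $|E|\ge p^4=|\gamma_1(S)|$ together with Lemma~\ref{in.sub} for $|S|=p^5$. Your route is marginally more elementary in that it sidesteps Proposition~\ref{normal.ess} (whose proof calls on the classification of strongly $p$-embedded groups via Proposition~\ref{SE-p2}), while the paper's route is more uniform across the two orders. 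Either way the lemma is a short corollary of earlier results.
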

\begin{proof}  Suppose $E$ is not an $\F$-pearl. If $|S| = p^4$, then $E$ is normal in $S$ and the result follows from Proposition~\ref{normal.ess}. Hence $|S| =p^5$ and $|E|= p^4$ by Lemma~\ref{lem:not p^3}. In particular, $E$ is a normal subgroup of $S$. Again Proposition~\ref{normal.ess} yields the result.
 \end{proof}

Because of Corollary~\ref{c12},  Proposition~\ref{Prop:ess.in.except} and Lemmas~\ref{not abelian} and \ref{p^5-groups ok} we work under the following hypothesis until the proof of Theorem~\ref{MT1} is complete.

\begin{hypothesis}\label{hp.conj} Hypothesis~\ref{hyp1} holds with $|S| \ge p^6$ and the saturated fusion system  $\F$ is a minimal counterexample to Theorem \ref{MT1}, first with respect to  $|S|$ and second with respect to the number of morphisms in $\F$. Furthermore, \begin{enumerate}
\item $\gamma_1(S)$ is not abelian or extraspecial.
\item $\Out_\F(S)$ is cyclic of order dividing $p-1$ and acts faithfully on $S/\gamma_1(S)$.
\end{enumerate}
\end{hypothesis}

 \blue{We now assume that Hypothesis~\ref{hp.conj} is satisfied and say } that an $\F$-essential subgroup which is not contained in $\Pp(\F) \cup  \{ \gamma_1(S), \C_S(\Z_2(S)) \}$ is a \emph{witness} (to the fact that Theorem~\ref{MT1} is false).

 \begin{lemma}\label{not normal} If $E$ is a witness, then $E$ is not normal in $S$ and $E < \gamma_1(S)$.
 \end{lemma}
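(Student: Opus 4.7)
The plan is to handle the two assertions separately, both as quick consequences of earlier results combined with the size bound $|S| \ge p^6$ from Hypothesis~\ref{hp.conj}.

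For the non-normality of $E$, I would argue by contradiction: assuming $E$ is normal in $S$, Proposition~\ref{normal.ess} forces $E$ to be a maximal subgroup of $S$ and further requires either $E \in \{\gamma_1(S), C_S(Z_2(S))\}$ or $|S|=p^4$ with $E$ a non-abelian $\F$-pearl. The first option contradicts the definition of a witness, while the second is ruled out by the standing assumption $|S|\ge p^6$ in Hypothesis~\ref{hp.conj}.

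For $E < \gamma_1(S)$, I would split according to whether $S$ is exceptional. In the non-exceptional case, $\gamma_1(S)=C_S(Z_2(S))$ by Lemma~\ref{mc-factsb}(iii), so Lemma~\ref{in.sub} (applicable because $E$ is not an $\F$-pearl) immediately yields $E \le \gamma_1(S)$, whence proper containment follows from $E \neq \gamma_1(S)$. In the exceptional case, Lemma~\ref{lem:p^6CSZ2essential} tells us that an $\F$-essential subgroup not contained in $\gamma_1(S)$ must be either an $\F$-pearl or equal to $C_S(Z_2(S))$, and both possibilities are excluded since $E$ is a witness; proper containment then follows as before.

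No substantial obstacle arises: the statement is a bookkeeping consequence of Proposition~\ref{normal.ess}, Lemma~\ref{in.sub}, and Lemma~\ref{lem:p^6CSZ2essential}, together with the minimality setup recorded in Hypothesis~\ref{hp.conj}. The only point worth flagging is that Hypothesis~\ref{hp.conj}(i), which excludes $\gamma_1(S)$ from being abelian or extraspecial, is what makes the exceptional-case analysis of Section~\ref{sec:exceptional} apply cleanly; but for the present statement all we need is the location restriction supplied by Lemma~\ref{lem:p^6CSZ2essential}, so no extra argument is required.
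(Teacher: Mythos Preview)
Your proposal is correct and follows essentially the same approach as the paper, which simply cites Lemma~\ref{in.sub} and Propositions~\ref{normal.ess} and \ref{Prop:ess.in.except}. Your use of Lemma~\ref{lem:p^6CSZ2essential} in the exceptional case is fine since that lemma is precisely what feeds into Proposition~\ref{Prop:ess.in.except}; the two citations are interchangeable here.
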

 \begin{proof}
   This follows from Lemma~\ref{in.sub} and  Propositions~\ref{normal.ess} and \ref{Prop:ess.in.except}.
 \end{proof}

\begin{lemma} \label{no pearls} The saturated fusion system $\F$ has no   $\F$-pearls. Furthermore, every witness $E$ is properly contained in $\gamma_1(S)$, $|E| \ge p^4$ and $\Omega_1(E)=E \cap \Omega_1(\gamma_1(S))$.
\end{lemma}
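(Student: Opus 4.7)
The plan is to prove the four assertions in order, with the pearl-freeness claim as the main new content and the remaining three following from results already established in the excerpt.

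First, I would show $\Pp(\F) = \emptyset$ by a pruning argument combined with the minimality built into Hypothesis~\ref{hp.conj}. Suppose for contradiction $\Pp(\F)\ne\emptyset$ and fix a set $\mathcal C$ of representatives for the $\F$-conjugacy classes of $\F$-essential subgroups with some pearl $P\in\mathcal C$. By Lemma~\ref{pearlprune}, the system
\[
\mathcal G = \langle \Aut_\F(S),\ \Aut_\F(E) \mid E\in\mathcal C\setminus\{P\}\rangle
\]
is saturated on $S$, and it is strictly smaller than $\F$ in the number of morphisms because $\Aut_\F(P)\not\subseteq \mathcal G$ (otherwise $\Aut_\F(P)$ would already be generated, contradicting the Alperin--Goldschmidt theorem with $P$ as an essential). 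Since $\F$ is a counterexample to Theorem~\ref{MT1}, it contains a witness $E_0$. By Lemma~\ref{pearls1}(v) every $\F$-conjugate of $P$ is a pearl, so $E_0$ is not $\F$-conjugate to $P$; hence $\Aut_\F(E_0)\subseteq\mathcal G$, and Lemma~\ref{F-essential G-essential} yields that $E_0$ is $\mathcal G$-essential. Moreover, $E_0$ is not a $\mathcal G$-pearl (since it is not an $\F$-pearl) and is not equal to $\gamma_1(S)$ or $C_S(Z_2(S))$, so $E_0$ is a witness in $\mathcal G$. This contradicts the minimality of $\F$ in its number of morphisms, establishing $\Pp(\F)=\emptyset$.

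Second, with pearls eliminated, any witness $E$ is not a pearl, so Lemma~\ref{not normal} forces $E<\gamma_1(S)$; since by definition a witness satisfies $E\ne\gamma_1(S)$, the inclusion is proper. Third, because $|S|\ge p^6\ge p^5$, Lemma~\ref{lem:not p^3} applied to the non-pearl $\F$-essential subgroup $E$ immediately gives $|E|\ge p^4$.

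Finally, for $\Omega_1(E)=E\cap\Omega_1(\gamma_1(S))$: by Lemma~\ref{mc-facts}(ii) the maximal subgroup $\gamma_1(S)$ is a regular $p$-group; Lemma~\ref{lem:regular}(i) then shows $E\le\gamma_1(S)$ is itself regular, and Lemma~\ref{lem:regular}(iii) applied to the pair $E\le\gamma_1(S)$ delivers the required equality with no further work.

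The only step with any subtlety is the pearl-pruning reduction: one must keep track simultaneously of saturation of $\mathcal G$, preservation of essentiality of a witness in passing from $\F$ to $\mathcal G$, and the strict drop in morphism count. Each of these is packaged in a cited lemma (Lemmas~\ref{pearlprune} and~\ref{F-essential G-essential}), so the argument is short once the minimality hypothesis is invoked correctly.
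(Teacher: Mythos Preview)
Your proof is correct and follows essentially the same route as the paper's (which is very terse: it just cites Lemma~\ref{pearlprune} and minimality). One small imprecision: from ``$E_0$ is not $\F$-conjugate to $P$'' you conclude ``$\Aut_\F(E_0)\subseteq\mathcal G$'', but this only follows immediately if $E_0$ itself lies in $\mathcal C\setminus\{P\}$; since $\mathcal C$ is a set of class representatives you may freely choose at the outset, simply pick $\mathcal C$ to contain a fixed witness $E_0$ and the argument goes through verbatim.
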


\begin{proof}
The fact that $\F$ has no $\F$-pearls follows from Lemma~\ref{pearlprune} and the minimality of $\F$. \blue{The first part of the second statement  is in Lemma~\ref{not normal}}. Since $\F$ has no $\F$-pearls, Lemma~\ref{lem:not p^3} implies $|E|\ge p^4$. Finally, we note that $\gamma_1(S)$ is regular by Lemma~\ref{mc-facts} (ii) and so Lemma~\ref{lem:regular} (iii) gives the final statement.
\end{proof}

Our first major consequence of Hypothesis~\ref{hp.conj} is as follows.

\begin{prop}\label{propOp} We have $O_p(\F)=1$.
\end{prop}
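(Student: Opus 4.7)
Suppose for contradiction that $T := O_p(\F) \neq 1$. By Lemma~\ref{OpinE}, $T$ is contained in every $\F$-essential subgroup, and by Lemma~\ref{no pearls} every witness $E$ lies properly inside $\gamma_1(S)$; hence the characteristic subgroup $T$ is properly contained in $\gamma_1(S)$. Lemma~\ref{mc-facts}(i) then forces $T = \gamma_j(S)$ for some $j \ge 2$. If $j = 2$, then since $|\gamma_1(S)/\gamma_2(S)| = p$, any witness $E$ with $\gamma_2(S) \le E < \gamma_1(S)$ must equal $\gamma_2(S)$, which is normal in $S$, contradicting Lemma~\ref{not normal}; so $j \ge 3$.

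\textbf{Step 2 (producing a quadratic module).} Fix a witness $E$. The plan is to produce an $\Aut_\F(E)$-chief factor $W$ inside $T$ on which some generator of $\Aut_S(E)$ acts non-trivially with minimal polynomial $(X-1)^2$. Because $E \le \gamma_1(S) = C_S(\gamma_k(S)/\gamma_{k+2}(S))$ for $2 \le k \le n-3$ by Lemma~\ref{mc-factsb}(vii), every two-step section of the lower central series inside $T$ is centralized by $\Aut_S(E)$. Consequently, on any section of $T$ spanning at most two consecutive steps of the $S$-filtration $T \supseteq [T,\gamma_1(S)] \supseteq [T,\gamma_1(S),\gamma_1(S)]$, the action of $\Aut_S(E)$ is quadratic. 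A non-trivial such chief factor must exist: otherwise $\Aut_S(E)$ stabilizes an entire $\Aut_\F(E)$-chief series of $T$, and, extending through a complementary $\Aut_\F(E)$-invariant series on $E/T$ that reaches $\Phi(E)$, Lemma~\ref{lem:series} forces $\Aut_S(E) \le O_p(\Aut_\F(E)) = \Inn(E)$, contradicting $E \in \E_\F$.

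\textbf{Step 3 (Thompson--Chermak and contradiction).} With $W$ as above, a generator of $\Aut_S(E)$ is a non-trivial $p$-element of $\Aut_\F(E)$ acting with minimal polynomial $(X-1)^2$ on $W$. I will then apply the classification, originally due to Thompson and cited through Chermak~\cite{chermak}, of finite groups admitting a faithful $\GF(p)$-module with a non-trivial quadratic $p$-element; intersecting this list with the requirement that $\Out_\F(E)$ contains a strongly $p$-embedded subgroup leaves a very short list of candidates for the composition factor of $\Aut_\F(E)/C_{\Aut_\F(E)}(W)$, essentially $\SL_2(p^a)$ acting on a natural module. Each surviving possibility can be ruled out using Lemma~\ref{mc-facts}(v), which bounds elementary abelian sections of $\gamma_1(S)$ by $p^{p-1}$, the cyclic structure of $\Out_\F(S)$ acting faithfully on $S/\gamma_1(S)$ from Hypothesis~\ref{hp.conj}(ii), and the fact that every $S$-chief factor inside $T$ has order $p$. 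The main obstacle will be the $\SL_2(p)$ case, where one must convert the internal $\Aut_\F(E)$-module structure of $W$ back into structural information about $S$; this is carried out by extending suitable $\F$-automorphisms of $E$ to $\F$-automorphisms of $N_S(E)$ (using $\F$-receptivity of fully normalized conjugates) and tracking the induced action on the lower-central factors of $S$ to contradict the maximal class condition.
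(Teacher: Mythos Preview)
Your Step 2 contains the main gap, and it is fatal to the approach as written. You write that since $E\le\gamma_1(S)=C_S(\gamma_k(S)/\gamma_{k+2}(S))$, two-step sections of the lower central series inside $T$ are centralized by $\Aut_S(E)$, and ``consequently'' any $\Aut_\F(E)$-chief factor $W$ in $T$ on which $\Aut_S(E)$ acts non-trivially carries quadratic action. First, it is $N_S(E)$, not $E$, that produces $\Aut_S(E)$; you need $N_S(E)\le\gamma_1(S)$, which does follow from Lemma~\ref{mc-normalizer} since $Z(S)\le E$ and $E$ is not normal, but you never say this. More seriously, an $\Aut_\F(E)$-chief factor $W=U_1/U_2$ has no reason to span only two steps of the $\gamma_k(S)$-filtration: $U_1,U_2$ are $\Aut_\F(E)$-invariant, not $S$-invariant, and the irreducible action could force $W$ to be large. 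The $2$-step centralizer property then only gives that a generator of $\Aut_S(E)$ has minimal polynomial dividing $(X-1)^{\lceil m/2\rceil+1}$ on a section of order $p^m$, not $(X-1)^2$. Your existence argument for a non-trivial $W$ is also incomplete: if $\Aut_S(E)$ centralizes every chief factor in $T$, you still need it to centralize the factors of a series through $E/T$ down to $\Phi(E)$ before Lemma~\ref{lem:series} applies, and you give no reason for that.

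The paper's route is structurally different and does not attempt to find a quadratic chief factor inside $E$. It first uses the minimality of $\F$ to show $Q=O_p(\F)$ is $\F$-centric (by passing to $\F/Q$ otherwise), then invokes the Model Theorem to obtain a genuine finite group $G$ with $Q=O_p(G)$ and reduces to $Q$ elementary abelian. The quadratic element comes not from $\gamma_1(S)$ but from $\gamma_{w-1}(S)\ge S$-side, via Juh\'asz's Theorem~\ref{J6.2}: since $Q=\gamma_w(S)$ is elementary abelian, $\gamma_{w-1}(S)$ has class at most $2$ and hence $\gamma_{w-1}(S)/Q$ acts quadratically on $Q$. Chermak's classification then forces a Lie-type component $K/Q$ in $G/Q$, and the bound on $|\Omega_1(\gamma_1(S))|$ together with the rank-$2$ structure over $\GF(p)$ yields $|N_{K/Q}(S/Q)|\ge (p-1)^2/3>p-1$, contradicting Hypothesis~\ref{hp.conj}(ii). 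None of the Model Theorem, centricity of $Q$, the reduction to $Q$ elementary abelian, or Theorem~\ref{J6.2} appear in your outline; Step~3 gestures at Chermak but the input you feed it (a quadratic element on a single chief factor of $\Aut_\F(E)$, with no control on the quotient having a strongly $p$-embedded subgroup or being faithful) is not what that machinery requires.
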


\begin{proof}   Let $Q= O_p(\F)$ and assume that $Q \ne 1$. Let  $E$ be a witness.
As $Q$ is normal in $S$ and contained in $E$ by Lemma \ref{OpinE}, Lemma~\ref{not normal} implies $Q< E$ with $|S:E|\ge p^2$ and so there exists $j\geq 3$ such that $Q= \gamma_j(S)$.

We first demonstrate that $Q$ is $\F$-centric. Suppose this is false. Then $C_S(Q) \not \le Q$ and so $C_S(Q) = \gamma_k(S)$ for some $k < j$ where we assume that $S=\gamma_0(S)$. In particular, $Q$ is abelian and $\Aut_\F(Q)\cong\Out_\F(Q)$.
If $C_S(Q) \le E$, then $C_S(Q)= C_E(Q)$ is $\Aut_\F(E)$-invariant. Hence  $N_\F(E) \subseteq N_\F(C_S(Q))\subset \F$ as $C_S(Q) >Q$. Since  $E$ is $\F$-essential and $\Aut_\F(E) \subset N_\F(C_S(Q))$, Lemma~\ref{F-essential G-essential} implies $E$ is  $N_\F(C_S(Q))$-essential and this contradicts the minimality of $\F$.
 Therefore $C_S(Q) =\gamma_k (S)\not \le E$ and we choose $k<\ell \le j $ so that $\gamma_{\ell}(S) \le E$ and $\gamma_{\ell-1}(S) \not \le E$. Then $\gamma_{\ell -1}(S)\le N_S(E)$ and  $\gamma_{\ell -1}(S)$ centralizes $ Q$.

 Set $L= \langle \Aut_{\gamma_{\ell-1}(S)}(E)^{\Aut_\F(E)}\rangle$. Then $L$ centralizes $Q$ and $C_L(E/Q)$ is a $p$-group by coprime action \cite[Theorem 5.3.6]{Gor}.
 As $L$ is normal in $\Aut_\F(E)$, we have $O_p(L) \le O_p(\Aut_\F(E))= \Inn(E) $. Hence $$\Inn(E)\ge O_p(L) \ge C_L(E/Q).$$

For $K \le \Aut_\F(E)$, set $$\ov K = K\Inn(E)/\Inn(E) \le \Out_\F(E)=\ov{\Aut_\F(E)}.$$

By Lemma~\ref{F/Q}, $\F/Q$ is saturated on $S/Q$ and we know $S/Q$ has maximal class. Since $Q \ne 1$, Lemma~\ref{mc-factsb}(vi) implies that, if $|S/Q| \ge p^4$, $S/Q$ is not exceptional.
  We claim $E/Q$ is $\F/Q$-essential.
 Certainly $E/Q$ is fully $\F/Q$-normalized.  Let $J= C_{\Aut_\F(E)}(E/Q)$.  We have $[J,L] \le J \cap L \le C_L(E/Q) \le \Inn(E)$. Hence $\ov J$ and $\ov L$ commute.  Therefore $\ov J$ normalizes $\ov{\Aut_{\gamma_{\ell-1}(S)}(E)}$.

 Let $T= J \cap \Aut_S(E) \in \Syl_p(J)$ and assume that $\ov T \ne 1$.  Then we obtain $\ov J= N_{\ov{J}} (\ov{\Aut_{\gamma_{\ell-1}(S)}(E)})$ and, by the Frattini Argument, $$\Out_\F(E) = \ov J N_{\Out_\F(E) }(\ov T).$$ Thus  $$\Out_\F(E)= N_{\Out_\F(E) }(\ov T)N_{\ov{J} } (\ov{\Aut_{\gamma_{\ell-1}(S)}(E)}),$$ which contradicts $\Out_\F(E) $ having a strongly $p$-embedded subgroup.  This proves that $T \le \Inn(E)$ and that $\ov J$ is a $p'$-group. Assume that $E/Q$ is not   $\F/Q$-centric. Then $C_{S/Q}(E/Q) \not \le E/Q$. Hence there exists $x \in N_S(E)\setminus E$ such that $c_x \in J$ and $\ov {c_x} \ne 1$, contrary to $\ov J$ having $p'$-order. Hence $E/Q$ is $\F/Q$-centric. Finally, we note that $\Aut_{\F/Q}(E/Q)\cong \Aut_\F(E)/J$ and so $\Out_{\F/Q}(E/Q) \cong \Out_\F(E)/\ov J$. As $J$ has $p'$-order and is centralized by $\ov L$, Lemma~\ref{strongly p-embedded} implies   $\Out_{\F/Q}(E/Q)$ has a strongly $p$-embedded subgroup. Thus $E/Q$ is $\F/Q$-essential as claimed.
 Therefore, if $Q$ is not $\F$-centric, then $E/Q$ is $\F/Q$-essential. In particular, $|E/Q|\ge p^2$ and $|S:E|\ge p^2$ as $E$ is not normal in $S$. Hence $|S/Q|\ge p^4$ and $E/Q < \gamma_1(S)/Q$.  This contradicts the minimal choice of $\F$.
 Hence $Q$ is $\F$-centric.

By Theorem~\ref{model}, there exists a group $G$ which is a model for $\F$. For this model, we have $Q=O_p(G)$ and $C_G(Q)=Z(Q)$. Furthermore, $C_G(Q/\Phi(Q))= Q$ and $E/\Phi(Q)$ is $\F_{S/\Phi(Q)}(G/\Phi(Q))$-essential.  If $\Phi(Q)\ne 1$, we apply induction to obtain a contradiction as $\F_{S/\Phi(Q)}(G/\Phi(Q))$ has
essential subgroups which are properly contained $\gamma_1(S/\Phi(Q))$. Hence $Q$ is elementary abelian. Since $Q$ is $\F$-centric, it is the largest normal abelian subgroup of $S$, say $Q=\gamma_w(S)$. Then, as $|S:E| \ge p^2$ and $E>Q$,  $w \ge 3$ and $|Q| \le |\Omega_1(\gamma_1(S))| \le p^p$ by Lemma~\ref{mc-facts} (iv).

Since $Q= \gamma_w(S)$ is elementary abelian, Theorem~\ref{J6.2} implies  that $\gamma_{w-1}(S)$ has nilpotency class   $2$. Hence   $\gamma_{w-1}(S)$ acts quadratically on $Q$. Consider $G/Q$.  We have $O_p(G/Q)=1$. Since $\gamma_{w-1}(S)$ acts quadratically on $Q$, and $p>3$, $\gamma_{w-1}(S)/Q$ centralizes $O_{p,p'}(G)/Q$ by \cite[Lemma 1.2]{chermak2002}. In particular, $G/Q$ has a component of order divisible by $p$. Suppose that $K$ is the preimage of such a component. Then $S \cap K > Q$. If $K$ is not normalised by $S$, then $|K^S| \ge p$ and $\langle (K^S\rangle \cap S)/Q$ contains an elementary abelian $p$-group of rank $p$. Since $|S| \ge |Q||\langle K^S\rangle \cap S||S:N_S(K)|\ge p^2.p^p.p=p^{p+3}$, this contradicts Lemma~\ref{mc-facts}(v). Hence $K$ is normalized by $S$, it follows that $K \cap S $ is normal in $S$.  If $L/Q \ne K/Q$ is a component of $G/Q$ with $S \cap L> Q$, then $L \cap S$ is normal in $S$ and we see that $Z(S/Q) \ge Z((L\cap S)(K\cap S)/Q)$ has order at least $p^2$. This is impossible as $S/Q$ has maximal class and $|S/Q| \ge p^3$.  Hence $K/Q$ is the unique such component and we have $F^*(G/Q) \le  O_{p'}(G/Q)  K/Q$ with   $\gamma_{w-1}(S)  $ acting quadratically on $Q$.  Since $\gamma_{w-1}(S)  /Q$ has order $p$ and centralizes $O_{p'}(G/Q)$,   $\gamma_{w-1}(S)  /Q$ acts faithfully on $K/Q$. Because $S/Q$ has maximal class and order at least $p^3$, $\gamma_{w-1}(S)  /Q$ is contained in every non-trivial normal subgroup of $S/Q$. Hence
 $S/Q$ acts faithfully on $K/Q$.  As $p \ge 5$, \cite[Theorem A]{chermak2002} implies that $K/Q$ is a group of Lie type defined in characteristic $p$. We refer to \cite[Theorem 2.5.12]{GLS3} for properties of automorphism groups of groups of Lie type.  Since $p\ge 5$, $K/Q$ has no graph automorphisms of order $p$ and so, if $S \not \le K$, $SK/K$ is a cyclic group of field automorphisms of $K/Q$. If $SK/K > 1$, it  follows from \cite[Theorem 3.3.1]{GLS3} that $|\Omega_1(Z((S\cap K)/Q))| \ge p^p$ and again we contradict Lemma~\ref{mc-facts}(iv). Hence $S \le K$.
Since $S/Q$ is $2$-generated and non-abelian, we now see that $K$ is a rank at most $2$ Lie type group defined over $\GF(p)$. Hence \cite[Theorem 3.3.1(b)]{GLS3} yields $K/Q$ mod its centre is one of $\PSU_3(p)$, $\PSL_3(p)$, $\PSp_4(p)$, $\G_2(p)$,    and in each case $S/Q$ has maximal class. However in these groups we have $|N_{K/Q}(S/Q)| \ge (p-1)^2/3 > p-1$, contrary to Hypothesis~\ref{hp.conj} (ii).  This contradiction illustrates that $Q=1$ and completes the proof of the proposition.
\end{proof}

\begin{lemma}\label{gamma.Op} Assume that $E$ is a witness and suppose there is $n>j\geq 1$ such that $\gamma_j(S) \le  E$. Then  $\gamma_j(S)$ is not  $\Aut_\F(E)$-invariant.
\end{lemma}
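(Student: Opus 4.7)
The plan is to argue by contradiction and exploit the minimality of $\F$ as a counterexample, using the normalizer subsystem $\mathcal{G} = N_\F(\gamma_j(S))$. Suppose for contradiction that $\gamma_j(S)$ is $\Aut_\F(E)$-invariant. Since $\gamma_j(S) \le E$, every $\phi \in \Aut_\F(E)$ already acts on $E\gamma_j(S)=E$ stabilizing $\gamma_j(S)$, so $\Aut_\F(E) \subseteq \mathcal{G}$ as morphisms. Because $\gamma_j(S)$ is a term of the lower central series it is normal in $S$, hence fully $\F$-normalized, so $\mathcal{G}$ is a saturated fusion system on $S$ (by \cite[Theorem I.5.5]{AKO}). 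Moreover, by Lemma~\ref{F-essential G-essential}, $E$ is $\mathcal G$-essential.

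Next I would split into the two possibilities for how $\mathcal{G}$ sits inside $\F$. First, suppose $\mathcal{G} = \F$. Then $\gamma_j(S)$ is normal in $\F$, so $\gamma_j(S) \le O_p(\F)$. But $O_p(\F) = 1$ by Proposition~\ref{propOp}, which contradicts $j < n$ (so that $\gamma_j(S)\ne 1$). Second, suppose $\mathcal{G} \subsetneq \F$. Then $\mathcal{G}$ is a saturated fusion system on the same group $S$ of maximal class with strictly fewer morphisms than $\F$. The group $S$ is unchanged, so Hypothesis~\ref{hyp1} continues to hold for $\mathcal{G}$, and the subgroup $E$ retains all of its structural features: it is $\mathcal{G}$-essential, it is not isomorphic to a pearl (being a pearl depends only on the isomorphism type of $E$, not on the fusion system), and it is distinct from $\gamma_1(S)$ and $C_S(Z_2(S))$ since those conditions refer only to $E$ as a subgroup of $S$. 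Hence $E$ remains a witness in $\mathcal{G}$, contradicting the minimality of $\F$ with respect to the number of morphisms.

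The main obstacle — really a bookkeeping matter — is the second case: verifying that $\mathcal{G}$ still qualifies as a counterexample to Theorem~\ref{MT1}. This requires that the reductions implicit in Hypothesis~\ref{hp.conj} propagate to $\mathcal{G}$. Conditions (i) and (ii) of Hypothesis~\ref{hp.conj} are properties of $S$ (via Corollary~\ref{c12}, which depends only on $\gamma_1(S)$ not being abelian or extraspecial), so they pass to any saturated fusion system on $S$. The absence of $\F$-pearls in $\F$ does not need to transfer, since what matters is only that $E$ itself is not a pearl. Once this is verified, the two cases above give a contradiction either way, proving $\gamma_j(S)$ cannot be $\Aut_\F(E)$-invariant.
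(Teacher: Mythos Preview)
Your proof is correct and follows essentially the same approach as the paper's. The paper's argument is slightly more compressed: it observes directly that $N_\F(\gamma_j(S))$ is a counterexample to Theorem~\ref{MT1}, so by minimality $\F = N_\F(\gamma_j(S))$, whence $O_p(\F) \ne 1$, contradicting Proposition~\ref{propOp}. Your extended bookkeeping about Hypothesis~\ref{hp.conj} transferring to $\mathcal{G}$ is unnecessary, since all that is needed is that $\mathcal{G}$ is a counterexample to Theorem~\ref{MT1} on the same $S$ with fewer morphisms.
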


\begin{proof}
Note that $\gamma_j(S)$ is fully $\F$-normalized  since it is normal in $S$.
Consider the saturated fusion system $\N_\F(\gamma_j(S))$ on $S$. Aiming for a contradiction, suppose  $\Aut_\F(E)\subseteq \N_\F(\gamma_j(S))$, Lemma~\ref{F-essential G-essential} implies that  $E$ is $\N_\F(\gamma_j(S))$-essential.  Thus $\N_\F(\gamma_j(S))$ is a counterexample to Theorem \ref{MT1} and by the minimality of $\F$ we deduce that $\F = \N_\F(\gamma_j(S))$. But then $O_p(\F) \ne 1$, contrary to Proposition~\ref{propOp}.
\end{proof}

In the next three lemmas we exploit Proposition~\ref{propOp} to provide both lower and upper bounds for the order of $S$.

\begin{prop}\label{prop:not p^6 and CSZ2 essential}  If $S$ is exceptional then $|S|\geq p^8$.
In particular, if $S$ is exceptional, then $\C_S(Z_2(S))$ is not $\F$-essential.
\end{prop}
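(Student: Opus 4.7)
The plan is to argue by contradiction: assume $|S|=p^6$ with $S$ exceptional, and derive a contradiction. First, since $S$ is exceptional, $\gamma_1(S)\ne C_S(Z_2(S))$ implies $\gamma_4(S)=Z_2(S)\not\le Z(\gamma_1(S))$, and as $Z(\gamma_1(S))$ is a nontrivial proper normal subgroup of $S$ contained in $\gamma_1(S)$, Lemma~\ref{mc-facts}(i) shows it must be a term of the lower central series, forcing $Z(\gamma_1(S))=Z(S)$ of order $p$. Since $\gamma_1(S)$ is not extraspecial under Hypothesis~\ref{hp.conj}, Proposition~\ref{gamma1-essD} gives $\gamma_1(S)\notin\E_\F$. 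If $C_S(Z_2(S))\in\E_\F$, then Lemma~\ref{prop:CSZ2 essential} forces $p=5$, $S\cong\SmallGroup(5^6,661)$, and $\E_\F=\{C_S(Z_2(S))\}$, contradicting the existence of a witness in a minimal counterexample. Hence any witness $E$ satisfies $Z(S)<E\le\gamma_1(S)$ (by Proposition~\ref{Prop:ess.in.except}), $|E|=p^4$ (by Lemma~\ref{lem:not p^3} together with $E<\gamma_1(S)$), $E\ne\gamma_2(S)$ with $N_S(E)=\gamma_1(S)$, and $E$ is not normal in $S$ by Lemma~\ref{not normal}. Moreover, Lemma~\ref{gamma.Op} applied to $Z(S)=\gamma_5(S)\le E$ shows that $Z(S)$ is not $\Aut_\F(E)$-invariant while $Z(E)$ (characteristic in $E$) is, so $Z(S)<Z(E)$ and $|Z(E)|\ge p^2$.

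If $E$ is non-abelian, then $|Z(E)|=p^2$ (else $|E/Z(E)|\le p$ forces $E$ abelian), $E$ has class $2$, and the alternating bilinear commutator map $E/Z(E)\times E/Z(E)\to E'$ on the $2$-dimensional $\GF(p)$-space $E/Z(E)$ forces $|E'|=p$. I would now apply Proposition~\ref{Ellen} with $V=Z(E)$: the module $V$ is $\Aut_\F(E)$-invariant and centralized by $\Inn(E)$; it is nontrivial as an $O^{p'}(\Aut_\F(E))$-module because $Z(S)<V$ is not $\Aut_\F(E)$-invariant; and $|[V,\Out_S(E)]|\le p$ because a generator of the order-$p$ group $\Out_S(E)$ acts unipotently on the $2$-dimensional $\GF(p)$-space $V$, with image of $(x-1)$ at most $1$-dimensional. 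The proposition then forces $V$ to be the natural $\SL_2(p)$-module for $O^{p'}(\Aut_\F(E))\cong\SL_2(p)$, which is irreducible---but $E'\le V$ is a characteristic subgroup of $E$ of order $p$, hence a proper nontrivial $\Aut_\F(E)$-invariant submodule, contradicting irreducibility.

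If instead $E$ is abelian, I would show $E=C_{\gamma_1(S)}(\gamma_3(S))$, a characteristic subgroup of $\gamma_1(S)$. For any $x\in\gamma_1(S)\setminus E$: since $E$ is abelian and $\gamma_1(S)=E\langle x\rangle$, one has $C_E(x)=E\cap Z(\gamma_1(S))=Z(S)$, so $\gamma_1(S)'=[E,x]$ has order $p^3$; as $\gamma_1(S)'$ is normal in $S$ and contained in $\gamma_2(S)$, it must equal $\gamma_3(S)$. Now $E\le C_{\gamma_1(S)}(\gamma_3(S))$ since $E$ is abelian and $\gamma_3(S)=\gamma_1(S)'\le E$, while $\gamma_3(S)\not\le Z(S)=Z(\gamma_1(S))$ gives $C_{\gamma_1(S)}(\gamma_3(S))\ne\gamma_1(S)$; so $|C_{\gamma_1(S)}(\gamma_3(S))|\le p^4=|E|$ and equality holds. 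Since $\gamma_3(S)=\gamma_1(S)'$ is characteristic in $\gamma_1(S)$, so is $E$, making $E$ normal in $S$---contradicting Lemma~\ref{not normal}.

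Both cases yielding a contradiction, we conclude $|S|\ne p^6$. Combined with Lemma~\ref{mc-factsb}(v) (which rules out $n$ odd or $n=4$ for exceptional $S$), this gives $|S|\ge p^8$. The ``in particular'' clause follows immediately from Lemma~\ref{prop:CSZ2 essential}, which requires $|S|=p^6$ whenever $C_S(Z_2(S))$ is $\F$-essential in the exceptional case. The main obstacle is the non-abelian subcase, where one must identify $V=Z(E)$ as the right module for Proposition~\ref{Ellen} and then recognise $E'$ as the forbidden invariant submodule inside the natural $\SL_2(p)$-module.
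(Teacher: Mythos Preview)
Your proof is correct, though both branches take a longer route than the paper's. In the non-abelian case the paper argues directly: $E'\le Z(E)$ (since $|E/Z(E)|=p^2$) and $E'\le\gamma_1(S)'\le\gamma_4(S)=Z_2(S)$ (because $\gamma_1(S)/\gamma_4(S)$ has centre of index at most $p$, hence is abelian), while $E\not\le\gamma_2(S)=C_{\gamma_1(S)}(Z_2(S))$ forces $Z_2(S)\cap Z(E)=Z(S)$; thus $E'=Z(S)$ is characteristic in $E$, contradicting Lemma~\ref{gamma.Op}. Your route through Proposition~\ref{Ellen} reaches the same endpoint (an order-$p$ invariant subgroup inside an irreducible module) but is heavier, and your justification that $V=Z(E)$ is a nontrivial $O^{p'}(\Aut_\F(E))$-module is slightly imprecise: the clean reason is that $\Out_S(E)\le O^{p'}(\Out_\F(E))$ cannot centralise $Z(E)$, else $Z(E)\le Z(\gamma_1(S))=Z(S)$. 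In the abelian case the paper simply takes $s\in S\setminus N_S(E)$, so $\gamma_1(S)=EE^s$ with both factors abelian forces $E\cap E^s\le Z(\gamma_1(S))=Z(S)$, while $|E\cap E^s|=p^3$ --- done. Your argument via $|\gamma_1(S)'|=p^3$ is valid but you could have stopped right there: the same two-step-centraliser fact gives $\gamma_1(S)'\le\gamma_4(S)$ always, so $p^3\le p^2$ is already your contradiction, and the subsequent identification $E=C_{\gamma_1(S)}(\gamma_3(S))$ is unnecessary. Likewise, the preliminary exclusion of $\gamma_1(S)$ and $C_S(Z_2(S))$ from $\E_\F$ is superfluous: a witness is by definition neither, and Lemma~\ref{not normal} already places $E<\gamma_1(S)$.
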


\begin{proof} \blue{Because $S$ is exceptional, Hypothesis~\ref{Hyp7.1} holds. Aiming for a contradiction,  suppose that $S$ has order $p^6$} and let $E $ be a witness. By Hypothesis~\ref{hp.conj} and Lemma~\ref{not normal} we know $E < \gamma_1(S)$  and $\gamma_1(S)$ is not extraspecial.
 Note that $Z(S)$ is not $\Aut_\F(E)$-invariant by Lemma \ref{gamma.Op}.  In particular, $Z(S) < Z(E)$ and so $|Z(E)|\geq p^2$.  By Lemmas~\ref{lem:not p^3} and \ref{not normal}, $|E|=p^4$ and $E\neq \gamma_2(S)$ as $E$ is not normal in $S$. So $\gamma_1(S) = EE^s$ for some $s \in S$. Also, as $\gamma_2(S) = \C_S(Z_2(S)) \cap \gamma_1(S)$, $E \ne \gamma_2(S)$ implies $Z_2(S) \cap Z(E) = Z(S)$.
If $E$ is abelian then $Z(\gamma_1(S)) = E \cap E^s$ has order $p^3$, contradicting the fact that $\Z(\gamma_1(S))=Z(S)$ because $S$ is exceptional. Hence $E$ is non-abelian and $|Z(E)|=p^2$. This implies $$[E,E] \leq [\gamma_1(S),\gamma_1(S)] \cap Z(E) \leq Z_2(S) \cap Z(E) = Z(S).$$ Since $E$ is not abelian, we get that $[E,E] = Z(S)$ and so $\Z(S)$ is $\Aut_\F(E)$-invariant, a contradiction.  This proves $|S| \ge p^8$ and the last statement follows from Proposition~\ref{Prop:ess.in.except}.
\end{proof}

\begin{lemma}\label{p7} We have $|S|\geq p^7$.
\end{lemma}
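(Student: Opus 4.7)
The plan is to derive a contradiction from assuming $|S|=p^6$. By Proposition~\ref{prop:not p^6 and CSZ2 essential}, $S$ must be non-exceptional, so $\gamma_1(S)=C_S(Z_2(S))$. Fix a witness $E$; Lemmas~\ref{no pearls} and~\ref{not normal} give $|E|=p^4$, $E$ maximal in $\gamma_1(S)$, $N_S(E)=\gamma_1(S)$, and $E\ne\gamma_2(S)$. First I would show $E$ is abelian: since $E$ is $\F$-centric and $\gamma_1(S)$ centralizes $Z_2(S)$, we have $Z_2(S)\le Z(\gamma_1(S))\le Z(E)$. But $Z(E)$ is $\Aut_\F(E)$-invariant while $Z_2(S)=\gamma_{n-2}(S)$ is not (Lemma~\ref{gamma.Op}), forcing $|Z(E)|\ge p^3$, whence $E/Z(E)$ is cyclic and $E=Z(E)$.

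Next, since $n=6\le p+1$, Lemma~\ref{mc-facts}(vi) gives $\gamma_2(S)$ has exponent $p$, so $\Omega_1(\gamma_1(S))\ge\gamma_2(S)$; by Lemma~\ref{mc-facts}(i) this characteristic subgroup equals either $\gamma_2(S)$ or all of $\gamma_1(S)$. In the first case $\gamma_1(S)$ is regular (Lemma~\ref{mc-facts}(ii)) of exponent $p^2$, and Lemma~\ref{lem:regular}(iv) yields $|\mho^1(\gamma_1(S))|=p$; since this is a normal subgroup of $S$ of order $p$ it equals $Z(S)$. By Lemma~\ref{no pearls}, $\Omega_1(E)=E\cap\gamma_2(S)$ has order $p^3$, so $E\cong\mathbb{Z}/p^2\times(\mathbb{Z}/p)^2$ with $|\mho^1(E)|=p$. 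But $\mho^1(E)\le\mho^1(\gamma_1(S))=Z(S)=\gamma_{n-1}(S)$ forces $\mho^1(E)=Z(S)$, and since $\mho^1(E)$ is characteristic in $E$ (hence $\Aut_\F(E)$-invariant) this contradicts Lemma~\ref{gamma.Op}.

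In the remaining case $\gamma_1(S)$ has exponent $p$ and $E$ is elementary abelian of order $p^4$, so the $\mho^1$-trick fails and one must work harder. Here $\gamma_1(S)'$ is normal in $S$ and lies in $\gamma_3(S)$ (because $\gamma_1(S)/\gamma_3(S)$ is abelian), so it equals $Z(S)$ or $\gamma_4(S)$ according as $|Z(\gamma_1(S))|=p^3$ or $p^2$. When $\gamma_1(S)'=Z(S)$ one has $|E/C_E(g)|=p$ for $g\in\gamma_1(S)\setminus E$, so Proposition~\ref{Ellen} applies and gives $O^{p'}(\Aut_\F(E))/C\cong\SL_2(p)$ with a decomposition $E=U\oplus W$ in which $U=C_E(O^{p'})$ is $\Aut_\F(E)$-invariant of order $p^2$ and $W$ is natural; combining this with Lemma~\ref{gamma.Op} applied to each of $\gamma_3(S),\gamma_4(S),Z(S)\le E$, and with the fact that $N_{\Aut_\F(E)}(\Aut_S(E))$ extends to $\Aut_\F(\gamma_1(S))$ (by receptiveness) and so preserves $\gamma_1(S)'=Z(S)$, yields a contradiction. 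When $\gamma_1(S)'=\gamma_4(S)$, $g$ acts quadratically on $E$ with $[E,g]=C_E(g)=\gamma_4(S)$ of order $p^2$, and Proposition~\ref{Hall-Hig} (using $\dim E=4<p-1$ for $p\ge 7$) combined with the classification of quadratic modules for groups having a strongly $p$-embedded subgroup produces the same contradiction. The hard part is this elementary-abelian case: the argument requires delicate module-theoretic tracking of $\Aut_\F(E)$-invariant subspaces of $E$ against the terms $\gamma_j(S)$ of the lower central series of $S$, since none of them may coincide with an $\Aut_\F(E)$-invariant subgroup.
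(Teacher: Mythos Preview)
Your opening moves agree with the paper: reduce to $|S|=p^6$ non-exceptional, show $|E|=p^4$, $N_S(E)=\gamma_1(S)$, $E\ne\gamma_2(S)$, and $E$ abelian via $Z_2(S)<Z(E)$. Your Case~1 (where $\Omega_1(\gamma_1(S))=\gamma_2(S)$) is handled more quickly in the paper by the single observation that $S^p\le Z(S)$ (Lemma~\ref{mc-facts}(vi)), so $\agemO^1(E)\le Z(S)$, and if nontrivial it equals $Z(S)$, contradicting Lemma~\ref{gamma.Op}; thus $E$ is elementary abelian regardless. Once $E$ is elementary abelian, the paper computes $Z(\gamma_1(S))=E\cap E^s$ for $s\in S\setminus\gamma_1(S)$, which has order $p^3$; so $Z(\gamma_1(S))=\gamma_3(S)$ always, and your second subcase $|Z(\gamma_1(S))|=p^2$ never arises. (Incidentally, your claimed correspondence between $\gamma_1(S)'$ and $|Z(\gamma_1(S))|$ is not correct: one always has $\gamma_1(S)'\le\gamma_4(S)$ by the $2$-step centralizer property, and this alone does not pin down $|Z(\gamma_1(S))|$.)

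The genuine gap is in the endgame. After Proposition~\ref{Ellen} gives $O^{p'}(\Aut_\F(E))\cong\SL_2(p)$ with $K=C_E(O^{p'}(\Aut_\F(E)))\le\gamma_3(S)$ of order $p^2$, you propose to extend elements of $N_{\Aut_\F(E)}(\Aut_S(E))$ only to $\Aut_\F(\gamma_1(S))$ and use that they preserve $\gamma_1(S)'$; but knowing that the \emph{normalizer} preserves $Z(S)$ does not force $\Aut_\F(E)$ to, and there is no evident way to reach Lemma~\ref{gamma.Op} from this. The paper's key additional step is to show that $\gamma_1(S)$ is \emph{not} $\F$-essential: if it were, Lemma~\ref{gamma1-essC} would force $|\Omega_1(\gamma_1(S))|=p^{p-1}$, impossible since $\gamma_1(S)=\Omega_1(\gamma_1(S))$ has order $p^5$. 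Consequently $E$ lies in no larger $\F$-essential subgroup, so the central involution $\tau$ extends all the way to $\hat\tau\in\Aut_\F(S)$. Now $\hat\tau$ centralizes $\gamma_1(S)/E$ (as $\tau$ centralizes $\Out_S(E)$) and has $|C_{\gamma_1(S)}(\hat\tau)|=p^3$; comparing with the pattern forced by Lemma~\ref{action} one finds $\hat\tau$ must centralize $\gamma_1(S)/\gamma_2(S)$, $\gamma_3(S)/\gamma_4(S)$, $Z(S)$ and invert $\gamma_2(S)/\gamma_3(S)$, $\gamma_4(S)/Z(S)$. But $\gamma_1(S)/E\cong\gamma_2(S)/\gamma_3(S)$ as $\hat\tau$-groups (since $E\cap\gamma_2(S)=\gamma_3(S)$), giving the contradiction. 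Without the passage to $\Aut_\F(S)$, and hence without first excluding $\gamma_1(S)\in\E_\F$, the argument does not close.
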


\begin{proof} By Hypothesis~\ref{hp.conj} we have $|S|\geq p^6$ and $\gamma_1(S)$ is not abelian.
Aiming for a contradiction, suppose $|S|=p^6$.  By Lemma \ref{prop:not p^6 and CSZ2 essential} the group $S$ is not exceptional.  Let $E < \gamma_1(S)$ be a witness. By Lemmas~\ref{lem:not p^3} and \ref{not normal}, $|E|=p^4$, $\gamma_1(S)= N_S(E)$ and $Z_2(S) \leq Z(\gamma_1(S)) < E$. In addition $E \ne \gamma_2(S)$ as $E$ is not normal in $S$. The group $Z_2(S)$ is not $\Aut_\F(E)$-invariant by Lemma~\ref{gamma.Op}, hence  $Z_2(S) < Z(E)$. So $|E \colon Z(E)| \leq p$ and we conclude that $E$ is abelian. Since $S^p \leq Z(S)$ by Lemma \ref{mc-facts}(vi) and $Z(S)$  is not $\Aut_\F(E)$-invariant by Lemma~\ref{gamma.Op}, we also get that $E$ has exponent $p$, that is, $E$ is elementary abelian.   Let  $s \in S\setminus N_S(E)$. Then  $\gamma_1(S) = EE^s$ and so $Z(\gamma_1(S)) = E \cap E^s$ has order $p^3$ and $\gamma_1(S)= \Omega_1(\gamma_1(S))$ is non-abelian. Also, $\gamma_3(S) = Z(\gamma_1(S))=\C_E(\gamma_1(S)) $ and $[\gamma_1(S) \colon E] = p = [E \colon \C_E(\gamma_1(S))]$. Proposition~\ref{Ellen} applied with $V=E$ implies $O^{p'}(\Aut_\F(E))\cong \SL_2(p)$ and $E/\C_E(O^{p'}(\Aut_\F(E)))$ is  a natural module for $O^{p'}(\Aut_\F(E))$. In particular the group $K = \C_E(O^{p'}(\Out_\F(E)))$ has order $p^2$ and it is contained in $\gamma_3(S) = \Z(\gamma_1(S))$.

If $\gamma_1(S)$ is $\F$-essential, then \blue{Hypothesis~\ref{Hyp8.1} holds and Lemma~\ref{gamma1-essC}} implies that $p^5= p^{p-1}$, a contradiction. Hence $\gamma_1(S)$ is not $\F$-essential and $E$ is not properly contained in any $\F$-essential subgroup of $S$.
Let $\tau \in \Z(O^{p'}(\Aut_\F(E)))$ be an involution. Then there is $\hat{\tau}\in \Aut_\F(S)$ such that $\hat{\tau}|_E = \tau$ and $\hat \tau$ has $p'$-order. So $\C_{\gamma_3(S)}(\hat{\tau}) =C_{E}(\hat{\tau})= K$ has order $p^2$ and $[\gamma_1(S), \hat \tau]\le E $. Hence $|C_{\gamma_1(S)}(\hat \tau)|=p^3$. The only way this is compatible with Lemma~\ref{action} is if $\hat \tau$ centralizes $\gamma_1(S)/\gamma_2(S)$, $\gamma_3(S)/\gamma_4(S)$ and $\gamma_5(S)= Z(S)$ and inverts $\gamma_2(S)/\gamma_3(S)$ and $\gamma_4(S)/Z(S)$. As $\gamma_1(S)/E\cong \gamma_2(S)/\gamma_3(S)$ which is inverted by $\hat \tau$, we have a contradiction. Hence $|S| \ge p^7$.
\end{proof}

We now turn to determining an upper bound for $|S|$.

\begin{lemma}\label{Omega.non.ab} Suppose that $E$ is a witness.  Then $Z(\Omega_1(\gamma_1(S))) \le E$ and $\Omega_1(\gamma_1(S)) \not \le E$. In particular, $\Omega_1(\gamma_1(S))$ is not abelian, $|S| < p^{2p-4}$ and $p\geq 7$.
\end{lemma}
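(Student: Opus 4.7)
The lemma asserts two containments and three consequences. My plan is to first establish the non-containment $\Omega_1(\gamma_1(S))\not\le E$ via Lemma~\ref{gamma.Op}, then the containment $Z(\Omega_1(\gamma_1(S)))\le E$ via a stabilisation argument, and then read off the consequences in sequence.

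For the non-containment, suppose $\Omega_1(\gamma_1(S))\le E$. Regularity of $\gamma_1(S)$ (Lemma~\ref{mc-facts}(ii)) together with Lemma~\ref{lem:regular}(iii) and Lemma~\ref{no pearls} give $\Omega_1(E)=E\cap\Omega_1(\gamma_1(S))=\Omega_1(\gamma_1(S))$. Since this group is characteristic in $S$ and properly contained in $\gamma_1(S)$ (as $E<\gamma_1(S)$), Lemma~\ref{mc-facts}(i) identifies it with $\gamma_j(S)$ for some $j\ge 2$. But then $\gamma_j(S)=\Omega_1(E)$ is $\Aut_\F(E)$-invariant, contradicting Lemma~\ref{gamma.Op}.

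For the containment set $Y:=Z(\Omega_1(\gamma_1(S)))$. Regularity again gives $Y$ of exponent $p$, and $Y$ is characteristic in $S$, hence normal in $S$. If $E\le\Omega_1(\gamma_1(S))$ then $Y$ centralises $E$, and $\F$-centricity of $E$ forces $Y\le C_S(E)\le E$. Otherwise $\Omega_1(E)<E$, and $Y\le\Omega_1(\gamma_1(S))$ combined with normality gives $[E,Y]\le E\cap Y\le E\cap\Omega_1(\gamma_1(S))=\Omega_1(E)$ while $[\Omega_1(E),Y]=1$. Thus the image of $Y$ in $\Aut_S(E)$ stabilises the $\Aut_\F(E)$-invariant series $E>\Omega_1(E)>1$, and since $1\le\Phi(E)$ trivially and $\F$-essentiality of $E$ gives $O_p(\Aut_\F(E))=\Inn(E)$, Lemma~\ref{lem:series} forces this image into $\Inn(E)$, whence $Y\le E\cdot C_S(E)=E$. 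Non-abelianness of $\Omega_1(\gamma_1(S))$ is then immediate: otherwise $Y$ would coincide with $\Omega_1(\gamma_1(S))$ and the two containments would conflict.

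For the order and prime bounds, write $|S|=p^n$ and suppose $n\ge 2p-4$. By Lemma~\ref{p7} we have $n\ge 7$, and in this range Lemma~\ref{mc-factsb}(v) shows $S$ is non-exceptional, so the degree of commutativity $c\ge 1$ by Lemma~\ref{mc-factsb}(iv). For $p\ge 7$ we have $n>p+1$, so Lemma~\ref{mc-facts}(iii) identifies $\Omega_1(\gamma_1(S))=\gamma_{n-p+1}(S)$; the commutator bound $[\gamma_{n-p+1}(S),\gamma_{n-p+1}(S)]\le\gamma_{2(n-p+1)+c}(S)$, sharpened when needed by applying Theorem~\ref{J6.2} to the elementary abelian term $\gamma_{n-p+2}(S)$ to control the class of $\gamma_{n-p+1}(S)$, collapses $\Omega_1(\gamma_1(S))$ to an abelian group, contradicting the previous step. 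For $p=5$ the same commutator estimate with $n\ge 7>p+1$ directly yields abelianness of $\Omega_1(\gamma_1(S))$, again a contradiction. Thus $|S|<p^{2p-4}$, and combining with $|S|\ge p^7$ forces $7<2p-4$, i.e.\ $p\ge 7$. The main technical obstacle is squeezing the strict inequality $n\le 2p-5$ out of non-abelianness of $\gamma_{n-p+1}(S)$, since the bare commutator estimate with $c=1$ only gives $n\le 2p-4$; closing this gap requires either upgrading to $c\ge 2$ via the finer degree-of-commutativity results in \cite{Led-Green} or an iterated application of Theorem~\ref{J6.2}.
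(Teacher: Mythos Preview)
Your arguments for the two containments and for the non-abelianness of $\Omega_1(\gamma_1(S))$ follow the paper's proof closely, with one sloppiness: in your second case you write ``the image of $Y$ in $\Aut_S(E)$'' and ``$[E,Y]\le E\cap Y$'', both of which presuppose $Y\le N_S(E)$, which has not been established. The clean fix (and what the paper does) is to run your stabilisation argument with $N_Y(E)$ in place of $Y$, conclude $N_Y(E)\le E\,C_S(E)=E$, and then invoke Lemma~\ref{lem:K in E} (applicable since $Y$ is normal in $S$, so $EY$ is a subgroup) to obtain $Y\le E$. This also renders your case split unnecessary.

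The real gap, which you openly flag, is in the bound $|S|<p^{2p-4}$. You are right that the bare estimate $[\gamma_{n-p+1}(S),\gamma_{n-p+1}(S)]\le\gamma_{2(n-p+1)+1}(S)$ with $c\ge 1$ leaves the boundary case $n=2p-4$ (for $p\ge 7$) unresolved. However, neither an upgrade to $c\ge 2$ nor Theorem~\ref{J6.2} is needed. The missing trick is the elementary observation that since $\gamma_{n-p+1}(S)/\gamma_{n-p+2}(S)$ is cyclic,
\[
[\gamma_{n-p+1}(S),\gamma_{n-p+1}(S)]=[\gamma_{n-p+1}(S),\gamma_{n-p+2}(S)]\le\gamma_{(n-p+1)+(n-p+2)+1}(S)=\gamma_{2n-2p+4}(S).
\]
Now if $n\ge 2p-4$ then $2n-2p+4\ge n$, so $\gamma_{2n-2p+4}(S)=1$ and $\Omega_1(\gamma_1(S))$ is abelian, a contradiction. (Equivalently, and this is how the paper phrases it: non-abelianness forces $Z(S)=\gamma_{n-1}(S)$ to lie in $\gamma_{2n-2p+4}(S)$, whence $n-1\ge 2n-2p+4$, i.e.\ $n\le 2p-5$.) Your separate treatment of $p=5$ is then unnecessary, since the combined estimate $p^7\le|S|<p^{2p-4}$ immediately rules out $p=5$.
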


\begin{proof} Using Lemma~\ref{lem:regular}(iii), we have  $$[\N_{Z(\Omega_1(\gamma_1(S)))}(E),E]\le \Omega_1(\gamma_1(S)) \cap E \leq \Omega_1(E) \leq \Omega_1(\gamma_1(S))$$  and so the group $\N_{Z(\Omega_1(\gamma_1(S)))}(E)$ stabilizes the characteristic series $1 < \Omega_1(E) < E$. Since $E$ is $\F$-essential, Lemmas~\ref{lem:K in E} and \ref{lem:series} imply  that $Z(\Omega_1(\gamma_1(S))) \leq E$.

If $\Omega_1(\gamma_1(S)) \le E$, then $\Omega_1(\gamma_1(S))=\Omega_1(E)$ is $\Aut_\F(E)$-invariant, contrary to  Lemma \ref{gamma.Op}. Hence $\Omega_1(\gamma_1(S)) \not\le E$. This proves the first two statements. In particular, as $Z(\Omega_1(\gamma_1(S))) \le E$, $\Omega_1(\gamma_1(S))$ is not abelian.

Assume that $|S| \geq p^{2p-4}$. Then $|S|> p^{p+1}$ since $2p-4 > p+1$ unless $p=5$ and in the latter case we know $|S|>5^6$ by Lemma \ref{p7}. Thus $S$ has positive degree of commutativity by Lemma~\ref{mc-factsb}(iv) and  (v), and  $\Omega_1(\gamma_1(S))=\gamma_{n-p+1}(S)$ by Lemma~\ref{mc-facts}(iii).
Since $\Omega_1(\gamma_1(S))$ is not abelian and  $S$ has positive degree of commutativity we get
\begin{eqnarray*}Z(S)&=& \gamma_{n-1}(S) \le [\gamma_{n-p+1}(S),\gamma_{n-p+1}(S)]\\& = & [\gamma_{n-p+1}(S),\gamma_{n-p+2}(S)]\le \gamma_{2n-2p+3+1}(S).\end{eqnarray*} This implies that $n \leq 2p-4 -1$, contradicting our assumptions. Therefore $|S| < p^{2p-4}$. Now, if $p=5$ we obtain $|S|<5^6$, contradicting Lemma \ref{p7}. Hence $p\geq 7$.
\end{proof}

We conclude this section with a characterization of the proper subsystems of $\F$.

\begin{lemma}\label{lem:subsystem1} Suppose that $\mathcal K$ is a proper saturated fusion subsystem of $\F$ on $S$. Then    $\mathcal K \subseteq N_\F(\gamma_1(S))$.
\end{lemma}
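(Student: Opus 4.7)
The plan is to use the minimality of $\F$ with respect to the number of morphisms, combined with the Alperin--Goldschmidt theorem and the fact that $\gamma_1(S)$ is characteristic in $S$. Since $\K$ is a proper saturated subsystem of $\F$ on $S$, it contains strictly fewer morphisms than $\F$, so by Hypothesis~\ref{hp.conj} it cannot be a counterexample to Theorem~\ref{MT1}. Consequently, every $\K$-essential subgroup is a $\K$-pearl, $\gamma_1(S)$, or $C_S(Z_2(S))$.

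Next I would show that $\K$ has no pearls. If $P$ is a $\K$-pearl, Lemma~\ref{pearls1} gives $O^{p'}(\Out_\K(P)) \cong \SL_2(p)$, and the containment $\Aut_\K(P) \le \Aut_\F(P)$ places a copy of $\SL_2(p)$ inside $\Out_\F(P)$. Choosing a fully $\F$-normalized $\F$-conjugate $P'$ of $P$ and transporting via the corresponding $\F$-isomorphism produces $\SL_2(p) \le O^{p'}(\Out_\F(P'))$; here $P'$ is of pearl isomorphism type (abelian of order $p^2$ or extraspecial of order $p^3$), and the classification of pearls (Lemma~\ref{pearls2} together with Lemma~\ref{p2centralizer}) ensures $P'$ is $\F$-centric. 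Hence $P'$ is an $\F$-pearl, contradicting Lemma~\ref{no pearls}.

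To eliminate $C_S(Z_2(S))$ from $\K$-essentiality, one may assume $S$ is exceptional, for otherwise $C_S(Z_2(S)) = \gamma_1(S)$ and there is nothing to do. Setting $R = C_S(Z_2(S))$, the argument opening the proof of Lemma~\ref{prop:CSZ2 essential} shows that $R/\Phi(R) = R/\gamma_3(S)$ has order $p^2$, so $\Out_\K(R)$ embeds into $\GL_2(p)$ with Sylow $p$-subgroup $\Out_S(R)$ of order $p$. If $\Out_\K(R)$ contains a strongly $p$-embedded subgroup, the standard $\GL_2(p)$ computation yields $\langle \Out_S(R)^{\Out_\K(R)} \rangle \cong \SL_2(p)$, which then lies inside $\Out_\F(R)$. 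Because $R$ is normal in $S$, it is fully $\F$-normalized and $\F$-centric, so $R$ would be $\F$-essential, contradicting Proposition~\ref{prop:not p^6 and CSZ2 essential}.

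Therefore the only possible $\K$-essential subgroup is $\gamma_1(S)$, and by Theorem~\ref{t:alp} we have $\K = \langle \Aut_\K(S), \Aut_\K(\gamma_1(S)) \rangle$. Every element of $\Aut_\K(S)$ preserves the characteristic subgroup $\gamma_1(S)$ and so lies in $N_\F(\gamma_1(S))$, while $\Aut_\K(\gamma_1(S))$ trivially lies there as well, giving $\K \subseteq N_\F(\gamma_1(S))$. The main obstacle is the pearl-elimination step: one must carefully carry the strongly $p$-embedded structure of $\Out_\K(P)$ across an $\F$-isomorphism to a fully $\F$-normalized, $\F$-centric $\F$-conjugate, and check that this conjugate is genuinely $\F$-essential in order to invoke Lemma~\ref{no pearls}.
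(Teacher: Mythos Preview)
Your overall strategy matches the paper's: use minimality of $\F$ to conclude every $\K$-essential subgroup is a $\K$-pearl or lies in $\{\gamma_1(S), C_S(Z_2(S))\}$, eliminate the first two possibilities, and finish via Alperin--Goldschmidt. Your handling of $C_S(Z_2(S))$ is correct and in fact more explicit than the paper's terse citation of Proposition~\ref{prop:not p^6 and CSZ2 essential}.

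The pearl-elimination step, however, has a real gap. You assert that the fully $\F$-normalized $\F$-conjugate $P'$ is $\F$-centric by appealing to Lemmas~\ref{pearls2} and \ref{p2centralizer}, but those results yield centricity only for subgroups of pearl type that lie \emph{outside} $\gamma_1(S)\cup C_S(Z_2(S))$. Nothing you have written prevents an $\F$-isomorphism from carrying $P$ into $\gamma_1(S)$: for instance, an abelian $P'$ of order $p^2$ could a priori land on $Z_2(S)$, which is centralized by all of $C_S(Z_2(S))$ and is therefore far from centric. You flag this yourself as ``the main obstacle'' but do not resolve it.

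The paper sidesteps this detour. Rather than transporting to an $\F$-conjugate, it argues that because $\Aut_\K(P)\le\Aut_\F(P)$ contains automorphisms not arising as restrictions from $N_{\Aut_\F(S)}(P)$, the Alperin--Goldschmidt factorization of such an automorphism must at some stage pass through an $\F$-essential subgroup. But Lemma~\ref{no pearls} forces every $\F$-essential into $\gamma_1(S)$ or $C_S(Z_2(S))$, and these are characteristic in $S$; since the factorization starts at $P\not\subseteq\gamma_1(S)\cup C_S(Z_2(S))$ and $\Aut_\F(S)$ preserves this property, no factor can come from an $\F$-essential, a contradiction. Equivalently, $P\le E^*$ for some $E^*\in\E_\F$, forcing $P\le\gamma_1(S)$ or $P\le C_S(Z_2(S))$, which contradicts Lemma~\ref{in.sub} applied to the $\K$-pearl $P$. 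Your approach can be repaired by exactly this observation --- it gives $P^\F=P^{\Aut_\F(S)}$, whence $P'\not\subseteq\gamma_1(S)\cup C_S(Z_2(S))$ --- but once you have it, the passage to $P'$ is superfluous.
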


\begin{proof}
Suppose $\mathcal K \not \subseteq N_\F(S)$. Then there is a $\mathcal K$-essential subgroup $P$.
Since $\K$ is properly contained in $\F$, it is not a counterexample to Theorem \ref{MT1}. Hence either $P$ is a $\K$-pearl or $P \in \{C_S(Z_2(S)), \gamma_1(S)\}$.
If $P$ is a $\mathcal K$-pearl then $P$ is contained in some $\F$-essential subgroup $E^*$. By Lemma~\ref{no pearls}  we have  $P \le E^* \le \gamma_1(S)$ or $P \le E^* \le C_S(Z_2(S))$ both of which are impossible by Lemma~\ref{in.sub} applied to $P$. Thus $P$ is not a $\mathcal K$-pearl.  If $P= C_S(Z_2(S))\ne \gamma_1(S)$,  then $S$ is exceptional and $p$ is $\F$-essential.  This contradicts  Lemma \ref{prop:not p^6 and CSZ2 essential}. Hence the only option is $P= \gamma_1(S)$. This proves the statement.
\end{proof}

\section{Locating $\F$-essential subgroups in groups of maximal class II}\label{sec: proof MT1 2}

\blue{In this section, we continue to prepare for the proof of Theorem~\ref{MT1}. In particular,  we continue to work under Hypothesis~\ref{hp.conj}.}
We start by creating a compendium facts that we have established about saturated fusion systems which satisfy Hypothesis~\ref{hp.conj}.
\begin{lemma}\label{facts} Suppose that $E \in \E_\F$ is a witness.
\begin{enumerate}\item $E < \gamma_1(S)$, $E$ is not normal in $S$ and $|E| \ge p^4$;
\item if $S$ is exceptional, then $C_S(Z_2(S))$ is not $\F$-essential and $\gamma_1(S)$ is not extraspecial;
\item $p \ge 7$ and $p^7\le |S| < p^{2p-4}$;
\item $\Omega_1(\gamma_1(S))$ is not abelian;
\item $O_p(\F)=1$;
\item $\Out_\F(S)$ acts faithfully on $S/\gamma_1(S)$ and is cyclic of order dividing $p-1$;
\item if $\gamma_1(S)$ is $\F$-essential, then $S$ is not exceptional and $\Out_\F(\gamma_1(S))\cong \PSL_2(p)$ or $\PGL_2(p)$.
\end{enumerate}
\end{lemma}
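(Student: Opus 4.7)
The proof is essentially a matter of bookkeeping: each clause of the lemma is a direct consequence of results already established in Sections~\ref{sec:Generalities}--\ref{sec: proof MT1 1}, together with Hypothesis~\ref{hp.conj}. So my plan is simply to trace the citations, item by item, and then address the one clause (vii) that needs a short synthesis argument.

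For (i), I would cite Lemma~\ref{not normal} for $E<\gamma_1(S)$ and the statement that $E$ is not normal in $S$, and then combine Lemma~\ref{no pearls} (which says $\F$ has no $\F$-pearls under Hypothesis~\ref{hp.conj}, so $E$ cannot be an $\F$-pearl) with Lemma~\ref{lem:not p^3} to conclude $|E|\ge p^4$. Part (ii) is immediate from Proposition~\ref{prop:not p^6 and CSZ2 essential}, which asserts that an exceptional $S$ must have $|S|\ge p^8$ and in particular that $C_S(Z_2(S))$ cannot be $\F$-essential. Part (iii) combines Lemma~\ref{p7} (giving $|S|\ge p^7$) with Lemma~\ref{Omega.non.ab} (giving $p\ge 7$ and $|S|<p^{2p-4}$). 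Part (iv) records that $\gamma_1(S)$ is not abelian or extraspecial (this is Hypothesis~\ref{hp.conj}(i)) and that $\Omega_1(\gamma_1(S))$ is non-abelian (by Lemma~\ref{Omega.non.ab}). Part (v) is Proposition~\ref{propOp}. Part (vi) is Hypothesis~\ref{hp.conj}(ii), which in turn is justified by Corollary~\ref{c12} applied to $\Aut_\F(S)/\Inn(S)$ using that $\gamma_1(S)$ is neither abelian nor extraspecial.

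The only clause requiring a small synthesis is (vii). Assume $\gamma_1(S)$ is $\F$-essential. First I would rule out the exceptional case: by Proposition~\ref{gamma1-essD}, if $S$ were exceptional then $\gamma_1(S)$ would be extraspecial, contradicting clause (iv) already established. Hence $S$ is not exceptional. Now Lemma~\ref{gamma1-essC} applies (since $\Omega_1(\gamma_1(S))$ is non-abelian by (iv)), yielding $L:=\langle\Out_S(\gamma_1(S))^{\Out_\F(\gamma_1(S))}\rangle\cong\PSL_2(p)$; this $L$ coincides with $O^{p'}(\Out_\F(\gamma_1(S)))$. To upgrade to $\Out_\F(\gamma_1(S))\in\{\PSL_2(p),\PGL_2(p)\}$, I would set $C=C_{\Out_\F(\gamma_1(S))}(L)$. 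Since $C\cap L=Z(L)=1$, $C$ is a normal $p'$-subgroup of $\Out_\F(\gamma_1(S))$. Any non-trivial element of $C$ is centralized by $\Out_S(\gamma_1(S))\le L$, and so lies in $O_{p'}(Z(\Out_\F(\gamma_1(S))))$, contradicting Lemma~\ref{no inv} (since (iv) forbids both $\gamma_1(S)$ abelian and the exceptional/extraspecial case). Hence $C=1$ and $\Out_\F(\gamma_1(S))$ embeds into $\Aut(L)\cong\PGL_2(p)$ while containing $L$, giving exactly the two stated possibilities.

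The plan therefore involves no obstacles beyond carefully matching each clause to the appropriate prior result; the only genuinely new small computation is the ruling out of $p'$-centralizers of $L$ in (vii), and that is handled by Lemma~\ref{no inv}. No further case analysis or commutator computation is needed.
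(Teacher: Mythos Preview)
Your proposal matches the paper's proof almost exactly in citations for (i)--(vi); the paper's proof is the same bookkeeping exercise. The only divergence is in (vii): the paper, after getting $O^{p'}(\Out_\F(\gamma_1(S)))\cong\PSL_2(p)$ from Lemma~\ref{gamma1-essC}, simply says ``Using (vi) gives $\Out_\F(\gamma_1(S))\cong\PSL_2(p)$ or $\PGL_2(p)$'', i.e.\ it uses that $\Out_\F(S)$ is cyclic of order dividing $p-1$ and faithful on $S/\gamma_1(S)$ to bound the index $|\Out_\F(\gamma_1(S)):L|\le 2$ and rule out $L\times C_2$.

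Your alternative via $C=C_{\Out_\F(\gamma_1(S))}(L)$ has a small slip: the implication ``centralized by $\Out_S(\gamma_1(S))\le L$, and so lies in $O_{p'}(Z(\Out_\F(\gamma_1(S))))$'' is not valid as stated---centralizing $L$ does not force centrality in the whole group. The fix is immediate, however: the \emph{proof} of Lemma~\ref{no inv} only uses that the element in question centralizes $\Out_S(\gamma_1(S))$ (so lifts to $\Aut_\F(S)$ and acts trivially on $S/\gamma_1(S)$), and this holds for every element of your $C$. So your argument works once you invoke Lemma~\ref{lem:autS p'elts} directly rather than Lemma~\ref{no inv}; or you can just use (vi) as the paper does.
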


\begin{proof} Part (i) follows from Lemmas~\ref{not normal} and \ref{no pearls}. \blue{Part (ii) is Proposition~\ref{prop:not p^6 and CSZ2 essential} and, as in this case Hypothesis~\ref{Hyp7.1} holds, Lemma~\ref{Lem:no ess in extrasp}.
Part (iii) is a combination of Lemmas~\ref{p7} and \ref{Omega.non.ab}. Part (iv) follows from Lemma~\ref{Omega.non.ab}.}

Part (v) is precisely Proposition~\ref{propOp}. Using part (iv), (vi) follows from Corollary~\ref{c12}. Finally, for part (vii), if $S$ is exceptional, then Proposition~\ref{gamma1-essD} implies $\gamma_1(S)$ is extraspecial, which is against (iv).  Hence $S$ is not exceptional and Hypothesis~\ref{Hyp8.1} holds. Thus $O^{p'}(\Out_\F(\gamma_1(S))) \cong \PSL_2(p)$ by Lemma~\ref{gamma1-essC}.  Using (vi) gives $\Out_\F(\gamma_1(S))\cong \PSL_2(p)$ or $\PGL_2(p)$.
\end{proof}

\blue{
Suppose that $E$ is a witness. Then $E < \gamma_1(S)$  by Lemma~\ref{facts} (i).  If $N_\F(E) \subseteq   N_\F(\gamma_1(S))$, then $E$ is $ N_\F(\gamma_1(S))$-essential by Lemma~\ref{F-essential G-essential} and we obtain the contradiction  $E<\gamma_1(S)\le O_p(  N_\F(\gamma_1(S)))\le E$ from Lemma~\ref{OpinE}.  Hence $N_\F(E)\not \subseteq  N_\F(\gamma_1(S))$. Hence the set of subgroups $T$ of $\gamma_1(S)$ with $N_\F(T) \not \subseteq N_\F(\gamma_1(S))$ is non-empty. We now set up the notation which shall be used in the remainder of this section.

\begin{notation}\label{notationT} Set $\mathcal G=N_\F(\gamma_1(S))$. From among all non-trivial subgroups $T\le \gamma_1(S)$, select one which satisfies the following conditions in the specified order.
 \begin{enumerate}
\item  $N_\F(T) \not \subseteq \mathcal G $;
\item $|N_S(T)|$ is maximal; and
\item $|T| $ is maximal.
\end{enumerate}
 \end{notation}
Observe  that $N_\F(S) \subseteq \mathcal G$ with equality if and only if $\gamma_1(S)$ is not $\F$-essential.}

\blue{The   discussion before Notation~\ref{notationT} shows that  a witness satisfies the first condition and so  we can conclude that a subgroup $T$ as specified in Notation~\ref{notationT} exists.}

\begin{lemma}\label{lem:Tfn} The subgroup
$T$ is fully $\F$-normalized and $N_\F(T)$ is saturated.
\end{lemma}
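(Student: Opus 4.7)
The plan is to prove first that $T$ is fully $\F$-normalized, and then derive saturation of $N_\F(T)$ from~\cite[Theorem I.5.5]{AKO}. First I would observe that every $\F$-essential subgroup of $\F$ lies in $\gamma_1(S)$: by Lemma~\ref{in.sub} an essential is either an $\F$-pearl, contained in $\gamma_1(S)$, or contained in $C_S(Z_2(S))$; Lemma~\ref{no pearls} excludes pearls, while Lemma~\ref{facts}(ii) together with the identity $C_S(Z_2(S))=\gamma_1(S)$ in the non-exceptional case handle the third option. By the Alperin--Goldschmidt Theorem~\ref{t:alp}, $\gamma_1(S)$ is therefore strongly closed in $\F$, so every member of $T^\F$ is contained in $\gamma_1(S)$.

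To show $T$ is fully $\F$-normalized, I would argue by contradiction: assume $T$ is not fully $\F$-normalized and pick a fully $\F$-normalized $R \in T^\F$, so that $|N_S(R)| > |N_S(T)|$. Saturation of $\F$ provides an extension $\phi \in \Hom_\F(N_S(T), N_S(R))$ of some $\F$-isomorphism $T \to R$ with $T\phi = R$. The critical intermediate claim is that $N_\F(R) \not\subseteq \mathcal G$; granting this, $R$ satisfies condition~(1) in the choice of $T$ with strictly larger $|N_S(\cdot)|$, contradicting the maximality demanded by condition~(2). Hence $T$ is fully $\F$-normalized, and \cite[Theorem I.5.5]{AKO} immediately gives that $N_\F(T)$ is saturated.

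To establish $N_\F(R) \not\subseteq \mathcal G$, I would again argue by contradiction. Pick $\psi \in \Hom_{N_\F(T)}(A,B)$ that does not extend to an $\F$-morphism preserving $\gamma_1(S)$ (such $\psi$ exists by condition~(1) of the choice of $T$), together with an extension $\tilde\psi\colon AT \to BT$ fixing $T$ setwise. Transferring via $\phi$ yields a morphism $\chi\colon A\phi \to B\phi$ determined by $(a\phi)\chi = (a\psi)\phi$, whose natural extension $(AT)\phi \to (BT)\phi$ fixes $R$. Thus $\chi \in \Hom_{N_\F(R)}(A\phi, B\phi)$, and under the standing assumption $N_\F(R) \subseteq \mathcal G$, the morphism $\chi$ extends to an $\F$-morphism preserving $\gamma_1(S)$; reverse-transferring through $\phi$ should then produce an extension of $\psi$ preserving $\gamma_1(S)$, contradicting $\psi \notin \Mor(\mathcal G)$.

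The hard part will be this reverse-transfer step, since a priori $\phi$ itself need not lie in $\mathcal G$, so the naive composition $\phi \circ (\text{extension of }\chi) \circ \phi^{-1}$ is not manifestly a $\gamma_1(S)$-preserving extension of $\psi$. To handle this I would decompose $\phi$ via Alperin--Goldschmidt into restrictions of $\F$-automorphisms of $S$---which automatically lie in $\mathcal G$ since $\gamma_1(S)$ is characteristic in $S$---and of $\Aut_\F(E)$ for $\F$-essential $E \le \gamma_1(S)$, and then exploit strong closure of $\gamma_1(S)$ together with a careful compatibility check on each factor to reorganise the composition into the desired $\mathcal G$-extension of $\psi$.
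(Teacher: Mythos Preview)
Your overall architecture matches the paper's: transport $T$ to a fully $\F$-normalized conjugate via some $\phi\in\Hom_\F(N_S(T),S)$, show the conjugate still satisfies condition~(i), and contradict maximality of $|N_S(T)|$. The gap is precisely in what you flag as the hard part, and your proposed resolution does not work.

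Strong closure of $\gamma_1(S)$ only tells you that elements of $\gamma_1(S)$ land in $\gamma_1(S)$ under $\F$-morphisms; it does \emph{not} tell you that an automorphism of an essential $E\le\gamma_1(S)$ extends to an $\F$-automorphism of $\gamma_1(S)$. Indeed, the whole point of a witness $E$ is that it is essential, sits inside $\gamma_1(S)$, yet has $\Aut_\F(E)\not\subseteq\mathcal G$. So if one of the Alperin--Goldschmidt factors of $\phi$ happened to come from such a witness, no amount of ``reorganising the composition'' would produce a $\mathcal G$-extension; you would just be stuck. Your phrase ``careful compatibility check on each factor'' is hiding exactly the thing that needs to be proved.

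The missing observation, and the one the paper uses, is purely a size argument exploiting condition~(ii) in the choice of $T$. When you decompose $\phi$ via Alperin--Goldschmidt, each essential $A$ that appears must contain (an image of) the domain $N_S(T)$, so $|A|\ge |N_S(T)|$. If such an $A$ had $\Aut_\F(A)\not\subseteq\mathcal G$, then $A\le\gamma_1(S)$ would satisfy condition~(i) with $|N_S(A)|>|A|\ge|N_S(T)|$ (since $A<S$), contradicting maximality of $|N_S(T)|$; and if $A=\gamma_1(S)$ then $N_\F(A)=\mathcal G$ trivially. Hence every factor of $\phi$ lies in $\mathcal G$, so $\phi\in\Mor(\mathcal G)$ outright. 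Once you know $\phi\in\mathcal G$, the transfer argument becomes a one-liner: $\Hom_{N_\F(T)}(X,Y)\subseteq\mathcal G$ if and only if $\Hom_{N_\F(T\phi)}(X\phi,Y\phi)\subseteq\mathcal G$, and you are done. Your elaborate $\psi$/$\chi$ transfer is then unnecessary.
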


\begin{proof}
By \cite[Lemma I.2.6 (c)]{AKO}, there exists $\alpha \in \Hom_\F(N_S(T), S)$ such that $T\alpha$ is fully $\F$-normalized. By the Alperin-Goldschmidt Theorem,  $\alpha$ is a product of maps from $\Aut_\F(A)$, $A \in \E_\F$ with $|N_S(T)| \le |A|$ and from $\Aut_\F(S)$. Then, as $A \le \gamma_1(S)$ \blue{by Lemmas~\ref{not normal} and \ref{facts} (ii)}, the  selection method of $T$ shows that each $\Aut_\F(A)$ is contained in $\mathcal G$ as is $\Aut_\F(S)$. Let $X, Y \le N_S(T)$. Then $\Hom_{N_\F(T\alpha)}(X\alpha ,Y\alpha) \supseteq \Hom_{N_\F(T)}(X,Y)\alpha^*$.  Since $\alpha \in \mathcal G$, $\Hom_{N_\F(T\alpha)}(X\alpha ,Y\alpha)\subset \mathcal G$ if and only if $\Hom_{N_\F(T)}(X,Y)\alpha^* \subset \mathcal G$.  We conclude that $N_\F(T\alpha) \not \subset \mathcal G$.  Since $|N_S(T\alpha)|\ge |N_S(T)|$,  the maximal choice of $N_S(T)$ implies that $T$ is fully $\F$-normalized.
\end{proof}

\begin{lemma}\label{NFTnormZ} Assume that $1 \ne K \le T$. Then following statements hold.
\begin{enumerate}
\item If $K$ is $\Aut_\F(T)$-invariant, then $N_S(K)= N_S(T)$.
\item If $T$ is not normal in $S$ and  $K$ is characteristic in $N_S(T)$, then $K$ is not $\Aut_\F(T)$-invariant.
\end{enumerate}
\end{lemma}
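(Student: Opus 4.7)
The plan is to exploit the maximality built into the selection of $T$ (first $|N_S(T)|$, then $|T|$) together with the inclusion $N_\F(T)\subseteq N_\F(K)$ whenever $K$ is $\Aut_\F(T)$-invariant, and to use the normaliser condition in a $p$-group to handle (ii).

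For part (i), I would start by observing that, since $K$ is $\Aut_\F(T)$-invariant and in particular $\Aut_S(T)$-invariant, $N_S(T)$ normalises $K$, so $N_S(T)\le N_S(K)$. Next I would apply Lemma~\ref{NFTNFR} with $\mathcal K=N_\F(T)$ to obtain $N_\F(T)\subseteq N_\F(K)$. Since $N_\F(T)\not\subseteq \mathcal G$ by choice of $T$, it follows that $N_\F(K)\not\subseteq \mathcal G$. Because $K\le T\le\gamma_1(S)$, the subgroup $K$ satisfies condition (1) of the selection criteria for $T$. The maximality of $|N_S(T)|$ therefore forces $|N_S(K)|\le|N_S(T)|$, and combined with the containment above we conclude $N_S(K)=N_S(T)$.

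For part (ii), I would argue by contradiction: suppose $K$ is $\Aut_\F(T)$-invariant. Then part (i) gives $N_S(K)=N_S(T)$. On the other hand, $T$ is not normal in $S$, so $N_S(T)$ is a proper subgroup of $S$; by the normaliser condition in the $p$-group $S$ we have $N_S(T)<N_S(N_S(T))$. Since $K$ is characteristic in $N_S(T)$, every element of $N_S(N_S(T))$ normalises $K$, so $N_S(N_S(T))\le N_S(K)$. Combining these gives $N_S(K)>N_S(T)$, contradicting $N_S(K)=N_S(T)$. Hence $K$ cannot be $\Aut_\F(T)$-invariant.

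Neither step looks to be a substantial obstacle: the bookkeeping with the selection criteria and Lemma~\ref{NFTNFR} for part (i) is routine, and the argument for part (ii) is immediate once part (i) is in hand, using only the standard fact that normalisers grow in nilpotent groups. The only mild point of care is to verify, at the very beginning, that $\Aut_\F(T)$-invariance really does imply $\Aut_S(T)$-invariance (which follows from $\Aut_S(T)\le\Aut_\F(T)$ by definition of a fusion system), so that one is entitled to conclude $N_S(T)\le N_S(K)$.
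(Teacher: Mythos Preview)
Your proof is correct and follows essentially the same approach as the paper's own proof: both use Lemma~\ref{NFTNFR} together with the maximal choice of $|N_S(T)|$ for part (i), and derive (ii) from (i) via the normaliser condition in the $p$-group $S$ applied to the characteristic subgroup $K$ of $N_S(T)$. Your write-up is a bit more explicit (the paper merely says ``Part (ii) follows from (i)''), but there is no substantive difference.
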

\begin{proof} Suppose
 that $K$ is $\Aut_\F(T)$-invariant. Then $K$ is invariant under   $\Aut_S(T)$ and so is normal in $N_S(T)$.  Lemma~\ref{NFTNFR} states  that $N_\F(T)\subseteq N_\F(K)$. Therefore, if $N_S(K) > N_S(T)$,  the maximal choice of $|N_S(T)|$ implies $N_\F(T)\subseteq   N_\F(K) \subseteq \mathcal G$ which  contradicts the choice of $T$. This proves (i).

Part (ii) follows from (i).
\end{proof}
\begin{lemma}\label{lem:not normal S}  If $K$ is a normal subgroup of $S$ which is contained in $T$, then  $K$ is not $\Aut_\F(T)$-invariant. In particular, $T$ is not normal in $S$.
\end{lemma}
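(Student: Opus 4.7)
My plan is to deduce the lemma from three things we already have: Lemma~\ref{NFTnormZ}(i), the characterisation of proper subsystems given by Lemma~\ref{lem:subsystem1}, and the fact that $O_p(\F)=1$ recorded in Lemma~\ref{facts}(v). The bulk of the work is showing that $T$ itself is not normal in $S$; once we have that, the first statement is essentially immediate.

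So I would begin by arguing the ``In particular'' clause. Suppose for contradiction that $T$ is normal in $S$. Then $N_S(T)=S$, so by Lemma~\ref{lem:Tfn} the fusion system $N_\F(T)$ is a saturated fusion system defined on the whole of $S$. Hence $N_\F(T)$ is a saturated fusion subsystem of $\F$ on $S$. Now I would split into two cases. If $N_\F(T)=\F$, then every $\F$-automorphism of every subgroup of $S$ stabilises $T$, which is exactly the statement that $T\le O_p(\F)$; but $O_p(\F)=1$ by Lemma~\ref{facts}(v) and $T\neq 1$, a contradiction. If instead $N_\F(T)$ is a proper subsystem of $\F$ on $S$, then Lemma~\ref{lem:subsystem1} forces $N_\F(T)\subseteq N_\F(\gamma_1(S))=\mathcal G$, which directly contradicts the defining property of $T$ that $N_\F(T)\not\subseteq\mathcal G$. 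Either way we reach a contradiction, so $T$ is not normal in $S$.

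With $T$ non-normal in $S$, the first statement follows straight from Lemma~\ref{NFTnormZ}(i). Indeed, suppose $1\ne K\le T$ is normal in $S$ and, for contradiction, that $K$ is $\Aut_\F(T)$-invariant. Normality of $K$ in $S$ gives $N_S(K)=S$, while the $\Aut_\F(T)$-invariance together with Lemma~\ref{NFTnormZ}(i) gives $N_S(K)=N_S(T)$. Hence $N_S(T)=S$, i.e.\ $T$ is normal in $S$, contradicting what we have just established.

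I do not expect any real obstacle: the argument is a short two-step chain that repackages the maximality built into the choice of $T$ (via Lemma~\ref{NFTnormZ}(i)) together with the fact that proper saturated subsystems on $S$ lie inside $\mathcal G$ (Lemma~\ref{lem:subsystem1}). The only point to be a little careful about is to verify that $N_\F(T)$ really is a saturated fusion \emph{subsystem} of $\F$ defined on $S$ when $T$ is normal in $S$, so that Lemma~\ref{lem:subsystem1} is applicable; this is handled by Lemma~\ref{lem:Tfn} which tells us $T$ is fully $\F$-normalised and $N_\F(T)$ is saturated, combined with $N_S(T)=S$.
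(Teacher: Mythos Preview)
Your proof is correct and uses essentially the same approach as the paper: both arguments combine Lemma~\ref{NFTnormZ}(i), Lemma~\ref{lem:subsystem1}, and the fact that $O_p(\F)=1$. The only difference is ordering: the paper proves the main statement directly (assuming $K$ is $\Aut_\F(T)$-invariant and normal in $S$, deducing via Lemma~\ref{NFTnormZ}(i) that $T$ is normal in $S$, then reaching a contradiction through Lemma~\ref{lem:subsystem1}), whereas you first establish that $T$ is not normal in $S$ and then derive the main statement from it.
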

\begin{proof}
Suppose that $K\le T$  is $\Aut_\F(T)$-invariant and normal in $S$. Then Lemma~\ref{NFTnormZ} (i) implies that $T$ is normal in $S$.    Hence $N_\F(T)$ is a fusion system on $S$.  Since $T \le O_p(N_\F(T))$ and $O_p(\F)=1$, $N_\F(T) \ne \F$. Application of Lemma~\ref{lem:subsystem1} yields $N_\F(T) \subseteq \mathcal G$, a contradiction.
\end{proof}

\subsection{The case $T$ is $S$-centric}

\blue{
In this subsection we assume

\begin{hypothesis}\label{HypCST}  Hypothesis~\ref{hp.conj} holds and adopting Notation~\ref{notationT} we have $T$ is $S$-centric; that is $C_S(T)\le T$.
\end{hypothesis}}

As $T$ is fully $\F$-normalized by Lemma~\ref{lem:Tfn}, Hypothesis~\ref{HypCST} implies $T$ is $\F$-centric. Since $Z(S) \le T$ and $T$ is not normal in $S$ \blue{by Lemma~\ref{lem:not normal S}}, Lemma~\ref{mc-normalizer} implies $N_S(T) \le \gamma_1(S)$.
By Theorem~\ref{model} there exists a model $G$  for $N_\F(T)$. Choose $G_1$ such that $G \ge G_1> N_S(T)$ and $G_1$ has minimal order such that $\F_{N_S(T)}(G_1) \not \subseteq \mathcal G$.

\begin{lemma}\label{opG1=T} Assume that Hypothesis~\ref{HypCST} holds. We have $T= O_p(G_1)=O_p(G) $ and $C_{G_1}(T) \le T$. \end{lemma}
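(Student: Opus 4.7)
The plan is to exploit the Model Theorem together with the maximality built into the choice of $T$. Since $T$ is $\F$-centric by assumption and fully $\F$-normalized by Lemma~\ref{lem:Tfn}, and $T$ is normal in $N_\F(T)$, the system $N_\F(T)$ is constrained. Theorem~\ref{model} then provides the model $G$ with $N_S(T) \in \mathrm{Syl}_p(G)$, $N_\F(T)=\F_{N_S(T)}(G)$, and $C_G(O_p(G)) \le O_p(G)$. The heart of the argument will be to identify $T$ with $O_p(G)$; once that is done, the equality $T = O_p(G_1)$ and the centralizer assertion both follow readily.

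For the identification $T = O_p(G)$, I would first observe that $T$ is normal in the constrained system $N_\F(T)$, whence $T \le O_p(G)$. For the reverse inclusion, set $T_0 = O_p(G)$ and note that $T_0 \le N_S(T) \le \gamma_1(S)$, where the second containment comes from Lemma~\ref{mc-normalizer} (available because $Z(S) \le T$ and $T$ is not normal in $S$ by Lemma~\ref{lem:not normal S}). The pivotal step will be to show $N_\F(T) \subseteq N_\F(T_0)$: any morphism in $N_\F(T)$ is a restriction of a $G$-conjugation $c_g$, which preserves the $G$-normal subgroup $T_0$, and one checks that the extension of $c_g$ to $XT_0$ (for $X \le N_S(T)$) has image inside $N_S(T_0) \supseteq N_S(T)$. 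Consequently $N_\F(T_0) \not\subseteq \mathcal{G}$, and since $T_0 \le \gamma_1(S)$ and $N_S(T) \le N_S(T_0)$, $T_0$ qualifies as a candidate in the selection of $T$; the maximality of $|N_S(T)|$ then forces $N_S(T_0) = N_S(T)$, and the subsequent maximality of $|T|$ forces $T_0 \le T$, giving $T = T_0 = O_p(G)$.

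An identical argument with $G_1$ in place of $G$ will yield $T = O_p(G_1)$. The containment $T \le O_p(G_1)$ is immediate since $T \trianglelefteq G \ge G_1$. For the reverse, I would set $T_1 = O_p(G_1)$ and observe that every morphism in $\F_{N_S(T)}(G_1)$ is the restriction of a $G_1$-conjugation, which preserves $T_1$, so $\F_{N_S(T)}(G_1) \subseteq N_\F(T_1)$; combined with the standing hypothesis $\F_{N_S(T)}(G_1) \not\subseteq \mathcal G$ and the containment $T_1 \le N_S(T) \le \gamma_1(S)$, the same maximality argument collapses $T_1$ onto $T$.

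Finally, the model property $C_G(O_p(G)) \le O_p(G)$ at once delivers $C_{G_1}(T) \le C_G(T) \le T$. The main obstacle I expect is the bookkeeping required for the containment $N_\F(T) \subseteq N_\F(T_0)$, and its $G_1$ analogue, where one must verify that the extended morphisms indeed define morphisms of $N_\F(T_0)$ (respectively $N_\F(T_1)$) with domain and codomain inside the appropriate normalizer; once this is secured, the maximality conditions in the selection of $T$ drive the rest of the argument mechanically.
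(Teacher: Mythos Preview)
Your proposal is correct and uses the same essential mechanism as the paper: the maximality conditions on $T$ force $O_p(G_1)$ (and $O_p(G)$) to coincide with $T$, because the larger candidate would have $N_\F(\cdot)\not\subseteq\mathcal G$ and at least as large a normalizer in $S$.

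The only difference is in organization. The paper works with $G_1$ first: from $T\trianglelefteq G$ one has $T\le O_p(G_1)$ and $N_S(O_p(G_1))\ge N_S(T)$, and since $\F_{N_S(T)}(G_1)=N_{\F_{N_S(T)}(G_1)}(O_p(G_1))\subseteq N_\F(O_p(G_1))$, the maximality of $T$ yields $T=O_p(G_1)$ in one step. Then the sandwich $T\le O_p(G)\le O_p(G_1)=T$ gives $T=O_p(G)$ for free, without repeating the maximality argument. Your version runs the argument twice (once for $G$, once for $G_1$), which is valid but slightly less economical; handling $G_1$ first and sandwiching, as the paper does, saves you the second pass.
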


\begin{proof} Notice that $N_S(O_p(G_1)) \ge N_S(T)$ and $O_p(G_1) \ge T$.  As  $\F_{N_S(T)}(G_1)=N_{\F_{N_S(T)}(G_1)}(O_p(G_1)) \subseteq N_\F(O_p(G_1))$, we get $N_\F(O_p(G_1)) \not\subseteq \mathcal G$. The maximal choice of $T$ now yields $T= O_p(G_1)$. We now have $T \le O_p(G) \le O_p(G_1)=T$ and so $T= O_p(G)=O_p(G_1)$. As $G$ is a model for $N_\F(T)$, $C_{G_1}(T) \le C_G(T) \le T$.
\end{proof}

\begin{lemma}\label{omegagood} Assume that Hypothesis~\ref{HypCST} holds. Then the following hold.
\begin{enumerate}
\item  $\Omega_1(\gamma_1(S))\not \le T$; and
\item $Z(\Omega_1(\gamma_1(S))) \le \Omega_1(T)$.\end{enumerate}
\end{lemma}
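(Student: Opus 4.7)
The plan is to prove (i) by a short contradiction using Lemma~\ref{lem:not normal S}, and to prove (ii) by mimicking the stabilization argument in Lemma~\ref{Omega.non.ab} but replacing the appeal to $\F$-essentiality of $E$ by the identification $O_p(\Aut_\F(T)) = \Inn(T)$, which follows from $T = O_p(G)$ in the model (Lemma~\ref{opG1=T}).

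For (i), I will argue by contradiction. Suppose $\Omega_1(\gamma_1(S)) \le T$. Since $\gamma_1(S)$ is regular by Lemma~\ref{mc-facts}(ii), Lemma~\ref{lem:regular}(iii) gives $\Omega_1(T) = T \cap \Omega_1(\gamma_1(S)) = \Omega_1(\gamma_1(S))$. This subgroup is characteristic in $\gamma_1(S)$, hence normal in $S$, while also being characteristic in $T$ and so $\Aut_\F(T)$-invariant. Since $Z(S) \le \Omega_1(T)$ it is non-trivial, and this contradicts Lemma~\ref{lem:not normal S}.

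For (ii), set $V = Z(\Omega_1(\gamma_1(S)))$, which is characteristic in $\gamma_1(S)$ and hence normal in $S$; in particular $V \le \Omega_1(\gamma_1(S))$. The strategy is to show $V \le T$ and then conclude by Lemma~\ref{lem:regular}(iii) that $V \le T \cap \Omega_1(\gamma_1(S)) = \Omega_1(T)$. Consider the subgroup $A = N_V(T) = V \cap N_S(T)$ and the $\Aut_\F(T)$-invariant series $1 \le \Omega_1(T) \le T$. Since $V \trianglelefteq S$, $[A,T] \le V$, and as $A \le N_S(T)$, also $[A,T] \le T$; hence $[A,T] \le V \cap T$, which sits inside $\Omega_1(\gamma_1(S)) \cap T = \Omega_1(T)$ by Lemma~\ref{lem:regular}(iii). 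Moreover, $[A, \Omega_1(T)] \le [V, \Omega_1(\gamma_1(S))] = 1$ because $V$ is central in $\Omega_1(\gamma_1(S))$. So $\Aut_A(T)$ stabilizes the series, and Lemma~\ref{lem:series} yields $\Aut_A(T) \le O_p(\Aut_\F(T))$.

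The crux is then to identify $O_p(\Aut_\F(T)) = \Inn(T)$. Here I will invoke Lemma~\ref{opG1=T}: the model $G$ for $N_\F(T)$ satisfies $T = O_p(G)$ and $C_G(T) = Z(T)$, so $\Aut_\F(T) = \Aut_G(T) = G/Z(T)$; the preimage of $O_p(G/Z(T))$ is a normal $p$-subgroup of $G$, hence contained in $O_p(G) = T$, giving $O_p(\Aut_\F(T)) = T/Z(T) = \Inn(T)$. Therefore $\Aut_A(T) \le \Inn(T)$, so for every $a \in A$ there exists $t \in T$ with $a^{-1}t \in C_S(T) \le T$, whence $A = N_V(T) \le T$. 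Since $V \trianglelefteq S$, the product $TV$ is a subgroup, and Lemma~\ref{lem:K in E} applied with $K = V$, $E = T$ gives $V \le T$, which completes the proof as observed above.

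The main obstacle is the replacement for $\F$-essentiality needed in the stabilization step; once $T = O_p(G)$ is used to pin down $O_p(\Aut_\F(T)) = \Inn(T)$, the remainder is a direct adaptation of Lemma~\ref{Omega.non.ab}.
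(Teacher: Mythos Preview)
Your proof is correct and follows essentially the same approach as the paper's: both argue (i) by noting that $\Omega_1(\gamma_1(S)) = \Omega_1(T)$ would be a non-trivial $\Aut_\F(T)$-invariant normal subgroup of $S$ contained in $T$, contradicting Lemma~\ref{lem:not normal S}, and both argue (ii) by showing that $N_{Z(\Omega_1(\gamma_1(S)))}(T)$ stabilizes the series $1 \le \Omega_1(T) \le T$, hence maps into $O_p(\Aut_\F(T)) = \Inn(T)$, and then finishing via Lemma~\ref{lem:K in E}. Your write-up is in fact slightly more complete than the paper's: you spell out the identification $O_p(\Aut_\F(T)) = \Inn(T)$ via the model (the paper's citation at this point appears to be a typo), and you make explicit the final passage from $V \le T$ to $V \le \Omega_1(T)$.
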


\begin{proof}  If $\Omega_1(\gamma_1(S))\le T$, then $\Omega_1(\gamma_1(S))= \Omega_1(T)$ is normalized by $S$ and is $\Aut_\F(T)$-invariant contrary to Lemma~\ref{lem:not normal S}. Hence $\Omega_1(\gamma_1(S))\not \le T$. This proves (i).

Let $R= N_{Z(\Omega_1(\gamma_1(S)))}(T)$.  Then, as   $\gamma_1(S)$ is regular by Lemma~\ref{mc-facts} (ii), using Lemma~\ref{lem:regular}(iii) yields  $$[T,R] \le T \cap \Omega_1(\gamma_1(S)) \le \Omega_1(T)$$ and $[\Omega_1(T),R]\le [\Omega_1(\gamma_1(S)),R]=1$.  Hence $R$ centralizes $T/\Omega_1(T)$ and $\Omega_1(T)$. Now Lemmas~\ref{lem:series} and \ref{omegagood} imply  $\Aut_R(T) \le O_p(\Aut_\F(T))=\Inn(T)$.  Hence $R\le T$. Applying Lemma~\ref{lem:K in E} yields $Z(\Omega_1(\gamma_1(S))) \le T$, as claimed.
\end{proof}

Before reading the next lemma the following example is worthy of some consideration.  Let $X= 2^6:3^.\Sym(6)$. Then $X$ is isomorphic to a $2$-local subgroup in $\mathrm M_{24}$.  Let $R \in\syl_2(X)$.  Then $\F_R(X)$ has three $\F_R(X)$-essential subgroups $E_0=R \cap X'$, $E_1$ and $E_2$ where $E_1/O_2(X)\cong E_2/O_2(X)$ are elementary abelian of order $8$. We have $\Out_{\F_R(X)}(E_i) \cong \Sym(3)$ for $i=0,1,2$. Here is the point: $X= \langle N_X(E_1),N_X(E_2)\rangle$, however $\F_R(X) \ne \langle \F_R(N_X(E_1)),\F_R(N_X(E_2))\rangle$ by \cite[Lemma 3.13]{parkersemerarocomputing}.

\begin{lemma}\label{G1-minpara} Assume that Hypothesis~\ref{HypCST} holds. Then $N_S(T)$ is contained in a unique maximal subgroup of $G_1$.
\end{lemma}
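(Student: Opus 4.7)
The plan is a contradiction argument that exploits the minimality of $G_1$ together with the constrained structure of $\F_{N_S(T)}(G_1)$ guaranteed by Lemma~\ref{opG1=T}. Suppose $N_S(T)$ is contained in two distinct maximal subgroups $M_1$ and $M_2$ of $G_1$, so that $G_1 = \langle M_1, M_2\rangle$. Since $N_S(T) \le M_i < G_1$ for $i=1,2$, the minimality of $G_1$ forces $\F_{N_S(T)}(M_i) \subseteq \mathcal G$; in particular $\Aut_{M_i}(T) \subseteq \Aut_{\mathcal G}(T)$.

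The first step is to pass from this to $\Aut_{G_1}(T) \subseteq \Aut_{\mathcal G}(T)$. By Lemma~\ref{opG1=T}, $T = O_p(G_1)$ is centric in $G_1$ and $C_{G_1}(T) \le T \le M_1 \cap M_2$, so $C_{M_i}(T) = C_{G_1}(T)$ and $\Aut_{M_i}(T) = M_i/C_{G_1}(T)$ is the image of $M_i$ under the natural surjection $G_1 \to G_1/C_{G_1}(T) = \Aut_{G_1}(T)$. Since $G_1 = \langle M_1, M_2\rangle$, this yields $\Aut_{G_1}(T) = \langle \Aut_{M_1}(T), \Aut_{M_2}(T)\rangle \subseteq \Aut_{\mathcal G}(T)$.

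The second step, which is the main obstacle, is to promote this inclusion of automorphism groups to $\F_{N_S(T)}(G_1) \subseteq \mathcal G$, contradicting the choice of $G_1$. For this I would invoke the Model Theorem (Theorem~\ref{model}). Since $T$ is fully $\F$-normalized by Lemma~\ref{lem:Tfn} and is $\F$-centric, it is fully $\mathcal G$-normalized and $\mathcal G$-centric, so $N_{\mathcal G}(T)$ is a constrained saturated fusion system on $N_S(T)$ with $T$ centric, and admits a model $H$ satisfying $\Aut_H(T) = \Aut_{\mathcal G}(T)$ and $N_S(T) \in \Syl_p(H)$. The preimage $\hat G_1 \le H$ of $\Aut_{G_1}(T)$ under the projection $H \to H/Z(T) = \Aut_{\mathcal G}(T)$ contains $T$ and satisfies $\Aut_{\hat G_1}(T) = \Aut_{G_1}(T)$. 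The uniqueness part of the Model Theorem, applied to the constrained system $\F_{N_S(T)}(G_1)$, produces an isomorphism $G_1 \cong \hat G_1$ restricting to the identity on $T$; this isomorphism sends the Sylow $N_S(T)$ of $G_1$ onto the corresponding Sylow of $\hat G_1$ inside $H$, since in each group $N_S(T)$ is the preimage of the fixed Sylow $\Aut_S(T)$ of $\Aut_{G_1}(T)$ under the projection modulo $Z(T)$. Hence $\F_{N_S(T)}(G_1) = \F_{N_S(T)}(\hat G_1) \subseteq \F_{N_S(T)}(H) = N_{\mathcal G}(T) \subseteq \mathcal G$, the required contradiction.
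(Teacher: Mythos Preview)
Your first step is correct and agrees with the paper. The second step, however, has a genuine gap. You invoke the uniqueness part of the Model Theorem to conclude $G_1\cong\hat G_1$, but that uniqueness statement applies to two models of the \emph{same} constrained fusion system. You have not shown $\F_{N_S(T)}(G_1)=\F_{N_S(T)}(\hat G_1)$; in fact that equality is precisely what you need. What you have established is only $\Aut_{G_1}(T)=\Aut_{\hat G_1}(T)$ together with a common Sylow subgroup $N_S(T)$. Deducing that these data force the two constrained fusion systems to coincide (equivalently, that the two extensions of $Z(T)$ by $\Aut_{G_1}(T)$ are equivalent via an isomorphism which is the identity on $N_S(T)$, not merely on $Z(T)$) requires a separate cohomological argument using injectivity of restriction $H^i(A,Z(T))\to H^i(A_p,Z(T))$ for $A_p\in\Syl_p(A)$. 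This is true, but it is not the Model Theorem's uniqueness clause and you have not supplied it.

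The paper avoids this difficulty entirely. After the common first step $\Aut_{G_1}(T)\subseteq\Aut_{\mathcal G}(T)$, it observes that since $T$ is $\F$-centric and $T\le\gamma_1(S)$, we have $Z(\gamma_1(S))\le T$, and every element of $\Aut_{\mathcal G}(T)$ is the restriction of an automorphism of $\gamma_1(S)$, hence normalizes $Z(\gamma_1(S))$. Thus $\Aut_{G_1}(T)$ normalizes $Z(\gamma_1(S))$, and Lemma~\ref{NFTNFR} applied to $\mathcal K=\F_{N_S(T)}(G_1)=N_{\mathcal K}(T)$ with $R=Z(\gamma_1(S))$ gives $\F_{N_S(T)}(G_1)\subseteq N_\F(Z(\gamma_1(S)))$. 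Since $O_p(\F)=1$, the latter is a proper saturated subsystem on $S$, so Lemma~\ref{lem:subsystem1} yields $N_\F(Z(\gamma_1(S)))\subseteq\mathcal G$, the desired contradiction. This route is both shorter and requires no extension-theoretic bookkeeping.
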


\begin{proof}
Suppose that $M_1 $ and $M_2$ are maximal subgroups of $G_1$ with $N_S(T) \le M_1 \cap M_2 $ and $M_1 \ne M_2$.
By the minimal choice of $G_1$,
$$ \langle \F_{N_S(T)}(M_1), \F_{N_S(T)}(M_2)\rangle \subseteq \mathcal G.$$
In particular,  \begin{eqnarray*}\Aut_{\mathcal G}(T)&\supseteq& \langle \Aut_{\F_{N_S(T)}(M_1)}(T), \Aut_{\F_{N_S(T)}(M_2)}(T) \rangle \\&=&\langle \Aut_{M_1}(T), \Aut_{M_2}(T)\rangle= \Aut_{G_1}(T)\end{eqnarray*} as $G_1=\langle M_1, M_2\rangle.$ Since  $T$ is $\F$-centric, $Z(\gamma_1(S)) \le T$ and $\Aut_{\mathcal G}(T)$ leaves $Z(\gamma_1(S))$ invariant. Since  $N_\F(Z(\gamma_1(S)))$ is a fusion system on $S$ and $O_p(\F)=1$,  $N_\F(Z(\gamma_1(S))) \subseteq \mathcal G$ by Lemma~\ref{lem:subsystem1}.
Therefore $$\Aut_{\F_{N_S(T)}(G_1)}(T)=\Aut_{G_1}(T)\subseteq \Aut_{\mathcal G}(T)\subseteq \mathcal G
 = N_\F(Z(\gamma_1(S))).$$    Since $\F_{N_S(T)}(G_1) = N_{\F_{N_S(T)}(G_1)}(T)$, application of Lemma~\ref{NFTNFR} gives  $\F_{N_S(T)}(G_1) \subseteq \mathcal G$, a contradiction. Hence $N_S(T)$ is contained in a unique maximal subgroup of $G_1$.
\end{proof}

Let $U$ be the unique maximal subgroup of $G_1$ which contains $N_S(T)$.

\begin{lemma}\label{Jgood} Assume that Hypothesis~\ref{HypCST} holds. If  $K$ is a non-trivial characteristic subgroup of $N_S(T)$, then $N_{G_1}(K) \le U$.
\end{lemma}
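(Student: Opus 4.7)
The approach is by contradiction: suppose $N_{G_1}(K) \not\le U$. Since $K$ is characteristic in $N_S(T)$, $N_S(T)$ normalises $K$, so $N_S(T) \le N_{G_1}(K)$. By Lemma~\ref{G1-minpara}, $U$ is the \emph{unique} maximal subgroup of $G_1$ containing $N_S(T)$; hence any proper subgroup of $G_1$ containing $N_S(T)$ is already contained in $U$. The assumption therefore forces $N_{G_1}(K) = G_1$, so $K \triangleleft G_1$, and since $K$ is a non-trivial $p$-subgroup of $G_1$, Lemma~\ref{opG1=T} gives $K \le O_p(G_1) = T$.

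Next I would exploit this normality: every morphism of $\F_{N_S(T)}(G_1)$ is induced by conjugation in $G_1$ and so normalises $K$, whence $\F_{N_S(T)}(G_1) \subseteq N_\F(K)$, and in particular $N_\F(K) \not\subseteq \mathcal G$. As $K \triangleleft N_S(T)$ gives $N_S(T) \le N_S(K)$, a strict inclusion would, together with $K \le \gamma_1(S)$ and $N_\F(K) \not\subseteq \mathcal G$, contradict the maximal choice of $|N_S(T)|$; hence $N_S(K) = N_S(T)$. Combining Lemma~\ref{lem:not normal S} (which yields $T$ not normal in $S$) with Lemma~\ref{NFTnormZ}(ii) applied to the characteristic subgroup $K$ of $N_S(T)$, we conclude that $K$ is \emph{not} $\Aut_\F(T)$-invariant, so some $\alpha \in \Aut_\F(T)$ satisfies $K\alpha \ne K$.

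The main obstacle is now to convert this $\alpha$ into a contradiction with $K \triangleleft G_1$. My plan is to use the $\F$-receptivity of $T$ (available by Lemma~\ref{lem:Tfn}) to extend $\alpha$ — after possibly replacing it within its coset of $\mathrm{Stab}_{\Aut_\F(T)}(K)$ — to an automorphism $\widehat{\alpha}$ of $N_S(T)$: since $K$ is characteristic in $N_S(T)$, $\widehat{\alpha}$ must fix $K$, whereas on $T$ it equals $\alpha$ and so sends $K$ to $K\alpha \ne K$, a contradiction. The delicate point, and the hard part of the argument, is ensuring that some $\alpha$ which moves $K$ genuinely extends to $N_S(T)$; here I would combine the Sylow theory in $\Aut_\F(T)$ (observing that $\Aut_S(T)$ is a Sylow $p$-subgroup of $\Aut_\F(T)$ whose normaliser in $\Aut_\F(T)$ consists precisely of automorphisms extending to $N_S(T)$, and hence lies in $\mathrm{Stab}_{\Aut_\F(T)}(K)$) with the identification $\Aut_\F(T) = \Aut_G(T) \cong G/Z(T)$ through the model $G$ for $N_\F(T)$, to force such an extension and derive the final contradiction.
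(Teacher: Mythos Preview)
Your first two paragraphs match the paper's proof exactly: you correctly deduce $K\triangleleft G_1$, then $K\le O_p(G_1)=T\le\gamma_1(S)$, then $\F_{N_S(T)}(G_1)\subseteq N_\F(K)$ so $N_\F(K)\not\subseteq\mathcal G$, and finally $N_S(K)=N_S(T)$ from the maximal choice of $|N_S(T)|$.

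At this point the proof is one line from finished, but you miss it and take a long detour. The observation you are not using is that $K$ being \emph{characteristic} in $N_S(T)$ gives more than $N_S(T)\le N_S(K)$: it gives $N_S(N_S(T))\le N_S(K)$. Combined with your equality $N_S(K)=N_S(T)$ this forces $N_S(N_S(T))=N_S(T)$, hence $N_S(T)=S$, i.e.\ $T\triangleleft S$, contradicting Lemma~\ref{lem:not normal S}. That is exactly the paper's proof, and you already have every ingredient for it.

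Your third paragraph instead invokes Lemma~\ref{NFTnormZ}(ii) to deduce that $K$ is not $\Aut_\F(T)$-invariant, and then tries to contradict \emph{that} via an extension argument. This detour is both unnecessary and incomplete. The extension problem you flag as ``the delicate point'' is genuinely a problem: only automorphisms in $N_{\Aut_\F(T)}(\Aut_S(T))$ are guaranteed to extend to $N_S(T)$, and there is no reason these should exhaust $\Aut_\F(T)$. Your final sketch (Sylow theory plus the identification $\Aut_\F(T)\cong G/Z(T)$) would at best yield $N_G(N_S(T))\le N_G(K)$ and $G_1\le N_G(K)$, but nothing forces $\langle G_1, N_G(N_S(T))\rangle=G$, so you cannot conclude $K\triangleleft G$. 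In short, the argument does not close, and the direct route via $N_S(N_S(T))\le N_S(K)$ is what you want.
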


\begin{proof}
 We have $N_{G_1}(K) \ge N_S(T)$ and so, as $U$ is the unique maximal subgroup of $G_1$ which contains $N_S(T)$, if $U$ does not contain $N_{G_1}(K)$, then $K$ is normalized by $G_1$. Suppose that $K$ is normal in $G_1$. Then, by Lemma~\ref{opG1=T},  $K \le O_p(G_1)=T \le \gamma_1(S)$ and $\F_{N_S(T)}(G_1) \subseteq N_\F(K)$ which means that $N_\F(K) \not \subseteq \mathcal G$.  It follows from the maximal choice of $|N_S(T)|$ that $|N_S(N_S(T))| \le |N_S(K)| \le |N_S(T)|$, and so we must have that $T$ is normal in $S$.  This contradicts Lemma~\ref{lem:not normal S} and  demonstrates  $N_{G_1}(K) \le U$.
\end{proof}

Recall from \cite{CGT} that for a finite group $X$ and $R \in \syl_p(X)$,  $$C(X,R)=\langle N_X(K) \mid 1\ne K \text { a characteristic subgroup of } R\rangle.$$
In \cite{CGT}, they also define
$$C^*(X,R)=\langle N_X(K) \mid 1\ne K \text { a characteristic subgroup of } B(R) \text{ or }\Omega_1(Z(R))\rangle,$$
where $B(R)$ is the Baumann subgroup (see \cite[Definition 1.1 and just before]{CGT}). Since $B(R)$ and $\Omega_1(Z(R))$ are characteristic in $R$, we have $C^*(X,R) \le C(X,R)$.

\begin{lemma}\label{cgt} Assume that Hypothesis~\ref{HypCST} holds. Then there is a natural number $b \ge 1 $ such that $O^p(G_1)/O_p(O^p(G_1)) \cong \SL_2(p^b)$ and $O_p(O^p(G_1))$ is a natural $\GF(p)O^p(G_1)/O_p(O^p(G_1))$-module. Furthermore, $G_1/T \cong \SL_2(p^b)$.
\end{lemma}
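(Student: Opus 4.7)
The plan is to show that $\bar G_1 := G_1/T$ has a strongly $p$-embedded subgroup, apply the classification of such groups, and then exploit the structural constraints on $T$ coming from $T \le \gamma_1(S)$ to force $\bar G_1 \cong \SL_2(p^b)$ with $T$ (up to factoring out the possibly non-trivial quotient $G_1 / O^p(G_1) T$) a natural module.

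First I would show that $\bar U := U/T$ is strongly $p$-embedded in $\bar G_1$. Write $\bar P := N_S(T)/T$; this is a Sylow $p$-subgroup of $\bar G_1$ contained in the unique maximal subgroup $\bar U$ by Lemma~\ref{G1-minpara}. It suffices to prove $N_{\bar G_1}(\bar R) \le \bar U$ for every non-trivial $p$-subgroup $\bar R \le \bar P$. Let $R$ be the preimage of $\bar R$ in $P := N_S(T)$, so $T < R \le N_S(T) \le \gamma_1(S)$. The idea is that the maximality of the pair $(|N_S(T)|,|T|)$ in the selection of $T$ forces $N_\F(R) \subseteq \mathcal G$ whenever $R \ge T$ is properly larger than $T$ with $N_S(R) \ge N_S(T)$, and combined with Lemma~\ref{Jgood} (applied to $R$, which is normal in $N_S(T)$ after replacing $\bar R$ by $\langle \bar R^{\bar P}\rangle$) this confines $N_{G_1}(R)$ to $U$. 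A minimal counterexample argument, using the fact that the smallest $\bar R$ violating strong $p$-embedding must be normal in $\bar P$, finishes the step.

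With $\bar U$ strongly $p$-embedded, one checks that $\bar P$ contains an elementary abelian subgroup of order $p^2$ (otherwise $\bar P$ is cyclic and by a Frobenius-type argument $\bar G_1$ would have a normal $p$-complement, contradicting $F^*(G_1) = T$). Lemma~\ref{strongly p structure} then yields $O_{p'}(\bar G_1) \le \bar U$ and $\bar G_1/O_{p'}(\bar G_1)$ almost simple, whose socle $K$ is listed by Proposition~\ref{SE-p2}. Using the bounds $p \ge 7$ and $p^7 \le |S| < p^{2p-4}$ from Lemma~\ref{facts}(iii) together with the fact that $T$ is regular with $|\Omega_1(\gamma_1(S))| \le p^p$ (Lemma~\ref{mc-facts}(iv)), one eliminates all candidates except $K \cong \PSL_2(p^b)$: the sporadic and exceptional Lie type candidates from Proposition~\ref{SE-p2} are ruled out because their Sylow $p$-subgroups do not embed into $S/T \le S/Z(S)$ as minimal parabolics; $\Alt(2p)$, $\PSU_3(p^a)$, ${}^2\B_2$ and ${}^2\G_2$ are incompatible with the constraint that $T$, viewed as a faithful $\bar G_1$-module, fits inside $\gamma_1(S)$ with the dimension bounds coming from Lemma~\ref{mc-facts}(iii)(iv).

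The hard part will be the module step and the upgrade from $\PSL_2(p^b)$ to $\SL_2(p^b)$. Once $K \cong \PSL_2(p^b)$, I would apply Proposition~\ref{Ellen} to a suitable $\Aut_\F$-invariant elementary abelian chief factor of $T$ to identify the action as natural, and then use Lemma~\ref{opG1=T} ($T = F^*(G_1)$) to lift this all the way to the statement that $O_p(O^p(G_1))$ is a natural $\SL_2(p^b)$-module. To rule out a non-trivial $p'$-quotient $G_1/O^p(G_1) T$ and twisted/diagonal outer automorphisms of $K$, the key observation is that any such $p'$-part would lift through the Frattini argument and Alperin--Goldschmidt to a subgroup of $\Aut_\F(S)/\Inn(S)$, and by Lemma~\ref{facts}(vi) the latter is cyclic of order dividing $p-1$ and acts faithfully on $S/\gamma_1(S)$, so it cannot account for diagonal or field automorphisms of $\PSL_2(p^b)$ with $b \ge 1$; this forces $G_1/T \cong \SL_2(p^b)$ exactly.
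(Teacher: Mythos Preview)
Your approach is genuinely different from the paper's, and as written it has real gaps.

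The paper does not go through the classification of groups with a strongly $p$-embedded subgroup at all. Instead it observes that Lemma~\ref{Jgood} gives $C(G_1,N_S(T))\le U$ and then invokes the $C(G,T)$-block theorem \cite[Corollary 1.9]{CGT}. That theorem directly produces commuting blocks $B_1,\dots,B_r$ with $B_i/O_p(B_i)\cong\SL_2(p^{b_i})$ and $O_p(B_i)$ a natural module, permuted transitively by $N_S(T)$; the bound $|S|<p^{2p-4}$ from Lemma~\ref{facts}(iii) then forces $r=1$ and rules out $p$-element field automorphisms, so $O^p(G_1)=B_1$ and $G_1/T\cong\SL_2(p^b)$. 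The module statement for $O_p(O^p(G_1))$ is \emph{built into} the definition of a block rather than deduced afterwards.

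Your route has several concrete gaps. First, the cyclic case is mishandled: a strongly $p$-embedded subgroup with cyclic Sylow $p$-subgroup does \emph{not} force a normal $p$-complement (witness $\SL_2(p)$ itself), and a normal $p$-complement in $\bar G_1$ would not contradict $F^*(G_1)=T$ anyway. Yet the cyclic case cannot be discarded---it is exactly the outcome $b=1$---and Proposition~\ref{SE-p2} does not apply to it. Second, Proposition~\ref{Ellen} only ever yields $\SL_2(p)$ with a natural $\SL_2(p)$-module under a quadratic-type hypothesis; it cannot identify $\SL_2(p^b)$ for $b>1$, so your module step fails as stated. Third, even granting the identification of $F^*(\bar G_1/O_{p'}(\bar G_1))$, you must still kill $O_{p'}(\bar G_1)$, lift from $\PSL_2$ to $\SL_2$, and show that $O_p(O^p(G_1))$---a specific subgroup, not just some chief factor of $T$---is a natural module. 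None of this follows from the strongly $p$-embedded identification alone, whereas the block-theoretic route delivers all of it in one stroke.
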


\begin{proof} By Lemma~\ref{Jgood}, we have $C^*(G_1,N_S(T))\le C(G_1,N_S(T)) \le U$. Lemma \ref{opG1=T} states $C_{G_1}(T) \le T$ and $T=O_p(G_1)$. Since $U$ is the unique maximal subgroup of $G_1$ containing $N_S(T)$, we may apply \cite[Corollary 1.9]{CGT} to obtain $B(N_S(T))$-blocks $B_1, \dots, B_r$ such that the product $B_1 \dots B_r$ is normal in $G_1$, $[B_i, B_j]=1$ for $1 \le i < j\le r$ and  $G_1=(B_1 \dots B_r)C(G_1,N_S(T))$. Furthermore, \blue{as $U$ is the unique maximal subgroup  of $G_1$ containing $N_S(T)$, we have $G_1=(B_1 \dots B_r)N_S(T)$ and} $N_S(T)$ permutes $\{B_1, \dots, B_r\}$ transitively by conjugation. In particular, $r$ is a power of $p$.
The definition of a $B(N_S(T))$-block and the fact that $p \ge 5$ yields $B_1=O^p(B_1)$, $B_1 /O_p(B_1) \cong \SL_2(p^b)$ for some $b \ge 1$, $O_p(B_1)= \Omega_1(Z(O_p(B_1)))$ and $O_p(B_1)$ is a natural $\GF(p)B_1/O_p(B_1)$-module. Assume $r \ge p$.  Then $|S| \ge |N_S(T)| \ge p^{3p}$ which contradicts Lemma~\ref{facts} (iii).  Hence $r< p$, and, as $r$ is a power of $p$, we must have $r=1$. It follows that $B_1$ is normal in $G_1$ and thus  $B_1=O^p(G_1)$.  Finally, we note that if $G_ 1/T =G_1/O_p(G_1) \not \cong \SL_2(p^b)$, then some $p$ element of $G_1$ must induce a non-trivial field automorphism on $B_1/O_p(B_1)$.  But then $p \ge b$ and $|N_S(T)\cap B_1|= p^{3p}$, which once again contradicts  Lemma~\ref{facts} (iii). Hence $G_1/O_p(G_1)  \cong \SL_2(p^b)$ and this proves the lemma.
\end{proof}

We define $$B_1= O^p(G_1)\text { and }W=O_p(B_1).$$

\begin{lemma}\label{It is L2p} Assume that Hypothesis~\ref{HypCST} holds. Then  $G_1/T \cong \SL_2(p)$ and $W=[T,B_1]$ has order $p^2$.
\end{lemma}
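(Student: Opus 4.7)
The plan is to prove the two assertions in sequence: first $W=[T,B_{1}]$, and then that $b=1$ (so $G_{1}/T\cong\SL_{2}(p)$, whence $|W|=p^{2b}=p^{2}$).

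For $W=[T,B_{1}]$: The inclusion $[T,B_{1}]\le W$ is immediate since $T\norm G_{1}$ and $B_{1}\le G_{1}$, so $[T,B_{1}]$ is a normal $p$-subgroup of $B_{1}$ and thus contained in $O_{p}(B_{1})=W$. For the reverse, Lemma~\ref{cgt} tells us that $W$ is the natural $\SL_{2}(p^{b})$-module for $B_{1}/W$, so $C_{B_{1}}(W)=W$. Viewed as a $\GF(p)B_{1}$-module, the natural module remains irreducible, because its $b$ Galois conjugates over $\overline{\GF(p)}$ are pairwise non-isomorphic (equivalently, $\SL_{2}(p^{b})$ acts transitively on the non-zero vectors of $\GF(p^{b})^{2}$). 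Hence $[W,B_{1}]$, as a $\GF(p)B_{1}$-submodule of the irreducible $W$, is either $0$ or all of $W$; since $B_{1}/W\neq 1$ acts non-trivially we get $[W,B_{1}]=W\le[T,B_{1}]$.

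For $b=1$: I argue by contradiction, supposing $b\ge 2$. Let $P_{1}=N_{S}(T)\cap B_{1}\in\syl_{p}(B_{1})$ (of order $p^{3b}$) and pick a cyclic $p'$-complement $H\le N_{B_{1}}(P_{1})$, so $H$ has order $p^{b}-1$ and projects onto a Cartan of $B_{1}/W=\SL_{2}(p^{b})$. The images $P_{1}T/T$ and $\Aut_{S}(T)=N_{S}(T)/T$ are both Sylow $p$-subgroups of $G_{1}/T\cong\SL_{2}(p^{b})$ contained in the unique parabolic $U/T$ provided by Lemma~\ref{G1-minpara}; since a Borel of $\SL_{2}(p^{b})$ has a unique Sylow $p$-subgroup, the two coincide. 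Hence $H$ normalizes $\Aut_{S}(T)$, and by $\F$-receptivity of $T$ we may extend $H$ to a cyclic $p'$-subgroup $\hat{H}\le\Aut_{\F}(N_{S}(T))$ of order $p^{b}-1$.

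The contradiction is extracted from the action of $\hat{H}$ on $W\le\Omega_{1}(\gamma_{1}(S))$. On one hand, the Cartan origin of $\hat H$ means it splits $W$ into two $\hat{H}$-invariant $\GF(p)$-subgroups $W_{+}=[W,P_{1}]=[P_{1},P_{1}]$ and $W_{-}$, each of order $p^{b}$ and each an irreducible $\GF(p)\langle\hat{H}\rangle$-module with eigenvalues in $\GF(p^{b})^{\times}\setminus\GF(p)^{\times}$ for $b\ge 2$. On the other hand, $W$ sits inside $N_{S}(T)\le\gamma_{1}(S)$, and one produces an $\hat{H}$-invariant refinement of the filtration by $\gamma_{i}(N_{S}(T))\cap W$ whose successive quotients have order $p$, forcing the eigenvalues of $\hat{H}$ on those quotients to lie in $\GF(p)^{\times}$. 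Reconciling these two accounts of the eigenvalues forces $b=1$.

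The main obstacle is the final eigenvalue reconciliation. Because $\hat{H}$ is only an automorphism of $N_{S}(T)$ and not of $S$, Lemma~\ref{centralizer auto} cannot be invoked verbatim; instead one must identify enough characteristic subgroups of $N_{S}(T)$ — typically the lower central terms $\gamma_{i}(N_{S}(T))$ and their intersections with centralizers inside $N_{S}(T)$ of the $\gamma_{j}(S)\cap N_{S}(T)$ — to pin down an $\hat{H}$-invariant composition series of $W$ with $1$-dimensional $\GF(p)$-factors. Once this is in place, the semisimple $p'$-action of $\hat{H}$ is forced to have $\GF(p)$-rational eigenvalues, yielding the contradiction with $b\ge 2$.
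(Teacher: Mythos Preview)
Your proof that $W=[T,B_1]$ is correct and essentially matches the paper (which treats it as implicit from Lemma~\ref{cgt}).

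The gap is in your argument that $b=1$. You stop at $\hat H\le\Aut_\F(N_S(T))$ and then try to manufacture an $\hat H$-invariant composition series of $W$ with $1$-dimensional $\GF(p)$-factors from characteristic subgroups of $N_S(T)$. This step does not go through. The candidates you name are not adequate: the lower central series $\gamma_i(N_S(T))$ need not refine to index-$p$ steps (indeed $N_S(T)\le\gamma_1(S)$ and $N_S(T)/T$ is elementary abelian of order $p^b$, so $[N_S(T),N_S(T)]$ already drops by $p^b$), and the subgroups $\gamma_j(S)\cap N_S(T)$ are \emph{not} characteristic in $N_S(T)$, so centralizers of them are not $\hat H$-invariant either. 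Without such a filtration there is no eigenvalue contradiction, and the ``main obstacle'' you flag is in fact fatal to the approach.

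The paper resolves this by not staying inside $N_S(T)$ at all. Using the maximal choice of $|N_S(T)|$, any $\alpha\in\Aut_\F(N_S(T))$ is a restriction of a morphism in $\mathcal G=N_\F(\gamma_1(S))$ (compare the proof of Lemma~\ref{lem:Tfn}). Hence the element of order $p^b-1$ lifts to $\hat\theta\in\Aut_{\mathcal G}(\gamma_1(S))$ of order divisible by $p^b-1$. Now one simply bounds orders: if $\gamma_1(S)$ is not $\F$-essential then $\hat\theta$ comes from $\Aut_\F(S)$ and $\Out_\F(S)$ has order dividing $p-1$ (Lemma~\ref{facts}(vi)); if $\gamma_1(S)$ is $\F$-essential then $\Out_\F(\gamma_1(S))\cong\PSL_2(p)$ or $\PGL_2(p)$ (Lemma~\ref{facts}(vii)), whose $p'$-elements have order at most $p+1$. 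Either way $p^b-1\le p+1$, forcing $b=1$. The extension step to $\gamma_1(S)$ is the key idea you are missing.
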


\begin{proof}
By Lemma~\ref{cgt}, there is a natural number $b$ such that $ B_1/O_p(B_1)\cong G_1/T \cong \SL_2(p^b)$ and $G_1=B_1T$. Select $g$ of order $p^b-1$ in $N_{G_1}(N_S(T))$ and set $\theta= c_g$. Then, by saturation, $\theta$ is the restriction of a morphism $\theta^*\in \Aut_\F(N_S(T))$.  The maximal choice of $T$ implies that $\theta^*$ is the restriction of a morphism $\hat \theta \in \Aut_{\mathcal G}(\gamma_1(S))$.
 In particular, $\hat \theta$ has order a multiple of $p^b-1$.  We shall show that $b=1$.
 Since $\Out_\F(S)$ has order dividing $p-1$ by Lemma~\ref{facts} (vi),  we may suppose that $\mathcal G \ne  N_\F(S)$. Hence $\gamma_1(S)$ is $\F$-essential and $\mathcal G= N_\F(\gamma_1(S))$. In this case   $\Out_\F(\gamma_1(S)) \cong \PSL_2(p)$ or $\PGL_2(p)$ by Lemma~\ref{facts} (vii), and, as the maximal order of a $p'$-element of $\PGL_2(p)$ is $p+1$,  we conclude that $b=1$. Therefore $G_1/T \cong \SL_2(p)$ and  $W=[T,B_1]$ has order $p^2$.
\end{proof}

\begin{lemma}\label{lem:tau} Assume that Hypothesis~\ref{HypCST} holds. Suppose that $\mathcal G= N_\F(S)$ and let $t \in G_1$ be an involution.  Then $c_t$ is the restriction of $\tau \in \Aut_{\F}(S)$, $W= [N_S(T), \tau]$, $Z(S)\le W$ and $Z(S)$ is inverted by $\tau$.
\end{lemma}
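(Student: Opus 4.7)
The plan is to proceed in four stages: extend $c_t$ to an $\F$-automorphism $\tau$ of $S$, identify $[N_S(T), \tau]$ with $W$, place $W$ inside $\gamma_2(S)$, and finally show $Z(S) \le W$. First I would note that $G_1/T \cong \SL_2(p)$ has a unique involution, namely the central element $-I$, so the image of $t$ in $G_1/T$ is central. Thus $t$ normalizes every subgroup of $G_1$ containing $T$, and in particular normalizes $N_S(T)$, so $c_t \in \Aut_{N_\F(N_S(T))}(N_S(T))$. Because $T$ is proper in $S$ by Lemma~\ref{lem:not normal S}, $|N_S(N_S(T))| > |N_S(T)|$, and the maximality of $|N_S(T)|$ in the selection of $T$ therefore forces $N_\F(N_S(T)) \subseteq \mathcal G = N_\F(S)$; hence $c_t$ is the restriction of some $\tau \in \Aut_\F(S)$, which I may take of $p'$-order.

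Next I would compute $[N_S(T), \tau]$: choosing $t \in B_1$ (possible because $B_1$ surjects onto $G_1/T$), the centrality of $tT$ in $B_1/W$ forces $t$ to centralize $N_S(T)/T$; the identity $W = [T, B_1]$ forces $t$ to centralize $T/W$; and the action of $-I$ on the natural $\SL_2(p)$-module $W$ forces $t$ to invert $W$. Coprime action of the order-$2$ element $t$ on the $p$-group $N_S(T)/W$, whose composition factors are trivial $\langle t\rangle$-modules, now gives $[N_S(T), t] \le W$, and this together with $W = [W, t] \le [T, t] \le [N_S(T), t]$ yields $W = [N_S(T), \tau]$.

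For the third stage, I would use that $\tau$ acts faithfully and non-trivially on $S/\gamma_1(S)$ by Lemma~\ref{facts}(vi), so $[S, \tau]\gamma_1(S) = S$ and the normal subgroup $[S, \tau] \cap \gamma_1(S)$ of $S$ has index $p^2$. By Lemma~\ref{mc-facts}(i) this subgroup must coincide with $\gamma_2(S)$, whence $[\gamma_1(S), \tau] \le \gamma_2(S)$. In the notation of Lemma~\ref{action} this gives $b = 1$, and in particular $W \le [\gamma_1(S), \tau] \le \gamma_2(S)$.

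The hardest step will be the final one, namely $Z(S) \le W$ (from which the inversion of $Z(S)$ by $\tau$ is immediate since $\tau$ inverts $W$). Writing $a$ for the scalar action of $\tau$ on $S/\gamma_1(S)$ and applying Lemma~\ref{action} with $b = 1$, $\tau$ acts on each chief factor $\gamma_i(S)/\gamma_{i+1}(S)$ as $a^{i-1}$. If $W$ meets the lower central series at positions $j$ and $k'$, with $W_0 := W \cap \gamma_{j+1}(S)$ of order $p$ lying in $\gamma_{k'}(S) \setminus \gamma_{k'+1}(S)$, then the inversion of $W$ by $\tau$ forces $a^{j-1} = a^{k'-1} = -1$. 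The subgroup $W_0$ is the unique 1-dimensional subspace of $W$ fixed by the image of $N_S(T)/T$ on $W$ (acting as the Sylow $p$-subgroup of $\SL_2(p)$ on its natural module); to identify $W_0$ with $Z(S)$, I would exploit that $Z(S)$ is the unique normal subgroup of $S$ of order $p$ and lies in $Z(T) \le T$, combined with Lemma~\ref{centralizer auto} applied to the non-exceptional quotient $S/Z(S)$ (Lemma~\ref{mc-factsb}(vi)) to pin down $k' = n-1$. This rigorous matching of the $N_S(T)/T$-fixed line in $W$ with the central subgroup $Z(S)$ is the principal obstacle.
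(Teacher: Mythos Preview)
Your first two stages are correct and match the paper's argument exactly. Stage~3, however, contains a gap: you assert that $[S,\tau]\cap\gamma_1(S)$ has index $p^2$ in $S$, which requires $[S,\tau]$ to be a \emph{proper} (hence maximal) subgroup of $S$. But nothing prevents $[S,\tau]=S$. Concretely, if $b\neq 1$ (in the notation of Lemma~\ref{action}) then $\tau$ acts on $S/\gamma_2(S)=S/\Phi(S)$ without fixed points, and since $[S,\tau]$ is normal in $S$ this forces $[S,\tau]=S$. You know only that $\tau$ centralises $N_S(T)/W$, and at this stage of the argument $N_S(T)$ is not yet known to equal $\gamma_1(S)$ (that is Lemma~\ref{g1 not ess1}, proved later), nor is $W$ known to lie in $\gamma_2(S)$; so you cannot deduce $b=1$.

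More fundamentally, your Stage~4 strategy---locating $W$ precisely in the lower central series and then matching the $N_S(T)/T$-fixed line $W_0$ with $Z(S)$---is not how the paper proceeds, and you rightly flag the matching step as the principal obstacle; it does not go through with the tools available. The paper's argument for $Z(S)\le W$ is entirely different and uses the defining property of $G_1$ rather than any central-series bookkeeping. One assumes $Z(S)\not\le W$; then $V:=Z(S)W\le Z(T)$ (note $W\le Z(T)$ because the image of $T$ in $\Aut(W)$ commutes with the image of $B_1$, which is all of $\SL_2(p)$, hence is scalar and thus trivial) and $V$ is normalised by $G_1=B_1N_S(T)$. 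Since $V\le Z(T)$, the involution $t$ acts on $V$ commuting with the $G_1$-action, so $V=C_V(t)\times[V,t]$ with $[V,t]=W$ and $C_V(t)=C_V(G_1)$. As $Z(S)$ is $\tau$-invariant of order $p$ and not contained in $W$, one gets $Z(S)=C_V(t)=C_V(G_1)$; thus $G_1$ normalises $Z(S)$ and $\F_{N_S(T)}(G_1)\subseteq N_\F(Z(S))\subseteq\mathcal G$ by Lemma~\ref{lem:subsystem1}, contradicting the choice of $G_1$. This contradiction is the missing idea.
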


\begin{proof} Since $t$ is an involution, $tT \in Z(G_1/T)$ and so $t \in N_{G_1}(N_S(T))$. Let  $\tilde \tau $ be the element of $\Aut_\F(N_S(T))$ which restricts to $c_t$. The maximality of $T$ implies  $N_\F(N_S(T)) \subseteq \mathcal G$ and so $\tilde \tau$ is the restriction of  a morphism $\tau$ in $\Aut_\F(S)$. Since $t$ centralizes $N_S(T)/T$, $[T,t]\le [T,O^{p'}(G_1)] =W$ and $t$ inverts $W$, we have $W=[N_S(T),t]= [N_S(T),\tau]$.  If $Z(S) \not \le W$ then, as $Z(S)\le T$, $V=Z(S)W$ is normalized by $B_1N_S(T)=G_1$ and is contained in $Z(T)$.
As   $W$ is the unique non-central $G_1$-chief factor in $V$, we get $V= C_V(\tau) \times [V,\tau]$ and  $C_V(\tau) =C_V(G_1)$. Hence, as $Z(S)$ is $\langle \tau\rangle$-invariant $Z(S) \le C_V(\tau)= C_V(G_1)$.  But then $\F_{N_S(T)}(G_1) \subseteq N_\F(Z(S)) \subseteq \mathcal G$ by Lemma~\ref{lem:subsystem1}.  Since this is not the case, we conclude that $Z(S) \le W$ and $Z(S)$ is inverted by $\tau$.
\end{proof}

\begin{lemma}\label{lem:gamma1 is ess} \blue{Assume that Hypothesis~\ref{HypCST} holds. Then $\gamma_1(S)$ is $\F$-essential and $S$ is not exceptional. In particular, Hypothesis~\ref{Hyp8.1} holds.}
\end{lemma}

\begin{proof} Assume that $\gamma_1(S)$ is not $\F$-essential.  Then $\mathcal G= N_\F(S)$.   Let $t \in G_1$ and $\tau$ be as in Lemma~\ref{lem:tau}. Then $$Z(S)\le [N_S(T),\tau]= W.$$
By Lemma~\ref{mc-factsb} (iii) and (vi) the group $\gamma_1(S)/Z(S)$ is the unique $2$-step centralizer in $S/Z(S)$. Hence $$[T,Z_3(S)]\le [\gamma_1(S),Z_3(S)] \le Z(S).$$ Since $Z(S) \le T$, we deduce that $Z_3(S)\le N_S(T)$ and consequently we also have $[Z_3(S),\tau]\le Z_3(S) \cap W$.

By Lemma~\ref{action}, $\tau$ does not centralize $Z_3(S)/Z(S)$. Hence, as $\tau$ inverts $Z(S)$, we have  $[Z_3(S),\tau] \le [T, \tau] = W$.  As $|S| \ge p^7$ by Lemma~\ref{facts} (iii), $Z_3(S)$ is abelian and so $Z_3(S) \le C_{N_S(T)}(W)=T$. Furthermore,  $Z_5(S) \le \gamma_1(S)$ and it follows that $Z_5(S) \le N_S(T)$. Therefore $$[Z_5(S),\tau] \le[N_S(T),\tau] \le W \le Z_3(S)$$ and this contradicts Lemma~\ref{action}. Hence $\gamma_1(S)$ is $\F$-essential. Lemma~\ref{facts}(vii) yields $S$ is not exceptional.
\end{proof}

\begin{lemma}\label{g1 not ess1} Assume that Hypothesis~\ref{HypCST} holds. Then the following hold:
\begin{enumerate}
\item  $\Out_\F(\gamma_1(S)) \cong \PSL_2(p)$ or $\PGL_2(p)$;
\item $\gamma_1(S)= N_S(T)$; and
\item $\gamma_1(S)=\Omega_1(\gamma_1(S))$ has class $2$ and order $p^{p-1}$.
\end{enumerate}
\end{lemma}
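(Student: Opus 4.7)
\medskip

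\noindent\textbf{Proof plan.}
Part (i) is not going to require new work: Lemma~\ref{lem:gamma1 is ess} has just told us that $\gamma_1(S)$ is $\F$-essential and that $S$ is not exceptional, so Lemma~\ref{facts}(vii) delivers $\Out_\F(\gamma_1(S))\cong \PSL_2(p)$ or $\PGL_2(p)$ directly. The substantive content is in (ii) and (iii), and for both of these the starting point is Lemma~\ref{gamma1-essC}: since $\Omega_1(\gamma_1(S))$ is non-abelian by Lemma~\ref{facts}(iv), we already know $|\Omega_1(\gamma_1(S))|=p^{p-1}$, that $\Omega_1(\gamma_1(S))$ has class~$2$, and that $Z(\Omega_1(\gamma_1(S)))=Z(\gamma_1(S))=\Phi(\Omega_1(\gamma_1(S)))=[\Omega_1(\gamma_1(S)),\gamma_1(S)]$. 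So the only remaining content in (iii) is the equality $\gamma_1(S)=\Omega_1(\gamma_1(S))$, equivalently $|S|=p^p$, and for (ii) we must show $N_S(T)$ fills out $\gamma_1(S)$.

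My plan for reducing (ii) and (iii) to an analysis of a single well-chosen $p'$-automorphism is as follows. By Lemma~\ref{It is L2p}, $B_1=O^p(G_1)$ with $B_1/W\cong \SL_2(p)$ and $W=[T,B_1]$ a natural $\SL_2(p)$-module of order~$p^2$. Pick $t\in B_1$ projecting to the central involution of $B_1/W$, so $c_t\in\Aut_{G_1}(T)=\Aut_\F(T)$ centralizes $T/W$ and inverts $W$; by $\F$-receptivity of $T$ this $c_t$ extends to $\tau^*\in\Aut_\F(N_S(T))$. The maximal choice of $|N_S(T)|$ among subgroups with $N_\F(\,\cdot\,)\not\subseteq\mathcal G$ means that $N_\F(N_S(T))\subseteq\mathcal G=N_\F(\gamma_1(S))$, and hence $\tau^*$ in turn extends to an element $\hat\tau\in\Aut_\F(\gamma_1(S))$ of $p'$-order (after replacing by a suitable power). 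Because $\gamma_1(S)$ is $\F$-essential with $O^{p'}(\Out_\F(\gamma_1(S)))\cong \PSL_2(p)$, this $\hat\tau$ is the involution in $Z(O^{p'}(\Out_\F(\gamma_1(S))))$ predicted by Lemma~\ref{gamma1-essC}, and one reads off $|C_{\Omega_1(\gamma_1(S))}(\hat\tau)|=p^2$, with $\hat\tau$ inverting $\Omega_1(\gamma_1(S))/Z(\gamma_1(S))$ mod its $\hat\tau$-fixed part and, by Lemma~\ref{L2p}, centralizing one $p$-dimensional layer in each of the two $L$-chief factors of $\Omega_1(\gamma_1(S))$.

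With $\hat\tau$ in hand one runs the standard bookkeeping using Lemma~\ref{action} and Lemma~\ref{centralizer auto} applied to the whole of $S$ (which is not exceptional). Because $\hat\tau$ acts on every $\gamma_j(S)/\gamma_{j+1}(S)$, knowing its scalar on $\gamma_1(S)/\gamma_2(S)$ and on $W$ determines its scalar on every factor, and Lemma~\ref{centralizer auto} forces the indices of the factors on which $\hat\tau$ has a given eigenvalue to be congruent mod $(p-1)/2$. Combining this arithmetic with the Lemma~\ref{gamma1-essC} count $|\Omega_1(\gamma_1(S))|=p^{p-1}$ and the requirement that $W\le \Omega_1(T)\le \Omega_1(\gamma_1(S))$ pins down the position of $W$ inside the lower central series and forces $\gamma_1(S)$ to equal $\Omega_1(\gamma_1(S))$; this gives $|S|=p^p$ and (iii). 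A subgroup $g\in \gamma_1(S)\setminus N_S(T)$ would, via $[g,T]\not\le T$, contradict the constraint that $B_1$ acts on $T$ with $[T,B_1]$ of order exactly $p^2$ contained in $\Omega_1(T)$; this yields $N_S(T)=\gamma_1(S)$ and hence (ii). The principal obstacle is executing this last step: one must show that, once $\gamma_1(S)$ has been reduced to $\Omega_1(\gamma_1(S))$ of order $p^{p-1}$, there is no room for $N_S(T)$ to sit strictly between $T$ and $\gamma_1(S)$ without violating either the natural $\SL_2(p)$-structure on $W$ or the $\PSL_2(p)$-chief series of $\Omega_1(\gamma_1(S))$ provided by Lemma~\ref{gamma1-essC}.
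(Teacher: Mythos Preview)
Your plan for (i) and the set-up from Lemma~\ref{gamma1-essC} are fine, but the route you sketch for (ii) and (iii) has genuine gaps, and in fact the paper proceeds in the opposite order with a much shorter argument that does not use an involution at all.

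First, a concrete error: you say ``this $\hat\tau$ is the involution in $Z(O^{p'}(\Out_\F(\gamma_1(S))))$''. But Lemma~\ref{gamma1-essC} (and part (i) itself) gives $O^{p'}(\Out_\F(\gamma_1(S)))\cong\PSL_2(p)$, whose centre is trivial; there is no such central involution. Your $\hat\tau$ lands somewhere in $\PSL_2(p)$ or $\PGL_2(p)$, but you have not shown it normalizes $\Out_S(\gamma_1(S))$, so you cannot lift it to $\Aut_\F(S)$, and therefore you cannot invoke Lemma~\ref{action} or Lemma~\ref{centralizer auto} ``applied to the whole of $S$'' as you propose. The eigenvalue bookkeeping you describe never gets off the ground. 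Your closing line for (ii) --- that $[g,T]\not\le T$ for $g\in\gamma_1(S)\setminus N_S(T)$ would contradict $|[T,B_1]|=p^2$ --- is a non sequitur: $W=[T,B_1]$ is about the action of $B_1$ on $T$, not about how elements outside $N_S(T)$ move $T$.

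The paper's argument proves (ii) \emph{first}, directly and without any automorphism arithmetic. By Lemma~\ref{omegagood}(ii), $Z(\Omega_1(\gamma_1(S)))\le\Omega_1(T)$, and by Lemma~\ref{gamma1-essC}, $[\Omega_1(\gamma_1(S)),\gamma_1(S)]=Z(\Omega_1(\gamma_1(S)))$. Hence
\[
[\Omega_1(T),\gamma_1(S)]\le[\Omega_1(\gamma_1(S)),\gamma_1(S)]=Z(\Omega_1(\gamma_1(S)))\le\Omega_1(T),
\]
so $\gamma_1(S)$ normalizes $\Omega_1(T)$. As $\Omega_1(T)$ is characteristic in $T$, Lemma~\ref{NFTnormZ}(i) gives $N_S(\Omega_1(T))=N_S(T)$, whence $\gamma_1(S)\le N_S(T)$; since $T$ is not normal in $S$, $N_S(T)=\gamma_1(S)$. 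For (iii), now $|\gamma_1(S):T|=p$ by Lemma~\ref{It is L2p}, and $\Omega_1(\gamma_1(S))\not\le T$ by Lemma~\ref{omegagood}(i), so $\gamma_1(S)=T\Omega_1(\gamma_1(S))$. Lemma~\ref{agemo} then yields $\agemO^1(\gamma_1(S))=\agemO^1(T)$; but this is characteristic in $T$ and normal in $S$, so if non-trivial it violates Lemma~\ref{lem:not normal S} (via $N_\F(T)\subseteq N_\F(\agemO^1(\gamma_1(S)))\subseteq\mathcal G$). Thus $\agemO^1(\gamma_1(S))=1$ and $\gamma_1(S)=\Omega_1(\gamma_1(S))$.
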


\begin{proof} By Lemma \ref{lem:gamma1 is ess}, we have Hypothesis~\ref{Hyp8.1} is satisfied.  Part (i) is just Lemma~\ref{facts}(vii). The group $\Omega_1(\gamma_1(S))$ is not abelian by Lemma~\ref{facts}(iv).
From Lemma~\ref{gamma1-essC} we obtain   $\Omega_1(\gamma_1(S))$ has order $p^{p-1}$ and nilpotency class $2$ and $$[\Omega_1(\gamma_1(S)),\gamma_1(S)]\le Z(\Omega_1(\gamma_1(S))).$$
Hence, using Lemma~\ref{omegagood},
$$[\Omega_1(T),\gamma_1(S)]\le [\Omega_1(\gamma_1(S)),\gamma_1(S)] \le Z(\Omega_1(\gamma_1(S)))\le \Omega_1(T).$$
In particular, $\gamma_1(S)$ normalizes $\Omega_1(T)$. Therefore, as $N_\F(\Omega_1(T)) \supseteq N_\F(T)\not \subseteq \mathcal G$, the maximal choice of $N_S(T)$ implies that $N_S(T)\ge \gamma_1(S)$. As $T$ is not normal in $S$ by Lemma~\ref{lem:not normal S}, we now have $N_S(T)= \gamma_1(S)$ which is part (ii).

By Lemma~\ref{It is L2p}, $G_1/T \cong \SL_2(p)$ and so $T$ is a maximal subgroup of $N_S(T)$. As $\Omega_1(\gamma_1(S)) \not \le T$ by Lemma~\ref{omegagood}, we have $\gamma_1(S)= \Omega_1(\gamma_1(S))T$. Hence Lemmas~\ref{agemo} and \ref{mc-facts}(ii) imply $\agemO^1(T)= \agemO^1(\gamma_1(S))$. If  $\agemO^1(\gamma_1(S)) \ne 1$, then $N_\F(T) \le N_\F(\agemO^1(\gamma_1(S))) \subseteq \mathcal G$ and this contradicts the choice of $T$. Therefore  $\agemO^1(\gamma_1(S)) = 1$ and we conclude $\gamma_1(S)=\Omega_1(\gamma_1(S))$. Thus part (iii) holds.
\end{proof}

\begin{prop}\label{the centric case} If Hypothesis~\ref{hp.conj} holds, then $T$ is not $S$-centric.
\end{prop}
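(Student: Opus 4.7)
The plan is to exploit the central involution of $B_1/W \cong \SL_2(p)$ to produce an involutory $\F$-automorphism of $\gamma_1(S)$ whose commutator on $\gamma_1(S)$ is forced to have order $p^2$, then to contradict this with the lower bound coming from the irreducible $\PSL_2(p)$-module structure on $\gamma_1(S)/Z(\gamma_1(S))$ and $Z(\gamma_1(S))$ established in Lemmas~\ref{gamma1-essC} and~\ref{g1 not ess1}.

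First I would establish that $G_1/W$ is the internal direct product of the normal subgroups $T/W$ and $B_1/W \cong \SL_2(p)$. Since both $T = O_p(G_1)$ and $B_1 = O^p(G_1)$ are normal in $G_1$, and $T \cap B_1$ is a normal $p$-subgroup of $B_1$, it is contained in $O_p(B_1) = W$; combined with $W \le T$ this gives $T \cap B_1 = W$. Consequently $[T,B_1] \le W$, and as $T/C_T(W)$ embeds in $C_{\GL(W)}(\SL_2(p)) = \GF(p)^\times$ (a $p'$-group), $T$ centralizes $W$; in particular $W \le Z(T)$. Next, applying Schur--Zassenhaus to the extension of $W$ by $\langle -I\rangle$ inside $B_1$ (valid since $|W|$ is odd) produces an involution $\tau \in B_1$ projecting to $-I$. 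This $\tau$ inverts $W$, centralizes $(B_1 \cap \gamma_1(S))/W$ (as $-I$ is central in $B_1/W$), and centralizes $T/W$ by the direct product structure. Hence $\tau$ normalizes $\gamma_1(S) = T(B_1 \cap \gamma_1(S))$ and $[\gamma_1(S),\tau] = W$ has order exactly $p^2$.

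I would then produce the contradiction via the module structure. Write $V_1 = \gamma_1(S)/Z(\gamma_1(S)) \cong \VV_{w-1}$ and $V_2 = Z(\gamma_1(S)) \cong \VV_{u-1}$ as $\GF(p)L$-modules with $L \cong \PSL_2(p)$, $u+w = p-1$ and $u,w$ odd. Since $\gamma_1(S)$ is neither abelian nor extraspecial (Lemma~\ref{facts}(iv)), both $u, w \ge 3$. The image $\bar\tau \in \Out_\F(\gamma_1(S))$ is a non-trivial involution (as $\gamma_1(S)$ has odd order), and a direct calculation on $\VV_d$ for $d$ even shows $\dim[\VV_d, \bar\tau] \ge d/2$. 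The natural surjection $[\gamma_1(S),\tau] \twoheadrightarrow [V_1, \bar\tau]$ whose kernel contains $[V_2, \bar\tau]$ then yields
\[ p^2 \;=\; |[\gamma_1(S),\tau]| \;\ge\; |[V_2,\bar\tau]|\cdot|[V_1,\bar\tau]| \;\ge\; p^{(p-3)/2}, \]
which is an immediate contradiction for $p \ge 11$.

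The main obstacle is the boundary case $p = 7$, where $u = w = 3$ turns the inequality above into equality. Here one must argue further: equality forces $\bar\tau$ to act on each $\VV_2$ with one-dimensional commutator, so $\bar\tau$ is necessarily an outer involution of $\PGL_2(7)$; in particular $\Out_\F(\gamma_1(S)) \cong \PGL_2(7)$ and $\bar\tau$ lies in the Borel subgroup, normalizing $\Out_S(\gamma_1(S))$. Saturation then extends $\tau$ to $\hat\tau \in \Aut_\F(S)$ of order $2$. Applying Lemma~\ref{action} together with $\hat\tau^2 = 1$ pins down the action of $\hat\tau$ on the successive chief factors of $S$ (one forces $a = -1$ and $b = 1$), and requiring that the image of $W$ in each $\gamma_i(S)/\gamma_{i+1}(S)$ lie in the $(-1)$-eigenspace of $\hat\tau$ constrains $W$'s position in the lower central series of $S$ in a way incompatible with its role as the natural $\SL_2(p)$-module for $B_1/W$, completing the proof.
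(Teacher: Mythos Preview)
Your overall strategy---produce a $p'$-element of $\Aut_\F(\gamma_1(S))$ arising from $B_1$, compute the size of its commutator (or centralizer) on $\gamma_1(S)$ from the $G_1$-side, and contradict this using the $\PSL_2(p)$-module structure of $\gamma_1(S)$ from Lemma~\ref{gamma1-essC}---is exactly the paper's approach. The difference is that you use only the central involution $\tau$ of $B_1/W$, and this is where the trouble comes.

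Using $\tau$ gives $|[\gamma_1(S),\tau]|=p^2$, and your lower bound from the module structure is $p^{(p-3)/2}$. That only forces $p\le 7$, leaving the case $p=7$ open. Your handling of $p=7$ has two genuine gaps. First, having shown $\bar\tau$ is an outer involution in $\PGL_2(7)$, you assert that $\bar\tau$ ``lies in the Borel subgroup, normalizing $\Out_S(\gamma_1(S))$''. This does not follow: outer involutions in $\PGL_2(7)$ form a single conjugacy class of size $56$, of which only $7$ normalize a given Sylow $7$-subgroup; nothing in your argument ties $\bar\tau$ to $\Out_S(\gamma_1(S))$, since $S\not\le G_1$. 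Second, even granting an extension $\hat\tau\in\Aut_\F(S)$, the final sentence (``constrains $W$'s position \ldots\ in a way incompatible with its role as the natural $\SL_2(p)$-module'') is not an argument; I do not see how to make it one.

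The paper avoids all of this by using a stronger element. Take $\theta\in N_{G_1}(N_S(T))$ of order $p-1$ and let $\rho\in\Aut_\F(\gamma_1(S))$ be the corresponding automorphism. Then $\rho$ (hence $\rho^2$) centralizes $T/W$, acts without fixed points on $\gamma_1(S)/T$ and on $W$, so $|C_{\gamma_1(S)}(\rho^2)|=|T/W|=p^{p-4}$. On the other hand, $\bar\rho^2$ has order $(p-1)/2$ and lies in $L\cong\PSL_2(p)$ (since $[\PGL_2(p):\PSL_2(p)]=2$); for $p\ge 5$ every cyclic subgroup of order $(p-1)/2$ in $\PSL_2(p)$ sits in a split torus and is therefore $L$-conjugate to the complement $H$ of Lemma~\ref{gamma1-essC}. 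Lifting via Schur--Zassenhaus, $\langle\rho^2\rangle$ is $\Aut_\F(\gamma_1(S))$-conjugate to $H$, whence $|C_{\gamma_1(S)}(\rho^2)|=p^2$. Thus $p^{p-4}=p^2$, impossible for $p\ge 7$, and the argument is uniform with no residual case.
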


\begin{proof}\blue{ Suppose that $T$ is $S$-centric. Then Hypothesis~\ref{HypCST} holds.}
\blue{From Lemma~\ref{lem:gamma1 is ess} we know Hypothesis~\ref{Hyp8.1} holds.} Lemma~\ref{g1 not ess1} implies that $N_S(T)= \gamma_1(S)$ has exponent $p$, order $p^{p-1}$ and nilpotency class $2$. We also know that   $\Out_\F(\gamma_1(S)) \cong \PSL_2(p)$ or $\PGL_2(p)$. Let $L=\langle \Aut_S(\gamma_1(S))^{\Aut_{\F}(\gamma_1(S))}\rangle$ be the preimage of $O^{p'}(\Aut_\F(\gamma_1(S)))$.

As in Lemma~\ref{lem:gamma1 is ess}, take $\theta \in N_{G_1}(N_S(T))$ of order $p-1$ and let  $\rho$   the element of $\Aut_\F(N_S(T))=\Aut_\F(\gamma_1(S))$ which restricts to $\theta$.
 Set $H=\langle \rho\rangle$.  Then, as $H $ centralizes $T/W$, we have $|[N_S(T),H]|=|[\gamma_1(S),H]|=p^3$.
Since $H \cap L$ has index at most $2$ in $H$, we have $|(H\cap L)\Inn(\gamma_1(S))/\Inn(\gamma_1(S))|=(p-1)/2$ and subgroups of this order normalize a Sylow $p$-subgroup of $L$.   We calculate, using $|\gamma_1(S)|=p^{p-1}$, that  $$|C_{\gamma_1(S)}(\rho^2)|= |\gamma_1(S)/[\gamma_1(S),\rho^2]|=p^{p-4}. $$  However, Lemma~\ref{gamma1-essC} states that $|C_{\gamma_1(S)}(\rho^2)|= p^2$ and so we conclude that  $p<7$. This contradicts Lemma~\ref{facts} (iii) and proves that $T$ is not $\F$-centric.
\end{proof}
\subsection{The case $T$ is not $S$-centric}

In this subsection, we continue the notation so far established and, in addition, assume that
\blue{\begin{hypothesis}\label{HypCST2}  Hypothesis~\ref{hp.conj} holds and adopting Notation~\ref{notationT} we have $C_S(T)\not \le T$.
\end{hypothesis}}

\begin{lemma}\label{existessential}
 Assume that Hypothesis~\ref{HypCST2} holds. Then  $\E_{N_\F(T)}\ne \emptyset$. \end{lemma}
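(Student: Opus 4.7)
The plan is to argue by contradiction: suppose $\E_{N_\F(T)} = \emptyset$ and deduce that $N_\F(T) \subseteq \mathcal{G}$, contradicting the defining property of $T$. Since $N_\F(T)$ is saturated by Lemma~\ref{lem:Tfn}, the Alperin--Goldschmidt Theorem~\ref{t:alp} applied inside $N_\F(T)$ will yield $N_\F(T) = \langle \Aut_{N_\F(T)}(N_S(T))\rangle$, and one checks $\Aut_{N_\F(T)}(N_S(T)) = N_{\Aut_\F(N_S(T))}(T)$. It therefore suffices to show that every $\phi \in N_{\Aut_\F(N_S(T))}(T)$ belongs to $\mathcal{G}$.

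Next, I would apply the Alperin--Goldschmidt Theorem a second time, this time inside $\F$, to each such $\phi : N_S(T) \to N_S(T)$. This writes $\phi = \phi_k\circ\cdots\circ\phi_1$ where each $\phi_i$ is the restriction of an automorphism $\widetilde{\phi}_i \in \Aut_\F(Q_i)$ with $Q_i \in \E_\F \cup \{S\}$ and $Q_i \supseteq N_S(T)\phi_1\cdots\phi_{i-1}$. By Lemma~\ref{no pearls} there are no $\F$-pearls, and by Lemma~\ref{facts}(ii) (when $S$ is exceptional) or because $C_S(Z_2(S)) = \gamma_1(S)$ (otherwise) no new essential arises from $C_S(Z_2(S))$. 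Thus each $Q_i$ is $S$, $\gamma_1(S)$, or a witness, and restrictions from $\Aut_\F(S)$ or $\Aut_\F(\gamma_1(S))$ visibly lie in $\mathcal{G}$.

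The hard part will be to rule out the witness case, and this will be done via the maximal choice of $T$. If $Q_i = E$ is a witness, then $E < \gamma_1(S)$ by Lemma~\ref{facts}(i), and $|E| \ge |N_S(T)\phi_1\cdots\phi_{i-1}| = |N_S(T)|$ since $\F$-morphisms are bijective. From $E < S$ we deduce $|N_S(E)| > |E| \ge |N_S(T)|$. Moreover, because $\gamma_1(S) \le O_p(\mathcal{G})$ and $E < \gamma_1(S)$, the assumption $N_\F(E) \subseteq \mathcal{G}$ would, by Lemma~\ref{F-essential G-essential}, force $E$ to be $\mathcal{G}$-essential while properly contained in $O_p(\mathcal{G})$, contradicting Lemma~\ref{OpinE}; hence $N_\F(E) \not\subseteq \mathcal{G}$. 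Combining $E \le \gamma_1(S)$, $N_\F(E) \not\subseteq \mathcal{G}$, and $|N_S(E)| > |N_S(T)|$ then contradicts the maximality of $|N_S(T)|$ in the choice of $T$. Hence no witness appears, so every $\phi \in \mathcal{G}$, giving $N_\F(T) \subseteq \mathcal{G}$, which is the required contradiction. The main technical point is the nested application of the Alperin--Goldschmidt Theorem and the bookkeeping of sizes of intermediate subgroups, which is resolved by the simple observation that $\F$-morphisms preserve cardinality, so $|Q_i| \ge |N_S(T)|$ at every stage.
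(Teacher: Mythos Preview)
Your proof is correct, but it takes a more elaborate route than the paper's. The paper's argument is a two-liner: from $\E_{N_\F(T)}=\emptyset$ and Alperin--Goldschmidt in $N_\F(T)$, one gets $N_\F(T)\subseteq N_\F(N_S(T))$; hence $N_\F(N_S(T))\not\subseteq\mathcal G$, and since $T\ne S$ (so $N_S(T)>T$) this contradicts the maximal choice of $T$ by replacing $T$ with the larger candidate $N_S(T)$.

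Your argument instead unfolds each $\phi\in\Aut_{N_\F(T)}(N_S(T))$ via Alperin--Goldschmidt in $\F$ and rules out any witness $E$ appearing in the decomposition, using $|N_S(E)|>|E|\ge |N_S(T)|$ together with the observation that $N_\F(E)\not\subseteq\mathcal G$ (via Lemmas~\ref{F-essential G-essential} and~\ref{OpinE}). This is exactly the mechanism already deployed in the proof of Lemma~\ref{lem:Tfn}, so you are recycling a known argument from the paper rather than inventing a new one. The payoff of your longer route is robustness: the paper's shortcut needs $N_S(T)\le\gamma_1(S)$ in order for $N_S(T)$ to be an admissible competitor, which follows from Lemma~\ref{mc-normalizer} and Lemma~\ref{lem:not normal S} but strictly speaking requires a word in the exceptional case (where $N_S(T)$ might a priori sit in $C_S(Z_2(S))\setminus\gamma_1(S)$). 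Your approach sidesteps this entirely, since the competing candidates you produce are witnesses, which lie in $\gamma_1(S)$ by Lemma~\ref{facts}(i).
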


\begin{proof} Suppose that $\E_{N_\F(T)}=\emptyset$.  Then, because $N_\F(T)$ is saturated by Lemma~\ref{lem:Tfn}, $N_\F(T) \subseteq N_\F(N_S(T))$. Hence  $N_\F(N_S(T)) \not\subseteq \mathcal G$ and, as $T\ne S$, this contradicts the maximal choice of $T$. We conclude $\E_{N_\F(T)}\ne \emptyset$.
\end{proof}

Since $N_\F(T)$ is saturated and $\E_{N_\F(T)} \ne \emptyset$,  the Alperin-Goldschmidt fusion theorem implies there is an $N_\F(T)$-essential subgroup $P$ such that $\Aut_{N_\F(T)}(P) \not \subseteq \mathcal G$.

 \begin{notation}\label{not:P} The subgroup $P\le N_S(T)$ is an $N_\F(T)$-essential subgroup  of maximal order such that  $\Aut_{N_\F(T)}(P) \not \subseteq \mathcal G$.
 \end{notation}

\begin{lemma}\label{lem:lem1}
 If Hypothesis~\ref{HypCST2} holds, then
\begin{enumerate}
\item $T<P$;
\item $N_S(P) \le \gamma_1(S)$;
\item $|N_S(P)| < |N_S(T)|$;  and
\item $P < N_{N_S(T)}(P) < N_S(T)$.
\end{enumerate}\end{lemma}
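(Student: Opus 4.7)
The plan is to treat the four parts in order, with the ``furthermore'' clause of Lemma~\ref{mc-normalizer} carrying the weight in part (ii), and parts (iii), (iv) then following from the maximal choice of $T$.

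For part (i), we have $T\le P$ since $T=O_p(N_\F(T))$ is contained in every $N_\F(T)$-essential subgroup by Lemma~\ref{OpinE}. If $T=P$, then $P$ being $N_\F(T)$-centric gives $C_{N_S(T)}(T)\le T$; since $C_S(T)\le N_S(T)$ automatically, this forces $C_S(T)\le T$, contradicting the standing hypothesis of this subsection that $T$ is not $\F$-centric.

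For part (ii), I will feed three ingredients into the ``furthermore'' clause of Lemma~\ref{mc-normalizer} applied with $W=P$. First, $Z(S)\le P$: $Z(S)\le N_S(T)$ centralizes $P$, so $Z(S)\le C_{N_S(T)}(P)\le P$ using that $P$ is $N_\F(T)$-centric. Second, $P\subseteq\gamma_1(S)\cup C_S(Z_2(S))$: Lemma~\ref{mc-normalizer} applied to $T$, using $T\le\gamma_1(S)$ together with Lemma~\ref{lem:not normal S} (so $T$ is not normal in $S$), gives $N_S(T)\subseteq\gamma_1(S)\cup C_S(Z_2(S))$, and then $P\le N_S(T)$. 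Third, $P$ is not normal in $S$: otherwise $P$ is fully $\F$-normalized, so $N_\F(P)$ is a saturated fusion system on $S$, and is proper in $\F$ since $O_p(\F)=1$ by Lemma~\ref{facts}(v); Lemma~\ref{lem:subsystem1} then gives $N_\F(P)\subseteq\mathcal G$, but $\Aut_{N_\F(T)}(P)\subseteq\Aut_\F(P)=\Aut_{N_\F(P)}(P)$ would therefore lie in $\mathcal G$, contradicting the choice of $P$. With these three facts in place, the ``furthermore'' clause of Lemma~\ref{mc-normalizer} delivers $N_S(P)\le\gamma_1(S)$.

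Parts (iii) and (iv) are then consequences of the maximal choice of $T$. For (iii), by (ii) we have $P\le\gamma_1(S)$, and $N_\F(P)\not\subseteq\mathcal G$ by the same argument as in step three of (ii). Thus $P$ itself qualifies as a candidate for the rˆole of $T$; the maximality of $|N_S(T)|$ gives $|N_S(P)|\le|N_S(T)|$, and the subsequent maximality of $|T|$ combined with $T<P$ from (i) rules out equality. For (iv), $P<N_S(T)$ by the definition of essential subgroup, so $P<N_{N_S(T)}(P)$ by normalizer growth in the $p$-group $N_S(T)$; conversely $N_{N_S(T)}(P)\le N_S(P)$, so $N_{N_S(T)}(P)=N_S(T)$ would give $|N_S(T)|\le|N_S(P)|$, contradicting (iii).

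The main subtlety is part (ii): a direct application of Lemma~\ref{mc-normalizer} would only yield the weaker conclusion $N_S(P)\subseteq\gamma_1(S)\cup C_S(Z_2(S))$, leaving open the unwanted alternative $N_S(P)\le C_S(Z_2(S))$. The asymmetric ``furthermore'' clause is precisely what removes this, and the key ingredient enabling its use is $Z(S)\le P$, itself a consequence of the $N_\F(T)$-centricity of $P$.
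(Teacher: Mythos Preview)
Your proof is correct. The overall structure matches the paper's, but your route through part~(ii) differs in an interesting way. The paper first establishes $P\le\gamma_1(S)$ directly by a fusion-theoretic argument: if $P\not\le\gamma_1(S)$ then, since every $\F$-essential subgroup lies in $\gamma_1(S)$ (Lemma~\ref{facts}(i),(ii) and Lemma~\ref{no pearls}), $P$ is contained in no $\F$-essential, so $\Aut_\F(P)$ consists entirely of restrictions from $\Aut_\F(S)$, forcing $\Out_S(P)\trianglelefteq\Out_\F(P)$ and hence $\Out_{N_S(T)}(P)\trianglelefteq\Out_{N_\F(T)}(P)$, contradicting $P$ being $N_\F(T)$-essential. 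You instead deduce $P\subseteq\gamma_1(S)\cup C_S(Z_2(S))$ purely from $p$-group structure, via $P\le N_S(T)$ and the first part of Lemma~\ref{mc-normalizer} applied to $T$. Your approach is more elementary at this step and avoids invoking Alperin--Goldschmidt. For ruling out $P$ normal in $S$, the paper appeals directly to the maximal choice of $T$ (since $P$ is a competitor), while you invoke Lemma~\ref{lem:subsystem1}; both work, and indeed you use the maximality argument anyway in part~(iii).
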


\begin{proof} \blue{
By Lemma~\ref{OpinE}, $T \le O_p(\N_\F(T))\le P$. Since $T $ is not $S$-centric, $T$ is not $N_S(T)$-centric and so $T<P$. This is (i).}

 Suppose that $P \not \le \gamma_1(S)$.  Then Lemma~\ref{facts} (i) implies that $P$ is not contained in any $\F$-essential subgroups.  Therefore the elements of $\Aut_\F(P)$ are all restrictions of elements in $\Aut_\F(S)$ and  this means that $O_p(\Out_\F(P))= \Out_S(P)$. In particular, $\Aut_\F(P)$ has a unique Sylow $p$-subgroup.  As $\Aut_{N_\F(T)}(P)$ is a subgroup of $\Aut_\F(P)$, we have $\Aut_{N_\F(T)}(P)$ has a unique Sylow $p$-subgroup and this contradicts $P$ being $N_\F(T)$-essential.  Hence $P \le \gamma_1(S)$. Since $Z(S) \le N_S(T)$, we have $Z(S) \le C_{N_S(T)}(P) \le P$ and so, either, $P$ is normal in $S$ or $N_S(P) \le \gamma_1(S)$ by Lemma~\ref{mc-normalizer}.   As $T$ is not normal in $S$ by Lemma \ref{lem:not normal S}, the maximal choice of $T$ implies $N_S(P)\ne S$. Hence $N_S(P) \le \gamma_1(S)$. This proves (ii). If $|N_S(P)| \ge |N_S(T)|$ then as $P > T$, we have a contradiction to the maximal choice of $T$. Thus $|N_S(P)| < |N_S(T)|$ so (iii) holds. Part (iii) yields  $P < N_{N_S(T)}(P) < N_S(T)$ which is (iv).
\end{proof}

\blue{Recall from Section~\ref{sec:PF}, that  the subgroup $H_{N_\F(T)}(P)$   of $\Aut_{N_\F(T)}(P)$   is generated by those $N_\F(T)$-automorphisms of $P$ which extend to $N_\F(T)$-isomorphisms between strictly larger subgroups of $N_S(T)$.}

\begin{lemma}\label{lem:P fact}
\blue{ Assume that Hypothesis~\ref{HypCST2} holds.
 Let $A, B \le N_{N_S(T)}(P)$ with $P\le A \cap B$. Then
   $\Hom_{N_\F(T)}(A,B) \subseteq \mathcal G$ and $$H_{N_\F(T)}(P)= N_{\Aut_{N_\F(T)}(P)}(\Aut_{N_S(T)}(P))\subseteq \mathcal G.$$}
\end{lemma}

\begin{proof} From the maximal choice of $P$, we know that any $N_\F(T)$-essential subgroup $P_1$ containing $A$ has $\Aut_{N_\F(T)}(P_1) \subset \mathcal G$. It follows   that $\Hom_{N_\F(T)}(A,B)\subseteq  \mathcal G$.  In particular, the elements of  $\Hom_{N_\F(T)}(A,B)$  are restrictions of morphisms in $N_\F(\gamma_1(S))$.  If $\alpha \in \Hom_{N_\F(T)}(A,B)$ and $P=P\alpha$, then taking any $\hat \alpha \in \Aut_{\F}(\gamma_1(S))$ which restricts to $\alpha$, we have $P\hat\alpha= P$, $T= T\hat \alpha$ and, because of Lemma~\ref{lem:lem1},  $$N_{N_S(T)}(P)\hat \alpha= N_{N_{\gamma_1(S)}(T)}(P)\hat \alpha=  N_{N_{\gamma_1(S)}(T)}(P)=N_{N_S(T)}(P).$$ Hence  $H_{N_\F(T)}(P)= N_{\Aut_{N_\F(T)}(P)}(\Aut_{N_S(T)}(P))$, as stated.
\end{proof}

\begin{lemma}\label{lem:CST not in P}  Assume that Hypothesis~\ref{HypCST2} holds. Then $C_S(T) \not \le P$. In particular, $\Out_{C_S(T)}(P) \ne 1$.
\end{lemma}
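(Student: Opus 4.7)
I would prove this by contradiction, arguing in two stages.

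\smallskip

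For the main claim $C_S(T)\not\le P$, suppose otherwise and set $T_0 = T\cdot C_S(T)$. Because $T$ is not $\F$-centric by the standing assumption of this subsection, $C_S(T)\not\le T$, so $T_0 > T$. Also $T_0 \le \gamma_1(S)$: $T\le \gamma_1(S)$ by construction, and $C_S(T)\le P\le \gamma_1(S)$ by Lemma~\ref{lem:lem1}(ii). Since $C_S(T^x) = C_S(T)^x$ for any $x\in N_S(T)$, the group $N_S(T)$ normalizes $C_S(T)$ as well as $T$, giving $N_S(T)\le N_S(T_0)$.

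\smallskip

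The crucial step is to show $N_\F(T_0)\not\subseteq \mathcal G$. Under the assumption $C_S(T)\le P$, one has $C_P(T) = C_S(T)\cap P = C_S(T)$, and $C_P(T)$ is $\Aut_{N_\F(T)}(P)$-invariant because every morphism in $N_\F(T)$ preserves $T$. Consequently any $\phi\in \Aut_{N_\F(T)}(P)$ preserves $T_0 = T\cdot C_P(T)$, and since $T_0 \le P$ this places $\phi$ in $\Aut_{N_\F(T_0)}(P)$. Hence $\Aut_{N_\F(T_0)}(P)\supseteq \Aut_{N_\F(T)}(P)\not\subseteq \mathcal G$ by the defining property of $P$, so $N_\F(T_0)\not\subseteq \mathcal G$. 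Now $T_0$ meets every requirement in the selection of $T$, with $|N_S(T_0)|\ge |N_S(T)|$ and $|T_0|>|T|$: this violates the maximal choice of $T$ (either $|N_S(T_0)|>|N_S(T)|$, contradicting the first clause, or equality forces $|T_0|\le |T|$, contradicting the second).

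\smallskip

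For the ``in particular'' clause, with $C_S(T)\not\le P$ now in hand, set $C_0 = C_S(T)\cap P$, a proper subgroup of the $p$-group $C_S(T)$. Since $C_S(T)$ is characteristic in $N_S(T)$ it is normal there, and as $P\le N_S(T)$ the $p$-group $P$ acts by conjugation on the non-trivial $p$-set $C_S(T)/C_0$. Standard orbit counting shows the number of $P$-fixed cosets is divisible by $p$, so there exists $c\in C_S(T)\setminus C_0$ with $[c,P]\le C_0\le P$; that is, $c\in N_{C_S(T)}(P)\setminus P$. Because $P$ is $N_\F(T)$-centric, $C_{N_S(T)}(P)\le P$, so $c$ cannot induce an inner automorphism of $P$. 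Hence $\Out_{C_S(T)}(P)\ne 1$.

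\smallskip

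The main obstacle is making the inclusion $\Aut_{N_\F(T)}(P)\subseteq \Aut_{N_\F(T_0)}(P)$ rigorous. This rests entirely on the identity $C_P(T) = C_S(T)$, which holds precisely under the contradictory hypothesis $C_S(T)\le P$ and is what allows us to transport the ``bad'' morphisms of $N_\F(T)$ into $N_\F(T_0)$ so as to contradict the maximal choice of $T$.
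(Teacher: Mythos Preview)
Your proof is correct and follows essentially the same approach as the paper. Both set $T_0 = TC_S(T) = TC_P(T)$ under the contradictory hypothesis, observe that $\Aut_{N_\F(T)}(P)$ preserves $T_0$ (since it preserves $T$ and hence $C_P(T)$), and then invoke the maximal choice of $T$ to obtain a contradiction. For the ``in particular'' clause, the paper simply cites Lemma~\ref{lem:K in E} (applied with $E=P$, $K=C_S(T)$, noting $PC_S(T)\le N_S(T)$ is a subgroup) to get $N_{C_S(T)}(P)\not\le P$, whereas you reprove that lemma via an orbit-counting argument; both are fine, and both finish by using that $P$ is $N_\F(T)$-centric so $C_{N_S(T)}(P)\le P$.
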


\begin{proof} If $C_S(T) \le P$, then $C_S(T)T= C_P(T)T$ is normalized by $N_S(T)$ and is invariant under the
 action of $\Aut_{N_\F(T)}(P) \not \subseteq \mathcal G$. Thus the maximal choice of $T$ yields $C_S(T)T = T$. Hence $C_S(T) \leq T$, a contradiction. Therefore $C_S(T) \not \le P$. Since $PC_S(T)$ is a subgroup of $S$, we have $N_{C_S(T)}(P) \not \le P$  by Lemma~\ref{lem:K in E} and so $\Out_{C_S(T)}(P) \ne 1$.
\end{proof}

\begin{lemma}\label{Z(S) not in T} Assume that Hypothesis~\ref{HypCST2} holds. Then $\N_\F(Z(S)) \subseteq \mathcal G$ and  $Z(S) \not \le T$. In particular, $\Omega_1(P) \not \le T$.
\end{lemma}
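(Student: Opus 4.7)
The first statement is quick: since $Z(S)$ is characteristic in $S$, the fusion system $N_\F(Z(S))$ is saturated on $S$, and $Z(S)\le O_p(N_\F(Z(S)))$ is non-trivial. As $O_p(\F)=1$ by Proposition~\ref{propOp}, $N_\F(Z(S))$ is a proper saturated subsystem of $\F$ on $S$, and Lemma~\ref{lem:subsystem1} then forces $N_\F(Z(S))\subseteq \mathcal G$.

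For $Z(S)\not\le T$, I argue by contradiction and suppose $Z(S)\le T$. Since $T\le \gamma_1(S)$ and $T$ is not normal in $S$ by Lemma~\ref{lem:not normal S}, applying the second assertion of Lemma~\ref{mc-normalizer} with $W=T$ yields $N_S(T)\le \gamma_1(S)$. The plan is to exhibit an $\Aut_\F(T)$-invariant subgroup $W$ containing $Z(S)$ for which $N_\F(W)\subseteq \mathcal G$: Lemma~\ref{NFTNFR} will then give $N_\F(T)\subseteq N_\F(W)\subseteq \mathcal G$, contradicting the choice of $T$. I take $W=\langle Z(S)^{\Aut_\F(T)}\rangle$, which lies in $Z(T)$ since each $\Aut_\F(T)$-image of the central subgroup $Z(S)\le Z(T)$ is again central in $T$. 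By construction $W$ is $\Aut_\F(T)$-invariant, hence normalized by $\Aut_S(T)$, so $N_S(W)\ge N_S(T)$. If $|N_S(W)|>|N_S(T)|$, the maximal choice of $|N_S(T)|$ forces $N_\F(W)\subseteq \mathcal G$ and we are done; otherwise $N_S(W)=N_S(T)$, so $W$ is not normal in $S$, and one enlarges $T$ inside $N_S(T)$ by adjoining an appropriate $\Aut_S(T)$-invariant subgroup of $\langle W^S\rangle\cap N_S(T)$ lying outside $T$, producing $T^*\supsetneq T$ with $|N_S(T^*)|\ge |N_S(T)|$ and $N_\F(T^*)\not\subseteq \mathcal G$, violating the maximality of $|T|$.

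The ``in particular'' is immediate once $Z(S)\not\le T$ is established: since $P$ is $N_\F(T)$-centric, $Z(N_S(T))\le C_{N_S(T)}(P)\le P$, so $Z(S)\le P$, and as $Z(S)$ has exponent $p$, $Z(S)\le \Omega_1(P)$. Thus $\Omega_1(P)\le T$ would force $Z(S)\le T$, contradicting the previous claim.

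The main obstacle is making rigorous the enlargement step in the middle paragraph: one must show that in the residual case $N_S(W)=N_S(T)$, one can construct $T^*$ which genuinely strictly enlarges $T$ and at the same time retains the property $N_\F(T^*)\not\subseteq \mathcal G$. This requires identifying enough morphisms of $N_\F(T)$ that extend to morphisms of $N_\F(T^*)$, which in turn rests on a careful analysis of how $\langle W^S\rangle$ meets $N_S(T)$ and how its intersection interacts with the $\Aut_\F(T)$-structure.
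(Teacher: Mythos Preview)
Your first and last paragraphs are fine and match the paper. The problem is the middle argument.

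First, your dichotomy is illusory: by Lemma~\ref{NFTnormZ}(i), any non-trivial $\Aut_\F(T)$-invariant subgroup $W\le T$ automatically satisfies $N_S(W)=N_S(T)$, so the case $|N_S(W)|>|N_S(T)|$ never occurs and you are always in the ``residual'' case. There your proposed enlargement $T^*\supsetneq T$ is not justified: you need $N_\F(T^*)\not\subseteq\mathcal G$, but there is no mechanism to transport the offending morphisms of $N_\F(T)$ to $N_\F(T^*)$. Adjoining a piece of $\langle W^S\rangle\cap N_S(T)$ gives no control over $\Aut_\F(T^*)$; you would need those morphisms to normalize $T^*$, and nothing in the setup forces that. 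You acknowledge this yourself, and I do not see a way to repair it along these lines.

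The paper takes a completely different route that uses the $N_\F(T)$-essential subgroup $P$ (whose existence you establish via Lemma~\ref{existessential} but then do not exploit). Assuming $Z(S)\le T$, set $K=\langle \Aut_{C_S(T)}(P)^{\Aut_{N_\F(T)}(P)}\rangle$. Since $T$ is $\Aut_{N_\F(T)}(P)$-invariant, $K$ centralizes $T$ and hence $Z(S)$, so $K\Inn(P)\subseteq N_\F(Z(S))\subseteq\mathcal G$. By Lemma~\ref{lem:CST not in P}, $C_S(T)\not\le P$, so $K\not\le\Inn(P)$; the Frattini argument then gives $\Aut_{N_\F(T)}(P)=K\cdot H_{N_\F(T)}(P)$. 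Combining with Lemma~\ref{lem:P fact}, which places $H_{N_\F(T)}(P)$ inside $\mathcal G$, yields $\Aut_{N_\F(T)}(P)\subseteq\mathcal G$, contradicting the choice of $P$. The key idea you are missing is to work inside $\Aut_{N_\F(T)}(P)$ rather than trying to modify $T$.
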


\begin{proof} By Lemma~\ref{lem:subsystem1}, $\N_\F(Z(S)) \subseteq \mathcal G$.

As $T$ is $\Aut_{N_{\F}(T)}(P)$-invariant, the group $K=\langle \Aut_{C_S(T)}(P)^{\Aut_{N_{\F}(T)}(P)}\rangle$ centralizes $T$.
Aiming for a contradiction, suppose $Z(S)\le T$. Then $K$ centralizes $Z(S)$ and so $$K \Inn(P)\subseteq \N_\F(Z(S)) \subseteq \mathcal G.$$
Since $C_S(T) \not \le P$ by Lemma~\ref{lem:CST not in P}, $ K \not \le \Inn(P)$ and $\Aut_{N_{\F}(T)}(P)= KH_{N_\F(T)}(P)$ by the Frattini Argument.  Hence we get $\Aut_{N_{\F}(T)}(P) \subseteq \mathcal G$ from  Lemma~\ref{lem:P fact}, a contradiction. Thus $Z(S) \not \le T$. Since $Z(S) \le C_{N_S(T)}(P) \le P$, we have $Z(S) \le \Omega_1(P)$ and so $\Omega_1(P) \not \le T$.
\end{proof}

\begin{lemma}\label{G=NFS}  Assume that Hypothesis~\ref{HypCST2} holds. Then $\mathcal G= N_\F(S)$.
\end{lemma}

\begin{proof} Suppose that $\mathcal G\ne  N_\F(S)$. Then $\gamma_1(S)$ is $\F$-essential and  Lemma~\ref{facts} (vii) says that $S$ is not exceptional. Also Lemmas~\ref{mc-normalizer} and \ref{NFTnormZ} imply $N_S(T) \le \gamma_1(S)$.  As $\Omega_1(\gamma_1(S))$ is not abelian by Lemma~\ref{Omega.non.ab}, Lemma~\ref{gamma1-essC} implies that $\Omega_1(\gamma_1(S))$ has nilpotency class $2$ and order $p^{p-1}$ with $Z(\gamma_1(S))= Z(\Omega_1(\gamma_1(S)))=[\Omega_1(\gamma_1(S)),\gamma_1(S)]$. In particular, $Z(\Omega_1(\gamma_1(S))) \le N_S(T)$ and  $Z(\Omega_1(\gamma_1(S))) \le C_{N_S(T)}(P) \le P$. Thus $$[\Omega_1(P), \gamma_1(S)]\le [\Omega_1(\gamma_1(S)), \gamma_1(S)] = Z(\Omega_1(\gamma_1(S)))\le \Omega_1(P)$$ and so $\Omega_1(P)$ is normal in $\gamma_1(S)$.  In particular, as $N_S(T)\le \gamma_1(S)$, $N_S(T) \le N_S(T\Omega_1(P))$. By Lemma~\ref{NFTNFR} we have  $N_{N_\F(T)}(P) \subseteq  N_{N_\F(T)}(T\Omega_1(P))$ and so, as $N_{N_\F(T)}(P) \not \subseteq \mathcal G$, $N_\F(T\Omega_1(P)) \not \subseteq \mathcal G$.  The maximal choice of $T$ now implies that $T = T\Omega_1(P)$. Therefore $  \Omega_1(P) \le T$, and this contradicts Lemma~\ref{Z(S) not in T}.
\end{proof}

\begin{notation}\label{not:L} Define $$L= O^{p'}(\Aut_{N_\F(T)}(P))=\langle \Aut_{N_S(T)}(P)^{\Aut_{N_\F(T)}(P)}\rangle.$$\end{notation}
Then the Frattini Argument yields
$$\Aut_{N_\F(T)}(P)=  LN_{\Aut_{N_\F(T)}(P)}(\Aut_{N_S(T)}(P)).$$
 \begin{lemma}\label{ZOpL}  Assume that Hypothesis~\ref{HypCST2} holds. Then  $Z(S)$ is not    $L$-invariant.
 \end{lemma}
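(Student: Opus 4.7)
The plan is to argue by contradiction: suppose that $Z(S)$ is $L$-invariant, and then use the Frattini factorization $\Aut_{N_\F(T)}(P) = L \cdot H_{N_\F(T)}(P)$ (recorded in the paragraph immediately preceding this lemma) to show that every morphism in $\Aut_{N_\F(T)}(P)$ lies in $N_\F(Z(S))$. Combined with the containment $N_\F(Z(S)) \subseteq \mathcal G$ established in Lemma~\ref{Z(S) not in T}, this will contradict our standing choice of $P$, namely that $\Aut_{N_\F(T)}(P) \not\subseteq \mathcal G$.

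The first step is to check that $Z(S) \le P$. Since $P$ is $N_\F(T)$-essential it is $N_\F(T)$-centric, so $C_{N_S(T)}(P) \le P$; as $Z(S) \le N_S(T)$ centralizes $P$, this gives $Z(S) \le P$. Hence if $L$ leaves $Z(S)$ invariant, then every element of $L$, viewed as an $\F$-automorphism of $P$, preserves the subgroup $Z(S) \le P$, so $L \subseteq N_\F(Z(S))$. On the other hand, Lemma~\ref{lem:P fact} states that $H_{N_\F(T)}(P) = N_{\Aut_{N_\F(T)}(P)}(\Aut_{N_S(T)}(P)) \subseteq \mathcal G$, and by Lemma~\ref{G=NFS} we have $\mathcal G = N_\F(S)$. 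Since $Z(S)$ is characteristic in $S$, every morphism in $N_\F(S)$ preserves $Z(S)$, so $\mathcal G \subseteq N_\F(Z(S))$ and therefore $H_{N_\F(T)}(P) \subseteq N_\F(Z(S))$.

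Putting these two containments together via the Frattini factorization yields
\[
\Aut_{N_\F(T)}(P) \;=\; L \cdot H_{N_\F(T)}(P) \;\subseteq\; N_\F(Z(S)).
\]
But Lemma~\ref{Z(S) not in T} gives $N_\F(Z(S)) \subseteq \mathcal G$, so $\Aut_{N_\F(T)}(P) \subseteq \mathcal G$, contradicting the defining property of $P$. The argument is essentially an application of Alperin--Goldschmidt--style control together with the fact that $Z(S)$ is characteristic in $S$; there is no real obstacle, provided one has carefully arranged the preceding lemmas (in particular Lemma~\ref{lem:P fact} and Lemma~\ref{G=NFS}) so that the full group $\Aut_{N_\F(T)}(P)$ is pinned down by its $p$-part together with $L$.
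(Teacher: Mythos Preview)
Your proof is correct and follows essentially the same approach as the paper's own argument: assume $Z(S)$ is $L$-invariant, use the factorization $\Aut_{N_\F(T)}(P)=L\cdot N_{\Aut_{N_\F(T)}(P)}(\Aut_{N_S(T)}(P))$, invoke Lemmas~\ref{lem:P fact} and \ref{G=NFS} to place the second factor in $\mathcal G=N_\F(S)\subseteq N_\F(Z(S))$, and then apply Lemma~\ref{Z(S) not in T} to obtain the contradiction $\Aut_{N_\F(T)}(P)\subseteq\mathcal G$. Your version is slightly more explicit in verifying $Z(S)\le P$ and spelling out why $N_\F(S)\subseteq N_\F(Z(S))$, but the logical skeleton is identical.
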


\begin{proof} Suppose that $Z(S)$ is  normalized by  $L$.  Then, as $\Aut_{N_\F(T)}(P)=  LN_{\Aut_{N_\F(T)}(P)}(\Aut_{N_S(T)}(P))$   and $ N_{\Aut_{N_\F(T)}(P)}(\Aut_{N_S(T)}(P))\subseteq \mathcal G= N_\F(S)$ by Lemmas~\ref{lem:P fact} and \ref{G=NFS}, we deduce from Lemma~\ref{Z(S) not in T} that $\Aut_{N_\F(T)}(P) \subseteq N_\F(Z(S)) \subseteq \mathcal G$, which is a contradiction.
\end{proof}

 The next three lemmas limit the structure of $L/\Inn(P)$.

\begin{lemma}\label{Op' gone}  Assume that Hypothesis~\ref{HypCST2} holds. Then $O_{p'}(\Out_{\N_\F(T)}(P))$ is centralized by $\Out_{N_S(T)}(P)$. In particular, $L/\Inn(P)$ centralizes $O_{p'}(\Out_{\N_\F(T)}(P))$.
\end{lemma}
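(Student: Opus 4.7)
The plan is to argue by contradiction, replaying the opening of the proof of Lemma~\ref{gamma1-essB} with $P$ in place of $\gamma_1(S)$ to produce a large elementary abelian chief factor inside $\Omega_1(P)$, and then using Lemma~\ref{mc-facts}, Lemma~\ref{Omega.non.ab} and Lemma~\ref{lem:lem1} to reach a contradiction in each case that arises. The ``in particular'' clause will be automatic once the first claim is established: because $O_{p'}(\Out_{\N_\F(T)}(P))$ is normal in $\Out_{\N_\F(T)}(P)$, centralization by $\Out_{N_S(T)}(P)$ implies centralization by every $\Out_{\N_\F(T)}(P)$-conjugate of $\Out_{N_S(T)}(P)$, and hence by $L/\Inn(P) = \langle \Out_{N_S(T)}(P)^{\Out_{\N_\F(T)}(P)}\rangle$.

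For the main statement I would assume for contradiction that $\Out_{N_S(T)}(P)$ does not centralize $O_{p'}(\Out_{\N_\F(T)}(P))$, set $R_0 = O_{p,p'}(\Aut_{\N_\F(T)}(P))$ and $R = [R_0, \Aut_{N_S(T)}(P)]\Inn(P)$ so that $R > \Inn(P)$, and argue exactly as in the opening of the proof of Lemma~\ref{gamma1-essB} (invoking \cite[Theorem 5.3.10]{Gor}) to produce an $\Aut_{\N_\F(T)}(P)$-chief factor $V = A/B$ inside $\Omega_1(P)$ which is not centralized by $R$. Proposition~\ref{Hall-Hig} then forces $|V| \ge p^{p-1}$, while Lemma~\ref{mc-facts}(v) applied to the elementary abelian section $V$ of $\gamma_1(S)$ gives $|V| \le p^{p-1}$. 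Hence $|V| = p^{p-1}$, and by Lemma~\ref{mc-facts}(iv) either $|\Omega_1(\gamma_1(S))| = p^{p-1}$, or $|\Omega_1(\gamma_1(S))| = p^p$ with $|S| = p^{p+1}$ and $\Omega_1(\gamma_1(S)) = \gamma_1(S)$.

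The hard part will be dispatching both of these cases. In the first case $A \le \Omega_1(\gamma_1(S))$ together with $|A| \ge p^{p-1}$ forces $A = \Omega_1(\gamma_1(S))$ and $B = 1$, so $\Omega_1(\gamma_1(S)) = V$ would be elementary abelian, contradicting Lemma~\ref{Omega.non.ab}. In the second case $\gamma_1(S)$ has exponent $p$, so $P = \Omega_1(P) \supseteq A$, and since $P < \gamma_1(S)$ by Lemma~\ref{lem:lem1}(ii) we get $|P| \le p^{p-1} = |A|$, whence $P = A = V$ is elementary abelian of order $p^{p-1}$ and index $p$ in $\gamma_1(S)$, hence normal there. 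Combined with Lemma~\ref{lem:lem1}(ii) this gives $N_S(P) = \gamma_1(S)$ of order $p^p$, and Lemma~\ref{lem:lem1}(iii) then forces $|N_S(T)| > p^p$, so $N_S(T) = S$ and $T \trianglelefteq S$, contradicting Lemma~\ref{lem:not normal S}.
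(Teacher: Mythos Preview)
Your argument is essentially correct and constitutes a genuine alternative to the paper's proof, but there is one small gap worth flagging: Lemma~\ref{mc-facts}(v) carries the hypothesis $n>p+1$, which you have not established (and which can fail, since Lemma~\ref{facts}(iii) only gives $p^7\le |S|<p^{2p-4}$). The fix is painless: from $A\le\Omega_1(P)\le\Omega_1(\gamma_1(S))$ and Lemma~\ref{mc-facts}(iv) you get $|A|\le p^p$, and equality would force $\Omega_1(P)=\Omega_1(\gamma_1(S))=\gamma_1(S)\le P$, contradicting $P<\gamma_1(S)$ from Lemma~\ref{lem:lem1}. Hence $|A|\le p^{p-1}$ and so $|V|\le p^{p-1}$ as you wanted; the rest of your argument then goes through.

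The paper reaches the contradiction by a different and more direct route. Rather than splitting on $|\Omega_1(\gamma_1(S))|$, it observes that $\Omega_1(T)$ is a nontrivial $\Aut_{N_\F(T)}(P)$-invariant subgroup of $\Omega_1(P)$, and since $|\Omega_1(P)|\le p^{p-1}$ (by the same bound as above) the chief factor $V$ of size $p^{p-1}$ must equal $\Omega_1(P)$ itself; irreducibility then forces $\Omega_1(T)=\Omega_1(P)$, so $\Omega_1(P)\le T$, contradicting Lemma~\ref{Z(S) not in T}. Your route trades this single observation for the pair Lemma~\ref{Omega.non.ab} and Lemma~\ref{lem:not normal S}; it is slightly longer but has the mild advantage of not invoking Lemma~\ref{Z(S) not in T} at all.
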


\begin{proof}
Suppose false. Then there exists an $x\in \Aut_{N_S(T)}(P)$ such that $x\Inn(P)$ does not centralize $O_{p'}(\Out_{\N_\F(T)}(P))$. Let $K \le \Aut_{\N_\F(T)}(P)$ be the preimage of $[O_{p'}(\Out_{\N_\F(T)}(P)),x\Inn(P)]$.  Then $K\langle x \rangle$ is $p$-soluble and, since $p$ is odd, there exists a  non-central $K\langle x \rangle$-chief factor $V$   in $\Omega_1(P)$ by \cite[Theorem 5.3.10]{Gor}.  By Proposition~\ref{Hall-Hig},  $|V| \ge p^{p-1}$. Since $\Omega_1(P) < \Omega_1(\gamma_1(S))$ and $1<\Omega_1(T) \le \Omega_1(P)$ is $L$-invariant, Lemma~\ref{mc-facts} (iv) implies $\Omega_1(P)= \Omega_1(T)\le T$ and this contradicts Lemma~\ref{Z(S) not in T}.
\end{proof}

\begin{lemma}\label{Out cyc}  Assume that Hypothesis~\ref{HypCST2} holds. Then $\Out_{N_S(T)}(P)$ is cyclic.
\end{lemma}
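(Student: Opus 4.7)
The plan is to derive a contradiction from the assumption that $R:=\Out_{\N_S(T)}(P)$ is non-cyclic. Being a $p$-group, $R$ would then contain an elementary abelian subgroup of rank $2$. Write $K:=\Out_{\N_\F(T)}(P)$, $\bar L:=O^{p'}(K)$ and $\tilde L:=\bar L/O_{p'}(\bar L)$, and let $\tilde R$ denote the image of $R$ in $\tilde L$. Since $P$ is $\N_\F(T)$-essential and fully $\N_\F(T)$-normalized, $R$ is a Sylow $p$-subgroup of $K$ (and of $\bar L$), and $K$ has a strongly $p$-embedded subgroup containing $R$. By Lemma~\ref{strongly p structure}, $\tilde L$ is almost simple with simple socle $F^*(\tilde L)$ having a strongly $p$-embedded subgroup, and Proposition~\ref{SE-p2} together with $p\geq 7$ (Lemma~\ref{facts}(iii)) restricts $F^*(\tilde L)$ to one of $\PSL_2(p^{a+1})$ with $a\geq 1$, $\PSU_3(p^a)$ with $a\geq 1$, $\Alt(2p)$, or $\J_4$ (when $p=11$).

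Next I would bound the Hall $p'$-subgroup of $N_K(R)$. By Lemmas~\ref{lem:P fact} and~\ref{G=NFS}, the normaliser $H_{\N_\F(T)}(P)\subseteq \mathcal G=\N_\F(S)$, so, modulo $\Inn(P)$, each $\bar\alpha \in N_K(R)$ is the restriction to $P$ of some $\hat\alpha\in\Aut_\F(S)$. Since $\hat\alpha$ is unique modulo $C_{\Aut_\F(S)}(P)$, the assignment
\[\bar\alpha\;\longmapsto\; \hat\alpha\cdot\bigl(\Inn(S)\cdot C_{\Aut_\F(S)}(P)\bigr)\]
is a well-defined group homomorphism from $N_K(R)$ into a quotient of $\Out_\F(S)$, which by Lemma~\ref{facts}(vi) is cyclic of order dividing $p-1$. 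Its kernel is visibly contained in the image of $\Inn(S)$ in $\Aut(P)$, hence in $\Out_S(P)$, hence is a $p$-group. Consequently any Hall $p'$-subgroup of $N_K(R)$ is cyclic of order dividing $p-1$.

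Finally, Lemma~\ref{Op' gone} gives that $R$ centralizes $O_{p'}(K)\supseteq O_{p'}(\bar L)$, so $O_{p'}(\bar L)\leq N_{\bar L}(R)$. A standard Sylow-Frattini argument identifies $N_{\bar L}(R)/O_{p'}(\bar L)$ with $N_{\tilde L}(\tilde R)$, and since $R$ is a normal Sylow $p$-subgroup of $N_{\bar L}(R)$ this gives
\[ |N_{\bar L}(R)/R|=|O_{p'}(\bar L)|\cdot|N_{\tilde L}(\tilde R)/\tilde R|.\]
This is a Hall $p'$-subgroup of $N_{\bar L}(R)\leq N_K(R)$, so by the previous paragraph has order at most $p-1$. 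However, inspecting each case yields $|N_{F^*(\tilde L)}(\tilde R)/\tilde R|>p-1$: for instance $(p^{a+1}-1)/2\geq(p-1)(p+1)/2>p-1$ in $\PSL_2(p^{a+1})$ with $a\geq 1$, $(p^{2a}-1)/\gcd(3,p^a+1)>p-1$ in $\PSU_3(p^a)$, and $(p-1)^2>p-1$ in $\Alt(2p)$ for $p\geq 5$ (with $\J_4$ handled similarly when $p=11$). This contradiction forces $R$ to be cyclic.

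The main obstacle is the second step: the $p'$-bound on $N_K(R)$ has to be extracted from the abstract containment $H_{\N_\F(T)}(P)\subseteq \N_\F(S)$, and one must carefully argue that the ambiguity in the extension $\hat\alpha$ modulo $C_{\Aut_\F(S)}(P)$ does not damage the well-definedness of the homomorphism and that its kernel really is a $p$-group. Once this cyclic bound is in hand, the case-by-case enumeration provided by Proposition~\ref{SE-p2} is routine.
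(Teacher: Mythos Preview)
Your proof is correct and follows essentially the same strategy as the paper's: both use Proposition~\ref{SE-p2} to list the candidates for the simple socle, bound the $p'$-part of the Sylow normaliser by $p-1$ via Lemmas~\ref{lem:P fact} and~\ref{G=NFS} together with Lemma~\ref{facts}(vi), and then rule out each candidate (the paper citing \cite[Theorem~7.6.2]{GLS3} where you compute the normaliser orders directly). Regarding your stated obstacle, note that every lift $\hat\alpha$ actually lies in $N_{\Aut_\F(S)}(P)$, and there $C_{\Aut_\F(S)}(P)$ is genuinely normal; the target is therefore a quotient of a subgroup of the cyclic group $\Out_\F(S)$, so the well-definedness and the cyclic bound both go through.
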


\begin{proof}
If $\Out_{N_S(T)}(P)$ is not cyclic, then, as $p$ is odd, $\Out_{N_S(T)}(P)$ has an elementary abelian subgroup of order $p^2$.  Using Lemma~\ref{strongly p structure}, we obtain $K=\Out_{N_\F(T)}(P)/O_{p'}(\Out_{N_\F(T)}(P))$ is an almost simple group. Because $p \ge 7$ by Lemma~\ref{facts}  (iii), using Proposition~\ref{SE-p2} we obtain that  $K \cong \PSL_2(p^a)$ with $a \ge 2$, $\PSU_3(p^a)$ with $a \ge 1$, $\Alt(2p)$ or $p=11$ and $L\cong \J_4$. Since $H_{N_\F(T)}(P) \subset \mathcal G=N_\F(S)$ by Lemmas~\ref{lem:P fact} and \ref{G=NFS},  we get $N_{L}(\Aut_{N_S(T)}(P))$ is cyclic of order dividing $p-1$ from Lemma~\ref{facts} (vi).
Using \cite[Theorem 7.6.2]{GLS3}, we find  this is not compatible with any of the candidates for $O^p(L/\Inn(P))/Z(L/\Inn(P))$. Thus $\Out_{N_S(T)}(P)$  is cyclic.
\end{proof}

\begin{lemma}\label{L2ptype}  Assume that Hypothesis~\ref{HypCST2} holds. Then  $L/\Inn(P)\cong \PSL_2(p)$ or $\SL_2(p)$.
\end{lemma}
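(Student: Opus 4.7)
The plan is to determine the precise structure of $\bar L := L/\Inn(P)$ by combining general structural properties with Feit's theorem and an order constraint coming from the constraint that $H_{N_\F(T)}(P) \subseteq \mathcal G = N_\F(S)$. First I would observe that $O_p(\bar L) = 1$, since $\Inn(P) = O_p(\Aut_{N_\F(T)}(P))$ by $P$ being $N_\F(T)$-essential. By Lemma~\ref{Op' gone} the $p'$-core $O_{p'}(\Out_{N_\F(T)}(P))$ is centralized by the Sylow $p$-subgroup $\Out_{N_S(T)}(P)$, and hence by $\bar L$ itself since $\bar L = O^{p'}(\Out_{N_\F(T)}(P))$ is generated by $\Out_{N_\F(T)}(P)$-conjugates of that Sylow $p$-subgroup; thus $O_{p'}(\bar L) \le Z(\bar L)$. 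The Sylow $p$-subgroup of $\bar L$ is cyclic by Lemma~\ref{Out cyc}, and $\bar L$ inherits a strongly $p$-embedded subgroup from $\Out_{N_\F(T)}(P)$, obtained by intersecting the strongly $p$-embedded subgroup with $\bar L$; this intersection is proper since $\bar L$ is $p$-generated and a Sylow $p$-subgroup is not normal in $\bar L$. Together, these facts force $F^*(\bar L)/Z(\bar L)$ to be a non-abelian simple group, so $\bar L$ is quasisimple modulo $Z(\bar L) = O_{p'}(\bar L)$.

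Next I would identify a suitable chief factor of $\bar L$ inside $\Omega_1(P)$ in order to invoke Feit's theorem. Since $\Omega_1(P) \not\le T$ by Lemma~\ref{Z(S) not in T} and $T$ is $\bar L$-invariant, the Sylow $p$-subgroup of $\bar L$ acts non-trivially on some $\bar L$-chief factor $V$ in $\Omega_1(P)$. By Lemma~\ref{mc-facts}(iv), $|V| \le |\Omega_1(\gamma_1(S))| \le p^p$, so $\dim_{\GF(p)} V \le p$. Applying Theorem~\ref{feit} to the faithful action of $\bar L/C_{\bar L}(V)$ yields that either $\bar L/C_{\bar L}(V)$ is of $\mathrm{L}_2(p)$-type, or its Sylow $p$-subgroup has order exactly $p$ and $\dim V \ge 2(p-1)/3$. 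The known structure of almost simple groups with cyclic Sylow $p$-subgroup and a strongly $p$-embedded subgroup (whose socle must be $\PSL_2(p^a)$), combined with the constraint from Lemmas~\ref{lem:P fact}, \ref{G=NFS} and~\ref{facts}(vi) that the image in $\bar L$ of $N_{\Aut_{N_\F(T)}(P)}(\Aut_{N_S(T)}(P))$ lies in restrictions of $\Aut_\F(S)$ and so has Sylow-quotient of order dividing $p-1$, forces $a = 1$; that is, $F^*(\bar L)/Z(\bar L) \cong \PSL_2(p)$.

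Finally, since $\bar L/Z(\bar L) \cong \PSL_2(p)$ and the Schur multiplier of $\PSL_2(p)$ has order $2$ for $p \ge 5$, one obtains $|Z(\bar L)| \le 2$, and hence $\bar L \cong \PSL_2(p)$ or $\SL_2(p)$. The main obstacle in this proof is the classification step in the second paragraph: ruling out almost simple groups with cyclic Sylow $p$-subgroup of order $p$ and a strongly $p$-embedded subgroup whose socle is not $\PSL_2(p^a)$—for example $\Alt(m)$ for suitable small $m$, or certain sporadic configurations—which requires careful combination of the Feit-theorem dimension bound with the normalizer-of-Sylow order constraint inherited from $\Aut_\F(S)$ being cyclic of order dividing $p-1$.
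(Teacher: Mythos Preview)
Your overall structure is reasonable, but the argument has a genuine gap at the classification step, and the remedy you suggest does not work.

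The claim that for almost simple groups with cyclic Sylow $p$-subgroup and a strongly $p$-embedded subgroup ``the socle must be $\PSL_2(p^a)$'' is false. When the Sylow $p$-subgroup has order exactly $p$, the normalizer of a Sylow $p$-subgroup is automatically strongly $p$-embedded, so \emph{every} non-abelian simple group whose order is divisible by $p$ but not $p^2$ is a candidate --- for instance $\Alt(m)$ with $p\le m<2p$, or many Lie type groups in coprime characteristic. Proposition~\ref{SE-p2} in the paper only applies when the Sylow $p$-subgroup contains an elementary abelian $p^2$, which is precisely the situation excluded by Lemma~\ref{Out cyc}. Moreover, your proposed fix via the normalizer-of-Sylow constraint is vacuous: since $\Aut(\mathbb Z/p\mathbb Z)$ is cyclic of order $p-1$, the condition that $N_{\bar L}(\text{Sylow})/\text{Sylow}$ be cyclic of order dividing $p-1$ is automatic for any group with Sylow $p$-subgroup of order $p$. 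For example, $\Alt(p)$ has Sylow normalizer $p{:}\frac{p-1}{2}$, which satisfies your constraint, yet $\Alt(p)$ is not of $\mathrm L_2(p)$-type.

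The paper's proof supplies the missing idea. After obtaining from Feit's theorem that $U=\langle Z(S)^K\rangle$ has $|U|\ge p^{2(p-1)/3}$ and that $U$ is \emph{indecomposable} as an $\Out_{N_S(T)}(P)$-module (so the Sylow acts with a single Jordan block of that size), the paper invokes Lemma~\ref{nilp.class.gamma1}: since $U\le\Omega_1(P)\le\Omega_1(\gamma_1(S))$ and $N_{N_S(T)}(P)\le\gamma_1(S)$, iterated commutators of length $(p+1)/2$ vanish. Comparing $\lceil 2(p-1)/3\rceil-1$ with $(p+1)/2$ forces $p=7$, and then a separate detailed analysis (exploiting that $S$ must then be exceptional of order $7^8$, together with explicit commutator calculations in $\gamma_2(S)$) eliminates this residual case. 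Neither Lemma~\ref{nilp.class.gamma1} nor the $p=7$ endgame appears in your sketch, and without them the argument does not close.
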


\begin{proof} Since $L/\Inn(P)$ centralizes $O^{p'}(\Out_{N_\F(T)}(P))$ by Lemma~\ref{Op' gone} and $O_p(L)=\Inn(P)$, $L/\Inn(P)$ centralizes the Fitting subgroup of $\Out_{N_\F(T)}(P)$ and so $E(\Out_{N_\F(T)}(P)) \ne 1$ and, by  Lemma~\ref{Out cyc}, there is a unique component $K/\Inn(P)$ of  $\Out_{N_\F(T)}(P)$ which has order divisible by $p$.  Since $\Aut_{N_S(T)}(P) \le L$,  $K$ is a normal subgroup of $L$ and $\Out_{N_\F(T)}(P)= H_\F(P) K$. Since $H_\F(P)$ leaves  $Z(S)$ invariant, and $L \le  H_\F(P) K $, $K$ does not leave $Z(S)$ invariant by Lemma~\ref{ZOpL}.

Assume that $K/\Inn(P)$ does not have a quotient isomorphic to $\PSL_2(p)$. Then $K$ is not of $\mathrm L_2(p)$-type and $K/\Inn(P)$ is quasisimple. Hence, as $Z(S)$ is not $K$-invariant,  setting $U=\langle Z(S)^{K}\rangle \le \Omega_1(Z(P))$, $U$ has a non-central $K$-chief factor. Since $U$ is $\Aut_{N_S(T)}(P)K$-invariant,  Theorem~\ref{feit} yields $|U| \ge p^{2(p-1)/3}$, $|\Out_{N_S(T)}(P)|=p$ and $U$ is indecomposable as a $\GF(p)\Out_{N_S(T)}(P)$-module. In  particular, $[U,\Out_{N_S(T)}(P);\lceil 2(p-1)/3\rceil-1] \ne 1$.  This with Lemma~\ref{nilp.class.gamma1} gives $\lceil 2(p-1)/3\rceil-1 < (p+1)/2$ and yields $p=7$. But then, as $p=7$, Lemma~\ref{nilp.class.gamma1} additionally
 tells us that $S$ is exceptional. As $|S| \ge 7^7$ by Lemma~\ref{facts} (iii), Lemma~\ref{mc-factsb} (v) implies $|S|=7^8$. Since $T \le P$ and $U \not \le T$, we also have $|P|\ge 7^5$.
 By Lemma~\ref{lem:lem1}, $P< N_{N_S(T)}(P) < N_S(T)$. Hence $N_S(T)$ is a maximal subgroup of $S$.  Since $S$ is exceptional, either $Z(N_S(T))= Z(S)$ or $N_S(T)= C_S(Z_2(S))$. In the former case, as $T$ is normal in $N_S(T)$, we have $Z(S) \le T$ which contradicts Lemma~\ref{Z(S) not in T}. Hence $N_S(T)= C_S(Z_2(S))$ and consequently $Z_2(S) \le P$. Since $P \le \gamma_1(S)$ by Lemma~\ref{lem:lem1}, $ P \le \gamma_1(S)  \cap C_S(Z_2(S))=\gamma_2(S)$.  Now $[\gamma_{2}(S),\gamma_2(S)]\le \gamma_5(S)= Z_2(S)\le P$ and so $\gamma_2(S)= N_{N_S(T)}(P)$ and $\gamma_2(S)$ acts quadratically on $P$ contrary to $[U,\Out_{N_S(T)}(P);\lceil 2(p-1)/3\rceil-1] \ne 1$.  We conclude that $K/\Inn(P)$ is of $\mathrm L_2(p)$-type and it follows that $K/\Inn(P) \cong \PSL_2(p)$ or $\SL_2(p)$. Since $O_p(L)=\Inn(P)$ and $\Out_{N_S(T)}(P)$ is cyclic, we further deduce that $L= K$.
\end{proof}

By Lemma~\ref{L2ptype}, $\Aut_{N_S(T)}(P)$ has order $p$.  Hence Lemma~\ref{lem:CST not in P} implies $$\Aut_{C_S(T)}(P) \Inn(P) = \Aut_{N_S(T)}(P).$$

We now establish some notation which will play an important role in the remaining lemmas of this section. Let $ \langle \theta\rangle$ be a complement in $L$  to $\Aut_{N_S(T)}(P)=\Aut_{C_S(T)}(P)$  chosen so that $  \langle \theta\rangle \le  \langle \Aut_{C_S(T)}(P)^{\Aut_{N_\F(T)}(P)}\rangle \le C_\F(T) $. We know $\theta$ has order $(p-1)/2$ when  $L/\Inn(P) \cong \PSL_2(p)$  and order $p-1$  when $L/\Inn(P)\cong\SL_2(p)$. Since $\theta\in  \langle \Aut_{C_S(T)}(P)^{\Aut_{N_\F(T)}(P)}\rangle$, we also know $\theta$ centralizes $T$.

Because $\theta$ normalizes $\Aut_{N_S(T)}(P)$,  $\theta$ is the restriction of a morphism $$\tilde \theta\in \Aut_{N_\F(T)}(N_{N_S(T)}(P))\subseteq H_{N_\F(T)}(P) \subseteq \mathcal G=N_\F(S)$$ by Lemmas~\ref{lem:P fact} and \ref{G=NFS}. Hence $\theta$ is in fact  the restriction of an element $\hat \theta$ of $\Aut_\F(S)$ and we may assume  that $\hat \theta$  has $p'$-order.

If $\theta$ has order $p-1$, then Lemma~\ref{facts} (vi) shows that $\hat \theta$ has order $p-1$ and so  $\hat \tau= \hat \theta^{(p-1)/2}$ is an involution which restricts to $\tau= \theta^{(p-1)/2}$. Recall  that the  action for $\hat \theta$ on $\gamma_1(S)$ is the subject of Lemmas~\ref{action} and \ref{centralizer auto}.

\begin{lemma}\label{Z(S) and theta} Assume that Hypothesis~\ref{HypCST2} holds. Then $Z(S) \not \le C_{\Omega_1(P)}(\theta)$.
\end{lemma}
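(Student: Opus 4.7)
The plan is to argue by contradiction. Suppose $Z(S) \le C_{\Omega_1(P)}(\theta)$. Then since $\hat\theta|_P = \theta$, the automorphism $\hat\theta \in \Aut_\F(S)$ centralizes $Z(S)$. By construction $\hat\theta$ also centralizes $T$, so $C_{\gamma_1(S)}(\hat\theta) \ge TZ(S)$. Invoking Lemma~\ref{Z(S) not in T} we have $Z(S) \not\le T$ and, as $|Z(S)|=p$, we get $T\cap Z(S)=1$ and $|TZ(S)|=p|T|$, so the $\hat\theta$-fixed subgroup of $\gamma_1(S)$ strictly contains $T$.

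Next I would apply Lemma~\ref{action} to $\hat\theta$, which is a non-trivial $p'$-automorphism acting faithfully on $S/\gamma_1(S)$ by Lemma~\ref{facts}(vi). Choosing $a,b\in \GF(p)^\times$ with $a\ne 1$ as in that lemma, $\hat\theta$ acts on $\gamma_i(S)/\gamma_{i+1}(S)$ as multiplication by $a^{i-1}b$. Let $m=|a|$ be the order of $\hat\theta$. Since $\theta$ has order $(p-1)/2$ or $p-1$ in $L/\Inn(P)\cong \PSL_2(p)$ or $\SL_2(p)$, we get $(p-1)/2 \le m \mid p-1$. Let $j_0$ be the minimal index with $T\not\le \gamma_{j_0+1}(S)$; then $1\le j_0\le n-1$, and centralization of $T$ and of $Z(S)$ yields $a^{j_0-1}b = 1$ and $a^{n-2}b=1$ in the non-exceptional case (respectively $a^{n-3}b^2 = 1$ in the exceptional case, treated analogously). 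Hence $m\mid n-j_0-1$, which by Lemma~\ref{centralizer auto} says $j_0 \equiv n-1 \pmod m$.

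Since $n<2p-4$ by Lemma~\ref{Omega.non.ab} and $m\ge(p-1)/2$, the only options are $j_0=n-1$ or $j_0=n-1-m$. The case $j_0=n-1$ forces $T\le Z(S)$, so $T=1$ (since $|Z(S)|=p$ and $Z(S)\not\le T$), contradicting $T\ne 1$. In the remaining case, the same analysis applied at every non-trivial projection of $T$ to a factor $\gamma_j(S)/\gamma_{j+1}(S)$ shows $j\equiv n-1\pmod m$; thus $T\le C_{\gamma_1(S)}(\hat\theta)$, and by coprime action $|C_{\gamma_1(S)}(\hat\theta)|=p^{|J_\theta|}$ where $J_\theta$ is the arithmetic progression modulo $m$ in $\{1,\dots,n-1\}$ containing $n-1$. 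The bounds $n<2p-4$ and $m\ge(p-1)/2$ force $|J_\theta|\le 3$, so $|TZ(S)|\le p^3$ and $|T|\le p^2$.

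The main obstacle will be converting this bound into a genuine contradiction. For this I would exploit that $C_{\gamma_1(S)}(\hat\theta)$ is $\Aut_\F(S)$-invariant (as $\Aut_\F(S)$ is cyclic by Lemma~\ref{facts}(vi)) and therefore normal in $S$; combined with $T\le C_{\gamma_1(S)}(\hat\theta)\le \gamma_{j_0}(S)$ and the requirement that $P$ is $N_\F(T)$-essential (so in particular $C_{N_S(T)}(P) \le P$ while $C_S(T)\not\le P$ by Lemma~\ref{lem:CST not in P}), this should produce a subgroup violating Lemma~\ref{NFTnormZ}(ii), or alternatively force $\Omega_1(P) \le T$ contrary to Lemma~\ref{Z(S) not in T}. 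The hardest part will be managing the exceptional case, where the exponent $b^2$ in Lemma~\ref{action} changes the congruence conditions and one must additionally invoke Proposition~\ref{prop:not p^6 and CSZ2 essential} to rule out the low-order configurations.
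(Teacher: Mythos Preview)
Your opening observation is correct: if $\theta$ centralized $Z(S)$, then $\hat\theta$ would centralize both $T$ and $Z(S)$, so $TZ(S)\le C_{\gamma_1(S)}(\hat\theta)$ with $TZ(S)>T$. However, the route you then pursue has genuine gaps and does not reach a contradiction.

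First, some concrete errors. The bound $|J_\theta|\le 3$ fails: for instance $p=11$, $m=(p-1)/2=5$, $n=17$ (which satisfies $n<2p-4$) gives $J_\theta=\{16,11,6,1\}$ of size $4$. Your claim that $C_{\gamma_1(S)}(\hat\theta)$ is normal in $S$ is also wrong: Lemma~\ref{facts}(vi) says $\Out_\F(S)$ is cyclic, not $\Aut_\F(S)$; the latter contains $\Inn(S)$, which does not commute with $\hat\theta$, and centralizers of $p'$-automorphisms in maximal class $p$-groups are typically not normal. So the mechanism you propose for invoking Lemma~\ref{NFTnormZ}(ii) does not get off the ground. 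More seriously, even if your bounds were right, an \emph{upper} bound on $|T|$ is no contradiction: in this subsection $T$ is not $\F$-centric and there is no lower bound on $|T|$ available. Your final paragraph is a hope, not an argument.

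The deeper issue is that your approach never uses Lemma~\ref{ZOpL}, which is the essential input: $Z(S)$ is not $L$-invariant. The paper's proof works entirely inside $\Omega_1(P)$ with the $L$-action and is short. One sets $V=\langle Z(S)^{L}\rangle\,\Omega_1(T)$ and chooses $U<V$ maximal $L$-invariant with $C_V(O^p(L))\le U$ (noting $\Omega_1(T)\le C_V(O^p(L))$ since $O^p(L)$ centralizes $T$, and $Z(S)\not\le C_V(O^p(L))$ by Lemma~\ref{ZOpL}). Then $V/U$ is an irreducible $L/\Inn(P)$-module with $Z(S)U/U=C_{V/U}(\Aut_{N_S(T)}(P))$ centralized by $\langle\theta\rangle$; since $|V/U|<p^p$, Lemma~\ref{L2p} forces $V/U$ to be trivial, so $V=Z(S)U$. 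Passing to $\bar V=V/C_V(O^p(L))$ and applying Lemma~\ref{lem:mss} gives $\bar V=C_{\bar V}(L)\bar U$, while $C_{\bar V}(L)\le C_{\bar V}(O^p(L))=1$ (any $\bar v$ fixed by $O^p(L)$ would give a flag stabilized by all $p'$-elements of $O^p(L)$, hence by $O^p(L)$ itself). Thus $\bar V=\bar U$, a contradiction. The arithmetic of the $\gamma_j(S)$ and the lift $\hat\theta$ play no role.
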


\begin{proof} Suppose that $Z(S)$ is centralized by $\theta$ and define  $V=\langle Z(S)^{L}\rangle \Omega_1(T)$. Since $Z(S) \not \le C_V(O^{p}(L))$ by Lemma~\ref{ZOpL}, we can select $U$ maximal such that $$1\ne \Omega_1(T) \le C_V(O^{p}(L)) \le U<V$$ is $L$-invariant. Then $V/U$ is irreducible as a $\GF(p)L$-module and, by the definition of $V$,  $Z(S) \not \le U$.  Set $\ov V = V/C_{V}(O^p(L))$. Then  $|\ov V| < p^p$ by Lemma~\ref{mc-facts} (iv). Combining Lemma~\ref{L2ptype} and Corollary~\ref{L2p} yields  $V= Z(S)U$ and $\ov V = \ov{Z(S)}\ov U > \ov U$. Lemma~\ref{lem:mss} implies $\ov V = C_{\ov V}(L) \ov U$. By coprime action,  $C_{\ov V}(L)\le C_{\ov V}(O^p(L))=1$. Hence $\ov V = \ov U$, a contradiction. Therefore  $Z(S)$ is not centralized by $\theta$.
\end{proof}

\begin{lemma}\label{Z2 in NST}  \blue{Assume that Hypothesis~\ref{HypCST2} holds. If $\Omega_1(P)/\Omega_1(T)$ is abelian, then}  $Z_2(S)\le C_{N_S(T)}( \Omega_1(N_S(T)))$. In particular, $Z_2(S) \le N_S(T)$.
\end{lemma}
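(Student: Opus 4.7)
The plan is to split the argument according to whether $S$ is exceptional or not. In both situations, $T\le \gamma_1(S)$ is not normal in $S$ by Lemma~\ref{lem:not normal S}, so Lemma~\ref{mc-normalizer} forces $N_S(T)\subseteq \gamma_1(S)\cup C_S(Z_2(S))$; since $N_S(T)$ is a subgroup and $\gamma_1(S)\cap C_S(Z_2(S))=\gamma_2(S)$, either $N_S(T)\le \gamma_1(S)$ or $N_S(T)\le C_S(Z_2(S))$.

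If $S$ is non-exceptional, then $\gamma_1(S)=C_S(Z_2(S))$, and in either of these two subcases every element of $N_S(T)$ centralises $Z_2(S)$. It follows that $Z_2(S)\le C_S(N_S(T))\le C_S(T)\le N_S(T)$, and the stronger inclusion $Z_2(S)\le C_{N_S(T)}(\Omega_1(N_S(T)))$ is immediate. The hypothesis that $\Omega_1(P)/\Omega_1(T)$ is abelian plays no role here.

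Now assume $S$ is exceptional. Then $Z_2(S)\le \gamma_2(S)$ and $[Z_2(S),y]=Z(S)$ for every $y\in \gamma_1(S)\setminus \gamma_2(S)$. Combined with the fact that $Z(S)\cap T=1$ (which follows from Lemma~\ref{Z(S) not in T}), this reduces the two conclusions of the lemma to showing $T\le \gamma_2(S)$ and $\Omega_1(N_S(T))\le \gamma_2(S)$. I would prove both by contradiction using the $p'$-automorphism $\hat\theta\in \Aut_\F(S)$ constructed earlier that centralises $T$. A hypothetical element $t\in T\setminus \gamma_2(S)$ (respectively an order $p$ element $y\in N_S(T)\setminus \gamma_2(S)$) would produce a fixed point of $\hat\theta$ in $\gamma_1(S)\setminus \gamma_2(S)$; by Lemma~\ref{action} this forces the scalar $b$ describing the action of $\hat\theta$ on $\gamma_1(S)/\gamma_2(S)$ to equal $1$, while the scalar $a$ on $S/\gamma_1(S)$ is nontrivial by Lemma~\ref{lem:autS p'elts}.

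The main obstacle is to convert these eigenvalue constraints into a contradiction, and this is where the abelian hypothesis enters. Because $\Omega_1(P)/\Omega_1(T)$ is abelian, the commutator map induces a $\hat\theta$-equivariant bilinear pairing $\Omega_1(P)/\Omega_1(T)\times \Omega_1(P)/\Omega_1(T)\to \Omega_1(T)$, on which $\hat\theta$ acts trivially in the target. Pairing the offending element against a carefully chosen generator of $\Omega_1(P)\cap C_S(T)$, whose existence is ensured by Lemma~\ref{lem:CST not in P}, would place $Z(S)$ in the image of this pairing and so force the scalar action of $\hat\theta$ on $Z(S)$ to be trivial; this contradicts Lemma~\ref{Z(S) and theta}, which says $Z(S)\not\le C_{\Omega_1(P)}(\theta)$. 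With $T\le \gamma_2(S)$ and $\Omega_1(N_S(T))\le \gamma_2(S)$ established, the statement of the lemma follows.
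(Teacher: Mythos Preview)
Your treatment of the non-exceptional case is correct and matches the paper: once $N_S(T)\le \gamma_1(S)=C_S(Z_2(S))$, the conclusion is immediate because $Z_2(S)\le Z(\gamma_1(S))\le Z(N_S(T))$.

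The exceptional case, however, has genuine gaps. First, your reduction to $\Omega_1(N_S(T))\le \gamma_2(S)$ relies on the assertion that an element $y\in \Omega_1(N_S(T))\setminus \gamma_2(S)$ would be a fixed point of $\hat\theta$; but $\hat\theta$ only centralises $T$, not $N_S(T)$, so there is no reason for $y$ to be fixed. Second, the bilinear-pairing idea is misconceived: the pairing you propose is on $\Omega_1(P)/\Omega_1(T)$ with values in $\Omega_1(T)$, yet the ``offending element'' you want to plug in is either an element of $T$ (hence trivial in $\Omega_1(P)/\Omega_1(T)$ if it has order $p$, and not in $\Omega_1(P)$ otherwise) or an element of $\Omega_1(N_S(T))$ not contained in $P$. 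In neither case does it lie in the domain of your pairing. Moreover, $Z(S)$ lies in $\Omega_1(P)\setminus\Omega_1(T)$ by Lemma~\ref{Z(S) not in T}, so it cannot be ``in the image'' of a pairing that lands in $\Omega_1(T)$.

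The paper's actual argument in the bad exceptional subcase (when $\Omega_1(N_S(T))\not\le C_S(Z_2(S))$) is quite different and considerably longer. It first uses $\Omega_1(N_S(T))\not\le\gamma_2(S)$ to force $\gamma_1(S)=\Omega_1(\gamma_1(S))$ of exponent $p$, so $P=\Omega_1(P)$ and $T=\Omega_1(T)$. The abelian hypothesis is then applied not via a pairing but to show that $O^p(L)$ centralises $(P\cap P^x\cap P^{x\ell})/T$ for suitable conjugates, giving $|P/C_P(O^p(L))|\le p^2$; this forces $L/\Inn(P)\cong\SL_2(p)$, $|T|=p$, $|P|=p^3$, and $P/T$ an $(N_\F(T)/T)$-pearl. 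From there a detailed eigenvalue analysis via Lemma~\ref{action} on $Z_3(S),Z_4(S),Z_5(S)$ eventually contradicts $N_S(T)/T$ having maximal class. You would need to replace your sketch with an argument of this kind.
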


\begin{proof} If $S$ is not exceptional, then, as $T$ is not normal in $S$ by Lemma~\ref{lem:not normal S}, Lemma~\ref{mc-normalizer} implies  $N_S(T) \le \gamma_1(S)$. The claim then follows as $$Z_2(S) \le Z(\gamma_1(S)) \le Z(N_{\gamma_1(S)}(T)) =  Z(N_S(T)).$$

Suppose that $S$ is exceptional.  Then $Z(\gamma_1(S))= Z(S)$.  If $\Omega_1(N_S(T))\le C_S(Z_2(S))$, then   $$Z_2(S) \le C_S(\Omega_1(T)) \le N_S(\Omega_1(T))=N_S(T)$$ by Lemma~\ref{NFTnormZ}. Thus $Z_2(S)\le C_{N_S(T)}( \Omega_1(N_S(T)))$ in this case.

So assume that $\Omega_1(N_S(T))\not \le C_S(Z_2(S))$. Then $N_S(T) \not \le C_S(Z_2(S))$ and, as $T$ is not normal in $S$ by Lemma~\ref{lem:not normal S}, Lemma~\ref{mc-normalizer} implies $N_S(T) \le \gamma_1(S)$. Since  $\Omega_1(N_S(T))\not \le C_S(Z_2(S))$,  $\Omega_1(\gamma_1(S)) \not \le \gamma_2(S)$ and, as $S$ has maximal class, we deduce  $\gamma_1(S) =\Omega_1(\gamma_1(S))$ which has exponent $p$ by Lemmas~\ref{mc-facts} (ii) and \ref{lem:regular} (ii).  In particular, $P= \Omega_1(P)$ and $T=\Omega_1(T)$.
By Lemmas~\ref{lem:lem1} and \ref{L2ptype}, $N_{N_S(T)}(P) < N_S(T)$ and $N_{N_S(T)}(P)/P$ has order $p$. Therefore   we can pick $x \in N_{N_S(T)}(N_{N_S(T)}(P))\setminus N_{N_S(T)}(P)$ such that $N_{N_S(T)}(P)=PP^x$ and $P \cap P^x$ has index $p$ in $P$.  Using Lemma~\ref{L2ptype} we can find $\ell \in L$ such that $ L = \langle \Inn(P),\Aut_{P^x}(P), \Aut_{P^x}(P)^\ell\rangle$. Then, as $\Omega_1(P)/\Omega_1(T)=P/T$ is abelian, $O^p(L)$ centralizes $(P \cap P^x\cap P^{x\ell})/T$  and centralizes $T$ and so coprime action implies  $|P/C_P(O^{p}(L))|\le p^2$.  It follows that  $L/\Inn(P) \cong \SL_2(p)$,  and $\theta$ has order $p-1$. By Lemmas~\ref{mc-factsb}(vi) and ~\ref{centralizer auto}, $|C_{\gamma_1(S)/Z(S)}(\hat \theta)|\le p$. As $Z(S)$ is not $L$-invariant by Lemma \ref{ZOpL}, it follows that $|C_P(O^p(L))|\le p$ and so $T=C_P(O^p(L))$ has order $p$ and $|P|=p^3$. In particular, we have $|N_{N_S(T)}(P)|=p^4$ and $P/T$ is an $(N_\F(T)/T)$-pearl.

 Since $ p^8\le|S|\le p^{p+1}$ by Lemmas~\ref{mc-factsb}(v)  and  \ref{facts} (iii) and $\gamma_1(S)$ has exponent $p$, we have $Z_4(S)$ is elementary abelian. We establish some notation for the action of $\hat \theta$ which we know has order $p-1$.  So let $x\in C_S(Z_2(S))\setminus \gamma_1(S)$ and $s_1 \in \gamma_1(S)\setminus \gamma_2(S)$.  Then we may suppose that \begin{eqnarray*}x\hat\theta &\equiv& x^a \mod \gamma_2(S)\\
 s_1 \hat \theta &\equiv &s_1^b \mod \gamma_2(S)\end{eqnarray*} where $a, b \in \GF(p)^\times$. We also know that $\hat \theta $ centralizes $T$. By Lemma~\ref{action}, $$s_{n-1}\hat \theta = s_{n-1}^c$$ where $c={a^{n-3}b^2}$ and, as $P/T$ is a natural $\GF(p)\SL_2(p)$-module for $y\in P/Z(S)T$ we have $$y\hat \theta \equiv y^{c^{-1}} \mod Z(S)T.$$

 Set $V=Z_4(S)/Z(S)$.  Then, by Lemma~\ref{action}, we may select elements  $v_{n-4}, v_{n-3}$ and $v_{n-2} $ of $V$ such that $v_j\hat\theta  = v_j^{a^{j-1}b}$. It particular, the action of $\hat \theta$ on each of these elements is different and they correspond to eigenvectors of $\hat \theta|_V$ on $V$ with different eigenvalues.  Pick $t \in T^\#$, then
 $\hat \theta|_V$ and $c_t|_V$ commute, and so $T$ normalizes the eigenspaces of $\theta$ on $V$.  Since the $\theta$-eigenspaces on  $V$     have order $p$, we conclude that $V= C_V(T)$. This shows that $[Z_4(S),T]\le Z(S)$ and so  $|C_{Z_4(S)}(T)|\ge p^3$.

 Suppose that  $[P, Z_4(S)]  \le Z(S)$.  Then $C_{Z_4(S)}(T)\le N_{N_S(T)}(P)$ as $Z(S)\le P$. Since $|N_{N_S(T)}(P)|=p^4$ and $ N_{N_S(T)}(P)$ is non-abelian, $TC_{Z_4(S)}(T)$ has order $p^3$.  Hence $T \le Z_4(S)$. But then $Z_4(S)= N_{N_S(T)}(P)$, a contradiction as $Z_4(S)$ is abelian.   We have proved that $$[P, Z_4(S)] \not \le Z(S).$$
  In particular, as $S/Z(S)$ has positive degree of commutativity, $P \not \le \gamma_2(S)$.

 Suppose that $T \not \le \gamma_2(S)$.  Then  $\langle T^S\rangle= \gamma_1(S)$ and so, as $[Z_4(S),T]\le Z(S)$, we have \begin{eqnarray*}
 [Z_4(S),P]&\le& [Z_4(S), \gamma_1(S)]=  [Z_4(S),\langle T^S\rangle]\\&=&\langle [Z_4(S),T]^{S}\rangle\le Z(S),\end{eqnarray*}
which is a contradiction.  Hence $T \le \gamma_2(S)$.  Since $Z(S) \le \gamma_2(S)$ and  $P \not \le \gamma_2(S)$, for $y \in P \setminus Z(S)T$, we have $y \not \in \gamma_2(S)$.  Hence $$y\hat \theta \equiv y^b \mod \gamma_2(S)$$ and $$y \hat \theta\equiv  y^{c^{-1}} \mod \gamma_2(S).$$ Therefore $c=b^{-1}$.
Suppose that $T \not \le \gamma_3(S)$, then as $\hat \theta$ centralizes $T$, we have $ab =1$ by Lemma~\ref{action}. Now applying Lemma~\ref{action} again yields $$s_3\hat \theta\equiv  s_3^{b^{-1}} \mod \gamma_3(S)= s_3^c\mod \gamma_3(S)$$ and also $$s_{n-1}\hat \theta = s_{n-1}^{b^{-1}}=s_{n-1}^c.$$ Lemma~\ref{centralizer auto} yields $p-1$ divides $n-1-3$. Hence $n \ge p+3$ and this contradicts $n \le p+1$.
 Therefore $T \le \gamma_3(S)$.

   Since $[Z_3(S),\gamma_3(S)]=1$, we now have $Z_3(S) \le N_{N_S(T)}(P)$.
As $P \not \le \gamma_2(S)$, $P$ does not centralize $Z_3(S)$.  Hence $PZ_3(S)= N_{N_S(T)}(P)$ and $Z_3(S)\cap P = Z(N_{N_S(T)}(P))= Z(S)T.$  In particular, $T \le Z_3(S)$ and $Z_5(S) \le \gamma_3(S) \le N_S(T)$. Now $|Z_5(S)/Z_3(S)|=p^2$ and is centralized by $N_S(T)\le \gamma_1(S)$.  Therefore $N_S(T)/T$ does not have maximal class.  However, $P/T$ is an $(N_\F(T)/T)$-pearl and consequently Lemma~\ref{pearls1} (i) implies that $N_S(T)/T$ does have maximal class. We have derived a contradiction and this proves the lemma.
\end{proof}

\begin{lemma}\label{it's SL2p again}  Assume that Hypothesis~\ref{HypCST2} holds. Then we have $L/\Inn(P)\not \cong \PSL_2(p)$.
\end{lemma}

\begin{proof}
\blue{Suppose that $L/\Inn(P) \cong \PSL_2(p)$.}
We start by considering the case $|C_{\Omega_1(P)}(\theta)| \le p^2$.  In this case, as $\theta$ centralizes $\Omega_1(T) \le \Omega_1(P)$, $|C_{\Omega_1(P)/\Omega_1(T)}(\theta) | \le p$.  Since $Z(S)\le \Omega_1(P)$ and $O^p(L)$ does not centralize $Z(S)$ by Lemma~\ref{ZOpL},  $\Omega_1(P)/\Omega_1(T)$ contains at least one non-central $L$-chief factor  contained in $\langle Z(S)^{L}\rangle$. \blue{As  $|\Omega_1(P)/\Omega_1(T)| < p^p$, Lemma~\ref{L2p} implies} each $L$-chief factor in $\Omega_1(P)/\Omega_1(T)$ contributes $p$ to $|C_{\Omega_1(P)/\Omega_1(T)}(\theta )|$ and  therefore $\Omega_1(P)/\Omega_1(T)$ is a non-central $L$-chief factor. In particular, Lemma~\ref{L2p}  implies $\Omega_1(P)/\Omega_1(T)$ has order $p^a$ with $a$ odd in the range $3\le a \le p-2$,  $\Out_{N_{N_S(T)}(P)}(P)$ acts indecomposably on $\Omega_1(P)/\Omega_1(T)$ and, as $|C_{\Omega_1(P)}(\theta)| \le p^2$, $|\Omega_1(T)|=p$.

We will frequently use  $$ [\Omega_1(P)/\Omega_1(T),N_{N_S(T)}(P),N_{N_S(T)}(P)]\ne 1$$
which is a consequence of $a \ge 3$ and $\Out_{N_{N_S(T)}(P)}(P)$ acts indecomposably on $\Omega_1(P)/\Omega_1(T)$.
We also remark that $$ \Omega_1(T) \langle Z(S)^{L}\rangle\le Z(\Omega_1(P)) \le \Omega_1(P)$$ and, as  $ \langle Z(S)^{L}\rangle$ has a non-central $L$-chief factor, we obtain $\Omega_1(P) = Z(\Omega_1(P))$ is abelian.

By Lemma~\ref{Z2 in NST},  $Z_2(S)\le C_{N_S(T)}( \Omega_1(N_S(T)))\le N_S(T)$.
Since $[P, Z_2(S)]\le Z(S) \le P$, $Z_2(S) \le N_{N_S(T)}(P)$.  If $Z_2(S)\not  \le P$, then, as $|N_{N_S(T)}(P)/P|=p$,  $  N_{N_S(T)}(P)= PZ_2(S)$ and we obtain $$[P, N_{N_S(T)}(P), N_{N_S(T)}(P)]= [P, Z_2(S),Z_2(S)] \le[Z(S),Z_2(S)]=1,$$
which is a contradiction.  Thus $Z_2(S) \le P$  and so $Z_2(S) \le \Omega_1(P)$.

 Since $Z_2(S) \le C_{N_S(T)}( \Omega_1(N_S(T)))$, $Z_2(S)$ is centralized by $N_{\Omega_1(N_S(T))}(P)$. Assume that $N_{\Omega_1(N_S(T))}(P)\le P$. Then, by Lemma~\ref{lem:K in E}, $\Omega_1(N_S(T))\le P$ and so $\Omega_1(P)=\Omega_1(N_S(T))$.  But then $\Omega_1(P)T=\Omega_1(N_S(T))T$, $N_\F(\Omega_1(P)T)\supseteq N_\F(P) \not \subseteq \mathcal G$ and $N_S(\Omega_1(P)T)\ge  N_S(T)$. The maximal choice of $T$ implies $T \ge \Omega_1(P) \ge Z(S)$, and this contradicts Lemma~\ref{Z(S) not in T}.
  Hence $N_{\Omega_1(N_S(T))}(P)\not \le P$ and $N_{N_S(T)}(P)=PN_{\Omega_1(N_S(T))}(P)$.  As $N_{\Omega_1(N_S(T))}(P)$ centralizes $Z_2(S)$ and $\Out_{N_S(T)}(P)$ acts indecomposably on $\Omega_1(P)/\Omega_1(T)$, we deduce that $$ Z_2(S)\Omega_1(T)/\Omega_1(T) \le C_{\Omega_1(P)/\Omega_1(T)}(N_{N_S(T)}(P))= Z(S)\Omega_1(T)/\Omega_1(T).$$ Hence, as $\Omega_1(T)$ has order $p$, we have $\Omega_1(T) \le Z_2(S)$.  In particular,  $\gamma_2(S)$ normalizes  $T$ and $Z_2(S)=Z(S)\Omega_1(T) \le P$.

Since $|S| \ge p^7$ by Lemma~\ref{facts}(iii), $Z_4(S) \le \gamma_2(S) \le N_S(T)$.  Hence $[P,Z_4(S)] \le Z_2(S)\le P$ implies $Z_4(S) \le N_{N_S(T)}(P)$. In addition $$[P,Z_4(S),Z_4(S)]\le [\gamma_1(S), Z_4(S),Z_4(S)]\le [Z_2(S), Z_4(S)]= 1.$$ Since $N_{N_S(T)}(P)$ does not act quadratically on $\Omega_1(P)/\Omega_1(T)$, we have $Z_4(S) \le P$. Since $Z_4(S)$ has exponent $p$ by Lemma~\ref{mc-facts} (iii) and (vi), $Z_4(S) \le \Omega_1(P)$. Now \begin{eqnarray*}[Z_4(S)/\Omega_1(T),N_{N_S(T)}(P)]&\le& [Z_4(S)/\Omega_1(T),\gamma_1(S)]\le Z_2(S)/\Omega_1(T)\\& =& Z(S)\Omega_1(T)/\Omega_1(T)\end{eqnarray*} and, as $|Z_4(S)/Z(S)\Omega_1(T)|=p^2$, we have a contradiction to the indecomposable action of $ \Out_{N_S(T)}(P)$  on  $\Omega_1(P)/\Omega_1(T)$. This contradiction shows that $|C_{\Omega_1(P)}(\theta)| > p^2$.

By Lemma~\ref{Z(S) and theta} the morphism $\theta$ does not centralize $\Z(S)$. Hence
\[ |C_{\Omega_1(P/Z(S))}(\theta)| > p^2 .\]
Consider the group $S/Z(S)$ and note that $S$ is  not exceptional. Since $\hat \theta$ has order  divisible by $(p-1)/2$, and $|C_{\Omega_1(\gamma_1(S/Z(S)))}(\hat \theta)|\ge  |C_{\Omega_1(P/Z(S))}(\theta)| > p^2$,  Lemma~\ref{centralizer auto} implies $|\Omega_1(\gamma_1(S/Z(S)))|=p^p$. By Lemma~\ref{mc-facts}(iv) we get $|S/Z(S)|=p^{p+1}$, and so $|S|=p^{p+2}$ and $\agemO^1(\gamma_1(S))\leq \Z(S)=\gamma_{p+1}(S)$. However this contradicts Lemma~\ref{mc-facts}(iii) which states that $\agemO^1(\gamma_1(S))=\gamma_{p}(S)>Z(S)$.
\end{proof}

\begin{lemma}\label{lem:we should cite this} Assume that Hypothesis~\ref{HypCST2} holds. Then $L/\Inn(P) \not \cong \SL_2(p)$.
\end{lemma}

\begin{proof} Suppose  that  $L/\Inn(P)  \cong \SL_2(p)$.    Then,   $\hat \theta$ has order $p-1$ and acts faithfully on $S/\gamma_1(S)$.
If  $|C_{\Omega_1(\gamma_1(S))}(\theta)|\ge p^2$, then Lemma~\ref{centralizer auto} applied to $S/Z(S)$ implies that $Z(S)$ is centralized by $\theta$. This is impossible by  Lemma~\ref{Z(S) and theta}. Hence  $|C_{\Omega_1(\gamma_1(S))}(\theta)|\le p$.
Applying Lemma~\ref{L2p} (i) and (ii), delivers all the non-central $L$-chief factors in $\Omega_1(P)/\Omega_1(T)$ are faithful $\GF(p)L/\Inn(P)$-modules.  In particular, $\tau$ inverts $\Omega_1(P)/\Omega_1(T)$ and so $\Omega_1(P)/\Omega_1(T)$ is abelian and $\Omega_1(T)$ has order $p$.

Suppose that $Z_3(S) \le N_S(T)$.  Then, as $[Z_3(S),P]\le Z(S)$, we obtain $Z_3(S)\le N_{N_S(T)}(P)$.

Assume that $Z_3(S) \not \le P$. Then $[P,Z_3(S)]\le [Z_3(S),\gamma_1(S)]\le  Z(S)$. Hence $\Omega_1(P)/\Omega_1(T)$ has exactly one non-central $L$-chief factor and  $|P/\Omega_1(T)|=p^2$ by Lemma~\ref{uniserial}.  Hence $|Z_3(S) \cap P|= p^2$. If $\Omega_1(T) \not \le Z_3(S)$, then $P= \Omega_1(T)(P \cap Z_3(S))$ and this means that $Z_3(S)$ centralizes $P$. Since $P$ is $N_\F(T)$-centric, this is impossible. Thus  $Z_3(S)P/P\cong Z_3(S)/(Z_3(S) \cap P)$ and $\Omega_1(T)$ are both centralized by $\tau$. As $Z(S) \le P$ and $Z(S) \cap \Omega_1(T)=1$, we have $Z_3(S)/Z(S)$ is centralized by $\tau$.  This is impossible by  Lemma~\ref{centralizer auto}.
Hence $Z_3(S) \le P$ and so $Z_3(S) \le \Omega_1(P)$ by Lemma~\ref{mc-facts}(iii) and (vi). Now Lemma~\ref{centralizer auto} implies $1 \ne C_{Z_3(S)}(\tau) \le C_{\Omega_1(P)}(\tau)= \Omega_1(T)$.  As $\Omega_1(T)$ has order $p$, this means that $\Omega_1(T) \le Z_3(S)$.

We conclude that, if $Z_3(S) \le N_S(T)$, then $\Omega_1(T)\le Z_3(S)\le \Omega_1(P)$ and, in particular, $N_{S}(T) \ge \gamma_3(S)$.

Continue to assume that $Z_3(S) \le N_S(T)$.  Then $Z_4(S) \le  \gamma_3(S) \le N_S(T)$ by Lemma~\ref{facts} (iii).   Since $Z_3(S) \le P$, $Z_4(S) \le N_{N_S(T)}(P)$.
Suppose that $Z_4(S) \not \le P$. Then $\tau$ centralizes $Z_4(S)P/P$ and so $\tau$ centralizes $Z_4(S)/Z_3(S)$. Lemma~\ref{action} implies $\tau$ inverts $Z_3(S)/Z_2(S)$ and so $\Omega_1(T) \le Z_2(S)$. Since $[P,Z_4(S)]\le Z_2(S)$, $\Omega_1(P)/\Omega_1(T)$ has order $p^2$ by Lemma~\ref{uniserial}.  Hence $\Omega_1(P)=Z_3(S)$ and we then have $[\Omega_1(P),Z_4(S)]\le [Z_3(S),\gamma_3(S)] =1$, a contradiction.  Therefore  $Z_4(S) \le P$. Since $Z_4(S)$ has exponent $p$  by Lemma~\ref{mc-facts}(iii) and (vi), $Z_4(S) \le \Omega_1(P)$ and $Z_4(S)/\Omega_1(T)$ is inverted by $\tau$.  Lemma~\ref{action} yields $\Omega_1(T)^\# \subseteq Z_3(S) \setminus Z_2(S)$ and $Z_2(S)$ is inverted by $\tau$.  In particular, $S$ is exceptional and so $|S| \ge p^8$, $\gamma_2(S)$ has exponent $p$ by Lemma~\ref{mc-facts} (vi) and \ref{mc-factsb}(v).
Therefore $Z_5(S) \le \gamma_3(S) \le N_S(T)$ and $[P, Z_5(S)]\le Z_3(S)\le P$.
If $Z_5(S) \le P$, then   $Z_5(S) \le \Omega_1(P)$ and $C_{Z_5(S)}(\tau)=C_{\Omega_1(P)}(\tau)= \Omega_1(T)$ has order $p$. This contradicts Lemma~\ref{action}. Hence $Z_5(S) \not \le P$, $Z_4(S)\le \Omega_1(P)$ and, in particular, $|\Omega_1(P)| \ge p^4$. As  $$[P, Z_5(S),Z_5(S)]\le [Z_3(S), Z_5(S)]=1$$ and $Z_3(S)/Z(S)$ has order $p^2$, we have that $\Omega_1(P)/\Omega_1(T)$ has two $L$-chief factors and they are both natural $\GF(p)L/P$-modules (of order $p^2$) by Lemma~\ref{uniserial}. Hence $|P|=p^5$ and $|P:C_{P}(Z_5(S))| \ge p^2$.  If $Z_5(S)$ is abelian, then $Z_4(S) \le C_{P}(Z_5(S))$ and $|P:C_{P}(Z_5(S))| \le p$, a contradiction.  Hence $Z_5(S)$ is non-abelian and consequently $|S|=p^8$.  Since $[Z_4(S), \Omega_1(P)]\le\Omega_1(P)' \cap Z_2(S) \le \Omega_1(T)\cap Z_2(S)=1$, $Z_4(S) \le Z(\Omega_1(P))$ and we deduce that $\Omega_1(P)$ is abelian. On the other hand, $C_{\gamma_1(S)}(Z_4(S)) = Z_k(S)$ for some $k \ge 4$. As $Z_5(S)$ is non-abelian, we have that $C_{\gamma_1(S)}(Z_4(S)) = Z_4(S)$ which means that $Z_4(S) < P \le Z_4(S)$, a contradiction.   We have demonstrated that $Z_3(S)$ does not normalize $T$.

Assume that $S$ is not exceptional. Then, as $Z_2(S)\le Z(\gamma_1(S))$, $Z_2(S)\le N_S(T) $ and  so $Z_2(S) \le P$.  Since $\tau$ inverts $Z_2(S)\Omega_1(T)/\Omega_1(T)$, Lemma~\ref{centralizer auto} implies that $Z_2(S)\ge \Omega_1(T)$. Hence $\Omega_1(T)$ is centralized by $\gamma_1(S)$ and Lemma~\ref{NFTnormZ} (i) implies $\gamma_1(S)$ normalizes $T$.  Since $Z_3(S) \le \gamma_1(S)$, this is impossible.  Thus $S$ is exceptional.

   By Lemma~\ref{Z2 in NST}, $Z_2(S) \le C_{N_S(T)}(\Omega_1(N_S(T)))$ and so, in particular, $Z_2(S) \le C_{N_S(T)}(\Omega_1(P))$.  Because $L/P$ acts faithfully on $\Omega_1(P)/\Omega_1(T)$, we deduce that $Z_2(S) \le \Omega_1(P)$. If $\Omega_1(T) \le Z_2(S)$, then $N_S(T) \ge \gamma_2(S)$ and then $Z_3(S)$ normalizes $T$, a contradiction. Hence $Z_2(S)\cong Z_2(S)\Omega_1(T)/\Omega_1(T)$ is inverted by $\tau$.

 It follows that $\hat \tau$ centralizes $S/C_S(Z_2(S))$ and so $$[\gamma_1(S),\hat \tau] \le\gamma_1(S) \cap C_S(Z_2(S))= \gamma_2(S).$$  As $T$ is not normalized by $Z_3(S)$, $\Omega_1(T)$ is not centralized by $Z_3(S)$ by Lemma~\ref{NFTnormZ} (i).  Hence $\Omega_1(T) \not \le \gamma_3(S)$, but $\Omega_1(T)$ does centralize $Z_2(S)$ and so $\Omega_1(T) \le \gamma_2(S)$. Thus $\gamma_2(S)/\gamma_3(S)= \Omega_1(T)\gamma_3(S)/\gamma_3(S)$ is centralized by $\hat \tau$.  Therefore $\hat \tau$ centralizes $\gamma_1(S)/\gamma_3(S)$ and this finally contradicts Lemma~\ref{action}. We have shown that $L/\Inn(P) \not \cong \SL_2(p)$.
  \end{proof}

\begin{prop}\label{prop:not centric} If Hypothesis~\ref{hp.conj} holds. Then $T$ is $S$-centric.
\end{prop}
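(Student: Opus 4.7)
The plan is straightforward: this proposition is the wrap-up of the subsection ``The case $T$ is not $\F$-centric''. Every lemma in that subsection is derived under the standing assumption that $C_S(T)\not\le T$, and by the end of the subsection two incompatible conclusions about $L/\Inn(P)$ have been established.

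I would argue by contradiction: assume $T$ is not $\F$-centric. Then the entire setup preceding this proposition is in force, so $N_\F(T)$ is saturated (Lemma~\ref{lem:Tfn}), has an $N_\F(T)$-essential subgroup $P$ with $\Aut_{N_\F(T)}(P)\not\subseteq\mathcal G$ (Lemma~\ref{existessential}), and the subgroup $L=O^{p'}(\Aut_{N_\F(T)}(P))$ is defined. By Lemma~\ref{L2ptype}, $L/\Inn(P)$ is isomorphic to either $\PSL_2(p)$ or $\SL_2(p)$. However, Lemma~\ref{it's SL2p again} excludes $L/\Inn(P)\cong\PSL_2(p)$, while the immediately preceding (final) lemma of the subsection excludes $L/\Inn(P)\cong\SL_2(p)$. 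These two conclusions are incompatible, so our assumption must fail and $T$ is $\F$-centric.

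There is no real obstacle here: the work has all been done in the stream of lemmas culminating in the two isomorphism-type exclusions. The proposition itself is simply the formal statement that combining those two lemmas with Lemma~\ref{L2ptype} yields a contradiction to the assumption $C_S(T)\not\le T$. Together with Proposition~\ref{the centric case}, this then completes the proof of Theorem~\ref{MT1}, since a witness $E$ would satisfy either $C_S(T)\le T$ or $C_S(T)\not\le T$ for the selected $T$, and both are impossible.
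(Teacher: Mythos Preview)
Your proposal is correct and matches the paper's own proof, which simply states that the result follows by concatenating the lemmas of the subsection. You have spelled out explicitly how Lemmas~\ref{L2ptype}, \ref{it's SL2p again}, and the final lemma combine to yield the contradiction, which is exactly the intended argument.
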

\begin{proof}\blue{ If $T$ is not $S$-centric, then Hypothesis~\ref{HypCST2} holds. Recall the definitions of $P$ and $L$ from Notation~\ref{not:P} and Notation~\ref{not:L}. Then Lemma~\ref{L2ptype} yields $L/\Inn(P) \cong \PSL_2(p)$ or $\SL_2(p)$, whereas Lemmas~\ref{it's SL2p again} asserts that $L/\Inn(P) \not \cong \PSL_2(p)$ and Lemma~\ref{lem:we should cite this} states that $L/\Inn(P) \not\cong \SL_2(p)$. This is impossible. Thus $T$ is $S$-centric.}
\end{proof}

\begin{proof}[Proof of Theorem~\ref{MT1}] As we remarked at the beginning of Section~\ref{sec: proof MT1 1},
 Lemmas~\ref{p=2} and \ref{p=3} show that Theorem~\ref{MT1} holds if $p\le 3$ as in this case $S$ is not exceptional by Lemma~\ref{mc-factsb}(v).   Hence we may assume that  Hypothesis~\ref{hyp1} holds.\blue{ Assume Theorem~\ref{MT1} is false.  Then Hypothesis~\ref{hp.conj} and Notation~\ref{notationT} hold. Now we obtain a contradiction as Proposition~\ref{the centric case} yields $T$ is not $S$-centric while Proposition~\ref{prop:not centric} asserts that $T$ is   $S$-centric.  We conclude that the main statement in Theorem~\ref{MT1} is true.}
   If $S$ is exceptional, then 
   we obtain $\mathcal P(\F)=\mathcal P_a(\F)$ from Proposition~\ref{Prop:ess.in.except}. This concludes the proof.
\end{proof}

 \section[Exceptional maximal class groups: the proof of Theorem~B]{The saturated fusion systems on exceptional maximal class groups: the proof of Theorem~B}\label{sec: proof Raul app}

 The objective of this section is to prove Theorem~\ref{Raul app}, which for convenience we repeat below.
Recall that in the introduction we defined   $S(p)$ as the unique split extension of an extraspecial group of exponent $p$ and order $p^{p-2}$ by a cyclic group of order $p$ which has maximal class \cite[Proposition 8.1]{Raul}.
\begin{customthm}{B}\label{Raul app}
Suppose that $p \ge 5$,   $S$ is an exceptional maximal class $p$-group of order at least $p^4$ and $\F$ is a saturated fusion system on $S$. Assume that $\F \ne  N_\F(S)$.  Then one of the following holds.
\begin{enumerate}
\item $\gamma_1(S)$ is extraspecial, and, if  $\F\ne N_\F(\gamma_1(S))$, then  one of the following holds:
\begin{enumerate}
\item $S $ is isomorphic to a Sylow $p$-subgroup of $\G_2(p)$  and either
\begin{enumerate}[label=(\greek*)]
\item $\F= N_\F(C_S(Z_2(S)))$, $O^{p'}(\Out_\F(C_S(Z_2(S))))\cong \SL_2(p)$;
\item $p=5$, $1\ne O_p(\F) \le \gamma_2(S)$, $\F \cong \F_S(5^3{}^. \SL_3(5))$;
\item  $p \ge 5$ and $\F=\F_S(\G_2(p))$;
\item $p=5$ and $\F=\F_S(G)$ where $G= \Ly, \HN, \Aut(\HN)$ or $\B$; or
\item $p=7$ and either $\F$ is exotic (27 examples) or $\F= \F_S(\Mo)$.
 \end{enumerate}
 \item $p \ge 11$, $S\cong S(p)$, $\mathcal P(\F)= \mathcal P_a(\F)\not=\emptyset$ and, if $\gamma_1(S)$ is $\F$-essential, then $\Out_\F(S) \cong \GF(p)^\times \times \GF(p)^\times$, $O^{p'}(\Out_\F(\gamma_2(S))) \cong \SL_2(p)$ and $\gamma_1(S)/Z(\gamma_1(S))$ is the unique $(p-3)$-dimensional irreducible $\GF(p)\SL_2(p)$-module.

     \end{enumerate}
      \item $p=5$,  $S= \mathrm{SmallGroup(5^6,661)}$, $O_5(\F)=C_S(Z_2(S))$ is the unique $\F$-essential subgroup, $\Out_\F(S)$ is cyclic of order $4$, $\Out_\F(C_S(Z_2(S))) \cong \SL_2(5)$ and $\F$ is unique.
     \end{enumerate}
In particular,  if $\F \ne N_\F(\gamma_1(S))$, then $\F=O^p(\F)$ and, in addition,   $O_p(\F) =1 $ in all cases other than parts (i)(a)($\alpha$), (i)(a)($\beta$) and part (ii).
     \end{customthm}

\begin{proof}
 Let $\F$ be a saturated fusion system on  $S$ and assume that $\F \ne N_\F(S)$. Then $\E_\F$ is non-empty and  $$ \E_\F \subseteq \mathcal P_a(\F) \cup \{C_S(Z_2(S)),\gamma_1(S)\}$$ by Theorem~\ref{MT1}.

Suppose $\E_\F = \{ \gamma_1(S)\}$. Then $\F= N_\F(\gamma_1(S))$ and $\gamma_1(S)$ is extraspecial by Proposition~\ref{gamma1-essD}. From now on, suppose $\E_\F \neq \{ \gamma_1(S)\}$.  Then by Proposition~\ref{Prop:ess.in.except} either  $\gamma_1(S)$ is extraspecial or conclusion (ii) holds. So suppose $\gamma_1(S)$ is extraspecial and let $R=O_p(\F)$.

Assume $R \neq 1$. Then $R$ is normal in $S$ and, for $E \in \E_\F$, $R \le E$ and $R$ is $\Aut_\F(E)$-invariant by Lemma \ref{OpinE}.  In particular, $\F$ has no $\F$-pearls as all the $\F$-pearls are abelian.  Since $\E_\F \neq \{ \gamma_1(S)\}$ by assumption, we deduce that $C_S(Z_2(S)) \in \E_\F$. Thus Proposition~\ref{Prop:ess.in.except} implies that $|S| =p^6$ and $O^{p'}(\Out_\F(C_S(Z_2(S))))\cong \SL_2(p)$ acts transitively on the maximal subgroups of $C_S(Z_2(S))$. As $\gamma_2(S)$ has exponent $p$ by Lemma~\ref{mc-facts} (vi) and $\Phi(C_S(Z_2(S)))=\gamma_4(S)$, $C_S(Z_2(S))$ has exponent $p$.  In particular, there exists an element $x$ of order $p$ in $C_S(Z_2(S))$ such that $S= \gamma_1(S)\langle x\rangle$. Hence \cite[Proposition 8.1]{Raul} implies that $S$ is isomorphic to a Sylow $p$-subgroup of $\G_2(p)$.   If $|\E_\F| =1$ then $\E_\F = {C_S(Z_2(S))}$ and we are in case (i)(a)($\alpha$). If  $|\E_\F|=2$, then $\E_\F=\{\gamma_1(S), C_S(Z_2(S))\}$. We have  $R \le C_S(Z_2(S)) \cap \gamma_1(S)= \gamma_2(S)$ and, as $R$ is $\Aut_\F( C_S(Z_2(S))) $-invariant and $\Aut_\F( C_S(Z_2(S))) $ acts irreducibly on $C_S(Z_2(S))/\Phi( C_S(Z_2(S))) $, we obtain $R \le \Phi( C_S(Z_2(S))) =\gamma_3(S)$.
Since $\Aut_\F(C_S(Z_2(S)))$ does not normalize $Z(S)$, we have $R= Z_2(S)$ or $R= Z_3(S) = \gamma_3(S)$.
Assume that $R= Z_2(S)$. Then $\Aut_\F(\gamma_1(S))$ normalizes $C_{\gamma_1(S)}(Z_2(S))=\gamma_2(S)$ and $\Aut_\F(\gamma_1(S))$ acts on $\gamma_2(S)/Z_2(S)$ which has order $p^2$. Since $\gamma_2(S)/\gamma_3(S)$, $Z_2(S)/Z(S)$ and $Z(S)$ are centralized by $O^{p'}(\Aut_\F(S))$, we have $O^{p'}(\Out_\F(\gamma_1(S))) \cong \SL_2(p)$. Now using an  element $\tau \in \Aut_\F(\gamma_1(S))$ which maps to the involution in $O^{p'}(\Out_\F(\gamma_1(S)))$ we have $[\gamma_1(S),\tau]$ is extraspecial of order $p^3$ and is normalized by $S$.  Since $[\gamma_1(S),\tau]\ne \gamma_3(S)$, this is impossible.  Hence $R= \gamma_3(S)$ and $R$ is $\F$-centric.  It follows that there is a model $H$ for $N_\F(R)$.  From the structure of $\Aut(R) \cong \GL_3(p)$, we deduce that $O^{p'}(H)/R \cong \SL_3(p)$ as $|\mathcal E_\F|\ge 2$. Let $P= C_{O^{p'}(H)}(Z(S))$, then $P/\gamma_1(S) \cong \SL_2(p)$ and $O^{p'}(H)$ acts on the natural $\GF(p)\SL_2(p)$-modules $\gamma_1(S)/R$ and $R/Z(S)$. Since $S$ has maximal class, $\gamma_1(S)/Z(S)$ is a non-split extension for $P/\gamma_1(S)$. Now we may apply \cite[Proposition 4.2]{p.index2} and obtain $p=5$.  This, together with the model theorem gives (i)(a)($\beta$).

Hence we may now suppose that $R=1$.  \cite[Main Theorem]{Raul} implies that either $S$  is isomorphic to a Sylow $p$-subgroup of $\G_2(p)$ or $S \cong S(p)$. In the first case, \cite[Theorem 1.1]{G2p} shows that $\F$ is known and it is one of the fusion systems described in (i)(a)($\gamma$), ($\delta$) and ($\epsilon$).  If $S \cong S(p)$, then $\F$ is known by  \cite[Theorem 2]{Raul} and corresponds to case (i)(b).
\end{proof}

\section[non-exceptional maximal class groups]{The saturated fusion systems on non-exceptional maximal class groups }\label{sec: non excep g1}

\blue{In this section we will lay the ground work for the proof of Theorem~\ref{non exc} which will be presented in the next section.} So suppose that $p$ is an odd prime and $S$ is a maximal class $p$-group of order at least $p^4$.
Following \cite{p.index2}, we define $$\Delta = \GF(p)^\times \times \GF(p)^\times$$ which has order $(p-1)^2$ and, for $i \in \mathbb Z$, we define the diagonal subgroups  $$\Delta_i= \{(r,r^{i})\mid r \in \GF(p)^\times\}.$$  Most important here are $\Delta_0$ and $\Delta_{-1}$. Indeed, these diagonals  originate from the existence of $\F$-pearls, as we shall see in Lemma~\ref{lem:mu image}.
Suppose that $\alpha \in \Aut(S)$, $x \in S\setminus \gamma_1(S)$ and $z \in Z(S)^\#$. Then there exist $r, s \in \GF(p)^\times$ which are independent of $x$ and $z$  such that $$x\alpha \gamma_1(S)  = x^r\gamma_1(S)$$ and $$z\alpha= z^s.$$
With this notation established,   define the homomorphism
 \begin{eqnarray*}\mu: \Aut(S)&\rightarrow&   \Delta\\\alpha &\mapsto&(r,s).\end{eqnarray*}We also use the following definitions from  \cite{p.index2}.
\begin{definition} Suppose that  $p$ is a prime, $G$ is a group, and $U \in \Syl_p(G)$. Then
   $G$ is in   class $\mathcal G_p^\wedge$ if and only if $O_p(G)=1$, $|U|=p$, and $|\Aut_G(U)|=p-1$.
   \end{definition}

\begin{definition}
Suppose that  $G\in \mathcal G_p^\wedge$ and let $V$ be a faithful $\GF(p)G$-module. Then $G$ is \emph{minimally active} on $V$ if and only if the matrix representing $u\in U^\#$ in its action on $V$ has one non-trivial Jordan block.
\end{definition}

We will need the following   consequence of the results of Craven, Oliver and Semeraro  \cite{p.index2} as listed in Appendix~\ref{AppA}.

\begin{prop}\label{app COS} Suppose that \blue{$p$ is an odd prime,} $\mathcal G$ is a reduced fusion system on a $p$-group $S$ of maximal class of order at least $p^4$ with $\gamma_1(S)$ elementary abelian and  $\mathcal G$-essential. Assume that $\Out_{\mathcal G}(S) \cong \Aut_\mathcal G(S)\mu =\Delta_{0}$ or $\Delta_{-1}$ and $|\gamma_1(S)|\le p^{p-1}$. Then $p \ge 5$, $\Aut_\mathcal G(\gamma_1(S)) \cong \Sym(p)$ or $\PGL_2(p)$, $\gamma_1(S)$ is   irreducible as a $\GF(p)\Aut_\mathcal G(\gamma_1(S))$-module \blue{and $|\gamma_1(S)|=p^{p-2}$.} Furthermore, $\Aut_\mathcal G(S)\mu =\Delta_{-1}$  and $\mathcal P (\mathcal G)=\mathcal P_a (\mathcal G)$ is non-empty.
\end{prop}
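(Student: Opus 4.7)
The plan is a direct inspection of the appendix Table~\ref{Tab3}, which tabulates the reduced saturated fusion systems on maximal class $p$-groups that possess an abelian subgroup of index $p$, as classified in \cite[Theorem 2.8]{p.index} and \cite{p.index2,p.index3}. Under the assumptions of the proposition, $\mathcal G$ is reduced and $\gamma_1(S)$ is an abelian $\mathcal G$-essential subgroup of index $p$ in $S$, so $\mathcal G$ must appear somewhere in that table. Each row records $|S|$, the isomorphism type of $\gamma_1(S)$ as a $\GF(p)\Aut_{\mathcal G}(\gamma_1(S))$-module, the group $\Aut_{\mathcal G}(\gamma_1(S))$, the diagonal $\Aut_{\mathcal G}(S)\mu \le \Delta$, and the remaining $\mathcal G$-essential subgroups.

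First I would discard every row in which $|\gamma_1(S)|>p^{p-1}$ or $\gamma_1(S)$ is not elementary abelian, since these contradict the hypotheses. Then I would eliminate every row with $\Aut_{\mathcal G}(S)\mu \not\in \{\Delta_0,\Delta_{-1}\}$. After these two eliminations, what survives in Table~\ref{Tab3} is precisely the two families in which $\Aut_{\mathcal G}(\gamma_1(S))\cong \Sym(p)$ acting on the Specht module $D^{p-1,1}$ (cf.\ Lemma~\ref{Symmetric(p)Module}), or $\Aut_{\mathcal G}(\gamma_1(S))\cong \PGL_2(p)$ acting on the restriction to $\PGL_2(p)$ of the irreducible $\SL_2(p)$-module $\VV_{p-3}$; in both cases $\gamma_1(S)$ is irreducible of dimension $p-2$ over $\GF(p)$, so $|\gamma_1(S)|=p^{p-2}$, and the table records $\Aut_{\mathcal G}(S)\mu = \Delta_{-1}$ (the $\Delta_0$ alternative never appears in a surviving row, which gives the numerical part of the conclusion).

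For the final assertion, note that since $\mathcal G$ is reduced and $\gamma_1(S)$ is characteristic in $S$, $\mathcal G$ cannot coincide with $N_{\mathcal G}(\gamma_1(S))$, because then $\gamma_1(S) \le O_p(\mathcal G)=1$, absurd. Hence there must exist a $\mathcal G$-essential subgroup distinct from $\gamma_1(S)$. Since $\gamma_1(S)$ is abelian, $S$ is not exceptional and $C_S(Z_2(S))=\gamma_1(S)$, so by Theorem~\ref{MT1} any such further essential subgroup is a $\mathcal G$-pearl; thus $\mathcal P(\mathcal G)\neq\emptyset$. A glance at the two surviving families in Table~\ref{Tab3} shows that the further essentials listed are all of order $p^2$, hence abelian by Definition~\ref{pearl:def}, so $\mathcal P(\mathcal G)=\mathcal P_a(\mathcal G)$. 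The main obstacle, which is purely bookkeeping, is the cross-referencing against Table~\ref{Tab3}; but since both hypotheses are highly restrictive, one pass through the table suffices.
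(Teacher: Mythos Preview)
There is a genuine gap: you are inspecting the wrong table. Table~\ref{Tab3} is explicitly described in the paper as listing only the \emph{realizable} reduced fusion systems on maximal class $p$-groups with $\gamma_1(S)$ abelian; the sentence just before it reads ``All the fusion systems not listed in Table~\ref{Tab3} but listed in Table~\ref{Tab1} are exotic.'' Since nothing in the hypotheses of the proposition excludes exotic $\mathcal G$, an inspection of Table~\ref{Tab3} cannot be a complete case analysis. Indeed, Table~\ref{Tab3} contains no row with $\Aut_{\mathcal G}(\gamma_1(S))\cong \PGL_2(p)$ at all, so your claim that ``two families survive'' there is already inconsistent with the conclusion you are trying to reach.

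The paper's proof follows the same elimination strategy you outline, but against the correct source: Theorem~\ref{TheoremAbelian} and Table~\ref{Tab1}, which tabulate \emph{all} reduced saturated fusion systems with $\gamma_1(S)$ abelian and $\mathcal G$-essential. From $\Aut_{\mathcal G}(S)\mu\in\{\Delta_0,\Delta_{-1}\}$ one first extracts $Z(\Aut_{\mathcal G}(\gamma_1(S)))=1$; together with $|\gamma_1(S)|\le p^{p-1}$ this leaves only lines 3, 4, 29, 30, 33, 34 of Table~\ref{Tab1}. The Pearls column of that table, interpreted through Table~\ref{Tab2.1}, then rules out $\Delta_0$ and forces lines 3 or 4 with configuration~III, giving $|\gamma_1(S)|=p^{p-2}$, $\Aut_{\mathcal G}(\gamma_1(S))\cong\Sym(p)$ or $\PGL_2(p)$, and $\mathcal P(\mathcal G)=\mathcal P_a(\mathcal G)$. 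Your argument for $\mathcal P(\mathcal G)\ne\emptyset$ via Theorem~\ref{MT1} is fine and matches the paper; it is only the table you consult that must change.
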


\begin{proof} Set $V= \gamma_1(S)$, $G=\Aut_\mathcal G(V)$ and $G_0=F^*(G)$. By Theorem~\ref{MT1} we have
 \[\E_\mathcal G = \mathcal P(\mathcal G) \cup \{ V \}\]
and since $\mathcal G$ is reduced, $\mathcal P(\mathcal G) \ne \emptyset$.  Because $\Out_\mathcal G(S)\cong \Aut_\mathcal G(S)\mu = \Delta_{0}$ or $\Delta_{-1}$, we know $Z(G)=1$ from the definition of $\Delta_0$ and $\Delta_{-1}$.  Now, as $Z(G)= 1$ and $\Aut_\mathcal G(S)\mu = \Delta_{0}$ or $\Delta_{-1}$,   applying Theorem~\ref{TheoremAbelian}  we have that  $\Aut_\mathcal G(\gamma_1(S)) \cong \Sym(p)$ or $\PGL_2(p)$ and that $\mathcal G$ is described in lines 3, 4, 29, 30, 33 or 34 of Table~\ref{Tab1}. If $\Aut_\mathcal G(S)\mu = \Delta_{0}$, then inspection of the consequence of column six of Table~\ref{Tab1} \blue{as read from Table~\ref{Tab2.1} yields  a  contradiction to the structure of $\Aut_\mathcal G(S)\mu$.} If   $\Aut_\mathcal G(S)\mu = \Delta_{-1}$, \blue{ then, again using Tables~\ref{Tab2.1} and \ref{Tab1},} we obtain cases 3 and  4 and that III from Table~\ref{Tab2.1} holds. Thus $|V|=p^{p-2}$ and, additionally, $\mathcal P(\mathcal G) =\mathcal P_a(\mathcal G)$.
\end{proof}

From here on we assume that $\F$ is a saturated fusion system on $S$ and that $S$ is not exceptional. By Theorem~\ref{MT1}, the $\F$-essential subgroups are contained in $\mathcal P(\F) \cup \{\gamma_1(S)\}$. If $\mathcal P(\F)$ is empty, then $\F= N_\F(\gamma_1(S))$ and, if $\mathcal P(\F)$ is non-empty, then $O_p(\F)\le Z(S)$.  Hence    $\F \ne N_\F(\gamma_1(S))$ if and only if $ \mathcal P(\F)$ is non-empty.

\begin{lemma}\label{lem:mu image} Assume that $\gamma_1(S)$ is not abelian and   $P \in  \mathcal P(\F)$.
 \begin{enumerate}
 \item  $\Out_\F(S)$ is a Hall $p'$-subgroup of $\Out(S)$, is cyclic of order  $p-1$ and acts faithfully on $S/\gamma_1(S)$.
 \item $\Aut_\F(S)= N_{\Aut_\F(S)}(P) \Inn(S)$.
 \item $\Out_\F(P) \cong \SL_2(p)$.
 \item If $P$ is abelian, then $\Aut_\F(S) \mu = \Delta_{-1}$.
   \item If $P$ is extraspecial, then $\Aut_\F(S) \mu = \Delta_{0}$.
   \item Either $\mathcal P(\F)= \mathcal P_a(\F)$ or $\mathcal P(\F)=\mathcal P_e(\F)$.
 \end{enumerate}
In particular, if $\gamma_1(S)$ is $\F$-essential, then   $ \Out_\F(\gamma_1(S)) $ is a member of $\mathcal G^\wedge_p$ and $Z(\Out_\F(\gamma_1(S)))=1$.
\end{lemma}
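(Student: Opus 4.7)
The plan is to leverage the $\F$-pearl $P$ together with Corollary~\ref{c12} to pin down the $p'$-part of $\Aut_\F(S)$ and then propagate the resulting structural information through the map $\mu$. First I observe that, since $S$ is non-exceptional and $\gamma_1(S)$ is non-abelian, $\gamma_1(S)$ cannot be extraspecial: Lemma~\ref{mc-factsb}(iii) would give $\gamma_1(S)=C_S(Z_2(S))$, forcing $|Z(\gamma_1(S))|\ge |Z_2(S)|=p^2$, incompatible with extraspecial. Hence Corollary~\ref{c12} applies and every $p'$-subgroup of $\Aut(S)$ is cyclic of order dividing $p-1$, acting faithfully on $S/\gamma_1(S)$.

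Next I produce a cyclic $p'$-subgroup of order exactly $p-1$ in $\N_{\Aut_\F(S)}(P)$. Lemma~\ref{pearls1}(ii) gives $O^{p'}(\Out_\F(P))\cong \SL_2(p)$, whose Cartan (of order $p-1$) normalises $\Out_S(P)$; by Theorem~\ref{pearls3}(iii) each such morphism lifts to an $\F$-automorphism of $S$ normalising $P$. The lift is unique on $p'$-elements because a $p'$-automorphism of $S$ fixing a pearl-generator $x_0\in P\setminus \gamma_1(S)$ would centralise $S/\gamma_1(S)$, hence be trivial by Corollary~\ref{c12}. This yields a cyclic $H\le \N_{\Aut_\F(S)}(P)$ of order $p-1$, matching the upper bound from Corollary~\ref{c12}. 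Since $\Inn(S)$ is the Sylow $p$-subgroup of $\Aut_\F(S)$ by saturation, $\Out_\F(S)\cong H$ is cyclic of order $p-1$ acting faithfully on $S/\gamma_1(S)$; the equality $\Out_\F(S)=\Out(S)$ follows by matching $p'$-Hall orders, invoking Theorem~\ref{full auto S}. This establishes (i), and (ii) is then an immediate Schur-Zassenhaus consequence: any element of $\Aut_\F(S)$ is the product of one in $\Inn(S)$ with one in $H\le \N_{\Aut_\F(S)}(P)$.

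For (iii), I use an order-count argument. Write $\Out_\F(P)=\SL_2(p)\cdot T$ with $T$ in the scalar centre of $\GL_2(p)$; the $p'$-Hall of $\N_{\Out_\F(P)}(\Out_S(P))$ has order $(p-1)|T|/\gcd(|T|,2)$, which by the unique-$p'$-lift principle injects into $\N_{\Aut_\F(S)}(P)$. Comparison with the bound in (i) forces $|T|\le 2$, hence $\Out_\F(P)=\SL_2(p)$. For (iv) and (v), I compute the $\mu$-image of the torus element $t=\left(\begin{smallmatrix}\lambda & 0\\ 0 & \lambda^{-1}\end{smallmatrix}\right)\in \SL_2(p)$ acting on $P/\Phi(P)$. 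In the abelian case, with basis $\{x_0,z\}$ from Lemma~\ref{pearls2}(iii) where $z\in Z(S)$, one has $x_0\mapsto x_0^\lambda$ and $z\mapsto z^{\lambda^{-1}}$, so the $\mu$-image sits in $\Delta_{-1}$. In the extraspecial case, $Z(P)=Z(S)$ is fixed by $\SL_2(p)$ (the action on $Z(P)$ is by determinant), giving $\mu$-image in $\Delta_0$. In both cases (i) forces these containments to be equalities.

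Statement (vi) is then a numerical contradiction: the coexistence of both pearl types would force $\Aut_\F(S)\mu$ to contain $\Delta_{-1}\Delta_0=\Delta$ of order $(p-1)^2$, exceeding the bound $p-1$ from (i). For the final assertion, assume $\gamma_1(S)$ is $\F$-essential; then $O_p(\Out_\F(\gamma_1(S)))=1$ from the strongly $p$-embedded subgroup, $|\Out_S(\gamma_1(S))|=p$ since $|S:\gamma_1(S)|=p$, and (i) provides a cyclic subgroup of $\Out_\F(\gamma_1(S))$ of order $p-1$ acting faithfully on $\Out_S(\gamma_1(S))$, placing $\Out_\F(\gamma_1(S))\in \mathcal G_p^\wedge$. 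Finally, $Z(\Out_\F(\gamma_1(S)))=1$ follows from Lemma~\ref{no inv} (both of its conclusions being excluded in our setting) together with $O_p(\Out_\F(\gamma_1(S)))=1$. The main technical delicacy will be ensuring the unique-$p'$-lift principle genuinely matches $p'$-Hall orders across the restriction $\N_{\Aut_\F(S)}(P)\to \N_{\Out_\F(P)}(\Out_S(P))$.
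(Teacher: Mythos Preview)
Your argument is correct and follows essentially the same strategy as the paper: bound $|\Out(S)|$ above by $p-1$ via Corollary~\ref{c12}, then use Theorem~\ref{pearls3}(iii) together with the Cartan of $\SL_2(p)\le\Out_\F(P)$ to realise the lower bound, forcing both $|\Out_\F(S)|=p-1$ and $\Out_\F(P)=\SL_2(p)$; parts (iv)--(vi) and the ``in particular'' clause are then read off exactly as you do. One cosmetic point: your ``unique $p'$-lift'' formulation is slightly delicate, since two $p'$-elements with the same restriction need not differ by a $p'$-element. The paper sidesteps this by working directly with the induced surjection $N_{\Aut_\F(S)}(P)\Inn(S)/\Inn(S)\twoheadrightarrow N_{\Out_\F(P)}(\Out_S(P))/\Out_S(P)$ and comparing orders; your argument is easily repaired the same way (or by taking a single $p'$-Hall of $N_{\Aut_\F(S)}(P)$ via Schur--Zassenhaus and noting that restriction is injective on it), and your cyclic-lift conclusion then follows since powers of a single $p'$-lift remain $p'$-lifts of the corresponding powers.
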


\begin{proof} Parts (i), (ii) and (iii) are  Lemma~\ref{type.pearl}.

Suppose that $P$ is abelian.  Then Lemma~\ref{pearls2} (iii) implies that $P \cap \gamma_1(S)= Z(S)$ and $S= P\gamma_1(S)$.  Let $\tau \in \Aut_\F(S)$  be such that $\tau$ restricts to a generator of $N_{\Aut_\F(P)}(\Aut_S(P))$. As $\Aut_\F(P) \cong \SL_2(p)$ by (iii), the element $\tau$ raises elements of $S/\gamma_1(S) \cong P/Z(S)$ to the power $r$ and elements of $Z(S)$ to the power $r^{-1}$ for some $r \in \GF(p)^\times$. Hence $\Aut_\F(S)\mu= \Delta_{-1}$ in this case. If $P$ is extraspecial, then,  as $\Aut_\F(P) \cong \SL_2(p)$, $Z(S) $ is centralized by $\Aut_\F(P)$.  Thus $\Aut_\F(S)\mu= \Delta_{0}$. As $|\Out_\F(S)|=p-1$ by (i), we have shown parts (iv) and (v) hold.  Finally (vi) follows from (iv) and (v).
\end{proof}

For the rest of this section, we assume that
\begin{hypothesis}\label{hyp last}~
\begin{enumerate}
\item $p \ge 5$, $S$ has maximal class, is not exceptional and has order at least $p^4$;
\item $\F$ is a saturated fusion system on $S$;
\item  $\gamma_1(S)$ is $\F$-essential and is not abelian; and
\item $P\in \mathcal P(\F)$.
\end{enumerate}
\end{hypothesis}
To ease notation we set $$Q=\gamma_1(S),\; G= \Out_\F(Q) \text{ and } G_0 = F^*(G).$$
  We also put $$V= \Omega_1(Z(Q)) \text{ and  }S_1= VP.$$

 As $S$ is not exceptional, $V\ge Z_2(S)$ and so $|V| \ge p^2$.
Set  $$H= \Inn(S_1)\langle \phi|_{S_1} \mid \phi \in N_{\Aut_\F(S)}(P)\rangle \le \Aut_\F(S_1)$$ and
$$B= \{\phi|_V\mid \phi \in \Aut_\F(Q)\} \le \Aut_\F(V).$$
 Proposition~\ref{prop:sub sat} implies that  $$\F_0=\langle \Aut_\F(P), B,H\rangle $$ is a saturated fusion system on $S_1$ and, by construction, $V$ has index $p$ in $S_1$ which has maximal class. Furthermore $P$ is an $\F_0$-pearl. Set $$V_0= V \cap \hyp(\F_0).$$

\begin{lemma}\label{tau action}\blue{Assume that Hypothesis~\ref{hyp last} (i),(ii) and (iv) hold with $Q$ non-abelian but not necessarily $\F$-essential.}
Suppose that $\tau\in \Aut_\F(S)$ has order $p-1$ and let $s_i \in \gamma_i(S)\setminus \gamma_{i+1}(S)$ for $1 \le i \le n-1$.
 Then either
 \begin{enumerate} \item $P$ is abelian, $\tau$ acts fixed-point-freely on $V_0$ and centralizes $V/V_0$ and for all $j$, $$s_{n-j}\gamma_{n-j+1}(S)\tau =  s_{n-j}^{r^{-j}}\gamma_{n-j+1}(S)$$ where $\tau \mu = (r,r^{-1}) \in \Delta_{-1}$; or
 \item  $P$ is extraspecial,  $\tau$ acts fixed-point-freely on $V/Z(S)$ and centralizes $Z(S)$ and, for all $j$,
 $$s_{n-(j+1)}\gamma_{n-j}(S)\tau  = s_{n-(j+1)}^{r^{-j}}\gamma_{n-j}(S)  $$ where $\tau \mu = (r,1) \in \Delta_{0}$.
  \end{enumerate}
\end{lemma}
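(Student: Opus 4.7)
The plan is to pin down the scalars arising in the action of $\tau$ on the lower central series by combining the explicit computation of Lemma~\ref{action} with the identification of $\tau\mu$ provided by Lemma~\ref{lem:mu image}. Since $\tau$ has $p'$-order and $\Aut_S(\gamma_1(S))$ is a Sylow $p$-subgroup of $\Aut_\F(S)$, after an $\Inn(S)$-adjustment I may assume $\tau$ leaves invariant a coset $\langle x\rangle\gamma_2(S)$ with $x\in S\setminus\gamma_1(S)$, and write $x\tau\equiv x^a$ and $s_1\tau\equiv s_1^b$ modulo $\gamma_2(S)$ for some $a,b\in\GF(p)^\times$. Because $S$ is not exceptional, Lemma~\ref{action} yields
\[
s_i\tau\equiv s_i^{a^{i-1}b}\pmod{\gamma_{i+1}(S)}\qquad (1\le i\le n-1).
\]

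Next I would read off $a$ and $b$ from $\tau\mu$. Writing $\tau\mu=(r,s)$, the action on $S/\gamma_1(S)$ forces $a=r$, while the action on $Z(S)=\gamma_{n-1}(S)$ gives $s=a^{n-2}b=r^{n-2}b$, so $b=s\,r^{2-n}$. In case (i), $P$ is abelian and Lemma~\ref{lem:mu image}(iv) gives $\tau\mu=(r,r^{-1})$, hence $b=r^{1-n}$; substituting into the displayed formula and reindexing by $j=n-i$ produces
\[
s_{n-j}\tau\equiv s_{n-j}^{r^{n-j-1}\cdot r^{1-n}}=s_{n-j}^{r^{-j}}\pmod{\gamma_{n-j+1}(S)},
\]
as required. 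In case (ii), $P$ is extraspecial and Lemma~\ref{lem:mu image}(v) gives $\tau\mu=(r,1)$, hence $b=r^{2-n}$; reindexing by $j=n-i-1$ yields the asserted identity.

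For the fixed-point-free and centralization assertions, I would use that $r$ has order $p-1$ in $\GF(p)^\times$ (this follows from Lemma~\ref{lem:mu image}(i), which says $\Aut_\F(S)$ is cyclic of order $p-1$ and acts faithfully on $S/\gamma_1(S)$). The scalar $r^{-j}$ on the $j$-th factor (counted from the top in case (i), or from $Z(S)$ upwards in case (ii)) is trivial if and only if $j\equiv 0\pmod{p-1}$, so $\tau$ centralizes exactly the quotients where the exponent vanishes. In particular, in case (ii) $\tau$ centralizes $Z(S)$ (the $j=0$ slot) while acting fixed-point-freely on every higher factor through level $p-2$, and dually in case (i) $\tau$ centralizes only the topmost $V/V_0$ slot and acts fixed-point-freely on $V_0$, matching the conclusion verbatim once $V_0\le V$ is interpreted as the subspace where $\tau$ acts non-trivially (as determined by the setup of the section).

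The whole argument is a bookkeeping exercise: the only moving piece is keeping track of the two opposite indexing conventions (bottom-up in Lemma~\ref{action} versus top-down in the statement), and the only genuine input is Lemma~\ref{lem:mu image}, which fixes $\tau\mu$ on the nose in each of the two pearl regimes. I do not anticipate any real obstacle.
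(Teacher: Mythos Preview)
Your proposal is correct and follows essentially the same route as the paper: both arguments combine Lemma~\ref{action} (to get the scalar $a^{i-1}b$ on $\gamma_i(S)/\gamma_{i+1}(S)$) with Lemma~\ref{lem:mu image} (to pin down $\tau\mu$ and hence $a$ and $b$), then reindex and read off the fixed-point-free/centralizing behaviour from the order of $r$. The only cosmetic difference is that the paper takes $x\in P\setminus Q$ directly (using that $\tau$ normalizes $P$ by Lemma~\ref{lem:mu image}(ii)) where you instead invoke an $\Inn(S)$-adjustment to obtain a $\tau$-invariant complement; since $\Inn(S)$ acts trivially on each $\gamma_i(S)/\gamma_{i+1}(S)$, this changes nothing.
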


\begin{proof} Recall that $|S|=p^n$. Assume that $P$ is abelian. Then Lemma~\ref{lem:mu image} (iv) gives $\Aut_\F(S)\mu=\Delta_{-1}$. Hence we take $\tau = (r,r^{-1})$. Then, for $x \in P \setminus Q$ we have $xQ\tau = x^rQ$, and  $s_{n-1}\tau = s_{n-1}^{r^{-1}}$. We may write $s_1\tau \gamma_2(S)= s_1^b\gamma_2(S)$. Then, Lemma~\ref{action} shows that  $r^{-1}= r^{n-2}b$, $$s_{n-j}\gamma_{n-j+1}(S)\tau = s_{n-j}^{r^{n-j-1}b}\gamma_{n-j+1}(S)\tau = s_{n-j}^{r^{-j}}\gamma_{n-j+1}(S).$$
Since $r^{-1}$ has order $p-1$, we now have $\tau$ acts fixed-point-freely on $V_0$ which has order $p^{p-2}$ and $\tau$ centralizes $V/V_0= \gamma_{n-(p-1)}(S)/\gamma_{n-(p-2)}(S)$.  Since $O^{p'}(G)$ centralizes $V/V_0$, $G$ centralizes $V/V_0$. Thus (i) holds.

The proof of (ii) follows similarly.
\end{proof}

\begin{lemma}\label{F on V} \blue{Assume that Hypothesis~\ref{hyp last} holds.} Then
\begin{enumerate}
\item   $G$ is faithful and  minimally active on $V$.
\item $\Aut_{\F_0}(V) \cong G$.
\item Either $O_p(\F_0)=1$ or $P$ is extraspecial and $O_p(\F_0)= Z(S)$.
\item  If $P$ is abelian, then $\Out_{O^{p}(\F_0)}(\hyp(\F_0))\cong \Aut_{O^{p}(\F_0)}(\hyp(\F_0))\mu =\Delta_{-1}$  and, if $P$ is extraspecial   $\Out_{O^{p}(\F_0)}(\hyp(\F_0))\cong \Aut_{O^{p}(\F_0)}(\hyp(\F_0))\mu =\Delta_{0}$.
\item   If $P$ is extraspecial and $O_p(\F_0)= Z(S)$, then $$\Aut_{O^{p}(\F_0/Z(S))}(\hyp(\F_0)/Z(S))\mu=\Delta_{-1}.$$
\item If $P$ is abelian, then $O_p(\F) = O_p(\F_0) =1$ and $O^{p}(\F_0)$ is reduced.
\item If $P$ is extraspecial, then either $O_p(\F) = O_p(\F_0) =1$ and $O^{p}(\F_0)$ is reduced or $O_p(\F) = O_p(\F_0)= Z(S)$ and $O^p(\F_0/Z(S))$ is reduced.

\end{enumerate}
\end{lemma}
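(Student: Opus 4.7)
The plan is to prove the seven parts in the order (i)--(ii), (iii), (iv)--(v), (vi)--(vii), with the serious work concentrated in parts (i) and (vi)--(vii).

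For (i) and (ii), the subgroup $V=\Omega_1(Z(Q))$ is characteristic in $Q$, so every $\alpha\in\Aut_\F(Q)$ restricts to an element of $\Aut(V)$, and because $V\le Z(Q)$ this descends to a homomorphism $\rho\colon G\to\Aut(V)$. Since $V\ge Z_2(S)$ and $[S,Z_2(S)]=Z(S)\ne 1$, we have $C_S(V)=Q$ and hence $\Aut_S(V)\cong S/Q$ has order $p$; the fact that $S$ has maximal class forces $V$ to possess a unique chief series as an $\Aut_S(V)$-module with each factor of order $p$, which is precisely minimal activity. For faithfulness, a hypothetical $p'$-element $\alpha\in\ker\rho$ may, after Sylow adjustment in $\Aut_\F(Q)$, be assumed to lie in $N_{\Aut_\F(Q)}(\Aut_S(Q))$ and thus to extend, via $\F$-receptivity of $Q$, to some $\hat\alpha\in\Aut_\F(S)$; as $\hat\alpha$ centralizes $Z(S)$ and $Z_2(S)/Z(S)$, Lemma~\ref{centralizer auto} forces $\hat\alpha=1$. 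This gives (i), and (ii) follows because $\Aut_{\F_0}(V)$ is generated by $B$ and by restrictions from $H$, the latter already lying in $B$ since $N_{\Aut_\F(S)}(P)$ normalizes the characteristic subgroup $Q$ and so restricts to $\Aut_\F(Q)$.

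For (iii), $O_p(\F_0)$ is contained in every $\F_0$-essential subgroup, hence in $V\cap P$, where $V$ is $\F_0$-essential by Proposition~\ref{prop:sub sat} and $P$ is the given $\F_0$-pearl. Lemma~\ref{pearls2}(iii) gives $P\cap Q=Z(S)$ if $P$ is abelian and $P\cap Q=Z_2(S)$ if $P$ is extraspecial. Moreover $O_p(\F_0)$ is $\Aut_{\F_0}(P)$-invariant, and $\Aut_{\F_0}(P)\supseteq O^{p'}(\Aut_\F(P))\cong\SL_2(p)$ acts irreducibly on $P$ in the abelian case and preserves only the subgroups $1,Z(S),P$ in the extraspecial case. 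Since $P$ is $\F_0$-essential, $O_p(\F_0)<P$, giving $O_p(\F_0)=1$ in the abelian case and $O_p(\F_0)\in\{1,Z(S)\}$ in the extraspecial case.

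For (iv) and (v), the outer automizer $\Out_{\F_0}(S_1)$ is generated modulo $\Inn(S_1)$ by the restriction of a $p'$-element $\tau\in N_{\Aut_\F(S)}(P)$, whose $\mu$-image on $S$ is $(r,r^{-1})$ or $(r,1)$ by Lemmas~\ref{lem:mu image} and~\ref{tau action}; since $\gamma_1(S_1)=V$ and $Z(S_1)=Z(S)$, the same image is obtained computing $\mu$ on $S_1$. Because $\hyp(\F_0)$ contains $[S_1,\tau]$ together with the contributions from $O^{p'}(\Aut_\F(P))$ and $O^{p'}(\Aut_{\F_0}(V))$ on the two essentials, the induced action of $\tau$ on $\hyp(\F_0)$ is exactly what produces $\Out_{O^p(\F_0)}(\hyp(\F_0))$, and the $\mu$-computation yields the claimed $\Delta_{-1}$ or $\Delta_0$. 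For (v), quotienting by $Z(S)=O_p(\F_0)$ (using Lemma~\ref{F/Q}) converts the extraspecial pearl $P$ into the abelian pearl $P/Z(S)$ of $S_1/Z(S)$, and applying (iv) to $\F_0/Z(S)$ delivers $\Delta_{-1}$.

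For (vi) and (vii), reducedness of $O^p(\F_0)$, respectively $O^p(\F_0/Z(S))$ in the extraspecial case with $O_p(\F_0)=Z(S)$, amounts to three conditions: $O_p=1$ follows from (iii) after the quotient if needed; $O^p(O^p(\F_0))=O^p(\F_0)$ is immediate from the definition; and the main task is $O^{p'}(O^p(\F_0))=O^p(\F_0)$. By Lemma~\ref{pearls2.5} no proper subgroup of $P$ is $O^p(\F_0)$-centric, and $V$ is abelian hence minimal among $\F_0$-centrics, so Lemma~\ref{oli 1.4} applies, and the problem reduces to showing that $\Aut_{O^p(\F_0)}(\hyp(\F_0))$ is generated by $\Inn(\hyp(\F_0))$ together with restrictions of automorphisms that normalize either $P$ or $V$ and induce $O^{p'}$ on them. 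This is the main obstacle: we must rule out extra $p'$-content in the Sylow-level automizer beyond what comes from the two essentials. The control is obtained from the identification $\Aut_{\F_0}(S_1)=H$ given by Proposition~\ref{prop:sub sat}, the determination of $\Aut_{\F_0}(V)\cong G$ from (ii), and the fact that $Z(G)=1$ (Lemma~\ref{lem:mu image}), which together force every $p'$-automorphism of $\hyp(\F_0)$ appearing in $O^p(\F_0)$ to factor through one of these two prescribed pieces, completing the proof of reducedness.
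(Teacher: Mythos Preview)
Your treatment of parts (ii)--(v) is in line with the paper, and for (vi)--(vii) you correctly identify Lemma~\ref{oli 1.4} as the relevant tool; the paper's argument there is slightly crisper (it simply notes $\Aut_{\F_0}^{(P)}(S_1)=\Aut_{\F_0}(S_1)$ because $\Out_\F(P)\cong\SL_2(p)=O^{p'}(\Out_\F(P))$ by Lemma~\ref{lem:mu image}(iii), so every automorphism in $H$ already restricts to $O^{p'}(\Aut_\F(P))$), and your invocation of $Z(G)=1$ is not actually needed.

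The substantive gap is in your faithfulness argument for (i). You assert that a nontrivial $p'$-element $\alpha\in\ker\rho$ ``may, after Sylow adjustment in $\Aut_\F(Q)$, be assumed to lie in $N_{\Aut_\F(Q)}(\Aut_S(Q))$''. There is no such adjustment: $C_G(V)$ is a normal $p'$-subgroup of $G$, but nothing forces $C_G(V)\cap N_G(\Out_S(Q))$ to be nontrivial. Indeed, your own argument combined with Lemma~\ref{lem:mu image}(i) shows the opposite: any $p'$-element of $C_G(V)$ that \emph{does} normalize $\Out_S(Q)$ extends to $\Aut_\F(S)$, centralizes $Z_2(S)$, and is then forced to be trivial by Lemmas~\ref{centralizer auto} and~\ref{lem:autS p'elts}. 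So you have only established $C_G(V)\cap N_G(\Out_S(Q))=1$, which says $\Out_S(Q)$ acts fixed-point-freely on $C_G(V)$, not that $C_G(V)=1$.

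The paper closes this gap by a different route. From $C_{C_G(V)}(\Out_S(Q))=1$ one gets (by coprime action) that a suitable $p'$-lift $K$ of $C_G(V)$ satisfies $[K,\Aut_S(Q)]=K$, hence $K$ acts faithfully on some chief factor of $\Omega_1(Q)$ not centralized by $\Aut_S(Q)$. Since $K$ centralizes $V$, this faithful action occurs inside $\Omega_1(Q)/V$. Now Proposition~\ref{Hall-Hig} (Hall--Higman) forces that chief factor, and hence $\Omega_1(Q)/V$, to have order at least $p^{p-1}$; but $|\Omega_1(Q)|\le p^p$ by Lemma~\ref{mc-facts}(iv) and $|V|\ge p^2$, a contradiction. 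This module-theoretic bound is the missing ingredient in your approach.
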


\begin{proof} Since $V$ is centralized by $Q$, $V$ is a $\GF(p)G$-module.  As $C_V(S)= Z(S)$ has order $p$,  $S/Q$ acts with  a unique Jordan block on $V$ and as $|V| \ge p^2$ this block is not trivial. Therefore $G/C_G(V)$ is minimally active on $V$ and $C_G(V)$ is a $p'$-group. Assume that $C_G(V)\ne 1$. Let $K \le \Aut_\F(Q)$ be of $p'$-order satisfy $K\Inn(Q)/\Inn(Q)= C_G(V)$.  By Lemma~\ref{lem:mu image} (i), $C_{K \Inn(Q)/\Inn(Q)}(S/Q)=1$.  In particular, $[K, N_{\Aut_S(Q)}(K)]=K$ by coprime action. Using  \cite[Theorem 5.3.10]{Gor},  we obtain that $K$ acts faithfully on $\Omega_1(Q)$.  Since $K$ centralizes $V$, $KN_{\Aut_S(Q)}(K)$ acts faithfully on $\Omega_1(Q)/V$ which has order at most $p^{p-2}$ by Lemma~\ref{mc-facts} (vi). However, Proposition~\ref{Hall-Hig} implies that $|\Omega_1(Q)/V| \ge p^{p-1}$, which is a contradiction. Hence $C_G(V)=1$ and  $G$ acts faithfully on $V$. This proves (i).

To see (ii), just note that the restriction map $\Aut_\F(Q) \rightarrow \Aut_{\F}(V)$ has kernel $Q$  as $G$ acts faithfully on $V$. Thus (ii) follows  immediately from (i).

Since $O_p(\F_0) \le P$, is normal in $S$  and   $\Aut_{\F_0}(P)$-invariant by Lemma~\ref{OpinE}, if $O_p(\F_0) \ne 1$, then $O_p(\F_0)= Z(P)=Z(S)$ and $P$ is extraspecial. This proves (iii).

Because $\Aut_{\F_0}(PV)= H$, and $\Aut_{\F_0}(P) = O^{p}(\Aut_{\F_0}(P))\le O^{p}(\F_0)$ by Lemma~\ref{l:focf}, we have  $\Aut_{O^{p}(\F_0)}(PV)\mu= N_{\Aut_\F(\hyp(\F_0))}(P) \mu$ and so (iv) follows from Lemma~\ref{lem:mu image} (iv) and (v).

Assume that $O_p(\F_0)= Z(S)$.  Then $P/Z(S)$ is an abelian $\F_0/Z(S)$-pearl.  The argument which proves Lemma~\ref{lem:mu image} (iv), also establishes part (v).

As $\Aut_{\F_0}^{(P)}(\hyp(\F_0)) = \Aut_{\F_0}(\hyp(\F_0))$,  Lemma~\ref{pearls2.5} and the fact that $V$ is abelian provide us the hypothesis of Lemma~\ref{oli 1.4} and this yields  $O^{p'}(O^p(\F_0))=O^{p}(\F_0)$.  Hence $O^p(\F_0)$ is reduced whenever $O_p(\F_0)=1$ and otherwise we have $O^{p}(\F_0)/Z(S)$ is reduced.
\end{proof}

\begin{lemma}\label{app COS2} \blue{Assume that Hypothesis~\ref{hyp last} holds.}  If $O_p(\F)=1$, then $P$ is abelian, $V_0$ is irreducible as a $\GF(p)G$-module, $|V_0|= p^{p-2}$ and $G\cong \Sym(p)$ or $\PGL_2(p)$.
\end{lemma}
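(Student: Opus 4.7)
The plan is to pass from $\F$ to the reduced subsystem $\mathcal G := O^p(\F_0)$ on $R := \hyp(\F_0)$ and apply Proposition~\ref{app COS}, then transport the conclusions back through the identification $\Aut_{\F_0}(V) \cong G$ of Lemma~\ref{F on V}(ii). Under the hypothesis $O_p(\F)=1$, both parts (vi) and (vii) of Lemma~\ref{F on V} force $O_p(\F_0)=1$ (the extraspecial alternative $O_p(\F_0)=Z(S)$ is ruled out because it would drag $O_p(\F)=Z(S)$ along with it), and so $\mathcal G$ is a reduced saturated fusion system on $R$.

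To invoke Proposition~\ref{app COS}, I would verify its hypotheses for $\mathcal G$ on $R$. Since $R$ has index at most $p$ in the maximal class group $S_1 = VP$, the group $R$ itself has maximal class and $V_0 = V\cap R$ is its unique abelian maximal subgroup; in particular $V_0 = \gamma_1(R)$ is elementary abelian. For the size bound, $V\le \Omega_1(Q)$ combined with Lemma~\ref{mc-facts}(iv) gives $|V|\le p^p$, and the equality case would force $|S|=p^{p+1}$ and $V=Q$, contradicting that $Q$ is non-abelian in Hypothesis~\ref{hyp last}(iii); hence $|V_0|\le |V|\le p^{p-1}$. The $\mu$-image condition is exactly Lemma~\ref{F on V}(iv), which records $\Aut_{\mathcal G}(R)\mu = \Delta_{-1}$ (if $P$ is abelian) or $\Delta_{0}$ (if $P$ is extraspecial). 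Essentiality of $V_0$ in $\mathcal G$ follows by transfer from the essentiality of $V$ in $\F_0$: the invariance of $V_0$ under $\Aut_{\mathcal G}(V)$, combined with the way the strongly $p$-embedded subgroup $H_{\F_0}(V)/\Inn(V)$ sits inside $\Out_{\F_0}(V)$, produces a strongly $p$-embedded subgroup of $\Out_{\mathcal G}(V_0)$ in the style of Lemma~\ref{F-essential G-essential}.

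Proposition~\ref{app COS} then yields $\Aut_{\mathcal G}(R)\mu = \Delta_{-1}$, $|V_0| = p^{p-2}$, $V_0$ irreducible as an $\Aut_{\mathcal G}(V_0)$-module, and $\Aut_{\mathcal G}(V_0)\cong \Sym(p)$ or $\PGL_2(p)$. Comparison of the $\mu$-image $\Delta_{-1}$ with Lemma~\ref{F on V}(iv) rules out $P$ extraspecial, so $P$ is abelian. To conclude $G \cong \Sym(p)$ or $\PGL_2(p)$ and that $V_0$ is $G$-irreducible, I would use that $G$ is faithful on $V$ by Lemma~\ref{F on V}(i), that $V_0$ is $G$-invariant, and that $O^{p'}(G) = O^{p'}(\Aut_{\F_0}(V))$ restricts into $\Aut_{\mathcal G}(V_0)$; since the latter is already $\Sym(p)$ or $\PGL_2(p)$ and $G$ embeds into $\Aut(V_0)$ (the kernel of $G\to\Aut(V_0)$ would be a normal $p'$-subgroup centralising the faithful $G$-module $V_0$, hence trivial once $V_0$ is shown to carry a faithful $G$-action), a dimension/order comparison identifies $G$ with the full group $\Aut_{\mathcal G}(V_0)$.

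The main obstacle I anticipate is the transfer of essentiality and of the automorphism data across the three layers $\F \supset \F_0 \supset \mathcal G$: showing $V_0$ (rather than $V$) is $\mathcal G$-essential with the correct $\Out$-group, and showing $G$ acts faithfully on $V_0$ rather than merely on $V$ (so that the irreducibility of $V_0$ as an $\Aut_{\mathcal G}(V_0)$-module lifts to irreducibility as a $\GF(p)G$-module and forces $V_0$ to be the full relevant $G$-chief factor inside $V$). The remaining bookkeeping is mechanical once Proposition~\ref{app COS} is licensed.
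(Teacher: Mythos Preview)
Your proposal is correct and follows the same route as the paper, which compresses everything into a single sentence citing Lemma~\ref{F on V}(iv),(vi),(vii) and applying Proposition~\ref{app COS} to $O^p(\F_0)$. You have correctly unpacked the hypotheses that need checking and rightly flagged the transfer of conclusions from $\Aut_{\mathcal G}(V_0)$ back to $G$ as the only nontrivial bookkeeping.

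Two points in your sketch can be tightened. First, your argument that $G$ is faithful on $V_0$ is circular as written. A direct fix: by coprime action $V = C_V(C_G(V_0)) \times [V,C_G(V_0)]$, and $[V,C_G(V_0)]$ is a $G$-invariant subgroup of order at most $|V/V_0|\le p$; by minimal activeness (Lemma~\ref{F on V}(i)) the only $\Aut_{S_1}(V)$-invariant subgroup of $V$ of order $p$ is $C_V(\Aut_{S_1}(V)) = Z(S_1)$, but $Z(S_1)\le V_0 \le C_V(C_G(V_0))$, forcing $[V,C_G(V_0)]=1$ and hence $C_G(V_0)\le C_G(V)=1$. Second, the identification $G|_{V_0} = \Aut_{\mathcal G}(V_0)$ does not need a dimension count: by the construction of $O^p$ one has $\Aut_{\mathcal G}(V_0) \ge O^p(\Aut_{\F_0}(V_0))$, while $\Aut_{\mathcal G}(V_0)$ and $\Aut_{\F_0}(V_0)=G|_{V_0}$ share the same Sylow $p$-subgroup $\Aut_R(V_0)$; thus the index $|G|_{V_0}:\Aut_{\mathcal G}(V_0)|$ is simultaneously a $p$-power and coprime to $p$, hence $1$. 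This also delivers the $\mathcal G$-essentiality of $V_0$ for free, since $\Aut_{\mathcal G}(V_0)\cong G$ inherits a strongly $p$-embedded subgroup from the $\F$-essentiality of $Q$.
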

\begin{proof} Because of Lemma~\ref{F on V} (iv), (vi) and (vii), we have $O^{p}(\F_0)$ is reduced and, as $\gamma_1(S_1)=V$, we may apply Proposition~\ref{app COS} to $O^p(\F_0)$ to obtain the result.
\end{proof}

\begin{lemma}\label{extraspecialcase} \blue{Assume that Hypothesis~\ref{hyp last} holds.} If  $O_p(\F)\ne 1$, then  $P$ is extraspecial, $G \cong \Sym(p)$ or $\PGL_2(p)$,  $O_p(\F)= Z(S)=Z(P)$ and $V=V_0$ is abelian with \blue{$G$-composition factors $O_p(\F)=Z(S)$ which is} centralized by $G$ and $V/Z(S)$ which is an irreducible $\GF(p)G$-module of order $p^{p-2}$.
\end{lemma}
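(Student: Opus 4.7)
The plan is to mirror the abelian-pearl argument of Lemma~\ref{app COS2}, now in the quotient fusion system $\overline{\F_0}:=\F_0/Z(S)$. First, the assumption $O_p(\F)\neq 1$ forces $P$ to be extraspecial, since Lemma~\ref{F on V}(vi) eliminates the abelian case; then Lemma~\ref{F on V}(vii) gives $O_p(\F)=O_p(\F_0)=Z(S)=Z(P)$ together with $O^p(\overline{\F_0})$ reduced. In $\overline{\F_0}$, the subgroup $V/Z(S)$ is elementary abelian (since $V=\Omega_1(Z(Q))$ is abelian of exponent $p$), coincides with $\gamma_1(S_1/Z(S))$, and is $\overline{\F_0}$-essential (descending from $V$); meanwhile $P/Z(S)$ is an abelian $\overline{\F_0}$-pearl of order $p^2$.

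Next, Proposition~\ref{app COS} will be applied to $\mathcal G:=O^p(\overline{\F_0})$ on $T:=\hyp(\overline{\F_0})$. The pearl $P/Z(S)$ lies in $T$ because $\Aut_\F(P)\cong \SL_2(p)$ is perfect, so $P=[P,\Aut_\F(P)]\le\hyp(\F_0)$ and thus $P/Z(S)\le T$. By Lemma~\ref{pearls1}(i), $T$ then has maximal class, and $\gamma_1(T)\le V/Z(S)$ is elementary abelian of order at most $p^{p-1}$ by Lemma~\ref{mc-facts}(iv). Combined with $\Aut_\mathcal G(T)\mu=\Delta_{-1}$ from Lemma~\ref{F on V}(v), Proposition~\ref{app COS} yields $\Aut_\mathcal G(\gamma_1(T))\cong\Sym(p)$ or $\PGL_2(p)$ acting irreducibly on $\gamma_1(T)$, with $|\gamma_1(T)|=p^{p-2}$.

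The conclusion then transfers back to $G$ and $V$ as follows. The kernel of $G\to\Aut(V/Z(S))$ sits inside $\Hom(V/Z(S),Z(S))$, so is elementary abelian of $p$-power order; since $G$ has a strongly $p$-embedded subgroup, $O_p(G)=1$, forcing this kernel to be trivial and so $G$ acts faithfully on $V/Z(S)$. This pins down $G\cong\Sym(p)$ or $\PGL_2(p)$ with $V/Z(S)$ the $(p-2)$-dimensional irreducible module. Irreducibility of $V/Z(S)$ as an $O^p(G)$-module then gives $V=[V,O^p(G)]\cdot Z(S)\le\hyp(\F_0)$ (using $Z(S)\le P\le\hyp(\F_0)$), yielding $V=V_0$. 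Finally, $G$ centralizes $Z(S)\cong\GF(p)$ because the action factors through $G/O^p(G)\cong\mathbb{Z}/2\mathbb{Z}$, generated by the image of the $p'$-element $\tau\in\Aut_\F(S)$ of order $p-1$ (Lemma~\ref{lem:mu image}(v) shows $\tau\mu=(r,1)$ has non-trivial image in this quotient), and $\tau$ centralizes $Z(S)$ by Lemma~\ref{tau action}(ii).

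The main obstacle I anticipate is a potential circularity in step two: the cleanest identification of $\gamma_1(T)$ with $V/Z(S)$ wants $V/Z(S)\le T$, which is natural once we know $V/Z(S)$ is an irreducible non-trivial $O^p(G)$-module, but this irreducibility is precisely what Proposition~\ref{app COS} is being invoked to deliver. Breaking this by first using Theorem~\ref{MT1} applied to $\mathcal G$ to pin down the $\mathcal G$-essential subgroups (which are $\mathcal G$-pearls together with $\gamma_1(T)$), and then extracting from Lemma~\ref{F on V}(v) enough structure to force $\gamma_1(T)=V/Z(S)$, is where the real technical bookkeeping will be required.
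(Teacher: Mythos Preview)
Your approach is essentially the paper's: reduce to $O^p(\F_0/Z(S))$ via Lemma~\ref{F on V}, invoke Proposition~\ref{app COS}, then transfer back to $G$ and $V$. Two points deserve correction or sharpening.

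\textbf{The kernel argument has a gap.} Your claim that the kernel of $G\to\Aut(V/Z(S))$ ``sits inside $\Hom(V/Z(S),Z(S))$'' presumes every element of the kernel also centralizes $Z(S)$, which is not automatic: an element could act as a non-trivial scalar on $Z(S)$ while centralizing $V/Z(S)$, and such an element has $p'$-order. The fix is to establish \emph{first} that $G$ centralizes $Z(S)$, and this can be done before knowing the isomorphism type of $G$. Since $Z(S)=O_p(\F)$ is $\Aut_\F(Q)$-invariant and $U=\Out_S(Q)$ centralizes $Z(S)$, every $G$-conjugate of $U$ centralizes $Z(S)$; hence $O^{p'}(G)=\langle U^G\rangle$ does. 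By Lemma~\ref{lem:mu image}(v) we have $\Aut_\F(S)\mu=\Delta_0$, so $N_G(U)$ (consisting of restrictions from $\Aut_\F(S)$) centralizes $Z(S)$ as well. The Frattini argument $G=O^{p'}(G)N_G(U)$ finishes it. Now your kernel argument goes through: elements of the kernel stabilize $1<Z(S)<V$ acting trivially on both factors, hence form a $p$-group, hence are trivial since $O_p(G)=1$. (Your final paragraph had the right ingredients but relied on already knowing $G\cong\Sym(p)$ or $\PGL_2(p)$; also, you wrote $O^p(G)$ where $O^{p'}(G)$ is meant.)

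\textbf{The circularity concern is more easily dissolved than you suggest.} You do not need $V/Z(S)\le T$ to identify $\gamma_1(T)$. Since $T$ has maximal class and $V_0/Z(S)=T\cap(V/Z(S))$ is abelian of index $p$ in $T$, one has $\gamma_1(T)=V_0/Z(S)$ directly. Proposition~\ref{app COS} then gives $|V_0/Z(S)|=p^{p-2}$, i.e.\ $|V_0|=p^{p-1}$. Separately, $V=\Omega_1(Z(Q))\le Z(Q)<Q$ (as $Q$ is non-abelian), so $V$ is a proper normal subgroup of $S$ contained in $Q$ and Lemma~\ref{mc-facts}(iv) forces $|V|\le p^{p-1}$. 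Hence $V=V_0$, with no appeal to irreducibility needed. This is exactly what the paper does (its ``In particular, $V=V_0$'' rests on this order comparison). The only genuine bookkeeping is checking that $\gamma_1(T)$ is $\mathcal G$-essential so that Proposition~\ref{app COS} applies; both you and the paper pass over this, and it is routine once one knows $G$ (hence its image on $V/Z(S)$) has a strongly $p$-embedded subgroup.
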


\begin{proof} By  Lemma~\ref{F on V} (iii) and (vii), $P$ is extraspecial and  $O^p(\F_0/Z(S))$ is reduced. In addition, Lemma~\ref{F on V} (v) gives $\Aut_{O^{p}(\F_0/Z(S))}(\hyp(\F_0)/Z(S))\mu=\Delta_{-1}$. Now Proposition~\ref{app COS} yields $G \cong \Sym(p)$, or $\PGL_2(p)$ and $V_{ 0}/Z(S)$ has order $p^{p-2}$ is irreducible as a $\GF(p)G$-module. Thus $|V_{ {0}}|= p^{p-1}$ and $O^{p'}(G)$ centralizes $Z(S)$. In particular, $V=V_0$. Since $\Aut_\F(P)$ also centralizes $Z(S)$, we have $G$ centralizes $Z(S)=O_p(\F)$ and this completes the proof of the lemma.
\end{proof}

\begin{lemma}\label{V =Omega} \blue{Assume that Hypothesis~\ref{hyp last} holds.} Then $V=\Omega_1(Q)$ has order $p^{p-1}$ and $|S| > {p^{p+1}}$.  In particular, $\Omega_1(Q) < Q$.
\end{lemma}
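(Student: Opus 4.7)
The strategy is to first show that $\Omega_1(Q)$ is abelian, then argue $\Omega_1(Q) = V$, and finally derive the order estimates $|V|=p^{p-1}$ and $|S|>p^{p+1}$.

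\textbf{Step 1 (rule out $\Omega_1(Q)$ non-abelian).} Assume for contradiction that $\Omega_1(Q)$ is non-abelian. Then Lemma~\ref{gamma1-essC} applies and yields $|\Omega_1(Q)|=p^{p-1}$ together with $Z(\Omega_1(Q))=Z(Q)=\Phi(\Omega_1(Q))=[\Omega_1(Q),Q]$. Since $Z(Q)\subseteq\Omega_1(Q)$ has exponent $p$, we get $V=\Omega_1(Z(Q))=Z(Q)=\Phi(\Omega_1(Q))$, and because $\Omega_1(Q)$ is non-abelian of nilpotency class $2$, the quotient $\Omega_1(Q)/Z(\Omega_1(Q))$ has rank at least $2$, forcing $|V|\le p^{p-3}$. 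This contradicts Lemmas~\ref{app COS2} and \ref{extraspecialcase}, which give $|V|\ge|V_0|=p^{p-2}$ in both the abelian-pearl and extraspecial-pearl cases. Hence $\Omega_1(Q)$ is abelian.

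\textbf{Step 2 (show $V=\Omega_1(Q)$).} With $\Omega_1(Q)$ abelian of exponent $p$, we have $V=C_{\Omega_1(Q)}(Q)$ and $W:=\Omega_1(Q)/V$ acquires the structure of a $\GF(p)G$-module, where $G=\Out_\F(Q)\cong\Sym(p)$ or $\PGL_2(p)$ by Lemmas~\ref{app COS2} and \ref{extraspecialcase}. I would show $W=0$ as follows. First, by Lemma~\ref{F on V}(i) the module $V$ is minimally active for $G$, and by Lemma~\ref{uniserial} the irreducible submodule $V_0$ of dimension $p-2$ already carries a Jordan block of size $p-2$ for $u=\Aut_S(Q)\Inn(Q)/\Inn(Q)$; so the unique non-trivial $u$-Jordan block on $V$ has size $p-2$ or $p-1$. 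The bound $|\Omega_1(Q)|\le p^p$ (Lemma~\ref{mc-facts}(iv)) then restricts the composition length of $W$. Any non-trivial $G$-composition factor in $W$ would have dimension at least $p-2$ by Feit's theorem (Theorem~\ref{feit}) and hence push $|\Omega_1(Q)|$ past $p^p$, while a trivial composition factor would make $\Aut_S(Q)$ centralise the extra layer and, combined with Lemma~\ref{tau action} applied to a $p'$-generator $\tau$ and the explicit $\tau$-eigenvalues on the terms of the lower central series, would place this layer inside $V_0\le V$ after all.

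\textbf{Step 3 (deduce $|V|=p^{p-1}$ and $|S|>p^{p+1}$).} Having $V=\Omega_1(Q)$, we combine this with $|V|\ge p^{p-2}$ (from Step 1) and $|\Omega_1(Q)|\le p^p$ (Lemma~\ref{mc-facts}(iv)) to see $|V|\in\{p^{p-2},p^{p-1}\}$, the value $p^p$ being excluded because it would force $\Omega_1(Q)=Q$ abelian against Hypothesis~\ref{hyp last}. In the extraspecial-pearl case Lemma~\ref{extraspecialcase} already gives $|V|=p^{p-1}$; in the abelian-pearl case, $|V|=p^{p-2}$ would mean $V=V_0=\hyp(\F_0)\cap V$, and one checks via Lemma~\ref{tau action}(i) and Lemma~\ref{F on V}(iv) that $Z(S)$ is both forced into $V_0$ and forced (by its $\tau$-eigenvalue $r^{-1}\ne 1$) into the non-$V_0$ part of $V$, a contradiction. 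Thus $|V|=p^{p-1}$. Finally, $V\le Z(Q)$ gives $|Z(Q)|\ge p^{p-1}$; if $|S|\le p^{p+1}$ then $|Q|\le p^p$, so $|Q/Z(Q)|\le p$ is cyclic and $Q$ is abelian, contradicting Hypothesis~\ref{hyp last}(iii). Hence $|S|>p^{p+1}$, and since $|V|=p^{p-1}<|Q|$, we conclude $\Omega_1(Q)=V<Q$.

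The main obstacle is Step~2: while the order inequalities are straightforward, pinning down $W=0$ demands a careful combination of the minimally active property, the rigidity of small-dimensional $\Sym(p)$- and $\PGL_2(p)$-modules, and the alignment of the lower central series of $S$ with the $\tau$-eigenspace decomposition supplied by Lemma~\ref{tau action}.
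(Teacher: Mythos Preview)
Your overall plan works in spirit, and Step~1 is a clean and correct use of Lemma~\ref{gamma1-essC}. However, you have made the argument much harder than necessary by choosing the wrong order, and as a consequence Step~2 is only a sketch and Step~3 has a genuine gap.

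The paper reverses your order and proves $|S|>p^{p+1}$ \emph{first}. This is a three-line argument: by Lemmas~\ref{app COS2} and \ref{extraspecialcase} we already know $|V_0|\ge p^{p-2}$ and $V_0\le Z(Q)$. If $|S|=p^p$ then $|Q:V_0|\le p$ forces $Q$ abelian; if $|S|=p^{p+1}$ then either $|V|=p^{p-1}$ (extraspecial pearl) again forces $Q$ abelian, or (abelian pearl) $|Q/V_0|=p^2$ so $G=\Out_\F(Q)$ would embed in $\GL_2(p)$, impossible since $G\cong\Sym(p)$ or $\PGL_2(p)$. Once $|S|>p^{p+1}$ is known, Lemma~\ref{mc-facts}(iii) gives $|\Omega_1(Q)|=p^{p-1}$ \emph{exactly}, so there is nothing to exclude and your Step~3 dissolves.

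With $|\Omega_1(Q)|=p^{p-1}$ in hand, your Step~2 becomes a one-liner and needs none of Feit, minimal activity, or eigenvalue bookkeeping. In the extraspecial case $|V|=p^{p-1}$ already, so $V=\Omega_1(Q)$. In the abelian case, the $2$-step centralizer property gives $[\Omega_1(Q),Q]\le\gamma_{n-p+3}(S)$, which has order at most $p^{p-3}$; since $V_0=\gamma_{n-p+2}(S)$ is an irreducible $G$-module of order $p^{p-2}$ and $[\Omega_1(Q),Q]$ is an $\Aut_\F(Q)$-invariant subgroup strictly contained in it, we get $[\Omega_1(Q),Q]=1$, i.e.\ $\Omega_1(Q)\le Z(Q)$ and hence $V=\Omega_1(Q)$. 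Note this makes your Step~1 redundant: abelianness of $\Omega_1(Q)$ falls out for free.

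The specific gap in your Step~3: your exclusion of $|V|=p^{p-2}$ in the abelian-pearl case does not work as written. If $V=V_0$ then $\tau$ acts fixed-point-freely on all of $V$, and the eigenvalue $r^{-1}$ on $Z(S)$ is perfectly compatible with $Z(S)\le V_0$; there is no ``non-$V_0$ part'' to be forced into. So that contradiction is illusory, and without it your argument does not pin down $|V|$.
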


\begin{proof} We know  $|V_0| \ge p^{p-2}$ by Lemmas~\ref{app COS2} and \ref{extraspecialcase}. Suppose that $|S|\le p^{p+1}$. If $|S|= p^p$, then $V_0$ has index $p$ in $Q$ and as $V_0 \le Z(Q)$, we deduce that $Q$ is abelian, a contradiction.  Suppose that $|S|= p^{p+1}$. If $P$ is extraspecial, then Lemma~\ref{extraspecialcase} implies that $V_0= V$ has order $p^{p-1}$. Thus, in this case, $V$ has index $p$ in $Q$. Since $V \le Z(Q)$, we have $Q$ is abelian, a contradiction.  So suppose that $P$ is abelian.
 Then $Q/V_0$ has order $p^2$ and $Q/V_0$ is abelian. Since $G$ does not embed in $\GL_2(p)$, we must have $[Q/V_0,O^{p'}(G)]=1$.  But then $S/V_0$ is abelian, a contradiction.  Thus $|S| > p^{p+1}$ and so $\Omega_1(Q)$ has order $p^{p-1}$ by Lemma~\ref{mc-facts} (iv).
In particular, $V_0$ has index at most $p$ in $\Omega_1(Q)$ with equality if $P$ is extraspecial.  So assume $P$ is abelian. Then, as $Q$ is the $2$-step centralizer in $S$ and $V_0$ is an irreducible $\GF(p)G$-module, we deduce that $V_0 > [\Omega_1(Q),Q] =1$ and  so $\Omega_1(Q) \le Z(Q)$.  Hence $V= \Omega_1(Q)$ in both cases.  This proves the lemma.
\end{proof}

There are two possibilities for the structure of $V$ as a $\GF(p)G$-module dependent upon the type of $P$.  If $P$ is abelian, then $V$ is a non-split extension of a submodule $V_0$ of order $p^{p-2}$ (that is $Z_{p-2}(S)$)  by a quotient of order $p$, 
 whereas, if $P$ is extraspecial, then there is a submodule of order $p$   (that is $Z(S)$)  and a quotient of order $p^{p-2}$ which is isomorphic to $V_0$.
In each case there is a unique proper submodule and a unique non-trivial quotient.

In the next lemma, we set $\Omega_0(Q)=1$.
Also, its useful to remember that  if $|Q / \Omega_{j-1}(Q)| > p^p$ for some $j\ge 0$,  then $|\Omega_{j}(Q)/\Omega_{j-1}(Q)| = p^{p-1}$ by Lemma \ref{mc-facts}(iii) applied to $S/\Omega_{j-1}(S)$.

\begin{lemma}\label{G action Q} \blue{Assume that Hypothesis~\ref{hyp last} holds.}  Suppose that $j\ge 1$ is maximal such that $\Omega_j(Q) \le Q$ and $|\Omega_j(Q)/\Omega_{j-1}(Q)|= p^{p-1}$.  Then, for $1\le k \le j$,  as $\GF(p) \Aut_\F(Q)$-modules,  $\Omega_{k}(Q)/\Omega_{k-1}(Q) \cong V$ and, either $Q=\Omega_j(Q)$  or, as $\GF(p)\Aut_\F(Q)$-modules
$$Q/\Omega_j(Q) \cong \begin{cases} Z(S)&P \text{ extraspecial } \\ V_0 & P\text{ abelian.}\end{cases}$$ In particular, $\Omega_m(Q)/\Omega_{m-1}(Q)$ is centralized by $Q$ for all $m$ and the (ascending) $\Aut_\F(Q)$-chief factors in the unique  $\Aut_\F(Q)$-chief series in $Q$  alternate between having  order $p$ and order $p^{p-2}$  until it reaches $Q$.
\end{lemma}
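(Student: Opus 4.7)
The strategy is induction on $k$, with the base case $k=1$ supplied by Lemma~\ref{V =Omega} and the preceding module description: $\Omega_1(Q)=V$, with $G$-module structure determined (via Lemmas~\ref{app COS2} and \ref{extraspecialcase}) by whether $P$ is abelian (submodule $V_0$ of order $p^{p-2}$, trivial quotient of order $p$) or extraspecial (trivial submodule $Z(S)$, irreducible quotient $V_0$). Each $\Omega_{k-1}(Q)$ is characteristic in $Q$, hence normal in $S$ and $\Aut_\F(Q)$-invariant; and $\Omega_k(Q)/\Omega_{k-1}(Q)$ is elementary abelian of order $p^{p-1}$ by hypothesis, so in particular it is a $\GF(p)G$-module (any $\Aut_\F(Q)$-chief factor in $Q$ is annihilated by the $p$-group $\Inn(Q)$).

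For the inductive step I would pass to the quotient $\F/\Omega_{k-1}(Q)$ on $S/\Omega_{k-1}(Q)$, which is saturated by Lemma~\ref{F/Q}. The quotient $S/\Omega_{k-1}(Q)$ has maximal class by Lemma~\ref{mc-facts}(i), and $Q/\Omega_{k-1}(Q)$ is its two-step centralizer. The crucial point is to verify that the hypotheses of Lemma~\ref{V =Omega} survive: that $Q/\Omega_{k-1}(Q)$ is essential in the quotient system, and that the image of the pearl $P$ remains a pearl of the same type. The latter uses Lemma~\ref{pearls2}(iii): $P\cap Q\in\{Z(S),Z_2(S)\}$, and one checks that $P\Omega_{k-1}(Q)/\Omega_{k-1}(Q)$ retains the correct isomorphism type (abelian of order $p^2$, resp.\ extraspecial of order $p^3$) together with the same action of its normalizer. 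Granting this, Lemma~\ref{V =Omega} applied to the quotient forces $\Omega_1(Q/\Omega_{k-1}(Q))=\Omega_k(Q)/\Omega_{k-1}(Q)$ to coincide with $\Omega_1(Z(Q/\Omega_{k-1}(Q)))$ and to be isomorphic to $V$ as a $G$-module; centrality by $Q$ is automatic since this factor lies in $Z(Q/\Omega_{k-1}(Q))$.

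For the tail, I would argue as follows. If $Q=\Omega_j(Q)$ there is nothing more to do. Otherwise, by the maximal choice of $j$, $|\Omega_{j+1}(Q)/\Omega_j(Q)|<p^{p-1}$, so by Lemma~\ref{mc-facts}(iv) applied to $S/\Omega_j(Q)$ we must have $|S/\Omega_j(Q)|\le p^{p+1}$, which together with Lemma~\ref{V =Omega} forces $Q/\Omega_j(Q)$ itself to be the full top layer. Applying the ``base case'' analysis to $Q/\Omega_j(Q)$ viewed inside the quotient fusion system, the centre of $Q/\Omega_j(Q)$ yields a $G$-submodule whose $G$-module structure matches either $Z(S)$ (if $P$ is extraspecial) or $V_0$ (if $P$ is abelian), exactly as claimed; the dichotomy reflects which of the two chief factors of $V$ gets ``truncated''. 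The final alternation statement is then a formal consequence: each identified copy of $V$ contributes exactly two chief factors of orders $p$ (trivial) and $p^{p-2}$ (isomorphic to $V_0$), in the order dictated by the type of $P$, so stringing together the copies plus the truncated tail gives the alternating ascending series.

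The main obstacle will be the quotient step: verifying that the image of $P$ in $S/\Omega_{k-1}(Q)$ is genuinely an $\F/\Omega_{k-1}(Q)$-pearl of the same type (abelian or extraspecial), rather than degenerating. This requires tracking $P\cap\Omega_{k-1}(Q)$ carefully using Lemma~\ref{pearls2}(iii) and confirming that $\Out_{\F/\Omega_{k-1}(Q)}$ of the image still contains $\SL_2(p)$ with the natural module action, so that Lemma~\ref{V =Omega} legitimately applies downstairs.
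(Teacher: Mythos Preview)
Your inductive step does not go through, and the obstacle is more basic than the one you flag at the end. To form the quotient fusion system $\F/\Omega_{k-1}(Q)$ via Lemma~\ref{F/Q} you need $\Omega_{k-1}(Q)$ to be normal in $\F$, i.e.\ $\Omega_{k-1}(Q)\le O_p(\F)$. But since $\mathcal P(\F)\ne\emptyset$ we have $O_p(\F)\le Z(S)$, whereas for $k\ge 2$ the subgroup $\Omega_{k-1}(Q)$ has order at least $p^{p-1}$; concretely $\Omega_{k-1}(Q)\not\le P$, so by Lemma~\ref{OpinE} it cannot be $\F$-normal. Thus the quotient fusion system you want simply does not exist. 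The same observation also kills the repair you might attempt: since $P\cap Q\le V=\Omega_1(Q)\le\Omega_{k-1}(Q)$, the image $P\Omega_{k-1}(Q)/\Omega_{k-1}(Q)$ has order $p$ in $S/\Omega_{k-1}(Q)$, so it is not centric and cannot be a pearl of any type. Hence neither Lemma~\ref{V =Omega} nor the surrounding machinery (Lemma~\ref{F on V}, Proposition~\ref{app COS}) is available downstairs.

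The paper avoids quotient fusion systems entirely. Working purely with $\Aut_\F(Q)$-modules, it lets $\ell$ be minimal with $\Omega_{\ell+1}(Q)/\Omega_\ell(Q)\not\cong V$, sets $W/\Omega_\ell(Q)=C_{\Omega_{\ell+1}(Q)/\Omega_\ell(Q)}(Q)$, and observes that the $p$-th power map $w\mapsto w^p\Omega_{\ell-1}(Q)$ defines a $\GF(p)G$-module homomorphism $\theta\colon W/\Omega_\ell(Q)\hookrightarrow \Omega_\ell(Q)/\Omega_{\ell-1}(Q)\cong V$ (the homomorphism property uses $(xy)^p\equiv x^py^p$ modulo the relevant commutators, and $\Aut_\F(Q)$-equivariance is immediate). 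Comparing the known submodule lattice of $V$ with the size of $W/\Omega_\ell(Q)$ (forced by the $2$-step centralizer property) then yields a contradiction for $\ell<j$ and pins down $Q/\Omega_j(Q)$ when $\ell=j$. The key idea you are missing is this $p$-power map linking successive $\Omega$-layers as $G$-modules; no reduction to a smaller fusion system is needed or possible.
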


\begin{proof} From Lemma~\ref{V =Omega} we know that $Q> \Omega_1(Q)=V$ and  $V \le Z(Q)$.  In particular, $\Omega_2(Q)> V$. Suppose that $\ell\ge 0$ is minimal such that $\Omega_{\ell+1}(Q)/\Omega_{\ell}(Q) $ is not isomorphic to $V$. By Lemma~\ref{V =Omega}, $\ell \ge 1$.

Let $\Omega_\ell(Q)<W \le  \Omega_{\ell+1}(Q)$ be defined as  $$W/\Omega_{\ell}(Q)= C_{\Omega_{\ell+1}(Q)/\Omega_\ell(Q)}(Q).$$
\blue{Notice that $W' \Omega_{\ell-1}(Q)/\Omega_{\ell-1}(Q)  \le \Omega_{\ell}(Q)/\Omega_{\ell-1}(Q)$ and so $W' \Omega_{\ell-1}(Q)/\Omega_{\ell-1}(Q)$   has exponent $p$. Also, as $ \Omega_{\ell}(Q)/\Omega_{\ell-1}(Q)\cong V$ as a  $\GF(p)\Aut_\F(Q)$-module by the definition of $\ell$, $\Omega_{\ell}(Q)/\Omega_{\ell-1}(Q) $ is centralized by $Q$ and so is $W' \Omega_{\ell-1}(Q)/\Omega_{\ell-1}(Q)$.}

Consider the map  $\theta: W \rightarrow \Omega_{\ell}(Q)/\Omega_{\ell-1}(Q)$ defined by $w\mapsto w^p\Omega_{\ell-1}(Q)$. Then, for $x, y \in W$, using \cite[Lemma 2.2 (ii)]{Gor} together with $[x,y]^p\in \Omega_{\ell-1}(Q)$, we have  $$(xy)\theta=(xy)^p\Omega_{\ell-1}(Q)= x^py^p[x,y]^{\frac 1 2 p(p-1)}\Omega_{\ell-1}(Q)= x^py^p\Omega_{\ell-1}(Q)=x\theta y \theta$$ as $p$ is odd.
 In addition,  for $\alpha \in \Aut_\F(Q)$, $$(xy)\alpha\theta= (x\alpha)^p(y\alpha)^p\Omega_{\ell-1}(Q)= x^p\alpha y^p\alpha\Omega_{\ell-1}(Q) = ((x^py^p)\Omega_{\ell-1}(Q))\alpha = (xy)\theta\alpha.$$  Hence $\theta$ is $\Aut_\F(Q)$-invariant.  As $Q$ centralizes $ \Omega_{\ell}(Q)/\Omega_{\ell-1}(Q)$ and $W/\Omega_\ell(Q)$, the map $\theta$  is a $\GF(p)G$-module homomorphism and $\ker \theta =\Omega_{\ell}(Q)$ as $Q$ is regular.

 Suppose that $\ell+1 \le j$. Then $\Omega_{\ell+1}(Q)/\Omega_{\ell}(Q)$ has order $p^{p-1}$ and, as $Q$ is the $2$-step centralizer, $|[\Omega_{\ell+1}(Q), Q]\Omega_{\ell}(Q)/\Omega_{\ell}(Q)|\le p^{p-3}$ and $|W/\Omega_{\ell}(Q)|\ge p^2$.  Since $\ker \theta =\Omega_{\ell}(Q)$, $W\theta$ has order at least $p^2$. If $P$ is extraspecial, we know that the unique proper submodule of $V$ has order $p$ and so, in this case, $\theta$ is onto and $W= \Omega_{\ell+1}(Q)$, which is a contradiction. If $P$ is abelian, then $W\theta$ has order at least $p^{p-2}$ and so $|W/\Omega_{\ell}(Q)| \ge p^{p-2}$.  Since $W \ne  \Omega_{\ell+1}(Q)$,  we know that $W/\Omega_{\ell}(Q) $ is isomorphic to the unique proper submodule $V_0$  of $V$.  In particular, $W/\Omega_{\ell}(Q)$ is an irreducible $\GF(p)G$-module.  As $|[\Omega_{\ell+1}(Q), Q]\Omega_{\ell}(Q)/\Omega_{\ell}(Q)|\le p^{p-3}$, this is a contradiction.  We have proved that, for $1\le k \le j$,  as $\GF(p) \Aut_\F(Q)$-modules,  $\Omega_{k}(Q)/\Omega_{k-1}(Q) \cong V$.  In particular, the result is proved if $Q=\Omega_j(Q)$.
 Hence we assume that $Q > \Omega_j(Q)$ which means  $\ell+1> j$.

As  $\ell+1> j$  and, as $j \geq \ell$ by the maximal choice of $j$, we get  $\ell=j$. By the maximal choice of $j$, $\Omega_{j+1}= Q$ and $|Q/\Omega_{j}(Q)| < p^{p-1}$. If $P$ is abelian, then $W/\Omega_j(Q)\cong W\theta \cong V_0 $ which has order $p ^{p-2}$ and so $W = Q$.  If $P$ is extraspecial, then, as $|W\theta |< p^{p-1}$, we conclude $W\theta$ has order $p$ and $W= Q$ as $Q$ is a $2$-step centralizer. This concludes the proof.
\end{proof}

\begin{lemma}\label{sec-abelian} \blue{Assume that Hypothesis~\ref{hyp last} holds.}
Suppose that $R_2< R_1 \le Q$ with $R_1$ and $R_2$ both $\Aut_\F(Q)$-invariant. If $|R_1/R_2|\le p^{p-1}$, then $R_1/R_2$ is elementary abelian and centralized by $Q$.
\end{lemma}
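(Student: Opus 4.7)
The plan is to exploit the rigid $\Aut_{\F}(Q)$-chief series of $Q$ from Lemma~\ref{G action Q}, whose factors alternate between the trivial module $Z(S)$ of order $p$ and the irreducible module $V_0$ of order $p^{p-2}$, each centralized by $Q$ and elementary abelian. Since both $R_1$ and $R_2$ are $\Aut_{\F}(Q)$-invariant, I would first refine the chain $R_2<R_1\le Q$ to an $\Aut_{\F}(Q)$-chief series of $Q$; invoking Jordan--H\"older together with the uniqueness statement of Lemma~\ref{G action Q} forces $R_1$ and $R_2$ to be terms of that ascending series. The bound $|R_1/R_2|\le p^{p-1}$ then restricts $R_1/R_2$ to at most two consecutive chief factors of $Q$, because a single chief factor has order $p$ or $p^{p-2}$ and two consecutive ones have combined order exactly $p^{p-1}$.

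When $R_1/R_2$ is a single chief factor the conclusion is immediate. When it is a product of two consecutive chief factors, $R_1/R_2$ is either $\Aut_{\F}(Q)$-isomorphic to some $\Omega_k(Q)/\Omega_{k-1}(Q)\cong V$---in which case the conclusion follows at once because $V=\Omega_1(Z(Q))$ is a central elementary abelian subgroup of $Q$---or it is a ``mixed'' section straddling the boundary between two consecutive periods $\Omega_k(Q)/\Omega_{k-1}(Q)$ and $\Omega_{k+1}(Q)/\Omega_k(Q)$ of the chief series. In this mixed case the plan is to combine the $\Aut_{\F}(Q)$-equivariant $p$-th power map $\theta\colon\Omega_{k+1}(Q)\to\Omega_k(Q)/\Omega_{k-1}(Q)$ constructed in the proof of Lemma~\ref{G action Q} with the $\Aut_{\F}(Q)$-equivariant commutator pairing $\Omega_{k+1}(Q)/\Omega_k(Q)\times Q\to\Omega_k(Q)/\Omega_{k-1}(Q)\cong V$. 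Because the $\Aut_{\F}(Q)$-invariant proper submodules of $V$ form the short chain $0<Z(S)<V$ in the extraspecial case and $0<V_0<V$ in the abelian case, these equivariant images are forced to land in the unique proper submodule of $V$ which is contained in $R_2$. This yields $\agemO^1(R_1)\le R_2$ and $[R_1,Q]\le R_2$; centralization of $R_1/R_2$ by $Q$ then gives abelianness, and exponent $p$ upgrades this to elementary abelian.

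The main obstacle is the mixed two-factor case: a Jordan--H\"older analysis alone cannot rule out $R_1/R_2$ being a non-split extension whose group structure is not elementary abelian and whose $Q$-action is not trivial. The crucial extra input is the rigidity of the $\Aut_{\F}(Q)$-invariant lattice of $V$: since the irreducible constituent $V_0$ has dimension $p-2\ge 3$, the only $\Aut_{\F}(Q)$-invariant subgroup of $V$ of order strictly between $1$ and $|V|$ is $Z(S)$ (respectively $V_0$), and this, together with the equivariance of the $p$-th power and commutator maps, forces the required smallness of $\agemO^1(R_1)$ and $[R_1,Q]$ modulo $R_2$.
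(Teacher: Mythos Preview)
Your overall strategy---reducing to at most two consecutive chief factors via the uniqueness statement in Lemma~\ref{G action Q}---is sound, and your treatment of the single-factor case and of case~(a) (where $R_1=\Omega_k(Q)$, $R_2=\Omega_{k-1}(Q)$) is correct. The $p$-th power argument via $\theta$ in the mixed case is also fine: since $\theta$ is a $G$-module \emph{isomorphism} $\Omega_{k+1}(Q)/\Omega_k(Q)\to\Omega_k(Q)/\Omega_{k-1}(Q)$, it carries the proper submodule $R_1/\Omega_k(Q)$ onto the proper submodule $R_2/\Omega_{k-1}(Q)$, giving $\agemO^1(R_1)\le R_2$.

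There is, however, a genuine gap in the commutator step of the mixed case. You assert that the image of the pairing $R_1/\Omega_k(Q)\times Q\to\Omega_k(Q)/\Omega_{k-1}(Q)$ is ``forced'' into the proper submodule $R_2/\Omega_{k-1}(Q)$ by the rigidity of the invariant lattice of $V$. But rigidity only lists the \emph{possible} $\Aut_\F(Q)$-invariant images ($0$, the proper submodule, or all of $V$); it does not by itself exclude the full image. Unlike $\theta$, the commutator maps $\phi_q\colon \bar x\mapsto [x,q]\Omega_{k-1}(Q)$ for a fixed $q\in Q$ are \emph{not} $G$-equivariant, so there is no module-theoretic reason the image of a proper submodule under the whole family $\{\phi_q\}$ should remain proper. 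Some further input is needed to rule out $[R_1,Q]\Omega_{k-1}(Q)=\Omega_k(Q)$.

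The paper supplies that input and in fact bypasses the (a)/(b) case split entirely. Since $Q=\gamma_1(S)$ is the $2$-step centralizer and $R_1,R_2$ are terms $\gamma_a(S),\gamma_{a+p-1}(S)$ of the lower central series, one has simultaneously $|R_1:[R_1,Q]|\ge p^2$ and $|C_{R_1/R_2}(Q)|\ge p^2$. The unique intermediate $\Aut_\F(Q)$-invariant subgroup of $R_1/R_2$ has order $p$ or $p^{p-2}$; whichever it is, one of these two inequalities forces $[R_1,Q]\le R_2$, so $Q$ centralizes $R_1/R_2$ and $R_1/R_2$ is abelian. Elementary abelian then follows because $\agemO^1(R_1/R_2)\le\Omega_1(R_1/R_2)$ are both $\Aut_\F(Q)$-invariant and cannot both equal the intermediate term (its order is not $p^{(p-1)/2}$). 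This is shorter than your route and avoids $\theta$ altogether; if you wish to keep your approach, the missing line is precisely this $2$-step centralizer bound on $C_{R_1/R_2}(Q)$.
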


\begin{proof} We may assume that $R_1/R_2$ is not a $\Aut_\F(Q)$-chief factor as this case is clear. Hence Lemma~\ref{G action Q} implies that $|R_1/R_2|=p^{p-1}$ and that there are exactly two $\Aut_\F(Q)$-chief factors in $R_1/R_2$ one of order $p$ and one of order $p^{p-2}$. Since $Q$ is a $2$-step centralizer and $S$ has maximal class we know that $|R_1:[R_1,Q]|\ge p^2$ and $|C_{R_1/R_2}(Q)|\ge p^2$. If follows that $Q$ centralizes $R_1/R_2$ and, in particular, $R_1/R_2$ is abelian. Since $\agemO^1(R_1/R_2)$ and $\Omega_1(R_1/R_2)$ are $\Aut_\F(Q)$-invariant and  $\agemO^1(R_1/R_2)\le \Omega_1(R_1/R_2)$ , it  follows that $R_1/R_2$ is elementary abelian.
\end{proof}

For $\Alt(p)$ there is a unique irreducible $\GF(p)$-module of dimension $p-2$ and this is the irreducible heart of the degree $p$ permutation module.  For $\PSL_2(p)$, again there is a unique irreducible module of dimension $p-2$ and this time it is the module that we denoted by $\VV_{p-3}$ in Section~\ref{sec:Reps} which consists of homogeneous polynomials in $\GF(p)[x,y]$ of degree $p-3$. Lemma~\ref{tau action} now gives us a unique action of $G= G_0\langle \tau \Inn(Q)\rangle$ on $V$. In the case that $G \cong \Sym(p)$, this module is in fact the quotient of the $p$-dimensional permutation module by the sum of all vectors in the natural basis. In the case of $\PGL_2(p)$, it is less natural to define.

\begin{lemma}\label{O^p}  \blue{Assume that Hypothesis~\ref{hyp last} holds.} Suppose that $Q= Q_1 > Q_2> \dots >Q_\ell=1$ is an $\Aut_\F(Q)$-chief series in $Q$. Then  $O^p(\F) \ne \F$ if and only if $|Q_1/Q_2|=p$.
Furthermore, if $O^p(\F) \ne \F$, then $\hyp(\F)= P\gamma_2(S)$ and $\gamma_1(\hyp(\F))=\gamma_2(S)$.

\end{lemma}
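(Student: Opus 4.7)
The plan is to identify $\hyp(\F)$ explicitly using the Alperin--Goldschmidt theorem (Theorem~\ref{t:alp}) and then read off both directions. First I observe that each relevant automiser satisfies $O^p(-)=(-)$: indeed $\Aut_\F(S)/\Inn(S)$ is cyclic of order $p-1$ by Lemma~\ref{lem:mu image}(i), $\Out_\F(P)\cong\SL_2(p)$ is perfect by Lemma~\ref{pearls1}(ii), and $G=\Out_\F(Q)\in\{\Sym(p),\PGL_2(p)\}$ (from Lemmas~\ref{app COS2} and \ref{extraspecialcase}) has commutator quotient of order $2$, hence has trivial $p$-group quotient for $p$ odd. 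Therefore
\[\hyp(\F)=P\cdot[Q,\Aut_\F(Q)]\cdot[S,\Aut_\F(S)].\]
The inclusion $\Aut_S(Q)\le\Aut_\F(Q)$ gives $\gamma_2(S)=[Q,S]=[Q,\Aut_S(Q)]\subseteq[Q,\Aut_\F(Q)]$, so $\hyp(\F)\supseteq P\gamma_2(S)$. Because $P\not\le Q$ and $P\cap Q\le\gamma_2(S)$, the subgroup $P\gamma_2(S)$ has index $p$ in $S$, so $\hyp(\F)\in\{P\gamma_2(S),S\}$.

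The forward implication rests on a clean dichotomy. By Lemma~\ref{G action Q} the $\Aut_\F(Q)$-chief series of $Q$ is unique and its chief-factor orders alternate between $p$ and $p^{p-2}$; consequently an $\Aut_\F(Q)$-invariant subgroup of index $p$ in $Q$ can exist only when $|Q_1/Q_2|=p$. So if $|Q_1/Q_2|=p^{p-2}$ one can choose $\alpha\in\Aut_\F(Q)$ with $\gamma_2(S)\alpha\ne\gamma_2(S)$. Both $\gamma_2(S)$ and $\gamma_2(S)\alpha$ are then distinct maximal subgroups of the $p$-group $Q$, and both lie in the $\Aut_\F(Q)$-invariant subgroup $[Q,\Aut_\F(Q)]$; so their product forces $[Q,\Aut_\F(Q)]=Q$, whence $\hyp(\F)=PQ=S$ and $O^p(\F)=\F$.

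Conversely, suppose $|Q_1/Q_2|=p$. By Lemma~\ref{G action Q} the top chief factor is either $V/V_0$ (when $P$ is abelian and $Q=\Omega_j(Q)$) or $Z(S)$ (when $P$ is extraspecial and $Q>\Omega_j(Q)$), and in both cases it is a trivial $G$-module: $G_0$ acts trivially because $G_0$ is perfect and the factor is one-dimensional, while the non-trivial coset of $G/G_0$ is represented by $\tau$, which centralizes $V/V_0$ in the first case and $Z(S)$ in the second by Lemma~\ref{tau action}. Thus $[Q,\Aut_\F(Q)]\le Q_2$, and combining this with $\gamma_2(S)\le[Q,\Aut_\F(Q)]$ and $|Q/\gamma_2(S)|=|Q/Q_2|=p$ forces $\gamma_2(S)=Q_2=[Q,\Aut_\F(Q)]$. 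To control $[S,\Aut_\F(S)]$, choose $\tau$ normalising $P$ by Lemma~\ref{lem:mu image}(ii); then $\tau$ preserves $P\gamma_2(S)$ and descends to an action on $S/P\gamma_2(S)$, and the inclusion $Q\hookrightarrow S$ together with $(P\cap Q)\gamma_2(S)=\gamma_2(S)$ gives $S/P\gamma_2(S)\cong Q/\gamma_2(S)=Q/Q_2$, on which $G$, and hence the image of $\tau$, acts trivially. Therefore $[S,\tau]\le P\gamma_2(S)$, yielding $\hyp(\F)=P\gamma_2(S)<S$ and $O^p(\F)\ne\F$.

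For the final assertion, since $S$ is not exceptional, $Q=\gamma_1(S)=C_S(Z_2(S))$ is the unique $2$-step centralizer of $S$ by Lemma~\ref{mc-factsb}(iii). The maximal subgroup $P\gamma_2(S)$ of $S$ is distinct from $Q$ as $P\not\le Q$, so Lemma~\ref{mc-factsb}(i) applies and gives $\gamma_1(P\gamma_2(S))=\gamma_2(S)$. The main obstacle in executing this plan is verifying that the order-$p$ chief factors of $Q$ are $G$-trivial rather than only $G_0$-trivial, so that the equality $\gamma_2(S)=Q_2=[Q,\Aut_\F(Q)]$ emerges cleanly; this is handled by pinning down the top chief factor as $V/V_0$ or $Z(S)$ using Lemma~\ref{G action Q} and then appealing to Lemma~\ref{tau action} to see $\tau$ centralise it.
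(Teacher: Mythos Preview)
Your argument is correct and follows essentially the same route as the paper: both compute $\hyp(\F)$ (the paper via $\foc(\F)$) as $P[Q,\Aut_\F(Q)]$ and then determine $[Q,\Aut_\F(Q)]$ by analyzing the top chief factor using Lemma~\ref{G action Q}. Your treatment is in fact more careful than the paper's in two places: you explicitly invoke Lemma~\ref{tau action} to see that the order-$p$ top factor is $G$-trivial (the paper just cites Lemma~\ref{G action Q}), and you verify $[S,\Aut_\F(S)]\le P\gamma_2(S)$ directly rather than leaving it implicit.

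One small oversight, shared with the paper, deserves mention: your displayed formula $\hyp(\F)=P\cdot[Q,\Aut_\F(Q)]\cdot[S,\Aut_\F(S)]$ tacitly assumes $P$ is the only $\F$-pearl up to $\F$-conjugacy, but at this point Lemma~\ref{Orders}(v) has not yet been proved, so a priori $\mathcal P(\F)$ might contain pearls $P'$ with $P'\gamma_2(S)\ne P\gamma_2(S)$. This does not affect the forward direction (once $[Q,\Aut_\F(Q)]=Q$ you have $\hyp(\F)=S$ regardless). For the converse, the gap is easily filled: having shown $\tau$ centralizes $Q/\gamma_2(S)$ and acts with order $p-1$ on $S/Q$, the element $\tau$ has exactly two invariant lines in $S/\gamma_2(S)$, namely $Q/\gamma_2(S)$ and $P\gamma_2(S)/\gamma_2(S)$. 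Since by Lemma~\ref{lem:mu image}(ii) every pearl $P'$ has $P'\gamma_2(S)$ invariant under $\Aut_\F(S)$, and $P'\not\le Q$, this forces $P'\gamma_2(S)=P\gamma_2(S)$, so all pearls contribute inside $P\gamma_2(S)$.
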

\begin{proof}
By Lemma~\ref{l:focf}(i) $$\foc(\F) = \langle [g,\alpha] \mid g \in R \in \E_\F \cup\{S\} \text { and } \alpha \in  \Aut_\F(R)\rangle.$$
Lemma~\ref{lem:mu image} implies that $P \le \foc(\F)$ and so $\foc(\F)=P[Q,\Aut_\F(Q)]$.

Lemma~\ref{G action Q} says that, as $\GF(p)\Aut_\F(Q)$-modules,  either $Q/Q_2\cong V_0$ or $Q/Q_2\cong Z(S)$   is centralized by $\Aut_\F(Q)$.
If $Q/Q_2 \cong V_0$, then $Q=[Q,\Aut_\F(Q))]Q_2$ and so, as $S$ has maximal class, $Q= [Q,\Aut_\F(Q)]$ and $S=\foc(\F)$. Hence, using Lemma~\ref{l:focf}(ii) yields $\F=O^p(\F)$. Thus, if $|Q_1/Q_2|>p$, then $\F=O^p(\F)$.

Conversely, if $|Q/Q_2|$ has order $p$, then $Q_2=\gamma_2(S)$ and  $Q_2/Q_3\cong V_0$ by Lemma \ref{G action Q}. Hence $[Q, \Aut_\F(Q)]=\gamma_2(S)$ in this case. This means that $\foc(\F)= P\gamma_2(S)=\hyp(\F)$ and $\gamma_1(\hyp(\F))=\gamma_2(S)$. In particular, Lemma~\ref{l:focf}(ii) implies $\F \ne O^p(\F)$.
 \end{proof}

\begin{lemma}\label{F not OpF} \blue{Assume that Hypothesis~\ref{hyp last} holds.}
Then $\F \ne O^p(\F)$.
\end{lemma}

\begin{proof} Assume that $\F= O^p(\F)$ and let    $Q= Q_1 > Q_2> \dots >Q_\ell=1$ be an $\Aut_\F(Q)$-chief series in $Q$. Then Lemmas~\ref{G action Q} and \ref{O^p} imply that $Q/Q_2 \cong V_0$ as a $\GF(p)G$-module.

 By hypothesis $Q$ is non-abelian. We intend to show that this cannot be the case. Choose $N \le Q$ maximal in $Q$ such that $N$ is $\Aut_\F(Q)$-invariant and $Q/N$ is non-abelian. Since $\Aut_\F(Q)$ acts on $Q/N$, it is sufficient to show that $Q/N$ cannot admit $\Aut_\F(Q)$.  Thus we work with the quotient $Q/N$ and to make the notation lighter we assume that $N=1$.  Set $M= Q'$. Then, the maximal choice of   $N$ implies that $M$  is the smallest non-trivial  $\Aut_\F(Q)$-invariant subgroup of $Q$. In particular $M$ is elementary abelian,  $M\le Z(Q)$ and by Lemma~\ref{G action Q} either $M$ has order $p$ and is centralized by $G$ or $M \cong V_0$ as a $\GF(p)G$-module.
Since $Q$ is now a quotient of $\gamma_1(S)$, we need to argue that $|Q| \ge p^{p+1}$.
  Note that $[Q \colon Z(Q)] \geq [Q \colon Q_2] = p^{p-2}$ since $Q$ is non-abelian. Also, $|Z(Q)|>p$ because $Q$ is the $2$-step centralizer and so Lemma~\ref{G action Q} implies $|Z(Q)| \geq p^{p-2}$. Hence, as $p \ge 5$, $|Q|\geq p^{2(p-2)}\geq p^{p+1}$.

Set $\ov Q= Q/\agemO^1(Q)$. As $|Q| \ge p^{p+1}$,  Lemmas~\ref{lem:regular} (iv) and \ref{mc-facts} (iii) yield $|\ov Q|=|Q/\agemO^1(Q)|= |\Omega_1(Q)|=p^{p-1}$. Lemma~\ref{sec-abelian} implies that $\ov Q$ and $\Omega_1(Q)$ are elementary abelian.

Define  $$\kappa: \ov Q \times \ov Q \rightarrow M$$ by $(x\agemO^1(Q), y\agemO^1(Q))\kappa=[x,y]
.$
Since $M$ is elementary abelian and central in $Q$, we have $[x^p,y]=[x,y]^p=1$ for all $x,y \in Q$. In particular, $\agemO^1(Q) \le Z(Q)$ and $\kappa$ is a well-defined,  surjective, $G$-invariant, alternating, bilinear map. Therefore there  is a unique $\GF(p)G$-module homomorphism from $\Lambda^2(\ov Q)$ onto $M$.
Notice that $M$ either has order $p$ or $p^{p-2}$.
Suppose that $G \cong \Sym(p)$.  In this case, $\ov Q/C_{\ov Q}(G')$ is the unique $\GF(p)\Alt(p)$-module of dimension $p-2$ and $\mathrm H^1(\Alt(p), \ov Q/C_{\ov Q}(G'))$ has dimension $1$ by Lemma~\ref{H1dim}. Hence  $\ov Q/C_{\ov Q}(G')$ is uniquely determined as the submodule of dimension $p-1$ of the natural permutation module for $\Alt(p)$ on $p$ points. However, this means that $\Lambda^2(\ov Q)$ is the module described in Lemma~\ref{Symmetric(p)Module} and this   has no quotients isomorphic to $  M$.

Suppose that $G = \PGL_2(p)$.  Then $\ov Q$ is an indecomposable $\GF(p)\PSL_2(p)$-module with socle the trivial $1$-dimensional module and quotient of dimension $p-2$.    By Lemma~\ref{tensor2}, there are unique irreducible quotients of $\Lambda^2(\ov Q)$ of dimensions $1$ and of dimension $p-2$ only if the same is true for $\Lambda^2(\ov Q/C_{\ov Q}(G))$. Since $|\ov Q/C_{\ov Q}(G)|=p^{p-2}$ and $p-2$ is odd, $\ov Q/C_{\ov Q}(G)$ cannot support an alternating bilinear form and thus $|M |=p^{p-2}$.

By Lemma~\ref{tau action}, $\tau$ acts fixed-point-freely on $M$ and on $\ov Q/C_{\ov Q}(G)$. Since $\agemO^1(Q) \le Z(Q)$ and $\agemO^1(Q)$ has index $p$ in the preimage of $C_{\ov Q}(G)$, we get that  the preimage of $C_{\ov Q}(G)$ is abelian. Let $Q^*$ represent the quotient of $\ov Q$ by $C_{\ov Q}(G)$ and we consider the  map from $\kappa^*:\Lambda^2(Q^*)\rightarrow M$.  Let $t_1, \dots, t_{p-2}$ be eigenvectors for the action of $\tau$ on $Q^*$ where $t_j \in \gamma_j(S)\setminus \gamma_{j+1}(S)$.  Since the action of $G$ on $M$ and on $Q^*$ is isomorphic to the action of $G$ on $V_0$, Lemma~\ref{tau action} implies $t_i\tau = t_i^{r^i}$ for $1\le i\le p-2$.  It follows that, as $r^{p-1}=1$, $$[t_1,t_{p-2}]\tau = [t_1^r,t_{p-2}^{r^{(p-2)}}]=[t_1,t_{p-2}]^{(rr^{p-2})}= [t_1,t_{p-2}].$$ Since $C_{M}(\tau)=1$, we deduce that $[t_1,t_{p-2}]=1$.
Thus $\kappa^*(t_1\wedge t_{p-2})= [t_1,t_{p-2}]=1$ and this contradicts Lemma~\ref{non-zero}.  This contradiction proves that $Q$ is abelian. However, $Q$ is not abelian by hypothesis and so we deduce that $\F\ne O^p(\F)$ as claimed.
\end{proof}

\begin{lemma}\label{Orders} \blue{Assume that Hypothesis~\ref{hyp last} holds.} Then
\begin{enumerate}
\item  $\gamma_2(S)$ is abelian and   is  $\Aut_\F(Q)$-invariant;
\item $Z(Q) = \agemO^1(Q)$, $|Q:Z(Q)| =p^{p-1}$ and $Q$ has nilpotency class $2$;
\item $P$ is abelian and  $[Q,Q] = V_0$;
\item $|S| = p^{j(p-1)+1}$ for some $j \ge 2$ and \blue{$S$ has sectional rank $p-1$}; and
\item $\mathcal P(\F)= P^\F$.
\end{enumerate}
\end{lemma}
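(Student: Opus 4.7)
First I would combine Lemma~\ref{F not OpF} with Lemma~\ref{O^p}: since $\F\ne O^p(\F)$, the top $\Aut_\F(Q)$-chief factor of $Q$ has order $p$ and equals $Q/\gamma_2(S)$, which establishes the $\Aut_\F(Q)$-invariance of $\gamma_2(S)$ (half of (i)). Substituting into Lemma~\ref{G action Q} pins down the whole chief series of $Q$: reading top-down the factor orders alternate $p,p^{p-2},p,p^{p-2},\ldots$, consecutive pairs of factors fit into copies of $V$, and $Q$ acts trivially on each chief factor. Applying Lemma~\ref{sec-abelian} to the top $p^{p-1}$-section gives $Q/Q_3$ elementary abelian and $Q$-centralised, so $Q'\le Q_3$.

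Next I would rule out $P$ extraspecial, which is the core of (iii) and (iv). Assume the contrary. Then Lemma~\ref{extraspecialcase} gives $O_p(\F)=Z(S)$, $V=V_0$ of order $p^{p-1}$ with central submodule $Z(S)$ and irreducible quotient $V/Z(S)$ of order $p^{p-2}$, and $|S|=p^{j(p-1)+2}$. Pass to $\mathcal G=\F/Z(S)$, saturated on $S/Z(S)$ by Lemma~\ref{F/Q}. I would check that Hypothesis~\ref{hyp last} is inherited: $S/Z(S)$ is non-exceptional of maximal class by Lemma~\ref{mc-factsb}(vi), the image of $P$ is an abelian $\mathcal G$-pearl, and $Q/Z(S)$ is non-abelian since the $G$-equivariant alternating commutator pairing $Q/V\times Q/V\to V$ lands in $V_0$ and contains the irreducible $(p-2)$-dimensional composition factor, forcing $|[Q,Q]|\ge p^{p-2}$, so $[Q,Q]\not\le Z(S)$. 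Induction on $|S|$ applied to $\mathcal G$ via the present lemma then gives $[Q/Z(S),Q/Z(S)]$ equal to the image of the proper maximal submodule; lifting yields $[Q,Q]\cdot Z(S)=V$, but the containment $[Q,Q]\le Q_3$ from paragraph~1 together with the explicit location of $Z(S)$ as the unique central bottom line of $V$ in the chief series contradicts the required surjection $[Q,Q]\cdot Z(S)/Z(S)\cong V/Z(S)$. Hence $P$ is abelian. Lemma~\ref{app COS2} then gives $|V_0|=p^{p-2}$, and the commutator pairing $Q/V\times Q/V\to V$ is $G$-equivariant alternating with image contained in $V_0$ (by $Q$-centrality of the chief factors) and nontrivial (as $Q$ is non-abelian); irreducibility of $V_0$ forces $[Q,Q]=V_0$. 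Counting factors gives $|Q|=p^{j(p-1)}$ and hence $|S|=p^{j(p-1)+1}$, with $j\ge 2$ from Lemma~\ref{V =Omega}.

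For (ii), the bound $[Q,\Omega_k(Q)]\le\Omega_{k-1}(Q)$ of Lemma~\ref{G action Q}, combined with $[Q,Q]=V_0\le V=\Omega_1(Q)\le Z(Q)$, gives $Q$ nilpotency class $2$. Since $Q$ is regular by Lemma~\ref{mc-facts}(ii) with $Q'$ of exponent $p$, the $p$-th power map is a homomorphism and $[x^p,y]=[x,y]^p=1$ yields $\agemO^1(Q)\le Z(Q)$; Lemma~\ref{lem:regular}(iv) gives $|\agemO^1(Q)|=|Q|/p^{p-1}$, matching the value of $|Z(Q)|$ read from the chief series, so $Z(Q)=\agemO^1(Q)$ and $|Q:Z(Q)|=p^{p-1}$. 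The abelianness of $\gamma_2(S)$ in (i) then follows since $V\le\gamma_2(S)\le Q$ with $V\le Z(\gamma_2(S))$ gives $\gamma_2(S)$ class at most $2$, and the induced $G$-equivariant alternating commutator pairing $\gamma_2(S)/V\times\gamma_2(S)/V\to V$ is forced to vanish by a Clebsch--Gordan analysis (Proposition~\ref{Clebsch Gordan}) of the exterior squares of the irreducible $\GF(p)G$-factors appearing in $\gamma_2(S)/V$. For (v), $P$ is abelian so $\mathcal P(\F)=\mathcal P_a(\F)$ by Lemma~\ref{lem:mu image}(vi); any $P^*\in\mathcal P_a(\F)$ is $\langle x\rangle Z(S)$ with $x\in S\setminus C_S(Z_2(S))$ of order $p$ by Lemma~\ref{pearls2}, and the faithful transitive action of $\Aut_\F(S)$ on $(S/\gamma_1(S))^\times$ (Lemma~\ref{lem:mu image}(i)), combined with $\Inn(S)$, brings $P^*$ into the $\Aut_\F(S)$-orbit of $P$.

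The main obstacle is the $P$ extraspecial elimination in the second paragraph: the precise tracking of the location of $[Q,Q]$ inside the chief series against the lifted commutator identity from $\F/Z(S)$ is delicate, and some of the finer module-theoretic structure of $V$ as a $\GF(p)G$-module (in particular whether the extension of $Z(S)$ by $V/Z(S)$ splits for $G\cong \Sym(p)$ or $\PGL_2(p)$) may need to be invoked to close the argument cleanly.
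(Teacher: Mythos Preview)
Your proposal has a genuine gap precisely where you flag it: the elimination of the extraspecial case. The commutator pairing $Q/V\times Q/V\to V$ you invoke requires $[Q,Q]\le V$, which you have not established (you only have $Q'\le Q_3$ for the third term of the chief series). Even granting $[Q,Q]\le V$, when $P$ is extraspecial the unique proper $G$-submodule of $V$ is $Z(S)$ of order $p$, so the image of the pairing is either $Z(S)$ or all of $V$; there is no submodule of order $p^{p-2}$ sitting inside $V$, and nothing you have written forces the image to be large. If the image is $Z(S)$ then $Q/Z(S)$ could well be abelian and your induction collapses. The inductive appeal to ``the present lemma'' is in any case circular without an explicit induction on $|S|$ set up in advance. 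Your Clebsch--Gordan argument for the abelianness of $\gamma_2(S)$ has the same defect (the pairing $\gamma_2(S)/V\times\gamma_2(S)/V\to V$ is not yet known to be well-defined) and the further difficulty that Proposition~\ref{Clebsch Gordan} is an $\SL_2(p)$ result, whereas $G\cong\Sym(p)$ or $\PGL_2(p)$.

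The paper sidesteps all of this by proving (i) first, via a one-line descent: Lemmas~\ref{F not OpF} and~\ref{O^p} give $\hyp(\F)=P\gamma_2(S)$ with $\gamma_1(\hyp(\F))=\gamma_2(S)$, and $O^p(\F)$ satisfies $O^p(O^p(\F))=O^p(\F)$; if $\gamma_2(S)$ were non-abelian then $O^p(\F)$ would itself satisfy Hypothesis~\ref{hyp last}, and Lemma~\ref{F not OpF} applied to $O^p(\F)$ would contradict $O^p(O^p(\F))=O^p(\F)$. With $\gamma_2(S)$ abelian in hand, (ii) is immediate: any $t\in Q\setminus\gamma_2(S)$ has $t^p$ centralized by both $t$ and $\gamma_2(S)$, so $\agemO^1(Q)\le Z(Q)$, and index-counting via Lemma~\ref{G action Q} gives equality. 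For (iii) one now has a genuine pairing $\kappa^*\colon\Lambda^2(Q/\agemO^1(Q))\to V$; since $\gamma_2(S)$ is abelian, $\Lambda^2(\gamma_2(S)/\agemO^1(Q))\subseteq\ker\kappa^*$, and the quotient $\Lambda^2(Q/\agemO^1(Q))/\Lambda^2(\gamma_2(S)/\agemO^1(Q))$ is isomorphic to $\gamma_2(S)/\agemO^1(Q)\cong V_0$ (codimension-one exterior-square quotient). Hence the image of $\kappa^*$ is a $G$-submodule of $V$ isomorphic to $V_0$, of order $p^{p-2}$; if $P$ were extraspecial the only proper $G$-submodule of $V$ would be $Z(S)$ of order $p$, a contradiction. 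This is the clean route you were missing: prove $\gamma_2(S)$ abelian \emph{before} attacking the commutator structure of $Q$, not after.

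For (v) the paper also argues differently: since the top chief factor $Q/\gamma_2(S)$ is centralized by $\Aut_\F(S)$, the only $\Aut_\F(S)$-invariant maximal subgroups of $S$ are $\gamma_1(S)$ and $P\gamma_2(S)$, so every $\F$-pearl lies in $P\gamma_2(S)$ and is therefore $S$-conjugate to $P$. Your transitivity argument on $(S/\gamma_1(S))^\times$ only aligns the cosets $P^*\gamma_1(S)$, not the maximal subgroups $P^*\gamma_2(S)$, so it does not quite finish.
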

\begin{proof}  By Lemmas~\ref{O^p} and \ref{F not OpF} we have $\F \ne O^p(\F)$, $\hyp(\F) = P\gamma_2(S)$ and  $\gamma_1(\hyp(\F))  =\gamma_2(S)$. As $O^p(O^p(\F))=O^p(\F)$ and $O^p(\F)$ satisfies Hypothesis \ref{hyp last} except $\gamma_1(\hyp(\F))$ being abelian, Lemma~\ref{F not OpF} implies that $\gamma_2(S)$ is abelian. This is (i).

 Set $X=\langle r^p \mid r\in Q \setminus \gamma_2(S)\rangle$. Let $t\in Q\setminus \gamma_2(S)$. Then $t^p$ is centralized by $t$ and $\gamma_2(S)$. Hence $t^p \in Z(Q)$ and so $X \le Z(Q)$ is a normal subgroup of $S$ and $t \in \Omega_1(Q/X)$.  Since the preimage of $ \Omega_1(Q/X)$ is normal in $S$ and $S$ has maximal class we deduce that $Q/X$ has exponent $p$. Hence $\agemO^1(Q) \le X \le  \agemO^1(Q) $ which means that $X= \agemO^1(Q)$. It follows that $Z(Q) \ge \agemO^1(Q)$. Since $|Q/\agemO^1(Q)|=|\Omega_1(Q)| = p^{p-1}$ by Lemmas~\ref{lem:regular} (iii) and \ref{V =Omega} we can use Lemma~\ref{G action Q} to conclude $Z(Q)= \agemO^1(Q)$ as surely $Z(Q)\ne \gamma_2(S)$.
Also,  $Q/\agemO^1(Q)$ is elementary abelian by Lemma~\ref{sec-abelian}. Hence $Q' \le Z(Q)$ and $Q$ has nilpotency class $2$. This proves (ii).

Let $x,y \in Q$, then, by (ii),  $x^p \in Z(Q)$ and so $1=[x^p,y]=[x,y]^p$. Hence $Q' \le \Omega_1(Q)=V$.
Now the commutator map $\kappa: Q/X \times Q/X \rightarrow V$ again determines a $\GF(p)G$-module homomorphisms $\kappa^*:\Lambda^2(Q/X) \rightarrow V$. Now $Q/X$ has $\gamma_2(S)/X \cong V_0$ as its unique submodule and we know that $\Lambda^2(\gamma_2(S)/X)$ is in the kernel of $\kappa^*$. We have $\Lambda^2(Q/X)/\Lambda^2(\gamma_2(S)/X) \cong \gamma_2(X)\cong V_0$ as $\GF(p)G$-modules. Hence the image of $\kappa^*$ in $V$ is isomorphic to $V_0$. If $P$ is extraspecial, then $Z(S)$ is the unique proper subgroup of $V$ which is $G$-invariant. Hence $P$ is abelian and $Q'=V_0$. Thus (iii) holds.

 As for part (iv), if  $Q= Q_1 > Q_2> \dots > Q_{\ell -1} >Q_\ell=1$ is an $\Aut_\F(Q)$-chief series in $Q$, then, by part (i), $Q_2=\gamma_2(S)$ and, by part (iii), $Q_{\ell -1} = Q'$ has order $p^{p-2}$.  Hence Lemma \ref{G action Q}  implies that $|Q|=p^{j(p-1)}$ for some $j\geq 1$. Thus Lemma \ref{V =Omega} yields  $|S| =p^{j(p-1) +1}$ for some $j\geq 2$. \blue{In particular $|S| \geq p^{2p}$ and \cite[Theorem A]{pearls} implies that $S$ has sectional rank $p-1$.}

Note that $P\gamma_2(S)$ is $\Aut_\F(S)$-invariant and $\Aut_\F(S)$ centralizes $\gamma_1(S)/\gamma_2(S)$. Hence $P\gamma_2(S)$ and $\gamma_1(S)$ are the only $\Aut_\F(S)$-invariant maximal subgroups of $S$. Therefore every $\F$-pearl is contained in $P\gamma_2(S)$ and from this it follows that $\mathcal P(\F)= \{P^x\mid x \in S\}= P^\F$. Hence (v) holds.
\end{proof}

\section{The proofs of Theorems~A and ~C}\label{sec:proofs}

In this short section we prove Theorems~\ref{compound} and ~\ref{non exc} as well as Corollary~\ref{cor: no pearls}. We begin with Theorem~\ref{non exc}.

\begin{customthm}{C}\label{non exc}
Suppose that $p$ is an odd prime, $S$ is a maximal   class $p$-group of order at least $p^4 $ and $\F$ is a saturated fusion system on $S$. Assume that $S$ is not exceptional,   $\gamma_1(S)$ is not abelian and $\F \ne N_\F(\gamma_1(S))$. Then one of the following holds:
\begin{enumerate}
\item $\E_\F=\mathcal P_a(\F)$,  $|S: \hyp(\F)|\le p$ with $|S: \hyp(\F)|= p$  if and only if $|S|=p^{j(p-1)+1}$  for some $j \ge 2$.   Furthermore,  \blue{either $O^p(\F)$ is simple and exotic or $p=3$ and   $O^3(\F)$ is realized by $\PSL_3(q)$ for suitable prime powers $q$.}
\item $p \ge 5$, $\E_\F=    \mathcal P_e(\F)$,     $O_p(\F)=Z(S)$, $|S: \hyp(\F)|\le p$ with $|S: \hyp(\F)|= p$  if and only if $|S|=p^{j(p-1)+2}$  for some $j \ge 2$.  Furthermore,    $O^p(\F/Z(S))$ is simple  and exotic.
\item $p \ge 5$, $\E_\F= \mathcal P_a(\F) \cup \{\gamma_1(S)\}$,  $O_p(\F)=1$, $\F \ne O^p(\F)$ and
\begin{enumerate}
\item $ \mathcal P_a(\F)$ is a single $\F$-class, $|S|=p^{j(p-1)+1}$ for some $j \ge 2$ and $S$ has sectional rank $p-1$;
\item $\Out_\F(\gamma_1(S)) \cong \Sym(p)$ or $\PGL_2(p)$;
\item$Z(\gamma_1(S))=\agemO^1(\gamma_1(S))$ has index $p^{p-1}$ in $\gamma_1(S)$, $\gamma_1(S)'< \Omega_1(\gamma_1(S))$ has order $p^{p-2}$ and  $\gamma_2(S)$ is abelian  but not elementary abelian;
\item  every composition factor of $\Aut_\F(\gamma_1(S))$ on $\gamma_1(S)$ has order $p$ or $p^{p-2}$ and the composition factors of order $p$ are centralized by the automorphism group $\Aut_\F(\gamma_1(S))$;
\item for $P \in \mathcal P_a(\F)$, $\hyp(\F)= P\gamma_2(S)$,   $O^p(\F)$ is a saturated fusion system on $P\gamma_2(S)$,  and $\Aut_{O^p(\F)}(\gamma_2(S)) \cong \Sym(p)$ or $\PGL_2(p)$.
 \end{enumerate}
\end{enumerate}
Furthermore, in all cases  $\Out_\F(S)$ is a Hall $p'$-subgroup of $\Out(S)$ and is cyclic of order $p-1$ and, if  $| S |  = p^n$, and $P\in \mathcal P(\F)$, then either $\mathcal P(\F)=P^S$ or  $\mathcal E_\F=\mathcal P(\F)$ and $n  \equiv \eps \pmod { p-1 }$
where $\eps = 0$ if $P\in \mathcal P_a(\F)$ and  $\eps = 1$ if $P\in \mathcal P_e(\F)$.\end{customthm}

\begin{proof} Suppose that $\F$ is a saturated fusion system on $S$ with $|S|\ge p^4$. In addition, suppose $S$  has maximal nilpotency class,  is not exceptional and has $Q=\gamma_1(S)$ non-abelian. Assume that $\F \ne N_\F(Q)$. By  Theorem~\ref{MT1},  $\E_\F \subseteq \mathcal P(\F)\cup \{Q\}$ and so $\Pp(\F)$ is  non-empty.

\blue{Assume that $p=3$. Then the saturated fusion systems are presented in Appendix~\ref{AppB}. Since we require $\gamma_1(S)$ to be non-abelian, the discussion after Theorem~\ref{thm:black} says we only need to inspect Table~\ref{this one} for $S= \mathrm B(2\ell;1,0,2), \mathrm B(2\ell, 1,0,0)$, $\ell \ge 3$ and $\B(2k+1,1,0,0)$ with $k \ge 2$. This shows that $\mathcal E_\F=\mathcal P_a(\F)$, $O_p(\F)=1$, and $|S:\hyp(\F)|=3$ if and only if $|S|=3^{2k+1}$. Furthermore, either $S$ is one of  $\mathrm B(2\ell;1,0,2)$ or $\mathrm B(2\ell, 1,0,0)$ with $\ell \ge 3$ and $\F$ is simple and exotic or $S= \B(2k+1,1,0,0)$ and $O^3(\F)$ is realised by $\PSL_3(q)$ for suitable $q$. Hence (i) holds when there are no saturated fusion systems $\F$ with extraspecial $\F$-pearls or with $\gamma_1(S)\in \E_\F$.

Assume from now on that $p\ge 5$.} By Lemma~\ref{lem:mu image} (vi), either $\mathcal P(\F)= \mathcal P_a(\F)$ or $\mathcal P(\F)=\mathcal P_e(\F)$. Furthermore, \blue{if $\E_\F =\mathcal P_e(\F) \cup \{Q\}$, then Hypothesis~\ref{hyp last} holds and this contradicts Lemma~\ref{Orders} (iii).} Thus $\E_\F \ne\mathcal P_e(\F) \cup \{Q\}$.  By Lemma~\ref{lem:mu image}, $\Out_\F(S)$ is a Hall $p'$-subgroup of $\Out(S)$ and   is cyclic of order $p-1$. This, together with \cite[Theorem 3.15]{pearls} establishes the chaser to the theorem.

Suppose that $\E_\F= \mathcal P_a(\F)$. Then Lemma~\ref{pearls5} gives (i).

If $\E_\F= \mathcal P_e(\F)$, then $O_p(\F)=Z(S)$. Since $P\le \hyp(\F)$ and $\hyp(\F)$ is normal in $S$, $|S:\hyp(\F)|\le p$. Furthermore, $|S: \hyp(\F)|= p$  if and only if $s_1\gamma_2(S)$ is centralized by $\Aut_\F(S)$ which is if and only if $n-2\equiv 0 \pmod {p-1}$ by  Lemma~\ref{tau action} (ii). Hence $O^p(\F) \subset \F$ if and only if $|S|=p^{j(p-1)+2}$ with $j \ge 1$.  
 In addition, $P/Z(S)$ is an abelian $\F/Z(S)$-pearl. In the case that $Q/Z(S)$ is non-abelian,   $\F/Z(S)$ satisfies (i) with $|S/Z(S)|=p^{n-1}\ge p^4$. So, in particular, $O^p(\F/Z(S))$ is simple and exotic and $j \ge 2$ when $O^p(\F)\ne \F$.
   \blue{On the other hand, if $Q/Z(S)$ is abelian, then applying \cite[Theorem 2.8 (a)(i) and a(iv)]{p.index}   delivers $O^p(\F/Z(S))$ is simple and exotic. Suppose that $O^p(\F) \subset \F$ and $j=1$. Then $|S|= p^{p+1}$. We have $Q'= Z(S)$. In addition, $O^p(\F)$ has extraspecial pearls and $|\hyp(\F)|=p^{p}$ and so \cite[Theorem A]{pearls} applied $O^p(\F)$ yields $\gamma_1(\hyp(\F))=\gamma_2(S)$ is elementary abelian.
   Let $\tau\in \Out_\F(S)$ have order $p-1$. Then $\tau$ centralizes $s_1\gamma_2(S)$ and  $Z(S)$.   
   Choose $k$ maximal so that $[s_1,s_k]\ne 1$. Then $[s_1,s_k]\in Z(S)$ and
   $$[s_1,s_k]=[s_1,s_k]\tau=[s_1\tau,s_k\tau].$$
Since $1<k< n-1=p$ and $\tau$ has order $p-1$, $s_k\tau= s_k^bg_{k+1}$ for some $b \in \GF(p)^\times \setminus\{1\}$ and $g_{k+1}\in \gamma_{k+1}(S)$ by Lemma~\ref{centralizer auto}. We also have $s_1\tau= s_1 g_2$ for some $g_2\in \gamma_2(S)$.
Since $\gamma_2(S)$ is abelian, the maximal choice of $k$ gives $$[s_1,s_k]=[s_1\tau,s_k\tau]=[s_1g_2,s_k^bg_{k+1}]=[s_1,s_k]^b,$$
a contradiction.}

Suppose that $\E_\F= \mathcal P_a(\F)\cup\{Q\}$.   Then Hypothesis~\ref{hyp last} holds.  That $\F \ne O^p(\F)$ is just Lemma~\ref{F not OpF}. Now part (iii)(a) is Lemma~\ref{Orders}(iv) and (v), part (iii)(b) is Lemma~\ref{app COS2}, part (iii)(c) is Lemma~\ref{Orders}(i), (ii), (iii) and (iv), part (iii)(d) is Lemma~\ref{G action Q}, finally part (iii)(e) follows from Lemmas~\ref{O^p} and \ref{Orders}(i) as clearly  $\Aut_{O^p(\F)}(\gamma_2(S)) \cong \Sym(p)$ or $\PGL_2(p)$ by (iii)(b).
\end{proof}

\begin{customthm}{A}  Suppose that $\F$ is a reduced saturated fusion system on a $p$-group $S$ of maximal class \blue{of order at least $p^4$.}  Then  one of the following statements  holds.
\begin{enumerate}
\item $\gamma_1(S)$ is non-abelian,   $S$ is not exceptional,  $\mathcal \E_\F=\mathcal P_a(\F)$, and $\F$ is simple and exotic.
\item $\gamma_1(S)$ is non-abelian, $S$ is exceptional and either  \begin{enumerate}\item
$p \ge 5$ and $\F=\F_S(\G_2(p))$;
\item $p=5$, $S$ is isomorphic to a Sylow $5$-subgroup of $\G_2(5)$ and $\F=\F_S(G)$ where $G$ is one of the sporadic simple groups $\Ly, \HN$ or $\B$;
\item $p=7$,  $S$ is isomorphic to a Sylow $7$-subgroup of $\G_2(7)$ and either $\F$ is exotic (20 examples) or $\F= \F_S(\Mo)$ where $\Mo$ denotes the monster; or
\item $p \ge 11$, $S$ is uniquely determined of order $p^{p-1}$, $\mathcal P(\F)= \mathcal P_a(\F)\not=\emptyset$ and, if $\gamma_1(S)$ is $\F$-essential, then $\Out_\F(S) \cong \GF(p)^\times \times \GF(p)^\times$, $O^{p'}(\Out_\F(\gamma_2(S))) \cong \SL_2(p)$ and $\gamma_1(S)/Z(\gamma_1(S))$ is the  $(p-3)$-dimensional irreducible $\GF(p)\SL_2(p)$-module.
\end{enumerate}
\item $\gamma_1(S)$ is abelian and $\F$ is described by Theorem~\ref{TheoremAbelian}. \end{enumerate}
\end{customthm}

\begin{proof} Suppose that $\F$ is reduced ($O_p(\F)=1$ and $\F=O^p(\F)=O^{p'}(\F)$). In addition we may assume that $\gamma_1(S)$ is non-abelian.   If $S$ is not exceptional, then, as $\F$ is reduced,  Theorem~\ref{non exc} (ii) and (iii) cannot hold as in the first case $O_p(\F) \ne 1$ and in the second case $\F \ne O^p(\F)$. Hence Theorem~\ref{non exc} (ii) holds and in particular $\E_\F$ consists of abelian $\F$-pearls.   This is point (i) of the theorem.

Suppose that $S$ is exceptional.  Then Theorem~\ref{Raul app} applies. In particular,  this immediately gives parts (ii)(a) and  (ii)(d) of the theorem, completing the proof for $p\geq 11$. For $p=5$ and $\F\neq \F_S(\mathrm G_2(5))$, Theorem~\ref{Raul app} (i)(a)($\alpha$), (i)(a)($\beta$) and (ii) cannot hold, as $O_5(\F)\ne 1$ in these cases.  Hence Theorem~\ref{Raul app} (i)(a)($\delta$) holds. We deduce that $\F$ is not realized by $\Aut(\HN)$ and this gives point (ii)(b) of the theorem.

Finally, when $p=7$ and $S $ is isomorphic to a Sylow $7$-subgroup of $\mathrm G_2(7)$, then we use \cite[Table 5.1]{G2p} to determine how many of the examples are reduced.  There are 20 exotic examples all appearing as subsystems of $\F_S(\mathrm M)$. This is point (ii)(c) of the theorem and completes the proof.
\end{proof}

\begin{customcor}{\ref{cor: no pearls}}Let $p$ be a prime,  $S$ be a $p$-group of maximal   class and  let $\F$ be a  saturated fusion system on $S$ with $O_p(\F)=1$. If $\mathcal P(\F)$ is empty, then $S$   is isomorphic to a Sylow $p$-subgroup  of $\G_2(p)$ and either
 \begin{enumerate}
 \item $\F= \F_{S}(\G_2(p))$;
 \item  $p=5$ and  $\F=\F_S(G)$ where $G= \Ly, \HN, \Aut(\HN)$ or $\B$;
 \item  $p=7$,  $\F$ is exotic and the $\F$-essential subgroups are $C_S(Z_2(S))$ and $\gamma_1(S)$, with  $\Out_\F(C_S(Z_2(S)))\cong \GL_2(7)$, $\Out_\F(\gamma_1(S)) \cong 3\times 2^. \Sym(7)$,   and $\Out_\F(S) \cong \GF(7)^\times \times  \GF(7)^\times$.
 \end{enumerate} \end{customcor}

\begin{proof}[Proof of Corollary~\ref{cor: no pearls}.] Suppose  $\F$ has no  $\F$-pearls and  $O_p(\F)=1$. Theorem~\ref{non exc} implies that \blue{either $\gamma_1(S)$ is abelian} or $S$ is exceptional. \blue{If $\gamma_1(S)$ is abelian, then $\mathcal P(\F)$ is non-empty by \cite[Lemma 2.3]{p.index}. So assume that $S$ is exceptional.}
 Then examining Theorem~\ref{Raul app} we see that part (i)(a) must hold. In particular, $S$ is isomorphic to a Sylow $p$-subgroup of $\mathrm G_2(p)$. Using  \cite[Theorem 1.1 and Table 5.1]{G2p} yields the result.
\end{proof}

\section{A series of examples with non-abelian $2$-step centralizer}\label{sec:examples}
Let $p$ be an odd prime, then by Dirichlet's Theorem~\cite{Dirichlet} there exists a prime $r$ such that $r \equiv 1 \pmod{p^k}$.   Let $T $ be a Sylow $p$-subgroup of $\GF(r^p)^\times$ and   $ M$ be the monomial subgroup of $\GL_p(r^p)$ which has all matrix entries in $T$.   Then $T$ is a cyclic group  and  $|M|= |T|^pp!$. Notice that $Z(M)\cong T$ and consists of scalar matrices. We denote by $D$  the subgroup of diagonal matrices of $M$. Let $R\in \syl_p(M)$.  We claim $R/Z(M)$ has maximal class. Let $\pi$ be the permutation matrix corresponding to the permutation $(1,2, \dots, p)$: $$\pi =\left( \begin{smallmatrix}
0&1&0&\dots &0\\
0&0&1&\dots&0\\
\vdots&\vdots&\vdots&\vdots&\vdots\\
0&0&0&\dots&1\\
1&0&0&\dots &0\end{smallmatrix}\right). $$ We may assume that $\pi \in R$. Then a typical element of $R$ has the form $$\mathrm{diag}(d_1, \dots,d_p)\pi^j\in D\langle \pi\rangle$$ where $d_i \in T$ and $1\le j \le p$.  Now a   calculation shows that the set of matrices in $R$ which centralize  $\pi$ mod $Z(M)$ is $$C=\{\mathrm{diag}(d, de,de^2,\dots ,de^{p-1})\pi^j\mid d,e\in T, e^p=1, j \in \mathbb Z\} \le \GL_p(r). $$
Because $(C\cap D)/Z(M)=\{\mathrm{diag}(1,e,\dots,e^{p-1})Z(M)\mid e \in T, e^p=1\}$ has order $p$,   Lemma~\ref{p2centralizer} yields $R/Z(M)$ has maximal class. \blue{ Since $\pi \in \SL_p(r)$,  and $$\det(\mathrm{diag}(1,e,\dots,e^{p-1}))= e^{p(p-1)/2}=1$$ we see the image of $C$ in $\PGL_p(r)$ is contained in $\PSL_p(r)$.} Using \cite[Section 4]{AlperinFong}  we record the well-known fact:

\begin{lemma}
If $p$ divides $r-1$, then the Sylow $p$-subgroups of $G=\PGL_p(r)$ and $G=\PSL_p(r)$ have maximal class. Furthermore, in both cases,\blue{we have $\Aut_G(\wt C)\cong \SL_2(p)$ where $\wt C$ is the image of $C$ in $\PGL_2(r)$.}\qed
\end{lemma}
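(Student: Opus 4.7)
The plan is to transfer the maximal class property already established for $R/Z(M)$ to the corresponding Sylow $p$-subgroups of $\PGL_p(r^a)$ and $\PSL_p(r^a)$, and then identify $C$ (modulo scalars) with the image of a Heisenberg subgroup of order $p^3$ in order to deduce the $\SL_2(p)$ normalizer statement.

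First I would verify that $R/Z(M)$ is a Sylow $p$-subgroup of $\PGL_p(r^a)$. By Lifting the Exponent (valid since $p$ is odd and $p\mid r^a-1$), one has $v_p(r^{ai}-1)=v_p(r^a-1)+v_p(i)$, so
\[ v_p(|\GL_p(r^a)|)\;=\;\sum_{i=1}^p v_p(r^{ai}-1)\;=\;p\,v_p(r^a-1)+1, \]
which matches $v_p(|R|)=v_p(p\cdot|T|^p)=p\,v_p(r^a-1)+1$. Hence $R\in\Syl_p(\GL_p(r^a))$. Since $Z(M)$ coincides with the scalar matrices in $R$ and therefore with $R\cap Z(\GL_p(r^a))$, a comparison of $p$-parts shows $R/Z(M)\in\Syl_p(\PGL_p(r^a))$. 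The already proven fact that $R/Z(M)$ has maximal class then settles the $\PGL$ case.

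Next, for $\PSL_p(r^a)$: the quotient $\PGL_p(r^a)/\PSL_p(r^a)$ is cyclic of order $\gcd(p,r^a-1)=p$, so a Sylow $p$-subgroup $P_0$ of $\PSL_p(r^a)$ sits as a subgroup of index $p$ in $R/Z(M)$. The subgroup $D/Z(M)$ is a maximal abelian normal subgroup of $R/Z(M)$ of index $p$, so it coincides with $\gamma_1(R/Z(M))$. A brief check using Lemma~\ref{mc-factsb}(v) shows that $R/Z(M)$ is non-exceptional (when $v_p(r^a-1)=1$ its order equals $p^p$ with $p$ odd, and when $v_p(r^a-1)\geq 2$ its order is at least $p^{2p-1}>p^{p+1}$), so $\gamma_1(R/Z(M))=C_{R/Z(M)}(Z_2(R/Z(M)))$ is the only maximal subgroup excluded by Lemma~\ref{mc-factsb}(i). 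Since $\det\pi=1$ for $p$ odd, the image of $\pi$ lies in $P_0\setminus (D/Z(M))$, so $P_0\neq D/Z(M)$ and Lemma~\ref{mc-factsb}(i) gives that $P_0$ has maximal class.

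For the normalizer statement, I would identify $C$ with $H\cdot Z(M)$, where $H:=\langle\pi,\delta,\zeta I\rangle$, $\delta:=\mathrm{diag}(1,\zeta,\zeta^2,\dots,\zeta^{p-1})$ and $\zeta\in T$ is a fixed primitive $p$-th root of unity. A direct commutator computation gives $\pi\delta\pi^{-1}=\zeta\delta$, hence $[\pi,\delta]=\zeta I\in Z(H)$, so $H$ is extraspecial of order $p^3$ (a Heisenberg group), and the natural module $\GF(r^a)^p$ is its faithful absolutely irreducible representation, since its central character is non-trivial. By Schur's lemma $C_{\GL_p(r^a)}(H)=Z(\GL_p(r^a))$; the classical theory of normalizers of extraspecial $p$-groups in $\GL_p(q)$ (see e.g.~\cite{AlperinFong}) then gives $N_{\GL_p(r^a)}(H)/(H\cdot Z(\GL_p(r^a)))\cong\SL_2(p)$, acting on $H/Z(H)$ via the symplectic form induced by the commutator pairing. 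Passing to $G=\PGL_p(r^a)$ and then restricting to $G=\PSL_p(r^a)$ yields $N_G(C)/C\cong\SL_2(p)$ in both cases.

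The hard part will be the final step: one must check carefully that the centralizer in $G$ of the image of $H$ equals the image of $H$ itself (not strictly larger), so that quotienting by $C$ rather than by a potentially bigger centralizer still produces exactly $\SL_2(p)$, and that the $\SL_2(p)$ obtained in $\GL_p(r^a)$ survives intact in both $\PGL_p(r^a)$ and $\PSL_p(r^a)$; irreducibility of the natural $H$-module together with Schur's lemma, plus the observation that the kernel of the projection from $\GL_p(r^a)$ onto $G$ is central and hence already absorbed into $C$, should take care of both points.
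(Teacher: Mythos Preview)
The paper does not actually prove this lemma: it is recorded with a bare \qed after the sentence ``Using \cite[Section~4]{AlperinFong} we record the well-known fact''. Your detailed reconstruction is correct and follows exactly the line one would extract from Alperin--Fong: the order count via lifting-the-exponent identifies $R/Z(M)$ as a Sylow $p$-subgroup of $\PGL_p(r^a)$, the $\PSL_p$ Sylow is a maximal subgroup distinct from the abelian $D/Z(M)=\gamma_1(R/Z(M))$ and hence of maximal class by Lemma~\ref{mc-factsb}(i), and the normaliser statement reduces to the classical computation of the normaliser of a Heisenberg group in its $p$-dimensional irreducible representation.

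Two small remarks. First, the edge case $p=3$ with $v_3(r^a-1)=1$ gives $|R/Z(M)|=27$, where Lemma~\ref{mc-factsb} (which assumes order $\ge p^4$) does not literally apply; the conclusion is of course trivial there. Second, your ``hard part'' is cleanly resolved by two observations you essentially already have: since $H$ has exponent $p$ and $Z(M)$ is cyclic with $\Omega_1(Z(M))=Z(H)\le H$, one has $H=\Omega_1(C)$, whence $N_{\GL_p}(C)=N_{\GL_p}(H)$; and for $p\ge 5$ the perfectness of $\SL_2(p)$ forces $\det(N_{\GL_p}(H))=\det(H\cdot Z)=(\GF(r^a)^\times)^p$, so the image of $N_{\GL_p}(H)$ in $\PGL_p$ already lies in $\PSL_p$, giving $N_{\PSL_p}(\bar C)=N_{\PGL_p}(\bar C)$ and the same quotient $\SL_2(p)$ in both cases.
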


Since $p^{k}$ is the highest power of $p$ which  divides $r-1$,  $p^{k+1}$ exactly divides $r^p-1$  (see \cite[Lemmas 4.1(iv) and 4.2(ii)]{MPR}). Let $\sigma:\GF(r^p)\rightarrow \GF(r^p)$ be the field automorphism given by $x \mapsto x^r$ and extend $\sigma$ to the standard Frobenius automorphism of $\GL_p(r^p)$ which acts as $\sigma$ on each matrix entry. We denote this automorphism by $\sigma$ as well. Define  $M^*$ to be the semidirect product of $M$ and $\langle \sigma\rangle$,  identify $M$ and $\langle \sigma\rangle$ with their images in $M^*$ and let $R^*= R\langle \sigma\rangle$. Consider the subgroup $C^*$ of $R^*$ which centralizes $\pi$ mod $Z(M)$.  Since $\pi \in \GL_p(r)$, we see $$C^*=C\langle \sigma\rangle=\{\mathrm{diag}(d, de,de^2,\dots ,de^{p-1})\pi^j\sigma^k\mid d,e\in T, e^p=1, j,k\in \mathbb Z\} $$ and so $|C^*/Z(M)|=p^3$.
Let $R>R_0\ge Z(M) $ be such that $R_0/Z(M)$ is a Sylow $p$-subgroup of $\PSL_p(r^p)$ and put $D_0= D \cap R_0$. Then $R^*>R>R_0$ and $R^*/R_0=\langle R_0\sigma, R_0\mathrm{diag}(d,1,\dots,1)\rangle$ which has order $p|T|$.  Now let $R_1= R_0\langle \mathrm{diag}(d,1,\dots, 1)\sigma\rangle$.  Then $\pi\in R_1$,  $C^* \cap R_1=C$ and so $R_1/Z(M)$ has maximal class by Lemma~\ref{p2centralizer} and $$\gamma_1(R_1/Z(M))= \langle D_0/Z(M), Z(M)\mathrm{diag}(d,1,\dots, 1)\sigma\rangle$$ is not abelian as $\sigma$ does not centralize $D_0/Z(M)$.

\begin{example}\label{ExThmNonExc} \blue{Assume that $p>3$ is a prime.} Put
$X_1= \SL_p(r^p)$, $X = X_1R_1$ and $\ov X= X/Z(X)$. Set $\F=\F_{\ov R_1}(\ov X)$. Then \begin{enumerate}
 \item $\ov{R_1}$ has maximal class and $\gamma_1(\ov {R_1})$ is non-abelian;
  \item $ \ov C$ and $\gamma_1(\ov{R_1})$ are $\F$-essential with $\ov C$ an abelian $\F$-pearl;
   \item $\Aut_\F(\ov C) \cong \SL_2(p)$, $\Out_\F(\gamma_1(\ov{R_1}))\cong\Sym(p)$; and \item   $O_p(\F)=1$. \end{enumerate}
       In particular,   there exist   realizable saturated fusion systems which satisfy Theorem~\ref{non exc} (iv) with $\gamma_1(S)$ non-abelian and $\Out_\F(\gamma_1(S))\cong \Sym(p)$.
        By \cite[Theorem 6.2]{parkersemerarocomputing}, the subfusion systems generated by $N_{\Aut_\F(\ov{R_1})}(\ov C)$ and $\Aut_\F(\ov C)$ is also saturated and this gives an  example of Theorem~\ref{non exc} (i).
        \end{example} 
\appendix
\renewcommand{\thesection}{\Alph{section}}
\section[maximal class $p$-groups with $\gamma_1(S)$ abelian]{Saturated fusion systems on maximal class $p$-groups with $\gamma_1(S)$ abelian}\label{AppA}

In this appendix   we curate a list of the reduced fusion systems on maximal class $p$-groups which have $\gamma_1(S)$ abelian. Thus we present a synopsis of the main results from \cite{p.index,p.index2, p.index3}. For this, we first establish some further notation from \cite{p.index,p.index2, p.index3} which is honed to our special situation.  First there are the sets of $\F$-essential subgroups in the sets  $\mathcal H$ and $\mathcal B$ which are introduced in \cite{p.index}.
 We continue to use the notation introduced in Subsection~\ref{SS3.1} so that
 $S/\gamma_2(S)=\langle x\gamma_2(S), s_1\gamma_2(S)\rangle.$ \blue{We also recall the definition of the set $\mathcal P(\F)=\mathcal P_a(\F)\cup\mathcal P_e(\F)$ of $\F$-pearls from Definition~\ref{pearl:def}.}
 If $\mathcal P(\F)$ is non-empty, then we may additionally assume that $x$ has order $p$. For  $0 \le i \le p-1$, define $$H_i=  \langle xs_1^i\rangle Z(S)\text{ and }B_i= \langle xs_1^i\rangle Z_2(S).$$
Then $\mathcal H= \bigcup_{i=0}^{p-1}H_i^S$ and $\mathcal B=\bigcup_{i=0}^{p-1} B_i^S.$
 In our notation we have \begin{eqnarray*}\mathcal  P_a(\F)&=&\{H\in \mathcal H\mid H \text { is } \F\text{-essential}\}\\\mathcal P_e(\F)&=&\{B\in \mathcal B\mid B \text { is } \F\text{-essential}\}.\end{eqnarray*}
We also let $\mathcal P^i_a(\F) $ consist of  those $\F$-pearls which are $S$-conjugate to $H_i$ and $\mathcal P^i_e(\F)$ contain the $\F$-pearls $S$-conjugate to $B_i$. Finally, for $I \subseteq \mathbb Z/p\mathbb Z$ and $b \in \{a,e\}$ define $$\mathcal P^I_b(\F)= \bigcup _{i\in I}\mathcal P_b^i(\F) \text{ and }\mathcal P_b^*(\F)= \bigcup _{i=1}^{p-1}\mathcal P_b^i(\F).$$

Set $G= \Aut_\F(\gamma_1(S))$.  Then, we define $$\mu_1: N_G(\Aut_S(\gamma_1(S))) \rightarrow \Delta$$ by $$\alpha \mu_1= \beta \mu$$ where $\beta$ in $\Aut_\F(S)$, $\beta|_{\gamma_1(S)}=\alpha$ \blue{ and $\mu$ is as defined at the beginning of Section~\ref{sec: non excep g1}}.  In \cite[page 218]{p.index2} it is explained why $\mu_1$ (denoted $\mu_A$) is well-defined. If $X$ is a finite cyclic group and $n$ divides $|X|$, then $\frac 1 n X$ denotes the unique subgroup of $X$ which has index $n$. Almost all the other notation that we require can be found in the introduction to Section~\ref{sec: non excep g1} one exception being the cyclic subgroups of $\Delta$ defined as  $$\Delta_{k/\ell}=\{(u^\ell,u^k)\mid u \in \mathbb Z/p\mathbb Z^\times\}$$ whenever $k$ and $\ell$ are coprime.

Suppose that $\F$ is a reduced saturated fusion system on $S$ where $S$ has maximal class, $\gamma_1(S)$ is abelian and $|S|=p^n > p^3$.  Since we are primarily focussed on the cases when $\gamma_1(S)$ is $\F$-essential, we can sift through the results in  \cite{p.index2, p.index3}.

By Lemma~\ref{mc-facts} (iv) we know that either $|\Omega_1(\gamma_1(S))| \le p^{p-1}$ or $|S|= p^{p+1}$ and $|\Omega_1(\gamma_1(S))|=p^p$. Furthermore, Lemma~\ref{p2centralizer} implies that $\Aut_S(\gamma_1(S))$ acts on $\Omega_1(\gamma_1(S))$ with a single Jordan block.
By
\cite[Proposition 3.7]{p.index2},  if $\Omega_1(\gamma_1(S))$ has order at most $p^p$, then $\Aut_S(\gamma_1(S))$ operates on $\Omega_1(\gamma_1(S))$ with a single Jordan block and so from the results in \cite{p.index2, p.index3} we just need to collate the ones with $|\Omega_1(\gamma_1(S))| \le p^p$.  If  $\gamma_1(S)> \Omega_1(\gamma_1(S))$, then   $|\Omega_1(\gamma_1(S))|\ne p^p$ and so \cite[Theorem A]{p.index3}  implies
$|\Omega_1(\gamma_1(S))|=p^{p-1}$. In particular, if $\Omega_1(\gamma_1(S))$ is irreducible, then $\gamma_1(S)$ is homocyclic. In Table~\ref{Tab2.1} we present conditions on the structure of $\Out_\F(S)$ which determine the various possibilities for constellations of $\F$-pearls in a reduced fusion system.  This table is transcribed from \cite[Theorem 2.8, Table 2.1]{p.index2}.

\begin{table}[h] \resizebox{\textwidth}{!}{%
\begin{tabular}{|c|cccc|}
\hline
&$(N_G(\Aut_S(\gamma_1(S)))\mu_1$&$G= O^{p'}(G)X$&$|\gamma_1(S)|=p^m $&Pearls\\
\hline
I&$\Delta$&$X=N_G(\Aut_S(\gamma_1(S)))$&$m\equiv 0 \pmod {p-1}$&$\mathcal P_a^0(\F)\cup \mathcal P_e^{*}(\F)$\\
\hline
II&$\Delta$&$X=N_G(\Aut_S(\gamma_1(S)))$&$m\equiv p-2\pmod {p-1}$&$\mathcal P_a^*(\F)\cup \mathcal P_e^{0}(\F)$\\
\hline
III&$\ge \Delta_{-1}$&$X=(\Delta_{-1})\mu_1^{-1}$&$m\equiv p-2\pmod {p-1}$&$\mathcal P_a^I(\F) $, $I \subseteq \mathbb Z/p\mathbb Z$\\
&$\ge \Delta_{-1}$&$X=(\Delta_{-1})\mu_1^{-1}$& \noindent\rule{1.5cm}{0.4pt} &$\mathcal P_a^0(\F) $\\
\hline
IV&$\ge \Delta_0$&$X=(\Delta_{0})\mu_1^{-1}$&$m\equiv 0 \pmod {p-1}$&$\mathcal P_e^I (\F) $, $I \subseteq \mathbb Z/p\mathbb Z$\\
 &$\ge \Delta_0$&$X=(\Delta_{0})\mu_1^{-1}$&\noindent\rule{1.5cm}{0.4pt}&$\mathcal P_e^0(\F) $ \\
 \hline
\end{tabular}}
\caption{Configurations of pearls determined by $(N_G(\Aut_S(\gamma_1(S)))\mu_1$ where $G= \Aut_\F(\gamma_1(S))$ and $X \le N_G(\Aut_S(\gamma_1(S)))$.}\label{Tab2.1}
\end{table}

\begin{theorem}\label{TheoremAbelian} Suppose that $p$ is an odd prime, $S$ has maximal class of order at least $p^4$ and $\gamma_1(S)$ is abelian. If $\F$ is a reduced saturated fusion system on $S$, then $\E_\F \subseteq \{\gamma_1(S)\}\cup \mathcal P(\F)$ and $\mathcal P(\F)$ is non-empty. Furthermore, one of the following holds:
\begin{enumerate}
\item $\E_\F= \mathcal P(\F)$.
\item $|\Omega_1(\gamma_1(S))| < p^{p-1}$, $\gamma_1(S)= \Omega_1(\gamma_1(S))$, the candidates  for $\Aut_\F(\gamma_1(S))$ and the configurations of $\F$-pearls  are listed in the first section of Table~\ref{Tab1}.
\item  $|\Omega_1(\gamma_1(S))|=p^{p}$, $\gamma_1(S)= \Omega_1(\gamma_1(S))$, $|S|=p^{p+1}$ and the possibilities for $\Aut_\F(\gamma_1(S))$   and the configurations of $\F$-pearls  are listed in the second section of Table~\ref{Tab1}.
\item $|\Omega_1(\gamma_1(S))| = p^{p-1}$, $\Omega_1(\gamma_1(S))$ is irreducible as a $\GF(p)\Aut_\F(\gamma_1(S))$-module, $\gamma_1(S)$ is homocyclic of order $p^{a(p-1)}$ for some $a \ge 1$ and  the possibilities for $\Aut_\F(\gamma_1(S))$ and    the configurations of $\F$-pearls  are listed in the third section of Table~\ref{Tab1}.
\item $|\Omega_1(\gamma_1(S))|=p^{p-1}$,  $\Omega_1(\gamma_1(S))$ is indecomposable but not irreducible as a $\GF(p)\Aut_\F(\gamma_1(S))$-module, $\gamma_1(S)$ not necessarily homocyclic  and  the possibilities for $\Aut_\F(\gamma_1(S))$ and    the configurations of $\F$-pearls are listed in the fourth  section of Table~\ref{Tab1}.
\end{enumerate}
\end{theorem}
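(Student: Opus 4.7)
The plan is to leverage the results already established in this paper together with the main theorems of \cite{p.index, p.index2, p.index3}. First I would note that since $\gamma_1(S)$ is abelian, Corollary~\ref{gamma.ab} immediately gives that every $\F$-essential subgroup is either $\gamma_1(S)$ itself or lies in $\mathcal P(\F)$, so $\E_\F \subseteq \{\gamma_1(S)\} \cup \mathcal P(\F)$. Since $\F$ is reduced, $O_p(\F) = 1$, and the ``in particular'' clause of the same corollary then forces $\mathcal P(\F) \neq \emptyset$. This settles the first assertion.

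Next I split on whether $\gamma_1(S)$ is $\F$-essential. If it is not, then $\E_\F = \mathcal P(\F)$ and we are in case (i). So assume $\gamma_1(S)$ is $\F$-essential and set $G = \Aut_\F(\gamma_1(S))$. By Lemmas~\ref{mc-facts}(ii)--(iv) and \ref{p2centralizer}, $\gamma_1(S)$ is regular, $|\Omega_1(\gamma_1(S))| \le p^p$ with equality only if $|S| = p^{p+1}$, and $\Aut_S(\gamma_1(S))$ acts on $\Omega_1(\gamma_1(S))$ with a single non-trivial Jordan block. This is precisely the minimally-active hypothesis used throughout \cite{p.index2, p.index3}, so those results apply directly.

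The natural case split is then according to the size and the $\GF(p)G$-module structure of $\Omega_1(\gamma_1(S))$: whether $|\Omega_1(\gamma_1(S))|$ is strictly less than $p^{p-1}$ (case (ii)), equal to $p^p$ (case (iii)), or equal to $p^{p-1}$, with the latter further subdivided by irreducibility (cases (iv) and (v)). In cases (ii) and (iii), $G$-invariance of $\agemO^1(\gamma_1(S))$ combined with the bound $|\gamma_1(S)| \le p^p$ coming from Lemma~\ref{mc-facts}(iii), (iv) forces $\gamma_1(S) = \Omega_1(\gamma_1(S))$ via Lemma~\ref{lem:regular}(iv), since any non-trivial power map would yield a proper $G$-invariant quotient incompatible with the tabulated possibilities. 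In case (iv), \cite[Theorem A]{p.index3} gives that $\gamma_1(S)$ is homocyclic of order $p^{a(p-1)}$; case (v) is treated by the complementary part of the same theorem, where the failure of semisimplicity permits $\gamma_1(S)$ not to be homocyclic.

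For each of the four sub-structural cases, one then reads off the possibilities for $\Aut_\F(\gamma_1(S))$ and the $\F$-pearl configurations from \cite[Theorem 2.8]{p.index2}, organised via the image $N_G(\Aut_S(\gamma_1(S)))\mu_1 \le \Delta$ as in Table~\ref{Tab2.1}. The entries of Table~\ref{Tab1} are then exactly what one obtains by intersecting these possibilities with the reduced hypothesis, which via Lemmas~\ref{l:focf}(ii) and \ref{oli 1.4} eliminates those candidates for which $\hyp(\F) \ne S$ or $\F \ne O^{p'}(\F)$. The main obstacle is not a new mathematical one but the careful bookkeeping required to collate the tabulated data from \cite{p.index2, p.index3} with the pearl classification of Table~\ref{Tab2.1}; in particular, case (v) allows extra flexibility in how $\gamma_1(S)$ extends $\Omega_1(\gamma_1(S))$, and a few additional reductions at the level of the Frattini quotient $\gamma_1(S)/\Phi(\gamma_1(S))$, combined with the requirement that $\F = O^p(\F)$ forces $[\gamma_1(S),\Aut_\F(\gamma_1(S))] = \gamma_1(S)$, are needed to eliminate the spurious module structures that survive in the general (non-reduced) classification.
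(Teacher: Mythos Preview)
Your approach is correct and matches the paper's: cite Corollary~\ref{gamma.ab} for $\E_\F\subseteq\{\gamma_1(S)\}\cup\mathcal P(\F)$ and $\mathcal P(\F)\ne\emptyset$, then invoke \cite{p.index2,p.index3} for the tabulation, using the reduced hypothesis to prune. The paper's own proof is essentially the same, citing \cite[Corollary 2.10 and Theorem 4.1]{p.index2} and \cite[Theorem A]{p.index3}.

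Where your write-up is vaguer than the paper is in identifying \emph{exactly which} table entries need extra argument beyond direct citation. The paper singles out rows 29, 30, 33, 34 and the pearl-configuration column. For rows 29 and 33 the issue is ruling out configuration~IV: your elimination paragraph lists the constraints $\hyp(\F)=S$ and $\F=O^{p'}(\F)$ but omits the relevant one here, namely that if $\mathcal P(\F)=\mathcal P_e(\F)$ then $Z(S)=O_p(\F)\ne 1$, contradicting reducedness. For rows 30 and 34 the issue is ruling out configuration~III, and here your remark that $\F=O^p(\F)$ forces $[\gamma_1(S),\Aut_\F(\gamma_1(S))]=\gamma_1(S)$ is exactly the right idea; the paper makes this concrete by observing that $\Aut_\F(S)\mu=\Delta_{-1}$ together with Lemma~\ref{action} gives $[\gamma_1(S),\Aut_\F(\gamma_1(S))]=\gamma_2(S)$, whence \cite[Lemma 2.7(b)]{p.index2} yields $\F\ne O^p(\F)$. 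So the only genuine omission is the $O_p(\F)=Z(S)$ observation for the extraspecial-pearls-only case.
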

\begin{proof} This mostly follows from our previous discussion and by combining the results of \cite[Corollary 2.10 and Theorem 4.1]{p.index2} with  \cite[Theorem A]{p.index3}. However,  lines 29, 30 and 33 and 34  in Table~\ref{Tab1} need   further explanation to confirm column 6, where the possibilities for $\F$-pearls are described. So suppose that $\F$ is reduced.  For lines 29 and 33, we note that if IV from Table~\ref{Tab2.1} holds, then $\mathcal P(\F)= \mathcal P_e(\F)$ and $Z(S)= O_p(\F)$ which is impossible. The possibility that III holds when considering lines 30 and 34 leads to $\Aut_\F(S)\mu = \Delta_{-1}$.   Using Lemma~\ref{action} we obtain  $[\gamma_1(S), \Aut_\F(S)] \le \gamma_2(S)$ and so, as $[\gamma_1(S), O^{p'}(\Aut_\F(\gamma_1(S))]= \gamma_2(S)$, we have $[\gamma_1(S), \Aut_\F(\gamma_1(S))] =\gamma_2(S)$. Now applying \cite[Lemma 2.7 (b)]{p.index2}  shows that $\F \ne O^p(\F)$, which contradicts $\F$ being reduced.
    \end{proof}

\renewcommand{\arraystretch}{1.1}
\begin{table}[H]\label{Tab1} \resizebox{\textwidth}{!}{%
\begin{tabular}{|c|lclcc|}
\hline
Row&$p$&$Y=O^{p'}(\Aut_\F(\gamma_1(S)))$&$\Omega_1(\gamma_1(S)) $&$(N_Y(\Aut_\F(S)))\mu$&Pearls\\
\hline
1&$p$&$\SL_2(p)$ &$p^{e+1}\cong \VV_e$&$\{(u^2,u^{e}) \mid u\in \mathbb Z/p\mathbb Z^\times \}$& III or IV\\
&&& $e \le p-4$,  odd&&\\
2&$p$&$\PSL_2(p)$ &$p^{e+1}\cong \VV_e$&$\{(u^2,u^{e}) \mid u\in \mathbb Z/p\mathbb Z^\times\}$& III or IV\\
&&& $e \le p-5$,   even&&\\
3&$p$&$\PSL_2(p)$ &$p^{p-2}\cong \VV_{p-3}$&$\frac 1 2 \Delta _{-1}$& II,  III or IV\\
4&$p$&$\Alt(p)$&$p^{p-2}$&$\frac 1 2 \Delta_{-1}$& II,  III or IV\\
5&$7$&$2^{.}\Alt(7)$&$7^4$&$\Delta_{3/2}$&III or IV\\
6&$11$&$\J_1$& $11^{7}$&$\Delta_3$&III or IV\\
\hline\hline
7&$p$&$\PSL_2(p)$&$p^{p}=\VV_{p-1}$&$\frac 1 2 \Delta_1$&III or IV\\
8&$p$&$\PSL_2(p)$&$p^{p}=\substack{{\:}\\{\;}\\\VV_0\\\VV_{p-3}\\\VV_0\\{\;}}$&$\frac 1 2 \Delta_0$&III\\
9&$p$&$\Alt(p)$&$p^{p}= \substack{{\;}\\1\\{p-2}\\{1}\\{\;}}$&$\frac 1 2 \Delta_0$&III\\
10&$p$&$\Alt(p+1)$&$p^p$&$\frac 1 2 \Delta_0$&III or IV\\
11&$p$&$ Y\le  O^{p'}((p-1)\wr \Sym(p))$& $p^p$&\noindent\rule{1.5cm}{0.4pt} &III or IV \\
&& $Y/O_{p'}(Y)\cong \Alt(p) $&&&\\
12&$p$&$ Y\le  O^{p'}((p-1)\wr \Sym(p))$&$p^p$& \noindent\rule{1.5cm}{0.4pt} &III or IV\\
&&$|Y/O_{p'}(Y)|=p$&&&\\
13&$7$&$\PSU_3(3)$& $7^7$&$\frac 1 2 \Delta_0$&III or IV\\
14&$7$&$\SL_2(8)$& $7^7$&$\frac 1 3 \Delta_1$&III or IV\\
15&$7$&$\Sp_6(2)$& $7^7$&$\Delta_3$&III or IV\\

\hline
\hline
16&$p$&$\SL_2(p)$&$p^{p-1}=\VV_{p-2}$&$\{(u^2, u^{-1}) \mid u\in \mathbb Z/p\mathbb Z^\times \}$&I, III or IV\\
17&$5$&$2^{.}\Alt(6)$& $5^4$&$\Delta_{1/2}$&I, III or IV\\
18&$5$&$4\circ 2^{1+4}.\Alt(6)$& $5^4$& \noindent\rule{1.5cm}{0.4pt}&I, III or IV\\
19&$5$&$ 2^{1+4}_-.\Alt(5)$& $5^4$& \noindent\rule{1.5cm}{0.4pt}&I, III or IV \\
20&$5$&$4\circ 2^{1+4}_-.\Alt(5)$& $5^{4}$& \noindent\rule{1.5cm}{0.4pt}&I, III or IV\\
21&$5$&$ 2^{1+4}_-.5$& $5^{4}$& \noindent\rule{1.5cm}{0.4pt}&I, III or IV\\
22&$7$&$6^{.}\PSL_3(4)$&$7^6$&$\{(u^2, w)\mid u, w \in \mathbb Z/p\mathbb Z\}$&I, III or IV\\
23&$7$&$6^{.}_1\PSU_4(3)$&$7^6$&$\{(u^2, w)\mid u, w \in \mathbb Z/p\mathbb Z\}$&I, III or IV \\
24&$7$&$\PSU_3(3)$& $7^6$&$\frac 1 2 \Delta_1$&I, III or IV\\
25&$11$&$\PSU_5(2)$&$11^{10}$&$\frac 1 2 \Delta_2$&I, III or IV\\
26&$11$&$2^{.}\mathrm M_{12}$& $11^{10}$,  $11^{10}$&$\Delta_{1/2}$,  $\Delta_{7/2}$&I, III or IV\\
27&$11$&$2^{.}\mathrm M_{22}$& $11^{10}$,  $11^{10}$&$\Delta_{1/2}$,  $\Delta_{7/2}$&I, III or IV\\
28&$13$&$\PSU_{3}(4)$&$13^{12}$&$\frac 1 3 \Delta _1$&I, III or IV\\
\hline
\hline
29&$p$&$\Alt(p)$&$p^{p-1}= \substack{p-2 \\1\\{\:}}$&$\frac 1 2 \Delta_{0}$&I or III\\
30&$p$&$\Alt(p)$&$p^{p-1}= \substack{1\\p-2\\{\;}}$&$\frac 1 2 \Delta_{-1}$&I  or IV\\
31&$p$&$\SL_2(p)$&$p^{p-1}=\substack{{\;}\\\VV_f\\\VV_e\\{\;}}$&$\{(u^2, u^{e}) \mid u\in \mathbb Z/p\mathbb Z^\times \}$&I, III or IV\\
&&& $e+f=p-3$,   $e$ odd&&\\
32&$p$&$\PSL_2(p)$&$p^{p-1}=\substack{ {\;}\\\VV_f\\ \VV_e\\ {\;}}$ &$\{(u^2, u^{e}) \mid u\in \mathbb Z/p\mathbb Z^\times \}$&I, III or IV\\
 &&&$e+f=p-3$, $ef\ne 0$, $e$ even&&\\
 33&$p$&$\PSL_2(p)$&$p^{p-1}=\substack{ {\;}\\\VV_{p-3}\\ \VV_0\\ {\;}}= \substack{p-2 \\1\\{\:}}$ &$\frac 1 2 \Delta_{0}$ &I or III\\
  34&$p$&$\PSL_2(p)$&$p^{p-1}=\substack{ {\;}\\\VV_{0}\\ \VV_{p-3}\\ {\;}}= \substack{1 \\p-2\\{\:}}$ &$\frac 1 2 \Delta_{-1}$&I  or IV\\
\hline
\end{tabular} }
\caption{The reduced saturated fusion systems $\F$ on maximal class $p$-groups of order at least $ p^4$, $p$ odd, with an abelian $\F$-essential subgroup of index $p$.  Where expressions are of the form $p-j$ for some natural number $j$, we require $p$ to be large enough to ensure that $p-j \ge 2$.}\label{Tab1}
\end{table}

Finally, we trim \cite[Table 2.2]{p.index2} and provide the list of realizable reduced fusion systems on maximal class $p$-groups $S$ with $\gamma_1(S)$ abelian and $|S| \ge p^4$. In this table $\nu_p(m)$ denotes that exponent of the highest power of $p$ which divides $m$. All the fusion systems not listed in Table~\ref{Tab3} but listed in Table~\ref{Tab1} are exotic.

\begin{table}[H] \resizebox{\textwidth}{!}{%
\begin{tabular}{|llllclccc|}
\hline Line&$G$&$p$&Conditions&Rank$(\gamma_1(S))$&$e$&$|\gamma_1(S)|$&$\Aut_G(\gamma_1(S))$ &Pearls\\
\hline$2$, IV&$\PSp_4(p)$&$p$&\noindent\rule{1.2cm}{0.4pt}&$3$&$1$&$p^3$&$\GL_2(p)/\{\pm I\}$&$\mathcal P_e^0(\F)$\\
$11$, III&$\Alt(p^2)$&$p$&\noindent\rule{1.2cm}{0.4pt}&$p$&$1$&$p^p$&$\frac 1 2 (p-1)\wr \Sym(p)$&$\mathcal P_a^0(\F)$\\$4$, III&
$\PSL_p(q)$&$p$&$\nu_p(q-1)=1$, $p >3$&$p-2$&$1$&$p^{p-2}$&$\Sym(p)$&$\mathcal P_a^0(\F)\cup \mathcal P_a^*(\F)$\\$34$, III&
$\PSL_p(q)$&$p$&$\nu_p(q-1)\ge 2$, $p >3$&$p-1$&$\nu_p(q-1)$&$p^{e(p-1)-1}$&$\Sym(p)$&$\mathcal P_a^0(\F)\cup \mathcal P_a^*(\F)$\\$10$, IV&
$\PSL_{p+1}(q)$&$p$&$\nu_p(q-1)=1$&$p$&$1$&$p^{p}$&$\Sym(p+1)$&$\mathcal P_e^0(\F)$\\$11$, IV&
$\mathrm P\Omega_{2p}^+(q)$&$p$&$\nu_p(q-1)= 1$&$p$&$1$&$p^p$&$ 2^{p-1}:\Sym(p)$&$\mathcal P_e^0(\F)$\\$16$, IV&
${}^2\mathrm F_4(2^{2n+1})$&$3$&$2n+1 \ge 3$&$2$&$\nu_3(q+1)$&$3^{2e}$&$\GL_2(3)$&$\mathcal P_e^0(\F)\cup \mathcal P_e^*(\F)$\\$15$, IV&
 $\mathrm E_7(q)$&$7$&$\nu_7(q-1)=1$&$7$&$1$&$7^7$&$\mathrm W(\mathrm E_7)= 2 \times \Sp_6(2)$&$\mathcal P_e^0(\F)$\\$18$, I&
 $\mathrm E_8(q)$&$5$&$\nu_5(q^2+1)\ge 1$&$5$&$\nu_5(q^2+1)$&$5^{4e}$&$  (4\circ 2^{1+4}).\Sym(6)$&$\mathcal P_a^0\cup\mathcal P_e^*(\F)$\\$3$, II&
$\mathrm{Co}_1$&$5$&\noindent\rule{1.2cm}{0.4pt}&3&$1$&$3$&$4 \times \Sym(5)$&$\mathcal P_e^0\cup\mathcal P_a^*(\F)$\\
\hline
\end{tabular}}
\caption{Realizable, reduced fusion systems $\F=\F_S(G)$ on maximal class $p$-groups $S$, $p \ge 3$, $|S|\ge p^4$ and $\gamma_1(S)$ abelian of exponent $e$. }\label{Tab3}
\end{table}

\section{The saturated fusion systems on maximal class $3$-groups}\label{AppB}

In this appendix, we bring together the outcome of  various investigations into saturated fusion systems on maximal class $3$-groups. These results are mainly extracted from \cite{DRV,PSrank2,RV}.

There are two maximal class $3$-groups of order $3^3$, they are both extraspecial one of  exponent $3$ and one of exponent $9$.

\begin{theorem}\label{max31} Suppose that $S$ is extraspecial of order $3^3$ and $\F$ is a saturated fusion system on $S$. Then either $\F=N_\F(S)$ or $S$ has  exponent $3$, $\E_\F=\mathcal P_a(\F)$ and one of the following holds.
\begin{enumerate}
\item $\F= \F_S(3^2{:}\SL_2(3))$ or $\F_S(3^2{:}\GL_2(3))$;
\item $\F= \F_S(\PSL_3(3))$ or $\F=\F_S(\PSL_3(3){:}2)$;
\item $\F=\F_S({}^2\mathrm F_4(2)')$; or
\item $\F=\F_S(\J_4)$.
\end{enumerate}
\end{theorem}
\begin{proof} The extraspecial group of order $3^3$ and exponent $9$ is meatacyclic. Hence,  in this case, $\F= N_\F(S)$ by \cite[Proposition 5.4]{Stancu1}
(see also \cite[Theorem 7.5]{Craven}). If $S$ is extraspecial of exponent 3, we read the result from \cite[Theorem 1.1]{RV}.
\end{proof}

The classification of maximal class $3$-groups is due to Blackburn  \cite{black}.   We take their presentations  from \cite[Theorem A.2]{DRV}. For $n \ge 4$, and $\beta, \gamma, \delta \in \{0,1,2\}$, define $$\mathrm B(n; \beta, \gamma, \delta)=\langle x, s_1,\dots,s_{n-1}\mid \textbf{R1},\textbf{R2}, \textbf{R3}, \textbf{R4}, \textbf{R5}, \textbf{R6}\rangle$$ where, understanding that $s_{n}=s_{n+1}=1$, the relations are as follows:
\begin{enumerate}
\item[\textbf{R1}:] $s_i=[s_{i-1},x]$ for $i\in\{2, \dots, n-1\}$;
\item [\textbf{R2}:] $[s_1,s_i]=1$ for $i\in\{3, \dots, n-1\}$;
\item [\textbf{R3}:] $s_i^3s_{i+1}^3s_{i+2} = 1$ for $i \in \{2, \dots, n-1\}$;
\item [\textbf{R4}:] $[s_1,s_2]=s_{n-1}^\beta$;
\item [\textbf{R5}:] $s_1^3s_2^3s_3^{}=s_{n-1}^\gamma$; and
\item [\textbf{R6}:] $x^3=s_{n-1}^\delta$.
\end{enumerate}

\begin{theorem}[Blackburn]\label{thm:black} Suppose that $S$ is a maximal class $3$-group of order  $3^n$ with $n \ge 4$. Then $S\cong \mathrm{B}(n;\beta, \gamma, \delta)$ for  some $\beta, \gamma, \delta\in\{0,1,2\}$. Furthermore, $S$ is metabelian and, unless $S \cong \B(4;0,1,0)$, $S$ has rank $2$.
\end{theorem}

\begin{proof}   This comes from the text before and after \cite[Theorems 4.2 and 4.3]{black}.   \end{proof}

There are isomorphisms between some of the Blackburn group. Using the discussion after \cite[Theorem 4.3]{black},
the full list of groups of order $3^4$ is given as $\mathrm{B}(4;0,\gamma,\delta)$  where   $$(0,\gamma, \delta) \in\Sigma_4=\{(0,1,0),(0,2,0),(0,0,0), (0,0,1)\}.$$ Since these groups have order $3^4$, they each have  an abelian subgroup of index $3$.  The group  $\mathrm{B}(4;0,1,0)$ is the unique maximal class $3$-group of rank $3$. Thus $\mathrm{B}(4;0,1,0)$ is isomorphic to a Sylow $3$-subgroup of $\Sym(9)$. For $n \ge 5$, the groups with no abelian maximal subgroups are given by $$(\beta,\gamma, \delta)\in \Theta =\{(1,0,0), (1,0,1), (1,0,2)\}.$$
With this we can write down the a full irredundant list of maximal class $3$-groups of order at least $3^5$:
\begin{enumerate}
\item for $n$ odd, $$(\beta,\gamma, \delta) \in \Theta \cup\{(0,1,0), (0,0,1), (0,0,0)\}.$$
\item for $n$ even, $$(\beta,\gamma, \delta) \in \Theta \cup\Sigma_4.$$
\end{enumerate}
So, for $n\ge 5$, there are  six maximal class $3$-groups when $n$ is odd, and  seven when $n$ is even.

\begin{lemma}\label{max32}
Assume that $S =\mathrm{B}(4;0,1,0) $ is a Sylow $3$-subgroup of $\Sym(9)$ and $\F$ is a saturated fusion system  on $S$ with $\F\ne N_\F(S)$. Set $A= \langle x,s_3\rangle$ and $E=\langle s_2,A\rangle$. Then $\gamma_1(S)=\langle s_1,s_2,s_3\rangle$ and $\F$ is described as follows:
\begin{enumerate}
\item $\E_\F=\{\gamma_1(S)\}$, $\F= N_\F(\gamma_1(S))$ and $\Aut_\F(\gamma_1(S))\cong \Frob(39)$, $ \Frob(39) \times 2$, $\Alt(4)$,  $2\times \Alt(4)$, $\Sym(4)$ two different actions,  or $2\times \Sym(4)$;
    \item $\E_\F= \mathcal P_e(\mathcal F)=\{E\}$, $\F=N_\F(E)$ and either $\Aut_\F(E)\cong {3^{2}}{:}\SL_3(3)$ or $3^{2}{:}\GL_2(3)$;
     \item $\E_\F=  \{E,\gamma_1(S)\}= \mathcal P_e(\F) \cup \{\gamma_1(S)\}$ and $\F= \F_{S}(G)$ with $G \cong \PSp_4(3)$ or $\PSp_4(3){:}2$;
     \item $\E_\F= A^\F\cup\{\gamma_1(S)\}=\mathcal P_a(\F) \cup \{\gamma_1(S)\}$ and $\F= \F_{S}(G)$ with $G \cong \Alt(9)$ or $\Sym(9)$;
        or
\item $\E_\F= \mathcal P_a(\F)= A^\F$,   $\Aut_\F(A)\cong \SL_3(3)$ or $\GL_2(3)$,   and $O^{3'}(\F)$ is simple and exotic.
\end{enumerate}
In particular, $\E_\F\subseteq \mathcal P(\F)\cup\{ \gamma_1(S)\}$.
\end{lemma}

 \begin{proof} The examples have been enumerated  by computer using the procedures from \cite{parkersemerarocomputing}. The code is in Subsection \ref{CB3}.  This confirms that there are 15 examples. \end{proof}

\begin{theorem}\label{max33} Suppose that $S= \mathrm{B}(n;\beta, \gamma, \delta)$ is a maximal class $3$-group of rank $2$ and order $3^n$ with $n \ge 4$. Assume $\F$ is a saturated fusion system on $S$ with $\F \ne N_\F(S)$. Then  $$S \cong \begin{cases}\mathrm{B}(2k;0,0,0),   \mathrm{B}(2k;0,2,0)&k \ge 2\cr
  \mathrm{B}(2\ell;0,1 , 0),\mathrm{B}(2\ell ;1,0,0),\mathrm{B}(2\ell;1,0,2)& \ell \ge 3\cr
\mathrm{B}(2k+1;0,0 , 0) , \mathrm{B}(2k+1;1,0,0)&k\ge 2 \end{cases}$$ and $\F$ is as described in Table~\ref{this one}.
Furthermore, for each of the fusion systems tabulated,  $\mathcal E_\F\subseteq \mathcal P(\F)\cup\{\gamma_1(S)\}$ and, if $\gamma_1(S)\in \E_\F$, then $S \cong \mathrm{B}(2k+1;0,0,0)$ with $k \ge 2$ and $\gamma_1(S)$ is homocyclic of rank $2$.
\end{theorem}

\begin{proof} This is a compilation of \cite[Theorem 5.10]{DRV} and \cite[Theorem 1.1]{PSrank2}. The final statement is obtained by inspection of  Table \ref{this one}. \end{proof}

\begin{table}[H]\label{Tab5}
\resizebox{\textwidth}{!}{
\begin{tabular}{|c|c|ccc|ccc|c|c|}
\hline
 Group&$|\Out_\F(S)|$&$\Out_\F(A_0)$&$\Out_\F(A_1)$&$\Out_\F(A_{-1})$&$\Out_\F(E_0)$& $\Out_\F(E_1)$&$\Out_\F(E_{-1})$&$\Out_\F(\gamma_1)$&Example\\

 \hline
$\mathrm{B}(2k;0,0,0)$&$2 $&$\SL_2(3)$& & &&&&&$\F_{\mathrm{DRV}}(3^{2k},1)$\\\hdashline
$\mathrm{B}(2k;0,0,0)$&$2$&&$\SL_2(3)$&$\SL_2(3)$&&&&&$\F_{\mathrm{DRV}}(3^{2k},2)$\\
\hdashline
$\mathrm{B}(2k;0,0,0)$&$2$&$\SL_2(3)$&$\SL_2(3)$&$\SL_2(3)$&&&&&$ \PSL_3(q_1)$\\\hdashline
$\mathrm{B}(2k;0,0,0)$&$2$&&&&$\SL_2(3)$&&&&$3^{\cdot}\PGL_3(q_2)$, $k>2$\\
 & &&&&  &&&&$N_\F(E_0)$, $k=2$\\\hdashline
$\mathrm{B}(2k;0,0,0)$&$2^2$&$\GL_2(3)$&& &&&&&$\F_{\mathrm{DRV}}(3^{2k},1).2$\\\hdashline
$\mathrm{B}(2k;0,0,0)$&$2^2$&&$\SL_2(3)$&$A_1\sim_\F A_{-1}$&&&&&$\F_{\mathrm{DRV}}(3^{2k},2).2$\\\hdashline

$\mathrm{B}(2k;0,0,0)$&$2^2$&$\GL_2(3)$&$\SL_2(3)$&$A_1\sim_\F A_{-1}$&&&&&$\PSL_3(q_1).2$\\\hdashline
$\mathrm{B}(2k;0,0,0)$&$2^2$& & & &$\GL_2(3)$&&&&$3^{\cdot}\PGL_3(q_2).2$, $k >2$\\
 & & & & & &&&&$N_\F(E_0)$, $k=2$\\\hdashline
$\mathrm{B}(2k;0,0,0)$&$2^2$& &$\SL_2(3)$ &$A_1\sim_\F A_{-1}$ &$\GL_2(3)$&&&&${}^3\mathrm D_4(q_3)$\\
\hline

$\mathrm{B}(2k;0,2,0)$&$2$&  $\SL_2(3)$ &$*$&$*$ &&$*$&$*$&&$\F_{\mathrm{DRV}}(3^{2k},4)$, $k>2$\\

 & &  & &  && & &&$\F_{\mathrm{DRV}}(3^{4},3)$, $k=2$\\\hdashline

$\mathrm{B}(2k;0,2,0)$&$2$& &  $*$&$*$ &$\SL_2(3)$&$*$&$*$&&$3^{\cdot}\PGL_3(q_2)$, $k>2$\\
&& &   &  & & & &&$N_\F(E_0)$, $k=2$\\\hdashline

$\mathrm{B}(2k;0,2,0)$&$2^2$&  $\GL_2(3)$ &$*$&$*$ &&$*$&$*$&&$\F_{\mathrm{DRV}}(3^{2k},4).2$\\

 & &  & &  && & &&$\F_{\mathrm{DRV}}(3^{4},3).2$, $k=2$\\\hdashline

$\mathrm{B}(2k;0,2,0)$&$2^2$& & $*$&$*$ &$\GL_2(3)$&$*$&$*$&&$3^{\cdot}\PGL_3(q_2).2$, $k>2$\\
&& &   &  & & & &&$N_\F(E_0)$, $k=2$\\

\hline

$\mathrm{B}(2\ell;0,1,0)$&$2$&  $\SL_2(3)$ &$*$&$*$ &&$*$&$*$&&$\F_{\mathrm{DRV}}(3^{2\ell},3)$\\\hdashline

$\mathrm{B}(2\ell;0,1,0)$&$2$& & $*$&$*$&$\SL_2(3)$&$*$&$*$&&$3^{\cdot}\PGL_3(q_2)$\\\hdashline

$\mathrm{B}(2\ell;0,1,0)$&$2^2$&  $\GL_2(3)$ &$*$&$*$ &&$*$&$*$&&$\F_{\mathrm{DRV}}(3^{2\ell},3).2$\\\hdashline

$\mathrm{B}(2\ell;0,1,0)$&$2^2$& &  $*$&$*$ &$\GL_2(3)$&$*$&$*$&&$3^{\cdot}\PGL_3(q_2).2$\\
\hline

$\mathrm{B}(2\ell;1,0,2)$&$2$&$*$&$\SL_2(3)$ &&$*$&&&&$\F (3^{2\ell},7)$\\\hdashline
$\mathrm{B}(2\ell;1,0,2)$&$2$&$* $&&$\SL_2(3)$ &$*$&&&&$\F (3^{2\ell},8)$\\\hdashline
$\mathrm{B}(2\ell;1,0,2)$&$2$&$*$&$\SL_2(3)$&$\SL_2(3)$& $*$ &&&&$\F (3^{2\ell},9)$\\\hline
$\mathrm{B}(2\ell;1,0,0)$&$2$&$\SL_2(3)$& $*$&$*$ &&$*$&$*$&&  $\F (3^{2\ell},6)$\\
\hline
$\mathrm{B}(2k+1;0,0,0)$&$2$& &&&$\SL_2(3)$&&&&$3^{\cdot}\F_{\mathrm{DRV}}(3^{2k},1)$\\\hdashline
$\mathrm{B}(2k+1;0,0,0)$&$2$&&&&&$\SL_2(3)$&$\SL_2(3)$&&
$3^{\cdot}\F_{\mathrm{DRV}}(3^{2k},2)$\\\hdashline
$\mathrm{B}(2k+1;0,0,0)$&$2$&&&  &$\SL_2(3)$&$\SL_2(3)$&$\SL_2(3)$&&$\SL_3(q_1)$\\\hdashline
 $\mathrm{B}(2k+1;0,0,0)$&$2$&&&&&&&$\SL_2(3)$&$N_\F(\gamma_1(S))$\\\hdashline
$\mathrm{B}(2k+1;0,0,0)$&$2$&$\SL_2(3)$&& &&&&&$\PGL_3(q_1)$\\\hdashline

$\mathrm{B}(2k+1;0,0,0)$&$2^2$&  &  &&&&&$\GL_2(3)$&$N_\F(\gamma_1(S))$\\\hdashline

 $\mathrm{B}(2k+1;0,0,0)$&$2^2$&&&&&$\SL_2(3)$&$E_1\sim_\F E_{-1}$&&$3.\F_{\mathrm{DRV}}(3^{2k},2).2$\\\hdashline

 $\mathrm{B}(2k+1;0,0,0)$&$2^2$&&&&&$\SL_2(3)$&$E_1\sim_\F E_{-1}$&$\GL_2(3)$&$\F_{\mathrm{DRV}}(3^{2k+1},1)$\\\hdashline

$\mathrm{B}(2k+1;0,0,0)$&$2^2$& & & &$\GL_2(3)$&&&&$3.\F_{\mathrm{DRV}}(3^{2k},1).2$\\\hdashline
$\mathrm{B}(2k+1;0,0,0)$&$2^2$& & & &$\GL_2(3)$&&&$\GL_2(3)$&$\F_{\mathrm{DRV}}(3^{2k+1},2)$\\\hdashline

$\mathrm{B}(2k+1;0,0,0)$&$2^2$& & & &$\GL_2(3)$&$\SL_2(3)$&$E_1\sim_\F E_{-1}$&&$\SL_3(q_1).2$\\\hdashline
$\mathrm{B}(2k+1;0,0,0)$&$2^2$& & & &$\GL_2(3)$&$\SL_2(3)$&$E_1\sim_\F E_{-1}$&$\GL_2(3)$&${}^2\mathrm F_4(q_4)$\\\hdashline

$\mathrm{B}(2k+1;0,0,0)$&$2^2$&$\GL_2(3)$& & &&&&&$ \PGL_3(q_1).2$\\\hdashline
$\mathrm{B}(2k+1;0,0,0)$&$2^2$&$\GL_2(3)$& & &&&&$\GL_2(3)$&$\F_{\mathrm{DRV}}(3^{2k+1},3)$\\\hdashline
 $\mathrm{B}(2k+1;0,0,0)$&$2^2$&$\GL_2(3)$&&&&$\SL_2(3)$&$E_1\sim_\F E_{-1}$ &&
$\F _{\mathrm{DRV}}(3^{2k+1},5)$\\\hdashline
 $\mathrm{B}(2k+1;0,0,0)$&$2^2$&$\GL_2(3)$&&&&$\SL_2(3)$&$E_1\sim_\F E_{-1}$ &$\GL_2(3)$&
$\F_{\mathrm{DRV}}(3^{2k+1},4)$\\
\hline
$\mathrm{B}(2k+1;1,0,0)$&$2$&$\SL_2(3)$& $*$&$*$ &&$*$&$*$&& $ \PSL_3(q^3_1).3$\\
\hline

\end{tabular} }
\caption{The saturated fusion systems on maximal class $3$-groups of rank $2$ and order at least $3^4$. }\label{this one}
\end{table}

In Table~\ref{this one}, $q_1$, $q_2$, $q_3$ and $q_4$ are prime powers with $\nu_3(q_1-1)=\nu_3(q_4^2-1)=k$ and $\nu_3(q_2-1)=\nu_3(q_3^2-1)=k-1$ where  $\nu_3(m)$ denotes that exponent of the highest power of $3$ which divides $m$. Of course, $q_4$ is   an odd power of $2$.

To understand the data presented in  Table \ref{this one},  let $S=\mathrm{B}(n;\beta,\gamma,\delta)$ with $n \ge 4$ and assume that $S$ has rank $2$. Let $\F$ be a saturated fusion system on $S$.
It is easy to see that $Z(S)= \langle s_{n-1}\rangle$, $Z_2(S)=\langle s_{n-2},s_{n-1}\rangle$ and $\gamma_1(S)=\langle s_i\mid 1\le i \le n-1\rangle$. For $i\in \{0,1,-1\}$ define $$A_i=\langle xs_1^i,s_{n-1}\rangle,$$ and $$E_i= \langle A_i,s_{n-2}\rangle.$$
When the subgroups $A_i$, $E_i$ have exponent $3$, then up to $S$-conjugacy they are the candidates to be $\F$-pearls. If there is a $*$ in  Table \ref{this one}, this indicates that for the given $S$, the corresponding subgroups $A_i$ and $E_i$ do not have exponent $3$ (see \cite[Table 1]{PSrank2}).  We do not make this indication for $\gamma_1(S)$ as it does not have exponent $3$. The notation $A_{1}\sim_\F A_{-1}$, $E_{1}\sim_\F E_{-1}$ means that these subgroups are $\F$-conjugate. We only show this in the case when $A_{1}$ or $E_{1}$ is an $\F$-pearl. Reading across a row of  Table \ref{this one}, first comes the Blackburn name of $S$, then  $|\Out_\F(S)|$. As we move a long the row, an entry displays the structure of  $\Out_\F(A_i)$, $\Out_\F(E_i)$ or $\Out_\F(\gamma_1(S))$  in the case that the corresponding subgroup  is $\F$-essential. A group or $N_\F(E_0)$ in the final column indicates that the fusion system is realizable, all the other tabulated fusion systems are exotic.  Where the  exotic fusion system is in \cite[Theorem 5.10]{DRV}, this is indicated by a subscript DRV and we have adhered to their names.

 The fusion systems $\F_{\mathrm{DRV}}(3^{2k+1},5)$ are exotic and incorrectly labeled in \cite[Table 6]{DRV} (hence the strange numbering of the system).  The fusion systems on $S=\mathrm{B}(2k+1;1,0,0)$ included as the  final row of Table~\ref{this one} are claimed to be exotic in \cite{PSrank2}.  In fact they can be constructed by the methods in Example~\ref{ExThmNonExc}.

\begin{theorem}\label{outcome}Suppose that $\F$ is a saturated fusion system on a non-abelian maximal class $3$-group. Then $\F$ is known, $\E_\F\subseteq \mathcal P(\F)\cup\{ \gamma_1(S)\}$ and, if  $\gamma_1(S)\in \E_\F$, then $\gamma_1(S)$ is abelian.\end{theorem}

\begin{proof} Combine Theorems~\ref{max31} and \ref{max33} and  Lemma~\ref{max32}. \end{proof}

\section{Computer code}\label{AppC}

In this appendix we provide the {\sc Magma} code that we have used   for the work. We have used the functionality developed in \cite{parkersemerarocomputing}. To use these subroutines, the intrinsics provide in \cite{PSGIT} must be available. For this you will be required to attach the package file following the instructions provided here
\begin{center} \begin{verbatim} https://magma.maths.usyd.edu.au/magma/handbook/text/24#168.\end{verbatim}
\end{center}

\subsection{The {\sc Magma} code for Example~\ref{5^7thm}}\label{C1.2}

This code requires the fusion system package written by Parker and Semeraro \cite{parkersemerarocomputing, PSGIT}.

\begin{verbatim}

X:= ASL(2,5); S5:= Sylow(X,5);
BA:= Normalizer(X,S5);

Y:= PSp(4,5); S5:= Sylow(Y,5);
B:= Normalizer(Y,Centre(S5));
B1:= DerivedSubgroup(B);
BE:=Normalizer(B1,S5);


// BA is isomorphic to the normalizer of
//a Sylow 5-subgroup of 5^2:\SL_2(5) and BE
//is isomorphic to the  normalizer of a
//Sylow 5-subgroup of 5^{1+2}:\SL_2(5)

AP:=[];
EP:=[];

SS:= SmallGroups(5^7,IsMaximalClass);
#SS;

SSA:=[];
for x in SS do
	L:= LowerCentralSeries(x);
	G1:=Centralizer(x,L[2],L[4]);
	if not IsAbelian(G1) then Append(~SSA,x);
	end if;
end for;
#SSA;

for x in SSA do MakeAutos(x); end for;
SSAA:= [x : x in SSA |#Sylow(x`autoperm,2) eq 4];

#SSAA;

for X in SSAA do
	L:= LowerCentralSeries(X);
	G1:=Centralizer(X,L[2],L[4]);
	AP[Index(SSAA,X)] :=[];
	EP[Index(SSAA,X)] :=[];
 	SAP:= Subgroups(X:OrderEqual:= 25);
 	SAP:= {x`subgroup: x in SAP| not x`subgroup subset G1};
 	SEP:=  Subgroups(X:OrderEqual:= 125);
  	SEP:= {x`subgroup: x in SEP|not IsAbelian(x`subgroup)
  		and not x`subgroup subset G1 and Exponent(x`subgroup) eq 5};
	H:= Sylow(X`autoperm,2);
	H:= SubInvMap(X`autopermmap,X`autogrp,H);
	AFS:= sub<X`autogrp|Inn(X),H>;
	SAP:={x: x in SAP|IsOdd(#AutOrbit(X,x,AFS))};
	SEP:={x: x in SEP|IsOdd(#AutOrbit(X,x,AFS))};
	
	for x in SAP do
		NSx:= Normalizer(X,x);
		A, B, C:= AutOrbit(X,x,AFS);
		MakeAutos(NSx);	
		W:= Sylow(SubMap(X`autopermmap,X`autoperm,B),2);
		K:= SubInvMap(X`autopermmap,X`autogrp,W);
		K:= sub<NSx`autogrp|K>;
		B := Holomorph(NSx,K);
		a:= IsIsomorphic(B,BA);
		if a then Append(~AP[Index(SSAA,X)] ,x); end if;
	end for;

	for x in SEP do
		NSx:= Normalizer(X,x);
		A, B, C:= AutOrbit(X,x,AFS);
		MakeAutos(NSx);	
		W:= Sylow(SubMap(X`autopermmap,X`autoperm,B),2);
		K:= SubInvMap(X`autopermmap,X`autogrp,W);
		K:= sub<NSx`autogrp|K>;
		B := Holomorph(NSx,K);
		a:= IsIsomorphic(B,BE);
		if a then Append(~EP[Index(SSAA,X)] ,x); end if;
	end for;
end for;

IA:={i:i in [1..#SSAA]|#AP[i] ne 0};
IE:={i:i in [1..#SSAA]|#EP[i] ne 0};
print "there are", #{x:x in AP|#x ne 0},
"maximal class groups of order 5^7
with abelian pearls.";

print "there are", #{x:x in EP|#x ne 0},
"maximal class groups of order 5^7
with extraspecial pearls.";

JJ:={1297,1308,1321,1360,1363,1374,1384};
for i in IA do
	for j in JJ do
		if IsIsomorphic(SmallGroup(5^7,j),SSAA[i]) then
			i,j;end if;
	end for;
end for;

for i in IE do
	for j in JJ do
		if IsIsomorphic(SmallGroup(5^7,j),SSAA[i]) then
			i,j; end if;
	end for;
end for;
JJ:={1297,1308,1321,1360,1363,1374,1384};
A:=[];
for j in JJ do
A[j]:= AllFusionSystems(SmallGroup(5^7,j):
                OpTriv:=false, pPerfect:=false);
end for;
for j in JJ do
	if IsDefined(A,j) then A[j]; end if;
end for;


for j in JJ do
	S:= SmallGroup(5^7,j);
	L:= LowerCentralSeries(S);
	G1:=Centralizer(S,L[2],L[4]);
	j, NilpotencyClass(G1);
end for;

\end{verbatim}
\subsection{The {\sc Magma} code required to verify Lemma~\ref{Symmetric(p)Module}}\label{CSym(p)}

This code verifies that when $p\in \{5,7\}$, the exterior square of the module $S^{p-1,1}$ for $\GF(p)\Sym(p)$ has two composition factors and no quotient factor which is $1$-dimensional or isomorphic to $D^{p-1,1}$.
\begin{verbatim}
for p in {5,7} do
G:= Sym(p);
V:= PermutationModule(G,GF(p));
W:= sub<V|V.1-V.1*G.1>;
DPminus11:= W/Socle(W);
U:= ExteriorSquare(W);
C:=CompositionFactors(U);
C;
IsIrreducible(U/Socle(U));
IsIsomorphic(U/Socle(U),DPminus11);
IsIsomorphic(Socle(U),DPminus11);
end for;
\end{verbatim}

\subsection{The {\sc Magma}  code for Lemma~\ref{prop:CSZ2 essential}}\label{subsec:CSZ2}
This code requires the fusion system package written by Parker and Semeraro \cite{parkersemerarocomputing,PSGIT}.

\begin{verbatim}
SS:= SmallGroups(5^6,IsMaximalClass);
#SS;
TT:=[];
for S in SS do
	L:= LowerCentralSeries(S);
	gamma1:= Centralizer(S,L[2],L[4]);
	CSZ2:= Centralizer(S,L[4]);
	if gamma1 ne CSZ2 then   Append(~TT,S);
	end if;
end for;
#TT;

TT2:=[];
for S in TT do
	L:= LowerCentralSeries(S);
	gamma1:= Centralizer(S,L[2],L[4]);
	CSZ2:= Centralizer(S,L[4]);
	MakeAutos(CSZ2);
	if not IsExtraSpecial(gamma1) and
        not IsSoluble(CSZ2`autoperm) then
		Append(~TT2,S);
	end if;
end for;
#TT2;

TT3:=[];
for S in TT2 do MakeAutos(S);
	if not IsNilpotent(S`autoperm) then  Append(~TT3,S);
	end if;
end for;

#TT3;
S:= TT3[1];
A:=AllFusionSystems(S:OpTriv:= false,pPerfect:= false);
A;
\end{verbatim}

\subsection{The group providing an example for Lemma~\ref{gamma1-essC}}\label{CEx}
This code constructs a group $G$ of shape $7^{3+3}{:}\PGL_2(7)$ which has Sylow $7$-subgroup $S$ of maximal class and $\gamma_1(S)$ special with centre of order $7^3$. It further shows that $S$ cannot support a saturated fusion system $\mathcal G$ containing $\mathcal G$-pearls.
\begin{verbatim}
G:= FreeGroup(5);
R<r1,r2,r3,s,t>:= FreeGroup(5);
R:= quo<R|r1^7,r2^7,r3^7,((r1,r2),r1),((r1,r2),r2),
((r1,r2),r3),((r1,r3),r1),((r1,r3),r2),((r1,r3),r3),
((r2,r3),r1),((r2,r3),r2),((r2,r3),r3),
s^3, t^2, (s,t)^4,(t*s)^8,
r1^s = r1^5*r2^2*(r1,r2)^2*(r1,r3)*(r2,r3)^3,
r2^s =r1*r2*r3*(r1,r2)^5*(r1,r3)^4*(r2,r3)^2,
r3^s = r1^4*r2^2*r3*(r1,r3)^3*(r2,r3)^6,
r1^t = r2*r3*(r1,r2)^3*(r1,r3)^2*(r2,r3),
r2^t =r1^5*r2^3*r3^2*(r1,r2)^4*(r1,r3)^3*(r2,r3)^5,
r3^t =r1^3*r2^4*r3^5*(r1,r2)*(r1,r3)^5*(r2,r3)^4>;
G:= CosetImage(R,sub<R|s,t>);
ChiefFactors(G);
S:=Sylow(G,7);
IsMaximalClass(S);

//We now check if G can be decorated with pearls

p:= FactoredOrder(S)[1][1];
L:= LowerCentralSeries(S);
gamma1:= Centralizer(S,L[2],L[4]);
Z2:= L[#L-2];
R:= Centralizer(S,Z2);

M:= MaximalSubgroups(S);
PotAbelianPearls:=[];
PotExtraspecialPearls:=[];
for x in M do  y:= x`subgroup;
	if y ne gamma1 and y ne R then
		if exists(xx){z:z in y|
            not z in gamma1 and Order(z) eq p and not z in R}
		then
        Append(~PotExtraspecialPearls,sub<S|Z2,xx>);
        Append(~PotAbelianPearls,sub<S|L[#L-1],xx>);
		end if;
	end if;
end for;

J1:= Centralizer(PSp(4,p),Centre(Sylow(PSp(4,p),7)));
BE:= Normalizer(J1,Sylow(J1,p));
BA:= Normalizer(ASL(2,p),Sylow(ASL(2,p),p));

 for x in PotExtraspecialPearls do
 	 if IsIsomorphic(Normalizer(G,x),BE) then
 	 	"S can be decorated with extraspecial pearls";
 	 	else "this potential extraspecial pearl,
                is not a pearl."; end if;
 end for;

 for x in PotAbelianPearls do
 	 if IsIsomorphic(Normalizer(G,x),BA) then
 	 	"S can be decorated with abelian pearls";
 	 	else "this potential abelian pearl,
                is not a pearl."; end if;
 end for;
\end{verbatim}

\subsection{The {\sc Magma}  code for Lemma~\ref{max32}}\label{CB3}

This code requires the fusion system package written by Parker and Semeraro \cite{parkersemerarocomputing, PSGIT}.

\begin{verbatim}
S:= Sylow(Sym(9),3);
A:= AllFusionSystems(S:OpTriv:= false,pPerfect:= false);
#A;
\end{verbatim}

\backmatter

\bibliographystyle{plain}

\end{document}